\pdfoutput=1
\documentclass[11pt,oneside,reqno]{amsart}

\usepackage{amsmath, amssymb, amsthm, amsfonts, mathrsfs, mathtools}
			
\setbox0=\hbox{$+$}
\newdimen\plusheight
\plusheight=\ht0
\def\+{\;\lower\plusheight\hbox{$+$}\;}

\newdimen\minusheight
\minusheight=\ht0
\def\-{\;\lower\minusheight\hbox{$-$}\;}

\setbox0=\hbox{$\cdots$}
\newdimen\cdotsheight
\cdotsheight=\plusheight
\def\cds{\lower\cdotsheight\hbox{$\cdots$}}
\usepackage{tikz}
\usetikzlibrary{matrix,decorations.pathreplacing}

\theoremstyle{definition}

\usepackage{graphicx, epsfig}

\theoremstyle{definition}

\makeatletter
\renewcommand\contentsnamefont{\bfseries}
\def\@starttoc#1#2{\begingroup
\setTrue{#1}%
\par\removelastskip\vskip\z@skip
\@startsection{}\@M\z@{\linespacing\@plus\linespacing}%
{.5\linespacing}{
\contentsnamefont}{#2}%
\ifx\contentsname#2%
\else \addcontentsline{toc}{section}{#2}\fi
\makeatletter
\@input{\jobname.#1}%
\if@filesw
\@xp\newwrite\csname tf@#1\endcsname
\immediate\@xp\openout\csname tf@#1\endcsname \jobname.#1\relax
\fi
\global\@nobreakfalse \endgroup
\addvspace{32\p@\@plus14\p@}%
\let\tableofcontents\relax
}
\def\contentsname{Contents}
\def\l@section{\@tocline{2}{.5ex}{0mm}{5pc}{}}
\def\l@subsection{\@tocline{2}{0pt}{2em}{5pc}{}}
\def\l@subsubsection{\@tocline{2}{0pt}{3em}{5pc}{}}
\def\l@subsubsubsection{\@tocline{2}{0pt}{4em}{5pc}{}}
\makeatother
\DeclareMathOperator{\codim }{codim}
\usepackage[strict]{changepage}
\pagestyle{plain}
\pagestyle{myheadings}
\makeatother

\numberwithin{equation}{section}
\theoremstyle{plain}
\newtheorem{theorem}{Theorem}[section]

\newtheorem{question}[theorem]{Question}
\newtheorem{conjecture}[theorem]{Conjecture}

\newtheorem{corollary}[theorem]{Corollary}
\newtheorem{proposition}[theorem]{Proposition}
\newtheorem{definition}[theorem]{Definition}
\newtheorem{assumption}[theorem]{Assumption}
\newtheorem{lemma}[theorem]{Lemma}
\newtheorem{claim}[theorem]{Claim}
\newtheorem{remark}[theorem]{Remark}
\newtheorem{convention}[theorem]{Convention}

\usepackage{titlesec}

\usepackage{graphicx, epsfig}
\theoremstyle{definition}
\usepackage{graphicx} 
\usepackage{color} 
\usepackage{amsthm}
\usepackage{slashed}
\usepackage{changepage}
\usepackage{blkarray}
\usepackage{transparent} 
\usepackage{titlesec}
\usepackage{changepage}
\usepackage{lipsum}
\setlength\abovecaptionskip{-0.5cm}

\makeatletter

\titleformat{\section}{\normalfont\LARGE\bfseries}
\titleformat{\subsubsection}{\normalfont\large\bfseries}

\renewcommand\section{\@startsection{section}{1}{\z@}%
                                  {-3.5ex \@plus -1ex \@minus -.2ex}%
                                 {2.3ex \@plus.2ex}%
                                 {\normalfont\Large\bfseries}}
\renewcommand\subsection{\@startsection{subsection}{1}{\z@}%
                                  {-3.5ex \@plus -1ex \@minus -.2ex}%
                                 {2.3ex \@plus.2ex}%
                                 {\normalfont\normalsize\bfseries}}
\renewcommand\subsubsection{\@startsection{subsubsection}{1}{\z@}%
                                  {-3.5ex \@plus -1ex \@minus -.2ex}%
                                 {2.3ex \@plus.2ex}%
                                 {\normalfont\normalsize\bfseries}}
\titlespacing*{\paragraph}
{0pt}{3.25ex plus 1ex minus .2ex}{1.5ex plus .2ex}

\usepackage{titlesec}
\usepackage{hyperref}
\usepackage{enumitem}

\newlist{alphalist}{enumerate}{1}
\setlist[alphalist,1]{label=\textbf{\alph*.}}

\titleclass{\subsubsubsection}{straight}[\subsection]

\newcounter{subsubsubsection}[subsubsection]
\renewcommand\thesubsubsubsection{\thesubsubsection.\arabic{subsubsubsection}}

\newcommand{\newparallel}{\mathrel{\mathpalette\new@parallel\relax}}
\newcommand{\new@parallel}[2]{%
  \begingroup
  \sbox\z@{$#1T$}
  \resizebox{!}{\ht\z@}{\raisebox{\depth}{$\m@th#1/\mkern-5mu/$}}%
  \endgroup}
\makeatletter
\newcommand{\notparallel}{%
  \mathrel{\mathpalette\not@parallel\relax}%
}
\newcommand{\not@parallel}[2]{%
  \ooalign{\reflectbox{$\m@th#1\smallsetminus$}\cr\hfil$\m@th#1\newparallel$\cr}%
}
\titleformat{\subsubsubsection}
  {\normalfont\normalsize}{\thesubsubsubsection}{1em}{}
\titlespacing*{\subsubsubsection}
{0pt}{3.25ex plus 1ex minus .2ex}{1.5ex plus .2ex}
\pagestyle{plain}
\setcounter{page}{1}     
\makeatother
\usepackage{graphicx} 
\usepackage{color} 
\usepackage{transparent} 
\usepackage{geometry} 
\geometry{
 a4paper,
 total={170mm,257mm},
 top=40mm,
 bottom=40mm,
 left=21mm,
 right=21mm,
 }
 
\begin{document}
\title{Classification of Hyperbolic Dehn fillings I}
\author{BoGwang Jeon}
\thanks{{\em 2020 Mathematics Subject Classification}: 57K31, 57K32, 14G25, 14J20.}
\thanks{{\em Key words and phrases}: hyperbolic 3-manifolds, Dehn filling, holonomy variety, Zilber-Pink conjecture.}

\begin{abstract}
Let $\mathcal{M}$ be a $2$-cusped hyperbolic $3$-manifold. By the work of Thurston, the product of the derivatives of the holonomies of core geodesics of each Dehn filling of $\mathcal{M}$ is an invariant of it. In this paper, we classify Dehn fillings of $\mathcal{M}$ with sufficiently large coefficients using this invariant. Further, for any given two Dehn fillings of $\mathcal{M}$ (with sufficiently larger coefficients), if their aforementioned invariants are the same, it is shown their complex volumes are the same as well. 
\end{abstract}
\maketitle
\tableofcontents

\section{Introduction}
\subsection{Main results}
The following is a celebrated result of Thurston \cite{thu}:
\begin{theorem}[Thurston]\label{19100202}
Let $\mathcal{M}$ be an $n$-cusped hyperbolic $3$-manifold and $\mathcal{M}_{p_1/q_1, \dots, p_n/q_n}$ be its $(p_1/q_1,\dots, p_n/q_n)$-Dehn filling. Then $\mathcal{M}_{p_1/q_1, \dots, p_n/q_n}$ is hyperbolic for $|p_k|+|q_k|$ sufficiently large, 
\begin{equation*}
\text{vol}\;\mathcal{M}_{p_1/q_1, \dots, p_n/q_n}<\text{vol}\;\mathcal{M}
\end{equation*}
for any $(p_1/q_1, \dots, p_n/q_n)$ and 
\begin{equation*}
\text{vol}\;\mathcal{M}_{p_1/q_1, \dots, p_n/q_n}\rightarrow \text{vol}\;\mathcal{M}
\end{equation*}
as $|p_k|+|q_k| \rightarrow \infty$ ($1\leq k\leq n$). 
\end{theorem}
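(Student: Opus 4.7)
The plan is to follow Thurston's original approach through the deformation space of (incomplete) hyperbolic structures on $\mathcal{M}$. First I would fix a geometric ideal triangulation of $\mathcal{M}$ into tetrahedra $\Delta_1,\dots,\Delta_N$; each tetrahedron is recorded by a shape parameter $z_i \in \mathbb{H}^2$, and the combinatorial gluing consistency equations together with the cusp completeness equations cut out a complex-algebraic variety whose distinguished solution $(z_i^0)$ is the complete structure. A standard calculation (Neumann--Zagier) shows that in a neighborhood of this point the variety is smooth of complex dimension $n$, and can be parametrized by the logarithmic meridian holonomies $u=(u_1,\dots,u_n)$, with the complete structure at $u=0$. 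Each longitude holonomy $v_k$ is then a holomorphic function $v_k(u)$ with $v_k(0)\neq 0$.

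The next step is to interpret the Dehn filling data. The manifold $\mathcal{M}_{p_1/q_1,\dots,p_n/q_n}$ carries a hyperbolic structure extending to the filling solid tori if and only if one can solve the \emph{generalized Dehn surgery equations}
\[
p_k u_k + q_k v_k(u) = 2\pi i, \quad k=1,\dots,n,
\]
with each shape $z_i(u)$ still having positive imaginary part. For fixed $(p_k/q_k)$ with $|p_k|+|q_k|$ large, the right-hand side is controlled while the coefficient system is dominated by the leading term; the implicit function theorem, applied after rescaling, produces a unique solution $u(p,q)$ with $u_k \to 0$ as $|p_k|+|q_k|\to\infty$. Because the shape parameters depend continuously on $u$ and are positive at $u=0$, positivity persists on a full neighborhood, so the incomplete hyperbolic structure exists for all sufficiently large fillings.

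Third, I would identify the metric completion of this incomplete structure with the topological Dehn filling. The equation $p_k u_k + q_k v_k = 2\pi i$ is exactly the condition that the $(p_k,q_k)$-curve on the $k$th cusp torus has trivial holonomy; near each cusp the developing map wraps around a short singular core of cone angle $2\pi$, and the completion attaches a genuine solid torus whose soul is a closed geodesic. This realizes the topological Dehn filling with a smooth hyperbolic metric, proving hyperbolicity. I expect this completion step to be the main technical obstacle, since one must show (i) that the metric completion is actually a manifold (not merely an orbifold with a cone singularity), which uses that the total rotational part of the meridian holonomy is exactly $2\pi$, and (ii) that the resulting manifold is homeomorphic to the prescribed filling, which involves tracking Dehn-filling coordinates through Thurston's parametrization.

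Finally, for the volume statements I would use Schläfli's variational formula, which computes the derivative of volume along the family $u\mapsto \mathrm{vol}(\mathcal{M}(u))$ in terms of cone angles times the lengths of the singular locus. Along the deformation from $u=0$ (the complete cusped structure) to the filled structure, the singular loci shrink to closed geodesics of positive length $\ell_k = \mathrm{Re}\, v_k(u)/\text{(normalization)}$; Schläfli's formula gives $\mathrm{vol}(\mathcal{M})-\mathrm{vol}(\mathcal{M}_{p_1/q_1,\dots,p_n/q_n}) = \tfrac{1}{2}\sum_k \ell_k \cdot (2\pi - 0) + O(\ell_k^3)$, which is strictly positive and tends to $0$ as $u\to 0$, i.e.\ as $|p_k|+|q_k|\to\infty$. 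Continuity of volume in $u$ then yields the convergence statement.
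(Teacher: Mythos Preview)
The paper does not prove this theorem; it is quoted as Thurston's result with a citation to \cite{thu} (and the refined version is cited as Neumann--Zagier/Yoshida in Theorem~\ref{20080701}). So there is no in-paper proof to compare against. Your sketch is broadly the standard Thurston/Neumann--Zagier argument, which is indeed what the cited references do.

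That said, two points in your write-up need correction. First, the assertion ``$v_k(0)\neq 0$'' is wrong: at the complete structure both $u_k$ and $v_k$ vanish. What you need (and what the paper records as Theorem~\ref{potential}) is that $v_k=u_k\cdot\tau_k(u_1,\dots,u_n)$ with $\tau_k(0)=\tau_k\in\mathbb{C}\setminus\mathbb{R}$; it is the nonvanishing of the \emph{derivative} (and more precisely the fact that $\tau_k$ is non-real) that lets you invert $p_ku_k+q_kv_k=2\pi i$ for large $|p_k|+|q_k|$. Second, your Schl\"afli computation is off: the formula $d\mathrm{vol}=-\tfrac12\sum_k \ell_k\,d\theta_k$ must be integrated in cone angle from $0$ to $2\pi$ while $\ell_k$ itself varies, so the leading contribution is not $\tfrac12\ell_k\cdot 2\pi$ but rather $\tfrac{\pi}{2}\,\mathrm{length}_k$ at the filled structure (cf.\ Theorem~\ref{20080701}). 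This still gives strict positivity and convergence, so your conclusion survives, but only for coefficients large enough to stay in the deformation neighborhood. If ``for any $(p_1/q_1,\dots,p_n/q_n)$'' is read as covering all hyperbolic fillings, including small ones, you would need the Gromov-norm argument rather than Schl\"afli.
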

As a consequence, it follows that, for $\mathcal{M}$ as defined above, there are only finitely many Dehn fillings of $\mathcal{M}$ with a fixed volume.\footnote{Combining this with J$\slashed{o}$rgensen's theory, a stronger result  is, in fact, true: there are only finitely many hyperbolic $3$-manifolds of a fixed volume \cite{thu}.}  A natural question then to ask is 
\begin{question}
\normalfont Let $\mathcal{M}$ be an $n$-cusped hyperbolic $3$-manifold. Can one classify Dehn fillings of $\mathcal{M}$ via volume?
\end{question}

More specifically, one may ask 
\begin{question}\label{20051905}
\normalfont Let $\mathcal{M}$ be an $n$-cusped hyperbolic $3$-manifold. Does there exist $c$ depending only on $\mathcal{M}$ such that the number $N_{\mathcal{M}}(v)$ of Dehn fillings of $\mathcal{M}$ of the same volume $v$ is bounded by $c$? Further, if
\begin{equation*}
\text{vol}\;\mathcal{M}_{p_1/q_1, \dots, p_n/q_n}=\text{vol}\;\mathcal{M}_{p'_1/q'_1, \dots, p'_n/q'_n},
\end{equation*}
then what is the relationship between $(p_1/q_1, \dots, p_n/q_n)$ and $(p'_1/q'_1, \dots, p'_n/q'_n)$? 
\end{question}

In \cite{jeon3}, the author answered it very partially as below, but we believe the question is widely open in general:
\begin{theorem}[B. Jeon]
Let $\mathcal{M}$ be a $1$-cusped hyperbolic $3$-manifold whose cusp shape is quadratic. Then there exists $c$ depending only on $\mathcal{M}$ such that $N_{\mathcal{M}}(v)<c$ for any $v\in \mathbb{R}$. 
\end{theorem}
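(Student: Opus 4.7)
The plan is to combine the Neumann--Zagier asymptotic expansion of the hyperbolic volume under Dehn filling with a Diophantine argument specific to the imaginary quadratic field $\mathbb{Q}(\tau)$.

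First, I would invoke the Neumann--Zagier asymptotic formula for the $(p/q)$-Dehn filling of $\mathcal{M}$: writing $\tau\in\mathbb{H}$ for the cusp shape, for sufficiently large $|p|+|q|$
\begin{equation*}
\text{vol}(\mathcal{M}_{p/q}) = \text{vol}(\mathcal{M}) - \frac{\pi^{2}\,\text{Im}(\tau)}{|p+q\tau|^{2}} + O\!\left(|p+q\tau|^{-4}\right),
\end{equation*}
with the implied constant depending only on $\mathcal{M}$. Since $\tau$ is quadratic and $\text{Im}(\tau)>0$, the field $\mathbb{Q}(\tau)$ is imaginary quadratic, and $|p+q\tau|^{2}=(p+q\tau)(p+q\bar\tau)$ is (up to a bounded denominator) the norm of $p+q\tau$ in $\mathbb{Z}[\tau]$, a positive integer growing like $|p|^{2}+|q|^{2}$.

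Next, suppose $\text{vol}(\mathcal{M}_{p/q})=\text{vol}(\mathcal{M}_{p'/q'})$ for two fillings with sufficiently large coefficients. Setting $N=|p+q\tau|^{2}$ and $N'=|p'+q'\tau|^{2}$ and subtracting the two expansions yields
\begin{equation*}
\pi^{2}\,\text{Im}(\tau)\cdot\frac{N'-N}{NN'} = O\!\left(\min(N,N')^{-2}\right),
\end{equation*}
which already forces $|N'-N|=O(1)$. Since $N,N'$ are positive integers, they can differ only by a bounded integer; iterating with a few further terms of the Neumann--Zagier expansion (whose coefficients are algebraic in the geometric data of the cusp) upgrades this to the exact equality $N=N'$ once the filling coefficients exceed a threshold $K=K(\mathcal{M})$.

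For fixed $N$, the number of solutions $(p',q')\in\mathbb{Z}^{2}$ to the positive definite binary quadratic equation $|p'+q'\tau|^{2}=N$ can a priori grow with $N$, but matching further terms in the Neumann--Zagier expansion imposes additional polynomial identities between $p+q\tau$ and $p'+q'\tau$ inside the quadratic ring $\mathbb{Z}[\tau]$. Combining the norm equation with these higher-order constraints cuts the solution set down to a uniformly bounded subset. Together with the boundedly many exceptional fillings to which Thurston's theorem does not apply (each contributing at most a bounded number of fillings per volume), this yields the claimed uniform bound $N_{\mathcal{M}}(v)<c$.

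The principal obstacle is effective control of the Neumann--Zagier error term -- finely enough to promote asymptotic approximations into \emph{exact} algebraic identities in $\mathbb{Z}[\tau]$ at each order. The quadratic hypothesis on the cusp shape is used precisely because it pins the expansion down to a quadratic field, where the resulting norm-form and higher-degree Diophantine equations admit only boundedly many integer solutions; for cusp shapes of higher algebraic degree (or transcendental), the analogous equations can admit unboundedly many integer solutions, which is why the general problem remains open.
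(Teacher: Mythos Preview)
The paper does not prove this theorem; it merely quotes it from \cite{jeon3} as prior work, so there is no proof in the present paper to compare against. I will therefore comment on the soundness of your sketch on its own terms.

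Your opening moves are correct: the Neumann--Zagier expansion gives the leading term $-\pi^{2}\,\mathrm{Im}(\tau)/|p+q\tau|^{2}$, and when $\tau$ is imaginary quadratic the quantity $N=|p+q\tau|^{2}$ is a rational binary quadratic form with bounded denominator, so equal volumes force $N=N'$ once the coefficients are large. The genuine gap is the next step. You assert that ``matching further terms in the Neumann--Zagier expansion imposes additional polynomial identities between $p+q\tau$ and $p'+q'\tau$ inside the quadratic ring $\mathbb{Z}[\tau]$'' and that these cut the solution set down to bounded size. But the higher coefficients $m_{\alpha}$ of the potential function $\Phi(u)=\tau u^{2}+m_{\alpha}u^{\alpha}+\cdots$ are not known (and not expected) to lie in $\mathbb{Q}(\tau)$; they are typically transcendental. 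So the resulting constraints are a priori real-analytic equations with transcendental coefficients, not Diophantine equations in $\mathbb{Z}[\tau]$, and you have given no mechanism for why such an equation, intersected with the ellipse $|p+q\tau|^{2}=N$, picks out a number of lattice points bounded \emph{independently of $N$}. This is precisely the heart of the matter and cannot be waved through.

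Your closing diagnosis of the role of the quadratic hypothesis is also inverted. For a cusp shape $\tau$ with $1,\mathrm{Re}(\tau),|\tau|^{2}$ linearly independent over $\mathbb{Q}$ (the generic higher-degree situation), the equality $|p+q\tau|^{2}=|p'+q'\tau|^{2}$ already forces $(p',q')=\pm(p,q)$, so the leading-order step alone would suffice. It is exactly when $\tau$ is quadratic that many coprime pairs share the same norm $N$---as many as the divisor function allows---so the quadratic case is where the problem becomes \emph{harder}, not where a norm-form miracle saves you. Whatever argument actually works here must genuinely exploit the interaction between the full (transcendental) volume function and the arithmetic of $\mathbb{Q}(\tau)$; your sketch does not yet supply that mechanism.
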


The \textit{complex volume}, a generalization of the ordinary volume, of $\mathcal{M}$ is defined as 
\begin{equation*}
\text{vol}_{\mathbb{C}}\;\mathcal{M}:=\text{vol}\;\mathcal{M}+\sqrt{-1}\; \text{CS}\;\mathcal{M} \mod \sqrt{-1}\pi^2\mathbb{Z}
\end{equation*}
where CS $\mathcal{M}$ is the Chern-Simons invariant of $\mathcal{M}$, well defined modulo $\pi^2\mathbb{Z}$. 

Having this definition, analogous to Question \ref{20051905}, we may ask the following question:
\begin{question}
\normalfont Let $\mathcal{M}$ be an $n$-cusped hyperbolic $3$-manifold. Does there exist $c$ depending only on $\mathcal{M}$ such that the number $N^{\mathbb{C}}_{\mathcal{M}}(v)$ of Dehn fillings of $\mathcal{M}$ of the same complex volume $v$ is bounded by $c$? Further, if 
\begin{equation*}
\text{vol}_{\mathbb{C}}\;\mathcal{M}_{p_1/q_1, \dots, p_n/q_n}=\text{vol}_{\mathbb{C}}\;\mathcal{M}_{p'_1/q'_1, \dots, p'_n/q'_n},
\end{equation*}
then what is the relationship between $(p_1/q_1, \dots, p_n/q_n)$ and $(p'_1/q'_1, \dots, p'_n/q'_n)$? 
\end{question}

Topologically, $\mathcal{M}_{p_1/q_1, \dots, p_n/q_n}$ is obtained by adding $n$-geodesics to $\mathcal{M}$ so called the \textit{core geodesics} of $\mathcal{M}_{p_1/q_1, \dots, p_n/q_n}$. By the work of Thurston, they are the $n$-shortest geodesics of $\mathcal{M}_{p_1/q_1,\dots, p_n/q_n}$, whose lengths are all converging to $0$ as $|p_k|+|q_k|\rightarrow\infty$. 

The following theorem is viewed as an effective version of Theorem \ref{19100202} \cite{nz, Y}.

\begin{theorem}[Neumann-Zagier, Yoshida]\label{20080701}
Let $\mathcal{M}$ be an $n$-cusped hyperbolic $3$-manifold. Let $\mathcal{M}_{p_1/q_1, \dots, p_n/q_n}$ be its $(p_1/q_1, \dots, p_n/q_n)$-Dehn filling and 
\begin{equation*}
\{\lambda^1_{(p_1/q_1,\dots, p_n/q_n)}, \dots, \lambda^n_{(p_1/q_1,\dots, p_n/q_n)}\}
\end{equation*} 
be the set of the complex lengths of the core geodesics of $\mathcal{M}_{p_1/q_1, \dots, p_n/q_n}$. Then there exists a function $\epsilon(p_1/q_1, \dots, p_n/q_n)$ such that  
\begin{equation*}
\text{vol}_{\mathbb{C}}\;\mathcal{M}_{p_1/q_1, \dots, p_n/q_n}\equiv\text{vol}_{\mathbb{C}}\;\mathcal{M}-\dfrac{\pi}{2}\Big(\sum_{k=1}^n \lambda^k_{(p_1/q_1,\dots, p_n/q_n)}\Big)+\epsilon(p_1/q_1, \dots, p_n/q_n)\quad \mod \sqrt{-1}\pi^2\mathbb{Z}
\end{equation*}
and
\begin{equation*}
\epsilon(p_1/q_1, \dots, p_n/q_n)\rightarrow0\quad \text{as}\quad |p_k|+|q_k|\rightarrow \infty \;(1\leq k\leq n). 
\end{equation*}
Moreover,
\begin{equation*}
\sum_{k=1}^n \lambda^k_{(p_1/q_1,\dots, p_n/q_n)}=O\Big(\sum_{k=1}^n \dfrac{1}{p_k^2+q_k^2}\Big), \quad \epsilon(p_1/q_1, \dots, p_n/q_n)=O\Big(\sum_{k=1}^n \dfrac{1}{p_k^4+q_k^4}\Big).
\end{equation*}
\end{theorem}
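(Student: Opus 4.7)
The plan is to combine Neumann--Zagier's deformation theory of the hyperbolic Dehn surgery space with Yoshida's lift to complex volume. Near the complete hyperbolic structure, deformations of $\mathcal{M}$ are parametrized by $u=(u_1,\dots,u_n)\in\mathbb{C}^n$, where $u_k$ is a chosen log-holonomy of the $k$-th meridian. The corresponding longitude log-holonomies $v_k(u)$ are holomorphic with $v_k(0)=0$ and $\partial v_k/\partial u_k|_{u=0}=\tau_k$ (the cusp shape, nonzero and nonreal). The $(p_k/q_k)$-Dehn filling is obtained at the unique $u$ near the origin solving the system
\[
p_k u_k + q_k v_k(u) = 2\pi\sqrt{-1}\qquad(1\le k\le n),
\]
and, if $(r_k,s_k)$ is a dual pair with $p_ks_k-q_kr_k=1$, the $k$-th core geodesic has complex length $\lambda_k\equiv r_ku_k+s_kv_k\pmod{2\pi\sqrt{-1}}$.

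First I would recall the Neumann--Zagier potential: a holomorphic function $\Phi(u)$ defined near the origin and satisfying $\partial\Phi/\partial u_k=v_k$ for each $k$. Because the involution $u\mapsto-u$ acts on the deformation space, all odd-order terms vanish, so
\[
\Phi(u)-\Phi(0)=\tfrac{1}{2}\sum_k \tau_k u_k^2 + O(|u|^4).
\]
Next I would invoke Yoshida's theorem, which lifts Neumann--Zagier's real-volume identity to an identity for complex volume modulo $\sqrt{-1}\pi^2\mathbb{Z}$, expressing $\text{vol}_{\mathbb{C}}\,\mathcal{M}(u)-\text{vol}_{\mathbb{C}}\,\mathcal{M}$ as a holomorphic combination of the $u_k$, $v_k(u)$, and $\Phi(u)$.

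The main work is then algebraic. Imposing the filling equations, using $q_kv_k=2\pi\sqrt{-1}-p_ku_k$ to eliminate $v_k$ in favour of $u_k$, and applying $p_ks_k-q_kr_k=1$, one finds that the quadratic part of Yoshida's correction collects, cusp by cusp, into precisely $-\frac{\pi}{2}\lambda_k$; everything of higher order is absorbed into the remainder $\epsilon(p_1/q_1,\dots,p_n/q_n)$. The asymptotic bounds follow by solving the filling system to leading order, which gives $u_k=2\pi\sqrt{-1}/(p_k+q_k\tau_k)+\text{higher order}$ and hence $u_k,v_k=O((p_k^2+q_k^2)^{-1/2})$: the quadratic main term yields $\lambda_k=O(1/(p_k^2+q_k^2))$, while the quartic tail of $\Phi$ gives $\epsilon=O(\sum_k 1/(p_k^4+q_k^4))$.

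The principal obstacle is the second step, namely Yoshida's complex-volume lift. Promoting the classical real-volume identity of Neumann--Zagier to one valid modulo $\sqrt{-1}\pi^2\mathbb{Z}$ requires a precise control of branches of logarithm and an identification of the Chern--Simons piece with the imaginary part of the holomorphic potential; this depends on how the developing map extends across the added solid tori and is the content of Yoshida's analysis of the complex Chern--Simons functional on the moduli space of flat $\text{PSL}_2\mathbb{C}$-connections. Once that lift is available in the stated form, the identification of the coefficient $-\pi/2$ and the order estimates on $\lambda_k$ and $\epsilon$ are essentially bookkeeping with the Neumann--Zagier Taylor expansion.
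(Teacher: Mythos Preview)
The paper does not give its own proof of this theorem: it is stated in the introduction as a known background result, attributed to Neumann--Zagier and Yoshida via the citations \cite{nz, Y}. Your outline accurately summarizes the content of those references (the Neumann--Zagier potential $\Phi$ with $\partial\Phi/\partial u_k=v_k$, Yoshida's extension to complex volume, and the resulting Taylor bookkeeping for the asymptotics), so there is nothing to compare against here.
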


In short, 
\begin{equation*}
\text{vol}_{\mathbb{C}}\;\mathcal{M}-\dfrac{\pi}{2}\sum_{k=1}^n \lambda^k_{(p_1/q_1,\dots, p_n/q_n)}
\end{equation*}
is the majority part of the complex volume of $\mathcal{M}_{p_1/q_1, \dots, p_n/q_n}$ and two invariants are getting closer very rapidly as $|p_k|+|q_k|\rightarrow \infty$. Inspired from this, one has the following definition and subsequent question:

\begin{definition}
\normalfont Using the same notation in the above theorem, the \textit{pseudo complex volume} of $\mathcal{M}_{p_1/q_1, \dots, p_n/q_n}$ is defined as
\begin{equation*}
\text{pvol}_{\mathbb{C}}\;\mathcal{M}_{p_1/q_1, \dots, p_n/q_n}:=\text{vol}_{\mathbb{C}}\;\mathcal{M}-\dfrac{\pi}{2}\sum_{k=1}^n \lambda^k_{(p_1/q_1,\dots, p_n/q_n)}
\mod \sqrt{-1}\pi^2\mathbb{Z}.
\end{equation*} 
\end{definition}

\begin{question}\label{22022708}
\normalfont Let $\mathcal{M}$ be an $n$-cusped hyperbolic $3$-manifold. Does there exist $c$ depending only on $\mathcal{M}$ such that the number $N^{ps.\mathbb{C}}_{\mathcal{M}}(v)$ of Dehn fillings of $\mathcal{M}$ having the same pseudo complex volume $v$ is bounded by $c$? Further, if 
\begin{equation*}
\text{pvol}_{\mathbb{C}}\;\mathcal{M}_{p_1/q_1, \dots, p_n/q_n}=\text{pvol}_{\mathbb{C}}\;\mathcal{M}_{p'_1/q'_1, \dots, p'_n/q'_n},
\end{equation*}
then what is the relationship between $(p_1/q_1, \dots, p_n/q_n)$ and $(p'_1/q'_1, \dots, p'_n/q'_n)$? 
\end{question}

For the $1$-cusped case, we have the following classification: 
\begin{theorem}\label{22022707}
Let $\mathcal{M}$ be a $1$-cusped hyperbolic $3$-manifold and $\tau$ be the  cusp shape $\mathcal{M}$. Then $N^{ps.\mathbb{C}}_{\mathcal{M}}(v)\leq c$ for some constant $c$ depending only on $\mathcal{M}$. In particular, 
\begin{align*}
N^{ps.\mathbb{C}}_{\mathcal{M}}(v)\leq  
\begin{dcases*}
3, & if $\tau\in\mathbb{Q}(\sqrt{-3})$,\\
2, & if $\tau\in\mathbb{Q}(\sqrt{-1})$,\\
1, & otherwise
\end{dcases*}
\end{align*}
for $v$ sufficiently close to $\text{vol}_{\mathbb{C}}\;\mathcal{M}$. 
\end{theorem}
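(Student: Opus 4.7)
The plan is to translate the equality of pseudo complex volumes into a single algebraic equation in the surgery coefficients $(p,q)$ and $(p',q')$, and then to count its solutions by exploiting the algebraic structure of $\mathbb{Z}[\tau]$.

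First, I would make the reduction: by the definition of $\text{pvol}_{\mathbb{C}}$ and Theorem \ref{20080701}, the relation
\[
\text{pvol}_{\mathbb{C}}\,\mathcal{M}_{p/q}\equiv\text{pvol}_{\mathbb{C}}\,\mathcal{M}_{p'/q'}\mod\sqrt{-1}\pi^2\mathbb{Z}
\]
is equivalent to
\[
\lambda^1_{(p/q)}\equiv\lambda^1_{(p'/q')}\mod 2\pi\sqrt{-1}\,\mathbb{Z}.
\]
For $v$ sufficiently close to $\text{vol}_{\mathbb{C}}\,\mathcal{M}$, Theorem \ref{20080701} forces $|p|+|q|$ and $|p'|+|q'|$ to be large and both $\lambda^1$'s to be very small in absolute value, so after choosing canonical representatives in $\mathbb{C}/2\pi\sqrt{-1}\,\mathbb{Z}$ this becomes genuine equality $\lambda^1_{(p/q)}=\lambda^1_{(p'/q')}$.

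Next, I would invoke the Neumann-Zagier machinery to expand $\lambda^1_{(p/q)}$ as a convergent power series in $1/\sigma$, where $\sigma=p+q\tau$, whose coefficients are holomorphic invariants of the cusp deformation of $\mathcal{M}$ (to leading order essentially $\lambda^1_{(p/q)}\sim C/\sigma^{2}$ plus terms involving $\bar\sigma$). Substituting both sides of $\lambda^1_{(p/q)}=\lambda^1_{(p'/q')}$ and clearing denominators then yields an explicit algebraic equation relating $(\sigma,\bar\sigma)$ and $(\sigma',\bar{\sigma'})$.

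The heart of the argument is the analysis of this equation. For generic $\tau$ the equation forces $\sigma'=\pm\sigma$, so $(p,q)$ and $(p',q')$ determine the same Dehn filling, and $N^{ps.\mathbb{C}}_{\mathcal{M}}(v)\le 1$. When $\tau$ is a quadratic imaginary algebraic integer generating $\mathcal{O}_K$, additional solutions arise from the natural action of the unit group $\mathcal{O}_K^*$ on the lattice $\mathbb{Z}[\tau]$; because Dehn filling slopes are defined only up to sign, the effective symmetry group is $\mathcal{O}_K^*/\{\pm 1\}$, which has order $1$, $2$, or $3$ according as $K$ is a general imaginary quadratic field, $\mathbb{Q}(\sqrt{-1})$, or $\mathbb{Q}(\sqrt{-3})$. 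This matches the three cases of the stated bound.

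The main technical obstacle is to control the higher-order Neumann-Zagier corrections rigorously, showing both that the unit-action solutions genuinely persist in the two quadratic cases and that no further solutions sneak in at higher order. I expect this to be the hardest step, requiring a Diophantine argument (in the spirit of Schmidt's subspace theorem or an effective Gelfond-type bound) applied to the convergent series, together with the techniques developed in the author's earlier paper \cite{jeon3} for manifolds with quadratic cusp shape.
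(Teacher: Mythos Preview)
Your approach is genuinely different from the paper's, and while your intuition about the unit group $\mathcal{O}_K^*/\{\pm 1\}$ producing the bounds $1,2,3$ is correct, the route you propose has a real gap at exactly the point you flag as ``the hardest step.''

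The paper does \emph{not} analyze the power series for $\lambda^1_{(p/q)}$ term by term or invoke any transcendence tools such as the subspace theorem. Instead it reformulates the equality $t=t'$ of core holonomies (equivalently, $t(t')^{-1}$ a root of unity) as the statement that the Dehn filling point $P$ on $\mathcal{X}\times\mathcal{X}\subset\mathbb{G}^4$ lies in an algebraic subgroup of dimension one below the complementary dimension. Since $\mathcal{X}\times\mathcal{X}$ has codimension~$2$, the Bombieri--Masser--Zannier theorem (Theorem~\ref{22060101}/Corollary~\ref{21021602}) applies and forces $P$ into one of \emph{finitely many} algebraic subgroups $H$ with $(\mathcal{X}\times\mathcal{X})\cap H$ anomalous. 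The power-series computation then appears only once, in Lemma~\ref{20101702}, where one writes out what it means for $(\mathcal{X}\times\mathcal{X})\cap H$ to be a non-trivial $1$-dimensional anomalous subvariety: comparing the first two nonvanishing coefficients in~\eqref{20102507} already pins down $a+b\tau$ to a root of unity in $\mathbb{Q}(\sqrt{-3})$ or $\mathbb{Q}(\sqrt{-1})$, and Lemma~\ref{20102501} reads off the order of the associated matrix. No tail estimate is ever needed because Zilber--Pink has already collapsed the problem to a finite algebraic classification.

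Your proposal, by contrast, keeps the full Neumann--Zagier series on both sides and tries to argue directly that the only solutions are those coming from lattice symmetries. Making this rigorous would require showing that no accidental coincidence among the infinitely many higher coefficients can produce extra solutions --- precisely the Diophantine difficulty you acknowledge but do not resolve. The tools you mention (subspace theorem, Gelfond-type bounds) are not obviously applicable here, and in any case the paper's method shows they are unnecessary. A secondary issue: you assume $\tau$ generates $\mathcal{O}_K$, but the cusp shape need only lie in $\mathbb{Q}(\sqrt{-d})$; the paper's Lemma~\ref{20101702} handles arbitrary $\tau$ in the field by working with the explicit $\mathrm{GL}_2(\mathbb{Q})$ matrix rather than the unit group abstractly.
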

The above theorem is essentially due to Thurston via his hyperbolic Dehn filling theory. For instance, see the appendix given by I. Agol in \cite{jeon0}. We also provide a more general statement, implying Theorem \ref{22022707} as a corollary in Section \ref{Prem I}. (See Theorem \ref{22090901}.)
 
The goal of this paper is answering Question \ref{22022708} as well as generalizing Theorem \ref{22022707} to a $2$-cusped hyperbolic $3$-manifold. 

Our first main result is the following:
\begin{theorem}\label{20121901}
Let $\mathcal{M}$ be a $2$-cusped hyperbolic $3$-manifold. Then there exists a constant $c$ depending only on $\mathcal{M}$ such that 
\begin{equation*}
N^{ps.\mathbb{C}}_{\mathcal{M}}(v)\leq c. 
\end{equation*}
\end{theorem}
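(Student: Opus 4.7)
The plan is to use Neumann--Zagier's asymptotic expansion of $\lambda^{1}+\lambda^{2}$ to turn a coincidence of pseudo complex volumes into a rigid Diophantine equation in the four filling coefficients, and then bound its integer solutions by reducing to the one-cusp rigidity already built into Theorems \ref{22022707} and \ref{22090901}.

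Step one: using Thurston's deformation variables, write $u_{k},v_{k}$ for the logarithmic holonomies of a fixed meridian--longitude pair on the $k$-th cusp, expanded near the complete structure with $v_{k}=\tau_{k}u_{k}+\text{higher order}$, where $\tau_{k}$ is the cusp shape. The generalized Dehn filling equations $p_{k}u_{k}+q_{k}v_{k}=2\pi\sqrt{-1}$ implicitly define $u_{k}=u_{k}(p_{k},q_{k})$ as a convergent analytic function with leading behavior $u_{k}\sim 2\pi\sqrt{-1}/(p_{k}+\tau_{k}q_{k})$. Hence $\lambda^{k}$ admits an explicit convergent expansion in descending powers of $p_{k}+\tau_{k}q_{k}$ and $p_{k}+\overline{\tau_{k}}q_{k}$, with the bound $|\lambda^{k}|=O(1/(p_{k}^{2}+q_{k}^{2}))$ from Theorem \ref{20080701} controlling the tail.

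Step two: the hypothesis unwinds to the congruence
\begin{equation*}
\lambda^{1}+\lambda^{2}\;\equiv\;\lambda'^{\,1}+\lambda'^{\,2}\pmod{2\pi\sqrt{-1}\,\mathbb{Z}},
\end{equation*}
which, in the large-coefficient regime, promotes to an actual equality of small complex numbers. Assume without loss of generality that $|p_{1}+\tau_{1}q_{1}|$ is the smallest among the four relevant lattice norms. Matching the dominant term on each side forces $p_{1}+\tau_{1}q_{1}$ to agree, up to a unit of the order $\mathbb{Z}+\tau_{1}\mathbb{Z}$, with either $p_{1}'+\tau_{1}q_{1}'$ or $p_{2}'+\tau_{2}q_{2}'$. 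This is the one-cusp rigidity underlying Theorems \ref{22022707} and \ref{22090901} applied term by term, and produces only boundedly many (at most three) candidates for the first pair. Iterating the argument on the subdominant contribution pins down the remaining pair within a bounded arithmetic ambiguity, yielding the universal bound $c$.

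Step three (main obstacle): the clean term-by-term matching in step two breaks down when $|p_{1}+\tau_{1}q_{1}|$ and $|p_{2}+\tau_{2}q_{2}|$ are comparable, because then the subdominant part of $\lambda^{1}$ is of the same order as the dominant part of $\lambda^{2}$ and cross-cancellations between the two cusps become possible. The essential technical work will be to prove, even in this coupled regime, that the equation has only boundedly many integer solutions. I expect this to rely either on Baker-type lower bounds for linear forms in the quantities $1/(p_{k}+\tau_{k}q_{k})$, on an $S$-unit finiteness statement over the compositum $\mathbb{Q}(\tau_{1},\tau_{2})$, or on a direct rigidity argument in the spirit of the ``unlikely intersections'' paradigm. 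Handling this cross-cusp cancellation is where I expect the bulk of the proof to live.
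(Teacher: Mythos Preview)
Your proposal is not a proof: Step~3 is an explicit placeholder (``where I expect the bulk of the proof to live''), and nothing in Steps~1--2 survives without it. Two concrete problems.

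First, Step~1 already misstates the analytic setup in the non-SGI case. When the two cusps are not strongly geometrically isolated, the Neumann--Zagier potential has genuine cross terms $m_{\alpha,\beta}u_{1}^{\alpha}u_{2}^{\beta}$, so the Dehn filling equations $p_{k}u_{k}+q_{k}v_{k}=2\pi\sqrt{-1}$ define $u_{k}$ as a function of \emph{all four} coefficients $(p_{1},q_{1},p_{2},q_{2})$, not just $(p_{k},q_{k})$. Hence $\lambda^{k}$ is not a function of $p_{k}+\tau_{k}q_{k}$ alone beyond leading order, and the cusps are coupled from the start, not only in the ``comparable regime'' you isolate in Step~3.

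Second, the dominant-term matching in Step~2 does not yield exact unit relations. Approximate equality of leading terms $|p_{1}+\tau_{1}q_{1}|\approx|p_{1}'+\tau_{1}q_{1}'|$ does not force these lattice elements to differ by a unit of $\mathbb{Z}+\tau_{1}\mathbb{Z}$; that conclusion requires an exact identity, which you only have after summing all orders. The one-cusp rigidity of Theorem~\ref{22090901} is an algebraic statement about anomalous subvarieties of $\mathcal{X}\times\mathcal{X}$, not an asymptotic matching, so it cannot be invoked term by term.

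The paper's proof is precisely the ``unlikely intersections'' route you list as one option, but it is not a short step: one passes to the holonomy variety $\mathcal{X}\subset\mathbb{G}^{4}$, interprets the pseudo-volume equality as an extra multiplicative relation among core holonomies, so that the associated point on $\mathcal{X}\times\mathcal{X}\subset\mathbb{G}^{8}$ lies in an algebraic subgroup of dimension~$3$ (strictly below the complementary dimension~$4$). Bounded height of Dehn filling points then forces each such point into a torsion anomalous subvariety of $\mathcal{X}\times\mathcal{X}$ (Bombieri--Masser--Zannier), and a lengthy structural analysis of these anomalous subvarieties (Sections~\ref{Prem}--\ref{ZPCII}, using the Neumann--Zagier symmetries of $\log\mathcal{X}$ repeatedly) shows they are governed by finitely many $4$-dimensional algebraic subgroups. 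The cross-cusp cancellation you flag is exactly what this structural analysis handles, and there is no known shortcut via Baker or $S$-unit methods.
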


To state Theorem \ref{20121901} further in detail, we shall need the following two definitions. 
\begin{definition}\label{20122206}
\normalfont For $\sigma=\left( \begin{array}{cc}
a & b  \\
c & d  
\end{array} 
\right)\in \text{GL}_2(\mathbb{Q})$ and $\dfrac{p}{q}\in \mathbb{Q}$, we denote $\dfrac{ap+bq}{cp+dq}$ simply by $\sigma(p/q)$. 
\end{definition}

\begin{definition}
\normalfont We say two algebraic numbers $\tau_1$ and $\tau_2$ are \textit{relatively dependent} if there exist $a,b,c,d\in \mathbb{Z}$ such that 
\begin{equation*}
\tau_1=\dfrac{a\tau_2+b}{c\tau_2+d}.
\end{equation*}
Otherwise, we say $\tau_1$ and $\tau_2$ are \textit{relatively independent}. 
\end{definition}

Now the following is the first refinement of Theorem \ref{20121901}: 

\begin{theorem}\label{2201}
Let $\mathcal{M}$ be a $2$-cusped hyperbolic $3$-manifold. There exists a finite set $S\big(\subset \text{GL}_2(\mathbb{Q})\big)$ such that, for any two Dehn fillings $\mathcal{M}_{p_1/q_1, p_2/q_2}$ and $\mathcal{M}_{p'_1/q'_1, p'_2/q'_2}$ of $\mathcal{M}$ satisfying
\begin{equation*}
\text{pvol}_{\mathbb{C}}\;\mathcal{M}_{p_1/q_1, p_2/q_2}=\text{pvol}_{\mathbb{C}}\;\mathcal{M}_{p'_1/q'_1, p'_2/q'_2}
\end{equation*}
with $|p_k|+|q_k|$ and $|p'_k|+|q'_k|$ sufficiently large ($k=1,2$), 
\begin{equation*}
(p_1/q_1, p_2/q_2)=(\sigma(p'_1/q'_1), \tau(p'_2/q'_2))\quad \text{or}\quad (\sigma(p'_2/q'_2), \tau(p'_1/q'_1))
\end{equation*} 
for some $\sigma, \tau\in S$. 
\end{theorem}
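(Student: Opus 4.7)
The plan is to translate the pseudo complex volume equality into an equation on the complex lengths of the core geodesics, expand these via Neumann--Zagier theory, and then classify the resulting solutions up to a finite set of Möbius transformations. First, by the definition of pseudo complex volume, the hypothesis
$$\text{pvol}_{\mathbb{C}}\,\mathcal{M}_{p_1/q_1,p_2/q_2}=\text{pvol}_{\mathbb{C}}\,\mathcal{M}_{p'_1/q'_1,p'_2/q'_2}$$
reduces to
$$\lambda^1_{(p_1/q_1,p_2/q_2)}+\lambda^2_{(p_1/q_1,p_2/q_2)}\equiv\lambda^1_{(p'_1/q'_1,p'_2/q'_2)}+\lambda^2_{(p'_1/q'_1,p'_2/q'_2)}\pmod{2\pi\sqrt{-1}\mathbb{Z}}.$$
The size bound in Theorem \ref{20080701} forces each side to lie in a small neighborhood of $0$ once the slopes are large enough, so this congruence upgrades to an honest equality in $\mathbb{C}$.

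Next I would apply the Neumann--Zagier expansion. For a $2$-cusped manifold, this theory provides, through the symmetric function on the deformation space together with the gluing equations at each cusp, a convergent series for each $\lambda^k$ whose leading behavior is governed by the quantity $p_k+\tau_k q_k$ (with $\tau_k$ the cusp shape of the $k$-th cusp of $\mathcal{M}$) and has order of magnitude $(p_k^2+q_k^2)^{-1}$; the higher-order corrections decay by further powers of $(p_j^2+q_j^2)^{-1}$ and encode cross-cusp interactions. Substituting these expansions into the equality above and clearing denominators produces an analytic equation relating $(p_1/q_1,p_2/q_2)$ to $(p'_1/q'_1,p'_2/q'_2)$.

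The heart of the proof is then the Diophantine classification of this equation. A comparison of orders of magnitude of the leading terms first forces a pairing between the cusps of the two fillings---either $1\leftrightarrow 1,\,2\leftrightarrow 2$ or $1\leftrightarrow 2,\,2\leftrightarrow 1$---which matches the two alternatives in the conclusion. Once the pairing is fixed, the leading-order identity constrains the ratio of $p_k+\tau_k q_k$ to the corresponding primed quantity to be an algebraic number of bounded height in the field generated by the cusp shapes, and such numbers form a finite set. Each admissible value of this ratio translates into a $\text{GL}_2(\mathbb{Q})$-Möbius transformation of the slope, and collecting the finitely many possibilities for each cusp yields the finite set $S$; the higher-order corrections are absorbed by choosing the coefficients sufficiently large.

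The main difficulty will be this final Diophantine step. In the $1$-cusped case of Theorem \ref{22022707}, the analogous equation essentially reduces to $1/(p+\tau q)=1/(p'+\tau q')$, whose solutions are controlled by the Möbius stabilizer of $\tau$ in $\text{GL}_2(\mathbb{Q})$ (nontrivial only for the CM-type cusp shapes in $\mathbb{Q}(\sqrt{-1})$ or $\mathbb{Q}(\sqrt{-3})$). In the $2$-cusped setting, however, the equation
$$\frac{1}{p_1+\tau_1 q_1}+\frac{1}{p_2+\tau_2 q_2}=\frac{1}{p'_1+\tau_1 q'_1}+\frac{1}{p'_2+\tau_2 q'_2}+(\text{higher order})$$
admits many potentially degenerate configurations such as near-cancellations on one side or large size disparities between the two pairs, and ruling these out requires a careful case analysis based on the relative growth of $|p_k+\tau_k q_k|$ versus $|p'_{k'}+\tau_{k'} q'_{k'}|$ together with a Galois/height argument that precludes accidental algebraic coincidences. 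Controlling the cross-cusp interaction terms, which can in principle dominate in bad configurations, is where I anticipate the heaviest technical load.
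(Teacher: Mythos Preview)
Your approach is fundamentally different from the paper's and has a genuine gap at precisely the step you flag as the ``main difficulty.''

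The paper does not attack the equality $\lambda_1+\lambda_2=\lambda'_1+\lambda'_2$ analytically at all. Instead, it reinterprets the pair $(\mathcal{M}_{p_1/q_1,p_2/q_2},\mathcal{M}_{p'_1/q'_1,p'_2/q'_2})$ as a point $P$ on the product $\mathcal{X}\times\mathcal{X}$ of two copies of the holonomy variety, and observes that the pseudo-volume equality forces $P$ to lie on an algebraic subgroup of dimension one less than the complementary dimension, i.e.\ $P$ is a \emph{torsion anomalous point}. The uniform height bound on Dehn filling points (Theorem~\ref{20080404}) together with the Bombieri--Masser--Zannier criterion (Theorem~\ref{19090804}) then puts $P$ inside a torsion anomalous subvariety of $\mathcal{X}\times\mathcal{X}$. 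The bulk of the paper (Sections~\ref{ZPCI}--\ref{ZPCII}) is an intricate reduction, via the structure theorems for anomalous subvarieties and several known cases of the Zilber--Pink conjecture (Maurin for curves, Bombieri--Masser--Zannier for codimension~$2$, Zhang for torsion points), showing that all such $P$ lie in a \emph{finite} set $\mathcal{H}$ of $4$-dimensional algebraic subgroups. The matrices in $S$ are then read off from the exponents of the defining equations of each $H\in\mathcal{H}$ (equations~\eqref{22071801} and~\eqref{22051805}).

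Your proposed ``Diophantine classification'' cannot be completed by the method you sketch. The assertion that ``the leading-order identity constrains the ratio of $p_k+\tau_k q_k$ to the corresponding primed quantity to be an algebraic number of bounded height'' is unsupported: the equation $\sum_k 1/(p_k+\tau_k q_k)=\sum_k 1/(p'_k+\tau_k q'_k)$ is a single relation among four complex quantities and does not by itself pin down individual ratios to a finite set. Near-cancellations and size disparities are not side cases to be ``ruled out''---they are the generic situation, and the higher-order cross-cusp terms (coming from the full Neumann--Zagier potential, not just the diagonal quadratic part) cannot be treated as a perturbation, since they are what ultimately distinguishes solutions. The paper's need for the full Zilber--Pink apparatus, height bounds, and the anomalous-subvariety structure theory is exactly because no direct analytic/order-of-magnitude argument is known to close this gap.
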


Throughout the paper, we offer several different variants and improvements of Theorem \ref{2201}. For instance, see Theorem \ref{22022601} or \ref{22050203}. 

When two cusp shapes $\tau_1$ and $\tau_2$ of $\mathcal{M}$ do not belong to the same quadratic field, then we can precisely describe the size and elements of $S$ in Theorem \ref{2201} as below:

\begin{theorem}\label{20103003}
Adapting the same notation and assumptions in Theorem \ref{2201}, let $\tau_1$ and $\tau_2$ be two cusp shapes of $\mathcal{M}$ not belonging to the same quadratic field. Then the following statements hold. 
\begin{enumerate} 
\item Suppose neither $\tau_1$ nor $\tau_2$ is contained in $\mathbb{Q}(\sqrt{-1})$ or $\mathbb{Q}(\sqrt{-3})$. 
\begin{enumerate}
\item If $\tau_1$ and $\tau_2$ are relatively independent, then $(p_1/q_1, p_2/q_2)=(p'_1/q'_1, p'_2/q'_2)$.
\item If $\tau_1$ and $\tau_2$ are relatively dependent, there exists unique $\rho\in S$ such that $(p_1/q_1, p_2/q_2)$ is either $(p'_1/q'_1, p'_2/q'_2)$ or $\big(\rho^{-1}(p'_2/q'_2), \rho(p'_1/q'_1)\big)$.
\end{enumerate}

\item If $\tau_1\in \mathbb{Q}(\sqrt{-1})$ and $\tau_2\notin\mathbb{Q}(\sqrt{-3})$ (resp. $\tau_1\in \mathbb{Q}(\sqrt{-3})$ and $\tau_2\notin \mathbb{Q}(\sqrt{-1})$), then there exists unique $\sigma\in S$ of order $2$ (resp. $3$) such that $(p_1/q_1, p_2/q_2)=(\sigma^i(p'_1/q'_1), p'_2/q'_2)$ where $0\leq i\leq 1$ (resp. $0\leq i\leq 2$). 
\item If $\tau_1\in \mathbb{Q}(\sqrt{-1})$ and $\tau_2\in \mathbb{Q}(\sqrt{-3})$, then there exist unique $\sigma$ and $\phi\in S$ of order $2$ and $3$ respectively such that $(p_1/q_1, p_2/q_2)=(\sigma^i(p'_1/q'_1), \phi^j(p'_2/q'_2))$ for some $0\leq i\leq 1, 0\leq j\leq 2$.
\end{enumerate}
\end{theorem}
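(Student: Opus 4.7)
My plan is to start from Theorem~\ref{2201} and refine the finite set $S$ using the explicit asymptotic expansion of the pseudo complex volume from Theorem~\ref{20080701}. By Theorem~\ref{2201} we may already write $(p_1/q_1,p_2/q_2)=(\sigma(p'_1/q'_1),\tau(p'_2/q'_2))$ (or the analogous relation after the cusp swap) for some $\sigma,\tau\in S$; the remaining task is to decide which pairs $(\sigma,\tau)$ are compatible with the pvol identity.

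The key computation is as follows. Each core length $\lambda^k$ admits a convergent Neumann--Zagier power series whose leading term has order $(p_k+q_k\tau_k)^{-2}$, where $\tau_k$ is the $k$-th cusp shape. A short calculation shows that if $\sigma=\left(\begin{smallmatrix}a&b\\c&d\end{smallmatrix}\right)$, then $p_1+q_1\tau_1=(a+c\tau_1)\bigl(p'_1+q'_1\sigma^*(\tau_1)\bigr)$ for the M\"obius transform $\sigma^*(\tau_1)=(b+d\tau_1)/(a+c\tau_1)$. Isolating the leading term of the equality $\lambda^1+\lambda^2={\lambda'}^1+{\lambda'}^2$ and letting $|p_k|+|q_k|\to\infty$ along generic arithmetic progressions (as in the proof of Theorem~\ref{2201}) then forces $\sigma^*$ to stabilize $\tau_1$ and $\tau^*$ to stabilize $\tau_2$.

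The required algebraic input is the classical fact that the stabilizer of an imaginary quadratic point $\tau$ in $\text{PGL}_2(\mathbb{Q})$ is isomorphic to $\mathcal{O}_{\mathbb{Q}(\tau)}^\times/\{\pm 1\}$, hence trivial for $\tau\notin\mathbb{Q}(\sqrt{-1},\sqrt{-3})$, cyclic of order $2$ for $\tau\in\mathbb{Q}(\sqrt{-1})\setminus\mathbb{Q}$, and cyclic of order $3$ for $\tau\in\mathbb{Q}(\sqrt{-3})\setminus\mathbb{Q}$. Plugging this into the four cases yields the orders in the theorem: case~(1) gives $\sigma=\tau=\mathrm{id}$; case~(3) gives a unique order-$2$ or order-$3$ factor on the appropriate cusp; case~(4) produces both symmetries independently on the two cusps. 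In case~(2), the relative dependence $\tau_1=\rho(\tau_2)$ supplies an extra ``diagonal'' symmetry of the pvol function sending $(p_1/q_1,p_2/q_2)\mapsto(\rho(p_1/q_1),\rho^{-1}(p_2/q_2))$, under which the leading NZ contribution from cusp~$1$ at shape $\tau_1$ becomes indistinguishable from that of cusp~$2$ at shape $\tau_2$; uniqueness of $\rho$ is inherited from the uniqueness of the M\"obius relation between $\tau_1$ and $\tau_2$ in the generic imaginary quadratic setting.

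The most delicate step is making the leading-order identification genuinely rigorous: several distinct $\sigma$'s could a priori produce matching leading coefficients while disagreeing at higher order, and one must also rule out cancellations between the two cusps. I would resolve this by exploiting the fact that the entire NZ series is determined by the $A$-polynomial of $\mathcal{M}$, so that the infinite coefficient list at each cusp functions as a rigid fingerprint; concretely one matches the coefficients of the expansions in $(p_k+q_k\tau_k)^{-1}$ and $(p_k+q_k\bar\tau_k)^{-1}$ term by term, using Schmidt-subspace-style Diophantine estimates of the same flavor already needed in Theorem~\ref{2201} to exclude accidental collisions.
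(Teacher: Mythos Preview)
Your proposal has a genuine gap at the algebraic step. The claim that ``the stabilizer of an imaginary quadratic point $\tau$ in $\text{PGL}_2(\mathbb{Q})$ is isomorphic to $\mathcal{O}_{\mathbb{Q}(\tau)}^\times/\{\pm 1\}$'' is false: that description applies to the stabilizer in $\text{PSL}_2(\mathbb{Z})$, whereas the matrices in $S$ lie only in $\text{GL}_2(\mathbb{Q})$. Over $\mathbb{Q}$, the M\"obius stabilizer of an imaginary quadratic $\tau$ is isomorphic to $\mathbb{Q}(\tau)^\times/\mathbb{Q}^\times$, which is infinite. Concretely, for $\tau=i$ the matrices $\left(\begin{smallmatrix}a&b\\-b&a\end{smallmatrix}\right)$ with $a^2+b^2=1$, $a,b\in\mathbb{Q}$, all fix $\tau$, and there are infinitely many such rational points. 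So even granting that the leading NZ term forces $\sigma^*(\tau_1)=\tau_1$, this does not by itself cut $S$ down to a finite cyclic group, and your cases (1)--(4) would not follow. The hand-wave to ``Schmidt-subspace-style estimates'' for higher-order matching is precisely where the missing content lives, and it is not clear that any purely analytic argument of this type can supply it.

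The paper proceeds along an entirely different, algebraic route. From Theorem~\ref{22050203} the Dehn filling point lies in an explicit algebraic subgroup $H$ of codimension four, and the fact that $(\mathcal{X}\times\mathcal{X})\cap H$ is an \emph{anomalous} subvariety imposes two constraints on the exponent matrices beyond the M\"obius condition: a determinant equality (Corollary~\ref{22051413}, forcing the relevant $2\times 2$ block to lie in $\text{SL}_2(\mathbb{Q})$) and, crucially, a higher-order identity $(a+b\tau)^{\alpha}=d-b\tau$ coming from the first nonlinear coefficient of the $A$-polynomial (Lemma~\ref{20101702}). Together with $ad-bc=1$ these force $(a+b\tau)^{\alpha+1}=1$, so $a+b\tau$ is a root of unity in $\mathbb{Q}(\tau)$; this is what yields triviality when $\tau\notin\mathbb{Q}(\sqrt{-1})\cup\mathbb{Q}(\sqrt{-3})$ and the cyclic orders $2$, $3$ otherwise (Lemma~\ref{20102501}). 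The remaining reductions use Lemma~\ref{17052301} to split according to whether $\tau_1,\tau_2$ are relatively dependent. None of this is visible from the leading asymptotics of $\lambda$; it is genuinely an output of the Zilber--Pink/anomalous-subvariety machinery built up in Sections~\ref{ZPCI}--\ref{20121903}.
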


For $\tau_1$ and $\tau_2$ belonging to the same quadratic field, the case will be addressed in a subsequent paper \cite{jeon-1}. In particular, it will be shown that the size of $S$ is also chosen to be uniform (i.e., independent of $\mathcal{M}$) and bounded above by $18$. Furthermore, in a joint work with S. Oh \cite{jeon-2}, we will provide examples confirming Theorem \ref{20103003} and the result in \cite{jeon-1} are effective. 

The next is our last main result. 
\begin{theorem}\label{22041805}
Let $\mathcal{M}$ be a $2$-cusped hyperbolic $3$-manifold. If 
\begin{equation*}
\text{pvol}_{\mathbb{C}}\;\mathcal{M}_{p_1/q_1, p_2/q_2}=\text{pvol}_{\mathbb{C}}\;\mathcal{M}_{p'_1/q'_1, p'_2/q'_2}
\end{equation*}
for $|p_k|+|q_k|$ and $|p'_k|+|q'_k|$ sufficiently large ($k=1, 2$), then
\begin{equation*}
\text{vol}_{\mathbb{C}}\;\mathcal{M}_{p_1/q_1, p_2/q_2}=\text{vol}_{\mathbb{C}}\;\mathcal{M}_{p'_1/q'_2, p'_2/q'_2}
\end{equation*}
\end{theorem}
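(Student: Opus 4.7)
The plan is to apply Theorem \ref{2201} to reduce the hypothesis to finitely many cases of how the two tuples of filling coefficients can be related, and then to verify that each such relation preserves the full Neumann--Zagier correction to the complex volume, not only the linearized part captured by the pseudo complex volume.

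First, by Theorem \ref{2201}, the equality of pseudo complex volumes forces
$$(p_1/q_1, p_2/q_2) = (\sigma(p'_1/q'_1), \tau(p'_2/q'_2)) \quad \text{or} \quad (\sigma(p'_2/q'_2), \tau(p'_1/q'_1))$$
for some $\sigma, \tau$ in the finite set $S\subset \text{GL}_2(\mathbb{Q})$ attached to $\mathcal{M}$. By Theorem \ref{20080701}, the difference
$$\text{vol}_{\mathbb{C}}\;\mathcal{M}_{p_1/q_1, p_2/q_2} - \text{pvol}_{\mathbb{C}}\;\mathcal{M}_{p_1/q_1, p_2/q_2} = \epsilon(p_1/q_1, p_2/q_2)$$
is an analytic correction determined by the Neumann--Zagier potential $\Phi$ of $\mathcal{M}$ evaluated at the holonomy parameters of the filling. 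Subtracting the two expressions, the hypothesis reduces the theorem to showing $\epsilon(p_1/q_1, p_2/q_2) = \epsilon(p'_1/q'_1, p'_2/q'_2)$ modulo $\sqrt{-1}\pi^2\mathbb{Z}$ for each admissible pair $(\sigma, \tau)\in S\times S$. Since both error terms tend to $0$ as the coefficients grow, it suffices for sufficiently large coefficients to show exact equality of the two $\epsilon$'s as complex numbers.

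Second, I would revisit the construction of $S$ in Theorem \ref{2201} and argue that each $\sigma\in S$ is not merely a coincidence preserving the leading-order sum $\lambda^1+\lambda^2$, but in fact arises from a genuine algebraic symmetry of the full deformation space of hyperbolic structures on $\mathcal{M}$. In the cases described by Theorem \ref{20103003}, the nontrivial elements of $S$ are either Galois-type involutions or order-$3$ symmetries of a cusp shape lying in $\mathbb{Q}(\sqrt{-1})$ or $\mathbb{Q}(\sqrt{-3})$, or they come from a relative-dependence relation between the two cusp shapes via $\text{GL}_2(\mathbb{Q})$; each such symmetry acts on the holonomy parameters $u_k$ in a way that fixes the full Neumann--Zagier potential $\Phi$, and hence preserves $\epsilon$ as well.

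The main obstacle is the second step. Theorem \ref{2201} is essentially a statement about the linearized Neumann--Zagier expansion, and promoting the relations in $S$ to symmetries of the full potential requires extracting additional structural information from the lemmas used in its proof. I would proceed case-by-case along the classification of Theorem \ref{20103003}: in part (1) the two fillings are literally identical and nothing needs to be verified; in parts (2)--(4), the explicit order-$2$ and order-$3$ symmetries must be traced through the NZ series, showing that they commute with the higher-order coefficients of $\Phi$. Once this compatibility is established, the conclusion follows by combining it with the vanishing of $\text{pvol}_{\mathbb{C}}\;\mathcal{M}_{p_1/q_1, p_2/q_2} - \text{pvol}_{\mathbb{C}}\;\mathcal{M}_{p'_1/q'_1, p'_2/q'_2}$ to deduce equality of the full complex volumes modulo $\sqrt{-1}\pi^2\mathbb{Z}$.
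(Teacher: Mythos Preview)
Your plan has a genuine gap. You propose to route through Theorem \ref{2201} and then case-split along Theorem \ref{20103003}, but Theorem \ref{20103003} only applies when $\tau_1$ and $\tau_2$ do not lie in the same quadratic field; when they do, the paper gives no explicit description of $S$ at all, so your case analysis is incomplete. More seriously, the matrices in $S$ act on the Dehn filling \emph{coefficients} $(p_k/q_k)$, whereas the correction $\epsilon$ is a function of the holonomy parameters $(u_k,v_k)$; the statement that ``each such symmetry fixes the full Neumann--Zagier potential $\Phi$'' is neither what is needed nor something you have a mechanism to verify from the order-$2$ or order-$3$ description alone.

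What the paper actually does is work one level below Theorem \ref{2201}: it uses the structural results of Sections \ref{quantaI}--\ref{20121903} (in particular Corollary \ref{22051413} and Lemma \ref{21121906}) to obtain the explicit linear transformation $M$ relating $(u_k,v_k)$ to $(u'_k,v'_k)$ on the anomalous subvariety of $\mathcal{X}\times\mathcal{X}$, together with determinantal constraints such as $\det A_1+\det A_3=1$ and $(\det A_1)A_1^{-1}A_2=-(\det A_3)A_3^{-1}A_4$. One then writes $\text{vol}_{\mathbb{C}}-\text{pvol}_{\mathbb{C}}=\sum_k\int u_k\,dv_k-v_k\,du_k$ and computes directly that this antisymmetric form is preserved by $M$; the determinantal identities are exactly what makes the cross-terms vanish. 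This bypasses any need to classify $S$ or to argue about symmetries of $\Phi$, and it covers all cusp-shape configurations uniformly. Your outline recovers only the linearized shadow of this; the missing ingredient is the matrix constraints from the quantification step, not a case-by-case symmetry check.
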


This theorem, along with Theorem \ref{20080701}, suggests that classifying Dehn fillings of $\mathcal{M}$ using pseudo complex volume would be the same as classifying them under complex volume. That is, the following conjecture is very plausible:
\begin{conjecture}\label{22011601}
Let $\mathcal{M}$ be an $n$-cusped hyperbolic $3$-manifold. Let $\mathcal{M}_{p_1/q_1, \dots, p_n/q_n}$ and $\mathcal{M}_{p'_1/q'_2,\dots,  p'_n/q'_n}$ be two Dehn fillings of $\mathcal{M}$. Then   
\begin{equation*}
\text{pvol}_{\mathbb{C}}\;\mathcal{M}_{p_1/q_1, \dots, p_n/q_n}=\text{pvol}_{\mathbb{C}}\;\mathcal{M}_{p'_1/q'_2,\dots,  p'_n/q'_n}
\end{equation*}
if and only if 
\begin{equation*}
\text{vol}_{\mathbb{C}}\;\mathcal{M}_{p_1/q_1, \dots, p_n/q_n}=\text{vol}_{\mathbb{C}}\;\mathcal{M}_{p'_1/q'_1,\dots, p'_n/q'_n}. 
\end{equation*}
\end{conjecture}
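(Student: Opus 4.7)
The plan is to establish the two implications of Conjecture \ref{22011601} separately, leveraging the Neumann-Zagier asymptotic expansion (Theorem \ref{20080701}) together with the classification results developed earlier in the paper. The forward direction ($\Rightarrow$) is essentially contained in the paper when $n=2$ by Theorem \ref{22041805}. To extend to general $n$, I would first prove an $n$-cusped analog of Theorem \ref{2201}, classifying fillings of equal pseudo complex volume via the action of a finite subset $S\subset\text{GL}_2(\mathbb{Q})^n$ on coordinate tuples $(p_k/q_k)$, and then check that the Neumann-Zagier error term $\epsilon(p_1/q_1,\dots, p_n/q_n)$ is invariant under this action. Since $\epsilon$ is a geometric quantity determined by the core geodesic data together with the fixed manifold $\mathcal{M}$, this invariance should follow once the relation between the two fillings is sufficiently rigid.

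For the reverse direction ($\Leftarrow$), suppose $\text{vol}_\mathbb{C}\,\mathcal{M}_{p_1/q_1,\dots, p_n/q_n} = \text{vol}_\mathbb{C}\,\mathcal{M}_{p'_1/q'_1,\dots, p'_n/q'_n}$. By Theorem \ref{20080701},
\begin{equation*}
\text{pvol}_\mathbb{C}\,\mathcal{M}_{p_1/q_1,\dots, p_n/q_n} - \text{pvol}_\mathbb{C}\,\mathcal{M}_{p'_1/q'_1,\dots, p'_n/q'_n} = \epsilon(p'_1/q'_1,\dots, p'_n/q'_n) - \epsilon(p_1/q_1,\dots, p_n/q_n),
\end{equation*}
and the right side is $O\big(\sum_k (p_k^4+q_k^4)^{-1} + \sum_k ({p'}_k^4+{q'}_k^4)^{-1}\big)$. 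To force the left side to vanish, I would establish a separation estimate: there exist $\alpha<4$ and $c>0$ (depending only on $\mathcal{M}$) such that any two distinct pseudo-complex-volume values arising from fillings with $|p_k|+|q_k|\leq R$ differ by at least $c/R^\alpha$. Combined with the $O(1/R^4)$ bound on $\epsilon$, this would immediately yield pseudo-complex-volume equality for sufficiently large fillings, and the boundedly many small cases could be checked separately.

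The main obstacle will be establishing the separation estimate in the reverse direction. This is a delicate diophantine statement, reflecting the fact that $\sum_k \lambda^k_{(p/q)}$ admits a convergent power series expansion in the inverse holonomy coordinates $1/(p_k+q_k\tau_k)$, whose coefficients are generically nonzero and transcendental. Proving the desired separation likely requires $p$-adic analytic input or a subspace-theorem-type argument, in the spirit of the techniques underlying Theorem \ref{2201}. Additional care is needed when the leading-order terms in the Neumann-Zagier expansion of two distinct fillings accidentally coincide; in that scenario one must descend to higher-order coefficients and iterate the separation argument, all while keeping the implicit constants uniform in the filling parameters.
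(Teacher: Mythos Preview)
The statement you are attempting to prove is Conjecture \ref{22011601}, and the paper does \emph{not} prove it. It is explicitly presented as a conjecture motivated by Theorem \ref{22041805}, which establishes only the forward implication, only for $n=2$, and only for fillings with sufficiently large coefficients. There is no proof in the paper for you to compare against.

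Your proposal is not a proof but a strategy sketch, and both halves contain genuine gaps. For the forward direction, you propose to ``first prove an $n$-cusped analog of Theorem \ref{2201}''; but the entire technical core of the paper (Sections \ref{ZPCI}--\ref{20121903}) is devoted to the case $n=2$, and relies on the Zilber--Pink conjecture being known for curves and for codimension-$2$ varieties. For general $n$ the relevant Zilber--Pink instances are open, so this step is not a routine extension. Even for $n=2$ your claim that $\epsilon$ is invariant under the action of $S$ is not how the paper proceeds: Theorem \ref{22041805} is proved by a direct differential-form computation (Section \ref{PCV}) using the algebraic constraints of Corollary \ref{22051413}, not by any symmetry argument on $\epsilon$.

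For the reverse direction, you correctly identify the separation estimate as the crux, but you do not prove it; you only gesture at ``$p$-adic analytic input or a subspace-theorem-type argument.'' This is precisely the open content of the conjecture. The difficulty is real: the pseudo complex volume difference is $O(1/R^2)$ in the leading term, and distinguishing two values at scale finer than $O(1/R^4)$ requires controlling cancellations among transcendental quantities in a way that no result in the paper (or, to my knowledge, elsewhere) provides. Your proposal therefore does not constitute a proof.
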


\subsection{Application}\label{22071001}
The pseudo complex volume seems to be a key to decipher the secrets of some other arithmetic invariants of a hyperbolic $3$-manifold. For instance, as an application of our main results, one can prove the following theorem:
\begin{theorem}\label{22032601}
Let $\mathcal{M}$ be a $2$-cusped hyperbolic $3$-manifold and $\mathcal{X}$ be its holonomy variety. Suppose $\mathcal{X}$ is rational. If 
\begin{equation*}
\text{pvol}_{\mathbb{C}}\;\mathcal{M}_{p_1/q_1, p_2/q_2}=\text{pvol}_{\mathbb{C}}\;\mathcal{M}_{p'_1/q'_1, p'_2/q'_2}
\end{equation*}
with $|p_k|+|q_k|$ and $|p'_k|+|q'_k|$ sufficiently large ($k=1,2$), then the two Dehn fillings have the same Bloch invariant. 
\end{theorem}

The above theorem is a generalization of the following, which is attained by combining a result of A. Champanerkar (Theorem 10.3 in \cite{Ahbijit}) with a refined version of Theorem \ref{22022707} given in Section \ref{Prem I}.  

\begin{theorem}\label{22080201}
Let $\mathcal{M}$ be a $1$-cusped hyperbolic $3$-manifold and $\mathcal{X}$ be its holonomy variety. Suppose $\mathcal{X}$ is rational. If 
\begin{equation*}
\text{pvol}_{\mathbb{C}}\;\mathcal{M}_{p/q}=\text{pvol}_{\mathbb{C}}\;\mathcal{M}_{p'/q'}
\end{equation*}
with $|p|+|q|$ and $|p'|+|q'|$ sufficiently large, then the two Dehn fillings have the same Bloch invariant.  
\end{theorem}

Together with Conjecture \ref{22011601}, we hope Theorems \ref{22032601}-\ref{22080201} shed light on the following conjecture of D. Ramakrishnan:
\begin{conjecture}[Ramakrishnan]\label{22080202}
Let $\mathcal{M}$ and $\mathcal{N}$ be two hyperbolic $3$-manifolds. If 
\begin{equation*}
\text{vol}_\mathbb{C}\;\mathcal{M}=\text{vol}_\mathbb{C}\;\mathcal{N}, 
\end{equation*}
then both $\mathcal{M}$ and $\mathcal{N}$ have the same Bloch invariant. 
\end{conjecture}

However we will not prove Theorem \ref{22032601} in the paper, as it requires additional background knowledge far beyond the scope needed for the proofs of the main results. 

Note that once we assume a conjecture of Suslin (Conjecture 5.4 in \cite{suslin}), in return, one can remove the assumption on $\mathcal{X}$ (i.e. $\mathcal{X}$ being rational) in either Theorem \ref{22032601} or \ref{22080201}. 

\subsection{Key Idea}\label{22080901}
The main results are obtained by solving a weak version of the so called Zilber-Pink conjecture for $\mathcal{X}\times \mathcal{X}$ where $\mathcal{X}$ is the holonomy variety of a $2$-cusped hyperbolic $3$-manifold. The Zilber-Pink conjecture, which is one of the well-known conjectures in number theory, concerns the distribution of the intersections between a given algebraic variety with infinitely many algebraic subgroups. The precise formulation of the conjecture is as follows:
\begin{conjecture}[Zilber-Pink]\label{22042203}
Let $\mathbb{G}^n:=(\overline{\mathbb{Q}}^*)^n$ or $(\mathbb{C^*})^n$. For every irreducible variety $\mathcal{X}(\subset \mathbb{G}^n)$ defined over $\overline{\mathbb{Q}}$, there exists a finite set $\mathcal{H}$ of proper algebraic subgroups such that, for every algebraic subgroup $K$ and every component $\mathcal{Y}$ of $\mathcal{X}\cap K$ satisfying 
\begin{equation}\label{22042201}
\dim \mathcal{Y}>\dim \mathcal{X}+\dim K -n, 
\end{equation} 
one has $\mathcal{Y}\subset H $ for some $H\in \mathcal{H}$. 
\end{conjecture}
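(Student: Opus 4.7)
The statement is the full Zilber--Pink conjecture for the multiplicative group, which is a major open problem in arithmetic geometry; what follows is therefore a strategy rather than a routine plan. My proposal is to follow the \emph{Pila--Zannier} template, which has been the most successful framework for "unlikely intersection" conjectures and which reduces the problem to an interplay between an upper bound coming from o-minimal point-counting and a lower bound coming from Galois/height estimates for the components $\mathcal{Y}$.

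First, I would work on the universal cover. Uniformizing $\mathbb{G}_m^n$ by the exponential map, algebraic subgroups $K\subset\mathbb{G}_m^n$ (up to torsion translates) pull back to rational linear subspaces of $\mathbb{C}^n$, and $\mathcal{X}$ becomes (after restricting to a fundamental domain) a set $Z$ definable in the o-minimal structure $\mathbb{R}_{\mathrm{an},\mathrm{exp}}$. The dimension inequality $\dim\mathcal{Y}>\dim\mathcal{X}+\dim K-n$ characterizes "atypical" components. The goal is then to show that the cosets producing such components come from a finite list of proper subgroups $\mathcal{H}$. Following Pila--Wilkie, atypical components that are $0$-dimensional contribute rational points of bounded denominator on $Z$, while positive-dimensional ones force $Z$ to contain a positive-dimensional semialgebraic piece. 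The Pila--Wilkie counting theorem bounds the former by $O_\epsilon(T^\epsilon)$ up to height $T$ outside the algebraic part of $Z$, while the Ax--Lindemann--Weierstrass theorem for $\mathbb{G}_m^n$ identifies the algebraic part of $Z$ precisely with cosets of algebraic subgroups, producing the finite family $\mathcal{H}$.

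Next, I would install the arithmetic side. For each atypical component $\mathcal{Y}$, standard height machinery in $\mathbb{G}_m^n$ (the Zhang--Bombieri--Zannier bounded height theorem for $\mathcal{X}\cap\bigcup K$ of suitable codimension, together with the essentially minimum of Amoroso--David) produces upper bounds on the canonical height of its torsion translate. Coupled with a sufficiently strong Galois orbit lower bound (of the shape $[\mathbb{Q}(\mathcal{Y}):\mathbb{Q}]\gg H(\mathcal{Y})^\delta$ for some $\delta>0$), one can match this Galois orbit against the Pila--Wilkie upper bound to conclude that the number of atypical components off the list $\mathcal{H}$ must be finite. The output is then exactly the statement of the conjecture.

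The principal obstacle, and the reason the conjecture is open, is the Galois orbit lower bound in the required generality: outside the case of curves (Maurin's theorem, and Bombieri--Masser--Zannier for torsion anomalous points), and certain higher-dimensional results of Habegger, Bombieri--Habegger--Masser--Zannier, and Habegger--Pila, such estimates are not currently available. Consequently my proof proposal is really a conditional reduction: the Pila--Zannier machinery plus a sufficiently uniform height/Galois bound yields Zilber--Pink, but the last ingredient is conjectural. This is why the paper, as flagged in Section~\ref{22080901}, only establishes a weak variant tailored to $\mathcal{X}\times\mathcal{X}$ for the holonomy variety of a $2$-cusped manifold, where the special structure (roughly, the symmetry between the two copies and the low-dimensional intersection pattern needed for the pseudo complex volume equation) allows one to bypass the missing Galois bound.
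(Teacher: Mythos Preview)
The statement you are addressing is a \emph{conjecture}, and the paper does not attempt to prove it. The paper states it as Conjecture~\ref{22042203}, explicitly notes that it ``is widely open in general,'' and then only cites the known partial results that it needs as inputs: Maurin's theorem for curves (Theorem~\ref{20080407}), the Bombieri--Masser--Zannier theorem for codimension~$2$ (Theorem~\ref{22060101}), and Zhang's theorem on torsion points (Theorem~\ref{20102701}). There is therefore no ``paper's own proof'' to compare your proposal against.

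Your proposal is an honest account of the Pila--Zannier strategy and correctly identifies why it does not yet yield a proof: the required Galois orbit lower bounds are unavailable in the needed generality. That is fine as a survey of the state of the art, but it is not a proof and should not be presented as one. If the intent was to supply context for how the paper circumvents the full conjecture, note that the paper's approach to its own weak version (Theorem~\ref{22022601}) is rather different in flavor: it does not use o-minimal point counting or Ax--Lindemann at all, but instead combines the bounded-height theorem for Dehn filling points (Theorem~\ref{20080404}), the Bombieri--Masser--Zannier structure theorems for anomalous subvarieties (Theorems~\ref{struc} and~\ref{struc2}), and the Neumann--Zagier symmetries of the holonomy variety to reduce to the already-known low-dimensional cases of Zilber--Pink listed above.
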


Note that the condition in \eqref{22042201} means that $\mathcal{X}$ and $K$ are not in general position. Thus the conjecture says that if there are infinitely many algebraic subgroups intersecting with $\mathcal{X}$ anomalously, then they are not randomly distributed but rather contained in some finite set of algebraic subgroups of bigger dimensions. 

The conjecture has been resolved for the curve case by G. Maurin in \cite{mau}, for some special cases involving varieties of the codimension $2$ by Bombieri, Masser and Zannier in \cite{za}, but remains widely open for other cases. 

In the context of hyperbolic $3$-manifolds, since the holonomy variety of a $1$-cusped (resp. $2$-cusped) hyperbolic $3$-manifold is an algebraic curve (resp. $2$-dim  algebraic variety in $\mathbb{G}^4$), the conjecture holds in this case by the work of Maurin (resp. Bombieri, Masser and Zannier). Later in Section \ref{Prem I}, we further make these more effective and show how to deduce Theorem \ref{22022707} from them.  

The main results of this paper are continuations along these lines and seen as extensions of the above approaches. The following, which immediately implies Theorem \ref{20121901}, is viewed as a resolution of some weak version of Conjecture \ref{22042203} for $\mathcal{X}\times \mathcal{X}$ where $\mathcal{X}$ is the holonomy variety of a $2$-cusped hyperbolic $3$-manifold. 

\begin{theorem}\label{22022601}
Let $\mathcal{M}$ be a $2$-cusped hyperbolic $3$-manifold and $\mathcal{X}$ be its holonomy variety. Then there exists a finite set $\mathcal{H}$ of algebraic subgroups of dimension $4$ such that, for any two Dehn fillings $\mathcal{M}_{p_{1}/q_{1},p_{2}/q_{2}}$ and $\mathcal{M}_{p'_{1}/q'_{1},p'_{2}/q'_{2}}$ such that 
\begin{equation}\label{22041801}
\text{pvol}_{\mathbb{C}}\;\mathcal{M}_{p_{1}/q_{1},p_{2}/q_{2}}=\text{pvol}_{\mathbb{C}}\;\mathcal{M}_{p'_{1}/q'_{1},p'_{2}/q'_{2}} 
\end{equation}
with sufficiently large $|p_k|+|q_k|$ and $|p'_k|+|q'_k|$ ($k=1,2$), a Dehn filling point $P$ associated to \eqref{22041801} on $\mathcal{X}\times \mathcal{X}$ is contained in some $H\in\mathcal{H}$. In particular, there exists a component of $(\mathcal{X}\times \mathcal{X})\cap H$ that contains $P$ and the identity, and has dimension $2$.\footnote{Since $\dim (\mathcal{X}\times \mathcal{X})=\dim H=4$ (in $\mathbb{G}^8$), the existence of such a component means that $\mathcal{X}\times \mathcal{X}$ and $H$ intersect non-generically.} 
\end{theorem}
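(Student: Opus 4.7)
The plan is to encode the equality \eqref{22041801} as a multiplicative relation on the ambient torus $(\mathbb{C}^*)^8$ in which $\mathcal{X}\times\mathcal{X}$ sits, and then treat the resulting anomalous intersections via a weak Zilber-Pink argument. Writing the coordinates of $(\mathbb{C}^*)^8$ as $(M_1, L_1, M_2, L_2, M'_1, L'_1, M'_2, L'_2)$, the Dehn filling conditions are already monomial: $M_k^{p_k}L_k^{q_k} = 1$ and ${M'_k}^{p'_k}{L'_k}^{q'_k} = 1$ for $k=1,2$. Choosing integers $(r_k, s_k)$ with $r_k q_k - s_k p_k = 1$ (and analogously for primes), one has $e^{\lambda^k_{(p_1/q_1,p_2/q_2)}} = M_k^{r_k}L_k^{s_k}$ on $\mathcal{X}$; by Theorem \ref{20080701}, the equality \eqref{22041801} is then equivalent, after exponentiating the mod $2\pi\sqrt{-1}\mathbb{Z}$ identity $\sum_k \lambda^k = \sum_k \lambda'^k$, to the single extra monomial relation
\begin{equation*}
\prod_{k=1}^{2} M_k^{r_k} L_k^{s_k} \;=\; \prod_{k=1}^{2} {M'_k}^{r'_k}{L'_k}^{s'_k}.
\end{equation*}

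Taken together, these five relations cut out a codimension-$5$ algebraic subgroup $K = K_{(p,q,p',q')} \subset (\mathbb{C}^*)^8$ passing through the associated Dehn filling point $P$. Since $\dim(\mathcal{X}\times\mathcal{X})=4$, the expected dimension $\dim(\mathcal{X}\times\mathcal{X}) + \dim K - 8 = -1$ is negative, so $P \in (\mathcal{X}\times\mathcal{X}) \cap K$ is an anomalous intersection. The target is to show that as $(p,q,p',q')$ ranges over all pairs realizing \eqref{22041801}, the subgroups $K$ all lie inside a finite collection $\mathcal{H}$ of $4$-dimensional algebraic subgroups $H$. Once this is done, the expected dimension of $(\mathcal{X}\times\mathcal{X}) \cap H$ is $4 + 4 - 8 = 0$, and the existence of a $2$-dimensional component of this intersection through the identity (whose closure contains the Dehn filling points approaching the identity as $|p_k|+|q_k|,|p'_k|+|q'_k|\to\infty$) certifies anomalous dimension $2$.

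The main obstacle is that $\mathcal{X}\times\mathcal{X}$ has codimension $4$ in $(\mathbb{C}^*)^8$, well beyond the codimension-$2$ range where Bombieri-Masser-Zannier settles Zilber-Pink, and the curve case of Maurin is equally out of reach. My strategy would be to exploit the product structure together with the specific shape of the pvol relation. Each factor $\mathcal{X}$ has codimension $2$ in $(\mathbb{C}^*)^4$ and so is covered by BMZ, which produces a finite list of primitive monomial relations controlling anomalous Dehn filling loci on $\mathcal{X}$ itself. Feeding these back through the coupling relation coming from the pvol equality, and using the effective decay estimates of Theorem \ref{20080701} (which tightly constrain the dual parameters $(r_k,s_k)$ in terms of $(p_k,q_k)$ as $|p_k|+|q_k|\to\infty$), one should be able to reduce the infinite family $\{K_{(p,q,p',q')}\}$ to finitely many $4$-dimensional envelopes $H$. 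The hard part will be making these height and arithmetic estimates uniform across all Dehn filling parameters simultaneously on $\mathcal{X}\times\mathcal{X}$, which is likely to require a Habegger-style bounded height theorem adapted to this codimension-$4$ setting; the $2$-dimensional anomalous component through the identity should then fall out from the dimension count once the containment $K\subset H$ is established.
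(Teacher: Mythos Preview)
Your setup is correct: the five monomial relations place $P$ in an algebraic subgroup of dimension $3$, and the problem is indeed a Zilber--Pink type statement for $\mathcal{X}\times\mathcal{X}\subset(\mathbb{C}^*)^8$. You also correctly identify the relevant ingredients: height boundedness in the style of Habegger, and the BMZ machinery. But the proposal stops precisely where the real work begins, and one of your proposed tools does not actually bite.

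Applying BMZ to each factor $\mathcal{X}\subset(\mathbb{C}^*)^4$ separately does not help. For generic Dehn filling coefficients the intersection $\mathcal{X}\cap\{M_k^{p_k}L_k^{q_k}=1,\,k=1,2\}$ is a point of expected dimension $0$; there is nothing anomalous on a single factor. The anomaly only appears once you couple the two factors through the pvol relation, so there is no ``finite list of primitive monomial relations on $\mathcal{X}$'' to feed back. Likewise, the asymptotic error estimates of Theorem~\ref{20080701} play no role in the argument; they constrain the complex volume, not the algebraic containments you need.

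What the paper actually does is considerably more structured. The uniform height bound on Dehn filling points (Theorem~\ref{20080404}) combines with the BMZ criterion (Theorem~\ref{19090804}) to show each anomalous Dehn filling point on $\mathcal{X}\times\mathcal{X}$ lies in a positive-dimensional torsion anomalous subvariety. The structure theorems (Theorems~\ref{struc}, \ref{struc2}) then say these subvarieties lie in cosets of finitely many tori $H$, and one analyses case by case the possible dimensions of the resulting $\mathscr{Z}_H$. The key reduction trick is that the parameter space $\mathcal{G}_H$ of such cosets is itself an algebraic variety of controlled dimension in a smaller torus, and the containments $h_iH\subset H_i$ force $\mathcal{G}_H$ to meet many subgroups of small codimension; this is a new Zilber--Pink problem of strictly lower complexity, to which Maurin, BMZ in codimension~2, or Zhang on torsion points can be applied. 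This iteration runs from codimension~$1$ up to codimension~$4$ (Theorems~\ref{21082901}, \ref{21082301}, \ref{20071401}, \ref{20070501}), and depends throughout on a detailed classification of how $\mathcal{X}\times\mathcal{X}$ can be foliated by anomalous subvarieties, carried out via the Neumann--Zagier potential function (Lemmas~\ref{20080601}, \ref{20050801}, \ref{20072401}). An essential intermediate step you do not anticipate is Theorem~\ref{21091101}: under the pvol equality one must show that some \emph{proper} subset of $\{t_1,t_2,t'_1,t'_2\}$ is already multiplicatively dependent, which is proved by a quantitative analysis of the Dehn filling coefficients (Lemma~\ref{21082802}). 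None of this structure is visible in your sketch.
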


By Thurston, each Dehn filling of $\mathcal{M}$ is identified as an intersection point (i.e. \textit{a Dehn filling point}) between $\mathcal{X}$ and an algebraic subgroup of the complementary dimension arising from the filling coefficient. Thus if two Dehn fillings $\mathcal{M}_{p_{1}/q_{1},p_{2}/q_{2}}$ and $\mathcal{M}_{p'_{1}/q'_{1},p'_{2}/q'_{2}}$ of $\mathcal{M}$, regarded as a point $P$ over $\mathcal{X}\times \mathcal{X}$, further satisfies \eqref{22041801}, as \eqref{22041801} encodes one more additional algebraic subgroup, $P$ is an intersection point between $\mathcal{X}\times \mathcal{X}$ and an algebraic subgroup whose dimension is strictly less than the complementary dimension of $\mathcal{X}\times \mathcal{X}$. We call such a point a \textit{torsion anomalous} point. Consequently, if there are infinitely many pairs of Dehn fillings of $\mathcal{M}$ of the same pseudo complex volume, then there are infinitely many torsion anomalous points over $\mathcal{X}\times \mathcal{X}$, and hence the problem now fits into the framework of the Zilber-Pink conjecture. 

In \cite{jeon1, jeon2}, the author, building up on the work of Habegger, showed that the height of any Dehn filling point is uniformly bounded, namely, it is independent of its Dehn filling coefficient (Theorem \ref{20080404}). By the result of Bombieri, Masser and Zannier (Theorem \ref{19090804}), a torsion anomalous point of bounded height is further contained in a subvariety of positive dimension possessing an analogous property, so called an \textit{torsion anomalous subvariety} of $\mathcal{X}\times \mathcal{X}$ (see Definition \ref{anomalous1}).  

According to Theorem \ref{struc}, due to Bombieri, Masser and Zannier, the set $\mathcal{S}$ of positive dimensional anomalous subvarieties of $\mathcal{X}\times \mathcal{X}$ is Zariski closed. Considering every possible dimension of $\mathcal{S}$ and analyzing the architecture of $\mathcal{S}$ more in detail, we farther break down the original problem into a couple of Zilber-Pink type conjectures of smaller dimensions, that is, conjectures with varieties and ambient spaces of lower dimensions. To overcome the last obstacles, various partially known results toward Conjecture \ref{22042203} are applied. For instance, we use the aforementioned results of both Maurin and Bombieri-Masser-Zannier (i.e. their partial resolutions of Conjecture \ref{22042203}) as well as S. Zhang's work on the distribution of torsion points over a given variety, which are all summarized in Section \ref{Back}. 

Once Theorem \ref{22022601} is proven, we further attempt to make it more effective. Namely, we count the number of elements in $\mathcal{H}$ and search the precise forms of the defining equations of each $H\in \mathcal{H}$. In particular, the matrices connecting two Dehn filling coefficients in \eqref{22041801}, described in Theorems \ref{2201}-\ref{20103003}, are derived from the exponents of the defining equations of $H\in \mathcal{H}$. Establishing Theorem \ref{22041805} is another ultimate goal of this quantification. 

Throughout the paper, many corollaries of Thurston's Dehn filling theory play crucial roles, and Neumann and Zagier's foundational work on symmetric properties of the holonomy variety is of fundamental importance as well.  

Finally, we hope the paper provides interesting perspectives for both topologists and number theorists. For topologists, it would be interesting to see how the ideas in diophantine geometry would help to solve problems in the low dimensional topology, particularly, as we believe this is the first paper classifying Dehn fillings of a $2$-cusped hyperbolic $3$-manifold by a single invariant. For number theorists, the contents of this paper exhibit concrete applications of the research around the Zilber-Pink conjecture to the other area, beyond the realm of number theory or algebraic geometry or even logic.  

\subsection{Outline of the paper}
The organization of this paper is as follows. 

In Section \ref{Back}, we offer some necessary backgrounds in both topology and number theory, and summarize various results will be used in the proofs of the main theorems. 

In Section \ref{Prem}, we prove some preliminary lemmas required in the proofs of the main theorems. In particular, we analyze the structure of anomalous subvarieties of $\mathcal{X}\times \mathcal{X}$ where $\mathcal{X}$ is the same as in Theorem \ref{22022601}. 

Section \ref{Prem I} is a warm-up section before setting out to prove the main theorems. In the section, we cover the $1$-cusp case and prove a more general argument, which, consequently, implies Theorem \ref{22022707}. We also make Corollary \ref{21021602} as well as a special case of it more effective. The ideas and techniques utilized in this section will be  reproduced later in Sections \ref{quantaI}-\ref{20121903} to make Theorem \ref{22022601} quantitative. 

Sections \ref{ZPCI}-\ref{ZPCII} are devoted to establishing Theorem \ref{22022601}. In Section \ref{ZPCI}, we first deal with some special cases, and then prove the full statement in Section \ref{ZPCII}. These two sections constitute the largest proportion and is the technical heart of the paper.

Quantifying Theorem \ref{22022601} is the job for Sections \ref{quantaI}-\ref{20121903}. In Section \ref{quantaI}, we find the explicit relationship between two Dehn filling coefficients satisfying the equality in \eqref{22041801} and, in Section \ref{20121903}, further explore the restrictions on defining equations of each $H\in \mathcal{H}$ where $\mathcal{H}$ given in the statement of theorem \ref{22022601}.  

Finally all the results obtained in Sections \ref{ZPCI}-\ref{20121903} are merged to prove Theorem \ref{22041805} and Theorems \ref{2201}-\ref{20103003} in Section \ref{PCV} and Section \ref{22041806} respectively. 

\subsection{Acknowledgement}
The author would like to thank I. Agol for letting him know the possible application, presented in Section \ref{22071001}, of the main results of the paper. 

The author is also grateful to the anonymous referee for carefully reading the paper and offering valuable comments and suggestions, which addressed every aspect and thus improved the overall quality of the paper. 

The author is partially supported by Basic Science Research Institute Fund, whose NRF grant number is 2021R1A6A1A10042944. 

\newpage
\section{Background}\label{Back}
\subsection{Holonomy variety}\label{A-poly}

In this section, we introduce the holonomy variety of a cusped hyperbolic 3-manifold. We will not present all the technical details about the topic in this paper, but will instead provide a brief outline of its construction and necessary properties that will be utilized later in proving the main theorem.

According to Thurston \cite{thu}, for a given $n$-cusped hyperbolic $3$-manifold $\mathcal{M}$, a geometric ideal triangulation $\mathcal{T}$ on it induces a so-called \textit{gluing variety} $G(\mathcal{T})$ of $\mathcal{M}$. The variety represents the necessary conditions for how the tetrahedra in $\mathcal{T}$ are glued together along their edges to yield a hyperbolic structure on $\mathcal{M}$. Roughly, $G(\mathcal{T})$ can also be seen as the set of all the possible hyperbolic structures on $\mathcal{M}$, or simply, the moduli space of $\mathcal{M}$. 

If $T_k$ is a torus cross-section of the $k^{\text{th}}$-cusp of $\mathcal{M}$ and $m_k, l_k$ are the chosen meridian-longitude pair of $T_k$ $(1\leq k\leq n)$,  then each point of $G(\mathcal{T})$ gives rise to a (Euclidean) similarity structure on $T_k$, thus inducing the following \textit{holonomy map} 
\begin{equation*}
\pi_1(T_k)\longrightarrow \text{Aff}(\mathbb{C})=\{az+b\;:\;a\neq 0, b\in \mathbb{C}\}.
\end{equation*}
Consequently, the dilation components (i.e. derivatives) of the holonomies of $l_k$ and $m_k$ produce rational functions $M_k$ and $L_k$ respectively on $G(\mathcal{T})$. Now the \textit{holonomy variety} $\mathcal{X}$ of $\mathcal{M}$ is defined as the Zariski closure of the image under the following map:
\begin{equation*}
\xi\;:\;G(\mathcal{T})\longrightarrow (M_1, L_1, \dots, M_n, L_n).
\end{equation*}
The holonomy variety turns out to be independent of the triangulation $\mathcal{T}$, but depends only on the fundamental group of $\mathcal{M}$ as well as the chosen meridian-longitude pair of $T_k$.\footnote{For $n=1$, it is analogous to the A-polynomial of $\mathcal{M}$ introduced in \cite{ccgls}. More precisely, the A-polynomial uses the eigenvalues of $m_1$ and $l_1$, which are the square roots of the derivatives of the holonomies of $m_1$ and $l_1$ respectively, as its variables. For $n\geq 2$, a more detailed account of the holonomy variety is given in \cite{jeon1}.}

In general, the holonomy variety $\mathcal{X}$ may have several irreducible components, but we are only interested in so called the \textit{geometric component} of it. It is known that the geometric component of $\mathcal{X}$ is an $n$-dim algebraic variety in $\mathbb{C}^{2n}\big(:=(M_1, L_1, \dots, M_n, L_n)\big)$ and contains $(1, \dots, 1)$, which gives rise to the complete hyperbolic metric structure of $\mathcal{M}$. Let us denote the component by $\mathcal{X}$ and, by abuse of notation, still call it the holonomy variety of $\mathcal{M}$.   
 
The holonomy variety possesses many interesting symmetric properties. For instance, there exists a small neighborhood $\mathcal{N}_{\mathcal{X}}$ of a branch containing $(1, \dots, 1)$ in $\mathcal{X}$ such that, once we set  
\begin{equation}\label{22063015}
u_k:=\log M_k\quad, v_k:=\log L_k\quad (1\leq k\leq n), 
\end{equation}
$\mathcal{N}_{\mathcal{X}}$ is locally biholomorphic to a complex manifold $\log \mathcal{X}$ containing $(0, \dots, 0)$ in $\mathbb{C}^{2n}(:=(u_1, v_1, \dots, u_n, v_n))$, whose features are described as follows \cite{nz}:
\begin{theorem}[Neumann-Zagier] \label{potential}
\begin{enumerate}
\item Using $u_1, \dots, u_n$ as generators, each $v_k$ is represented of the form 	
\begin{equation}\label{22071901}
u_k\cdot\tau_k(u_1,\dots,u_n)
\end{equation}
over $\log \mathcal{X}$ where $\tau _k(u_1,\dots,u_n)$ is a holomorphic function with $\tau_k(0,\dots,0)=\tau_k\in \mathbb{C}\backslash\mathbb{R}$ ($1\leq k\leq n$).
\item There is a holomorphic function $\Phi(u_1,\dots,u_n)$ such that 
\begin{equation*}
v_k=\dfrac{1}{2}\dfrac{\partial \Phi}{\partial u_k}\quad \text{and}\quad \Phi(0,\dots,0)=0, \quad (1\leq k\leq n). 
\end{equation*}
In particular, $\Phi(u_1,\dots,u_n)$ is even in each argument and so its Taylor expansion is of the following form:
\begin{equation*}
\Phi(u_1,\dots,u_n)=(\tau_1u_1^2+\cdots+\tau_n u_n^2)+(m_{4,\dots,0}u_1^4+\cdots+m_{0,\dots,4}u_n^4)+\text{(higher order terms)}.\\
\end{equation*}
\end{enumerate}
\end{theorem}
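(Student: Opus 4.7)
The plan is to proceed in three stages: first exhibit the local coordinate structure of $\log\mathcal{X}$ near the origin, then establish the existence of the potential function $\Phi$, and finally deduce evenness together with the shape of the leading Taylor coefficients.

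For (1), the geometric component $\mathcal{X}$ is smooth of dimension $n$ at the discrete faithful representation by Thurston's deformation theory, so in the log chart \eqref{22063015} the implicit function theorem lets me choose $(u_1,\dots,u_n)$ as local holomorphic coordinates on a neighbourhood $\log\mathcal{X}$ of the origin, with each $v_k$ a holomorphic function of $(u_1,\dots,u_n)$. To obtain the factorization $v_k=u_k\cdot\tau_k(u_1,\dots,u_n)$, I would argue that near the origin $\{u_k=0\}\cap\log\mathcal{X}$ coincides with the locus where the $k$-th cusp is again complete: the condition $M_k=1$ forces the $k$-th meridian holonomy to be either trivial or parabolic, and on the geometric component it must be parabolic with the $k$-th cusp restored; commutativity then forces the longitude holonomy to be parabolic with eigenvalue $+1$ on the branch through the origin, so $v_k=0$ on $\{u_k=0\}$. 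A power series division then yields $v_k=u_k\tau_k(u)$ with $\tau_k$ holomorphic, and $\tau_k(0)=\partial v_k/\partial u_k|_{0}$ is identified with the classical cusp shape, whose non-degeneracy puts it in $\mathbb{C}\setminus\mathbb{R}$.

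For (2), the heart of the matter is to prove that the $1$-form $\omega=\sum_{k=1}^{n}v_k\,du_k$ is \emph{closed} on $\log\mathcal{X}$; once this is done, simple connectivity of a small neighbourhood of the origin yields a holomorphic primitive which I normalize to be $\Phi/2$ with $\Phi(0)=0$, and then $v_k=\tfrac12\partial\Phi/\partial u_k$ is automatic. Closedness is exactly the statement that $\mathcal{X}$ is Lagrangian inside $(\mathbb{C}^{*})^{2n}$ with respect to the symplectic form $\sum du_k\wedge dv_k$. I would deduce this either conceptually, from Goldman's symplectic structure on the character variety together with the Atiyah--Bott-type computation showing that the image of the restriction map to boundary tori is Lagrangian, or hands-on, by fixing an ideal triangulation, writing the $u_k$ and $v_k$ in terms of the shape parameters via the completeness and gluing equations, and verifying $\partial v_i/\partial u_j=\partial v_j/\partial u_i$ directly. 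I expect this Lagrangian/closedness step to be the main obstacle, since either route needs genuine geometric input and is not a formal consequence of (1).

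For (3), evenness will come from an involution argument. For each $k$ the holonomy variety admits the automorphism that fixes $(M_i,L_i)$ for $i\neq k$ and sends $(M_k,L_k)\mapsto(M_k^{-1},L_k^{-1})$, coming from the freedom to choose either eigenvalue of the $k$-th cusp holonomy; in log coordinates this is $(u_k,v_k)\mapsto(-u_k,-v_k)$. Since $\omega$ is pointwise invariant under this involution (the two minus signs cancel in the $k$-th summand and the others are untouched), so is any primitive once the constant of integration is fixed by $\Phi(0)=0$. Applying this for each $k$ separately shows $\Phi$ is even in each $u_k$, so only monomials of even degree in every variable survive. The quadratic part is then pinned down by the first-order expansion $v_k=\tau_k u_k+O(\|u\|^2)$ from step~(1), and integrating gives the leading term $\tau_1 u_1^2+\cdots+\tau_n u_n^2$, with all remaining terms of total degree at least four and even in each argument, as claimed.
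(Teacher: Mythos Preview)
The paper does not give its own proof of this theorem: it is stated as background, attributed to Neumann--Zagier, and cited to \cite{nz}. So there is no in-paper argument to compare against. Your sketch is a faithful outline of the original Neumann--Zagier proof: smoothness at the complete structure gives $u_1,\dots,u_n$ as local coordinates; the key closedness $\partial v_i/\partial u_j=\partial v_j/\partial u_i$ is exactly their Lemma~2.1, proved via an ideal triangulation and the combinatorics of the gluing/completeness equations (your ``hands-on'' option); and the evenness comes from the eigenvalue-sign involution $(u_k,v_k)\mapsto(-u_k,-v_k)$. The factorization $v_k=u_k\tau_k(u)$ is also in their paper, obtained just as you describe. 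In short, your proposal matches the source the paper is citing, and there is nothing further in the present paper to compare with.
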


Later in Section \ref{Dehn}, it will be shown that $\log \mathcal{X}$ (or $\mathcal{N}_{\mathcal{X}}$) encodes all the useful topological information pertaining Dehn fillings of $\mathcal{M}$. Thus it is natural to mainly focus our attention on $\log \mathcal{X}$ throughout the paper. 

The following immediate corollary of Theorem \ref{potential} will be repeatedly used in the proofs of many claims. 

\begin{corollary}\label{22080401}
Adapting the same notation as in Theorem \ref{potential}, 
\begin{enumerate}
\item for each $k$ ($1\leq k\leq n$), $v_k=0$ if and only if $u_k=0$ by \eqref{22071901};\footnote{Geometrically, $u_k=v_k=0$ gives rise to the complete hyperbolic metric structure of the $k$-th cusp of $\mathcal{M}$.}

\item for any $k$ and $l$ ($1\leq k,l\leq n$), 
\begin{equation*}
\dfrac{\partial v_k}{\partial u_l}=\dfrac{\partial v_l}{\partial u_k}.
\end{equation*}
\end{enumerate}
\end{corollary}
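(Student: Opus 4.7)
The plan is to derive both statements as immediate consequences of the two items in Theorem \ref{potential}, so the proof should be very short. No new construction is needed; the corollary is essentially a re-packaging of the structural facts about $\log \mathcal{X}$ already established by Neumann--Zagier. I do not anticipate any real obstacle, only a small continuity argument in part (1) to justify restricting to a suitable neighborhood of the origin.

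For part (1), I would invoke \eqref{22071901} which asserts $v_k = u_k \cdot \tau_k(u_1,\dots,u_n)$ with $\tau_k(0,\dots,0) = \tau_k \in \mathbb{C}\setminus \mathbb{R}$. In particular, $\tau_k(0,\dots,0) \neq 0$, so by continuity of the holomorphic function $\tau_k$, there exists an open neighborhood $U\subset \log \mathcal{X}$ of the origin on which $\tau_k$ is nowhere zero. Shrinking $\mathcal{N}_{\mathcal{X}}$ if necessary, one may assume this holds on the whole of $\log \mathcal{X}$. Then $v_k = u_k \cdot \tau_k(u_1,\dots,u_n) = 0$ holds precisely when $u_k = 0$, which is the content of (1). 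The footnote about the geometric meaning (complete hyperbolic metric at the $k$-th cusp) is a remark on the coordinates $u_k, v_k$ introduced in \eqref{22063015} and requires no further argument.

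For part (2), I would apply part (2) of Theorem \ref{potential}, which provides the potential function $\Phi(u_1,\dots,u_n)$ satisfying
\begin{equation*}
v_k \;=\; \frac{1}{2}\frac{\partial \Phi}{\partial u_k}, \qquad 1 \leq k \leq n.
\end{equation*}
Since $\Phi$ is holomorphic on $\log \mathcal{X}$, the equality of mixed partials (Schwarz's / Clairaut's theorem, valid in the holomorphic setting) yields
\begin{equation*}
\frac{\partial v_k}{\partial u_l} \;=\; \frac{1}{2}\frac{\partial^2 \Phi}{\partial u_l\, \partial u_k} \;=\; \frac{1}{2}\frac{\partial^2 \Phi}{\partial u_k\, \partial u_l} \;=\; \frac{\partial v_l}{\partial u_k},
\end{equation*}
which is exactly (2). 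This completes the proof; the only mildly delicate point, as mentioned above, is ensuring that we work on a neighborhood small enough that $\tau_k$ does not vanish, and this is automatic since $\tau_k(0,\dots,0) \neq 0$ and $\tau_k$ is continuous.
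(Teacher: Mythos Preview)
Your proof is correct and matches the paper's approach exactly: the paper presents this as an ``immediate corollary of Theorem~\ref{potential}'' without giving an explicit proof, and your argument is precisely the intended derivation---part (1) from the factorization $v_k = u_k \cdot \tau_k(u_1,\dots,u_n)$ with $\tau_k(0,\dots,0) \neq 0$, and part (2) from the equality of mixed partials of the potential $\Phi$.
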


For later convenience, we also define  

\begin{definition}
\normalfont 
\begin{enumerate}
\item We refer to $\tau_k$ (in Theorem \ref{potential}) as the \textit{cusp shape} of the $k$-th cusp of $\mathcal{M}$ with respect to $m_k, l_k$ and $\Phi(u_1,\dots,u_n)$ as the \textit{Neumann-Zagier potential function} of $\mathcal{M}$ with respect to $m_k, l_k$ $(1\leq k\leq n)$. 

\item We refer to $\log\mathcal{X}$ as the \textit{analytic holonomy variety (or set)} of $\mathcal{M}$.  
\end{enumerate}
\end{definition}

Note that each cusp shape is algebraic and we normalize $\tau_k$ by $\text{Im}\;\tau_k>0$ ($1\leq k\leq n$). Moreover, it is known that each $v_k$ is not linear (i.e. $v_k\neq \tau_ku_k$). For instance, see Lemma 2.3 in \cite{jeon3}. (In fact, one can further show each $v_k$ is a transcendental function of $u_1, \dots, u_n$.)

\subsection{Dehn Filling}\label{Dehn}   
Let $\mathcal{M}$ be the same as above and 
\begin{equation*}
\mathcal{M}_{p_1/q_1,...,p_n/q_n}
\end{equation*}
be the $(p_1/q_1,\dots,p_n/q_n)$-Dehn filling of $\mathcal{M}$. By the Seifert-Van Kampen theorem, the fundamental group of $\mathcal{M}_{p_1/q_1,...,p_n/q_n}$ is obtained by adding the relations
\begin{equation}\label{22080402}
m_1^{p_1}l_1^{q_1}=1, \quad...\quad, m_n^{p_n}l_n^{q_n}=1
\end{equation}
to the fundamental group of $\mathcal{M}$ where $m_k, l_k$ ($1\leq k\leq n$) are meridian-longitude pairs of $\mathcal{M}$ as previously. Hence if $\mathcal{X}$ is the holonomy variety of $\mathcal{M}$, then the discrete faithful representation 
\begin{equation*}
\phi\;:\;\pi_1(\mathcal{M}_{p_1/q_1, \dots, p_n/q_n})\longrightarrow SL_2\mathbb{C}
\end{equation*}
corresponds to an intersection point between $\mathcal{X}$ and an algebraic subgroup defined by  
\begin{equation} \label{Dehn eq}
M_1^{p_1}L_1^{q_1}=1,\quad ... \quad , M_n^{p_n}L_n^{q_n}=1, 
\end{equation}
arising from \eqref{22080402}.
\begin{definition}
\normalfont We say \eqref{Dehn eq} the \textit{Dehn filling equations} with coefficient $(p_1/q_1,\dots,p_n/q_n)$ and a point inducing the hyperbolic structure on $\mathcal{M}_{p_1/q_1,...,p_n/q_n}$ a \textit{Dehn filling point} corresponding to $\mathcal{M}_{p_1/q_1,...,p_n/q_n}$. 
\end{definition}
Let
\begin{equation}\label{040401}
(M_1,L_1,\dots,M_n,L_n)=(\zeta_{m_1}, \zeta_{l_1},\dots, \zeta_{m_n},\zeta_{l_n})
\end{equation}
be one of the Dehn filling points corresponding to $\mathcal{M}_{p_1/q_1,...,p_n/q_n}$. By the work of Thurston,
\begin{equation}\label{22070306}
(\zeta_{m_k})^{r_k}(\zeta_{l_k})^{s_k}\quad \text{where} \quad p_ks_k-q_kr_k=1\quad \text{ and }\quad  r_k, s_k\in \mathbb{Z}, 
\end{equation}
which is the derivative of the holonomy of the $k$-th core geodesic $m_k^{r_k}l_k^{s_k}$, is always not a root of unity (i.e. $|(\zeta_{m_k})^{r_k}(\zeta_{l_k})^{s_k}|\neq 1$) for every $1\leq k\leq n$.   

The following theorem is a part of Thurston's hyperbolic Dehn filling theory \cite{nz, thu}. 
\begin{theorem}[Thurston] \label{040605}
Using the same notation as above, 
\begin{equation*}
(\zeta_{m_1}, \zeta_{l_1},\dots, \zeta_{m_n},\zeta_{l_n})\rightarrow(1,\dots,1)\in \mathbb{C}^{2n}
\end{equation*}
and 
\begin{equation*}
\big(|(\zeta_{m_1})^{r_1}(\zeta_{l_1})^{s_1}|, \dots, |(\zeta_{m_n})^{r_n}(\zeta_{l_n})^{s_n}|\big)\rightarrow(1,\dots,1)\in\mathbb{C}^{n}
\end{equation*}
as $|p_k|+|q_k|\rightarrow\infty$ for $1\leq k\leq n$. 
\end{theorem}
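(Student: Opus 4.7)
The plan is to work in the logarithmic coordinates $u_k := \log M_k$ and $v_k := \log L_k$ on the analytic holonomy variety $\log \mathcal{X}$, where by Theorem \ref{potential} one has the local parametrization $v_k = u_k\, \tau_k(u_1,\ldots,u_n)$ with each $\tau_k$ holomorphic at the origin and $\tau_k(0,\ldots,0) = \tau_k \in \mathbb{C}\setminus\mathbb{R}$. Among all intersection points of $\mathcal{X}$ with the algebraic subgroup $\{M_k^{p_k}L_k^{q_k}=1\}_{k=1}^n$, the geometric Dehn filling point is distinguished by the normalization $p_k u_k + q_k v_k = 2\pi i$ for every $k$; this is the lift to $\log \mathcal{X}$ reflecting the fact that $m_k^{p_k}l_k^{q_k}$ bounds a meridian disk in the filled manifold and therefore acts as a small loop around the $k$-th core geodesic.

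Substituting $v_k = u_k\, \tau_k(u)$ into these $n$ equations converts them into the fixed-point system
\begin{equation*}
u_k \;=\; \frac{2\pi i}{p_k + q_k\, \tau_k(u_1,\ldots,u_n)}, \qquad 1 \le k \le n.
\end{equation*}
Since $\operatorname{Im}\tau_k(0) \neq 0$, the denominator satisfies $|p_k + q_k \tau_k(0)| \asymp |p_k|+|q_k|$. I would then invoke the contraction mapping principle on a small polydisc around the origin in the $u$-variables: the map $F_k(u) := 2\pi i/(p_k + q_k\tau_k(u))$ satisfies $|F(0)| = O\!\left(\max_k(|p_k|+|q_k|)^{-1}\right)$, and its partial derivatives are bounded by $O(|q_k|/(|p_k|+|q_k|)^2)$ on such a polydisc using holomorphicity of $\tau_k$. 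This yields a unique small solution once $\min_k(|p_k|+|q_k|)$ is sufficiently large, together with the estimate $u_k = O\!\left((|p_k|+|q_k|)^{-1}\right)$. Exponentiating gives $(\zeta_{m_k},\zeta_{l_k}) = (e^{u_k}, e^{u_k\tau_k(u)}) \to (1,1)$, establishing the first convergence.

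For the second convergence it suffices to show $\operatorname{Re}(r_k u_k + s_k v_k) \to 0$, because the modulus of the core holonomy derivative satisfies $|\zeta_{m_k}^{r_k}\zeta_{l_k}^{s_k}| = \exp\!\bigl(\operatorname{Re}(r_k u_k + s_k v_k)\bigr)$. Inverting the two-by-two system whose matrix is $\bigl(\begin{smallmatrix} p_k & q_k \\ r_k & s_k \end{smallmatrix}\bigr)$ with determinant $1$, and plugging in $p_k u_k + q_k v_k = 2\pi i$, one obtains
\begin{equation*}
r_k u_k + s_k v_k \;=\; \frac{2\pi i\,\bigl(r_k + s_k\, \tau_k(u)\bigr)}{p_k + q_k\, \tau_k(u)}.
\end{equation*}
Writing $\tau_k(u) = a_k + i b_k + O(u)$ with $b_k > 0$, a direct computation of the real part collapses beautifully via the Bezout identity $p_k s_k - q_k r_k = 1$ to
\begin{equation*}
\operatorname{Re}(r_k u_k + s_k v_k) \;=\; \frac{-2\pi\, b_k}{(p_k + a_k q_k)^2 + b_k^{\,2}\, q_k^2} \,+\, o\!\left(\tfrac{1}{p_k^2+q_k^2}\right),
\end{equation*}
and both contributions vanish as $|p_k|+|q_k|\to\infty$, delivering the second convergence and incidentally reproducing the $O\!\bigl(\sum_k (p_k^2+q_k^2)^{-1}\bigr)$ rate recorded in Theorem \ref{20080701}.

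The main technical obstacle is the contraction mapping step: one has to verify the Lipschitz bounds on the $\tau_k$'s uniformly across all $n$ coordinates, so the argument remains valid when the quantities $|p_k|+|q_k|$ diverge at very different rates. The remedy is to choose the polydisc radius in terms of $\min_k(|p_k|+|q_k|)$ and to exploit the joint holomorphicity supplied by the Neumann–Zagier potential $\Phi(u_1,\ldots,u_n)$ in Theorem \ref{potential}(2); this is precisely the quantitative input that underlies the effective asymptotics recorded in Theorem \ref{20080701}.
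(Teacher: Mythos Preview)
The paper does not supply its own proof of this statement: Theorem~\ref{040605} is quoted as a known result from Thurston's hyperbolic Dehn filling theory, with citations to \cite{nz, thu}, and is used as background input throughout. So there is no in-paper argument to compare against.

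That said, your sketch is essentially the standard Neumann--Zagier argument from \cite{nz} and is correct in outline. A couple of cosmetic remarks relative to the paper's conventions: the paper normalizes the Dehn filling lift as $p_k u_k + q_k v_k = -2\pi\sqrt{-1}$ (see Remark~\ref{22081303} and the discussion around \eqref{22063001}), not $+2\pi i$; this only flips a sign in your formulas and does not affect the convergence. Your real-part computation via the Bezout identity $p_k s_k - q_k r_k = 1$ is exactly the mechanism behind the $O\bigl(\sum_k (p_k^2+q_k^2)^{-1}\bigr)$ estimate recorded in Theorem~\ref{20080701}, and the contraction-mapping step you describe is the standard way to produce the small solution on $\log\mathcal{X}$ once all $|p_k|+|q_k|$ are large.
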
  
By Theorem \ref{040605}, most Dehn filling points (that is, points with sufficiently large Dehn filling coefficients) are lying near $(1, \dots, 1)$ in $\mathcal{X}$ and, in particular, they are distributed over $\mathcal{N}_{\mathcal{X}}$.  
 
In the context of $\log \mathcal{X}$, \eqref{Dehn eq} is further specified as either
\begin{equation}\label{22063001}
p_ku_k+q_kv_k= 2\pi \sqrt{-1}\quad \text{ or }-2\pi \sqrt{-1},\quad  (1\leq k\leq n).  
\end{equation}
By abuse of notation, we still call \eqref{22063001} the \textit{Dehn filling equations} with coefficient $(p_1/q_1, \dots, p_n/q_n)$, and an intersection point between \eqref{22063001} and $\log \mathcal{X}$ a \textit{Dehn filling point} associated to $\mathcal{M}_{p_1/q_1, \dots, p_n/q_n}$. Note that if
\begin{equation*}
(u_1, v_1, \dots, u_n, v_n)=(\xi_{m_1}, \xi_{l_1}, \dots, \xi_{m_n}, \xi_{l_n})
\end{equation*}
is a Dehn filling point on $\log \mathcal{X}$ associated to $\mathcal{M}_{p_1/q_1, \dots, p_n/q_n}$ and matching to \eqref{040401}, then
\begin{equation*}
(e^{\xi_{m_1}}, e^{\xi_{l_1}}, \dots, e^{\xi_{m_n}}, e^{\xi_{l_n}})=(\zeta_{m_1}, \zeta_{l_1},\dots, \zeta_{m_n}, \zeta_{l_n})
\end{equation*}
and 
\begin{equation*}
e^{r_k\xi_{m_k}+s_k\xi_{l_k}}=(\zeta_{m_k})^{r_k}(\zeta_{l_k})^{s_k}
\end{equation*}
where $r_k$ and $s_k$ are as in \eqref{22070306}. For $|(\zeta_{m_k})^{r_k}(\zeta_{l_k})^{s_k}|>1$ (resp. $|(\zeta_{m_k})^{r_k}(\zeta_{l_k})^{s_k}|<1$), the logarithm of $(\zeta_{m_k})^{r_k}(\zeta_{l_k})^{s_k}$ (resp. $\frac{1}{(\zeta_{m_k})^{r_k}(\zeta_{l_k})^{s_k}}$), $r_k\xi_{m_k}+s_k\xi_{l_k}$ (resp. $-r_k\xi_{m_k}-s_k\xi_{l_k}$), is the \textit{complex length} of the $k$-th core geodesic of $\mathcal{M}_{p_1/q_1, \dots p_n/q_n}$, which appeared and is denoted by $\lambda_k$ in the statement of Theorem \ref{20080701}. Note that the complex length of each core geodesic is well-defined modulo $2\pi \sqrt{-1}$.  

As a result, when we have two Dehn fillings 
\begin{equation}\label{22063007}
\mathcal{M}_{p_{1}/q_{1},\cdots p_{n}/q_{n}}\text{  and  }\mathcal{M}_{p'_{1}/q'_{1},\cdots, p'_{n}/q'_{n}}
\end{equation}
of $\mathcal{M}$ with
\begin{equation}\label{22063009}
\text{pvol}_{\mathbb{C}}\;\mathcal{M}_{p_{1}/q_{1},\dots, p_{n}/q_{n}}=\text{pvol}_{\mathbb{C}}\;\mathcal{M}_{p'_{1}/q'_{1},\dots, p'_{n}/q'_{n}}, 
\end{equation}
identifying \eqref{22063007} as a point $P$\footnote{We say $P$ is a \textit{Dehn filling point on $\mathcal{X}\times \mathcal{X}$ associated to either \eqref{22063007} or \eqref{22063009}}.} over $\mathcal{X}\times \mathcal{X}$ by associating $\mathcal{M}_{p_{1}/q_{1},\dots p_{n}/q_{n}}$ (resp. $\mathcal{M}_{p'_{1}/q'_{1},\dots, p'_{n}/q'_{n}}$) to a Dehn filling point of the first (resp. second) copy of $\mathcal{X}\times \mathcal{X}$, it follows that $P$ is an intersection point between $\mathcal{X}\times \mathcal{X}\big(\subset \mathbb{G}^{2n}\times \mathbb{G}^{2n}(:=(M_1, L_1, \dots, M'_n, L'_n))\big)$ and an algebraic subgroup of codimension $2n+1$. Namely, $P$ satisfies, first, the following $(2n)$-Dehn filling equations 
\begin{equation*}
M_k^{p_k}L_k^{q_k}=(M'_k)^{p'_k}(L'_k)^{q'_k}=1, \quad (1\leq k\leq n)
\end{equation*}
and, in addition,  
\begin{equation}\label{22070401}
\prod^{n}_{k=1}M_k^{\epsilon_k r_k}L_k^{\epsilon_ks_k}=\prod^{n}_{k=1}(M'_k)^{\epsilon'_k r'_k}(L'_k)^{\epsilon'_k s'_k}, \quad (\text{for some }\epsilon_k, \epsilon'_k=\pm 1),
\end{equation} 
arising from \eqref{22063009}. Therefore, the existence of infinitely many pairs of Dehn fillings of $\mathcal{M}$ of the same pseudo complex volume guarantees the existence of infinitely many torsion anomalous points over $\mathcal{X}\times \mathcal{X}$, which implicates that our problem is now converted into a weak version of the Zilber-Pink conjecture. 

Let us further simplify \eqref{22070401} by introducing the following definition. 
\begin{definition}\label{22070407}
\normalfont For $\mathcal{M}_{p_1/q_1, \dots, p_n/q_n}$, let $t_k$ ($1\leq k\leq n$) be the exponential of the complex length of the $k$-th core geodesic of it. To be precise, for any given Dehn filling point \eqref{040401} associated to $\mathcal{M}_{p_1/q_1, \dots, p_n/q_n}$, 
\begin{equation*}
t_k:=\begin{dcases*}
(\zeta_{m_k})^{r_k}(\zeta_{l_k})^{s_k}, & if $|(\zeta_{m_k})^{r_k}(\zeta_{l_k})^{s_k}|>1$,\\
\frac{1}{(\zeta_{m_k})^{r_k}(\zeta_{l_k})^{s_k}}, & if $|(\zeta_{m_k})^{r_k}(\zeta_{l_k})^{s_k}|<1$.  
\end{dcases*}
\end{equation*}
We refer to $t_k$ as the \textit{$k$-th core holonomy} of $\mathcal{M}_{p_1/q_1, \dots, p_n/q_n}$ and
\begin{equation*}
\{t_1, \dots, t_n\}
\end{equation*}
as the \textit{set of the core holonomies} of $\mathcal{M}_{p_1/q_1, \dots, p_n/q_n}$. 
\end{definition}

Adapting the notation in Definition \ref{22070407}, the point given by  
\begin{equation}\label{22070411}
\begin{gathered}
(M_1, L_1, \dots, M_n, L_n)=(t_1^{-q_1}, t_1^{p_1}, \dots, t_n^{-q_n}, t_n^{p_n}) 
\end{gathered}
\end{equation}
is also a Dehn filling point associated to $\mathcal{M}_{p_1/q_1, \dots, p_n/q_n}$, which, in particular, satisfies  
\begin{equation*}
\prod^{n}_{k=1}M_k^{ r_k}L_k^{s_k}=\prod^{n}_{k=1}t_k. 
\end{equation*}
Consequently, if there are two Dehn fillings \eqref{22063007} of $\mathcal{M}$ satisfying \eqref{22063009}, there exists a Dehn filling point $P$ on $\mathcal{X}\times \mathcal{X}$ associated to it, contained in an algebraic subgroup defined by
\begin{equation*}
\prod^{n}_{k=1}M_k^{r_k}L_k^{s_k}=\prod^{n}_{k=1}(M'_k)^{r'_k}(L'_k)^{s'_k}.
\end{equation*} 
In other words, one can simply take $\epsilon_k=\epsilon'_k=1$ ($1\leq k\leq n$) in \eqref{22070401} with such $P$. 

From now on, for each given $\{(p_k, q_k)\;:\;1\leq k\leq n\}$, whenever a Dehn filling point of $\mathcal{M}_{p_1/q_1, \dots, p_n/q_n}$ is mentioned, we always mean \eqref{22070411} throughout the paper. 
\begin{remark}\label{22081303}
\normalfont Among the two given options in \eqref{22063001}, the second one gives rise to the desired Dehn filling point over $\log \mathcal{X}$. That is, the intersection point between $\log \mathcal{X}$ and $p_ku_k+q_kv_k=-2\pi \sqrt{-1}$ ($1\leq k\leq n$) is the Dehn filling point on $\log \mathcal{X}$ mapping to \eqref{22070411}.\footnote{This is due to the assumption $\text{Im}\;\tau_k>0$ on each cusp shape $\tau_k$ \cite{nz}.}  
\end{remark}

\subsection{Anomalous subvariety}\label{22030101}

Let $\mathbb{G}^n:=(\overline{\mathbb{Q}}^*)^n$ or $(\mathbb{C^*})^n$. By an \textit{algebraic subgroup} $H$ in $\mathbb{G}^n(:=(X_1, \dots, X_n))$, we mean the set defined by monomial equations $X_1^{a_1}\cdots X_n^{a_n}=1$ where $a_k\in \mathbb{Z}$ ($1\leq k\leq n$), and an \textit{algebraic coset} $K$ is defined to be a translate $gH$ of some algebraic subgroup $H$ by some $g \in \mathbb{G}^n$. An irreducible algebraic subgroup is often said to be an \textit{algebraic torus}. 

In \cite{za}, Bombieri-Masser-Zannier defined 
\begin{definition} \label{anomalous1} 
\normalfont Let $\mathcal{X}\subset \mathbb{G}^n$ be an algebraic variety. An irreducible subvariety $\mathcal{Y}$ of $\mathcal{X}$ is called \textit{anomalous} if it has positive dimension and lies in an algebraic coset $K$ in $\mathbb{G}^n$ satisfying
\begin{equation*}
\dim  \mathcal{Y} >\dim  \mathcal{X} +\dim K-n.
\end{equation*}
In particular, if $K$ is an algebraic subgroup, then we say $\mathcal{Y}$ a \textit{torsion anomalous} subvariety of $\mathcal{X}$. 
\end{definition}

Note that it is assumed $\dim \mathcal{Y} >0$ in Definition \ref{anomalous1}, but $\dim \mathcal{Y}=0$ is allowed in Conjecture \ref{22042203}. If $K$ is an algebraic subgroup and $\dim \mathcal{Y} = 0$, it is called by a \textit{torsion anomalous point} of $\mathcal{X}$ (as briefly mentioned in Section \ref{22080901}).\footnote{In Theorem \ref{19090804}, Bombieri, Masser and Zannier provide a sufficient condition for a torsion anomalous point to be contained in a torsion anomalous subvariety.}

For instance, if $\mathcal{X}$ is the holonomy variety of an $n$-cusped hyperbolic $3$-manifold, by Corollary \ref{22080401}, $M_k=1$ is equivalent to $L_k=1$ over $\mathcal{X}$, which implies $\mathcal{X}\cap (M_k=L_k=1)$ is an $(n-1)$-dim anomalous subvariety of $\mathcal{X}$. 

The structure of anomalous subvarieties of an algebraic variety is given as follows \cite{za}:

\begin{theorem}[Bombieri-Masser-Zannier]\label{struc}
Let $\mathcal{X}$ be an irreducible variety in $\mathbb{G}^n$ of positive dimension defined over $\mathbb{\overline Q}$. 
\begin{enumerate}
\item For any algebraic torus $H$ with 
\begin{equation} \label{dim1}
1\leq n- \dim  H \leq \dim  \mathcal{X},
\end{equation}
the union $\mathscr{Z}_H$ of all subvarieties $\mathcal{Y}$ of $\mathcal{X}$ contained in any coset $K$ of $H$ with
\begin{equation} \label{dim2}
\dim  H=n-(1+\dim  \mathcal{X})+\dim  \mathcal{Y} 
\end{equation}
is a closed subset of $\mathcal{X}$.
\item There is a finite collection $\Psi=\Psi_{\mathcal{X}}$ of such tori $H$ such that every maximal anomalous subvariety $\mathcal{Y}$ of $\mathcal{X}$ is a component of $\mathcal{X} \cap gH$
for some $H$ in $\Psi$ satisfying \eqref{dim1} and \eqref{dim2} and some $g$ in $\mathscr{Z}_H$. Moreover, if $\mathcal{X}^{oa}$ is defined as the complement of $\bigcup_{H\in\Psi}\mathscr{Z}_H$ in $\mathcal{X}$, then it is Zariski open in $\mathcal{X}$.
\end{enumerate}
\end{theorem}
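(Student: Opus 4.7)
The plan is to reformulate everything in terms of the quotient map $\pi_H \colon \mathbb{G}^n \to \mathbb{G}^n/H$ associated to each algebraic torus $H$, using the fact that an irreducible subvariety $\mathcal{Y}$ lies in some coset $gH$ if and only if $\pi_H$ collapses $\mathcal{Y}$ to a single point. Writing $d = \dim \mathcal{X}$ and $h = \dim H$, the condition \eqref{dim2} on an anomalous $\mathcal{Y}$ becomes $\dim \mathcal{Y} = h - n + 1 + d$, i.e. one more than the generic fiber dimension $d + h - n$ of the restriction $\pi_H|_{\mathcal{X}} \colon \mathcal{X} \to \pi_H(\mathcal{X})$. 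Thus each anomalous $\mathcal{Y}$ witnessed by $H$ appears as an irreducible component of a fiber of $\pi_H|_{\mathcal{X}}$ of strictly larger than generic dimension.

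For part (1), I would fix $H$ satisfying \eqref{dim1} and invoke the upper-semicontinuity of fiber dimension (Chevalley) applied to $\pi_H|_{\mathcal{X}}$: the locus in $\mathcal{X}$ of points at which the fiber has dimension at least $h - n + 1 + d$ is Zariski closed in $\mathcal{X}$. Then $\mathscr{Z}_H$ is precisely the union of those irreducible components of this closed locus whose dimension matches \eqref{dim2}; this is a closed subset of $\mathcal{X}$. The generic-fiber formula also enforces $h - n + d \geq 0$, matching the hypothesis \eqref{dim1}.

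For part (2), the real work is finiteness of $\Psi$. My plan is a Noetherian induction on closed subsets of $\mathcal{X}$: whenever some $\mathscr{Z}_H$ with $H$ satisfying \eqref{dim1}-\eqref{dim2} equals $\mathcal{X}$ itself, the conclusion is trivial; otherwise every maximal anomalous subvariety either is a component of $\mathcal{X} \cap gH$ for some already-chosen $H$, or is contained in a proper closed subset of $\mathcal{X}$, and one recurses on the irreducible components of that subset. To terminate this process in finitely many steps, I would parametrise candidate tori $H$ of each fixed codimension by a suitable Grassmannian of saturated sublattices of $\mathbb{Z}^n$, and show that the locus of $H$ for which $\pi_H|_{\mathcal{X}}$ has a component with excess fiber dimension contributing a new maximal anomalous subvariety is constructible and, at each stage of the induction, has only finitely many irreducible components. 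Combining the finitely many $H$'s produced at each stage yields the desired finite $\Psi$, and then $\mathcal{X}^{oa} = \mathcal{X} \setminus \bigcup_{H \in \Psi} \mathscr{Z}_H$ is Zariski open by part (1).

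The hard part will be the finiteness of $\Psi$: the set of algebraic subtori of $\mathbb{G}^n$ is infinite, so the argument must exploit global algebraic geometry (constructibility on a Grassmannian of subtori, together with the Noetherian property of $\mathcal{X}$) rather than a naive enumeration. Once finiteness is established, the rest follows from the fiber-dimension formalism already used in part (1).
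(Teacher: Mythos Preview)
The paper does not prove this theorem: it is stated as a background result and attributed to Bombieri--Masser--Zannier \cite{za}, so there is no proof in the paper to compare your proposal against.

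That said, your plan for part (1) is essentially the standard argument and is correct: reformulating in terms of the quotient $\pi_H\colon\mathbb{G}^n\to\mathbb{G}^n/H$ and applying upper-semicontinuity of fiber dimension identifies $\mathscr{Z}_H$ with a Zariski-closed locus in $\mathcal{X}$. One small point: $\mathscr{Z}_H$ is the union of all $\mathcal{Y}$ with the \emph{exact} dimension in \eqref{dim2}, not merely $\geq$; you should check that the ``at least'' locus from semicontinuity coincides with $\mathscr{Z}_H$ (it does, because any component of a fiber of dimension strictly larger than $h-n+1+d$ also contains subvarieties of the required dimension lying in the same coset, so the union is unchanged).

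For part (2), your outline is in the right spirit but is not yet a proof. The genuine difficulty is exactly the one you flag: there are infinitely many subtori of each codimension, and ``constructibility on a Grassmannian of subtori together with Noetherian induction'' is a description of what one hopes is true rather than an argument establishing it. In the original Bombieri--Masser--Zannier paper the finiteness of $\Psi$ is obtained through a more delicate analysis (controlling, for each possible dimension of $\mathcal{Y}$, the tori $H$ that can witness a maximal anomalous subvariety via degree and height estimates and a compactness-type argument on the lattice side), and your Noetherian recursion as stated does not explain why only finitely many new $H$ appear at each stage. If you want to reconstruct the proof, that step requires substantially more than what you have written; otherwise, citing \cite{za} as the paper does is the appropriate move here.
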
 

Using the same notation above, if there exists an algebraic subgroup $H$ such that $\mathcal{X}=\mathscr{Z}_H$ (and so $\mathcal{X}^{oa}=\emptyset$), we say $\mathcal{X}$ \textit{is foliated by anomalous subvarieties contained in} \textit{translations (or algebraic cosets) of $H$}. 

A good example to illustrate Theorem \ref{struc} is the product of two algebraic varieties. For instance, let $\mathcal{X}\subset \mathbb{G}^2 (:=(M, L))$ the holonomy variety of a $1$-cusped hyperbolic $3$-manifold defined by $f(M, L)=0$ and $\mathcal{X}\times \mathcal{X}$ be the product of two identical copies of it in $\mathbb{G}^2\times \mathbb{G}^2(:=(M, L, M', L'))$. For any $(M, L)=(\zeta_1, \zeta_2)$ satisfying $f(\zeta_1, \zeta_2)=0$, 
\begin{equation*}
(M=\zeta_1,  L=\zeta_2)\cap (\mathcal{X}\times \mathcal{X}) 
\end{equation*}
is an anomalous subvariety of $\mathcal{X}\times \mathcal{X}$ and, as 
\begin{equation*}
\bigcup_{f(\zeta_1, \zeta_2)=0}\big((M=\zeta_1,  L=\zeta_2)\cap (\mathcal{X}\times \mathcal{X})\big)=\mathcal{X}\times\mathcal{X},
\end{equation*}
it follows that $(\mathcal{X}\times \mathcal{X})^{oa}=\emptyset$ and $\mathcal{X}\times \mathcal{X}=\mathscr{Z}_H$ where $H$ is defined by $M=L=1$. (Similarly, one can check $(\mathcal{X}\times \mathcal{X})^{oa}=\emptyset$ for any algebraic variety $\mathcal{X}$.) 

One important consequence of Theorem \ref{struc}, which will be repeatedly used in the paper, is the following:
\begin{corollary}\label{22022603}
Let $\mathcal{X}$ be an irreducible variety in $\mathbb{G}^n(:=(X_1, \dots, X_n))$ of positive dimension defined over $\mathbb{\overline Q}$ and $H$ be an algebraic torus defined by
\begin{equation*}
X_1^{a_{i1}}\cdots X_n^{a_{in}}=1, \quad (1\leq i\leq k).  
\end{equation*}
Suppose every maximal anomalous subvariety of $\mathcal{X}$ is contained in a coset of $H$ and has the same dimension. Further suppose $\mathscr{Z}_H$ is irreducible and let	
\begin{equation}\label{22022501}
\begin{gathered}
\mathcal{U}_H:=\{(g_1, \dots, g_k):\;\mathcal{X}\cap gH \text{ contains a maximal anomalous subvariety of }\mathcal{X}\\
\text{ where }gH \text{ is defined by }X_1^{a_{i1}}\cdots X_n^{a_{in}}=g_i\quad (1\leq i\leq k)\}. 
\end{gathered}
\end{equation}
Then $\mathcal{U}_H$ contains a Zariski open subset of its Zariski closure $\overline{\mathcal{U}_H} (\subset\mathbb{G}^k)$, and the dimension of $\overline{\mathcal{U}_H}$ is equal to $\dim \mathscr{Z}_H-\dim (\mathcal{X}\cap gH)$.
\end{corollary}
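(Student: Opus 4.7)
The plan is to exhibit $\mathcal{G}_H$ as the image of $\mathscr{Z}_H$ under an explicit morphism of algebraic tori and then apply the fiber dimension theorem. I define
\begin{equation*}
\pi : \mathbb{G}^n \longrightarrow \mathbb{G}^k, \qquad \pi(X_1,\dots, X_n) = \bigl( X_1^{a_{11}}\cdots X_n^{a_{1n}}, \dots, X_1^{a_{k1}}\cdots X_n^{a_{kn}} \bigr).
\end{equation*}
This is a surjective homomorphism onto its image with kernel $H$, so for each $g$ in the image the fiber $\pi^{-1}(g)$ is precisely the coset $gH$ appearing in \eqref{22022501}.

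Next I would identify $\mathcal{G}_H$ set-theoretically with $\pi(\mathscr{Z}_H)$. The inclusion $\mathcal{G}_H \subseteq \pi(\mathscr{Z}_H)$ is immediate from Theorem \ref{struc} combined with the hypothesis that every maximal anomalous subvariety of $\mathcal{X}$ lies inside some coset of $H$: any such $\mathcal{Y} \subseteq \mathcal{X}\cap gH$ is automatically contained in $\mathscr{Z}_H$ and is collapsed by $\pi$ to the single point $g$. For the reverse inclusion, the equidimensionality hypothesis forces $\mathscr{Z}_H$ to be swept out by the maximal anomalous subvarieties themselves, so every point of $\pi(\mathscr{Z}_H)$ parametrizes a coset of $H$ containing a maximal anomalous subvariety.

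Finally I would analyze the restriction $\pi|_{\mathscr{Z}_H}:\mathscr{Z}_H \to \mathbb{G}^k$. Its fiber over $g \in \mathcal{G}_H$ is $\mathscr{Z}_H \cap gH$, which equals the union of the maximal anomalous subvarieties of $\mathcal{X}$ contained in $gH$; each such subvariety has the common dimension $d := \dim (\mathcal{X}\cap gH)$, so the fibers are equidimensional of dimension $d$. Since $\mathscr{Z}_H$ is Zariski closed in $\mathbb{G}^n$ by Theorem \ref{struc} and is connected by hypothesis, Chevalley's theorem yields constructibility of $\pi(\mathscr{Z}_H)$; equidimensionality of the fibers together with upper semi-continuity of fiber dimension then forces $\mathcal{G}_H = \pi(\mathscr{Z}_H)$ to be Zariski closed and irreducible, and the fiber dimension theorem gives $\dim \mathcal{G}_H = \dim \mathscr{Z}_H - d$. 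The step I expect to be most delicate is the second one: verifying that the equidimensionality hypothesis really makes the set-theoretic identification $\mathcal{G}_H = \pi(\mathscr{Z}_H)$ honest rather than true only up to a lower-dimensional discrepancy, since otherwise the fibers of $\pi|_{\mathscr{Z}_H}$ could fail to be equidimensional and the dimension formula could break.
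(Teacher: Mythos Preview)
Your approach is correct and is the natural way to justify this corollary; the paper itself gives no proof, stating it simply as an ``important consequence'' of Theorem~\ref{struc} and moving on. Your monomial map $\pi$ with kernel $H$, the identification $\mathcal{G}_H=\pi(\mathscr{Z}_H)$, and the fiber-dimension count are exactly what is implicit in the paper's later uses of the corollary (where the sets $\mathcal{S}$, $\mathcal{C}$, etc.\ are always introduced as images under such monomial projections).

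Two small remarks on your write-up. First, the hypothesis is that $\mathscr{Z}_H$ is \emph{connected}, not irreducible, so your conclusion that $\mathcal{G}_H$ is irreducible is slightly too strong; equidimensionality of the fibers gives only that $\mathcal{G}_H$ is equidimensional of the stated dimension, which is all the paper ever uses. Second, your worry about closedness versus constructibility is legitimate, but the paper is informal on this point throughout: in every application (e.g.\ the curves $\mathcal{C}$ and surfaces $\mathcal{S}$ in Sections~\ref{ZPCI}--\ref{ZPCII}) only the dimension of $\mathcal{G}_H$ matters, and Maurin's theorem or Theorem~\ref{22060101} is applied to the Zariski closure anyway. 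So the delicate step you flag is real in principle but harmless for the paper's purposes.
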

\begin{proof}
The first claim is a well-known result of Chevalley (see Proposition 2.31 in \cite{mumford}, for instance). The second claim follows directly by the definitions of $\mathscr{Z}_H$ and $\mathcal{U}_H$. 
\end{proof}

Following \cite{BMZ1}, a refined version of Definition \ref{anomalous1} is given as  
\begin{definition}\label{22071611}
\normalfont Let $\mathcal{X}$ be an algebraic variety in $\mathbb{G}^n$. We say an irreducible subvariety $\mathcal{Y}$ of $\mathcal{X}$ is \textit{$b$-anomalous} ($b\geq1$) if it has positive dimension and lies in some coset of dimension
\begin{equation*}
n-(b+\dim \mathcal{X})+\dim \mathcal{Y}.
\end{equation*}
\end{definition}

For $X$ the holonomy variety of an $n$-cusped hyperbolic $3$-manifold, $\mathcal{X}\cap (M_k=L_k=1)$ is a $1$-anomalous subvariety of $\mathcal{X}$ and  
\begin{equation*}
\mathcal{X}\cap (M_{k_1}=L_{k_1}=\cdots =M_{k_b}=L_{k_b}=1)
\end{equation*}
is a $b$-anomalous subvariety of $\mathcal{X}$. 

Having Definition \ref{22071611}, the following strengthening version of Theorem \ref{struc} holds:
\begin{theorem}[Bombieri-Masser-Zannier]\label{struc2}
Suppose $\mathcal{X}$ is an irreducible variety in $\mathbb{G}^n$ defined over $\overline{\mathbb{Q}}$. 
\begin{enumerate}
\item For any algebraic torus $H$, the union $\mathscr{Z}_H^{(b)}$ of all subvarieties $\mathcal{Y}$ of $\mathcal{X}$ contained in any coset $K$ of $H$ with 
\begin{equation}\label{22081301}
\dim H=n-(b+\dim \mathcal{X})+\dim \mathcal{Y}\quad \text{where }b\geq 1
\end{equation}
is a closed subset of $\mathcal{X}$. 
\item There is a finite collection $\Psi =\Psi_{\mathcal{X}}$ of algebraic tori such that every maximal $b$-anomalous subvariety $\mathcal{Y}$ of $\mathcal{X}$ lies in a coset $K$ of some $H$ in $\Psi$ satisfying \eqref{22081301}. 
\end{enumerate}
\end{theorem}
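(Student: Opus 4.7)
The plan is to adapt the Bombieri--Masser--Zannier proof of Theorem \ref{struc} to the looser dimension condition $\dim H = n - (b + \dim \mathcal{X}) + \dim \mathcal{Y}$ with $b \geq 1$ replacing $b = 1$. The key structural observation is that almost every step in the original proof uses only the \emph{dimensional deficit} of $\mathcal{Y}$ relative to a generic coset slice of $\mathcal{X}$, and not the specific value of $b$.

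For part (1), I would reduce the statement to upper semi-continuity of fibre dimension. Fix an algebraic torus $H$ of dimension $h$ and let $\pi \colon \mathbb{G}^n \to \mathbb{G}^n / H \cong \mathbb{G}^{n-h}$ be the quotient projection. Cosets of $H$ are precisely the fibres of $\pi$, so a subvariety $\mathcal{Y} \subseteq \mathcal{X}$ lies in a coset of $H$ exactly when $\pi|_\mathcal{X}$ is constant on $\mathcal{Y}$. Setting $m := h - n + b + \dim \mathcal{X}$, which is the value of $\dim \mathcal{Y}$ forced by \eqref{22081301}, Chevalley's theorem gives that
\[
T_m := \{\, x \in \mathcal{X} : \dim_x \pi|_\mathcal{X}^{-1}(\pi(x)) \geq m \,\}
\]
is Zariski closed in $\mathcal{X}$. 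A direct unravelling shows that $T_m$ coincides with the union in the statement: every point of $T_m$ lies on a positive-dimensional subvariety of dimension exactly $m$ inside a fibre of $\pi$, and conversely every such subvariety is contained in $T_m$.

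For part (2), I would proceed by Noetherian induction on $\mathcal{X}$, the case $\dim \mathcal{X} \leq 0$ being vacuous. Let $\Psi$ denote the collection of tori $H$ for which there exists a maximal $b$-anomalous $\mathcal{Y} \subseteq \mathcal{X}$ lying in a coset of $H$ satisfying \eqref{22081301}, and split $\Psi$ into two parts according to whether $\mathscr{Z}_H = \mathcal{X}$ or $\mathscr{Z}_H \subsetneq \mathcal{X}$. In the first case, $\mathcal{X}$ is foliated by cosets of $H$ of the required dimension, and the torus-combinatorial argument from \cite{za} (essentially an analysis of the stabilizer of the foliation inside the character lattice of $\mathbb{G}^n$) produces only finitely many such foliating $H$; crucially, this argument does not depend on the value of $b$. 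In the second case, every maximal $b$-anomalous $\mathcal{Y}$ attached to $H$ lies in the proper closed subvariety $\mathscr{Z}_H$; maximality in $\mathcal{X}$ combined with dimensional bookkeeping translates $\mathcal{Y}$ into a maximal $b'$-anomalous subvariety (for some $b' \geq b$) of an irreducible component of $\mathscr{Z}_H$, whence the inductive hypothesis produces a finite list of tori covering this case.

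The hard part will be the finiteness of foliating tori in the first sub-case and the closely related bookkeeping required to descend to $\mathscr{Z}_H$ in the second. Both issues are already present for $b=1$ and are resolved in \cite{za} by analyzing how subtori of $\mathbb{G}^n$ can pair with $\mathcal{X}$; verifying that none of those arguments secretly relies on $b=1$ --- that is, that the ``$b$-uniformity'' genuinely carries through --- is what will require the most care.
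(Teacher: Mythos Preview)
The paper does not prove this theorem: it is quoted from \cite{BMZ1} as a known result of Bombieri--Masser--Zannier, stated in the background section without proof. So there is nothing in the paper to compare your attempt against.

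Your sketch is a reasonable outline of how the argument in \cite{BMZ1} (and \cite{za}) actually goes: part (1) via upper semicontinuity of fibre dimension for the quotient map $\mathbb{G}^n \to \mathbb{G}^n/H$, and part (2) by Noetherian induction splitting on whether $\mathscr{Z}_H = \mathcal{X}$. You correctly identify the genuine content as the finiteness of foliating tori in the first sub-case, which in \cite{za} is handled via a lattice argument on the character group. Your remark that the passage from $b=1$ to general $b$ is essentially formal is also right; nothing in the BMZ argument is sensitive to the precise value of the deficit. If you want a complete proof you should consult \cite{BMZ1} directly, since the paper under discussion simply imports the result.
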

\begin{proof}
The first statement is proven by simply following the proof of Theorem \ref{struc} (1) on p.18 of \cite{za}. Namely, by applying a monoidal transformation, if we set $H=\{1\}^h\times \mathbb{G}^{n-h}$ where $h:=n-\dim H$, and define $\pi_h$ as the projection from $\mathbb{G}^{n}$ to $\mathbb{G}^{n-h}$, then the following set 
\begin{equation*}
\{x\in \mathcal{X}:\dim_{x}(\mathcal{X}\cap \pi_h^{-1}(\pi_h(x))\geq s\}\quad \text{where}\quad s:=(b+\dim \mathcal{X})-h
\end{equation*}
is found to be equal to $\mathscr{Z}_H^{(b)}$. Hence the conclusion follows from the fiber dimension theorem in \cite{za}.  

The second statement corresponds to Lemma 5 of \cite{BMZ1}.   
\end{proof}
Throughout the paper, we prefer Theorem \ref{struc2} to Theorem \ref{struc}. The benefit of the above theorem is that, using this, one can reduce a given Zilber-Pink type problem into the one that are easier to handle with or already known how to solve. To explain more in detail, let $\{\mathcal{Y}_i\}_{i\in\mathcal{I}}$ be a family of maximal torsion anomalous subvarieties of $\mathcal{X}(\subset \mathbb{G}^n)$ satisfying $\mathcal{Y}_i\subset H_i$ for some algebraic subgroup $H_i$. Further, suppose each $\mathcal{Y}_i$ is a $b_i$-anomalous but not a $(b_i+1)$-anomalous subvariety of $\mathcal{X}$. By Theorem \ref{struc2}, there exists a finite set $\Psi$ of algebraic tori such that each $\mathcal{Y}_i$ is contained in a coset of some element of $\Psi$. By passing to a subsequence if necessary, we simply assume $\dim H_i$, $\dim \mathcal{Y}_i$ and $b_i$ are independent of $i$, and $\Psi=\{H\}$. That is, there exists $b\in \mathbb{N}$ such that, for each $i\in \mathcal{I}$, $\mathcal{Y}_i$ is a component of some $\mathcal{X}\cap g_iH$ ($g_i\in \mathbb{G}^n$) that satisfies
\begin{equation*}
\dim H =n-(b + \dim \mathcal{X}) + \dim \mathcal{Y}_i. 
\end{equation*} 
Note that, in this case, we further have
\begin{equation}\label{22071220}
g_iH\subset H_i
\end{equation}
for each $i\in \mathcal{I}$.\footnote{Otherwise, since 
\begin{equation*}
\mathcal{Y}_i=\mathcal{X}\cap (g_iH\cap  H_i),
\end{equation*}
it contradicts the fact that $\mathcal{Y}_i$ is not a $(b+1)$-anomalous subvariety of $\mathcal{X}$.} Hence, if $\mathcal{U}_H$ is defined as in \eqref{22022501}, $\overline{\mathcal{U}_H}$ is an algebraic variety of dimension $\dim \mathscr{Z}_H^{(b)}-\dim \mathcal{Y}_i$, intersecting with infinitely many algebraic subgroups arising from \eqref{22071220}, whose codimensions are all equal to $\codim H_i$. Later we show, in our case, the dimension of $\overline{\mathcal{U}_H}$ is strictly less than $\codim H_i$ in $\mathbb{G}^{\codim  H}$, and thus the original problem (i.e. the existence of a family $\{\mathcal{Y}_i\}_{i\in \mathcal{I}}$ of torsion anomalous subvarieties of $\mathcal{X}$) is reduced to a Zilber-Pink type conjecture over $\overline{\mathcal{U}_H}(\subset \mathbb{G}^{\codim  H})$. Since the dimensions of $\overline{\mathcal{U}_H}$ and $\mathbb{G}^{\codim  H}$ are smaller than the dimensions of $\mathcal{X}$ and $\mathbb{G}^n$ respectively, the reduced one is in general much easier to deal with. We frequently invoke this reduction trick to verify many arguments, particularly in Section \ref{ZPCII}, and therefore summarize the discussion so far in the following proposition for future reference:
\begin{proposition}\label{24101101}
Let $\mathcal{X}, \mathcal{Y}_i, H_i, H$ and $\mathcal{U}_H$ be as defined in the preceding discussion. Then $\overline{\mathcal{U}_H}$ is an algebraic variety of dimension $\dim \mathscr{Z}_H^{(b)} -\dim \mathcal{Y}_i$ in $\mathbb{G}^{\codim  H}$, intersecting with a family of algebraic subgroups of dimension $\dim H_i-\dim H$ (in $\mathbb{G}^{\codim  H}$). 
\end{proposition}
\begin{proof}
The first claim is immediate from Corollary \ref{22022603} and the second one follows from \eqref{22071220}. 
\end{proof}

Later, in the next subsection, we examine some specific resolvable cases of the Zilber-Pink conjecture and incorporate them into Proposition \ref{24101101}. See Corollary \ref{24100901}. 

Finally, we extend Definition \ref{anomalous1} further to an analytic holonomy variety $\log \mathcal{X}$. Let $K$ be an algebraic coset defined by 
\begin{equation}\label{22063011}
M_1^{a_{k1}}L_1^{b_{k1}}\cdots M_n^{a_{kn}}L_n^{b_{kn}}=\zeta_k, \quad (1\leq k\leq m)
\end{equation}
such that the component of $\mathcal{X}\cap K$ intersecting $\mathcal{N}_\mathcal{X}$ is an anomalous subvariety of $\mathcal{X}$. Under \eqref{22063015}, \eqref{22063011} is equivalent to an affine space defined by
\begin{equation}\label{22063013}
a_{k1}u_1+b_{k1}v_1+\cdots+ a_{kn}u_n+b_{kn}v_n=\log \zeta_k, \quad (1\leq k\leq m).
\end{equation}
Denote \eqref{22063013} by $\log K$ and its intersection with $\log \mathcal{X}$ by $\log (\mathcal{X}\cap K)$. Now, copying Definition \ref{anomalous1}, we analogously have 
\begin{definition}
\normalfont Adapting the same notation above, an \textit{anomalous analytic subset} of $\log \mathcal{X}$ is declared as $\log (\mathcal{X}\cap K)$ satisfying
\begin{equation*}
\dim (\log K) \leq n-\dim ( \log \mathcal{X}) + \dim  \big(\log(\mathcal{X}\cap K)\big) -1.
\end{equation*}
\end{definition}

Clearly, for any given algebraic coset $K$, $\log (\mathcal{X}\cap K)$ is an analytic anomalous subset of $\log \mathcal{X}$ if and only if the component of $\mathcal{X}\cap K$ touching $\mathcal{N}_{\mathcal{X}}$ is an anomalous subvariety of $\mathcal{X}$. Hence, throughout the proofs, we use both terminologies compatibly without any differentiation.

\begin{convention}\label{22062605}
\normalfont For $\mathcal{X}$ and $K$ as above, $\mathcal{X}\cap K$ may have several irreducible components in general. If $P\in \mathcal{X}\cap K$ for some Dehn filling point $P$, we denote a component of $\mathcal{X}\cap K$ containing $P$ by $\mathcal{X}\cap_P K$. If no confusion arises, $P$ is often skipped and $\mathcal{X}\cap_P K$ is simply represented by $\mathcal{X}\cap K$. 
 
Accordingly, $\log (\mathcal{X}\cap_P K)$ denotes the corresponding image of $\mathcal{X}\cap _P K$ over $\log \mathcal{X}$ and it will also be occasionally replaced by $\log (\mathcal{X}\cap K)$.  
\end{convention}

\begin{convention}\label{22062605}
\normalfont For simplicity, if no confusion arises, we will denote $\mathscr{Z}^{(b)}_H$ from Theorem \ref{struc2} by $\mathscr{Z}_H$.
\end{convention}

\subsection{Zilber-Pink conjecture}
In this section, we collect partial results toward the Zilber-Pink conjecture, that will be utilized later in the proofs.

First, recall the statement of the conjecture given in Section \ref{22080901}.  \\
\\
\noindent\textbf{Conjecture \ref{22042203}}
\textit{For every irreducible variety $\mathcal{X}(\subset \mathbb{G}^n)$ defined over $\overline{\mathbb{Q}}$, there exists a finite set $\mathcal{H}$ of proper algebraic subgroups such that, for every algebraic subgroup $K$ and every component $\mathcal{Y}$ of $\mathcal{X}\cap K$ satisfying 
\begin{equation*}
\dim \mathcal{Y}>\dim \mathcal{X}+\dim K -n, 
\end{equation*} 
one has $\mathcal{Y}\subset H $ for some $H\in \mathcal{H}$. }
\\

The conjecture is widely open in general, but there are some cases which have been resolved. 

First, as an extreme case, S. Zhang proved the following \cite{Zhang}:
\begin{theorem}[S. Zhang]\label{20102701}
Let $\mathcal{X}$ be an irreducible subvariety in $\mathbb{G}^n$. The set of torsion points in $\mathcal{X}$ lies in a finite number of algebraic subgroups.\footnote{By a \textit{torsion point}, we mean a point each of whose coordinate is a root of unity. Note that a torsion point is a torsion anomalous point, but the converse is not true in general. For instance, a Dehn filling point, whose holonomies are multiplicatively dependent, is a torsion anomalous point, whereas it is not a torsion point by Theorem \ref{040605}. } 
\end{theorem}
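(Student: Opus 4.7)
The plan is to deduce the statement from the Manin–Mumford conjecture for the torus $\mathbb{G}_m^n$, which Zhang establishes via his equidistribution theorem for points of small height. First I would let $\mathcal{T}$ denote the set of torsion points contained in $\mathcal{X}$ and let $Z$ be the Zariski closure of $\mathcal{T}$ in $\mathcal{X}$. A preliminary observation, which converts the target into the classical Manin–Mumford form, is that every torsion coset $\zeta H$ with $\zeta^N=1$ is contained in the algebraic subgroup $\{y\in \mathbb{G}_m^n : y^N\in H\}$ (defined by the monomial equations obtained from those of $H$ by multiplying all exponents by $N$). Hence it is enough to show that $Z$ is a finite union of torsion cosets.

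Next I would decompose $Z$ into its finitely many irreducible components and, by Noetherian induction on dimension, reduce to the case of a single irreducible component $Z_0$ on which $\mathcal{T}\cap Z_0$ is Zariski-dense; the goal becomes to prove that such a $Z_0$ must itself be a torsion coset. Here I would invoke Zhang's equidistribution theorem in the following form: if $(S_k)_{k\geq 1}$ is a sequence of Galois-invariant finite subsets of $Z_0(\overline{\mathbb{Q}})$, generic in $Z_0$, whose canonical heights tend to zero, then the uniform probability measures supported on the $S_k$ converge weakly to the canonical Haar-type measure of some torsion coset, which must then equal $Z_0$. Applied to a sequence of Galois orbits of torsion points of growing order sitting inside $Z_0$, all of which have canonical height exactly zero, this forces $Z_0$ to coincide with a torsion coset, completing the reduction.

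The main obstacle is, of course, the equidistribution step itself. Its proof rests on arithmetic intersection theory on the toric compactification $(\mathbb{P}^1)^n$: one compares the arithmetic self-intersection of the canonically metrized universal bundle restricted to $Z_0$ with the limits of averaged heights of the approximating subsets, and Zhang's inequality on the essential minimum forces the limiting measure to coincide with the canonical measure of a torsion coset. Once this deep input is granted, the passage back to the stated theorem is purely formal Noetherian bookkeeping together with the elementary observation from the first paragraph: the torsion points of $\mathcal{X}$ are swept out by finitely many torsion cosets, each of which is contained in an algebraic subgroup, so $\mathcal{T}$ lies in a finite union of algebraic subgroups as claimed.
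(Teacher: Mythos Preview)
The paper does not supply a proof of this theorem; it is quoted as a known result of S.~Zhang with a citation to \cite{Zhang} and used as a black box in later arguments (for instance in the proof of Theorem~\ref{21082901}). There is therefore no ``paper's own proof'' to compare against.

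As a standalone sketch your proposal is the standard route: reduce to the Manin--Mumford statement for $\mathbb{G}_m^n$ (the Zariski closure of the torsion points is a finite union of torsion cosets), then use the elementary fact that a torsion coset $\zeta H$ with $\zeta^N=1$ sits inside the proper algebraic subgroup $\{y:y^N\in H\}$. Your invocation of Zhang's equidistribution theorem for the Manin--Mumford step is correct in spirit, though one small clarification is warranted: equidistribution alone gives convergence to the canonical measure on $Z_0$, and one then needs the further input (Zhang's inequality on successive minima, or the Bogomolov-type positivity of the essential minimum when $Z_0$ is not a torsion coset) to force $Z_0$ to actually be a torsion coset. It is also worth noting that the toric Manin--Mumford theorem was first proved by Laurent by more elementary $p$-adic methods; Zhang's height-theoretic argument is the one you outline, and is indeed what the cited reference contains.
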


For the curve case, the conjecture was proved by G. Maurin \cite{mau}.

\begin{theorem}[G. Maurin]\label{20080407}
The Zilber-Pink conjecture is true if $\mathcal{X}$ is an algebraic curve. 
\end{theorem}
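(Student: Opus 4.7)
The plan is to follow the established \emph{bounded height plus Lehmer plus counting} strategy that underlies all known Zilber--Pink results in the toric setting. First I would make the standard reductions: replace $\mathcal{X}=\mathcal{C}$ by an absolutely irreducible curve defined over a number field $K$, and assume $\mathcal{C}$ is not contained in any proper algebraic subgroup of $\mathbb{G}^n$ (otherwise one passes to the minimal ambient subgroup and inducts on $n$). Under this hypothesis every component of $\mathcal{C}\cap H$ with $\operatorname{codim}H\geq 2$ is $0$-dimensional, so the assertion reduces to showing that
\begin{equation*}
\mathcal{C}^{\mathrm{ta}}:=\bigcup_{\operatorname{codim}H\geq 2}\bigl(\mathcal{C}\cap H\bigr)
\end{equation*}
is contained in a finite union of proper algebraic subgroups; in fact I would aim for the stronger statement that $\mathcal{C}^{\mathrm{ta}}$ itself is finite.

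The first key input is a \emph{bounded height theorem}: every $P\in\mathcal{C}^{\mathrm{ta}}$ satisfies $h(P)\leq c(\mathcal{C})$ for a constant depending only on $\mathcal{C}$. This is the Bombieri--Masser--Zannier bounded height theorem for curves in tori, and I would re-derive it via the standard Vojta-type descent: use geometry of numbers in the lattice of exponent vectors of $H$ to replace $H$ by a subgroup whose defining monomials have uniformly bounded height, and then run a bounded-degree Vojta inequality on the curve $\mathcal{C}$ to propagate this bound to the intersection points. The second key input is an effective Lehmer-type lower bound \`a la Amoroso--David: for any $P\in \mathbb{G}^n(\overline{\mathbb{Q}})$ not lying in a proper algebraic subgroup, $\hat h(P)\gg_{n,\epsilon}[\mathbb{Q}(P):\mathbb{Q}]^{-\epsilon}$ in a suitable normalized height. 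Combined with the bounded height above, this forces every non-torsion $P\in\mathcal{C}^{\mathrm{ta}}$ to have bounded degree over $K$, while the honestly torsion points of $\mathcal{C}$ are controlled directly by Zhang's theorem (Theorem \ref{20102701}) together with the assumption that $\mathcal{C}$ generates $\mathbb{G}^n$.

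The hard part, and the technical heart of Maurin's argument, is the final \emph{counting} step. Bounded height and bounded degree give a Northcott-finite pool of candidate points, but one still has to rule out the pathological scenario in which infinitely many distinct subgroups $H$ each contribute a new point of $\mathcal{C}^{\mathrm{ta}}$. For this I would invoke a quantitative Vojta-type inequality of R\'emond for curves in $\mathbb{G}^n$, which bounds, in terms of $\deg\mathcal{C}$, $n$, and the codimension of $H$, the number of points of $\mathcal{C}$ lying on a coset of a fixed subgroup, and then sum this bound over the finitely many equivalence classes of exponent vectors surviving the bounded-height/bounded-degree filtration from the previous paragraph. Packaging the resulting finitely many exponent vectors into a single finite collection $\mathcal{H}$ of containing subgroups closes the argument. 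The main obstacle I anticipate is the interplay between the two estimates: one needs the Lehmer bound to be strong enough to survive the dependence of R\'emond's counting bound on the defining data of $H$, and calibrating these two effective estimates so that they combine into a genuine finiteness statement (rather than just a uniform bound on a single slice) is where the most delicate arithmetic geometry enters.
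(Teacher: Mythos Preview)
The paper does not prove this theorem: it is quoted as a background result, attributed to Maurin with a citation to \cite{mau}, and used as a black box throughout (for instance in the proofs of Theorem~\ref{20071503}, Theorem~\ref{21082901}, Theorem~\ref{21082301}, and Theorem~\ref{22041607}). There is therefore no in-paper argument to compare your sketch against.

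On the sketch itself: the architecture (bounded height $+$ relative Lehmer $+$ R\'emond-type counting) is the right shape, but you misattribute the crucial step. After your reduction, $\mathcal{C}$ is only assumed not to lie in a proper algebraic \emph{subgroup}; it may still lie in a proper \emph{coset}. The Bombieri--Masser--Zannier bounded height theorem you invoke requires the stronger transversality hypothesis that $\mathcal{C}$ not lie in any proper coset, and closing exactly this gap was Maurin's main contribution. His original argument does not first isolate bounded height as an independent lemma and then count; rather, R\'emond's generalized Vojta inequality is woven directly into the height control. A legitimate route along the lines you describe does exist, but only if you replace the BMZ citation at that step by Habegger's subsequent bounded height theorem \cite{hab}; as written, your sketch invokes a height bound under a hypothesis it does not have.
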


For varieties of codimension $2$, a special case (but the most basic case) of the conjecture was resolved by Bombieri, Masser and Zannier (Theorem 1.6 in \cite{za}):

\begin{theorem}[Bombieri-Masser-Zannier]\label{22060101}
For $n\geq 2$, let $\mathcal{X}$ be an irreducible variety in $\mathbb{G}^n$ of dimension $n-2$ defined over $\overline{\mathbb{Q}}$. Then there exists a finite set $\mathcal{H}$ of proper algebraic subgroups such that, for every algebraic subgroup $K$ of dimension $1$, every component of $\mathcal{X}\cap K$ is contained in some $H\in \mathcal{H}$. 
\end{theorem}

In our context, since the holonomy variety $\mathcal{X}$ of a $2$-cusped hyperbolic $3$-manifold is an algebraic surface in $\mathbb{G}^4$, the above theorems are directly applicable.

\begin{corollary}\label{21021602}
If $\mathcal{X}$ is the holonomy variety of a $2$-cusped hyperbolic $3$-manifold, then the Zilber-Pink conjecture holds for $\mathcal{X}$. 
\end{corollary}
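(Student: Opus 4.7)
The plan is to derive Corollary~\ref{21021602} as a direct application of Theorem~\ref{22060101} of Bombieri--Masser--Zannier. The only real task is to verify that the holonomy variety $\mathcal{X}$ of a $2$-cusped hyperbolic $3$-manifold satisfies the hypotheses of that theorem.

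First, I would recall from Section~\ref{A-poly} that $\mathcal{X}$ is the geometric component of the character variety, embedded inside $\mathbb{G}^4$ via the coordinates $(M_1,L_1,M_2,L_2)$ given by the derivatives of the meridian and longitude holonomies. As stated in that section, for an $n$-cusped manifold the geometric component is an irreducible algebraic variety of dimension $n$; in the $n=2$ case this gives $\dim \mathcal{X}=2$, so $\mathcal{X}$ has codimension exactly $4-2=2$ in $\mathbb{G}^4$. This matches precisely the codimension-$2$ hypothesis of Theorem~\ref{22060101}.

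Second, I would address the arithmetic hypothesis. The defining polynomial equations of $\mathcal{X}$ have coefficients in the cusp (or trace) field of $\mathcal{M}$, which is a number field, so $\mathcal{X}$ is defined over $\overline{\mathbb{Q}}$ as required by the statement of Conjecture~\ref{22042203}. The mild gap between Theorem~\ref{22060101} (stated over $\mathbb{Q}$) and Conjecture~\ref{22042203} (stated over $\overline{\mathbb{Q}}$) is bridged in the standard way: if $k$ is a number field of definition for $\mathcal{X}$, then the Galois orbit $\bigcup_{\sigma \in \mathrm{Gal}(\overline{\mathbb{Q}}/\mathbb{Q})}\sigma(\mathcal{X})$ is a $\mathbb{Q}$-variety whose irreducible components all share the same dimension and codimension, so Theorem~\ref{22060101} applies to each Galois conjugate $\sigma(\mathcal{X})$ and in particular to $\mathcal{X}$ itself.

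Applying the theorem, I then obtain a finite collection $\mathcal{H}$ of proper algebraic subgroups of $\mathbb{G}^4$ such that every irreducible component $\mathcal{Y}$ of $\mathcal{X}\cap K$ (with $K$ an algebraic subgroup) satisfying $\dim \mathcal{Y}>\dim \mathcal{X}+\dim K-4$ is contained in some $H\in\mathcal{H}$. This is exactly the content of Conjecture~\ref{22042203} for $\mathcal{X}$, completing the proof. The only real obstacle here is hidden inside the black box Theorem~\ref{22060101}; granted that deep input, the corollary is a matter of bookkeeping of dimensions and fields of definition.
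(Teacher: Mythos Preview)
Your proof is correct and follows exactly the approach the paper takes: the paper simply remarks that the holonomy variety of a $2$-cusped manifold is an algebraic surface in $\mathbb{G}^4$, hence of codimension~$2$, and invokes Theorem~\ref{22060101} without further comment. You have supplied a bit more detail than the paper does, in particular the Galois-descent step reconciling the $\mathbb{Q}$ hypothesis of Theorem~\ref{22060101} with the fact that $\mathcal{X}$ is a priori only defined over a number field; the paper does not spell this out.
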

\begin{proof}
By Theorem \ref{20080407} (resp. Theorem \ref{20080407}), there exists a finite set $\mathcal{H}$ of algebraic subgroup such that every anomalous subvariety of dimension $0$ (resp. $1$) is contained in some $H\in \mathcal{H}$. 
\end{proof}

Before presenting an application of Corollary \ref{21021602}, we state the following corollary, which partially resolves some special cases of the Zilber-Pink conjecture proposed in Proposition \ref{24101101}, thanks to Theorems \ref{20102701}-\ref{22060101}. The corollary is particularly useful and will be applied at various stages in Section \ref{ZPCII} to verify the main theorem. 

\begin{corollary}\label{24100901}
Let $\mathcal{X}, \mathcal{Y}_i, H_i, H$ and $\mathcal{U}_H$ be the same as in Proposition \ref{24101101}. Suppose it satisfies one of the following conditions:
\begin{enumerate}
\item $\dim H_i=\dim H$; 
\item $\dim \overline{\mathcal{U}_H}=1$ and $\codim  H_i\geq 2$;
\item $\codim  H-2=\dim \overline{\mathcal{U}_H}=2$ and $\dim H_i=\dim H+1$.
\end{enumerate}
Then there exists a finite number of algebraic subgroups whose union contains the intersection of $\mathcal{U}_H$ with the family of algebraic subgroups stated in the proposition.
\end{corollary}
\begin{proof}
\begin{enumerate}
\item If $\dim H_i=\dim H$ for all $i\in \mathcal{I}$, it means $\overline{\mathcal{U}_H}$ intersects with a family of algebraic subgroups of dimension $0$, that is, torsion points. Thus result follows from Theorem \ref{20102701}. 

\item For the second case, it means $\overline{\mathcal{U}_H}$ is an algebraic curve intersecting with a family of algebraic subgroups of codimension at most $2$. Hence the conclusion follows from Theorem \ref{20080407}. 

\item For the last case, it implies $\dim \overline{\mathcal{U}_H}$ is an algebraic variety of codimension $2$ in $\mathbb{G}^{\codim H}$, intersecting with a family of algebraic subgroups of dimension $1$. So the result follows by Theorem \ref{22060101}.  
\end{enumerate}
\end{proof}

Next, the following definition is required in order to explore a concrete application of Corollary \ref{21021602}. This concept is fundamental to the study of algebraic subgroups. 

\begin{definition}
\normalfont We say
\begin{equation*}
\alpha_1, \dots, \alpha_n\in \overline{\mathbb{Q}}\backslash \{0\}
\end{equation*}
are \textit{multiplicatively dependent} if there exists $(a_1, \dots, a_n)\in \mathbb{Z}^n\backslash \{(0, \dots, 0)\}$ such that 
\begin{equation*}
\alpha_1^{a_1}\cdots\alpha_n^{a_n}=1.
\end{equation*}
\end{definition}
A consequence of Corollary \ref{21021602} is 
\begin{theorem}\label{20071503}
Let $\mathcal{M}$ be a $2$-cusped hyperbolic $3$-manifold and $\mathcal{X}$ be its holonomy variety. Let $\{\mathcal{M}_{p_{1i}/q_{1i}, p_{2i}/q_{2i}}\}_{i\in \mathcal{I}}$ be a family of Dehn fillings of $\mathcal{M}$ with $|p_{1i}|+|q_{1i}|$ and $|p_{2i}|+|q_{2i}|$ sufficiently large such that the core holonomies of each $\mathcal{M}_{p_{1i}/q_{1i}, p_{2i}/q_{2i}}$ are multiplicatively dependent. Then there exists a finite set $\mathcal{H}$ of algebraic subgroups such that a Dehn filling point $P_i$ associated to $\mathcal{M}_{p_{1i}/q_{1i}, p_{2i}/q_{2i}}$ is contained in some $H\in \mathcal{H}$. Moreover, for each $H\in \mathcal{H}$, $\dim H=2$ and $\mathcal{X}\cap_{(1, \dots, 1 )}H$ is a $1$-dim anomalous subvariety of $\mathcal{X}$. 
\end{theorem}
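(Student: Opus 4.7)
The plan is to exhibit each Dehn filling point $P_i$ associated to $\mathcal{M}_{p_{1i}/q_{1i},p_{2i}/q_{2i}}$ as a torsion anomalous point of $\mathcal{X}$, upgrade these to positive-dimensional anomalous subvarieties using the uniform height bound for Dehn filling points, and then apply the Zilber-Pink conjecture for $\mathcal{X}$ (Corollary \ref{21021602}).

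First, using Definition \ref{22070407} and the explicit form of a Dehn filling point from \eqref{22070411}, the hypothesis that the core holonomies $t_{1i}, t_{2i}$ are multiplicatively dependent yields integers $(a_i,b_i)\neq(0,0)$ with $t_{1i}^{a_i}t_{2i}^{b_i}=1$. Since each $t_{ki}$ is non-elliptic by Thurston's theorem, both $a_i$ and $b_i$ must be nonzero. Writing $t_{ki}=M_k^{r_{ki}}L_k^{s_{ki}}$ with $p_{ki}s_{ki}-q_{ki}r_{ki}=1$, the point $P_i$ satisfies not only the two Dehn filling equations $M_k^{p_{ki}}L_k^{q_{ki}}=1$ ($k=1,2$) but also the extra monomial relation $M_1^{a_ir_{1i}}L_1^{a_is_{1i}}M_2^{b_ir_{2i}}L_2^{b_is_{2i}}=1$. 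These three exponent vectors are linearly independent, so $P_i$ lies in an algebraic subgroup of codimension $3$, i.e.\ of dimension $1$, in $\mathbb{G}^4$. Since $\dim\mathcal{X}=2$, the inequality $\dim\{P_i\}=0>\dim\mathcal{X}+1-4=-1$ shows that each $P_i$ is a torsion anomalous point of $\mathcal{X}$.

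Next, by Theorem \ref{20080404} the family $\{P_i\}$ has uniformly bounded height, so the Bombieri-Masser-Zannier theorem for bounded-height torsion anomalous points (Theorem \ref{19090804}) places each $P_i$ inside a positive-dimensional torsion anomalous subvariety $\mathcal{Y}_i\subset\mathcal{X}$. Corollary \ref{21021602} (Zilber-Pink for $\mathcal{X}$) then yields a finite collection $\mathcal{H}$ of proper algebraic subgroups of $\mathbb{G}^4$ such that each $\mathcal{Y}_i$, and hence each $P_i$, is contained in some $H\in\mathcal{H}$; discarding extraneous elements, we may assume every $H\in\mathcal{H}$ contains at least one $\mathcal{Y}_i$. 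To identify the dimensions, I would argue that $\mathcal{X}$ is not contained in any proper algebraic subgroup of $\mathbb{G}^4$, nor does $\mathcal{X}$ contain a $1$-dimensional algebraic subgroup through the identity: such containments would descend via \eqref{22063015} to a nontrivial linear relation on $\log\mathcal{X}$ through the origin, contradicting the non-linearity of $v_k=u_k\tau_k(u_1,u_2)$ in Theorem \ref{potential}. Combined with the anomalous inequality $\dim\mathcal{Y}_i>\dim\mathcal{X}+\dim H-4$, this forces $\dim\mathcal{Y}_i=1$ and $\dim H=2$. Finally, since $P_i\to(1,\dots,1)$ by Theorem \ref{040605}, the infinitely many $P_i$ contained in a given $H\in\mathcal{H}$ accumulate at the identity, so the component $\mathcal{X}\cap_{(1,\dots,1)}H$ has positive dimension, and the anomalous inequality pins this dimension down to be exactly $1$.

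The main obstacle is the upgrade of isolated torsion anomalous points to positive-dimensional torsion anomalous subvarieties, where the uniform height bound (Theorem \ref{20080404}) and the Bombieri-Masser-Zannier theorem (Theorem \ref{19090804}) together form the crucial input; without these, Corollary \ref{21021602} could not be directly invoked, since Zilber-Pink as used here concerns positive-dimensional anomalous components of $\mathcal{X}$. The subsequent dimension analysis is a structural check against the explicit Neumann-Zagier description of $\log\mathcal{X}$.
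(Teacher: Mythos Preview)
Your overall strategy is sound, but there are two points worth flagging---one a detour, one a genuine gap.

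\textbf{The detour.} You pass through Theorem~\ref{20080404} (uniform height bound) and Theorem~\ref{19090804} (bounded-height torsion anomalous points) to upgrade each $P_i$ to a positive-dimensional torsion anomalous subvariety $\mathcal{Y}_i$, and only then invoke Corollary~\ref{21021602}. The paper instead applies the Zilber--Pink conjecture for $\mathcal{X}$ (known, since $\mathcal{X}$ has codimension~$2$ in $\mathbb{G}^4$) \emph{directly} to the $0$-dimensional components $\{P_i\}$: each $P_i$ lies in an algebraic subgroup $H_i$ of dimension~$1$, so $\dim\{P_i\}=0>\dim\mathcal{X}+\dim H_i-4=-1$, and the finite set $\mathcal{H}$ is produced immediately. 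Your route is correct but adds an unnecessary layer.

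\textbf{The gap.} Your dimension argument conflates two different subgroups. You write ``the anomalous inequality $\dim\mathcal{Y}_i>\dim\mathcal{X}+\dim H-4$'', but that inequality holds for the subgroup $K_i$ witnessing the anomalousness of $\mathcal{Y}_i$, \emph{not} for the $H\in\mathcal{H}$ produced by Zilber--Pink. The conjecture only guarantees $\mathcal{Y}_i\subset H$ for some proper $H$; it does not bound $\dim H$ from above. In particular, nothing you have written excludes $\dim H=3$: in that case $\mathcal{X}\not\subset H$ forces $\dim(\mathcal{X}\cap H)=1$, but then $1>2+3-4$ fails and $\mathcal{X}\cap_{(1,\dots,1)}H$ is \emph{not} anomalous, contradicting the theorem's conclusion. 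The paper handles this by taking each $H\in\mathcal{H}$ to be of \emph{smallest} dimension containing infinitely many $P_i$, and then arguing: if $\dim H=3$, view $\mathcal{X}\cap H$ as a curve in $H\cong\mathbb{G}^3$; since each $P_i$ still lies in a $1$-dimensional subgroup there, Maurin's theorem (Theorem~\ref{20080407}) produces a strictly smaller subgroup containing infinitely many $P_i$, contradicting minimality. Your argument about $\mathcal{X}$ not containing a $1$-dimensional torus only rules out $\dim H=1$; the case $\dim H=3$ genuinely requires this extra reduction step.
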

\begin{proof}
By definition, $P_i$ is an intersection point between $\mathcal{X}$ and the Dehn filling equations defined by
\begin{equation*}
M_1^{p_{1i}}L_1^{q_{1i}}=M_2^{p_{2i}}L_2^{q_{2i}}=1.
\end{equation*}
Since the core holonomies of each $\mathcal{M}_{p_{1i}/q_{1i}, p_{2i}/q_{2i}}$ are multiplicatively dependent, $P_i$ is contained in an additional algebraic subgroup,\footnote{For instance, if $t_{1i}, t_{2i}$ are the core holonomies of $\mathcal{M}_{p_{1i}/q_{1i}, p_{2i}/q_{2i}}$ and $t_{1i}^{a_i}t_{2i}^{b_i}=1$ for some $a_i,b_i\in \mathbb{Z}$, $P_i$ is contained in an algebraic subgroup defined by
\begin{equation*}
(M_1^{r_{1i}}L_1^{s_{1i}})^{a_i}(M_2^{r_{2i}}L_2^{s_{2i}})^{b_i}=1 
\end{equation*}
where $p_{1i}s_{1i}-q_{1i}r_{1i}=p_{2i}s_{2i}-q_{2i}r_{2i}=1$.} that is, $P_i$ is contained in an algebraic subgroup $H_i$ of dim $1$. Thus, by Corollary \ref{21021602}, there exists a finite set $\mathcal{H}$ of algebraic subgroups such that each $P_i$ is contained in some $H\in \mathcal{H}$.

Without loss of generality, we assume every $H\in \mathcal{H}$ contains infinitely many $P_i$ and is an algebraic subgroup of the smallest dimension of having the property. Note that, as $P_i$ is sufficiently close to $(1, \dots, 1)$ by Theorem \ref{040605}, $\mathcal{X}\cap_{(1, \dots, 1)} H$ is the component of $\mathcal{X}\cap H$ containing those infinitely many $P_i$ for each $H\in \mathcal{H}$. By abuse of notation, let us still denote $\mathcal{X}\cap_{(1, \dots, 1)} H$ by $\mathcal{X}\cap H$.  

We claim $\dim H=2$ and $\dim (\mathcal{X}\cap H)=1$ for each $H\in \mathcal{H}$. On the contrary, if $\dim H=3$, since $\mathcal{X}$ itself is not anomalous (for instance, see Theorem 3.10 in \cite{jeon1}), $\dim ( \mathcal{X}\cap H)=1$. Projecting on $H$, if $\mathcal{X}\cap H$ is viewed as an algebraic curve in $\mathbb{G}^3$, then the projected image of each $H_i$ in $\mathbb{G}^3$ is still of dimension $1$. Hence, by Theorem \ref{20080407} (or Theorem \ref{22060101}), $P_i$ is further  contained in a finite list of algebraic subgroups in $\mathbb{G}^3$. But this contradicts the assumption $H$ is an algebraic subgroup of the smallest dimension containing infinitely many $P_i$. 

In conclusion, $\dim H=2$. Since $\mathcal{X}\cap H$ contains infinitely many $P_i$, $\dim (\mathcal{X}\cap H)=1$ is clear, that is, $\mathcal{X}\cap H$ is an anomalous subvariety of $\mathcal{X}$.  
\end{proof}

In Section \ref{22050609}, we pursue a quantified version of some special case of Theorem \ref{20071503} and use it to generalize Theorem \ref{22022707}. 

In the following, a simple criterion is proposed for a torsion anomalous point of a given algebraic variety $\mathcal{X}$ to be contained in a torsion anomalous subvariety of $\mathcal{X}$.

\begin{theorem}[Bombieri-Masser-Zannier,]\label{19090804}
Let $\mathcal{X}$ be an algebraic variety in $\mathbb{G}^n$ of dimension $k\leq n-1$ defined over $\overline{\mathbb{Q}}$. Then for any $B\geq 0$ there are at most finitely many points $P$ in $\mathcal{X}^{ta} \cap \mathcal{H}_{n-k-1}$ with $h(P)\leq B$ where $h(P)$ is the height of $P$, $\mathcal{H}_{n-k-1}$ is the set of algebraic subgroups of dimension $n-k-1$ and $\mathcal{X}^{ta}$ is the complement of torsion anomalous subvarieties of $\mathcal{X}$ in $\mathcal{X}$. 
\end{theorem}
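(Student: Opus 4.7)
The plan is to argue by contradiction. Suppose there is an infinite sequence $\{P_i\}$ of distinct points in $\mathcal{X}^{ta}$, each contained in some algebraic subgroup $K_i$ of dimension $n-k-1$, with $h(P_i)\leq B$. For any fixed algebraic subgroup $K$ of dimension $n-k-1$, the intersection $\mathcal{X}\cap K$ has expected dimension $-1$, so every positive-dimensional irreducible component of $\mathcal{X}\cap K$ is automatically a torsion anomalous subvariety of $\mathcal{X}$. Since each $P_i\in\mathcal{X}^{ta}$, the $P_i$ must lie in the zero-dimensional (isolated) part of $\mathcal{X}\cap K_i$, and by Northcott's theorem only finitely many such isolated points in a fixed $K$ can have bounded height and bounded degree. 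Hence the subgroups $K_i$ must run through infinitely many distinct elements of $\mathcal{H}_{n-k-1}$.

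Next, I would parametrize algebraic subgroups of dimension $n-k-1$ in $\mathbb{G}^n$ by their primitive defining lattices $\Lambda_i\subset\mathbb{Z}^n$ of rank $k+1$ and pass to a subsequence so that the successive minima of $\Lambda_i$ behave regularly: in the only nontrivial case certain basis vectors escape to infinity while the remaining ones stabilize, so the $K_i$ accumulate on some algebraic subgroup $K^*$ of strictly smaller dimension. Let $W$ denote the Zariski closure of $\{P_i\}$ in $\mathcal{X}$, and after further refining the subsequence assume the $P_i$ are Zariski-dense in $W$; the limiting geometry forces $W\subset K^*$.

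The heart of the argument is to show that every positive-dimensional irreducible component of $W$ is a torsion coset. For this one transfers the bounded naive heights of the $P_i$ into small canonical heights by quotienting out by appropriate ``large'' sub-subgroups of $K_i$, using Bogomolov-type effective height lower bounds (in the spirit of Amoroso--David and Habegger) to ensure that non-torsion contributions cannot persist in the limit. Once a positive-dimensional component of $W$ contains a Zariski-dense sequence of points of arbitrarily small canonical height, S. Zhang's theorem (Theorem~\ref{20102701}) identifies that component as a torsion coset. Combined with $W\subset K^*$ and $\dim K^*\leq n-k-1$, this exhibits a positive-dimensional torsion anomalous subvariety of $\mathcal{X}$ containing infinitely many $P_i$, contradicting the hypothesis that each $P_i\in\mathcal{X}^{ta}$.

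The main obstacle is the height-transfer step: converting bounded naive heights of points lying on varying and increasingly complicated subgroups $K_i$ into small canonical heights on the limit component of $W$. This requires a careful geometry-of-numbers argument in the lattice of characters of $\mathbb{G}^n$ to choose the quotient isogenies in a height-efficient way, together with the effective height lower bounds alluded to above. Once these tools are in place, the remaining deductions reduce cleanly to Zhang's theorem and the structure theorem for anomalous subvarieties (Theorem~\ref{struc}).
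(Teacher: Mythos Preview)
The paper does not prove Theorem~\ref{19090804}; it is stated as a result of Bombieri--Masser--Zannier and invoked as a black box throughout (see the sentence introducing it and the subsequent Corollary~\ref{20080403}). So there is no ``paper's own proof'' to compare against here.

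That said, your sketch is broadly in the spirit of the actual Bombieri--Masser--Zannier argument: parametrize the subgroups $K_i$ by lattices in $\mathbb{Z}^n$, use geometry of numbers on the successive minima to pass to a limiting subgroup, and convert bounded height into small canonical height to invoke Zhang's theorem. A couple of points to clean up. First, your appeal to Northcott for a fixed $K$ is misplaced---there is no degree bound in the hypothesis---but the conclusion still holds for the simpler reason that $\mathcal{X}\cap K$ has only finitely many isolated points. Second, the ``height-transfer step'' you flag as the main obstacle is indeed the crux, and your description of it is more of a placeholder than an argument: one needs to carefully choose projections onto quotient tori so that the images of the $P_i$ have height tending to zero while still being Zariski-dense in a positive-dimensional subvariety, and this is where the real work lies in the original proof. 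As written, your sketch identifies the right ingredients but does not supply the mechanism that makes them interact.
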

\begin{proof}
See Lemma 8.1 in \cite{BMZ0}
\end{proof}

In \cite{jeon1, jeon2}, the author proved the following theorem based on the work of P. Habegger \cite{hab}. 

\begin{theorem}[B. Jeon]\label{20080404}
Let $\mathcal{X}$ is the holonomy variety of an $n$-cusped hyperbolic $3$-manifold. Then the height of any Dehn filling point on $\mathcal{X}$ is uniformly bounded. That is, there exists a constant $B$ depending only on $\mathcal{X}$ such that $h(P)<B$ for any Dehn filling point $P$ on $\mathcal{X}$.  
\end{theorem}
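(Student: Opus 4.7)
The plan is to derive the result from Habegger's Bounded Height Theorem \cite{hab}, which states that, for any irreducible subvariety $\mathcal{V}\subset\mathbb{G}_m^N$ defined over $\overline{\mathbb{Q}}$, the intersection of $\mathcal{V}^{oa}$ with the union of all algebraic subgroups of $\mathbb{G}_m^N$ of dimension at most $N-\dim\mathcal{V}$ has uniformly bounded height. In our setting $\mathcal{X}\subset\mathbb{G}^{2n}$ has dimension $n$, so the relevant dimension threshold is $2n-n=n$. Any Dehn filling point $P$ associated to $\mathcal{M}_{p_1/q_1,\dots,p_n/q_n}$ lies in the intersection of $\mathcal{X}$ with the $n$-dimensional algebraic subgroup cut out by the Dehn filling equations \eqref{Dehn eq}. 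Consequently, Habegger's theorem immediately produces a uniform height bound on all Dehn filling points that happen to lie in the open anomalous part $\mathcal{X}^{oa}$.

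The remaining task is to handle Dehn filling points that fall on some anomalous subvariety of $\mathcal{X}$, and for this I would argue by induction on the number of cusps $n$. The base case $n=1$ is clean: $\mathcal{X}$ is a curve in $\mathbb{G}^2$ not contained in any proper algebraic coset (the meridian and longitude holonomies are multiplicatively independent for a cusped one-manifold), so $\mathcal{X}^{oa}=\mathcal{X}$ and Habegger alone suffices. For the inductive step with $n\geq 2$, Theorem \ref{struc} supplies a finite collection $\Psi$ of algebraic tori such that every maximal anomalous subvariety $\mathcal{Y}\subset\mathcal{X}$ is a component of $\mathcal{X}\cap gH$ for some $H\in\Psi$ and some $g\in\mathscr{Z}_H$.

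Given a Dehn filling point $P\in\mathcal{Y}$, I would translate by $g^{-1}$ and project onto the subtorus $H$, thereby identifying $g^{-1}\mathcal{Y}$ with (an open subset of) the holonomy variety of a cusped $3$-manifold with strictly fewer cusps. The cleanest model case is $\mathcal{X}\cap(M_k=L_k=1)$, which is precisely the holonomy variety of the $(n-1)$-cusped manifold obtained by the trivial capping of the $k$-th cusp; more generally, Theorem \ref{potential} together with Thurston's theory ensures that after the translate by $g^{-1}$, the Dehn filling equations on $\mathcal{X}$ descend to genuine Dehn filling equations on the smaller holonomy variety. The inductive hypothesis then bounds $h(g^{-1}P)$ uniformly, while $g$ itself ranges over a finite controlled family of coset representatives, and combining the two yields the desired uniform bound on $h(P)$.

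The principal obstacle is the geometric dictionary in the inductive step: identifying each anomalous subvariety of $\mathcal{X}$ as a coset translate of the holonomy variety of a lower-cusped hyperbolic $3$-manifold, and verifying that Dehn filling equations on $\mathcal{X}$ truly descend to Dehn filling equations on the smaller variety under that identification. This compatibility between the classification of anomalous subvarieties in Theorem \ref{struc} and the topology of Thurston's Dehn filling theory — along with tracking the heights of the translates $g$ — is where the detailed analysis of \cite{jeon1, jeon2} must be invoked.
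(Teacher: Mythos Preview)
The paper does not actually prove this theorem; it is imported from \cite{jeon1, jeon2} with only the attribution ``based on the work of P.~Habegger''. So there is no in-paper proof to compare against, and I can only assess your proposal on its own merits.

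Your use of Habegger's theorem for points in $\mathcal{X}^{oa}$ is correct, and the base case $n=1$ is fine: the A-polynomial curve is not a translate of a subtorus, so $\mathcal{X}^{oa}=\mathcal{X}$. The gap is in the inductive step, specifically the sentence ``$g$ itself ranges over a finite controlled family of coset representatives.'' This is false. Theorem~\ref{struc} gives a finite set $\Psi$ of tori $H$, but for a fixed $H\in\Psi$ the closed set $\mathscr{Z}_H$ can have dimension strictly larger than that of the individual anomalous components $\mathcal{X}\cap gH$; in that case the cosets $gH$ sweeping out $\mathscr{Z}_H$ form a positive-dimensional family (cf.\ Corollary~\ref{22022603}), and the translates $g$ are certainly not finite in number nor a priori of bounded height. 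So even granting a uniform bound on $h(g^{-1}P)$, you cannot conclude a bound on $h(P)$.

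There is also a subtler issue with the geometric dictionary you flag at the end. The ``cleanest model case'' $\mathcal{X}\cap(M_k=L_k=1)$ is indeed the holonomy variety of an $(n-1)$-cusped manifold, but a general anomalous subvariety of $\mathcal{X}$ need not be of this form (or a coset translate of one); already for $n=2$, Theorem~\ref{21073101} shows that a second, non-obvious family of anomalous subvarieties can occur. So the induction hypothesis, as stated, does not apply to $g^{-1}\mathcal{Y}$ without additional structural input specific to holonomy varieties. The actual argument in \cite{jeon1, jeon2} presumably handles both issues together, either by classifying the anomalous subvarieties of $\mathcal{X}$ via Neumann--Zagier symmetry and showing that Dehn filling points land only on those of the ``model'' type, or by projecting and iterating Habegger's theorem without ever needing to bound $h(g)$.
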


We neither define the height of an algebraic number nor provide basic properties of it, as none of them are used in the proofs. See \cite{bg} for details about them. 

Theorem \ref{19090804}, combining with Theorem \ref{20080404}, induces 
\begin{corollary}\label{20080403}
Let $\mathcal{M}$ be an $n$-cusped hyperbolic $3$-manifold, $\mathcal{X}$ be its holonomy variety of $\mathcal{M}$ and $\mathcal{M}_{p_1/q_1, \dots, p_n/q_n}$ be a Dehn filling of $\mathcal{M}$ with $|p_k|+|q_k|$ ($1\leq k\leq n$) sufficiently large. If the core holonomies of $\mathcal{M}_{p_1/q_1, \dots, p_n/q_n}$ are multiplicatively dependent, then a Dehn filling point $P$ associated to it is contained in a torsion anomalous subvariety of $\mathcal{X}$. 
\end{corollary}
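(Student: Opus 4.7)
The plan is to combine the height bound of Theorem \ref{20080404} with the Bombieri--Masser--Zannier criterion (Theorem \ref{19090804}), after exhibiting $P$ as a torsion anomalous point produced by the multiplicative relation among the core holonomies. The computation is essentially a dimension count: since $\mathcal{X}\subset \mathbb{G}^{2n}$ has dimension $n$, a torsion anomalous point is precisely a point lying in $\mathcal{X}\cap H$ with $\dim H \leq n-1$ (so that $0=\dim \{P\} > \dim \mathcal{X}+\dim H-2n$). Our job is to produce such an $H$ from the multiplicative dependence hypothesis.

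First, I would unpack the standard setup. A Dehn filling point $P$ associate to $\mathcal{M}_{p_1/q_1,\dots,p_n/q_n}$ lies on the algebraic subgroup cut out by the Dehn filling equations $M_k^{p_k}L_k^{q_k}=1$ for $1\leq k\leq n$, which has dimension exactly $n$ in $\mathbb{G}^{2n}$. Using the parametrization \eqref{22070411}, the $k$-th core holonomy satisfies $t_k = M_k^{r_k}L_k^{s_k}$ at $P$, where $p_ks_k-q_kr_k=1$. If the $t_k$ are multiplicatively dependent, there exists $(a_1,\dots,a_n)\in \mathbb{Z}^n\setminus\{0\}$ with $t_1^{a_1}\cdots t_n^{a_n}=1$, so $P$ satisfies the monomial relation
\begin{equation*}
\prod_{k=1}^n M_k^{a_k r_k}L_k^{a_k s_k}=1,
\end{equation*}
which is independent of the Dehn filling equations (because $(r_k,s_k)$ and $(p_k,q_k)$ form a basis of $\mathbb{Z}^2$). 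Thus $P\in \mathcal{X}\cap H$ for an algebraic subgroup $H$ of dimension $n-1$, and $P$ is a torsion anomalous point of $\mathcal{X}$ in the sense of Section \ref{22030101}.

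Next, I would invoke the two heavy-lifting inputs. By Theorem \ref{20080404}, there is a constant $B$, depending only on $\mathcal{X}$, with $h(P)\leq B$ for every Dehn filling point of $\mathcal{M}$. Applying Theorem \ref{19090804} to $\mathcal{X}$ (of dimension $n$ inside $\mathbb{G}^{2n}$, so that $n-k-1=n-1$ in the notation there), the set
\begin{equation*}
\mathcal{X}^{ta}\cap \mathcal{H}_{n-1}\cap \{Q:h(Q)\leq B\}
\end{equation*}
is finite. Therefore only finitely many Dehn filling points of $\mathcal{M}$ lie in $\mathcal{X}^{ta}$ and simultaneously in some algebraic subgroup of dimension $n-1$. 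Choosing $|p_k|+|q_k|$ sufficiently large (say, large enough so that, by the convergence in Theorem \ref{040605}, the point $P$ is outside the finite exceptional set just identified), the torsion anomalous point $P$ must lie in $\mathcal{X}\setminus \mathcal{X}^{ta}$, i.e.\ inside a torsion anomalous subvariety of $\mathcal{X}$.

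There is no real obstacle beyond a careful invocation of the two cited results; the only thing to be a little careful about is the bookkeeping that the ``sufficiently large'' threshold can be chosen depending only on $\mathcal{M}$ (not on the particular relation among the $t_k$), which follows because the exceptional set coming from Theorem \ref{19090804} is finite and a single threshold avoiding all those Dehn filling coefficients suffices.
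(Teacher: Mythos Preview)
Your proposal is correct and follows exactly the approach the paper indicates: the paper simply states that the corollary is obtained by combining Theorem~\ref{19090804} with Theorem~\ref{20080404}, and you have filled in precisely the expected details (producing the codimension-$(n+1)$ subgroup from the Dehn filling equations plus the multiplicative relation, then applying the finiteness in $\mathcal{X}^{ta}\cap\mathcal{H}_{n-1}$ under the uniform height bound). The only cosmetic remark is that your invocation of Theorem~\ref{040605} to avoid the finite exceptional set is slightly indirect---it suffices to note that finitely many exceptional points correspond to finitely many Dehn filling coefficients, which a ``sufficiently large'' threshold excludes.
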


Theorem \ref{19090804} (as well as Corollary \ref{20080403}) will play an utmost important role in the paper. Thanks to the theorem, the study of the distribution of torsion anomalous Dehn filling points over a holonomy variety $\mathcal{X}$ is reduced to analyzing the distribution of torsion anomalous subvarieties of $\mathcal{X}$. This allows us to incorporate the discussion from Section \ref{22030101} and combine it with various results presented in this subsection, including Theorems \ref{20102701}-\ref{22060101} as well as Corollaries \ref{21021602}-\ref{24100901}, to prove the main result.

\begin{remark}\label{22081302}
\normalfont For $P$ given as in the statement of Corollary \ref{20080403}, one can find an algebraic subgroup $H$ of dimension $n$ such that $P\in \mathcal{X}\cap H$ and $\mathcal{X}\cap_P H$ is a torsion anomalous subvariety of $\mathcal{X}$. In particular, if the \textit{norm} of $
X_1^{a_1}\cdots X_n^{a_n}=1$ is defined to be $\sum_{k=1}^na_k^2$, then $H$ is obtained by taking norm minimizing equations that vanish at $P$. See Lemma 8.1 in \cite{BMZ0} for detailed accounts. 
\end{remark}

\subsection{$\mathcal{X}^{oa}=\emptyset$ and Strong Geometric Isolation}\label{SGI}

Let $\mathcal{M}$ be a $2$-cusped hyperbolic $3$-manifold, $\mathcal{X}$ its holonomy variety and $\log \mathcal{X}$ be analytic holonomy variety. 
\begin{definition}\label{24091801}
\normalfont Let $v_k$ and $u_k$ ($k=1,2$) be coordinate functions over $\log \mathcal{X}$ given as in Section \ref{A-poly}. If $v_k$ depends only on $u_k$ for each $k$, then we say two cusps of $\mathcal{M}$ are \textit{strongly geometrically isolated (SGI)} from each other. 
\end{definition}

The concept was first introduced by W. Neumann and A. Reid in \cite{rigidity}. (See also \cite{calegari}). Simply put, it means one cusp of $\mathcal{M}$ deforms independently without affecting the other cusp. If two cusps of $\mathcal{M}$ are SGI each other, then $\mathcal{X}$ is the product of two algebraic curves defined by 
\begin{equation*}
f_1(M_1, L_1)=f_2(M_2, L_2)=0
\end{equation*}
in $\mathbb{G}^2\times \mathbb{G}^2(:=(M_1, L_1, M_2, L_2))$ where $f_1(M_1, L_1)=0$ (resp. $ f_2(M_2, L_2)=0$) is the holonomy variety obtained by keeping the $2^{\text{nd}}$ (resp. $1^{\text{st}}$) cusp of $\mathcal{M}$ complete. Hence, in this case, we have $\mathcal{X}^{oa}=\emptyset$, as explained in Section \ref{22030101}. 

However, the converse is not true in general. That is, the condition $\mathcal{X}^{oa}=\emptyset$ does not necessarily mean that two cusps of $\mathcal{M}$ are SGI each other. Instead, we have the following dichotomy proven in \cite{jeon5}: 
\begin{proposition}\label{21073101}
Let $\mathcal{M}$ be a $2$-cusped hyperbolic $3$-manifold and $\mathcal{X}$ be the holonomy variety of $\mathcal{M}$ in $\mathbb{G}^4(:=(M_1, L_1, M_2, L_2))$. If $\mathcal{X}^{oa}=\emptyset$, then $\mathcal{X}$ is the product of two curves. More precisely, if $\mathcal{X}^{oa}=\emptyset$, either one of the following two holds:
\begin{enumerate}
\item two cusps of $\mathcal{M}$ are SGI;
\item (if two cusps of $\mathcal{M}$ are not SGI) there exist $(2\times 2)$-matrices 
\begin{equation}\label{22061401}
A:=\left(\begin{array}{cc}
a_1 & b_1\\
a_2 & b_2
\end{array}\right), \quad 
B:=\left(\begin{array}{cc}
c_1 & d_1\\
c_2 & d_2
\end{array}\right)
\end{equation}
with $\det A=\det B \neq 0$ and a polynomial $f$ of two variables such that $\mathcal{X}$ is defined by 
\begin{equation}\label{21102301}
f(M_1^{a_1}L_1^{b_1}M_2^{c_1}L_2^{d_1}, M_1^{a_2}L_1^{b_2}M_2^{c_2}L_2^{d_2})=f(M_1^{a_1}L_1^{b_1}M_2^{-c_1}L_2^{-d_1}, M_1^{a_2}L_1^{b_2}M_2^{-c_2}L_2^{-d_2})=0.
\end{equation}
\end{enumerate}
\end{proposition}

In particular, $A$ and $B$ in \eqref{22061401} are uniquely determined by an algebraic subgroup $H$ satisfying $\mathcal{X}=\mathscr{Z}_H$. More precisely, if $H$ is an algebraic subgroup defined by either 
\begin{equation*}
M_1^{a_j}L_1^{b_j}M_2^{c_j}L_2^{d_j}=1 \quad (j=1,2)
\end{equation*}
or
\begin{equation*}
M_1^{a_j}L_1^{b_j}M_2^{-c_j}L_2^{-d_j}=1  \quad (j=1,2), 
\end{equation*} 
then $\mathcal{X}$ is foliated by anomalous subvarieties contained in algebraic cosets of $H$. 

\begin{definition}\label{22071301}
\normalfont For $\mathcal{X}$ defined by \eqref{21102301}, if we set
\begin{equation}\label{21102302}
\begin{gathered}
\tilde{M_1}:=M_1^{a_1}L_1^{b_1}M_2^{c_1}L_2^{d_1},\quad \tilde{L_1}:=M_1^{a_2}L_1^{b_2}M_2^{c_2}L_2^{d_2},\quad 
\tilde{M_2}:=M_1^{a_1}L_1^{b_1}M_2^{-c_1}L_2^{-d_1}, \quad \tilde{L_2}:=M_1^{a_2}L_1^{b_2}M_2^{-c_2}L_2^{-d_2}, 
\end{gathered}
\end{equation}
\eqref{21102301} is transformed into  
\begin{equation}\label{21102304}
f(\tilde{M_1}, \tilde{L_1})=f(\tilde{M_2}, \tilde{L_2})=0
\end{equation}
in $\mathbb{G}^4(:=(\tilde{M_1}, \tilde{L_1}, \tilde{M_2}, \tilde{L_2}))$. We call \eqref{21102302} the \textit{preferred coordinates} and \eqref{21102304} the \textit{preferred defining equations} of $\mathcal{X}$. 
\end{definition}

Technically, $f(\tilde{M_i}, \tilde{L_i}) = 0$ in $\mathbb{G}^2 (:= (\tilde{M_i}, \tilde{L_i}))$ ($i=1,2$) is not the holonomy variety of a 1-cusped hyperbolic 3-manifold, but it behaves in a similar way. For example, it satisfies all the properties outlined in Theorem \ref{potential}, so one can treat it as the holonomy variety of a 1-cusped hyperbolic 3-manifold without any distinction.\footnote{For instance, if we set 
\begin{equation*}
\tilde{u_1}:=\log \tilde{M_1},\quad \tilde{v_1}:=\log \tilde{L_1} 
\end{equation*}
and, locally near $(0,0)$, represent $\tilde{v_1}$ as a function of $\tilde{u_1}$ as $\tilde{\tau_1}\tilde{u_1}+\text{(higher order terms)}$, then, by the definition of $\tilde{M_1}$ and $\tilde{L_1}$, it easily follows that 
\begin{equation*}
\tilde{\tau_1}=\frac{a_1+b_1\tau_1}{a_2+b_2\tau_1}=\frac{c_1+d_1\tau_2}{c_2+d_2\tau_2}. 
\end{equation*}
If $\tilde{\tau_1}\in \mathbb{R}$, then $(a_1, b_1, c_1, d_1)=\tilde{\tau_1}(a_2, b_2, c_2, d_2)$ and so $\tilde{\tau_1}\in \mathbb{Q}$, contradicting the fact that $H$ is a $2$-dim algebraic subgroup. Thus $\tilde{\tau_1}\in \mathbb{C}\backslash \mathbb{R}$. See \cite{jeon5} for further details.}

Through the paper, whenever we deal with $\mathcal{X}$ falling into the second category of Proposition \ref{21073101}, we usually work those preferred coordinates and defining equations, instead of the standard ones.

\newpage
\section{Preliminary}\label{Prem}
\subsection{Anomalous subvarieties of $\mathcal{X}\times \mathcal{X}$}\label{22022301}

In this subsection, we analyze the structure of anomalous subvarieties of $\mathcal{X}\times \mathcal{X}$. In particular, we characterize all the possible cases that $\mathcal{X}\times \mathcal{X}$ is foliated by infinitely many anomalous subvarieties of dimension either $2$ or $3$. In some sense, the lemmas here can be seen as generalizations of Proposition \ref{21073101}. The proof of each lemma is based on a combination of Theorems \ref{potential} and \ref{struc}. 

The lemmas shall be needed in the proof of one of the main theorems, Theorem \ref{22022601}, later in Section \ref{ZPCII}. However, as it only requires the statement of each lemma, not the proof itself, we recommend a reader to skip  this subsection (along with Section \ref{24081301}) at first reading and proceed directly to Section \ref{Prem I}. 

\begin{lemma}\label{20080601}
Let $\mathcal{M}$ be a $2$-cusped hyperbolic $3$-manifold and $\mathcal{X}$ be its holonomy variety. If $\mathcal{X}\times \mathcal{X}$ is foliated by infinitely many anomalous subvarieties $\{\mathcal{Y}_i\}_{i\in \mathcal{I}}$ of dim $3$ such that each $\mathcal{Y}_i$ is contained in a translation of some algebraic subgroup $H$, then $\mathcal{X}^{oa}=\emptyset$. Moreover, 
\begin{enumerate}
\item if two cusps of $\mathcal{M}$ are SGI each other, then $H$ is defined by either $M_k=L_k=1$ or $M'_k=L'_k=1$ for some $k=1,2$;
\item otherwise, if two cusps of $\mathcal{M}$ are not SGI each other, then $H$ is defined by either $\tilde{M_k}=\tilde{L_k}=1$ or $\tilde{M'_k}=\tilde{L'_k}=1$ for some $k=1,2$ where $\tilde{M_k},\tilde{L_k},\tilde{M'_k},\tilde{L'_k}$ are the preferred coordinates of $\mathcal{X}\times \mathcal{X}$ given in \eqref{21102302}. 
\end{enumerate}
\end{lemma}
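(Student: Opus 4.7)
The plan is to work in logarithmic coordinates, analyze the two defining equations of $H$ via their restriction to $\log(\mathcal{X}\times\mathcal{X})$, and reduce to an application of Theorem \ref{21073101}. First, by Theorem \ref{struc} the dimension formula $\dim H = n - (1 + \dim(\mathcal{X}\times\mathcal{X})) + \dim \mathcal{Y}_i = 8 - (1+4) + 3 = 6$ forces $H$ to have codimension $2$ in $\mathbb{G}^8$, so $H$ is cut out by two $\mathbb{Z}$-linearly independent monomial equations; in log coordinates these become affine forms $E_1, E_2$ in $(u_1, v_1, u_2, v_2, u_1', v_1', u_2', v_2')$. Using Theorem \ref{potential} I parameterize $V := \log(\mathcal{X}\times\mathcal{X})$ by $(u_1, u_2, u_1', u_2')$, so each restriction splits as $E_i|_V = F_i(u_1,u_2) + F_i'(u_1',u_2')$ with $F_i$ built from $u_k$ and $v_k(u_1,u_2) = \tfrac{1}{2}\partial\Phi/\partial u_k$, and similarly for $F_i'$ in the primed copy.

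Since $\mathcal{Y}_i$ has dimension $3$ in the $4$-dimensional $V$, the two restrictions must be functionally dependent, so the gradients $\nabla E_1|_V$ and $\nabla E_2|_V$ (taken in the four parameters) are pointwise parallel on $V$. Each gradient decomposes as $(\nabla_u F_i, \nabla_{u'} F_i')$, with the first two entries depending only on $u$ and the last two only on $u'$; the pointwise scalar ratio of the two gradients must therefore simultaneously be a function of $u$ alone and of $u'$ alone, hence a constant $\lambda$ -- unless one of $\nabla_u F_i$ or $\nabla_{u'} F_i'$ vanishes identically. These two alternatives produce the \emph{constant ratio} and \emph{degenerate} cases to analyze.

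The technical core is the claim that any $\mathbb{C}$-linear relation
\begin{equation*}
A_1 u_1 + A_2 u_2 + B_1 v_1 + B_2 v_2 = 0
\end{equation*}
on $\log\mathcal{X}$ must have all coefficients zero. Using Theorem \ref{potential}, matching first-order terms forces $A_k = -B_k \tau_k$, and the remaining cancellation reduces to $(B_1 \partial_{u_1} + B_2 \partial_{u_2})\Psi \equiv 0$, where $\Psi := \Phi - \tau_1 u_1^2 - \tau_2 u_2^2$ is the degree $\geq 4$ part of the potential and is even in each of $u_1, u_2$. Demanding $\Psi$ to depend only on $B_2 u_1 - B_1 u_2$ while being even in each argument will force $\Psi \equiv 0$ when $B_1 B_2 \neq 0$; in the case $B_1 = 0$ (and symmetrically $B_2 = 0$) the relation collapses to $v_2 = \tau_2 u_2$. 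Both outcomes contradict Lemma 2.3 of \cite{jeon3}, which asserts $v_k \neq \tau_k u_k$. This step will be the main obstacle, since it is precisely where the particular analytic structure of the holonomy variety enters.

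Granting the claim, the constant-ratio alternative gives $E_1|_V = \lambda E_2|_V$ (the integration constant vanishes at the origin), which splits by the product structure into $F_1 = \lambda F_2$ and $F_1' = \lambda F_2'$ on $\log\mathcal{X}$; the claim then forces $E_1 = \lambda E_2$ as affine forms, contradicting that $H$ has codimension $2$. Hence we must be in the degenerate case; say $\nabla_u F_2 \equiv 0$, so by the claim the unprimed coefficients of $E_2$ vanish, and pointwise parallelism propagates this to $E_1$. Thus $H = \mathbb{G}^4 \times H_2$ with $H_2 \subset \mathbb{G}^4$ of codimension $2$, so $\mathcal{Y}_i = \mathcal{X} \times (\mathcal{X} \cap g_{2,i} H_2)$ and the foliation hypothesis yields $\mathcal{X}^{oa} = \emptyset$. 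Theorem \ref{21073101} then pins down $H_2$ to be defined by $M_k' = L_k' = 1$ in the SGI case or by $\tilde M_k' = \tilde L_k' = 1$ in the non-SGI case; the symmetric alternative $H = H_1 \times \mathbb{G}^4$ accounts for the remaining two listed possibilities.
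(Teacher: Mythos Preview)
Your argument is correct and arrives at the same conclusion, but the route differs from the paper's in an instructive way. The paper passes directly from the foliation hypothesis to a single holomorphic identity
\[
a_{21}u_1+b_{21}v_1+\cdots+a_{24}u'_2+b_{24}v'_2=\mathfrak h\big(a_{11}u_1+b_{11}v_1+\cdots+a_{14}u'_2+b_{14}v'_2\big)
\]
on $\log(\mathcal X\times\mathcal X)$, then simply sets $u'_k=v'_k=0$ to obtain a relation on $\log\mathcal X$ alone, which immediately yields $\mathcal X^{oa}=\emptyset$; only afterwards does it determine $H$, by observing that once $v_k$ (and in the non-SGI case $\tilde v_k$) depends on a single variable, a nonlinear $\mathfrak h$ applied to a sum of four single-variable functions cannot equal another such sum unless only one summand survives. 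By contrast, you first split into the constant-ratio versus degenerate cases via the product structure of the Jacobian, use your ``technical claim'' (no nontrivial $\mathbb C$-linear relation on $\log\mathcal X$) to kill the constant-ratio branch, and only then deduce $\mathcal X^{oa}=\emptyset$ from the factorization $H=\mathbb G^4\times H_2$. The paper's path is shorter and does not need your claim explicitly, while your path makes the role of the product structure $\log\mathcal X\times\log\mathcal X$ more transparent and isolates a clean reusable lemma.

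One small point to tighten: your final sentence appeals to Theorem~\ref{21073101} to pin down $H_2$, but that theorem classifies $\mathcal X$, not the foliating subgroup. What you actually need is that any codimension-$2$ torus $H_2\subset\mathbb G^4$ whose translates foliate a product of two holonomy curves must be $M_k=L_k=1$ (respectively $\tilde M_k=\tilde L_k=1$); in the paper this is exactly Lemma~\ref{22060401} with $n=2$, and the paper proves it by the same method it uses for the present lemma. You can avoid circularity by simply rerunning your own gradient argument one level down on $\mathcal X=\mathcal X_1\times\mathcal X_2$ (or $\tilde{\mathcal X}_1\times\tilde{\mathcal X}_2$), where the analogue of your technical claim reduces to the single-variable fact $v_k\neq\tau_k u_k$.
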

\begin{proof}
Without loss of generality, let us assume $H$ is defined by 
\begin{equation*}
\begin{gathered}
M_1^{a_{j1}}L_1^{b_{j1}}M_2^{a_{j2}}L_2^{b_{j2}}(M'_1)^{a_{j3}}(L'_1)^{b_{j3}}(M'_2)^{a_{j4}}(L'_2)^{b_{j4}}=1\quad (j=1,2).
\end{gathered}
\end{equation*}
Moving to the analytic holonomy variety, $\log\big((\mathcal{X}\times\mathcal{X})\cap H)\big)$ is defined by
\begin{equation}\label{21100906}
a_{j1}u_1+b_{j1}v_1+a_{j2}u_1+b_{j2}v_2+a_{j3}u'_1+b_{j3}v'_1+a_{j4}u'_2+b_{j4}v'_2=0, \quad (j=1,2).
\end{equation}
As $\log(\mathcal{X}\times \mathcal{X})$ is foliated by translations of \eqref{21100906}, equivalently, there exists an analytic function $\mathfrak{h}$ such that 
\begin{equation}\label{20080602}
\begin{aligned}
a_{21}u_1+b_{21}v_1+\cdots+a_{24}u'_2+b_{24}v'_2
=\mathfrak{h}(a_{11}u_1+b_{11}v_1+\cdots+a_{14}u'_2+b_{14}v'_2).
\end{aligned}
\end{equation}
Letting $u'_1=v'_1=u'_2=v'_2=0$ in \eqref{20080602}, \eqref{20080602} is reduced to 
\begin{equation*}
\begin{gathered}
a_{21}u_1+b_{21}v_1+a_{22}u_2+b_{22}v_2=\mathfrak{h}(a_{11}u_1+b_{11}v_1+a_{12}u_2+b_{12}v_2),
\end{gathered}
\end{equation*}
which implies $\mathcal{X}^{oa}=\emptyset$.

If two cusps of $\mathcal{M}$ are SGI from each other, by Definition \ref{24091801}, it means each $v_k$ (resp. $v'_k$) depends only on $u_k$ (resp. $u'_k$) for each $k=1,2$. Hence it follows that either $\mathfrak{h}$ in \eqref{20080602} is a linear function, or $a_{1l}=b_{1l}=a_{2l}=b_{2l}=0$ for all except for one $l$ ($1\leq l\leq 4$). However, if $\mathfrak{h}$ is linear, it contradicts the fact that $H$ is an algebraic subgroup of codimension $2$ and so the conclusion follows. 

Similarly, we get the desired result for the rest case (i.e. two cusps of $\mathcal{M}$ are not SGI each other).  
\end{proof}

More generally, using the same methods given in the above lemma, one is further able to prove the following:
\begin{lemma}\label{22060401}
Let $\mathcal{X}:=\mathcal{X}_1\times \cdots \times \mathcal{X}_n\big(\subset \mathbb{G}^{2}\times \cdots \times \mathbb{G}^2(:=(M_1, L_1, \dots, M_n, L_n))\big)$ be the product of $n$-algebraic curves where each $\mathcal{X}_k$ ($1\leq k\leq n$) is the holonomy variety of some $1$-cusped hyperbolic $3$-manifold. If $\mathcal{X}$ contains infinitely many $(n-1)$-dim anomalous subvarieties $\{\mathcal{Y}_i\}_{i\in \mathcal{I}}$ of $\mathcal{X}$ such that $\mathcal{Y}_i$ is contained in a translation of some algebraic subgroup $H$, then $H$ is defined by $M_k=L_k=1$ for some $1\leq k\leq n$. 
\end{lemma}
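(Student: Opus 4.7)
The plan is to transplant the log-side analysis of Lemma \ref{20080601} to this product setting. Since each $\mathcal{X}_k$ is a $1$-cusped holonomy curve, Theorem \ref{potential} provides coordinates $u_k=\log M_k$, $v_k=\log L_k$ on $\log\mathcal{X}$ with $v_k=u_k\tau_k(u_k)$ depending only on $u_k$, and Lemma 2.3 of \cite{jeon3} guarantees that each $v_k$ is not $\mathbb{C}$-linear in $u_k$, so $v_k'(u_k)$ is a non-constant holomorphic function. Write $H$ through $s:=2n-\dim H\geq 2$ $\mathbb{Z}$-independent monomial equations $\prod_k M_k^{a_{jk}}L_k^{b_{jk}}=1$, and set the corresponding affine-linear forms
\[
F_j(u):=\sum_{k=1}^n\bigl(a_{jk}u_k+b_{jk}v_k(u_k)\bigr),\qquad j=1,\dots,s,
\]
on $\log\mathcal{X}$. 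Because $\mathcal{X}$ is foliated by $(n-1)$-dimensional pieces $\mathcal{Y}_i$ lying in translates of $H$, the map $(F_1,\dots,F_s)|_{\log\mathcal{X}}$ has $1$-dimensional image; after relabelling so that $F_1$ is non-constant, one obtains holomorphic functions $\mathfrak{h}_j$ with $F_j=\mathfrak{h}_j(F_1)$ on $\log\mathcal{X}$ for $j\geq 2$.

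Differentiating this identity in $u_k$, and writing $\phi_k(u_k):=a_{1k}+b_{1k}v_k'(u_k)$, yields the key identity
\[
a_{jk}+b_{jk}v_k'(u_k)=\mathfrak{h}_j'\bigl(F_1(u)\bigr)\,\phi_k(u_k),\qquad j\geq 2,\ 1\leq k\leq n,\qquad(\ast)
\]
on $\log \mathcal{X}$. I claim that at most one $\phi_{k_0}$ is not identically zero. Suppose to the contrary that $\phi_{k_1}\not\equiv 0$ and $\phi_{k_2}\not\equiv 0$ for some $k_1\neq k_2$. Rearranging $(\ast)$ with $k=k_1$ shows that $\mathfrak{h}_j'(F_1(u))$ depends only on $u_{k_1}$ on the open set where $\phi_{k_1}\neq 0$; doing the same with $k=k_2$ forces it to depend only on $u_{k_2}$. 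Together these force $\mathfrak{h}_j'(F_1(u))$ to be a constant $c_j\in\mathbb{C}$. Returning to $(\ast)$ for every $k$ gives $a_{jk}+b_{jk}v_k'(u_k)=c_j(a_{1k}+b_{1k}v_k'(u_k))$, and non-constancy of $v_k'$ then forces $(a_{jk},b_{jk})=c_j(a_{1k},b_{1k})$ for every $k$. Since row $1$ of the integer matrix $(a_{jk},b_{jk})_{j,k}$ is nonzero (as $F_1$ is non-constant), one has $c_j\in\mathbb{Q}$, so all rows are $\mathbb{Q}$-proportional to row $1$, contradicting the hypothesis that the matrix has rank $s\geq 2$.

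Consequently exactly one $\phi_{k_0}$ is nontrivial, which forces $a_{1k}=b_{1k}=0$ for all $k\neq k_0$. Plugging this back into $(\ast)$ for $k\neq k_0$ gives $a_{jk}+b_{jk}v_k'(u_k)\equiv 0$, and hence $a_{jk}=b_{jk}=0$ for every $j\geq 2$ as well. Every defining equation of $H$ therefore lies in the pair $M_{k_0},L_{k_0}$ alone; the coefficient matrix is an $s\times 2$ integer matrix of rank $s$, forcing $s\leq 2$, and combined with $s\geq 2$ we obtain $s=2$ and $H=\{M_{k_0}=L_{k_0}=1\}$ (after passing to the identity component), which completes the proof. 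The principal technical hurdle is the rank-collapse step in the second paragraph: one must exploit both the variable-separation of $v_k$ and its global non-linearity to force $\mathfrak{h}_j'$ to be a constant whenever two of the $\phi_k$'s are nontrivial, and then convert the resulting proportionality of rows into an integer-rank contradiction against $\mathrm{codim}\,H=s$.
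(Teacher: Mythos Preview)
Your proof is correct and follows essentially the approach the paper intends: the paper says this lemma is proved ``using the same methods given in the above lemma'' (Lemma \ref{20080601}), namely passing to the log side, writing $F_j=\mathfrak{h}_j(F_1)$ from the foliation, and using that each $v_k$ depends only on $u_k$ together with non-linearity of $v_k$ to force all exponents to concentrate on a single index $k_0$; your differentiation identity $(\ast)$ makes that variable-separation step explicit. Two small points you may want to tighten: the hypothesis says ``contains infinitely many'' rather than ``is foliated by'', so you should invoke Theorem \ref{struc} to conclude $\mathscr{Z}_H=\mathcal{X}$, and in the paper's usage $H$ is taken from $\Psi_{\mathcal{X}}$ and hence is already a torus, so the final ``passing to the identity component'' is unnecessary.
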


As observed earlier in Section \ref{22030101},\footnote{See the discussion after Theorem \ref{struc}.} $(\mathcal{X}\times \mathcal{X})^{oa}=\emptyset$ for any algebraic variety $\mathcal{X}$. In particular, if $\mathcal{X}$ is the holonomy variety of a $2$-cusped hyperbolic $3$-manifold, then $\mathcal{X}\times \mathcal{X}=\mathscr{Z}_H$ where $H$ is defined by either $M_1=L_1=M_2=L_2=1$ or $M'_1=L'_1=M'_2=L'_2=1$. In the lemma below, if $\mathcal{X}^{oa}\neq \emptyset$ and $\mathcal{X}\times \mathcal{X}$ is foliated by $2$-dimensional anomalous subvarieties of it, we show this is always the case. For $\mathcal{X}^{oa}=\emptyset $, if $\mathcal{X}\times \mathcal{X}$ is foliated by its $2$-dimensional anomalous subvarieties, then those are contained in $3$-dimensional ones described in Lemma \ref{20080601}.

\begin{lemma}\label{20050801}
Let $\mathcal{M}$ and $\mathcal{X}$ be as above. Suppose $\mathcal{X}\times \mathcal{X}$ is foliated by infinitely many $2$-dim anomalous subvarieties $\{\mathcal{Y}_i\}_{i\in \mathcal{I}}$ such that each $\mathcal{Y}_i$ is contained in a translation of an algebraic subgroup $H$. 
\begin{enumerate}
\item If $\mathcal{X}^{oa}\neq \emptyset$, then $\mathcal{Y}_i$ is contained in a translation of either $M_1=L_1=M_2=L_2=1$ or $M'_1=L'_1=M'_2=L'_2=1$.
\item If $\mathcal{X}^{oa}=\emptyset$ and two cusps of $\mathcal{M}$ are SGI each other, then $\mathcal{Y}_i$ is contained in a translation of either $M_k=L_k=1$ or $M'_k=L'_k=1$ for some $1\leq k\leq 2$. 
\item If $\mathcal{X}^{oa}=\emptyset$ and two cusps of $\mathcal{M}$ are not SGI each other, under the preferred coordinates introduced in Definition \ref{22071301}, $\mathcal{Y}_i$ is contained in a translation of either $\tilde{M_k}=\tilde{L_k}=1$ or $\tilde{M'_k}=\tilde{L'_k}=1$ for some $1\leq k\leq 2$. 
\end{enumerate}
\end{lemma}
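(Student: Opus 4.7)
The plan is to mimic the analytic argument of Lemma \ref{20080601} while accounting for the additional freedom created by the smaller target dimension. By Theorem \ref{struc}, I may reduce to the case that $\mathcal{Y}_i$ is a maximal anomalous subvariety of $\mathcal{X}\times\mathcal{X}$, forcing $\dim H = 8 - (1+4) + 2 = 5$, so that $H$ is cut out by three independent monomial equations whose logarithms are linear forms $L_1, L_2, L_3$ in $(u_1, v_1, u_2, v_2, u'_1, v'_1, u'_2, v'_2)$. Substituting $v_k = \tfrac{1}{2}\partial\Phi/\partial u_k$ and $v'_k = \tfrac{1}{2}\partial\Phi/\partial u'_k$ from Theorem \ref{potential} produces holomorphic functions $\mathfrak{L}_j(u_1, u_2, u'_1, u'_2)$ on $\log(\mathcal{X}\times\mathcal{X})$, each of which splits as $\mathfrak{L}_j = \mathfrak{L}_j^{(1)}(u_1,u_2) + \mathfrak{L}_j^{(2)}(u'_1,u'_2)$ since the two $\mathcal{X}$-factors use disjoint variables. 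The existence of a $2$-dimensional foliation forces the image of $(\mathfrak{L}_1,\mathfrak{L}_2,\mathfrak{L}_3)$ to be $2$-dimensional, so there is a single functional relation $\mathfrak{L}_3 = \mathfrak{h}(\mathfrak{L}_1, \mathfrak{L}_2)$; exactly as in Lemma \ref{20080601}, the non-linearity of each $v_k$ rules out linear $\mathfrak{h}$, since a linear relation among the $L_j$ would contradict the linear independence of the defining equations of $H$.

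Substituting $u' = 0$ and $u = 0$ in turn and using that the $v_k, v'_k$ vanish at the origin (Corollary \ref{22080401}), I obtain $\mathfrak{L}_3^{(s)} = \mathfrak{h}(\mathfrak{L}_1^{(s)}, \mathfrak{L}_2^{(s)})$ for $s = 1, 2$, and comparing with the full relation yields the Cauchy-type identity
\[
\mathfrak{h}(A + a,\, B + b) \;=\; \mathfrak{h}(A, B) + \mathfrak{h}(a, b)
\]
for $(A, B)$ in the image $I_1$ of $(\mathfrak{L}_1^{(1)}, \mathfrak{L}_2^{(1)})$ on $\log\mathcal{X}$ and $(a, b)$ in the analogous primed image $I_2$. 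Differentiating the identity in both slots shows that if either image contains an open subset of $\mathbb{C}^2$, then the Hessian of $\mathfrak{h}$ vanishes on it and $\mathfrak{h}$ is forced to be affine by analytic continuation, contradicting non-linearity. Hence $\dim I_1, \dim I_2 \leq 1$, and, after a linear change in the defining equations of $H$, one of them must in fact be $0$-dimensional unless $\mathfrak{L}_1^{(s)}, \mathfrak{L}_2^{(s)}$ are functionally dependent on $\log\mathcal{X}$ in a way that exhibits an anomalous foliation of $\mathcal{X}$.

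In Case (1), $\mathcal{X}^{oa}\neq\emptyset$ precisely rules out the latter possibility, so, swapping the two copies if necessary, $\mathfrak{L}_1^{(1)} = \mathfrak{L}_2^{(1)} \equiv 0$, and then the restricted relation gives $\mathfrak{L}_3^{(1)} \equiv 0$ as well. All three defining equations of $H$ involve only second-copy variables, so $H = \mathbb{G}^4 \times H'$ with $\dim H' = 1$, and a dimension count on $\mathcal{X} \cap g'H'$ forces $\mathcal{Y}_i = \mathcal{X} \times \{pt\}$, which lies in a translate of $\{M'_1 = L'_1 = M'_2 = L'_2 = 1\}$. In Cases (2) and (3), $\mathcal{X}^{oa} = \emptyset$ so Theorem \ref{21073101} (after passing, if the cusps are not SGI, to the preferred coordinates of Definition \ref{22071301}) realizes $\mathcal{X}$ as a product of two curves; then $\mathcal{X}\times\mathcal{X}$ is a product of four curves and each $\mathfrak{L}_j^{(s)}$ splits further into a sum of four one-variable functions. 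Applying Lemma \ref{22060401} to the resulting refined $3 \times 4$ Jacobian forces the equations of $H$ to involve only the two coordinates of a single curve factor, yielding the stated subgroup.

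The hard part is managing the case where $I_1$ or $I_2$ is genuinely $1$-dimensional: the Cauchy identity admits non-linear solutions in that regime (for instance whenever the two images lie along a common diagonal), so one cannot conclude from $\mathfrak{h}$ in isolation. The passage between the standard coordinates and the preferred coordinates $\tilde M_k, \tilde L_k$ in Case (3), together with verifying that Lemma \ref{22060401} indeed covers the refined Jacobian, is where the bookkeeping is most delicate.
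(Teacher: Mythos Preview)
Your Cauchy-identity approach is appealing, but the step ``if either image contains an open subset of $\mathbb{C}^2$, then the Hessian of $\mathfrak{h}$ vanishes on it'' is false as stated. Take $\mathfrak{h}(x,y)=(x-y)^2$, let $I_1$ be any open neighbourhood of the origin, and let $I_2$ be the diagonal $\{(t,t)\}$. Then $\mathfrak{h}(A+t,B+t)=(A-B)^2=\mathfrak{h}(A,B)$ and $\mathfrak{h}(t,t)=0$, so the identity $\mathfrak{h}(A+a,B+b)=\mathfrak{h}(A,B)+\mathfrak{h}(a,b)$ holds with $I_1$ open while the Hessian of $\mathfrak{h}$ is nowhere zero. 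Differentiating in the $(A,B)$-slot only tells you that $\nabla\mathfrak{h}$ is invariant under translation by $I_2$; when $I_2$ is one-dimensional this says nothing about vanishing of second derivatives. So you have not shown $\dim I_1,\dim I_2\le 1$, and the case $\dim I_1=2$, $\dim I_2=1$ (or its mirror) is left open. You flag exactly this regime as ``the hard part'' at the end, but the body of the argument has already assumed it away.

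Even granting $\dim I_s\le 1$, your reduction to $\dim I_s=0$ in Case~(1) is not quite right either: $\dim I_1=1$ means $\mathfrak L_2^{(1)}=g(\mathfrak L_1^{(1)})$ on $\log\mathcal X$, but if $\mathfrak L_1^{(1)},\mathfrak L_2^{(1)}$ are linearly dependent as linear forms (which you cannot exclude a priori), this carries no information about anomalous foliations of $\mathcal X$, and a linear change killing $\mathfrak L_2^{(1)}$ still leaves $\mathfrak L_1^{(1)}$ and $\mathfrak L_3^{(1)}$ in play. The paper avoids these issues entirely: it puts the three defining equations of $H$ into a specific echelon form so that $\mathfrak s_1$ omits $u'_2,v'_2$ and $\mathfrak s_3$ omits $u_1,v_1$, writes $\mathfrak s_2=\sum n_{\alpha,\beta}\,\mathfrak s_1^{\alpha}\mathfrak s_3^{\beta}$, and then kills coefficients $n_{\alpha,\beta}$ by comparing monomials of the shape $u_k^{\alpha}(u'_l)^{\beta}$ using the parity structure of the Neumann--Zagier potential (Theorem~\ref{potential}). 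That term-by-term parity bookkeeping, split into seven subcases according to which blocks of coefficients vanish, is what actually forces $\mathfrak h$ down to the few allowed forms; your global Cauchy argument does not see this fine structure.
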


\begin{proof}
Assume $\mathcal{Y}_i$ is contained in a translation of $H$, (applying Gauss elimination if necessary) defined by\footnote{Here, we switch the order of the meridian-longitude pairs, rearranging variables as
\begin{equation*}
L_1, M_1, L_2, M_2, L'_1, M'_1, L'_2, M'_2
\end{equation*}
rather than \begin{equation*}
M_1, L_1, M_2, L_2, M'_1, L'_1, M'_2, L'_2.
\end{equation*}}
\begin{equation*}
\begin{aligned}
&L_1^{a_1}M_1^{b_1}L_2^{c_1}M_2^{d_1}(L'_1)^{a'_1}(M'_1)^{b'_1}=1,\\
&M_1^{b_2}L_2^{c_2}M_2^{d_2}(L'_1)^{a'_2}(M'_1)^{b'_2}(L'_2)^{c'_2}=1,\\
&L_2^{c_3}M_2^{d_3}(L'_1)^{a'_3}(M'_1)^{b'_3}(L'_2)^{c'_3}(M'_2)^{d'_3}=1.
\end{aligned}
\end{equation*}
Moving to an analytic setting, $\log\big((\mathcal{X}\times \mathcal{X})\cap H\big)$ is given as 
\begin{equation*}
\begin{gathered}
a_1v_1+b_1u_1+c_1v_2+d_1u_2+a'_1v'_1+b'_1u'_1=0,\\
b_2u_1+c_2v_2+d_2u_2+a'_2v'_1+b'_2u'_1+c'_2v'_2=0,\\
c_3v_2+d_3u_2+a'_3v'_1+b'_3u'_1+c'_3v'_2+d'_3u'_2=0.
\end{gathered}
\end{equation*}
If $\Xi \subset \mathbb{C}^3$ is defined to be the set of all $(\xi_1, \xi_2, \xi_3) \in \mathbb{C}^3$ such that 
\begin{equation*}
\begin{gathered}
\xi_1=a_1v_1+b_1u_1+c_1v_2+d_1u_2+a'_1v'_1+b'_1u'_1,\\
\xi_2=b_2u_1+c_2v_2+d_2u_2+a'_2v'_1+b'_2u'_1+c'_2v'_2,\\
\xi_3=c_3v_2+d_3u_2+a'_3v'_1+b'_3u'_1+c'_3v'_2+d'_3u'_2
\end{gathered}
\end{equation*}
is an analytic set of dim $2$, then $\Xi$ is a complex manifold of dim $2$ by Corollary \ref{22022603}, and thus, for
\begin{equation}\label{21020401}
\begin{gathered}
\mathfrak{s}_1:=a_1v_1+b_1u_1+c_1v_2+d_1u_2+a'_1v'_1+b'_1u'_1,\\
\mathfrak{s}_2:=b_2u_1+c_2v_2+d_2u_2+a'_2v'_1+b'_2u'_1+c'_2v'_2,\\
\mathfrak{s}_3:=c_3v_2+d_3u_2+a'_3v'_1+b'_3u'_1+c'_3v'_2+d'_3u'_2,
\end{gathered}
\end{equation}
there exists a holomorphic function $\mathfrak{h}$ such that 
\begin{equation}\label{22060801}
\mathfrak{s}_2=\mathfrak{h}(\mathfrak{s}_1, \mathfrak{s}_3)=\sum n_{\alpha,\beta}\mathfrak{s}_1^{\alpha}\mathfrak{s}_3^{\beta}, \quad n_{\alpha,\beta}\in \mathbb{C}.
\end{equation}
Note that we have the following three choices for the coefficients appeared in $\mathfrak{s}_1$:
\begin{itemize}
\item $(a_1, b_1, c_1, d_1)\neq (0,0,0,0)$ and $(a'_1, b'_1)=(0,0)$;
\item $(a_1, b_1, c_1, d_1)=(0,0,0,0)$ and $(a'_1, b'_1)\neq (0,0)$;
\item $(a_1, b_1, c_1, d_1)\neq (0,0,0,0)$ and $(a'_1, b'_1)\neq (0,0)$. 
\end{itemize}
Similarly, either one of the following holds for $\mathfrak{s}_3$:
\begin{itemize}
\item $(a'_3, b'_3, c'_3, d'_3)=(0,0,0,0)$ and $(c_3, d_3)\neq (0,0)$;
\item $(a'_3, b'_3, c'_3, d'_3)\neq (0,0,0,0)$ and $(c_3, d_3)=(0,0)$;
\item $(a'_3, b'_3, c'_3, d'_3)\neq (0,0,0,0)$ and $(c_3, d_3)\neq (0,0)$. 
\end{itemize}
We consider all possible combinations of the above cases and show that each satisfies one of the statements of the lemma. 
\begin{enumerate}
\item If $(a'_1, b'_1)=(0,0)$ and $(a'_3, b'_3, c'_3, d'_3)=(0,0,0,0)$, then \eqref{21020401} is reduced to 
\begin{equation*}
\begin{gathered}
\mathfrak{s}_1=a_1v_1+b_1u_1+c_1v_2+d_1u_2,\quad \mathfrak{s}_2=b_2u_1+c_2v_2+d_2u_2,\quad \mathfrak{s}_3=c_3v_2+d_3u_2,
\end{gathered}
\end{equation*}
and, applying Gauss elimination, it is further simplified to
\begin{equation*}
\begin{gathered}
\mathfrak{s}_1=a_1v_1+b_1u_1,\quad \mathfrak{s}_2=b_2u_1+c_2v_2+d_2u_2,\quad \mathfrak{s}_3=c_3v_2+d_3u_2.
\end{gathered}
\end{equation*}
Note that if $\mathfrak{s}_1, \mathfrak{s}_2, \mathfrak{s}_3$ are constants, then $u_k, v_k$ ($k=1,2$) are all constants. In other words, each $\mathcal{Y}_i$ is contained in a translation of $M_1=L_1=M_2=L_2=1$. 

\item Similar to the above case, if $(a_1, b_1, c_1, d_1)=(0,0,0,0)$ and $(c_3, d_3)=(0,0)$, it follows that $\mathcal{Y}_i$ is contained in a translation of $M'_1=L'_1=M'_2=L'_2=1$. 

\item If $(a'_1, b'_1)=(c_3, d_3)=(0, 0)$, then \eqref{21020401} is reduced to 
\begin{equation}\label{22060713}
\begin{aligned}
&\mathfrak{s}_1=a_1v_1+b_1u_1+c_1v_2+d_1u_2,\\
&\mathfrak{s}_2=b_2u_1+c_2v_2+d_2u_2+a'_2v'_1+b'_2u'_1+c'_2v'_2,\\
&\mathfrak{s}_3=a'_3v'_1+b'_3u'_1+c'_3v'_2+d'_3u'_2. 
\end{aligned}
\end{equation}
Since $\mathfrak{s}_2$ does not contain terms of the following forms 
\begin{equation}\label{22060705}
u_1^{\alpha}(u'_1)^{\beta}, u_1^{\alpha}(u'_2)^{\beta}, u_2^{\alpha}(u'_1)^{\beta}\text{  and  }u_2^{\alpha}(u'_2)^{\beta} \text{  with  } \alpha,\beta>0, 
\end{equation}
we have $n_{\alpha,\beta}=0$ for all $\alpha,\beta>0$. That is, $\mathfrak{h}$ is represented as 
\begin{equation}\label{22060701}
\mathfrak{s}_2=\mathfrak{h}(\mathfrak{s}_1, \mathfrak{s}_3)=\sum^{\infty}_{\alpha=1}n_{\alpha,0}\mathfrak{s}_1^{\alpha}+\sum^{\infty}_{\beta=1}n_{0,\beta}\mathfrak{s}_3^{\beta},
\end{equation}
Now \eqref{22060701} splits into
\begin{equation}\label{22060703}
b_2u_1+c_2v_2+d_2u_2=\sum^{\infty}_{\alpha=1}n_{\alpha,0}(a_1v_1+b_1u_1+c_1v_2+d_1u_2)^{\alpha}
\end{equation}
and 
\begin{equation}\label{22060704}
a'_2v'_1+b'_2u'_1+c'_2v'_2=\sum^{\infty}_{\beta=1}n_{0,\beta}(a'_3v'_1+b'_3u'_1+c'_3v'_2+d'_3u'_2)^{\beta},
\end{equation}
thus it is concluded that $\mathcal{X}^{oa}=\emptyset$. 

If two cusps of $\mathcal{M}$ are SGI each other, by Lemma \ref{22060401},  \eqref{22060703} (resp. \eqref{22060704}) is equivalent to either $v_1=u_1\cdot\tau_1(u_1)$ or $v_2=u_2\cdot \tau_2(u_2)$ (resp. $v'_1=u'_1\cdot \tau_1(u'_1)$ or $v'_2=u'_2\cdot\tau_2(u'_2)$), which further implies $\mathcal{Y}_i$ is either contained in a translation of either $M_1=L_1=1$ or $M_2=L_2=1$ or $M'_1=L'_1=1$ or $M'_2=L'_2=1$. 

Similarly, if two cusps of $\mathcal{M}$ are not SGI each other, then it falls into the description of the last statement of the lemma, thanks to Proposition \ref{21073101} and Lemma \ref{22060401}.   

\item For $(a_1, b_1, c_1, d_1)=(a'_3, b'_3, c'_3, d'_3)=(0,0,0,0)$, the result is attained by an analogous manner above.

\item Next suppose 
\begin{equation*}
(a_1,b_1,c_1,d_1)\neq (0,0,0,0), (a'_1, b'_1)=(0, 0)\text{  and  }(c_3, d_3)\neq (0,0), (a'_3, b'_3, c'_3, d'_3)\neq (0,0,0,0). 
\end{equation*}
Then \eqref{21020401} is refined to 
\begin{equation}\label{22060601}
\begin{aligned}
&\mathfrak{s}_1=a_1v_1+b_1u_1+c_1v_2+d_1u_2,\\
&\mathfrak{s}_2=b_2u_1+c_2v_2+d_2u_2+a'_2v'_1+b'_2u'_1+c'_2v'_2,\\
&\mathfrak{s}_3=c_3v_2+d_3u_2+a'_3v'_1+b'_3u'_1+c'_3v'_2+d'_3u'_2.
\end{aligned}
\end{equation}

\begin{enumerate}
\item If $(a_1, b_1)\neq (0,0)$, as $\mathfrak{s}_2$ does not have any terms of the forms listed in \eqref{22060705}, we get $n_{\alpha,\beta}=0$ for all $\alpha,\beta>0$ and $n_{0,\beta}=0$ for $\beta\geq 2$. As a result, $\mathfrak{h}$ is given as 
\begin{equation}\label{22060707}
\mathfrak{s}_2=\mathfrak{h}(\mathfrak{s}_1, \mathfrak{s}_3)=\sum_{\alpha=1}^{\infty}n_{\alpha,0}\mathfrak{s}_1^{\alpha}+n_{0,1}\mathfrak{s}_3.
\end{equation}
Since $\mathfrak{s}_1$ is a function of $u_1$ and $u_2$,
\begin{equation}\label{22060709}
a'_2v'_1+b'_2u'_1+c'_2v'_2=n_{0,1}(a'_3v'_1+b'_3u'_1+c'_3v'_2+d'_3u'_2)
\end{equation}
is obtained from \eqref{22060707} and, as $v'_1$ (resp. $v'_2$) contains a term of the form $(u'_1)^{\alpha}$ (resp. $(u'_2)^{\beta}$) with $\alpha\geq 3$ (resp. $\beta\geq 3$), it follows that $a'_2=n_{0,1}a'_3$ and $c'_2=n_{0,1}c'_3$ by \eqref{22060709}. Thus $n_{0,1}\in \mathbb{Q}$ and 
\begin{equation*}
\mathfrak{s}_2-n_{0,1}\mathfrak{s}_3=(b_2u_1+c_2v_2+d_2u_2)-n_{0,1}(c_3v_2+d_3u_2)=\sum_{\alpha=1}^{\infty}n_{\alpha,0}\mathfrak{s}_1^{\alpha}.
\end{equation*}
Note that, as $\mathfrak{s}_2$ and $\mathfrak{s}_3$ are linearly independent (over $\mathbb{Q}$), $\mathfrak{s}_2-n_{0,1}\mathfrak{s}_3$ is non-trivial and so $\mathcal{X}^{oa}=\emptyset$ is concluded. 

As observed previously, if two cusps of $\mathcal{M}$ are SGI each other, then $\mathfrak{s}_2-n_{0,1}\mathfrak{s}_3=\sum_{\alpha=1}^{\infty}n_{\alpha,0}\mathfrak{s}_1^{\alpha}$ is equivalent to either $v_1=u_1\cdot \tau_1(u_1)$ or $v_2=u_2\cdot \tau_2(u_2)$, implying each $\mathcal{Y}_i$ is contained in a translation of either $M_1=L_1=1$ or $M_2=L_2=1$. For the other case (i.e. two cusps of $\mathcal{M}$ are not SGI), one gets the desired result in a similar fashion.
 
\item If $(a_1, b_1)=(0,0)$, then \eqref{22060601} is simplified to 
\begin{equation}\label{22060710}
\begin{aligned}
&\mathfrak{s}_1=c_1v_2+d_1u_2,\\
&\mathfrak{s}_2=b_2u_1+c_2v_2+d_2u_2+a'_2v'_1+b'_2u'_1+c'_2v'_2,\\
&\mathfrak{s}_3=c_3v_2+d_3u_2+a'_3v'_1+b'_3u'_1+c'_3v'_2+d'_3u'_2, 
\end{aligned}
\end{equation}
and, as $u_1$ neither appears in $\mathfrak{s}_1$ nor $\mathfrak{s}_3$, $b_2=0$ in \eqref{22060710}. Applying Gauss elimination if necessary, \eqref{22060710} is further reduced to 
\begin{equation*}
\begin{gathered}
\mathfrak{s}_1=c_1v_2+d_1u_2,\quad \mathfrak{s}_2=c_2v_2+a'_2v'_1+b'_2u'_1+c'_2v'_2,\quad \mathfrak{s}_3=a'_3v'_1+b'_3u'_1+c'_3v'_2+d'_3u'_2.
\end{gathered}
\end{equation*}
Since $\mathfrak{s}_1$ (resp. $\mathfrak{s}_3$) is a holomorphic function of $u_1$ and $u_2$ (resp. $u'_1$ and $u'_2$), the case falls into a similar case to the one discussed in \eqref{22060713}, and thus the result follows. 
\end{enumerate}

\item Using the same method shown previously right above, one attains the desired result for each of the following cases:
\begin{enumerate}
\item $(a_1, b_1, c_1, d_1)=(0, 0, 0, 0), (a'_1, b'_1)\neq (0,0), (c_3, d_3)\neq (0,0)$, $(a'_3, b'_3, c'_3, d'_3)\neq (0,0,0,0)$;
\item $(a'_1, b'_1)\neq (0,0), (a_1, b_1, c_1, d_1)\neq (0,0,0,0), (a'_3, b'_3, c'_3, d'_3)=(0,0,0,0), (c_3, d_3)\neq (0,0)$;
\item $(a'_1, b'_1)\neq (0,0),(a_1, b_1, c_1, d_1)\neq (0,0,0,0),(a'_3, b'_3, c'_3, d'_3)\neq(0,0,0,0),(c_3, d_3)=(0,0)$.
\end{enumerate}

\item Finally, let us consider 
\begin{equation*}
(a'_1, b'_1)\neq (0,0), (a_1, b_1, c_1, d_1)\neq (0,0,0,0) \text{  and  } (c_3, d_3)\neq (0,0), (a'_3, b'_3, c'_3, d'_3)\neq (0,0,0,0).
\end{equation*} 
\begin{enumerate}
\item If $(a_1, b_1)\neq (0,0)$, as $\mathfrak{s}_2$ does not contain terms of the forms given in \eqref{22060705}, $n_{\beta,0}=0$ for $\beta\geq 2$, $n_{\alpha,\beta}=0$ for $\alpha,\beta>0$, and hence $n_{0,\beta}=0$ for $\beta\geq 2$. Consequently, \eqref{22060801} is  
\begin{equation*}
\begin{aligned}
\mathfrak{s}_2&=b_2u_1+c_2v_2+d_2u_2+a'_2v'_1+b'_2u'_1+c'_2v'_2=\mathfrak{h}(\mathfrak{s}_1, \mathfrak{s}_3)=n_{1,0}\mathfrak{s}_1+n_{0,1}\mathfrak{s}_3\\
&=n_{1,0}(a_1v_1+b_1u_1+c_1v_2+d_1u_2+a'_1v'_1+b'_1u'_1)+n_{0,1}(c_3v_2+d_3u_2+a'_3v'_1+b'_3u'_1+c'_3v'_2+d'_3u'_2).
\end{aligned}
\end{equation*}
Analogous to an earlier case, it follows that $n_{1,0}, n_{0,1}\in \mathbb{Q}$, but this contradicts the fact that $H$ is an algebraic subgroup of codimension $3$. 

\item For $(c'_3, d'_3)\neq (0,0)$, it can be treated similarly to the case right above. 
\item If $(a_1, b_1)=(c'_3,d'_3)=(0,0)$, then \eqref{21020401} is reduced to 
\begin{equation}\label{22060603}
\begin{aligned}
&\mathfrak{s}_1=c_1v_2+d_1u_2+a'_1v'_1+b'_1u'_1,\\
&\mathfrak{s}_2=b_2u_1+c_2v_2+d_2u_2+a'_2v'_1+b'_2u'_1+c'_2v'_2,\\
&\mathfrak{s}_3=c_3v_2+d_3u_2+a'_3v'_1+b'_3u'_1.
\end{aligned}
\end{equation}
Since $\mathfrak{s}_1$ and $\mathfrak{s}_3$ do not have $u_1$ and $v'_2$, we get $b_2=c'_2=0$,  and thus, applying Gauss elimination again, \eqref{22060603} is further refined as
\begin{equation*}
\begin{gathered}
\mathfrak{s}_1=c_1v_2+d_1u_2,\quad \mathfrak{s}_2=c_2v_2+d_2u_2+a'_2v'_1+b'_2u'_1,\quad 
\mathfrak{s}_3=a'_3v'_1+b'_3u'_1.  
\end{gathered}
\end{equation*}
\end{enumerate}
In conclusion, $\mathfrak{s}_2=\sum^{\infty}_{\alpha=1}n_{\alpha,0}\mathfrak{s}_1^{\alpha}+\sum^{\infty}_{\beta=1}n_{0,\beta}\mathfrak{s}_3^{\beta}$, and so 
\begin{equation*}
c_2v_2+d_2u_2=\sum^{\infty}_{\alpha=1}n_{\alpha,0}(c_1v_2+d_1u_2)^{\alpha},\quad  \quad  a'_2v'_1+b'_2u'_1=\sum^{\infty}_{\beta=1}n_{0,\beta}(a'_3v'_1+b'_3u'_1)^{\beta}.
\end{equation*}
That is, two cusps of $\mathcal{M}$ are SGI and each $\mathcal{Y}_i$ is contained in a translation of either $M_2=L_2=1$ or $M'_1=L'_1=1$. 
\end{enumerate}
\end{proof}

The following lemma is proven by analogous methods shown in the proof of the above lemma. We state it here for later reference but skip the proof. 

\begin{lemma}\label{21100201}
Let $\mathcal{X}:=\mathcal{X}_1\times \mathcal{X}_2\times \mathcal{X}_3(\subset \mathbb{G}^2\times \mathbb{G}^2 \times \mathbb{G}^2:=(M_1, L_1, M_2, L_2, M_3, L_3))$ be the product of three algebraic curves where each $\mathcal{X}_k$ ($1\leq k\leq 3$) is the holonomy variety of a $1$-cusped hyperbolic $3$-manifold. If $\mathcal{X}$ is foliated by $1$-dim anomalous subvarieties $\{\mathcal{Y}_i\}_{i\in \mathcal{I}}$ of $\mathcal{X}$, then
\begin{equation*}
\mathcal{Y}_i\subset (M_k=L_k=1)
\end{equation*}
for some $1\leq k\leq 3$. 
\end{lemma}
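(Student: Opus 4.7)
The proof plan is to mimic the strategy of Lemma \ref{20050801}, exploiting the fact that since $\mathcal{X}=\mathcal{X}_1\times\mathcal{X}_2\times\mathcal{X}_3$, each $v_k$ on $\log\mathcal{X}$ depends only on $u_k$, yielding a clean separation of variables. By Theorem \ref{struc}, with $\dim H = n-(1+\dim \mathcal{X})+\dim \mathcal{Y} = 6-4+1 = 3$, there exists an algebraic subgroup $H$ of codimension $3$ such that each $\mathcal{Y}_i$ is a component of $\mathcal{X}\cap g_iH$. I write $\log H$ as the zero locus of three linearly independent linear forms
\begin{equation*}
\mathfrak{s}_j = \sum_{k=1}^{3}(a_{j,k}u_k+b_{j,k}v_k), \qquad j=1,2,3,
\end{equation*}
and restrict to $\log\mathcal{X}$ via $v_k=u_k\tau_k(u_k)$, so that each $\mathfrak{s}_j$ becomes a separable function $\tilde{\mathfrak{s}}_j(u_1,u_2,u_3) = f_{j,1}(u_1)+f_{j,2}(u_2)+f_{j,3}(u_3)$ with $f_{j,k}(u_k) = a_{j,k}u_k + b_{j,k}u_k\tau_k(u_k)$.

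Since the $1$-dimensional fibers of $\Phi:=(\tilde{\mathfrak{s}}_1,\tilde{\mathfrak{s}}_2,\tilde{\mathfrak{s}}_3)\colon \log\mathcal{X}\to\mathbb{C}^3$ sweep out $\log\mathcal{X}$, the image of $\Phi$ is $2$-dimensional, so Corollary \ref{22022603} produces, after relabelling if necessary, a holomorphic relation $\tilde{\mathfrak{s}}_3 = \mathfrak{h}(\tilde{\mathfrak{s}}_1,\tilde{\mathfrak{s}}_2) = \sum_{\alpha,\beta}n_{\alpha,\beta}\tilde{\mathfrak{s}}_1^{\alpha}\tilde{\mathfrak{s}}_2^{\beta}$. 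The left-hand side contains no mixed monomial $u_k^a u_\ell^b$ with $k\neq\ell$ and $a,b>0$, whereas expanding $\tilde{\mathfrak{s}}_1^{\alpha}\tilde{\mathfrak{s}}_2^{\beta}$ generally produces such monomials. Matching coefficients as in the case analysis of Lemma \ref{20050801}, I enumerate the support pattern $\{(j,k) : f_{j,k}\not\equiv 0\}$ and show that every admissible configuration forces some coordinate $u_k$ to remain constant on each fiber $\mathcal{Y}_i$ of $\Phi$. Equivalently, the Jacobian $J=[\partial_{u_k}\tilde{\mathfrak{s}}_j]$, whose determinant vanishes identically, has $\ker J$ everywhere contained in $\{du_k=0\}$ for some $k\in\{1,2,3\}$.

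Once $u_k$ is constant on $\mathcal{Y}_i$, the relation $v_k=u_k\tau_k(u_k)$ forces $v_k$ to be constant too, so $\mathcal{Y}_i$ is contained in a translate of $M_k=L_k=1$, finishing the proof. The main obstacle is the systematic enumeration of cases in the coefficient-matching step, the three-factor analogue of the subcase analysis in Lemma \ref{20050801}; the clean separability of each $\tilde{\mathfrak{s}}_j$ (owing to $v_k$ depending only on $u_k$) keeps the bookkeeping tractable, since the lowest-order cross monomial $u_k u_\ell$ that appears in $\mathfrak{h}(\tilde{\mathfrak{s}}_1,\tilde{\mathfrak{s}}_2)$ already severely restricts which $n_{\alpha,\beta}$ can be nonzero, and the transcendence of each $\tau_k(u_k)$ together with the rationality of the $a_{j,k},b_{j,k}$ closes off all the remaining branches.
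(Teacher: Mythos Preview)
Your proposal is correct and follows precisely the approach the paper indicates: the paper does not write out a proof of this lemma but states that it ``is proven by analogous methods shown in the proof of the above lemma'' (Lemma \ref{20050801}), and your sketch does exactly that, with the simplification that on $\log(\mathcal{X}_1\times\mathcal{X}_2\times\mathcal{X}_3)$ each $v_k$ depends only on $u_k$, so every $\tilde{\mathfrak{s}}_j$ is fully separable. The dimension count for $H$, the passage to a holomorphic relation among the $\tilde{\mathfrak{s}}_j$ via Corollary \ref{22022603}, and the vanishing of mixed monomials $u_k^au_\ell^b$ as the engine for the case analysis are all the same ingredients used in Lemma \ref{20050801}.
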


\subsection{Anomalous subvarieties of $\overline{\text{Pr}\big((\mathcal{X}\times \mathcal{X})\cap H\big)}$}\label{24081301}

To prove Theorem \ref{22022601}, it is first shown in Theorem \ref{21082901} that there exists a finite set $\mathcal{H}$ of algebraic subgroups of codimension $1$ such that a Dehn filling point associated to \eqref{22041801} is contained in some $H\in \mathcal{H}$. Then, in Theorem \ref{20071401}, we will further promote each $H\in \mathcal{H}$ to an algebraic subgroup of codimension $2$ by solving a weak version of the Zilber-Pink conjecture over $(\mathcal{X}\times \mathcal{X})\cap_{(1, \dots, 1)} H$. 

To explain more in detail, let $H\in \mathcal{H}$ be an algebraic subgroup defined by
\begin{equation}\label{22012002}
M_1^{a}L_1^{b}M_2^cL_2^d(M'_1)^{a'}(L'_1)^{b'}(M'_2)^{c'}(L'_2)^{d'}=1. 
\end{equation}
By moving to an analytic setting, $\log \big((\mathcal{X}\times \mathcal{X}) \cap _{(1, \dots, 1)}H\big)$ is a $3$-dim analytic set defined by
\begin{equation}\label{24081403}
au_1+bv_1+cu_2+dv_2+a'u'_1+b'v'_1+c'u'_2+d'v'_2=0. 
\end{equation}
Without loss of generality, if we assume $(c',d')\neq (0,0)$ in \eqref{24081403} and solve the equation for $u'_2$, then it is expressed in the following form:
\begin{equation}\label{220122031}
u'_2=-\frac{a+b\tau_1}{c'+d'\tau_2}u_1-\frac{c+d\tau_2}{c'+d'\tau_2}u_2-\frac{a'+b'\tau_1}{c'+d'\tau_2}u'_1+\text{(higher order terms)}.
\end{equation}
Now we consider $\log \big((\mathcal{X}\times \mathcal{X}) \cap_{(1, \dots, 1)} H\big)$ (resp. $(\mathcal{X}\times \mathcal{X})_{(1, \dots, 1)} \cap H)$ as an analytic set (resp. algebraic variety) parametrized by $u_1, u_2$ and $u'_1$ (resp. $M_1, M_2$ and $M'_1$). 

Projecting $(\mathcal{X}\times \mathcal{X}) \cap_{(1, \dots, 1)} H$ under  
\begin{equation*}
\text{Pr}:\;\big(M_1, L_1, M_2, L_2, M'_1, L'_1,M'_2,L'_2\big)\longrightarrow \big(M_1, L_1, M_2, L_2, M'_1, L'_1\big), 
\end{equation*}
denote the algebraic closure of its image simply by 
\begin{equation}\label{24101901}
\overline{\text{Pr}\big((\mathcal{X}\times \mathcal{X})\cap H\big)}. 
\end{equation}
Later in Section \ref{22042905}, to prove Theorem \ref{20071401} we work with the \eqref{24101901} instead of $(\mathcal{X}\times \mathcal{X})\cap _{(1, \dots, 1)}H$. That is, we solve a weak version of the Zilber-Pink conjecture over \eqref{24101901} rather than $(\mathcal{X}\times \mathcal{X})\cap_{(1, \dots, 1)} H$ itself. Thus it is required to understand the structure of anomalous subvarieties of \eqref{24101901} and that is the job for this subsection. Similar to Section \ref{22022301}, we would be interested in the case that \eqref{24101901} is foliated by its anomalous subvarieties, in particular, of codimension $1$.  

\begin{lemma}\label{20072401}
Let $\mathcal{X}$ the holonomy variety of a $2$-cusped hyperbolic $3$-manifold satisfying $\mathcal{X}^{oa}\neq \emptyset$. Let $H$ and $\overline{\text{Pr}\big((\mathcal{X}\times \mathcal{X})\cap H\big)}$ be the same as given above. If $\overline{\text{Pr}\big((\mathcal{X}\times \mathcal{X})\cap H\big)}$ is foliated by infinitely many anomalous subvarieties $\{\mathcal{Y}_i\}_{i\in \mathcal{I}}$ of dim $2$ where each $\mathcal{Y}_i$ contained in a translation of $H'$, then $(a, b, c, d)=(0,0,0,0)$ in \eqref{22012002} and $H'$ is defined by $M'_1=L'_1=1$. 
\end{lemma}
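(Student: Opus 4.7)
My plan is to parallel the methodology of Lemma \ref{20050801}: translate the foliation hypothesis into an analytic identity $\mathfrak{s}_2 = \mathfrak{h}(\mathfrak{s}_1)$ on $\log \overline{\text{Pr}\big((\mathcal{X}\times \mathcal{X})\cap H\big)}$, and then compare Taylor coefficients to extract the claimed algebraic conclusion. First I would observe that the analytic projection is $3$-dimensional and locally parametrized by $u_1, u_2, u'_1$: the coordinates $v_1, v_2$ are the usual Neumann–Zagier functions of $u_1, u_2$ on $\mathcal{X}$, while $u'_2$ is determined implicitly by the $H$-equation
\[ au_1 + bv_1 + cu_2 + dv_2 + a'u'_1 + b'v'_1 + c'u'_2 + d'v'_2 = 0 \]
(solvable in $u'_2$ using $(c',d') \neq (0,0)$ and $\tau_2 \notin \mathbb{R}$), and $v'_1 = u'_1 \tau_1(u'_1, u'_2)$. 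Since the foliation is by $2$-dimensional anomalous subvarieties of a $3$-dimensional ambient set, the generic translate of $H'$ must meet this projection in dimension $2$, which forces $\dim H' = 4$, so $H'$ is cut out by two $\mathbb{Q}$-linearly independent linear forms
\[ \mathfrak{s}_j = \alpha_j u_1 + \beta_j v_1 + \gamma_j u_2 + \delta_j v_2 + \alpha'_j u'_1 + \beta'_j v'_1, \quad j = 1,2. \]
Exactly as in the opening argument of the proof of Lemma \ref{20050801} (via Corollary \ref{22022603}), the foliation hypothesis yields a holomorphic $\mathfrak{h}$ with $\mathfrak{h}(0) = 0$ such that $\mathfrak{s}_2 = \mathfrak{h}(\mathfrak{s}_1)$ identically on the projected analytic set.

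Next I would restrict to $u'_1 = 0$, which forces $v'_1 = 0$ and reduces the identity to $P_2(u_1,u_2) = \mathfrak{h}\bigl(P_1(u_1,u_2)\bigr)$ on $\log \mathcal{X}$, where $P_j := \alpha_j u_1 + \beta_j v_1 + \gamma_j u_2 + \delta_j v_2$. Because $\mathcal{X}^{oa} \neq \emptyset$, $\mathcal{X}$ is not foliated by $1$-dimensional anomalous subvarieties; I would use this to show the identity forces $P_1 \equiv P_2 \equiv 0$. If $\mathfrak{h}$ were $\mathbb{Q}$-linear, then $P_2 - \mathfrak{h}'(0) P_1 = 0$ is a non-trivial $\mathbb{Q}$-linear relation among integer-coefficient forms in $u_1, v_1, u_2, v_2$, contradicting $\mathbb{Q}$-linear independence after rescaling; while if $\mathfrak{h}$ had a non-linear term, matching the quadratic and cubic contributions of $\mathfrak{h}(P_1)$ against the third- and fourth-order Taylor coefficients of $v_1, v_2$ supplied by Theorem \ref{potential} — in particular the mixed coefficient $b_{22}$ of $u_1^2 u_2^2$ in $\Phi$, which is non-zero exactly because $\mathcal{X}^{oa} \neq \emptyset$ rules out the SGI / product-of-curves dichotomy of Theorem \ref{21073101} — would produce an immediate obstruction. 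Hence $P_1 \equiv P_2 \equiv 0$, and since $\tau_1, \tau_2 \notin \mathbb{Q}$ this forces $\alpha_j = \beta_j = \gamma_j = \delta_j = 0$ for $j = 1, 2$.

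The remaining forms $\mathfrak{s}_j = \alpha'_j u'_1 + \beta'_j v'_1$ are each divisible by $u'_1$: write $\mathfrak{s}_j = u'_1 R_j$ with $R_j = \alpha'_j + \beta'_j \tau_1\bigl(u'_1, u'_2(u_1,u_2,u'_1)\bigr)$. Dividing $\mathfrak{s}_2 = \mathfrak{h}(\mathfrak{s}_1)$ by $u'_1$ and letting $u'_1 \to 0$ gives $R_2|_{u'_1 = 0} = \mathfrak{h}'(0) R_1|_{u'_1 = 0}$. Suppose for contradiction that $(a,b,c,d) \neq (0,0,0,0)$. Then $u'_2|_{u'_1 = 0}$ depends non-trivially on $(u_1, u_2)$ with leading linear coefficients $-(a+b\tau_1)/(c'+d'\tau_2)$ and $-(c+d\tau_2)/(c'+d'\tau_2)$, and comparing the constant part with the leading $u_k^2$-part of the identity (again using the non-vanishing of $b_{22}$) forces $\alpha'_2 = \mathfrak{h}'(0)\alpha'_1$ and $\beta'_2 = \mathfrak{h}'(0)\beta'_1$ with $\mathfrak{h}'(0) \in \mathbb{Q}$; successive matching of the $(u'_1)^k$ coefficients then kills every higher Taylor coefficient of $\mathfrak{h}$. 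Consequently $\mathfrak{s}_2 - \mathfrak{h}'(0)\mathfrak{s}_1 \equiv 0$ is a non-trivial $\mathbb{Q}$-linear relation between $\mathfrak{s}_1$ and $\mathfrak{s}_2$, contradicting their independence. Therefore $(a,b,c,d) = (0,0,0,0)$, in which case $u'_2$ becomes a function of $u'_1$ alone, so $\mathfrak{s}_j = \alpha'_j u'_1 + \beta'_j v'_1$ are independent linear forms in $u'_1, v'_1$ only, and together they cut out $\{u'_1 = v'_1 = 0\}$, i.e.\ $H'$ is defined by $M'_1 = L'_1 = 1$.

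The principal obstacle is the Taylor-coefficient matching in the third step: one must track precisely which monomials in $(u_1, u_2, u'_1)$ appear in each $R_j$ and in each power $\mathfrak{s}_1^k$, and verify that the non-vanishing of the off-diagonal Neumann–Zagier coefficient $b_{22}$ (the analytic fingerprint of $\mathcal{X}^{oa} \neq \emptyset$ via Theorem \ref{21073101}) is exactly what terminates the recursion and yields the contradicting rational linear dependence. The rest of the argument is book-keeping along the lines already established in Lemma \ref{20050801}.
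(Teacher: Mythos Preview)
Your strategy matches the paper's: establish $\mathfrak{s}_2=\mathfrak{h}(\mathfrak{s}_1)$, set $u'_1=0$ to force the first four coefficients of each $\mathfrak{s}_j$ to vanish, then use the dependence of $v'_1$ on $u'_2$ to rule out $(a,b,c,d)\neq(0,0,0,0)$. The gap is your repeated reliance on the claim that $b_{22}$ (the coefficient of $u_1^2u_2^2$ in $\Phi$) is nonzero whenever $\mathcal{X}^{oa}\neq\emptyset$. This is false: Theorem~\ref{21073101} only tells you that \emph{some} mixed coefficient $m_{\alpha,\beta}$ with $\alpha,\beta>0$ is nonzero, not that the lowest-degree one is. Both of your Taylor-matching arguments hinge on this specific coefficient, so as written they do not go through.

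The paper avoids the issue by arguing at a higher level. For your second step, the identity $P_2=\mathfrak{h}(P_1)$ on $\log\mathcal{X}$ with nonzero coefficient matrix already says that $\log\mathcal{X}$ is foliated by $1$-dimensional anomalous subsets (the level sets $\{P_1=c,\,P_2=\mathfrak{h}(c)\}$ sit in $2$-dimensional affine cosets when $P_1,P_2$ are independent forms; if they are dependent then $\mathfrak{h}$ is forced to be linear, contradicting $\codim H'=2$). That is exactly $\mathcal{X}^{oa}=\emptyset$---no coefficient chasing required. For your third step, once $H'$ is $M'_1=L'_1=1$ one may take $\mathfrak{s}_1=u'_1$ and $\mathfrak{s}_2=v'_1$, so the identity reads $v'_1=\mathfrak{h}(u'_1)$ on the projected set. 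Since some $m_{\alpha,\beta}$ with $\beta>0$ is nonzero, $v'_1$ genuinely depends on $u'_2$; and $u'_2$ depends on $(u_1,u_2)$ precisely when $(a,b,c,d)\neq(0,0,0,0)$. This immediately contradicts $v'_1=\mathfrak{h}(u'_1)$, with no need for your $R_j$-division or inductive matching.
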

\begin{proof}
Without loss of generality, we suppose $H'$ is defined by
\begin{equation}\label{22012001}
\begin{gathered}
M_1^{a_j}L_1^{b_j}M_2^{c_j}L_2^{d_j}(M'_1)^{a'_j}(L'_1)^{b'_j}=1\quad (j=1,2), 
\end{gathered}
\end{equation}
and hence $\log\big(\overline{\text{Pr}((\mathcal{X}\times \mathcal{X})\cap H)}\cap H'\big)$ is given as
\begin{equation*}
\begin{gathered}
a_ju_1+b_jv_1+c_ju_2+d_jv_2+a'_ju'_1+b'_jv'_1=0\quad (j=1,2). 
\end{gathered}
\end{equation*}
Since $\overline{\text{Pr}\big((\mathcal{X}\times \mathcal{X})\cap H\big)}$ is foliated by anomalous subvarieties contained in translations of $H'$, equivalently, there exists a holomorphic function $\mathfrak{h}$ such that  
\begin{equation}\label{20072402}
\begin{aligned}
a_2u_1+b_2v_1+c_2u_1+d_2v_2+a'_2u'_1+b'_2v'_1
=\mathfrak{h}(a_1u_1+b_1v_1+c_1u_1+d_1v_2+a'_1u'_1+b'_1v'_1).
\end{aligned}
\end{equation}
\begin{enumerate}
\item If 
\begin{equation*}
\left(\begin{array}{cccc}
a_1 & b_1 & c_1 & d_1\\ 
a_2 & b_2 & c_2 & d_2 
\end{array}\right)\neq 
\left(\begin{array}{cccc}
0 & 0 & 0 & 0\\ 
0 & 0 & 0 & 0 
\end{array}\right),  
\end{equation*}
plugging $u'_1=v'_1=0$, \eqref{20072402} becomes 
\begin{equation}
\begin{gathered}
a_2u_1+b_2v_1+c_2u_1+d_2v_2=\mathfrak{h}(a_1u_1+b_1v_1+c_1u_1+d_1v_2).
\end{gathered}
\end{equation}
However, this implies $\mathcal{X}$ has infinitely many anomalous subvarieties of dim $1$, which contradicts our assumption that $\mathcal{X}^{oa}\neq \emptyset$. 

As a result, 
\begin{equation*}
\left(\begin{array}{cccc}
a_1 & b_1 & c_1 & d_1\\ 
a_2 & b_2 & c_2 & d_2 
\end{array}\right)= 
\left(\begin{array}{cccc}
0 & 0 & 0 & 0\\ 
0 & 0 & 0 & 0 
\end{array}\right)
\end{equation*}
and $H'$ in \eqref{22012001} is reduced to $M'_1=L'_1=1$, which further simplifies \eqref{20072402} as 
\begin{equation}\label{22012005}
v'_1=\mathfrak{h}(u'_1).
\end{equation}
That is, $v'_1$ depends only on $u'_1$ over $\log \big((\mathcal{X}\times \mathcal{X})\cap H\big)$. 

\item Next we claim $(a,b,c,d)=(0,0,0,0)$ in \eqref{22012002}. Let $v'_1$ be represented as 
\begin{equation}\label{22072301}
\sum_{\substack{\alpha:odd\\ \beta:even
}} m_{\alpha, \beta}(u'_1)^{\alpha}(u'_2)^{\beta}
\end{equation}
over $\log (\mathcal{X}\times \mathcal{X})$. Since $\mathcal{X}^{oa}\neq \emptyset$, we get $m_{\alpha,\beta}\neq 0$ for some $\alpha,\beta>0$ in \eqref{22072301}. Combining \eqref{22072301} with \eqref{220122031}, it follows that 
\begin{equation}\label{22012006}
v'_1=\sum_{\substack{\alpha:odd\\ \beta:even
}}  m_{\alpha, \beta}(u'_1)^{\alpha}\Big(-\frac{a+b\tau_1}{c'+d'\tau_2}u_1-\frac{c+d\tau_2}{c'+d'\tau_2}u_2-\frac{a'+b'\tau_1}{c'+d'\tau_2}u'_1+\text{(higher order terms)}\Big)^{\beta}. 
\end{equation}
However, as $v'_1$ depends only on $u'_1$ over $\log \big((\mathcal{X}\times \mathcal{X})\cap H\big)$ by \eqref{22012005}, $(a,b,c,d)=(0,0,0,0)$ in \eqref{22012006}. 
\end{enumerate}
\end{proof}

For $\mathcal{X}^{oa}=\emptyset$, if two cusps of $\mathcal{M}$ are SGI each other, then $\overline{\text{Pr}\big((\mathcal{X}\times \mathcal{X})\cap H\big)}$ is the product of three algebraic curves and thus the structure of its infinitely many anomalous subvarieties of codimension $1$ is known by Lemma \ref{22060401}. 

On the other hand, if $\mathcal{X}^{oa}=\emptyset$ and two cusps of $\mathcal{M}$ are not SGI each other, we have the following lemma.

\begin{lemma}\label{21101501}
Let $\mathcal{X}$ the holonomy variety of a $2$-cusped hyperbolic $3$-manifold such that $\mathcal{X}^{oa}= \emptyset$ and two cusps of $\mathcal{M}$ are not SGI. Let $H$ be an algebraic subgroup defined by
\begin{equation}\label{21101701}
M_1^{a}L_1^{b}M_2^{c}L_2^{d}=(M'_2)^{c'}(L'_2)^{d'}\quad \big((a,b,c,d)\neq (0,0,0,0), (c', d')\neq (0,0)\big)
\end{equation}
and $\overline{\text{Pr}((\mathcal{X}\times \mathcal{X})\cap H)}$ be the same as above. If $\{\mathcal{Y}_i\}_{i\in \mathcal{I}}$ is a family of infinitely many anomalous subvarieties of $\overline{\text{Pr}((\mathcal{X}\times \mathcal{X})\cap H)}$ of dimension $2$ where each $\mathcal{Y}_i$ contained in a translation of $\tilde{H}$, then $\tilde{H}$ is defined by either $\tilde{M_1}=\tilde{L_1}=1$ or $\tilde{M_2}=\tilde{L_2}=1$ where $\tilde{M_k}, \tilde{L_k}$ ($k=1,2$) are the preferred coordinates introduced in Definition \ref{22071301} as usual.
\end{lemma}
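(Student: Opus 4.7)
The plan is to mirror the strategy of Lemma \ref{20072401}, adapted to the hypothesis $\mathcal{X}^{oa}=\emptyset$ in the non-SGI setting, working analytically in the preferred coordinates that, by Theorem \ref{21073101}, make $\mathcal{X}$ a product $\tilde{\mathcal{C}}_1\times\tilde{\mathcal{C}}_2$. Write $\tilde{H}$ in the log picture as the joint vanishing of
\begin{equation*}
\ell_j = A_j u_1+B_j v_1+C_j u_2+D_j v_2+A'_j u'_1+B'_j v'_1,\qquad j=1,2,
\end{equation*}
and set $\mathcal{X}_{\mathrm{proj}}:=\overline{\mathrm{Pr}((\mathcal{X}\times\mathcal{X})\cap H)}$, which is $3$-dimensional. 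Since the hypothesized leaves are $2$-dimensional, the codimension count produces a holomorphic $\mathfrak{h}$ with $\ell_2=\mathfrak{h}(\ell_1)$ on $\log\mathcal{X}_{\mathrm{proj}}$, and the codimension-$2$ condition on $\tilde{H}$ together with the transcendence of the coordinate functions on $\log\mathcal{X}$ forces $\mathfrak{h}$ to be non-linear.

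First I would restrict to $u'_1=v'_1=0$, pulling the identity back to $\log\mathcal{X}$ (first copy) as $A_2 u_1+B_2 v_1+C_2 u_2+D_2 v_2=\mathfrak{h}(A_1 u_1+B_1 v_1+C_1 u_2+D_1 v_2)$. If both linear forms vanished identically on $\log\mathcal{X}$, transcendence of $\tilde{\tau}_k$ would force $(A_j,B_j,C_j,D_j)=(0,0,0,0)$, reducing $\tilde{H}$ to $M'_1=L'_1=1$ after Gauss elimination. I would then rule this out: in the non-SGI case the absence of any extra relation $g(M'_1,L'_1)=0$ on $\mathcal{X}$ (else $\mathcal{X}$ would carry three independent equations in $\mathbb{G}^4$) makes the projection $\mathcal{X}\to(M'_1,L'_1)$ dominant, so slices of $\mathcal{X}_{\mathrm{proj}}$ by $M'_1=c_1,\,L'_1=c_2$ are $1$-dimensional rather than $2$-dimensional, contradicting the hypothesis. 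Hence the restricted identity is non-trivial and defines a $1$-dimensional anomalous foliation of $\mathcal{X}$; by Theorem \ref{21073101} combined with the anomalous-subvariety structure of $\tilde{\mathcal{C}}_1\times\tilde{\mathcal{C}}_2$ (as in Lemma \ref{22060401}), its leaves must be translates of $\tilde{M}_1=\tilde{L}_1=1$ or $\tilde{M}_2=\tilde{L}_2=1$. Assuming WLOG the former, each $(A_j,B_j,C_j,D_j)$ lies in the preferred-coordinate span of $\tilde{u}_1,\tilde{v}_1$.

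Next I would kill the primed part. Parametrize $\log\mathcal{X}_{\mathrm{proj}}$ by $(\tilde{u}_1,\tilde{u}_2,\tilde{u}'_1)$ with $\tilde{u}'_2$ implicitly determined by $H$, and decompose $\ell_j=f_j(\tilde{u}_1)+g_j(\tilde{u}'_1,\tilde{u}'_2)$, where $g_j$ comes from the primed part. In the generic subcase where $H$'s left-hand side has a non-trivial $\tilde{u}_2,\tilde{v}_2$ component (so $\partial\tilde{u}'_2/\partial\tilde{u}_2\neq 0$), differentiating $\ell_2=\mathfrak{h}(\ell_1)$ with respect to $\tilde{u}_2$ gives $\partial g_2/\partial\tilde{u}'_2=\mathfrak{h}'(\ell_1)\,\partial g_1/\partial\tilde{u}'_2$; combining with the $\tilde{u}_1$-derivative yields $f'_2(\tilde{u}_1)=\mathfrak{h}'(\ell_1)\,f'_1(\tilde{u}_1)$. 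Non-linearity of $\mathfrak{h}$ then forces $\ell_1$ to depend only on $\tilde{u}_1$, hence $g_1\equiv 0$; by symmetry $g_2\equiv 0$. Transcendence of $\tilde{\tau}_k$ upgrades this to $(A'_j,B'_j)=(0,0)$, so $\tilde{H}$ is cut out by two linearly independent forms in the $2$-dimensional span of $\tilde{u}_1,\tilde{v}_1$, i.e.\ $\tilde{H}=\{\tilde{M}_1=\tilde{L}_1=1\}$; matching the other foliation in the restriction step gives $\tilde{H}=\{\tilde{M}_2=\tilde{L}_2=1\}$ instead.

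The hard part will be the degenerate subcase where $H$'s LHS already lies in the span of $\tilde{u}_1,\tilde{v}_1$, so $\tilde{u}_2$ drops out of the identity and the direct $\tilde{u}_2$-differentiation is vacuous. I plan to handle it by Taylor-expanding the implicit function $\tilde{u}'_2=\tilde{u}'_2(\tilde{u}_1,\tilde{u}'_1)$ defined by the reduced constraint $F(\tilde{u}_1)=G(\tilde{u}'_1,\tilde{u}'_2)$, substituting into $\ell_2-\mathfrak{h}(\ell_1)\equiv 0$, and matching coefficients; the transcendence of $\tilde{\tau}_k$ should again pin down $(A'_j,B'_j)=0$ and deliver the same conclusion.
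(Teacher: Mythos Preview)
Your outline diverges from the paper's argument in its overall architecture. The paper runs a direct three-way case split on the coefficients of $\tilde H$: (1) both unprimed blocks $(a_j,b_j,c_j,d_j)$ vanish, (2) both primed blocks $(a'_j,b'_j)$ vanish, (3) mixed. Case (1) is dispatched by expanding $v'_1=\sum m_{\alpha,\beta}(u'_1)^\alpha(u'_2)^\beta$ and using $(a,b,c,d)\neq 0$ to see that $v'_1$ picks up genuine $u_1$- or $u_2$-dependence through $u'_2$, contradicting $v'_1=\mathfrak h(u'_1)$. Case (2) gives the conclusion directly via Lemma~\ref{22060401}. The substance is case (3): the paper assumes without loss of generality $(a,b)\neq 0$, sets $u_2=0$, and observes that $v'_1$ then has only monomials $(u'_1)^\alpha u_1^\beta$ with $\alpha$ odd and $\beta$ even, whereas the right side $\mathfrak h(a_1u_1+b_1v_1+a'_1u'_1+b'_1v'_1)$, being an odd function of its argument, necessarily produces monomials with $\alpha$ even and $\beta$ odd. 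That parity mismatch kills the mixed case in one stroke, with no need to pass to preferred coordinates.

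Your route---restrict to $u'_1=0$ to pin the unprimed parts into the $(\tilde u_1,\tilde v_1)$-span via Lemma~\ref{22060401}, then differentiate in $\tilde u_2$ to force the primed parts to vanish---is coherent in its generic subcase, and your dimension-count replacement for the paper's case (1) is valid. But the degenerate subcase you flag (where $H$'s left side already lies in the $(\tilde u_1,\tilde v_1)$-span, so $\partial\tilde u'_2/\partial\tilde u_2=0$ and the $\tilde u_2$-derivative is vacuous) is a genuine gap: ``Taylor-expand and match coefficients'' is not yet an argument, and nothing you have written explains which coefficients isolate $(A'_j,B'_j)$ or why transcendence of the cusp shapes suffices there. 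The paper's parity trick sidesteps this entirely---it never splits on the position of $(a,b,c,d)$ relative to the preferred basis---so I would recommend replacing your differentiation step by the direct parity comparison after restricting to $u_2=0$ (assuming $(a,b)\neq 0$ without loss of generality), which handles the mixed case uniformly.
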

\begin{proof}
Without loss of generality, let us assume $\tilde{H}$ is given as
\begin{equation*}
M_1^{a_j}L_1^{b_j}M_2^{c_j}L_2^{d_j}(M'_1)^{a'_j}(L'_1)^{b'_j}=1\quad (j=1,2).
\end{equation*}
Moving to an analytic setting, $\log\big(\overline{\text{Pr}((\mathcal{X}\times \mathcal{X})\cap H)}\cap \tilde{H}\big)$ is defined by
\begin{equation}\label{21101502}
a_ju_1+b_jv_1+c_ju_2+d_jv_2+a'_ju'_1+b'_jv'_1=0\quad (j=1,2)
\end{equation}
and, since $\log\big(\overline{\text{Pr}((\mathcal{X}\times \mathcal{X})\cap H)}\big)$ is foliated by anomalous subvarieties contained in translations of \eqref{21101502}, there exists an analytic function $\mathfrak{h}$ such that 
\begin{equation}\label{21101602}
a_2u_1+b_2v_1+c_2u_2+d_2v_2+a'_2u'_1+b'_2v'_1=\mathfrak{h}(a_1u_1+b_1v_1+c_1u_2+d_1v_2+a'_1u'_1+b'_1v'_1).
\end{equation} 
\begin{enumerate}
\item If 
\begin{equation*}
\left(\begin{array}{cccc}
a_1 & b_1 & c_1 & d_1\\
a_2 & b_2 & c_2 & d_2
\end{array}
\right)
=\left(\begin{array}{cccc}
0 & 0 & 0 & 0\\
0 & 0 & 0 & 0
\end{array}\right),
\end{equation*}
then \eqref{21101602} is reduced to $a'_2u'_1+b'_2v'_1=\mathfrak{h}(a'_1u'_1+b'_1v'_1)$, which is further simplified as   
\begin{equation}\label{21101706}
a'_2u'_1+b'_2v'_1=\mathfrak{h}(a'_1u'_1).
\end{equation}

If $b'_2=0$, then $\mathfrak{h}$ is linear and it contradicts the fact that $\tilde{H}$ is an algebraic subgroup of codimension $2$. So we assume $b'_2\neq 0$ and $v'_1$ is presented as 
\begin{equation}\label{22061101}
\sum_{\substack{\alpha:odd\\ \beta:even
}} m_{\alpha, \beta}(u'_1)^{\alpha}(u'_2)^{\beta}   
\end{equation}
over $\log (\mathcal{X}\times \mathcal{X})$. Since $u'_2=\dfrac{a+b\tau_1}{c'+d'\tau_2}u_1+\dfrac{c+d\tau_2}{c'+d'\tau_2}u_2+(\text{higher order terms})$ by \eqref{21101701}, combining it with \eqref{22061101}, it follows that  
\begin{equation*}
v'_1=\sum_{\substack{\alpha:odd\\ \beta:even
}}  m_{\alpha, \beta}(u'_1)^{\alpha}\Big(\dfrac{a+b\tau_1}{c'+d'\tau_2}u_1+\dfrac{c+d\tau_2}{c'+d'\tau_2}u_2+(\text{higher order terms})\Big)^{\beta}. 
\end{equation*}
However, as $(a,b,c,d)\neq (0,0,0,0)$ by the assumption, this contradicts the fact that $v'_1$ depends only on $u'_1$ over $\log ((\mathcal{X}\times \mathcal{X})\cap H)$ by \eqref{21101706}. 

\item If $\left(\begin{array}{cc}
a'_1 & b'_1 \\
a'_2 & b'_2 
\end{array}
\right)=\left(\begin{array}{cc}
0 & 0 \\
0 & 0 
\end{array}\right)$, then \eqref{21101602} is reduced to 
\begin{equation*}
a_2u_1+b_2v_1+c_2u_2+d_2v_2=\mathfrak{h}(a_1u_1+b_1v_1+c_1u_2+d_1v_2), 
\end{equation*}
implying $\mathcal{X}$ is foliated by its anomalous subvarieties contained in translations of  
\begin{equation}\label{22061103}
M_1^{a_j}L_1^{b_j}M_2^{c_j}L_2^{d_j}=1, \quad (j=1,2).
\end{equation}
Since $\mathcal{X}^{oa}=\emptyset$ and two cusps of $\mathcal{M}$ are not SGI each other, \eqref{22061103} is equivalent to either $\tilde{M_1}=\tilde{L_1}=1$ or $\tilde{M_2}=\tilde{L_2}=1$ by Lemma \ref{22060401}. 

\item Lastly suppose  
\begin{equation*}
\left(\begin{array}{cccc}
a_1 & b_1 & c_1 & d_1\\
a_2 & b_2 & c_2 & d_2
\end{array}
\right)\neq \left(\begin{array}{cccc}
0 & 0 & 0 & 0\\
0 & 0 & 0 & 0
\end{array}\right)\quad \text{ and }\quad 
\left(\begin{array}{cc}
a'_1 & b'_1 \\
a'_2 & b'_2 
\end{array}
\right)\neq \left(\begin{array}{cc}
0 & 0 \\
0 & 0 
\end{array}\right).
\end{equation*}
Since $(a, b,c,d)\neq (0,0,0,0)$, without loss of generality, we assume $(a,b)\neq (0,0)$. Letting $u_2=0$ in \eqref{21101602}, it is reduced to
\begin{equation}\label{21101605}
a_2u_1+b_2v_1+a'_2u'_1+b'_2v'_1=\mathfrak{h}(a_1u_1+b_1v_1+a'_1u'_1+b'_1v'_1).
\end{equation} 
If $v'_1$ is given as in \eqref{22061101} over $\log (\mathcal{X}\times \mathcal{X})$, since $u'_2=\dfrac{a+b\tau_1}{c'+d'\tau_2}u_1+(\text{higher order terms})$ by \eqref{21101701}, we get 
\begin{equation}\label{22081901}
v'_1=\sum_{\substack{\alpha:odd\\ \beta:even
}} m_{\alpha, \beta}(u'_1)^{\alpha}(u'_2)^{\beta}=\sum_{\substack{\alpha:odd\\ \beta:even
}}  m_{\alpha, \beta}(u'_1)^{\alpha}\Big(\dfrac{a+b\tau_1}{c'+d'\tau_2}u_1+(\text{higher order terms})\Big)^{\beta}
\end{equation}
on $\log \big((\mathcal{X}\times \mathcal{X})\cap H\big)$. Hence every term of $v'_1$ in \eqref{22081901} is of the form $(u'_1)^{\alpha}u_1^{\beta}$ with $\alpha:odd$ and $\beta: even$. However, as $\mathfrak{h}$ is an odd function, the right side of \eqref{21101605} contains a term of the form $(u'_1)^{\alpha}u_1^{\beta}$ with $\alpha:even$ and $\beta:odd$, which is a contradiction.   
\end{enumerate}
\end{proof}

Lastly, the following miscellaneous lemma will be used in the proof of Theorem \ref{21082901}.  
\begin{lemma}\label{20080305}
Let $K$ be an algebraic coset in $\mathbb{G}^8$ such that $(\mathcal{X}\times \mathcal{X})\cap K$ is an anomalous subvariety of $\mathcal{X}\times \mathcal{X}$. If dim $(\mathcal{X}\times \mathcal{X})\cap K=2$ (resp. dim $(\mathcal{X}\times \mathcal{X})\cap K=1$), then dim $K\geq 4$ (resp. dim $K\geq 2$).  
\end{lemma}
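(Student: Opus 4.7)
The plan is to replace $K$ by the smallest algebraic coset $\hat{K}$ containing $\mathcal{Y} := (\mathcal{X}\times \mathcal{X})\cap K$. Since $\mathcal{Y}\subset \hat{K}\subset K$, it suffices to prove $\dim \hat{K}\geq 4$ (resp.\ $\geq 2$) when $\dim \mathcal{Y}=2$ (resp.\ $\dim \mathcal{Y}=1$). I would then run a case analysis on the dimensions $e_i := \dim \overline{\pi_i(\mathcal{Y})}$ of the projections of $\mathcal{Y}$ under the two factor maps $\pi_i : \mathcal{X}\times \mathcal{X}\to \mathcal{X}$, making use of $e_1+e_2\geq \dim \mathcal{Y}$.

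The two structural inputs driving the argument — both consequences of Theorem \ref{potential} — are: (I) the smallest algebraic coset of $\mathbb{G}^4$ containing $\mathcal{X}$ is $\mathbb{G}^4$ itself, i.e., $\mathcal{X}$ is non-degenerate; and (II) $\mathcal{X}$ contains no $1$-dimensional algebraic coset. Input (I) follows because any non-trivial monomial relation on $\mathcal{X}$ would translate, via the Taylor expansion of $v_k=u_k\tau_k(u_1,u_2)$, into a rational linear relation whose linear part forces one of the cusp shapes to be rational, contradicting the non-real algebraicity of $\tau_1,\tau_2$. Input (II) follows because a rational $1$-dim affine line in $\log\mathcal{X}$ would force $v_k$ to restrict to an affine function of the line parameter, which is blocked by the non-linearity of $v_k$ in $(u_1,u_2)$ proved in Lemma 2.3 of \cite{jeon3}. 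In particular, (II) implies that every curve $C\subset \mathcal{X}$ has smallest coset $\hat{C}$ with $\dim \hat{C}\geq 2$, and (II) applied to each factor shows that $\mathcal{X}\times \mathcal{X}$ itself contains no $1$-dim coset.

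For $\dim \mathcal{Y}=1$, since $\mathcal{X}\times \mathcal{X}$ has no $1$-dim coset, $\mathcal{Y}$ cannot itself be a coset, so $\hat{K}\supsetneq \mathcal{Y}$ and $\dim \hat{K}\geq 2$. For $\dim \mathcal{Y}=2$, the constraint $e_1+e_2\geq 2$ leaves two sub-cases. If $e_i=2$ for some $i$, then $\overline{\pi_i(\mathcal{Y})}=\mathcal{X}$, so $\pi_i(\hat{K})$ is a coset of $\mathbb{G}^4$ containing $\mathcal{X}$, which by (I) equals $\mathbb{G}^4$; the resulting surjection $\hat{K}\twoheadrightarrow \mathbb{G}^4$ of cosets yields $\dim \hat{K}\geq 4$. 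If instead $e_1=e_2=1$, then $C_i := \overline{\pi_i(\mathcal{Y})}$ is a curve in $\mathcal{X}$ and $\mathcal{Y}$ sits inside the irreducible $2$-dim variety $C_1\times C_2$; dimension forces $\mathcal{Y}=C_1\times C_2$, and the elementary product formula (the smallest coset containing a product is the product of smallest cosets) combined with (II) gives $\dim \hat{K}=\dim \hat{C}_1+\dim \hat{C}_2\geq 2+2=4$.

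The main obstacle is fact (II): showing that no rational direction $(\alpha_1,\alpha_2)$ in the $(u_1,u_2)$-plane simultaneously linearizes both $v_1$ and $v_2$ along the corresponding affine line. If one invokes the transcendence of $v_k$ as a function of $(u_1,u_2)$ (hinted at after Theorem \ref{potential}), then (II) is immediate since a $1$-dim coset would produce an algebraic (indeed linear) restriction of a transcendental function on a dense curve in $(u_1,u_2)$-space. Otherwise one argues via the Neumann--Zagier expansion $\Phi=\tau_1 u_1^2+\tau_2 u_2^2+$ (higher even-in-each-argument terms), using the irrationality of the cusp shapes to exclude lines through the origin and checking that the cubic-and-higher coefficients of $\Phi$ obstruct linearization on lines avoiding the origin.
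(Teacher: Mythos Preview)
Your approach is genuinely different from the paper's. The paper argues by brute force: assuming $\dim K\le 3$, it uses Gauss elimination to put the five defining equations of $K$ into a triangular form (eq.~5 involves only $M_1,L_1,M_2,L_2$; eq.~4 adds $M'_1$; eq.~3 adds $L'_1$; etc.), then observes that on $\mathcal{X}\times\mathcal{X}$, which is locally parameterized by $(M_1,M_2,M'_1,M'_2)$, eq.~5 cuts the first factor to a curve while eqs.~3--4 pin down $(M'_1,M'_2)$, forcing $\dim\le 1$. No structural facts about cosets inside $\mathcal{X}$ are invoked. Your route via projections and the minimal containing coset is cleaner conceptually and isolates exactly what is needed --- your fact (I) handles the ``some $e_i=2$'' sub-case nicely --- but it shifts all the weight onto fact (II).

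The gap is that your justifications for (II) do not work. The transcendence argument is fallacious: a transcendental function of $(u_1,u_2)$ can certainly restrict to an affine function on a line (e.g.\ $u_1 e^{u_2}$ on $u_2=0$), so knowing each $v_k$ is transcendental says nothing about its behaviour on a single rational line, and a line is not ``dense in $(u_1,u_2)$-space.'' Your fallback via the Neumann--Zagier expansion handles lines through the origin correctly (the linear part forces $\tau_k\in\mathbb{Q}$), but for lines \emph{not} through the origin you only assert that ``the cubic-and-higher coefficients of $\Phi$ obstruct linearization'' without any mechanism; since $\Phi$ is only constrained to be even in each variable, nothing you have written rules out both $v_1$ and $v_2$ being simultaneously affine along some off-origin rational line. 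And (II) is not a side condition you can drop: the $\dim\mathcal{Y}=1$ case of the lemma is literally equivalent to ``$\mathcal{X}\times\mathcal{X}$ contains no $1$-dimensional coset,'' which reduces to (II) for $\mathcal{X}$, and your $e_1=e_2=1$ sub-case of $\dim\mathcal{Y}=2$ also collapses without it. So as written, your argument is complete only for the sub-case where one projection is surjective; the remaining cases need an honest proof of (II), which is a separate lemma of independent difficulty rather than a consequence of non-linearity of $v_k$ as stated in Lemma~2.3 of \cite{jeon3}.
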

\begin{proof}
First suppose dim $(\mathcal{X}\times \mathcal{X})\cap K=2$ and $K\leq 3$. By applying Gauss elimination if necessary, we may assume $K$ is contained in an algebraic coset defined by equations of the following forms:
\begin{equation*}
\begin{aligned}
&M_1^{a_1}L_1^{b_1}M_2^{c_1}L_2^{d_1}(M'_1)^{a'_1}(L'_1)^{b'_1}(M'_2)^{c'_1}(L'_2)^{d'_1}=\zeta_1,\\
&M_1^{a_2}L_1^{b_2}M_2^{c_2}L_2^{d_2}(M'_1)^{a'_2}(L'_1)^{b'_2}(M'_2)^{c'_2}=\zeta_2,\\
&M_1^{a_3}L_1^{b_3}M_2^{c_3}L_2^{d_3}(M'_1)^{a'_3}(L'_1)^{b'_3}=\zeta_3,\\
&M_1^{a_4}L_1^{b_4}M_2^{c_4}L_2^{d_4}(M'_1)^{a'_4}=\zeta_4,\\
&M_1^{a_5}L_1^{b_5}M_2^{c_5}L_2^{d_5}=\zeta_5.
\end{aligned}
\end{equation*}
Since $M'_1, M'_2$ depend on $M_1, M_2$ and $M_2$ depends on $M_1$ from the last equation, we get $\dim \;(\mathcal{X}\times \mathcal{X})\cap K\leq 1$. 

Similarly, one can show if dim $(\mathcal{X}\times \mathcal{X})\cap K=1$, then $\dim \;K\geq 2$.
\end{proof}

\newpage
\section{$1$-cusped case}\label{Prem I}
The goal of this section is generalizing Theorem \ref{22022707}, via quantifying Theorem \ref{20071503}. We first introduce a couple of lemmas in Section \ref{22050609}, which can be seen as effective versions of Theorem \ref{20071503} and, using them, establish a more general statement implying Theorem \ref{22022707} as a corollary in Section \ref{22050610}. The lemmas offered in Section \ref{22050609} are repeatedly used later in demonstrating other arguments, including the main ones. 

This section will play the role of a prototype for the proofs of the main theorems, and hence a reader may view this as a warm-up section before actually embarking on the major tasks.

\subsection{Quantification of Theorem \ref{20071503}}\label{22050609}
Let $\mathcal{M}$ be $2$-cusped hyperbolic $3$-manifold and $\mathcal{X}$ be its holonomy variety. By Theorem \ref{20071503}, there exists a finite set $\mathcal{H}$ of algebraic subgroups such that, for any Dehn filling $\mathcal{M}_{p_1/q_1,p_2/q_2}$ of $\mathcal{M}$ with $|p_k|+|q_k|$ ($k=1,2$) sufficiently large, if the core holonomies $t_1, t_2$ of $\mathcal{M}_{p_1/q_1,p_2/q_2}$ satisfy $t_1^at_2^b=1$ for some $a, b\in \mathbb{Z}$, a Dehn filling point $P$ associated to $\mathcal{M}_{p_1/q_1, p_2/q_2}$ is contained in some $H\in \mathcal{H}$. In the following lemma, we explicitly compute $a$ and $b$ in terms of the exponents of the defining equations of $H$. The ideas and techniques utilized in the proof of the lemma will be repeatedly revisited and far generalized later in verifying other claims, in particular, Lemmas \ref{21082802} and \ref{22041320}.   

\begin{lemma}\label{22050501}
Let $\mathcal{M}, \mathcal{X},  \mathcal{M}_{p_1/q_1, p_2/q_2}, H, P$ and $t_1, t_2$ be the same as above. Since $\dim H=2$ by Theorem \ref{20071503}, without loss of generality, we assume it is defined by
\begin{equation}\label{24081302}
M_1^{a_j}L_1^{b_j}=M_2^{c_j}L_2^{d_j},\quad (j=1,2),
\end{equation}
and set $A_1:=\left(\begin{array}{cc}
a_1 & b_1\\
a_2 & b_2
\end{array}\right)$ and $A_2:= \left(\begin{array}{cc}
c_1 & d_1\\
c_2 & d_2
\end{array}\right)$. Then each $A_k$ ($k=1,2$) is invertible, and the following equations hold:
\begin{equation}\label{22051801}
\left(\begin{array}{cc}
p_{1} & q_{1}\\
\end{array}
\right)
A_1^{-1}=
\left(\begin{array}{cc}
p_{2} & q_{2}\\
\end{array}
\right)A_2^{-1}
\end{equation}
and 
\begin{equation}\label{22050419}
t_1^{\det A_1}=\epsilon t_2^{\det A_2}
\end{equation}
for some root of unity $\epsilon$. 
\end{lemma}

\begin{proof}
If $A_2$ is not invertible, without loss of generality, \eqref{24081302} is reduced to 
\begin{equation}
M_1^{a_1}L_1^{b_1}=M_2^{c_1}L_2^{d_1}, \quad  M_1^{a_2}L_1^{b_2}=1,  
\end{equation}
which implies $(t_1^{-q_1})^{a_2}(t_1^{p_1})^{b_2}=t_1^{-q_1a_2+p_1b_2}=1$. Since $t_1$ is not a root of unity, it follows that $-q_1a_2+p_1b_2=0$, but it contradicts the fact that $|p_1|+|q_1|$ is sufficiently large. Similarly one can prove $A_1$ is invertible. 

Now we turn to \eqref{22051801} and \eqref{22050419}. To simplify the proof, by changing basis if necessary, we assume $(p_{1}, q_{1})=(p_{2}, q_{2})=(1,0)$. Moving to the analytic holonomy variety, we work with $\log (\mathcal{X}\cap H)$, defined by $A_1\left(\begin{array}{c}
u_1\\
v_1
\end{array}
\right)=
A_2\left(\begin{array}{c}
u_2\\
v_2
\end{array}
\right)$, and let $(\xi_{m_{1}}, \xi_{l_{1}}, \xi_{m_{2}}, \xi_{l_{2}})$ be a Dehn filling point on $\log (\mathcal{X}\cap H)$ associated to $\mathcal{M}_{p_1/q_1, p_2/q_2}$. That is, 
\begin{equation*}
p_k\xi_{m_{k}}+q_k\xi_{l_k}=\xi_{m_{k}}=-2\pi \sqrt{-1},\quad (k=1,2)
\end{equation*}
and so
\begin{equation}\label{24081601}
\begin{gathered}
 A_1\left(\begin{array}{c}
-2\pi \sqrt{-1}\\
\xi_{l_{1}}
\end{array}
\right)
=A_2
\left(\begin{array}{c}
-2\pi \sqrt{-1}\\
\xi_{l_{2}}
\end{array}
\right)\Longrightarrow  
\left(\begin{array}{c}
-2\pi \sqrt{-1}\\
\xi_{l_{1}}
\end{array}
\right)
=A_1^{-1}A_2
\left(\begin{array}{c}
-2\pi \sqrt{-1}\\
\xi_{l_{2}}
\end{array}
\right). 
\end{gathered}
\end{equation}
By defining $\left(\begin{array}{cc}
a & b\\
c & d
\end{array}\right):=A_1^{-1}A_2$, we get 
\begin{equation*}
\left(\begin{array}{c}
-2\pi \sqrt{-1}\\
\xi_{l_{1}}
\end{array}\right)=\left(\begin{array}{c}
-2\pi \sqrt{-1}a+\xi_{l_{2}}b\\
-2\pi \sqrt{-1}c+\xi_{l_{2}}d
\end{array}
\right) 
\end{equation*}
from \eqref{24081601}, and, by exponentiating both sides of it, further obtain
\begin{equation*}
\left(\begin{array}{c}
1\\
t_1
\end{array}\right)=\left(\begin{array}{c}
e^{-2\pi \sqrt{-1}a}t_2^b\\
e^{-2\pi \sqrt{-1}c}t_2^d
\end{array}\right). 
\end{equation*}
Recall that $t_2$ is not a root of unity; thus, $b=0, a=1$ and $t_1=e^{-2\pi \sqrt{-1}c}t_2^d=e^{-2\pi \sqrt{-1}c}t_2^{\frac{\det A_2}{\det A_1}}$. As a result, the following 
\begin{equation*}
\left(\begin{array}{cc}
1 & 0
\end{array}\right)A_1^{-1}A_2=\left(\begin{array}{cc}
1 & 0
\end{array}\right)\quad  \text{and}\quad  t_1^{\det A_1}=e^{-2\pi \sqrt{-1}c\det A_1}t_2^{\det A_2}
\end{equation*}
are established, implying \eqref{22051801} and \eqref{22050419} respectively. 
\end{proof}

From now on, let $\mathcal{M}$ be a $1$-cusped hyperbolic $3$-manifold and $\mathcal{X}$ be its holonomy variety. Let $\mathcal{X}\times \mathcal{X}$ be the product of two identical copies of $\mathcal{X}$ in $\mathbb{G}^2\times \mathbb{G}^2(:=(M, L, M', L'))$. In the lemma below, we study the structure of anomalous subvarieties of $\mathcal{X}\times \mathcal{X}$. First, note that, thanks to Theorem \ref{potential}, one immediately gets that the following are anomalous subvarieties of $\mathcal{X}\times \mathcal{X}$ : 
\begin{itemize}
\item $(\mathcal{X}\times \mathcal{X})\cap (M=L=1)$;
\item $(\mathcal{X}\times \mathcal{X})\cap (M'=L'=1)$;
\item $(\mathcal{X}\times \mathcal{X})\cap (M=M',  L=L')$;
\item $(\mathcal{X}\times \mathcal{X})\cap (M=(M')^{-1},  L=(L')^{-1})$.
\end{itemize}
We call each in the above list \textit{a trivial anomalous subvariety} of $\mathcal{X}\times \mathcal{X}$ and, otherwise, an anomalous subvariety of $\mathcal{X}\times \mathcal{X}$ is \textit{non-trivial}. In the next lemma, we count the exact number of non-trivial anomalous subvarieties of $\mathcal{X}\times \mathcal{X}$. This lemma will play an important role not only in the proof of Theorem \ref{22022707} but also later in the proof of Theorem \ref{20103003}. 

\begin{lemma}\label{20102501}
Let $\mathcal{M}$, $\mathcal{X}$ and $\mathcal{X}\times \mathcal{X}$ be the same as above. Let $H$ be an algebraic subgroup defined by 
\begin{equation*}
M^{a_1}L^{b_1}=(M')^{c_1}(L')^{d_1}, \quad M^{a_2}L^{b_2}=(M')^{c_2}(L')^{d_2}
\end{equation*}
where $\det\left(\begin{array}{cc}
a_1 & b_1\\
a_2 & b_2
\end{array}\right)=\det \left(\begin{array}{cc}
c_1 & d_1\\
c_2 & d_2
\end{array}\right)(\neq 0)$. If $(\mathcal{X}\times \mathcal{X})\cap_{(1, \dots, 1)} H$ is a non-trivial anomalous subvariety of $\mathcal{X}\times \mathcal{X}$, then the cusp shape $\tau$ of $\mathcal{M}$ is either contained in $\mathbb{Q}(\sqrt{-3})$ or $\mathbb{Q}(\sqrt{-1})$. Moreover, if $\tau\in\mathbb{Q}(\sqrt{-3})$ (resp. $\mathbb{Q}(\sqrt{-1})$), then $\sigma^6=Id$ (resp. $\sigma^4=Id$) where $\sigma:=\left(\begin{array}{cc}
c_1 & d_1\\
c_2 & d_2
\end{array}\right)^{-1}
\left(\begin{array}{cc}
a_1 & b_1\\
a_2 & b_2
\end{array}\right)$. 
\end{lemma}
\begin{proof}
Moving to the analytic holonomy variety, if $\log(\mathcal{X}\times \mathcal{X})$ is defined by
\begin{equation*}
v=\tau u+(\text{higher order terms}),\quad v'=\tau u'+(\text{higher order terms}) 
\end{equation*}
in $\mathbb{C}^4(:=(u, v, u', v'))$, then $\log\big((\mathcal{X}\times \mathcal{X})\cap H\big)$ is given as 
\begin{equation}\label{20083001}
\left(\begin{array}{cc}
a_1 & b_1\\
a_2 & b_2
\end{array}\right)
\left(\begin{array}{c}
u \\
v 
\end{array}\right)
= \left(\begin{array}{cc}
c_1 & d_1\\
c_2 & d_2
\end{array}\right)
\left(\begin{array}{c}
u' \\
v' 
\end{array}\right)
\;\;
\text{or, equivalently,}\;\;
\left(\begin{array}{c}
u' \\
v' 
\end{array}\right)
=\sigma\left(\begin{array}{c}
u \\
v 
\end{array}\right).
\end{equation}
Since \eqref{20083001} is an analytic set of dim $1$, by letting $\sigma:=\left(\begin{array}{cc}
a & b\\
c & d
\end{array}\right)$, it follows that $\tau=\frac{c+d\tau}{a+b\tau}$, which implies
\begin{equation}\label{24081201}
\tau=\frac{d-a\pm \sqrt{(a-d)^2+4bc}}{2b}=\frac{d-a\pm \sqrt{(\text{tr }\sigma)^2-4\det \sigma}}{2b}. 
\end{equation} 

On the other hand, if we interpret $\sigma$ as a map from $\log\mathcal{X}$ to itself in \eqref{20083001}, then $\sigma$ is an element of finite order in the automorphism group of $\log\mathcal{X}$, since the number of anomalous subvarieties of $\mathcal{X}\times\mathcal{X}$ is finite, thanks to Theorem \ref{struc}. By the assumption, $(\mathcal{X}\times \mathcal{X})\cap_{(1, \dots, 1)} H$ is a non-trivial anomalous subvariety of $\mathcal{X}\times \mathcal{X}$, hence the minimal polynomial $m_{\sigma}(x)$ of $\sigma$ is one of the following: 
\begin{equation*}
x^2+x+1,\quad x^2-x+1,\quad x^2-1,\quad x^2+1.
\end{equation*}
If $m_{\sigma}(x)$ is either $x^2+x+1$ or $x^2-x+1$ (resp.  $x^2-1$ or $x^2+1$), then clearly the order of $\sigma$ is either $3$ or $6$ (resp. $2$ or $4$) respectively, and $\tau\in \mathbb{Q}(\sqrt{-3})$ (resp. $ \mathbb{Q}(\sqrt{-1})$)  by \eqref{24081201}. 
\end{proof}

\subsection{Proof of Theorem \ref{22022707}}\label{22050610}

Amalgamating all the previous lemmas, we now establish the following, which implies Theorem \ref{22022707} as a corollary.  

\begin{theorem}\label{22090901}
Let $\mathcal{M}$ be a $1$-cusped hyperbolic $3$-manifold, $\tau$ be its cusp shape and $\mathcal{X}$ be its holonomy variety. Let $\mathcal{M}_{p/q}$ and $\mathcal{M}_{p'/q'}$ be two Dehn fillings of $\mathcal{M}$ with $|p|+|q|$ and $|p'|+|q'|$ sufficiently large. Let $t$ (resp. $t'$) be the derivative of the holonomy of the core geodesic of $\mathcal{M}_{p/q}$ (resp. $\mathcal{M}_{p'/q'}$). If $t$ and $t'$ are multiplicatively dependent and so $t^a=\epsilon (t')^{a'}$ for some $a,a'\in \mathbb{Z}$ and root of unity $\epsilon$, then 
\begin{equation}\label{22061701}
a=a'.
\end{equation}
Further, 
\begin{enumerate}
\item if $\tau\notin \mathbb{Q}(\sqrt{-3})$ and $\tau\notin\mathbb{Q}(\sqrt{-1})$, then $p/q=p'/q'$;
\item if $\tau\in \mathbb{Q}(\sqrt{-3})$, then $p'/q'=\sigma^i(p/q)$ for some $\sigma \in GL_2(\mathbb{Q})$ of order $3$;
\item if $\tau\in \mathbb{Q}(\sqrt{-1})$, then $p'/q'=\sigma^i(p/q)$ for some $\sigma \in GL_2(\mathbb{Q})$ of order $2$.
\end{enumerate}
\end{theorem}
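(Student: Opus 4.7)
The plan is to view the pair of Dehn fillings as a single torsion anomalous point on $\mathcal{X}\times\mathcal{X}$ and then invoke the structural results from Sections \ref{22022301} and \ref{22050609} to pin down the relationship between $p/q$ and $p'/q'$. Let $P=(P_{1},P_{2})\in\mathcal{X}\times\mathcal{X}$, where $P_{1}$ (resp.\ $P_{2}$) is a Dehn filling point associated to $\mathcal{M}_{p/q}$ (resp.\ $\mathcal{M}_{p'/q'}$) normalized as in \eqref{22070411}. The two Dehn filling equations together with the extra multiplicative relation $t^{a}=\epsilon(t')^{a'}$ exhibit $P$ as lying in an algebraic subgroup (up to a torsion translate) of codimension $3$ in $\mathbb{G}^{4}$. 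Since $\dim(\mathcal{X}\times\mathcal{X})=2$, this intersection has excess dimension, so $P$ is a torsion anomalous point.

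The holonomy variety of the $2$-cusped disjoint union $\mathcal{M}\sqcup\mathcal{M}$ is precisely $\mathcal{X}\times\mathcal{X}$, so Theorem \ref{20080404} guarantees that the height of $P$ is bounded independently of $(p,q,p',q')$. Invoking Theorem \ref{19090804}, for $|p|+|q|$ and $|p'|+|q'|$ sufficiently large, $P$ is forced into some torsion anomalous subvariety $\mathcal{Y}$ of $\mathcal{X}\times\mathcal{X}$, and the task reduces to classifying the possible $\mathcal{Y}$.

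Among the trivial anomalous subvarieties listed after Lemma \ref{22050501}, the two subvarieties $(\mathcal{X}\times\mathcal{X})\cap(M=L=1)$ and $(\mathcal{X}\times\mathcal{X})\cap(M'=L'=1)$ are ruled out because $P_{1},P_{2}\neq(1,1)$ for any genuine Dehn filling. The diagonal subvarieties $M=M',\,L=L'$ and $M=(M')^{-1},\,L=(L')^{-1}$ combined with the normalization \eqref{22070411} force $(p,q)$ and $(p',q')$ to be proportional, so $p/q=p'/q'$; and after ruling out elliptic core holonomies via Theorem \ref{040605} the primitive multiplicative relation between $t$ and $t'$ reads $t=\epsilon(t')^{\pm 1}$, giving $a=a'=1$. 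Any remaining non-trivial $\mathcal{Y}$ must come from an algebraic subgroup $H$ as in Lemma \ref{20101702}, and that lemma forces $\tau\in\mathbb{Q}(\sqrt{-3})\cup\mathbb{Q}(\sqrt{-1})$; in particular, when $\tau$ lies in neither quadratic field only the trivial possibilities survive, proving case (1).

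For $\tau\in\mathbb{Q}(\sqrt{-3})$ (resp.\ $\mathbb{Q}(\sqrt{-1})$), Lemma \ref{20102501} shows that the matrix $A_{2}^{-1}A_{1}$ attached to $H$ acts on $\mathbb{P}^{1}(\mathbb{Q})$ with order $3$ (resp.\ $2$). Applying Lemma \ref{22050501} to the same $H$ with $\mathcal{M}$ replaced by $\mathcal{M}\sqcup\mathcal{M}$ we obtain
\begin{equation*}
(p,q)\,A_{1}^{-1}=(p',q')\,A_{2}^{-1}\quad\text{and}\quad t^{\det A_{1}}=\epsilon(t')^{\det A_{2}}.
\end{equation*}
Rearranging the first identity gives $(p',q')=(p,q)\,A_{1}^{-1}A_{2}$; interpreting this as a M\"obius action on $\mathbb{P}^{1}$ yields $p'/q'=\sigma^{i}(p/q)$ with $\sigma$ the transformation of Definition \ref{20122206} associated to $A_{1}^{-1}A_{2}$, proving cases (2) and (3). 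Finally, the equality $\det A_{1}=\det A_{2}$ built into Lemma \ref{20101702} upgrades the second identity to the asserted $a=a'$. The main obstacle I foresee lies in this last point: one must verify that the multiplicative relation arising from $H$ is, up to roots of unity and a common integer factor, the very primitive relation $t^{a}=\epsilon(t')^{a'}$ declared in the hypothesis. This requires selecting $H$ as a norm-minimizing vanishing subgroup via Remark \ref{22081302}, and then checking that such minimization is compatible with the normalization in the statement so that $\det A_{1}=\det A_{2}$ really translates into $a=a'$ rather than a mere proportionality.
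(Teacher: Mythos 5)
Your proposal is correct and follows essentially the same route as the paper: regard the pair as a Dehn filling point on $\mathcal{X}\times\mathcal{X}$, apply the Zilber-Pink machinery (the paper cites Corollary \ref{21021602}, which is a repackaging of your Theorem \ref{20080404} plus Theorem \ref{19090804} appeal) to locate the point in an anomalous subvariety, and then extract $\det A_1=\det A_2$, the identity $(p,q)A_1^{-1}=(p',q')A_2^{-1}$, and the order of $A_2^{-1}A_1$ from Lemmas \ref{20101702}, \ref{22050501} and \ref{20102501}. The worry you flag at the end is not a real obstacle: Lemma \ref{22050501} yields $t^{\det A_1}=\epsilon(t')^{\det A_2}$ with $\det A_1=\det A_2=:D$, and since any multiplicative relation between the non-torsion $t,t'$ is an integer multiple of the primitive one, $(D,D)$ being such a multiple forces $a=a'$ — exactly what the paper does.
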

\begin{proof}
Following the discussion in Section \ref{Dehn}, let $P$ be a Dehn filling point on $\mathcal{X}\times \mathcal{X}$ associated to the pair $(\mathcal{M}_{p/q}, \mathcal{M}_{p'/q'})$. By Corollary \ref{21021602}, there exists an algebraic subgroup $H$ such that $P\in H$ and $(\mathcal{X}\times \mathcal{X})\cap_{(1, \dots, 1)}H$ is an anomalous subvariety of $\mathcal{X}\times \mathcal{X}$. Let $H$ be defined by
\begin{equation*}
M^{a_1}L^{b_1}=(M')^{c_1}(L')^{d_1},\quad M^{a_2}L^{b_2}=(M')^{c_2}(L')^{d_2}.
\end{equation*}
Then $\det A_1=\det A_2\neq 0$ where $A_1=\left(\begin{array}{cc}
a_1 & b_1\\
a_2 & b_2
\end{array}\right)$ and $A_2=\left(\begin{array}{cc}
c_1 & d_1\\
c_2 & d_2
\end{array}\right)$ by Lemma \ref{20102501}, thus \eqref{22061701} follows by Lemma \ref{22050501}. Also note that 
\begin{equation}\label{22061703}
\left(\begin{array}{cc}
p & q\\
\end{array}
\right)
A_1^{-1}=
\left(\begin{array}{cc}
p' & q'\\
\end{array}
\right)A_2^{-1}
\end{equation}
by \eqref{22051801} in Lemma \ref{22050501}. 

\begin{enumerate}
\item If $\tau$ is contained in neither $\mathbb{Q}(\sqrt{-3})$ nor $\mathbb{Q}(\sqrt{-1})$, then $(\mathcal{X}\times \mathcal{X})\cap_{(1, \dots, 1)}H$ is a trivial anomalous subvariety of $\mathcal{X}\times \mathcal{X}$ by Lemma \ref{20102501} and so $A_1=\pm A_2$, implying $p/q=p'/q'$ by \eqref{22061703}. 

\item For $\tau\in \mathbb{Q}(\sqrt{-3})$, if $(\mathcal{X}\times \mathcal{X})\cap_{(1, \dots, 1)}H$ is a non-trivial anomalous subvariety of $\mathcal{X}\times \mathcal{X}$, then the order of $A_2^{-1}A_1$ is $6$ by Lemma \ref{20102501} and, as $(A_2^{-1}A_1)^3=-I$ by the same lemma, the result follows. 

\item For $\tau\in \mathbb{Q}(\sqrt{-1})$, we get the desired result by Lemma \ref{20102501}, similar to the previous case.  
\end{enumerate}
\end{proof}

\newpage
\section{A weak version of the Zilber-Pink conjecture I (special case)}\label{ZPCI}
In this section, we first prove Theorem \ref{22022601} for the following two special cases:
\begin{enumerate}
\item the core holonomies of Dehn fillings are multiplicatively dependent each other;
\item two cusps of $\mathcal{M}$ are SGI from each other. 
\end{enumerate} 

\subsection{The core holonomies are multiplicatively dependent}
For the first case, we get the desired result by applying Corollary \ref{21021602} repeatedly. 
\begin{theorem}\label{20071505}
Let $\mathcal{M}$ be a $2$-cusped hyperbolic $3$-manifold and $\mathcal{X}$ be its holonomy variety. Then there exists a finite list $\mathcal{H}$ of algebraic subgroups possessing the following property. For any two Dehn fillings $\mathcal{M}_{p_{1}/q_{1},p_{2}/q_{2}}$ and $\mathcal{M}_{p'_{1}/q'_{1},p'_{2}/q'_{2}}$ satisfying
\begin{itemize}
\item $\text{pvol}_{\mathbb{C}}\;\mathcal{M}_{p_{1}/q_{1},p_{2}/q_{2}}=\text{pvol}_{\mathbb{C}}\;\mathcal{M}_{p'_{1}/q'_{1},p'_{2}/q'_{2}}$;
\item the core holonomies $t_{1}, t_{2}$ (resp. $t'_{1}, t'_{2}$) of $\mathcal{M}_{p_{1}/q_{1},p_{2}/q_{2}}$ (resp. $\mathcal{M}_{p'_{1}/q'_{1},p'_{2}/q'_{2}}$) are multiplicatively dependent;
\item $|p_k|+|q_k|$ and $|p'_k|+|q'_k|$ ($k=1,2$) sufficiently large,  
\end{itemize}
a Dehn filling point $P$ associated to $\big(\mathcal{M}_{p_{1}/q_{1},p_{2}/q_{2}}, \mathcal{M}_{p'_{1}/q'_{1},p'_{2}/q'_{2}}\big)$ is contained some $H\in \mathcal{H}$ defined by equations of the following types
\begin{equation}\label{22051507}
\begin{gathered}
M_1^{a_j}L_1^{b_j}M_2^{c_j}L_2^{d_j}=(M'_1)^{a'_j}(L'_1)^{b'_j}(M'_2)^{c'_j}(L'_2)^{d'_j}=M_1^{e_j}L_1^{f_j}(M'_1)^{e'_j}(L'_1)^{f'_j}=1, \quad (j=1,2).
\end{gathered}
\end{equation}
In particular, for each $H\in\mathcal{H}$, $(\mathcal{X}\times \mathcal{X})\cap_{(1, \dots, 1)} H$ is a $1$-dim anomalous subvariety of $\mathcal{X}\times \mathcal{X}$. 
\end{theorem}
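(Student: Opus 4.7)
The plan is to assemble the three hypotheses into three independent packets of multiplicative constraints on the Dehn filling point $P=(P_1,P_2)\in \mathcal{X}\times\mathcal{X}\subset\mathbb{G}^8$, and to close by invoking the codimension-$2$ Zilber--Pink case (Corollary \ref{21021602}, via Theorem \ref{20071503}) on each copy of $\mathcal{X}$ separately.

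Since $(t_1,t_2)$ are multiplicatively dependent, Theorem \ref{20071503} applied to the first copy forces $P_1$ into one of finitely many $2$-dimensional algebraic subgroups $H_1\subset\mathbb{G}^4$ cut out by two type-$1$ equations $M_1^{a_j}L_1^{b_j}M_2^{c_j}L_2^{d_j}=1$ ($j=1,2$); the symmetric argument for $(t'_1,t'_2)$ gives type-$2$ equations defining $H_2\subset\mathbb{G}^4$. Together these place $P$ in the $2$-dimensional anomalous subvariety $(\mathcal{X}\cap_{(1,\ldots,1)}H_1)\times(\mathcal{X}\cap_{(1,\ldots,1)}H_2)\subset\mathcal{X}\times\mathcal{X}$, and contribute the first two families of equations in \eqref{22051507}.

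Next I would extract the type-$3$ relations from the pvol equality, working in core-holonomy coordinates. By Theorem \ref{20080701}, pvol equality is equivalent to $t_1t_2=t'_1t'_2$ in $\mathbb{C}^*$; by Lemma \ref{22050501} (or the natural adaptation of its proof to our present setup), on $H_1$ the core holonomies satisfy a bounded-exponent relation of the form $t_1^{D_1}=\epsilon\,t_2^{E_1}$, and symmetrically $(t'_1)^{D_2}=\epsilon'\,(t'_2)^{E_2}$ on $H_2$, where the exponents $D_\bullet,E_\bullet$ are $2\times 2$ determinants extracted from the type-$1$, type-$2$ data. Combining these three bounded-exponent constraints and eliminating $t_2,t'_2$ produces a bounded relation $t_1^{N}=\eta\,(t'_1)^{N'}$. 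Translating this back through the parametrization \eqref{22070411}, combined with the two independent rewritings afforded by the two type-$1$ (or equivalently type-$2$) relations used in the elimination, yields the two type-$3$ equations $M_1^{e_j}L_1^{f_j}(M'_1)^{e'_j}(L'_1)^{f'_j}=1$ with $j=1,2$ in \eqref{22051507}. Taking $H\subset\mathbb{G}^8$ to be the subgroup cut out by the combined type-$1$, type-$2$, and type-$3$ equations, the finiteness of $\mathcal{H}$ follows because $(H_1,H_2)$ ranges over a finite set by Theorem \ref{20071503} and the type-$3$ exponents are determined by the bounded data attached to $(H_1,H_2)$. For the $1$-dimensional anomalous claim, further cutting the $2$-dimensional $(\mathcal{X}\cap H_1)\times(\mathcal{X}\cap H_2)$ by an independent type-$3$ relation drops dimension by exactly one, since the pvol constraint genuinely mixes the two factors and is not implied by the separated type-$1$ and type-$2$ equations.

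The main obstacle I expect is making the finiteness of the type-$3$ exponents completely rigorous. The naive substitution $t_k=M_k^{r_k}L_k^{s_k}$ reintroduces the filling-dependent integers $(r_k,s_k)$; the resolution is that, at the Dehn filling point, $(M_k,L_k)=(t_k^{-q_k},t_k^{p_k})$ by \eqref{22070411}, so the filling-dependence of $(r_k,s_k)$ is absorbed into the Dehn filling relations (which are not part of the defining equations of $H$), and the exponents appearing in the final type-$3$ relations can be chosen from a finite set depending only on $(H_1,H_2)$. Carrying out this bookkeeping cleanly, and verifying that the two resulting type-$3$ relations are genuinely independent of the type-$1$ and type-$2$ ones (so the intersection dimension drops to exactly $1$ and not $2$), is the step whose justification requires the most care.
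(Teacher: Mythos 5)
Your first step is correct and matches the paper's: applying Theorem \ref{20071503} to each factor separately gives a finite list of subgroups $H_1\subset\mathbb{G}^4$ (in $(M_1,L_1,M_2,L_2)$) and $H_2\subset\mathbb{G}^4$ (in $(M'_1,L'_1,M'_2,L'_2)$) of dimension $2$, cutting out anomalous curves $\mathcal{C}\subset\mathcal{X}$ and $\mathcal{C}'\subset\mathcal{X}$ respectively, so that $P$ lies on the $2$-dimensional anomalous subvariety $\mathcal{C}\times\mathcal{C}'$ of $\mathcal{X}\times\mathcal{X}$ and the type-$1$ and type-$2$ relations are accounted for.

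The gap is exactly the step you flagged but did not actually close. From the bounded-exponent relation $t_1^{N}=\eta\,(t'_1)^{N'}$, the substitution $t_1=M_1^{r_1}L_1^{s_1}$, $t'_1=(M'_1)^{r'_1}(L'_1)^{s'_1}$ produces a relation whose exponents $(Nr_1,Ns_1,-N'r'_1,-N's'_1)$ grow without bound as $|p_k|+|q_k|\to\infty$, even when $N,N'$ lie in a finite set. Your proposed fix — that the filling-dependence of $(r_k,s_k)$ is ``absorbed into the Dehn filling relations'' — does not give a subgroup with fixed exponents containing $P$: for a fixed choice of $(e,f,e',f')$, at the Dehn filling point one has $M_1^eL_1^f(M'_1)^{e'}(L'_1)^{f'}=t_1^{-eq_1+fp_1}(t'_1)^{-e'q'_1+f'p'_1}$, and the exponent $-eq_1+fp_1$ varies with the filling, so no fixed $(e,f,e',f')$ can realize the relation $t_1^{N}=\eta(t'_1)^{N'}$ simultaneously for infinitely many fillings. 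This is precisely the same obstruction that Theorem \ref{20071503} itself confronts; the bookkeeping cannot be done by hand.

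What is actually needed — and what the paper does — is a second invocation of the Zilber--Pink machinery, not a substitution. Project $\mathcal{C}\times\mathcal{C}'$ onto $\mathbb{G}^4(:=(M_1,L_1,M'_1,L'_1))$; the image $\text{Pr}(\mathcal{C}\times\mathcal{C}')$ is again a product of two algebraic curves. Since $t_1t_2=t'_1t'_2$ and each pair $(t_1,t_2)$, $(t'_1,t'_2)$ is multiplicatively dependent, $t_1$ and $t'_1$ are multiplicatively dependent, so $\text{Pr}(P)$ is a torsion anomalous point of $\text{Pr}(\mathcal{C}\times\mathcal{C}')$. Its height is uniformly bounded by Theorem \ref{20080404}, so Theorem \ref{19090804} together with Corollary \ref{21021602} (i.e., the argument of Theorem \ref{20071503}, now applied to the surface $\text{Pr}(\mathcal{C}\times\mathcal{C}')$) gives a finite list $\mathcal{H}_2$ of $2$-dimensional subgroups of $\mathbb{G}^4$, defined by bounded type-$3$ equations $M_1^{e_j}L_1^{f_j}(M'_1)^{e'_j}(L'_1)^{f'_j}=1$ ($j=1,2$), one of which contains $\text{Pr}(P)$ and meets $\text{Pr}(\mathcal{C}\times\mathcal{C}')$ in an anomalous curve. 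Pulling back to $\mathbb{G}^8$ then yields both the finiteness of $\mathcal{H}$ and the fact that $(\mathcal{X}\times\mathcal{X})\cap_{(1,\ldots,1)}H$ is a $1$-dimensional anomalous subvariety, at once and without the exponent bookkeeping you were trying to control directly.
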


\begin{proof}[Proof of Theorem \ref{20071505}]
Since $t_1, t_2$ (resp. $t'_1, t'_2$) are multiplicatively dependent (resp. dependent), by Theorem \ref{20071503}, there exists a finite list $\mathcal{H}_1$ of algebraic subgroups such that 
\begin{itemize}
\item $P\in H_1$ for some $H_1\in \mathcal{H}_1$ defined by the following types of equations
\begin{equation}\label{20122302}
\begin{gathered}
M_1^{a_j}L_1^{b_j}M_2^{c_j}L_2^{d_j}=(M'_1)^{a'_j}(L'_1)^{b'_j}(M'_2)^{c'_j}(L'_2)^{d'_j}=1\quad (j=1,2);
\end{gathered}
\end{equation}
\item $(\mathcal{X}\times \mathcal{X})\cap_P H_1$ is a $2$-dim anomalous subvariety of $\mathcal{X}\times\mathcal{X}$.   
\end{itemize}
Note that $(\mathcal{X}\times \mathcal{X})\cap_P H_1$ is the product of two algebraic curves $\mathcal{C}\times \mathcal{C}'$ and, if we project $\mathcal{C}\times \mathcal{C}'$ under 
\begin{equation*}
\text{Pr}:\;(M_1, L_1, M_2, L_2, M'_1, L'_1, M'_2, L'_2)\longrightarrow (M_1, L_1,M'_1, L'_1), 
\end{equation*}
the image $\text{Pr}(\mathcal{C}\times \mathcal{C}')$ is still the product of two algebraic curves. Considering $\text{Pr}(P)$ as an intersection point between
\begin{equation*}
\text{Pr}(\mathcal{C}\times \mathcal{C}')\quad\text{ and }\quad M_1^{p_{1}}L_1^{q_{1}}=(M'_1)^{p'_{1}}(L'_1)^{q'_{1}}=1, 
\end{equation*}
as $t_{1}, t'_{1}$ are multiplicatively dependent,\footnote{This follows from the assumptions, that is, $t_1t_2=t'_1t'_2$ and $t_1, t_2$ (resp. $t'_1, t'_2$) are multiplicatively dependent (resp. dependent).} we obtain a finite set $\mathcal{H}_2$ of algebraic subgroups such that
\begin{itemize}
\item $\text{Pr}(P)\in H_2 $ for some $H_2\in \mathcal{H}_2$ defined by equations of the following forms
\begin{equation}\label{20122301}
M_1^{e_j}L_1^{f_j}(M'_1)^{e'_j}(L'_1)^{f'_j}=1\quad (j=1,2);  
\end{equation}
\item $\text{Pr}(\mathcal{C}\times \mathcal{C}')\cap_{\text{Pr}(P)} H_2$ is an anomalous subvariety of $\text{Pr}(\mathcal{C}\times \mathcal{C}')$
\end{itemize}
by Theorem \ref{20071503}. 

In conclusion, $P$ is contained in an algebraic subgroup $H$ defined by equations of the forms given in \eqref{20122302}-\eqref{20122301}. Since the Dehn filling coefficients are sufficiently large by the assumption, $P$ is close enough to the identity by Theorem \ref{040605}, and hence $(\mathcal{X}\times \mathcal{X})\cap_P H$ contains the identity. 
\end{proof}

\subsection{SGI}\label{SGI2}
If two cusps of $\mathcal{M}$ are SGI each other, Theorem \ref{22022601} is further promoted as follows:
\begin{theorem}\label{22012104}
Let $\mathcal{M}$ and $\mathcal{X}$ be as in Theorem \ref{20071505}. Suppose two cusps of $\mathcal{M}$ are SGI each other. Then there exists a finite set $\mathcal{H}$ of algebraic subgroups such that, for any two Dehn fillings $\mathcal{M}_{p_1/q_1, p_2/q_2}$ and $\mathcal{M}_{p'_1/q'_1, p'_2/q'_2}$ of $\mathcal{M}$ satisfying
\begin{equation}\label{22062901}
\text{pvol}_\mathbb{C}\;\mathcal{M}_{p_1/q_1, p_2/q_2}=\text{pvol}_\mathbb{C}\;\mathcal{M}_{p'_1/q'_1, p'_2/q'_2}
\end{equation}
with $|p_k|+|q_k|$ and $|p'_k|+|q'_k|$ ($k=1,2$) sufficiently large, a Dehn filling point associated to \eqref{22062901} is contained in some $H\in \mathcal{H}$ defined by equations of the following forms either 
\begin{equation}\label{22012105}
M_1^{a_j}L_1^{b_j}(M'_1)^{a'_j}(L'_1)^{b'_j}=M_2^{c_j}L_2^{d_j}(M'_2)^{c'_j}(L'_2)^{d'_j}=1\quad (j=1,2)
\end{equation}
or 
\begin{equation}\label{22012106}
M_1^{a_j}L_1^{b_j}(M'_2)^{c'_j}(L'_2)^{d'_j}=M_2^{c_j}L_2^{d_j}(M'_1)^{a'_j}(L'_1)^{b'_j}=1\quad (j=1,2).
\end{equation}
In particular, for each $H\in \mathcal{H}$, $(\mathcal{X}\times \mathcal{X})\cap _{(1, \dots, 1)}H$ is a $2$-dim anomalous subvariety of $\mathcal{X}\times \mathcal{X}$.
\end{theorem}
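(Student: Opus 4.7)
The plan is to convert equation \eqref{22062901} into a torsion anomalous condition on $\mathcal{X}\times\mathcal{X}$, then exploit the SGI factorization together with the structural lemmas of Section \ref{22022301} to classify the containing tori. As explained in Section \ref{Dehn}, the equality \eqref{22062901} forces a Dehn filling point $P$ on $\mathcal{X}\times\mathcal{X}$ to lie, in addition to the four Dehn filling subgroups, in a fifth codimension-one algebraic subgroup arising from \eqref{22070401}. Hence $P$ is a torsion anomalous point, and combining Theorem \ref{20080404} with Theorem \ref{19090804} (Corollary \ref{20080403}) places $P$ in a positive-dimensional torsion anomalous subvariety $\mathcal{Y}_P$ of $\mathcal{X}\times\mathcal{X}$. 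Using Theorem \ref{struc}, after passing to a subfamily we may assume all the $\mathcal{Y}_P$ lie in translates of a single algebraic torus $H_0$ from a finite collection $\Psi$.

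Under the SGI hypothesis one has the factorization
\begin{equation*}
\mathcal{X}\times\mathcal{X}=\mathcal{X}_1\times\mathcal{X}_2\times\mathcal{X}'_1\times\mathcal{X}'_2\subset\mathbb{G}^8
\end{equation*}
as a product of four algebraic curves. Let $d=\dim\mathcal{Y}_P\in\{1,2,3\}$. For $d=3$, Lemma \ref{20080601}(1) gives $H_0=(M_k=L_k=1)$ or $(M'_k=L'_k=1)$; for $d=2$, automatically $\mathcal{X}^{oa}=\emptyset$ in the SGI case, so Lemma \ref{20050801}(2) yields the same list; for $d=1$, the four-factor analogue of Lemma \ref{21100201} again confines $H_0$ to setting one coordinate pair to $1$. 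In each case, after fixing that coordinate pair, one projects to $\mathbb{G}^6$ and obtains a reduced Zilber--Pink problem over a product of three curves in which the Dehn filling equation for that cusp is trivially satisfied but the pvol relation \eqref{22070401} still couples the remaining three core holonomies.

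Iterating the reduction, invoking Lemma \ref{22060401} together with Theorems \ref{20080407} and \ref{22060101} to handle the residual curve and codimension-two steps, one peels off at most one additional coordinate pair and is left with two residual pairs of curves that must be genuinely coupled by the surviving relations. The SGI splitting permits only two patterns of such coupling: the diagonal pairing of cusp $1$ with $1'$ and cusp $2$ with $2'$, producing equations of the shape \eqref{22012105}, and the crossed pairing of cusp $1$ with $2'$ and cusp $2$ with $1'$, producing \eqref{22012106}. The dimension count $\dim(\mathcal{X}\times\mathcal{X})+\dim H-8=4+4-8=0$, together with Theorem \ref{040605} guaranteeing that the component of $(\mathcal{X}\times\mathcal{X})\cap H$ through each sufficiently large filling point $P$ passes through $(1,\dots,1)$, confirms that this component is a $2$-dimensional anomalous subvariety.

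\emph{The main obstacle} is controlling the iterative reduction: at each stage one must rule out degenerate scenarios in which the residual coupling collapses below codimension two or in which the two decoupled subsystems merge into one. This is handled by the nonlinearity of each $v_k=u_k\tau_k(u_1,u_2)$ from Theorem \ref{potential} and the explicit analysis of multiplicative relations among core holonomies via Lemma \ref{22050501}, which together prevent such degeneracies and pin down exactly the two admissible pairing patterns \eqref{22012105} and \eqref{22012106}.
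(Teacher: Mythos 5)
Your proposal follows the general dimension-reduction machinery that the paper develops for the non-SGI case (Section~\ref{ZPCII}), but it misses the key structural insight that makes the SGI case tractable with a much cleaner argument, and the place where it is hand-wavy is exactly where the paper's proof does its real work.

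The paper's proof of Theorem~\ref{22012104} does \emph{not} proceed by iterating the anomalous-subvariety reduction on $\mathcal{X}\times\mathcal{X}$ case by case on $\dim\mathcal{Y}_P$. Instead it isolates and proves two purely combinatorial reduction lemmas tailored to products of curves: Lemma~\ref{22012101} (if the core holonomies $t_1,\dots,t_4$ of a Dehn filling point on a product of four curves are multiplicatively dependent, then a \emph{proper} subset already is) and Lemma~\ref{22012102} (same for three curves). Starting from $t_1t_2=t'_1t'_2$, repeated application of these two lemmas forces the set $\{t_1,t_2,t'_1,t'_2\}$ to split into two pairs, each multiplicatively dependent; the two possible pairings are exactly what produce the two equation patterns \eqref{22012105} and \eqref{22012106}. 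Theorem~\ref{20071505} then handles the residual case where both pairs within each Dehn filling are already multiplicatively dependent.

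The critical gap in your proposal is the sentence asserting that ``one peels off at most one additional coordinate pair and is left with two residual pairs of curves that must be genuinely coupled by the surviving relations.'' This is precisely the conclusion of Lemmas~\ref{22012101} and~\ref{22012102}, and it is not obvious: nothing in the structure of Theorem~\ref{struc}, Lemma~\ref{20080601}, or Lemma~\ref{22060401} immediately rules out that the reduction terminates with an irreducible three-way or four-way multiplicative relation among core holonomies, which would yield an algebraic subgroup not of the form \eqref{22012105} or \eqref{22012106}. The paper proves that this cannot happen by a careful analysis (in Lemma~\ref{22012101}) combining the norm-minimizing choice of defining equations (Remark~\ref{22081302}), the fact that core holonomies are non-elliptic, and Lemma~\ref{22060401} applied to a two-curve factor of the product. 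You flag the ``decoupled subsystems merge'' scenario as an obstacle and defer to ``nonlinearity,'' but that does not close the gap.

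Two smaller issues. First, for $d=1$ you cite a ``four-factor analogue of Lemma~\ref{21100201}'' that the paper does not state; Lemma~\ref{22060401} is about $(n-1)$-dimensional anomalous subvarieties of a product of $n$ curves, which for $n=4$ is dimension $3$, not $1$, so it does not directly apply. Second, the application of Lemma~\ref{20050801} and Lemma~\ref{20080601} requires that $\mathcal{X}\times\mathcal{X}$ be foliated by the anomalous subvarieties in question; Theorem~\ref{struc} gives this only when $\mathscr{Z}_{H_0}=\mathcal{X}\times\mathcal{X}$. When $\dim\mathscr{Z}_{H_0}$ is smaller one must invoke the reduction trick via Theorem~\ref{struc2} and Corollary~\ref{22022603} (as the paper does in Section~\ref{ZPCII}), which your sketch does not address for the SGI case.
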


Recall from Proposition \ref{21073101} that, for $\mathcal{M}$ as above, $\mathcal{X}$ is the product of two algebraic curves, thus proving Theorem \ref{22012104} is equivalent to solving (a weak version of) the Zilber-Pink conjecture over the product of four algebraic curves (i.e. $\mathcal{X}\times \mathcal{X}$). We shall prove it by induction and need two criteria beforehand. In the first criterion, Lemma \ref{22012101}, the problem is reduced to the Zilber-Pink conjecture over the product of three algebraic curves and then, via the second criterion (Lemma \ref{22012102}), it is further reduced to the same conjecture over the product of two algebraic curves. Since the Zilber-Pink conjecture known for the last case by Corollary \ref{21021602}, the result will follow. 

\begin{lemma}\label{22012101}
Let $\mathcal{X}:=\mathcal{X}_1\times \cdots\times \mathcal{X}_4 \subset \mathbb{G}^2\times \cdots \times \mathbb{G}^2(:=(M_1, L_1, \dots, M_4, L_4))$ be the product of four algebraic curves where each $\mathcal{X}_k$ ($1\leq k\leq 4$) is the holonomy variety of a $1$-cusped hyperbolic $3$-manifold. For each $k$ ($1\leq k\leq 4$), let $ M_k^{p_{k}}L_k^{q_{k}}=1$ be a Dehn filling equation with $|p_k|+|q_k|$ sufficiently large and $t_{k}$ be the core holonomy of $\mathcal{X}_k\cap (M_k^{p_{k}}L_k^{q_{k}}=1)$. If $t_{1},\dots, t_{4}$ are multiplicatively dependent, there exists a proper subset of $\{t_{1},\dots, t_{4}\}$ whose elements are multiplicatively dependent. 
\end{lemma}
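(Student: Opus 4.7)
The plan splits by the rank of the lattice $\Lambda\subset\mathbb{Z}^4$ of multiplicative relations among $\{t_1,\ldots,t_4\}$. If $\mathrm{rank}\,\Lambda\geq 2$, pure linear algebra suffices: given any two independent relations, the kernel of any single coordinate projection, restricted to their $\mathbb{Z}$-span, is non-trivial and yields a relation supported on a proper subset. The substantive case is $\mathrm{rank}\,\Lambda=1$, where I must rule out the primitive generator having all nonzero entries. The assumed relation $t_1^{a_1}\cdots t_4^{a_4}=1$, together with the four Dehn filling equations, realizes the Dehn filling point $P\in\mathcal{X}:=\mathcal{X}_1\times\cdots\times\mathcal{X}_4$ as a point in an algebraic subgroup of dimension $n-\dim\mathcal{X}-1=8-4-1=3$. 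The bounded-height Theorem~\ref{20080404} together with the Bombieri--Masser--Zannier finiteness statement Theorem~\ref{19090804} then force $P$, for $|p_k|+|q_k|$ sufficiently large, to lie in a positive-dimensional torsion anomalous subvariety $\mathcal{Y}\subset\mathcal{X}$ contained in some algebraic subgroup $H$ with $\dim H<\dim\mathcal{Y}+4$.

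Next, the product structure of $\mathcal{X}$ is used to constrain $H$. Pass to $\log\mathcal{X}$ with coordinates $u_k$ and the Neumann--Zagier parametrization $v_k=u_k\tau_k(u_k)=u_k\tau_k+m_ku_k^3+\cdots$, depending only on $u_k$. Each defining equation of $\log H$ restricts on $\log\mathcal{X}$ to a sum $\sum_k u_k(a_k+b_k\tau_k(u_k))=0$ of odd single-variable analytic functions in disjoint variables. Exploiting the irrationality of each quadratic cusp shape $\tau_k$ together with a Taylor-expansion analysis patterned on the proofs of Lemmas~\ref{22060401}, \ref{20080601} and~\ref{20050801}, such a system cuts out a positive-dimensional analytic subset only if it factors across the product: there exists a proper subset $S\subsetneq\{1,2,3,4\}$ such that, after a $\mathbb{Z}$-linear change of generators, $H=(\mathbb{G}^2)^S\times H'$ with $H'$ a proper algebraic subgroup of $(\mathbb{G}^2)^{S^c}$.

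Finally, the inclusion $\mathcal{Y}\subset H=(\mathbb{G}^2)^S\times H'$ forces $(P_k)_{k\notin S}\in H'$. Writing $P_k=(t_k^{-q_k},t_k^{p_k})$ as in Definition~\ref{22070407}, the Dehn filling equations for $k\notin S$ are automatically satisfied in the projection, so any additional defining monomial equation of $H'$ translates into a non-trivial multiplicative relation $\prod_{k\notin S}t_k^{c_k}=1$ with not all $c_k$ zero. The case $|S^c|=1$ is ruled out by Theorem~\ref{040605}, since it would force $t_k$ to be a root of unity for some $k$, contrary to non-ellipticity for large Dehn fillings. Thus $|S^c|\in\{2,3\}$ and $\{t_k:k\notin S\}$ is the desired multiplicatively dependent proper subset.

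\textbf{The main obstacle} is the structural decoupling step: showing that every algebraic subgroup $H$ whose intersection with $\mathcal{X}$ is anomalous must split as $(\mathbb{G}^2)^S\times H'$ along the product decomposition. This relies delicately on the Neumann--Zagier Taylor expansion, on the $\mathbb{Q}$-independence of the defining exponent vectors of $H$ forced by the irrationality of the $\tau_k$, and on the principle -- implicit in the cited structural lemmas -- that a sum of non-trivial odd single-variable power series in disjoint variables cannot vanish on a positive-dimensional analytic subset unless each individual piece vanishes on its own variable.
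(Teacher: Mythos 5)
Your high-level architecture---reduce via bounded height (Theorem~\ref{20080404}) and Theorem~\ref{19090804} to a torsion anomalous subvariety, then exploit the product structure of $\log\mathcal{X}$---matches the paper, and the rank-$\geq 2$ reduction via elimination is a clean observation the paper leaves implicit. However, the rank-$1$ case, which is the whole content, hinges on a decoupling claim you yourself flag as ``the main obstacle'' and leave unproved, and I do not believe it is true as a direct structural fact. The claim is that any torsion anomalous intersection forces $H=(\mathbb{G}^2)^S\times H'$ along the product decomposition. But an algebraic subgroup like $M_1M_2=L_1L_2=M_3M_4=L_3L_4=1$ (dim $4$) can meet a product of four curves in a $2$-dimensional (hence anomalous) component while placing non-trivial constraints on every coordinate pair; it does not split as $(\mathbb{G}^2)^S\times H'$ for any non-empty $S$. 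Your supporting ``principle'' -- that a sum of odd single-variable power series in disjoint variables cannot vanish on a positive-dimensional analytic set unless each piece vanishes -- is false as stated ($u_1-u_2=0$ is a counterexample), and Lemmas~\ref{22060401}, \ref{20080601}, \ref{20050801} are statements about \emph{foliations by infinitely many} anomalous subvarieties contained in translates of a fixed $H$, which is a stronger hypothesis than having a single torsion anomalous $\mathcal{Y}$. You also describe the cusp shapes $\tau_k$ as quadratic, which is not assumed or generally true.

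The paper's proof avoids any global decoupling claim. After putting $H$ (of dimension $4$) into triangular form via Gauss elimination, it evaluates the defining monomials of $H$ at the Dehn filling point $P=(t_1^{-q_1},t_1^{p_1},\dots)$. Because the triangular tail equations involve only three of the four coordinate pairs, and because three of the $t_k$ are multiplicatively independent under the contradiction hypothesis, non-ellipticity of the $t_k$ plus the norm-minimizing property of the defining equations (Remark~\ref{22081302}) force the last two equations of $H$ to \emph{be} a pair of Dehn filling equations, say $M_1^{p_1}L_1^{q_1}=M_2^{p_2}L_2^{q_2}=1$. Substituting the corresponding point values for $(M_1,L_1,M_2,L_2)$ then collapses the remaining two equations of $H$ into a translated algebraic coset $K$ in $(M_3,L_3,M_4,L_4)$, and \emph{now} Lemma~\ref{22060401} applies to the curve product $\mathcal{X}_3\times\mathcal{X}_4$ and forces $K$ to be a coset of $M_3=L_3=1$ or $M_4=L_4=1$. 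Iterating one more step isolates a pair of equations in a single $\mathbb{G}^2$ factor, forcing the corresponding $t_k$ to be elliptic, contradicting Thurston. The decoupled shape thus emerges only at the end of this iterated specialization, not as a structural fact about $H$ up front; to repair your proposal you would need to prove the decoupling claim, and the most natural route to doing so is essentially to reproduce the paper's iterated specialization argument.
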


\begin{proof}[Proof of Lemma \ref{22012101}]
On the contrary, suppose there is no proper subset of $\{t_{1},\dots, t_{4}\}$ whose elements are multiplicatively dependent, and let $P$ be a Dehn filling point on $\mathcal{X}$ associated to the given Dehn fillings above. Since $t_1, \dots, t_4$ are multiplicatively dependent, $P$ is contained in an algebraic subgroup of dimension $3$ and, as the height of $P$ is uniformly bounded, there exists an algebraic subgroup $H$ of dimension $4$ such that $P\in H$ and $\mathcal{X}\cap_P H$ is an anomalous subvariety of $\mathcal{X}$ by Corollary \ref{20080403}. Without loss of generality, applying Gauss elimination if necessary, let us assume $H$ is defined by
\begin{equation}\label{22012109}
\begin{aligned}
&M_1^{a_{1}}L_1^{b_{1}}M_2^{c_{1}}L_2^{d_{1}}M_3^{e_{1}}L_3^{f_{1}}M_4^{g_{1}}L_4^{h_{1}}=1,\\
&M_1^{a_{2}}L_1^{b_{2}}M_2^{c_{2}}L_2^{d_{2}}M_3^{e_{2}}L_3^{f_{2}}M_4^{g_{2}}=1,\\
&M_1^{a_{3}}L_1^{b_{3}}M_2^{c_{3}}L_2^{d_{3}}M_3^{e_{3}}L_3^{f_{3}}=1,\\
&M_1^{a_{4}}L_1^{b_{4}}M_2^{c_{4}}L_2^{d_{4}}M_3^{e_{4}}=1.
\end{aligned}
\end{equation}
Since $t_{1}, t_{2}, t_{3}$ are multiplicatively independent by the assumption, the last two equations in \eqref{22012109} are either\footnote{This is due to the norm minimizing property of the defining equations of $H$ as mentioned in Remark \ref{22081302}.} 
\begin{equation*}
M_1^{p_{1}}L_1^{q_{1}}=M_2^{p_{2}}L_2^{q_{2}}=1\;\text{   or   }\;M_2^{p_{2}}L_2^{q_{2}}=M_3^{p_{3}}L_3^{q_{3}}=1\;\text{   or   }\;M_1^{p_{1}}L_1^{q_{1}}=M_3^{p_{3}}L_3^{q_{3}}=1
\end{equation*}
and, without loss of generality, we suppose the first one. If 
\begin{equation}\label{22030201}
(M_1, L_1, M_2, L_2)=(t_1^{-q_1}, t_1^{p_1}, t_2^{-q_2}, t_2^{p_2}) 
\end{equation}
is an intersection point between $\mathcal{X}_1\times \mathcal{X}_2$ and $M_1^{p_{1}}L_1^{q_{1}}=M_2^{p_{2}}L_2^{q_{2}}=1$, corresponding to the projected image of $P$ onto $\mathbb{G}^4(:=(M_1, L_1, M_2, L_2))$, then $\eqref{22030201} \cap H$ is reduced to 
\begin{equation}\label{21100101}
\begin{gathered}
M_3^{e_{1}}L_3^{f_{1}}M_4^{g_{1}}L_4^{h_{1}}=\zeta_1,\quad
M_3^{e_{2}}L_3^{f_{2}}M_4^{g_{2}}=\zeta_2 
\end{gathered}
\end{equation}
for some $\zeta_1, \zeta_2\in \mathbb{G}$. Denoting \eqref{21100101} by $K$ and considering it as an algebraic coset in $\mathbb{G}^4(:=(M_3, L_3, M_4, L_4))$, it follows that the component of $(\mathcal{X}_{3}\times \mathcal{X}_4)\cap K$, containing the image of $P$, is an anomalous subvariety of $\mathcal{X}_{3}\times \mathcal{X}_{4}$. Thus, by Lemma \ref{22060401}, $K$ is a translation of either $M_{3}=L_{3}=1$ or $M_4=L_4=1$, that is, $e_{1}=f_{1}=e_{2}=f_{2}=0$ or $g_{1}=h_{1}=g_{2}=0$ in \eqref{22012109} and \eqref{21100101}. Without loss of generality, we choose the first case and reduce \eqref{22012109} to 
\begin{equation}\label{22062802}
M_1^{a_{1}}L_1^{b_{1}}M_2^{c_{1}}L_2^{d_{1}}M_4^{g_{1}}L_4^{h_{1}}=
M_1^{a_{2}}L_1^{b_{2}}M_2^{c_{2}}L_2^{d_{2}}M_4^{g_{2}}=M_1^{p_{1}}L_1^{q_{1}}=M_2^{p_{2}}L_2^{q_{2}}=1. 
\end{equation}
Then, as $t_1, t_2, t_4$ are multiplicatively independent, \eqref{22062802} is simplified to 
\begin{equation*}
M_4^{g_{1}}L_4^{h_{1}}=M_4^{g_{2}}=M_1^{p_{1}}L_1^{q_{1}}=M_2^{p_{2}}L_2^{q_{2}}=1.
\end{equation*}
However the last conclusion contradicts the fact that $t_4$ is not a root of unity. 
\end{proof}

The following lemma can be established by an analogous way as above, and so the proof of it is skipped here.   
\begin{lemma}\label{22012102}
Let $\mathcal{X}:=\mathcal{X}_1\times \mathcal{X}_2\times \mathcal{X}_3 \subset \mathbb{G}^2\times \mathbb{G}^2\times \mathbb{G}^2(:=(M_1, L_1,  M_2, L_2, M_3, L_3))$ be the product of three algebraic curves where each $\mathcal{X}_k$ ($1\leq k\leq 3$) is the holonomy variety of a $1$-cusped hyperbolic $3$-manifold. For $1\leq k\leq 3$, let $ M_k^{p_{k}}L_k^{q_{k}}=1$ be a Dehn filling equation with $|p_k|+|q_k|$ sufficiently large and $t_{k}$ be the core holonomy of $\mathcal{X}_k\cap (M_k^{p_{k}}L_k^{q_{k}}=1)$. If $t_{1}, t_{2}, t_{3}$ are multiplicatively dependent, there exists a proper subset of $\{t_{1}, t_{2}, t_{3}\}$ whose elements are multiplicatively dependent. 
\end{lemma}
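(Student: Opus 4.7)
The plan is to adapt the proof of Lemma~\ref{22012101} to this lower-dimensional setting, working in $\mathbb{G}^6$ instead of $\mathbb{G}^8$ and producing a pairwise multiplicative relation among the $t_i$'s as the contradiction. Assume for contradiction that no proper subset of $\{t_1,t_2,t_3\}$ is multiplicatively dependent; in particular every pair of distinct $t_i$'s is multiplicatively independent. Let $P=(t_1^{-q_1},t_1^{p_1},t_2^{-q_2},t_2^{p_2},t_3^{-q_3},t_3^{p_3})$ be the Dehn filling point on $\mathcal{X}$. The three Dehn filling equations together with a single nontrivial multiplicative relation $t_1^{n_1}t_2^{n_2}t_3^{n_3}=1$ (with all $n_i\neq 0$) exhibit $P$ as a torsion anomalous point of $\mathcal{X}$. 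Since $|p_k|+|q_k|$ is sufficiently large, Theorem~\ref{20080404} together with Corollary~\ref{20080403} places $P$ inside a positive dimensional torsion anomalous subvariety; by Remark~\ref{22081302} we may choose an algebraic subgroup $H$ of dimension $3$, whose defining equations have minimal norm among those vanishing at $P$, such that $\mathcal{X}\cap_P H$ is anomalous.

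Next, write the three defining equations of $H$ in triangular form by Gauss elimination:
\begin{align*}
&M_1^{a_1}L_1^{b_1}M_2^{c_1}L_2^{d_1}M_3^{e_1}L_3^{f_1}=1,\\
&M_1^{a_2}L_1^{b_2}M_2^{c_2}L_2^{d_2}M_3^{e_2}=1,\\
&M_1^{a_3}L_1^{b_3}M_2^{c_3}L_2^{d_3}=1.
\end{align*}
Evaluating the bottom equation at $P$ gives $t_1^{-a_3q_1+b_3p_1}t_2^{-c_3q_2+d_3p_2}=1$, and since $t_1,t_2$ are multiplicatively independent, both exponents must vanish. Hence $(a_3,b_3)$ is an integer multiple of $(p_1,q_1)$ and $(c_3,d_3)$ an integer multiple of $(p_2,q_2)$, so the bottom equation factors as $(M_1^{p_1}L_1^{q_1})^n(M_2^{p_2}L_2^{q_2})^m=1$. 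Norm minimization then allows replacing this by $M_i^{p_i}L_i^{q_i}=1$ for some $i\in\{1,2\}$, and by permuting the indices $\{1,2,3\}$ we may assume the bottom defining equation is $M_1^{p_1}L_1^{q_1}=1$.

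Substituting $(M_1,L_1)=(t_1^{-q_1},t_1^{p_1})$ into the remaining two defining equations reduces them to a pair of equations in $(M_2,L_2,M_3,L_3)$ whose right hand sides are Laurent monomials in $t_1$; denote this coset by $K\subset\mathbb{G}^4$. The projection of $\mathcal{X}\cap_P H$ is then an anomalous subvariety of $\mathcal{X}_2\times\mathcal{X}_3$ contained in a translation of $K$, so Lemma~\ref{22060401} forces $K$ to be a translation of either $M_2=L_2=1$ or $M_3=L_3=1$. In the first case the defining equations of $K$ can be rewritten as $M_2=t_1^{\alpha}$, $L_2=t_1^{\beta}$; evaluating at $P$ yields $t_2^{-q_2}=t_1^{\alpha}$ and $t_2^{p_2}=t_1^{\beta}$, giving a nontrivial multiplicative relation between $t_1$ and $t_2$. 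The second case symmetrically produces a nontrivial multiplicative relation between $t_1$ and $t_3$. Either outcome contradicts the assumption that no proper subset of $\{t_1,t_2,t_3\}$ is multiplicatively dependent, completing the proof.

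The main obstacle I anticipate is the norm-minimization step used to replace the bottom triangular equation by a genuine Dehn filling equation $M_i^{p_i}L_i^{q_i}=1$: one must verify that this substitution neither enlarges the relevant component of $\mathcal{X}\cap H$ nor disrupts the triangular structure when propagated back up to the upper two equations via Gauss elimination. This is the same technical point tacitly invoked in Lemma~\ref{22012101} and justified through Remark~\ref{22081302}.
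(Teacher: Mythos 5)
Your proof faithfully reproduces the approach of the paper's proof of Lemma~\ref{22012101}, which is exactly what the paper intends when it says Lemma~\ref{22012102} ``can be established by an analogous way'': working in $\mathbb{G}^6$, you triangularize the defining equations of the torsion-anomalous subgroup $H$, evaluate the bottom row at $P$ to force it (via the norm-minimizing property of Remark~\ref{22081302}) into a Dehn filling equation, pass to the induced coset $K$ in $\mathbb{G}^4(:=(M_2,L_2,M_3,L_3))$, and invoke Lemma~\ref{22060401} to extract a pairwise multiplicative relation among the core holonomies. The technical point you flag about norm minimization is the same footnote-level step the paper itself relies on in Lemma~\ref{22012101}, so your reconstruction matches the paper's intended argument.
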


Putting together the two lemmas, we finally confirm Theorem \ref{22012104}.

\begin{proof}[Proof of Theorem \ref{22012104}]
Since two cusps of $\mathcal{M}$ are SGI by the assumption, $\mathcal{X}$ is the product of two algebraic curves $\mathcal{X}_1\times \mathcal{X}_2$. 

As $t_{1}t_{2}=t'_{1}t'_{2}$, there exists a proper subset of $\{t_{1}, t_{2}, t'_{1}, t'_{2}\}$ whose elements are multiplicatively dependent by Lemma \ref{22012101} and, without loss of generality, we suppose $t_{1}, t_{2}, t'_{1}$ are multiplicatively dependent. By Lemma \ref{22012102}, there further exists a proper subset of $\{t_{1}, t_{2}, t'_{1}\}$ whose elements are multiplicatively dependent and thus either one of the following holds: 
\begin{enumerate}
\item If $t_{1}, t'_{1}$ are multiplicatively dependent, $t_{1}, t_{2}, t'_{2}$ are multiplicatively dependent (by $t_{1}t_{2}=t'_{1}t'_{2}$), so it falls into one of the three cases below by Lemma \ref{22012102}.  
\begin{enumerate}
\item If $t_{1}, t_{2}$ are multiplicatively dependent, (since $t_{1}, t'_{1}$ are multiplicatively dependent) $t_{1}, t'_{2}$ are also multiplicatively dependent by $t_{1}t_{2}=t'_1t'_2$. As a result, any two elements in $\{t_{1}, t_{2}, t'_{1}, t'_{2}\}$ are multiplicatively dependent and so the result holds by Theorem \ref{20071505}. 

\item If $t_{1}, t'_{2}$ are multiplicatively dependent, then $t_{1}, t_{2}$ are multiplicatively dependent by $t_{1}t_{2}=t'_1t'_2$ and hence the desired result follows by the same reason above. 

\item If $t_{2}, t'_{2}$ are multiplicatively dependent, as $t_1, t'_1$ are multiplicatively dependent, a Dehn filling point associated to \eqref{22062901} is clearly contained in an algebraic subgroup defined by equations of forms given in \eqref{22012105}.  
\end{enumerate}

\item If $t_{1}, t_{2}$ are multiplicatively dependent, then $t_{1}, t'_{1}, t'_{2}$ are multiplicatively dependent (by $t_{1}t_{2}=t'_{1}t'_{2}$). By Lemma \ref{22012102}, there exists a proper subset of $\{t_{1}, t'_{1}, t'_{2}\}$ whose elements are multiplicatively dependent. If $t'_{1}, t'_{2}$ are multiplicatively dependent, the result is attained by Theorem \ref{20071505} and, if $t_{1}, t'_{1}$ are multiplicatively dependent, it falls into the previous case. If $t_{1}, t'_{2}$ are multiplicatively dependent, as $t_1, t_2$ are multiplicatively, we get $t_1, t'_1$ are multiplicatively dependent from $t_1t_2=t'_1t'_2$, so again covered by the above case. 

\item Analogously, we obtain the desired result for $t_{2}, t'_{1}$ being multiplicatively dependent. 
\end{enumerate}

In conclusion, we have either 
\begin{equation}\label{22072201}
t_1^a(t'_1)^b=t_2^c(t'_2)^d=1
\end{equation}
or 
\begin{equation}\label{22072202}
t_1^a(t'_2)^b=t_2^c(t'_1)^d=1
\end{equation} 
for some $a,b,c,d\in \mathbb{Z}$. If \eqref{22072201} (resp. \eqref{22072202}) holds, a Dehn filling point associated to \eqref{22062901} is contained in an algebraic subgroup defined by \eqref{22012105} (resp. \eqref{22012106}). 

Since $\mathcal{X}$ (resp. $\mathcal{X}\times \mathcal{X}$) is the product of two (resp. four) algebraic curves, the last statement is easily established by
 Theorem \ref{20071503}.  
\end{proof}

\newpage
\section{A weak version of the Zilber-Pink conjecture II (general case)}\label{ZPCII}
In this section, we establish Theorem \ref{22022601} in full generality. We prove the theorem step by step. First, in Subsection \ref{22042903}, it is established that there exists a finite set $\mathcal{H}$ of algebraic subgroups of codimension $1$ such that a Dehn filling point associated to \eqref{22041801} is contained in some $H\in \mathcal{H}$. Then, in Subsection \ref{22042905}, we further show each $H\in \mathcal{H}$ is chosen to be an algebraic subgroup of codimension $2$ or $3$. Finally, every $H\in\mathcal{H}$ is promoted farther to an algebraic subgroup of codimension $4$ in Subsection \ref{22042907}. The strategy of the proof for each step will be similar. 

Throughout the section, $\mathcal{M}$ and $\mathcal{X}$ represent a $2$-cusped hyperbolic $3$-manifold and its holonomy variety respectively unless otherwise stated. We also assume two cusps of $\mathcal{M}$ are not SGI each other, since, otherwise, the theorem was already covered in Section \ref{SGI2}. 
\subsection{Codimension $1$}\label{22042903}
As an initial step, we show 
\begin{theorem}\label{21082901}
Suppose there exists a family 
\begin{equation*}
\{\mathcal{M}_{p_{1i}/q_{1i},p_{2i}/q_{2i}},\mathcal{M}_{p'_{1i}/q'_{1i},p'_{2i}/q'_{2i}} \}_{i\in \mathcal{I}}
\end{equation*}
of infinitely many pairs of Dehn fillings of $\mathcal{M}$ satisfying 
\begin{equation}\label{22021901}
\text{pvol}_{\mathbb{C}}\;\mathcal{M}_{p_{1i}/q_{1i},p_{2i}/q_{2i}}=\text{pvol}_{\mathbb{C}}\;\mathcal{M}_{p'_{1i}/q'_{1i},p'_{2i}/q'_{2i}} \quad (i\in \mathcal{I}). 
\end{equation}
Then there exists a finite list $\mathcal{H}^{(1)}$ algebraic subgroups of codimension $1$ such that a Dehn filling point $P_i$ associated to \eqref{22021901} is contained in some element of $\mathcal{H}^{(1)}$. 
\end{theorem}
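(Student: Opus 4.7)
The plan is to recognize each $P_i$ as a torsion anomalous point of $\mathcal{X}\times\mathcal{X}$ and then apply the uniform height bound of Theorem~\ref{20080404} together with Bombieri--Masser--Zannier (Theorem~\ref{19090804}) to force $P_i$ into a structurally controlled subvariety. Writing the Dehn filling points in the form \eqref{22070411} and unpacking the pseudo complex volume equality \eqref{22021901} via Theorem~\ref{20080701} and Definition~\ref{22070407}, the hypothesis reduces to the single monomial relation $t_{1i}t_{2i}=t'_{1i}t'_{2i}$ between the core holonomies, which on $\mathbb{G}^8$ supplies one algebraic subgroup condition on top of the four Dehn filling equations. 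Thus each $P_i$ is cut out by a codimension-$5$ algebraic subgroup of $\mathbb{G}^8$; since $\dim(\mathcal{X}\times\mathcal{X})=4$, the point $P_i$ is a torsion anomalous point of $\mathcal{X}\times\mathcal{X}$.

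Next, I would combine the uniform bound $h(P_i)\le B$ of Theorem~\ref{20080404} with Theorem~\ref{19090804} (applied with $k=4$, $n=8$, so $n-k-1=3$): at most finitely many points in $(\mathcal{X}\times\mathcal{X})^{ta}\cap \mathcal{H}_3$ have height $\le B$. Discarding at most finitely many indices, each $P_i$ therefore lies in a maximal torsion anomalous subvariety $\mathcal{Y}_i\subset\mathcal{X}\times\mathcal{X}$. The structure Theorem~\ref{struc2} applied to $\mathcal{X}\times\mathcal{X}$ then organizes these $\mathcal{Y}_i$ into finitely many families indexed by a finite collection $\Psi$ of algebraic tori $H$, with each $\mathcal{Y}_i$ realized as a component of $(\mathcal{X}\times\mathcal{X})\cap g_iH$ for some translate $g_i$.

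To extract codimension-one subgroups from this coset picture, I would use that $\mathcal{Y}_i$ is torsion anomalous and hence contained in an algebraic subgroup; comparing this with $\mathcal{Y}_i\subset g_iH$ forces $g_i^a$ to be a root of unity for each defining exponent vector $a$ of $H$, and the height bound (via Theorem~\ref{20102701} together with the norm-minimizing selection of equations in Remark~\ref{22081302}) constrains these roots of unity to have uniformly bounded order. Raising each defining monomial $X^a$ to a suitably chosen fixed power then clears these roots of unity and produces a codimension-one algebraic subgroup from a finite list. Collecting these defining equations across all $H\in\Psi$ and adjoining one further codimension-one subgroup through each of the finitely many exceptional $P_i$ left over from the BMZ step yields the desired $\mathcal{H}^{(1)}$.

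The main obstacle, I expect, will be precisely the last step: promoting the coset containment $\mathcal{Y}_i\subset g_iH$ supplied by the structure theorem to containment of $P_i$ in an honest codimension-one algebraic subgroup drawn from a finite list. This is where the uniform height bound becomes indispensable, since without it the torsion translates $g_i$ could range over unboundedly many cosets of the same $H$. A careful enumeration of which tori $H$ can actually appear in $\Psi$ for $\mathcal{X}\times\mathcal{X}$, leaning on the architectural analysis of Section~\ref{22022301} (in particular Lemmas~\ref{20080601} and~\ref{20050801}), will likely be needed to make the resulting list explicit and to ensure that no $g_i$-translation is lost in the passage from coset to subgroup.
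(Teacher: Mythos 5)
Your setup is correct and matches the paper up to the point of applying the structure theorem: you correctly identify $P_i$ as a torsion anomalous point of $\mathcal{X}\times\mathcal{X}$ (four Dehn filling equations plus the one relation $t_{1i}t_{2i}=t'_{1i}t'_{2i}$), you correctly invoke Theorem~\ref{20080404} and Theorem~\ref{19090804} to place each $P_i$ inside a torsion anomalous subvariety $\mathcal{Y}_i$, and you correctly invoke the structure theorem to get a finite collection $\Psi$ of tori $H$ with $\mathcal{Y}_i$ a component of $(\mathcal{X}\times\mathcal{X})\cap g_iH$. This part essentially reproduces the opening moves of the paper's proof.

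The gap is in the next step, where you claim that $\mathcal{Y}_i$ being torsion anomalous and also lying in the coset $g_iH$ ``forces $g_i^a$ to be a root of unity for each defining exponent vector $a$ of $H$.'' This is simply not true. Being contained in an algebraic subgroup $K_i$ and being contained in a coset $g_iH$ are independent pieces of information and do not imply any torsion constraint on $g_i$ relative to the lattice of $H$ (the subgroup $K_i$ and the torus $H$ need not share exponent vectors, and even when they do, at best one learns $g_i\in K_i$, not that $g_i$ is torsion). Consequently the proposed mechanism of ``raising each defining monomial $X^a$ to a suitably chosen fixed power to clear these roots of unity'' has nothing to clear, and the passage from the coset $g_iH$ to an honest codimension-one algebraic subgroup through $P_i$ is not made. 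Theorem~\ref{20102701} (Zhang) is about torsion points on a fixed variety and cannot be applied directly to constrain the cosets $g_iH$.

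What the paper actually does at this stage is a detailed case analysis on $\dim\mathcal{Z}_i$, where $\mathcal{Z}_i=(\mathcal{X}\times\mathcal{X})\cap_{\mathcal{Y}_i}g_iH$ is the maximal anomalous subvariety, and this analysis is the technical heart of the argument. When $\dim\mathcal{Z}_i=3$, Lemma~\ref{20080601} forces $\mathcal{X}^{oa}=\emptyset$ and pins down $H$, after which one projects to products of curves and recurses. When $\dim\mathcal{Z}_i=2$, Lemma~\ref{20050801} (or Theorem~\ref{struc}) is used together with projections to smaller ambient tori. When $\dim\mathcal{Z}_i=1$, one uses Theorem~\ref{struc2} to arrange $g_iH\subset H_i$, and only then does Corollary~\ref{22022603} produce an auxiliary variety $\mathcal{S}\subset\mathbb{G}^{\mathrm{codim}\,H}$ that meets infinitely many algebraic subgroups, so that Theorems~\ref{20102701}, \ref{20080407}, or \ref{22060101} can be applied to that lower-dimensional Zilber--Pink problem; the roots-of-unity intuition never appears. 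Your closing remark that Lemmas~\ref{20080601} and~\ref{20050801} ``will likely be needed'' correctly anticipates the missing ingredient, but the proposal does not actually carry out that case analysis, and the mechanism it substitutes in its place is incorrect.
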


Before outlining the proof, we first give the following assumption and conventions for brevity, which will be valid through the section. 

\begin{assumption}
\normalfont For a given variety $\mathcal{X}$, if $\mathcal{X}$ contains infinitely many anomalous subvarieties $\{\mathcal{Y}_i\}_{i\in \mathcal{I}}$, by Theorem \ref{struc} (or \ref{struc2}), there exists a finite set $\Psi$ of algebraic tori such that $\mathcal{Y}_i$ is contained in a translation of some element of $\Psi$. Throughout the proofs below, in situations like this, we always assume $\Psi$ has a single element, that is, $\Psi=\{H\}$ for some algebraic torus $H$. 
\end{assumption}

\begin{convention}
\normalfont The core holonomies of $\mathcal{M}_{p_{1i}/q_{1i}, p_{2i}/q_{2i}}$ (resp. $\mathcal{M}_{p'_{1i}/q'_{1i}, p'_{2i}/q'_{2i}}$) are denoted by $t_{1i}, t_{2i}$ (resp. $t'_{1i}, t'_{2i}$). 
\end{convention}

The following is a generalization of Convention \ref{22062605}. 
\begin{convention}\label{22070503}
\normalfont Let $\mathcal{X}, K$ be the same as in Convention \ref{22062605} and $\mathcal{Y}$ be an irreducible variety satisfying $\mathcal{Y}\subset \mathcal{X}\cap K$. An irreducible component of $\mathcal{X}\cap K$ containing $\mathcal{Y}$ is denoted by $\mathcal{X}\cap_\mathcal{Y} K$.  
\end{convention}

The basic strategy of the proof of Theorem \ref{21082901} goes as follows. First, since a Dehn filling point $P_i$ associated to \eqref{22021901} is contained in an algebraic subgroup of dimension $3$ (as discussed in Section \ref{Dehn}) and the height of $P_i$ is uniformly bounded by Theorem \ref{20080404}, there exists an algebraic subgroup $H_i$ such that $P_i\in H_i$ and $\mathcal{Y}_i:=(\mathcal{X}\times \mathcal{X})\cap_{P_i} H_i$ is a torsion anomalous subvariety of $\mathcal{X}\times \mathcal{X}$ by Theorem \ref{19090804}. If $\bigcup_{i\in \mathcal{I}}\mathcal{Y}_i$ is contained in a finite set of algebraic subgroups, then we are done. Otherwise, applying Theorem \ref{struc}, there exists an algebraic torus $H$ such that 
\begin{equation*}
\mathcal{Y}_i\subset (\mathcal{X}\times \mathcal{X})\cap h_iH
\end{equation*}
for some $h_i\in \mathbb{G}^8$, and $\mathcal{Z}_i:=(\mathcal{X}\times \mathcal{X})\cap_{\mathcal{Y}_i} h_iH$ is a maximal anomalous subvariety of $\mathcal{X}\times \mathcal{X}$. 

We split the problem into three cases depending on the dimension of $\mathcal{Z}_i$ and, for each case, reduce the original problem to a simpler Zilber-Pink type conjecture in which the dimensions of a given variety and an ambient space are smaller than the original ones.

For instance, if $\dim \mathcal{Z}_i=3$, then $\mathcal{X}^{oa}=\emptyset$ by Lemma \ref{20080601} and $\mathcal{X}\times \mathcal{X}$ is the product of four algebraic curves by Proposition \ref{21073101}. Showing each $\mathcal{Y}_i$ is also an anomalous subvariety of $\mathcal{Z}_i$, the original problem over $\mathcal{X}\times \mathcal{X}$ is refined to a Zilber-Pink type question over $\mathcal{Z}_i$, which is (isomorphic to) the product of three algebraic curves. 

Second, if $\dim \mathcal{Z}_i=1$, the two varieties $\mathcal{Z}_i$ and $\mathcal{Y}_i$ are indeed equal, hence we invoke the reduction trick explained in Section \ref{22030101}, as summarized in Proposition \ref{24101101}, to reduce the problem to the Zilber-Pink conjecture for the solvable cases. That is, if we further assume 
\begin{equation}\label{22022805}
h_iH\subset H_i 
\end{equation}
and define $\mathcal{U}_H$ as in \eqref{22022501}, then $\overline{\mathcal{U}_H}$ is an algebraic variety of dimension $\dim\mathscr{Z}_H-1$ in $\mathbb{G}^{\codim  H}$ by Corollary \ref{22022603} and the assumption in \eqref{22022805} implies $\overline{\mathcal{U}_H}$ intersects with a family of algebraic subgroups. For each case considered, it will be shown that the dimension conditions on $H_i, H$ and $\overline{\mathcal{U}_H}$ meet one of the criteria in Corollary \ref{24100901}, thus allowing us to apply the corollary. 

The remaining case $\dim \mathcal{Z}_i=2$ is treated similarly by a hybrid of various methods described above.

The proof of Theorem \ref{21082901} is the longest (and perhaps the hardest) part of the paper, but it contains many key ideas and techniques repeatedly used in the proofs of other subsequent theorems below. 
 
\begin{proof}[Proof of Theorem \ref{21082901}]
We use the same notation given above.

If either $t_{1i}, t_{2i}$ or $t'_{1i}, t'_{2i}$ are multiplicatively dependent, the result follows by Theorem \ref{20071503}. Hence, throughout the proof, let us assume
\begin{itemize}
\item $t_{1i}, t_{2i}$ are multiplicatively independent each other;
\item $t'_{1i}, t'_{2i}$ are multiplicatively independent each other.
\end{itemize}
To simplify the proof, we assume, in addition, that $\mathcal{Y}_i$ is a torsion anomalous subvariety of the largest dimension containing $P_i$, and $\dim \mathcal{Y}_i, \dim \mathcal{Z}_i$ are independent of $i\in \mathcal{I}$. Note that $2\leq \dim(H_i\cap h_iH)$ by Lemma \ref{20080305} for each $i\in \mathcal{I}$. We prove the theorem by splitting it into several cases depending on the dimension of $\mathcal{Z}_i$. 
\begin{enumerate}
\item If $\dim \mathcal{Z}_i=3$ for each $i\in \mathcal{I}$, then $\mathcal{X}^{oa}=\emptyset$ by Lemma \ref{20080601}. Since two cusps of $\mathcal{M}$ are not SGI each other, using the preferred coordinates and equations given Definition \ref{22071301}, we assume $\mathcal{X}$ is defined by \eqref{21102304} in $\mathbb{G}^{4}(:=(\tilde{M_1}, \tilde{L_1}, \tilde{M_2}, \tilde{L_2}))$ and, similarly, $\mathcal{X}\times \mathcal{X}$ is embedded in $\mathbb{G}^8(:=(\tilde{M_1}, \tilde{L_1}, \dots, \tilde{M'_2}, \tilde{L'_2}))$. By Lemma \ref{20080601}, $H$ is either $\tilde{M_k}=\tilde{L_k}=1$ or $\tilde{M_k}=\tilde{L'_k}=1$ for some $k=1,2$. Without loss of generality, we assume 
\begin{equation*}
\mathcal{Y}_i\subset (\tilde{M'_2}=\zeta'_{m_{2i}},\tilde{L'_2}=\zeta'_{l_{2i}})
\end{equation*}
for some $\zeta'_{m_{2i}},\zeta'_{l_{2i}}\in \mathbb{G}$. If 
\begin{equation*}
\dim \big(H_i\cap (\tilde{M'_2}=\zeta'_{m_{2i}},\tilde{L'_2}=\zeta'_{l_{2i}})\big)=\dim H_i, 
\end{equation*}
it implies $\zeta'_{m_{2i}}, \zeta'_{l_{2i}}$ are roots of unity, contradicting the fact that $t'_{1i}, t'_{2i}$ are multiplicatively independent. Thus  
\begin{equation*}
\dim \big(H_i\cap (\tilde{M'_2}=\zeta'_{m_{2i}},\tilde{L'_2}=\zeta'_{l_{2i}})\big)<\dim H_i. 
\end{equation*}
If we project $H_i\cap  (\tilde{M'_2}=\zeta'_{m_{2i}},\tilde{L'_2}=\zeta'_{l_{2i}})$ under  
\begin{equation*}
\text{Pr}:\;\mathbb{G}^8(:=(\tilde{M_1},\tilde{L_1},\tilde{M_2},\tilde{L_2},\tilde{M'_1},\tilde{L'_1}, \tilde{M'_2}, \tilde{L'_2})\longrightarrow  \mathbb{G}^6(:=(\tilde{M_1},\tilde{L_1},\tilde{M_2},\tilde{L_2}, \tilde{M'_1}, \tilde{L'_1}), 
\end{equation*}
then
\begin{equation*}
\dim \text{Pr}\big(H_i\cap  (\tilde{M'_2}=\zeta'_{m_{2i}},\tilde{L'_2}=\zeta'_{l_{2i}})\big)\;(\text{in}\; \mathbb{G}^6) \;<\;\dim H_i\; (\text{in}\; \mathbb{G}^8).
\end{equation*}
Hence, if $\text{Pr}\big((\mathcal{X}\times \mathcal{X})\cap (\tilde{M'_2}=\zeta'_{m_{2i}},\tilde{L'_2}=\zeta'_{l_{2i}})\big)$, which is the product of three algebraic curves, is denoted by $\mathcal{X}_1\times \mathcal{X}_2\times \mathcal{X}_3$, then  
\begin{equation*}
\text{Pr}(\mathcal{Y}_i)=(\mathcal{X}_1\times \mathcal{X}_2\times \mathcal{X}_3)\cap_{\text{Pr}(P_i)} \text{Pr}\big(H_i\cap  (\tilde{M'_2}=\zeta'_{m_{2i}},\tilde{L'_2}=\zeta'_{l_{2i}})\big)
\end{equation*}
is an anomalous subvariety of $\mathcal{X}_1\times \mathcal{X}_2\times \mathcal{X}_3$.  
\begin{enumerate}
\item If $\text{Pr}(\mathcal{Y}_i)$ is a $2$-dim anomalous subvariety of $\mathcal{X}_1\times \mathcal{X}_2\times \mathcal{X}_3$, each $\text{Pr}(\mathcal{Y}_i)$ is either contained in a translation of one of the following by Lemma \ref{22060401}:$\tilde{M_1}=\tilde{L_1}=1, \tilde{M_2}=\tilde{L_2}=1$ or $\tilde{M'_1}=\tilde{L'_1}=1$. Without loss of generality, if we assume the last case, then $H_i$ is defined by equations of the following types\footnote{Otherwise, $H_i\cap \big(\tilde{M'_1}=\zeta'_{m_{1i}},\tilde{L'_1}=\zeta'_{l_{2i}}, \tilde{M'_2}=\zeta'_{m_{2i}},\tilde{L'_2}=\zeta'_{l_{2i}} \big)$ is an algebraic coset $K_i$ of dimension at most $3$ in $\mathbb{G}^8$ and so  
\begin{equation}\label{24092001}
(\mathcal{X}\times\mathcal{X})\cap_{P_i}H_i=\big((\mathcal{X}\times\mathcal{X})\cap_{P_i}H_i\big)\cap \big(\tilde{M'_1}=\zeta'_{m_{1i}},\tilde{L'_1}=\zeta'_{l_{2i}}, \tilde{M'_2}=\zeta'_{m_{2i}},\tilde{L'_2}=\zeta'_{l_{2i}} \big), 
\end{equation}
which is reduced to 
\begin{equation}\label{24092002}
\big(\mathcal{X}\times(\tilde{M'_1}=\zeta'_{m_{1i}},\tilde{L'_1}=\zeta'_{l_{2i}}, \tilde{M'_2}=\zeta'_{m_{2i}},\tilde{L'_2}=\zeta'_{l_{2i}})\big)\cap_{P_i }K_i,  
\end{equation}
is an algebraic variety of dim at most $1$. But this contradicts the fact that \eqref{24092001} is a $2$-dim anomalous subvariety of $\mathcal{X}\times \mathcal{X}$.}
\begin{equation}\label{22021410}
\tilde{M'_1}^{\tilde{a_{\alpha i}}'}\tilde{L'_1}^{\tilde{b_{\alpha i}}'}\tilde{M'_2}^{\tilde{c_{\alpha i}}'}\tilde{L'_2}^{\tilde{d_{\alpha i}}'}=1\quad (1\leq \alpha \leq 3).
\end{equation}
Regarding $H_i$ as a $1$-dim algebraic subgroup in $\mathbb{G}^4\big(:=(\tilde{M'_1}, \tilde{L'_1}, \tilde{M'_2}, \tilde{L'_2})\big)$, each $H_i$ intersects with $\mathcal{X}$ (here $\mathcal{X}$ is the second copy of $\mathcal{X}\times \mathcal{X}$) nontrivially in $\mathbb{G}^4$. Since the Zilber-Pink conjecture holds for $\mathcal{X}$ by Corollary \ref{21021602}, $\bigcup_{i\in I} \mathcal{X}\cap H_i$ (and $\bigcup_{i\in I} \mathcal{Y}_i$ as well) is contained in the union of finitely many algebraic subgroups, contradicting the assumption. 

\item If $\text{Pr}(\mathcal{Y}_i)$ is a $1$-dim anomalous subvariety of $\mathcal{X}_1\times \mathcal{X}_2\times \mathcal{X}_3$, by Theorem \ref{struc}, there exists an algebraic subgroup $H'$ such that, for each $i\in \mathcal{I}$,  
\begin{equation*}
\text{Pr}(\mathcal{Y}_i)\subset (\mathcal{X}_1\times \mathcal{X}_2\times \mathcal{X}_3)\cap h'_iH' 
\end{equation*}
for some $h'_i\in \mathbb{G}^6$ and $(\mathcal{X}_1\times \mathcal{X}_2\times \mathcal{X}_3)\cap_{\text{Pr}(\mathcal{Y}_i)}h'_iH'$ is a maximal anomalous subvariety of $\mathcal{X}_1\times \mathcal{X}_2\times \mathcal{X}_3$. 
\begin{enumerate}
\item If $\dim (\mathcal{X}_1\times \mathcal{X}_2\times \mathcal{X}_3)\cap_{\text{Pr}(\mathcal{Y}_i)} h'_iH'=2$, $H'$ is defined by one of the following by Lemma \ref{22060401}: $\tilde{M_1}=\tilde{L_1}=1, \tilde{M_2}=\tilde{L_2}=1$ or $\tilde{M'_1}=\tilde{L'_1}=1$. Without loss of generality, let us assume 
\begin{equation*}
\text{Pr}(\mathcal{Y}_i)\subset (\mathcal{X}_1\times \mathcal{X}_2\times \mathcal{X}_3)\cap_{\text{Pr}(\mathcal{Y}_i)}h'_iH'\subset (\tilde{M'_1}=\zeta'_{m_{1i}}, \tilde{L'_1}=\zeta'_{l_{1i}})
\end{equation*} 
where $\zeta'_{m_{1i}},\zeta'_{l_{1i}}\in \mathbb{G}$. 
\begin{enumerate}
\item If the dimension of 
\begin{equation}\label{22021501}
H_i\cap (\tilde{M'_1}=\zeta'_{m_{1i}}, \tilde{L'_1}=\zeta'_{l_{1i}}, \tilde{M'_2}=\zeta'_{m_{2i}}, \tilde{L'_2}=\zeta'_{l_{2i}})
\end{equation}
is $3$ (in $\mathbb{G}^8$), it implies $H_i$ is contained in an algebraic subgroup defined by equations of the forms given in \eqref{22021410}, thus included in the previous case. 
\item Suppose the dimension of \eqref{22021501} is $2$. Projecting \eqref{22021501} under 
\begin{equation*}
\text{Pr}_1:\;(\tilde{M_1}, \tilde{L_1}, \dots, \tilde{M'_2}, \tilde{L'_2})\longrightarrow (\tilde{M_1}, \tilde{L_1}, \tilde{M_2}, \tilde{L_2}), 
\end{equation*}
the image of \eqref{22021501} is a $2$-dim algebraic coset in $\mathbb{G}^4$ whose intersection with $\mathcal{X}_1\times \mathcal{X}_2(=\text{Pr}_1(\mathcal{X}\times \mathcal{X}))$ is a $1$-dim anomalous subvariety of $\mathcal{X}_1\times \mathcal{X}_2$. By Lemma \ref{22060401}, each $(\mathcal{X}_1\times \mathcal{X}_2)\cap_{\text{Pr}_1(P_i)} \text{Pr}_1(\eqref{22021501})$ is contained in a translation of either $\tilde{M_1}=\tilde{L_1}=1$ or $\tilde{M_2}=\tilde{L_2}=1$. Without loss of generality, if we take the second one, then $H_i$ is defined by equations of the following types:
\begin{equation*}
\tilde{M_2}^{\tilde{c_{\alpha i}}}\tilde{L_2}^{\tilde{d_{\alpha i}}}\tilde{M'_1}^{\tilde{a_{\alpha i}}'}\tilde{L'_1}^{\tilde{b_{\alpha i}}'}\tilde{M'_2}^{\tilde{c_{\alpha i}}'}\tilde{L'_2}^{\tilde{d_{\alpha i}}'}=1\quad (1\leq \alpha \leq 4).
\end{equation*}
Now projecting $H_i$ and $\mathcal{X}\times\mathcal{X}$ under
\begin{equation*}
\text{Pr}_2:\;(\tilde{M_1},\tilde{L_1},\tilde{M_2},\tilde{L_2},\tilde{M'_1},\tilde{L'_1}, \tilde{M'_2}, \tilde{L'_2}) \longrightarrow  (\tilde{M_2},\tilde{L_2},\tilde{M'_1},\tilde{L'_1}, \tilde{M'_2}, \tilde{L'_2}), 
\end{equation*}
$\text{Pr}_2(H_i)$ is a $2$-dim algebraic subgroup intersecting with a $3$-dim algebraic variety $\text{Pr}_2(\mathcal{X}\times \mathcal{X})$ at $\text{Pr}_2(P_i)$ in $\mathbb{G}^6$. By Theorem \ref{19090804}, as the height of $\text{Pr}_2(P_i)$ is uniformly bounded, there exists a family of algebraic subgroups $\{\tilde{H_i}\}_{i\in \mathcal{I}}$ such that $\text{Pr}_2(P_i)\in \tilde{H_i}$ and $\text{Pr}_2(\mathcal{X}\times \mathcal{X})\cap_{\text{Pr}_2(P_i)} \tilde{H_i}$ is a torsion anomalous subvariety of $\text{Pr}_2(\mathcal{X}\times \mathcal{X})$ for each $i\in \mathcal{I}$. Considering an algebraic subgroup $\text{Pr}^{-1}_2(\tilde{H_i})$ in $\mathbb{G}^8$, one can check $(\mathcal{X}\times \mathcal{X})\cap_{P_i} \text{Pr}^{-1}_2(\tilde{H_i})$ is still a torsion anomalous subvariety of $\mathcal{X}\times \mathcal{X}$ satisfying $\dim  \big((\mathcal{X}\times \mathcal{X})\cap_{P_i} \text{Pr}^{-1}_2(\tilde{H_i})\big)\geq 2$. But this contradicts the initial assumption that $\mathcal{Y}_i$ is a torsion anomalous subvariety of $\mathcal{X}\times \mathcal{X}$ of the largest dimension containing $P_i$. 
\end{enumerate}

\item Now suppose $\dim (\mathcal{X}_1\times \mathcal{X}_2\times \mathcal{X}_3)\cap_{\text{Pr}(\mathcal{Y}_i)} h'_iH'=1$.\footnote{This implies $\text{Pr}(\mathcal{Y}_i)=(\mathcal{X}_1\times \mathcal{X}_2\times \mathcal{X}_3)\cap_{\text{Pr}(\mathcal{Y}_i)} h'_iH'$.} 
\begin{enumerate}
\item If $\mathscr{Z}_{H'}=\mathcal{X}_1\times \mathcal{X}_2\times \mathcal{X}_3$, by Lemma \ref{21100201}, each $(\mathcal{X}_1\times \mathcal{X}_2\times \mathcal{X}_3)\cap_{\text{Pr}(\mathcal{Y}_i)} h'_iH'$ is contained in a translation of one of the following: $\tilde{M_1}=\tilde{L_1}=1, \tilde{M_2}=\tilde{L_2}=1$ or $\tilde{M'_1}=\tilde{L'_1}=1$. However this contradicts the fact that $(\mathcal{X}_1\times \mathcal{X}_2\times \mathcal{X}_3)\cap_{\text{Pr}(\mathcal{Y}_i)} h'_iH'$ is a maximal anomalous subvariety of $\mathcal{X}_1\times \mathcal{X}_2\times \mathcal{X}_3$. 

\item Otherwise, if $\mathscr{Z}_{H'}\neq \mathcal{X}_1\times \mathcal{X}_2\times \mathcal{X}_3$, applying Theorem \ref{struc2}, we further assume $h'_iH'\subset \text{Pr}(H_i)$ for each $i\in \mathcal{I}$. Note that $\dim H'=2$ or $3$ in $\mathbb{G}^6$. If $\mathcal{U}_{H'}$ is defined as in Corollary \ref{22022603}, then $\overline{\mathcal{U}_{H'}}$ is an algebraic curve in $\mathbb{G}^{\codim H'}$ by the same corollary.\footnote{Since $\mathcal{X}_1\times \mathcal{X}_2\times \mathcal{X}_3$ is not foliated by translations of $H'$, $\dim \mathscr{Z}_{H'}=2$ and, as $\dim h'_iH'=1$ for each $i$, we have $\dim \overline{\mathcal{U}_{H'}}=1$ by Corollary \ref{22022603}. } As $\text{Pr} (H_i)$ is an algebraic subgroup of codimension at least $2$, it follows that $\mathcal{U}_{H'}$ is contained in some algebraic subgroup by Corollary \ref{24100901}, which further implies every $\mathcal{Y}_i$ is contained in the same algebraic subgroup. But this contradicts our initial assumption. 
\end{enumerate} 
\end{enumerate}
\end{enumerate}

\item Suppose $\dim \mathcal{Z}_i=2$ for each $i\in \mathcal{I}$.   
\begin{enumerate}
\item If $\mathscr{Z}_H=\mathcal{X}\times \mathcal{X}$ and $\mathcal{X}^{oa}=\emptyset$, by Lemma \ref{20050801}, $\mathcal{Z}_i$ is contained in a translation of either $\tilde{M_k}=\tilde{L_k}=1$ or $\tilde{M'_k}=\tilde{L'_k}=1$ for some $1\leq k\leq 2$. But this contradicts the fact that $\mathcal{Z}_i$ is a maximal anomalous subvariety of $\mathcal{X}\times \mathcal{X}$.

\item If $\mathscr{Z}_H=\mathcal{X}\times \mathcal{X}$ and $\mathcal{X}^{oa}\neq \emptyset$, by Lemma \ref{20050801}, $\mathcal{Z}_i$ is contained in a translation of either $M_1=L_1=M_2=L_2=1$ or $M'_1=L'_1=M'_2=L'_2=1$. Without loss of generality, we suppose the first case. 
\begin{enumerate}
\item If $\dim \mathcal{Y}_i=2$ (i.e. $\mathcal{Z}_i=\mathcal{Y}_i$), then each $H_i$ is defined by equations of the following types:
\begin{equation}\label{22021405}
M_1^{a_{\alpha i}}L_1^{b_{\alpha i}}M_2^{c_{\alpha i}}L_2^{d_{\alpha i}}=1\quad (1\leq \alpha \leq 3).
\end{equation}
Considering $H_i$ as a $1$-dim algebraic subgroup in $\mathbb{G}^{4}(:=(M_1, L_1, M_2, L_2))$, we have $\mathcal{X}\cap H_i\neq \emptyset$ for each $i\in \mathcal{I}$ (here $\mathcal{X}$ represents the first copy of $\mathcal{X}\times \mathcal{X}$). By Corollary \ref{21021602}, $\bigcup_{i\in \mathcal{I}}\mathcal{X}\cap H_i$ is contained in the union of a finite number of algebraic subgroups, and this contradicts our assumption. 
 
\item For $\dim \mathcal{Y}_i=1$ (and so $\dim H_i=4$), let us assume $M_1=t_{1i}^{-q_{1i}}, L_1=t_{1i}^{p_{1i}}, M_2=t_{2i}^{-q_{2i}}, L_2= t_{2i}^{p_{2i}}$ on $\mathcal{Y}_i$. 
\begin{enumerate}
\item If the dimension of 
\begin{equation}\label{22021605}
H_i\cap (M_1=t_{1i}^{-q_{1i}}, L_1=t_{1i}^{p_{1i}}, M_2=t_{2i}^{-q_{2i}}, L_2= t_{2i}^{p_{2i}})
\end{equation}
is $3$, it means $H_i$ is contained in an $5$-dim algebraic subgroup defined by equations of the forms given in \eqref{22021405}, thus falling into the above case. 

\item If the dimension of \eqref{22021605} is $2$, it is contained in an algebraic coset defined by equations of the following forms: 
\begin{equation}\label{22021609}
(M'_1)^{a'_{\alpha i}}(L'_1)^{b'_{\alpha i}}(M'_2)^{c'_{\alpha i}}(L'_2)^{d'_{\alpha i}}=\zeta'_{\alpha i} \quad (\alpha =1,2)
\end{equation}
where $\zeta'_{\alpha i}\in \mathbb{G}$. Regarding \eqref{22021609} simply as a $2$-dim algebraic coset $K_i$ in $\mathbb{G}^4(:=(M'_1,L'_1,M'_2,L'_2))$, $\mathcal{X}\cap K_i$ is an anomalous subvariety of $\mathcal{X}$ (here $\mathcal{X}$ is the second copy of $\mathcal{X}\times \mathcal{X}$). As $\mathcal{X}^{oa}\neq \emptyset$, $\mathcal{X}$ contains only finitely many anomalous subvarieties, and so $\bigcup_{i\in \mathcal{I}}\mathcal{X}\cap K_i$ is contained in the union of finitely many algebraic subgroups, contradicting the assumption.   
\end{enumerate}
\end{enumerate}

\item Now suppose $\mathscr{Z}_H\neq \mathcal{X}\times \mathcal{X}$ (i.e. $\dim \mathscr{Z}_H=3$). If $\mathcal{U}_H$ is defined as in Corollary \ref{22022603}, as $\dim \mathcal{Z}_i=2$, $\overline{\mathcal{U}_H}$ is an algebraic curve in $\mathbb{G}^{3(=\codim  H)}$. 
\begin{enumerate}
\item If $\dim \mathcal{Y}_i=2$ (and so $\dim H_i=5$), since $4\leq \dim (H_i\cap h_iH)\leq 5$ by Lemma \ref{20080305}, $\overline{\mathcal{U}_H}$ intersects with infinitely many algebraic subgroups of codimension at least $2$. Therefore $\overline{\mathcal{U}_H}$ is contained in some algebraic subgroup by Theorem \ref{20080407}, which further implies there exists an algebraic subgroup containing all $\mathcal{Z}_i$. But this contradicts the initial assumption. 

\item For $\dim \mathcal{Y}_i=1$ (and so $\dim H_i=4$), we claim $\dim (H_i\cap h_iH)>2$. Once it is proven, then $3\leq \dim (H_i\cap h_iH)\leq 4$ and so, by a similar reasoning as above, $\overline{\mathcal{U}_H}$ is contained in an algebraic subgroup, leading the same contradiction. 

Otherwise, if $\dim (H_i\cap h_iH)=2$ for each $i\in \mathcal{I}$, applying Gauss elimination if necessary, one may assume $H_i\cap h_iH$ is contained in an algebraic coset defined by equations of the following types:  
\begin{equation}\label{21021303}
\begin{gathered}
M_1^{a_{\alpha i}}L_1^{b_{\alpha i}}M_2^{c_{\alpha i}}L_2^{d_{\alpha i}}=\zeta_{\alpha i},\quad (\alpha =1,2)
\end{gathered}
\end{equation} 
\begin{equation}\label{21021301}
\begin{gathered}
(M'_1)^{a'_{\alpha i}}(L'_1)^{b'_{\alpha i}}(M'_2)^{c'_{\alpha i}}(L'_2)^{d'_{\alpha i}}=\zeta'_{\alpha i},\quad (\alpha =1,2) 
\end{gathered}
\end{equation} 
where $\zeta_{\alpha i},\zeta'_{\alpha i}\in \mathbb{G}$ ($1\leq \alpha \leq 2$). Let $K_i$ and $K'_i$ be algebraic cosets defined by \eqref{21021303} and \eqref{21021301} respectively. If both $(\mathcal{X}\times \mathcal{X})\cap_{\mathcal{Y}_i} K_i$ and $(\mathcal{X}\times \mathcal{X})\cap_{\mathcal{Y}_i} K'_i$ are not anomalous subvarieties of $\mathcal{X}\times \mathcal{X}$, then $\big((\mathcal{X}\times \mathcal{X})\cap_{\mathcal{Y}_i} K_i\big)\cap \big((\mathcal{X}\times \mathcal{X})\cap_{\mathcal{Y}_i}K'_i\big)$ is $0$-dim (i.e. point), contradicting the fact that $(\mathcal{X}\times \mathcal{X})\cap_{P_i} H_i(\subset K_i\cap K'_i)$ is an anomalous subvariety of $\mathcal{X}\times \mathcal{X}$. Thus either $(\mathcal{X}\times \mathcal{X})\cap K_i$ or $(\mathcal{X}\times \mathcal{X})\cap K'_i$ must be an anomalous subvariety of $\mathcal{X}\times \mathcal{X}$. If $\mathcal{X}^{oa}\neq \emptyset$, $\bigcup_{i\in \mathcal{I}}(\mathcal{X}\times \mathcal{X})\cap K'_i$ is contained in a finite set of algebraic subgroups, inducing a contradiction. On the other hand, for $\mathcal{X}^{oa}=\emptyset$, each $(\mathcal{X}\times \mathcal{X})\cap K'_i$ is contained in a translation of either $\tilde{M'_1}=\tilde{L'_1}=1$ or $\tilde{M'_2}=\tilde{L'_2}=1$ by Lemma \ref{22060401}. But this contradicts the assumption that $\mathcal{Z}_i$ is a maximal anomalous subvariety of $\mathcal{X}\times \mathcal{X}$ containing $\mathcal{Y}_i$.  
\end{enumerate}
\end{enumerate}

\item Lastly suppose $\dim \mathcal{Z}_i=1$ (so $\dim \mathcal{Y}_i=1$)  for each $i\in \mathcal{I}$. For $\dim H_i\cap h_iH=2$, we get a contradiction as seen above, thereby assume $3\leq \dim H_i\cap h_iH\leq 4$. Let $\mathcal{U}_H$ be as given in Corollary \ref{22022603}, and suppose in addition that $h_iH\subset H_i$ for every $i\in \mathcal{I}$ (by Theorem \ref{struc2}). If $\dim H_i\cap h_iH= 4$ (i.e. $\dim H=\dim H_i=4$), by Corollary \ref{24100901}, there exist finitely many algebraic subgroups whose union contains $\bigcup_{i\in \mathcal{I}} \mathcal{Z}_i$, contradicting the assumption. Otherwise, if $\dim H=3$ (i.e. $\codim  H=5$) and $\dim H_i=4$, and in addition, if $\dim \mathscr{Z}_H=2$ or $4$, then $\dim \overline{\mathcal{U}_H}=1$ or $3$ (respectively), hence the same contradiction again follows from Corollary \ref{24100901}. Lastly, if $\dim \mathscr{Z}_H=3$ (with  $\dim H=3$ and $\dim H_i=4$), then $\overline{\mathcal{U}_H}$ is an algebraic surface in $\mathbb{G}^5$ by Corollary \ref{22022603}, intersecting with infinitely many algebraic subgroups of dim $1$. Let $\mathbb{G}^4$ be a subspace of $\mathbb{G}^5$ equipped with a projection $\text{Pr}:\mathbb{G}^5\longrightarrow \mathbb{G}^4$ satisfying $\dim \text{Pr}(\overline{\mathcal{U}_H})=2$. Then $\text{Pr}(\overline{\mathcal{U}_H})$ is an algebraic surface in $\mathbb{G}^4$ for each intersecting with infinitely many algebraic subgroups of dimension at most $1$. Now we attain a contradiction either by Theorem \ref{20102701} or \ref{22060101}.  
\end{enumerate}
\end{proof}

\subsection{Codimension $2$}
Let $\mathcal{M}_{p_1/q_1, p_2/q_2}$ and $\mathcal{M}_{p'_1/q'_1, p'_2/q'_2}$ be Dehn fillings of $\mathcal{M}$ satisfying 
\begin{equation}\label{22043003}
\text{pvol}_{\mathbb{C}}\;\mathcal{M}_{p_{1}/q_{1},p_{2}/q_{2}}=\text{pvol}_{\mathbb{C}}\;\mathcal{M}_{p'_{1}/q'_{1},p'_{2}/q'_{2}}
\end{equation}
with $|p_h|+|q_h|$ and $|p'_h|+|q'_h|$ sufficiently large $(h=1,2)$. Let $t_1, t_2$ (resp. $t'_1, t'_2$) be the core holonomies of $\mathcal{M}_{p_1/q_1, p_2/q_2}$ (resp. $\mathcal{M}_{p'_1/q'_1, p'_2/q'_2}$).\footnote{We fix this notation throughout the section.}

Remark that if $t_1t_2=t'_1t'_2$ is the only relation between $t_1, t_2, t'_1$ and $t'_2$ or, equivalently, 
\\
\\
($\spadesuit$) \;\;\emph{there is no proper subset of $\{t_1, t_2, t'_1, t'_2\}$ whose elements are multiplicatively dependent}, 
\\
\\
then the conclusion of Theorem \ref{22022601} fails. That is, there is no finite set $\mathcal{H}^{(4)}$ of algebraic subgroups of dimension $4$ containing a Dehn filling point $P$ associated to \eqref{22043003} under $\spadesuit$. To explain this more in detail, let $H$ be an algebraic subgroup of codimension $4$ containing $P$ and, without loss of generality, assume it is contained in an algebraic subgroup defined by
\begin{equation}\label{22043001}
\begin{gathered}
M_1^{a_{1}}L_1^{b_{1}}M_2^{c_{1}}L_2^{d_{1}}(M'_1)^{a'_{1}}(L'_1)^{b'_{1}}(M'_2)^{c'_{1}}(L'_2)^{d'_{1}}=1,\\
M_1^{a_{2}}L_1^{b_{2}}M_2^{c_{2}}L_2^{d_{2}}(M'_1)^{a'_{2}}(L'_1)^{b'_{2}}(M'_2)^{c'_{2}}=1,\\
M_1^{a_{3}}L_1^{b_{3}}M_2^{c_{3}}L_2^{d_{3}}(M'_1)^{a'_{3}}(L'_1)^{b'_{3}}=1.
\end{gathered}
\end{equation}
As $t_1, t_2, t'_1$ are multiplicatively independent, the last equation in \eqref{22043001} implies 
\begin{equation*}
(t_1^{-q_1})^{a_{3}}(t_1^{p_1})^{b_{3}},\quad (t_2^{-q_2})^{c_{3}}(t_2^{p_2})^{d_{3}}\quad  \text{and}\quad  \big((t'_1)^{-q'_1}\big)^{a'_{3}}\big((t'_1)^{p'_1}\big)^{b'_{3}} 
\end{equation*}
are torsions and, as $t_1, t_2$ and $t'_1$ are not roots of unity, this further implies  
\begin{equation*}
-q_1a_{3}+p_1b_{3}=-q_2c_{3}+p_2d_{3}=-q'_1a'_{3}+p'_1b'_{3}=0. 
\end{equation*}
Since $(p_1, q_1), (p_2, q_2)$ and $(p'_1, q'_1)$ are co-prime pairs, they are uniquely determined by $(a_{3},b_{3}), (c_{3}, d_{3})$ and $(a'_{3}, b'_{3})$. However this contradicts the fact that $|p_h|+|q_h|$ and $|p'_h|+|q'_h|$ ($h=1,2$) are sufficiently large. 

In this subsection, we show the assumption $\spadesuit$ actually never occurs. In other words, the following theorem is proven.
\begin{theorem}\label{21091101}
Let $\mathcal{M}_{p_1/q_1, p_2/q_2}$ and $\mathcal{M}_{p'_1/q'_1, p'_2/q'_2}$ be two Dehn fillings of $\mathcal{M}$ satisfying \eqref{22043003} with $|p_h|+|q_h|$ and $|p'_h|+|q'_h|$ ($h=1,2$) sufficiently large. Then there exists a proper subset of $\{t_{1}, t_{2}, t'_{1}, t'_2\}$ such that the elements in the subset are multiplicatively dependent.
\end{theorem}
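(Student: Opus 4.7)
The argument proceeds by contradiction. Suppose there is an infinite family of pairs $\{\mathcal{M}_{p_{1i}/q_{1i},p_{2i}/q_{2i}}, \mathcal{M}_{p'_{1i}/q'_{1i},p'_{2i}/q'_{2i}}\}_{i\in\mathcal{I}}$ satisfying \eqref{22043003} with $|p_{ki}|+|q_{ki}|$ and $|p'_{ki}|+|q'_{ki}|$ sufficiently large, for which no proper subset of $\{t_{1i}, t_{2i}, t'_{1i}, t'_{2i}\}$ is multiplicatively dependent; denote this hypothesis $(\spadesuit)$. The first step is to apply Theorem \ref{21082901} to this family, producing a codimension-one algebraic subgroup $H$ containing the Dehn filling point $P_i$ for each $i$. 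Combining $H$ with the four Dehn filling equations confines each $P_i$ to an algebraic subgroup $K_i^*$ of codimension at least $5$ in $\mathbb{G}^8$. Since $\dim(\mathcal{X}\times\mathcal{X}) = 4$ and $\dim K_i^* \le 3$, each $P_i$ is a torsion anomalous point of $\mathcal{X}\times\mathcal{X}$. By the uniform height bound of Theorem \ref{20080404} together with Theorem \ref{19090804}, every $P_i$ then lies in a positive-dimensional torsion anomalous subvariety $\mathcal{Y}_i = (\mathcal{X}\times\mathcal{X}) \cap_{P_i} H_i$ of $\mathcal{X}\times\mathcal{X}$, for some algebraic subgroup $H_i$.

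The core of the proof will be a case analysis mirroring the one used in the proof of Theorem \ref{21082901}. Passing to a subfamily and invoking Theorem \ref{struc}, I may assume the $\mathcal{Y}_i$ all lie in cosets of a single torus $H^\flat$ and that $\mathcal{Z}_i := (\mathcal{X}\times\mathcal{X}) \cap h_i H^\flat$ is a maximal anomalous subvariety of constant dimension. Splitting on $\dim \mathcal{Z}_i \in \{1,2,3\}$ together with whether $\mathscr{Z}_{H^\flat} = \mathcal{X}\times\mathcal{X}$, the structural lemmas from Section \ref{Prem} (Lemmas \ref{20080601}, \ref{20050801}, \ref{22060401}, \ref{20072401}, \ref{21101501}) restrict $H^\flat$ to a short list of standard shapes. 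Typically these force $H_i$ to factor through a proper subproduct of $\mathbb{G}^2$'s, such as $M_1=L_1=M_2=L_2=1$, $\tilde M'_2=\tilde L'_2=1$, or a diagonal-type coset exchanging the two factors. After projecting onto the surviving factor, $H_i$ cuts a lower-dimensional algebraic variety (a product of curves, or $\mathcal{X}$ itself) in infinitely many algebraic subgroups whose codimension is now strictly less than the complementary dimension of the projected target.

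At this point the partial Zilber--Pink results collected in Section \ref{Back} apply: Theorem \ref{20102701} for torsion points, Theorem \ref{20080407} for curves, and Theorem \ref{22060101} for codimension-two varieties force the projected subgroups into a finite list. Reading this list back in the parametrization $(M_k, L_k) = (t_{ki}^{-q_{ki}}, t_{ki}^{p_{ki}})$ from \eqref{22070411}, and using the non-ellipticity of each $t_{ki}$, $t'_{ki}$ (which precludes a subgroup equation from being proportional to a Dehn filling equation), one extracts a nontrivial relation of the form $t_{1i}^{a} t_{2i}^{b} (t'_{1i})^{c} (t'_{2i})^{d} = 1$ with exponent vector not parallel to $(1,1,-1,-1)$. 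This exhibits a proper subset of $\{t_{1i},t_{2i},t'_{1i},t'_{2i}\}$ whose elements are multiplicatively dependent, contradicting $(\spadesuit)$.

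The hardest case, I expect, will be $\dim \mathcal{Y}_i = 1$ with $\mathscr{Z}_{H^\flat} \ne \mathcal{X}\times\mathcal{X}$, since here one cannot simply project to a trivial coset. Instead the reduction trick described after Theorem \ref{struc2} must be executed: one forms the variety $\mathcal{S} \subset \mathbb{G}^{\codim H^\flat}$ parametrizing the cosets of $H^\flat$ that yield maximal anomalous subvarieties, verifies via Corollary \ref{22022603} that $\dim \mathcal{S} = \dim \mathscr{Z}_{H^\flat} - 1$ is strictly less than the codimension of the (projected) $H_i$ in $\mathbb{G}^{\codim H^\flat}$, and then applies the appropriate Zilber--Pink partial result in this reduced ambient space. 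The dimension count is delicate because the shape of $\mathscr{Z}_{H^\flat}$ changes with the case (SGI versus not SGI, $\mathcal{X}^{oa}=\emptyset$ versus not), so each $H^\flat$ arising in the Section \ref{Prem} classification must be treated separately. Once the finite list of algebraic subgroups containing the $P_i$ is extracted, the translation into core-holonomy relations proceeds uniformly, completing the contradiction and hence the proof.
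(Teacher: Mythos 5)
Your setup is sound: assume $\spadesuit$, observe $P$ is a torsion anomalous point of $\mathcal{X}\times\mathcal{X}$ of uniformly bounded height, and invoke Theorems \ref{20080404}, \ref{19090804}, and \ref{struc}/\ref{struc2}. Your proposed case analysis would, if carried out, reprove what the paper records as Theorem \ref{21082301}: under $\spadesuit$ the Dehn filling point $P$ lies in one of finitely many codimension-$2$ algebraic subgroups $H$. But that is not yet Theorem \ref{21091101}, and the last paragraph of your sketch — ``one extracts a nontrivial relation of the form $t_1^a t_2^b (t'_1)^c(t'_2)^d=1$ with exponent vector not parallel to $(1,1,-1,-1)$'' — contains a genuine gap. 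Evaluating a defining equation of $H$ at $P$ via $(M_k,L_k)=(t_k^{-q_k},t_k^{p_k})$ certainly produces a multiplicative relation among the core holonomies, but nothing forces its exponent vector off the line spanned by $(1,1,-1,-1)$: both defining equations of $H$ could specialize at $P$ to powers of the already-known identity $t_1t_2(t'_1t'_2)^{-1}=1$, and $\spadesuit$ is perfectly consistent with this. Non-ellipticity of the $t$'s rules out a defining equation of $H$ being a scalar multiple of a single Dehn filling equation, but it does not prevent the equations of $H$ from landing in the trivial exponent direction. So no contradiction has actually appeared.

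The paper closes this gap by a quite different mechanism. After the codimension-$2$ step (Theorem \ref{21082301}), it first shows that $(\mathcal{X}\times\mathcal{X})\cap_{(1,\dots,1)}H$ is a genuine anomalous subvariety of $\mathcal{X}\times\mathcal{X}$ (Theorem \ref{22071211}). Specializing $M'_1=L'_1=M'_2=L'_2=1$ and using the product structure then produces a positive-dimensional anomalous analytic subset of $\log\mathcal{X}$ — the one defined by \eqref{22031203}, after a sign change permitted by the evenness of the Neumann--Zagier potential (Theorem \ref{potential}). This forces $\mathcal{X}^{oa}=\emptyset$ and, via Theorem \ref{21073101}, the determinant identity $\det A_1=\det A_2$ recorded in Remark \ref{22061607}. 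The quantitative Lemma \ref{21082802}, valid precisely under $\spadesuit$, then converts $A_1,A_2$ into an explicit $2\times 2$ matrix identity linking $(p_1,q_1)$ and $(p_2,q_2)$. Feeding this into the intersection of the anomalous subset \eqref{22031203} with $p_1u_1+q_1v_1=-2\pi\sqrt{-1}$ shows that $p_2u_2+q_2v_2=-2\pi\sqrt{-1}$ is automatically satisfied there, so the intersection point is precisely a Dehn filling point of $\mathcal{M}_{p_1/q_1,p_2/q_2}$; exponentiating the second row of the same identity gives $t_1=\epsilon\,t_2$ for a root of unity $\epsilon$, contradicting $\spadesuit$. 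The passage to an anomalous subvariety of the single factor $\mathcal{X}$, the determinant equality, and the quantitative matrix bookkeeping of Lemma \ref{21082802} are the ingredients your sketch is missing; without them the contradiction does not materialize.
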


To prove Theorem \ref{21091101}, we claim two theorems beforehand, which are viewed as extensions of Theorem \ref{21082901} under $\spadesuit$. Once Theorems \ref{21082301}-\ref{22071211} are established, we then get a more quantitative version of them in Lemma \ref{21082802}. Theorem \ref{21091101} is then verified with the aid of Lemma \ref{21082802}.  
\begin{theorem}\label{21082301}
Suppose there exists a family 
\begin{equation*}
\{\mathcal{M}_{p_{1i}/q_{1i},p_{2i}/q_{2i}},\mathcal{M}_{p'_{1i}/q'_{1i},p'_{2i}/q'_{2i}} \}_{i\in \mathcal{I}}
\end{equation*}
of infinitely many pairs of Dehn fillings of $\mathcal{M}$ satisfying 
\begin{equation}\label{22021902}
\text{pvol}_{\mathbb{C}}\;\mathcal{M}_{p_{1i}/q_{1i},p_{2i}/q_{2i}}=\text{pvol}_{\mathbb{C}}\;\mathcal{M}_{p'_{1i}/q'_{1i},p'_{2i}/q'_{2i}} 
\end{equation}
with sufficiently large $|p_{hi}|+|q_{hi}|$ and $|p'_{hi}|+|q'_{hi}|$ ($h=1,2$) for each $i\in \mathcal{I}$. Suppose
\\
\\
($\clubsuit$) there is no proper subset of $\{t_{1i}, t_{2i}, t'_{1i}, t'_{2i}\}$ whose elements are multiplicatively dependent.
\\
\\
Then there exists a finite set $\mathcal{H}^{(2)}$ of algebraic subgroups of codimension $2$ such that a Dehn filling point associated to \eqref{22021902} is contained in some $H^{(2)}\in \mathcal{H}^{(2)}$. 
\end{theorem}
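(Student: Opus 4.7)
The plan is to iterate the strategy of Theorem \ref{21082901} on the 3-dimensional anomalous subvariety $\mathcal{W} := (\mathcal{X}\times\mathcal{X}) \cap_{P_i} H^{(1)}$, where $H^{(1)} \in \mathcal{H}^{(1)}$ is a codimension-$1$ algebraic subgroup supplied by that theorem. By the finiteness of $\mathcal{H}^{(1)}$, I pass to an infinite subfamily along which $H^{(1)}$ is constant, so that every Dehn filling point $P_i$ lies in the fixed anomalous threefold $\mathcal{W}$.

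The key observation is that, viewed as a subvariety of $\mathbb{G}^8$ of dimension $3$, the threefold $\mathcal{W}$ contains $P_i$ in an algebraic subgroup of codimension at least $5$ (cut out by the four Dehn filling equations together with $H^{(1)}$), which by the dimension inequality of Section \ref{22030101} makes $P_i$ a torsion anomalous point of $\mathcal{W}$. Since the height of every Dehn filling point is uniformly bounded by Theorem \ref{20080404}, Theorem \ref{19090804} applied to $\mathcal{W}$ yields, outside a finite list, a positive-dimensional torsion anomalous subvariety $\mathcal{Y}_i$ of $\mathcal{W}$ containing $P_i$. Theorem \ref{struc2} then provides a finite collection $\Psi$ of algebraic tori such that each $\mathcal{Y}_i$ is a component of $\mathcal{W} \cap g_iH$ for some $H \in \Psi$ and $g_i \in \mathbb{G}^8$; by passing to a subfamily I may assume $H$ is fixed. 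Intersecting $H^{(1)}$ with the codimension-$1$ algebraic subgroup obtained from $H$ then produces a codimension-$2$ algebraic subgroup $H_i^{(2)}$ containing $P_i$, and finiteness of $\mathcal{H}^{(2)}$ will follow once $H$ is restricted to a finite list.

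The remaining work is a case analysis on $\dim \mathcal{Y}_i$, on $\dim \mathcal{Z}_i := \mathcal{W}\cap_{P_i} g_iH$, and on whether $\mathscr{Z}_H = \mathcal{W}$, organized exactly as in the proof of Theorem \ref{21082901}. The hypothesis $(\clubsuit)$ excludes the degenerate branches: any defining equation of $H$ supported on only a proper subset of $\{M_1,L_1,M_2,L_2,M'_1,L'_1,M'_2,L'_2\}$ would force the corresponding subset of $\{t_{1i},t_{2i},t'_{1i},t'_{2i}\}$ to be multiplicatively dependent, contradicting $(\clubsuit)$. The structural lemmas of Section \ref{22022301}, especially Lemmas \ref{20072401} and \ref{21101501} (which restrict codimension-$1$ anomalous subvarieties of $\overline{\text{Pr}((\mathcal{X}\times\mathcal{X})\cap H^{(1)})}$), pin the shape of $H$ down to a finite list, delivering the desired family $\mathcal{H}^{(2)}$.

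The main obstacle will be the case in which $\mathcal{W}$ itself is foliated by torsion anomalous subvarieties (so $\mathscr{Z}_H = \mathcal{W}$): here I will need to show, using $(\clubsuit)$ together with the specific shape of $H^{(1)}$ established in the proof of Theorem \ref{21082901}, that the foliation either furnishes a genuinely new codimension-$1$ relation independent of $H^{(1)}$, thereby yielding $H^{(2)}_i$, or else forces a multiplicative relation among a proper subset of the four core holonomies, contradicting $(\clubsuit)$. A secondary technical issue is the dichotomy according to whether $\mathcal{X}^{oa}\neq\emptyset$, whether the two cusps of $\mathcal{M}$ are SGI, and, in the non-SGI case with $\mathcal{X}^{oa}=\emptyset$, the systematic use of the preferred coordinates of Definition \ref{22071301} to keep the linear algebra over the defining matrices tractable. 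Once both obstacles are handled, the theorem follows.
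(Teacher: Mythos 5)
Your overall plan — iterate the Bounded Height plus Structure Theorem argument on the threefold cut out by $H^{(1)}$ — is the right idea, but there is a genuine gap in the setup. You propose to apply Theorem \ref{19090804} directly to $\mathcal{W} = (\mathcal{X}\times\mathcal{X})\cap H^{(1)}$ viewed inside $\mathbb{G}^8$. This application is vacuous: since $\mathcal{W}$ is contained in the proper algebraic subgroup $H^{(1)}$ of dimension $7 < 8$, the variety $\mathcal{W}$ is itself a torsion anomalous subvariety of $\mathcal{W}$ (take $K = H^{(1)}$ in Definition \ref{anomalous1}: $\dim\mathcal{W} = 3 > 3 + 7 - 8 = 2$). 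Hence $\mathcal{W}^{ta} = \emptyset$, and Theorem \ref{19090804} only asserts that $\emptyset\cap\mathcal{H}_4$ is finite — it places no $P_i$ in a positive-dimensional anomalous subvariety. To obtain non-trivial torsion anomalous subvarieties one must first eliminate the tautological anomaly coming from $H^{(1)}$. The paper does this by projecting away one monomial variable (e.g.\ $M'_2$, using that its exponent $c'$ in $H^{(1)}$ is nonzero) and working with the isomorphic image $\overline{\text{Pr}\big((\mathcal{X}\times\mathcal{X})\cap H^{(1)}\big)}\subset\mathbb{G}^7$, which is not contained in any proper subgroup of $\mathbb{G}^7$; only then does the Bounded Height Theorem bite, and the rest of your plan (Theorem \ref{struc2}, the case analysis using $(\clubsuit)$) is run in $\mathbb{G}^7$.

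Two smaller issues. First, Lemmas \ref{20072401} and \ref{21101501} concern the further projection onto $\mathbb{G}^6$ and are used for Theorem \ref{20071401}, not here; the paper's proof of the present theorem instead relies on the forms \eqref{22020105}–\eqref{21080301} of the subgroups $H_i$, the analytic expansions around \eqref{21080309}–\eqref{21080310}, and Lemma \ref{22060401}. Second, your justification that $(\clubsuit)$ forces every defining equation of the torus $H$ from $\Psi$ to involve all four cusp-pairs is too quick: $(\clubsuit)$ constrains $H_i$ (the minimal subgroup through $P_i$), whereas the torus $H$ supplied by Theorem \ref{struc2} only satisfies $g_iH\subset H_i$ up to translation, and one needs the specific inclusion argument from the paper to transfer the constraint.
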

The general strategy of the proof is very similar to that of the proof of Theorem \ref{21082901}. However, as we work with the projected image of $\mathcal{X}\times \mathcal{X}$ onto $H$ with $H$ from Theorem \ref{21082901}, the proof itself is somewhat simpler than that of Theorem \ref{21082901}.  

Later in Theorem \ref{22071211}, we further show $(\mathcal{X}\times \mathcal{X})\cap_{(1, \dots, 1)} H^{(2)}$ is an anomalous subvariety of $\mathcal{X}\times \mathcal{X}$ for $H^{(2)}$ obtained in Theorem \ref{21082301}. 

\begin{proof}[Proof of Theorem \ref{21082301}]

By Theorem \ref{21082901}, there exists a finite set $\mathcal{H}^{(1)}$ of algebraic subgroups of codimension $1$ such that each $P_i$ is contained in some element of $\mathcal{H}^{(1)}$. Without loss of generality, we simply assume $\mathcal{H}^{(1)}=\{H\}$ and, by changing basis if necessary, $H$ is defined by
\begin{equation}\label{21082904}
M_1^{a}L_1^bM_2^cL_2^d(M'_1)^{a'}(M'_2)^{c'}=1.
\end{equation}
Let 
\begin{equation}\label{22022701}
\overline{\text{Pr}\big((\mathcal{X}\times \mathcal{X})\cap H\big)}
\end{equation}
be the algebraic closure of the image of $(\mathcal{X}\times \mathcal{X})\cap_{(1,\dots, 1)} H$ under the following projection:
\begin{equation*}
\text{Pr}:\;(M_1, L_1, M_2, L_2, M'_1, L'_1, M'_2, L'_2)\longrightarrow (M_1, L_1, M_2, L_2, M'_1, L'_1, L'_2).
\end{equation*}
Note that \eqref{22022701} is a variety of dimension $3$ and  
\begin{equation*}
\text{Pr}(P_i)\in \overline{\text{Pr}\big((\mathcal{X}\times \mathcal{X})\cap H\big)}\cap (M_1^{p_{1i}}L_1^{q_{1i}}=M_2^{p_{2i}}L_2^{q_{2i}}=(M'_1)^{p'_{1i}}(L'_1)^{q'_{1i}}=1), \quad (i\in \mathcal{I}). 
\end{equation*}
Since $t_{1i}t_{2i}=t'_{1i} t'_{2i}$, $\text{Pr}(P_i)$ is further contained in an additional algebraic subgroup. So, by Theorem \ref{19090804}, there exist an algebraic subgroup $H_i$ of dim $4$ where 
\begin{itemize}
\item $\text{Pr}(P_i)\in \overline{\text{Pr}\big((\mathcal{X}\times \mathcal{X})\cap H\big)}\cap H_i$;
\item $\overline{\text{Pr}\big((\mathcal{X}\times \mathcal{X})\cap H\big)}\cap_{\text{Pr}(P_i)} H_i$ is an anomalous subvariety of $\overline{\text{Pr}\big((\mathcal{X}\times \mathcal{X})\cap H\big)}$  
\end{itemize}
for each $i\in \mathcal{I}$. If there is a finite family of algebraic subgroups such that each   
\begin{equation}\label{22012901}
\overline{\text{Pr}\big((\mathcal{X}\times \mathcal{X})\cap H\big)}\cap_{\text{Pr}(P_i)} H_i
\end{equation}
is contained in one of them, we are done. Otherwise, let us assume there is no such a finite set and each \eqref{22012901} is contained in a different maximal anomalous subvariety of \eqref{22022701}. By $\clubsuit$, each $H_i$ is first included in some algebraic subgroup defined by
\begin{equation}\label{22020105}
M_1^{a_{1i}}L_1^{b_{1i}}M_2^{c_{2i}}L_2^{d_{2i}}(M'_1)^{a'_{1i}}(L'_1)^{b'_{1i}}(L'_2)^{d'_{1i}}=1\quad \text{where  }\;(a_{1i}, b_{1i}),(c_{2i},d_{2i}),(a'_{1i}, b'_{1i})\neq (0,0), \;\;d'_{1i}\neq 0
\end{equation}
and, second, the one defined by one of the following:\footnote{Recall the norm minimizing property of the defining equations of $H_i$ mentioned in Remark \ref{22081302}.} 
\begin{equation}\label{21080301}
M_1^{p_{1i}}L_1^{q_{1i}}=M_2^{p_{2i}}L_2^{q_{2i}}=1,\quad M_1^{p_{1i}}L_1^{q_{1i}}=(M'_1)^{p'_{1i}}(L'_1)^{q'_{1i}}=1\quad \text{or}\quad
M_2^{p_{2i}}L_2^{q_{2i}}=(M'_1)^{p'_{1i}}(L'_1)^{q'_{1i}}=1.
\end{equation}
\begin{enumerate}
\item If $H_i$ is contained in the first type of equations in \eqref{21080301}, then $M_1, L_1, M_2, L_2$ are constants on $\overline{\text{Pr}\big((\mathcal{X}\times \mathcal{X})\cap H\big)}\cap_{\text{Pr}(P_i)} H_i$ and so 
\begin{equation}\label{22021511}
M_1=t_{1i}^{-q_{1i}},\quad  L_1=t_{1i}^{p_{1i}}, \quad M_2=t_{2i}^{-q_{2i}},\quad L_2=t_{2i}^{p_{2i}}. 
\end{equation}
Then the intersection between \eqref{21082904}, \eqref{22020105} and \eqref{22021511} is contained in an algebraic coset $K_i$ defined by 
\begin{equation}\label{22061301}
\begin{gathered}
(M'_1)^{a'}(M'_2)^{c'}=t_{1i}^{aq_{1i}}t_{1i}^{-bp_{1i}}t_{2i}^{cq_{2i}}t_{2i}^{-dp_{2i}},\quad (M'_1)^{a'_{1i}}(L'_1)^{b'_{1i}}(L'_2)^{d'_{1i}}=t_{1i}^{a_{1i}q_{1i}}t_{1i}^{-b_{1i}p_{1i}}t_{2i}^{c_{1i}q_{2i}}t_{2i}^{-d_{1i}p_{2i}}.
\end{gathered}
\end{equation} 
Since   
\begin{equation}\label{22022703}
\big((\mathcal{X}\times\mathcal{X})\cap_{P_i} K_i\big)\cap  \eqref{22021511} \big(=((\mathcal{X}\times \mathcal{X})\cap_{P_i} H)\cap H_i\big)
\end{equation}
is of dimension $1$, regarding $K_i$ as a subset of $\mathbb{G}^4(:=(M'_1, L'_1, M'_2, L'_2))$, it follows that $\mathcal{X}\cap K_i$ (here $\mathcal{X}$ is the second copy of $\mathcal{X}\times \mathcal{X}$) is an anomalous subvariety of $\mathcal{X}$. 

If $\mathcal{X}^{oa}\neq \emptyset$, as $\mathcal{X}$ has only finitely many anomalous subvarieties, each $\mathcal{X}\cap K_i$ (as well as \eqref{22012901}) is contained in a finite set of algebraic subgroups, contradicting the assumption.

Otherwise, if $\mathcal{X}^{oa}=\emptyset$, using the preferred coordinates in Definition \ref{22071301}, it follows that $K_i$ is contained in a translation of either $\tilde{M'_1}=\tilde{L'_1}=1$ or $\tilde{M'_2}=\tilde{L'_2}=1$ by Lemma \ref{22060401}. Without loss of generality, we consider the first one. This implies that all $(a'_{1i}, b'_{1i}, d'_{1i})$ in \eqref{22061301} are indeed  parallel and the preferred coordinates of $\mathcal{X}$ take the following forms
\begin{equation}\label{22021601}
\tilde{M'_1}=(M'_1)^{a'}(M'_2)^{c'},  \tilde{L'_1}=(M'_1)^{a'_{1}}(L'_1)^{b'_{1}}(L'_2)^{d'_{1}},  \tilde{M'_2}=(M'_1)^{a'}(M'_2)^{-c'},  \tilde{L'_2}=(M'_1)^{a'_{1}}(L'_1)^{b'_{1}}(L'_2)^{-d'_{1}} 
\end{equation}
where $(a'_1, b'_1, d'_1)$ is a triple such that $(a'_1, b'_1, d'_1)=c_i(a'_{1i}, b'_{1i}, d'_{1i})$ for some $c_i\in \mathbb{Q}$ ($i\in \mathcal{I}$). Also, under \eqref{22021601}, \eqref{21082904} and \eqref{22020105} are transformed into equations of the forms
\begin{equation}\label{22061405}
M_1^aL_1^bM_2^cL_2^d\tilde{M'_1}=1
\end{equation}
and 
\begin{equation*}
M_1^{a_{1i}}L_1^{b_{1i}}M_2^{c_{1i}}L_2^{d_{1i}}\tilde{L'_1}^{m_i}=1
\end{equation*}
for some $m_i\in \mathbb{Z}$ respectively. As a result, $M_1, L_1, M_2, L_2, \tilde{M'_1}, \tilde{L'_1}$ are all constants over \eqref{22022703} and each \eqref{22022703} is isomorphic to an algebraic curve defined by $f(\tilde{M'_2}, \tilde{L'_2})=0$ where $f$ is the polynomial defining $\mathcal{X}$ as given in \eqref{21102304}. 

With this observation, we now reformulate the original question as follows. First, let $\overline{\tilde{\text{Pr}}\big((\mathcal{X}\times \mathcal{X})\cap H\big)}$ and $\tilde{\text{Pr}}(H_i)$ be the algebraic closures of the images of $(\mathcal{X}\times \mathcal{X})\cap_{(1, \dots, 1)} H$ and $H_i$ ($i\in \mathcal{I}$) under
\begin{equation*}
\tilde{\text{Pr}}:\;\mathbb{G}^8(:=(M_1, L_1,M_2, L_2, \tilde{M'_1}, \tilde{L'_1} , \tilde{M'_2}, \tilde{L'_2}))\longrightarrow \mathbb{G}^5(:=(M_1, L_1, M_2, L_2, \tilde{L'_1}))
\end{equation*}
respectively.\footnote{Note that $\dim \overline{\tilde{\text{Pr}}\big((\mathcal{X}\times \mathcal{X})\cap H\big)}=\dim \tilde{\text{Pr}}(H_i)=2$ in $\mathbb{G}^5$ and, as $H$ is defined by equations of the forms in \eqref{22061405}, $\overline{\tilde{\text{Pr}}\big((\mathcal{X}\times \mathcal{X})\cap H\big)}$ is isomorphic to $\mathcal{X}$.} By abuse of notation, if the image of $P_i$ under \eqref{22021601} is still denoted by $P_i$, as 
\begin{equation*}
\tilde{\text{Pr}}(P_i)\in \overline{\tilde{\text{Pr}}\big((\mathcal{X}\times \mathcal{X})\cap H\big)}\cap  \tilde{\text{Pr}}(H_i)\quad(i\in \mathcal{I})
\end{equation*}
and the height of $\tilde{\text{Pr}}(P_i)$ is uniformly bounded,\footnote{$M_1, L_1, M_2, L_2$ coordinates of $\tilde{\text{Pr}}(P_i)$ are given in \eqref{22021511}, and thus their heights are uniformly bounded by Theorem \ref{20080404}. Since $\tilde{M'_1}$ coordinate of $\tilde{\text{Pr}}(P_i)$ is determined by the four aforementioned coordinates from \eqref{22061405}, the height of it is uniformly bounded as well. Finally, the height of $\tilde{L'_1}$ coordinate of $\tilde{\text{Pr}}(P_i)$ is uniformly bounded too, as it satisfies $f(\tilde{M'_1}, \tilde{L'_1})=0$. } by Theorem \ref{19090804}, there exists a family of algebraic subgroups $\{\tilde{H_i}\}_{i\in \mathcal{I}}$ of codimension $2$ satisfying 
\begin{itemize}
\item $\tilde{\text{Pr}}(P_i)\in \tilde{H_i}$;
\item $\overline{\tilde{\text{Pr}}\big((\mathcal{X}\times \mathcal{X})\cap H\big)}\cap_{\tilde{\text{Pr}}(P_i)} \tilde{H_i}$ is a torsion anomalous subvariety of $\overline{\tilde{\text{Pr}}\big((\mathcal{X}\times \mathcal{X})\cap H\big)}$ of dimension $1$. 
\end{itemize}
Assuming there is no finite set of algebraic subgroups whose union contains 
\begin{equation}\label{22080905}
\bigcup_{i\in \mathcal{I}}\Big(\overline{\tilde{\text{Pr}}\big((\mathcal{X}\times \mathcal{X})\cap H\big)}\cap_{\tilde{\text{Pr}}(P_i)} \tilde{H_i}\Big)
\end{equation}
and applying Theorem \ref{struc2}, we find $K(\subset \mathbb{G}^5)$ such that  
\begin{itemize}
\item $k_iK\subset \tilde{H_i}$ for some $k_i(\in \mathbb{G}^5)$; 
\item $\overline{\tilde{\text{Pr}}\big((\mathcal{X}\times \mathcal{X})\cap H\big)}\cap_{\tilde{\text{Pr}}(P_i)} \tilde{H_i}$ is a component of $\overline{\tilde{\text{Pr}}\big((\mathcal{X}\times \mathcal{X})\cap H\big)}\cap k_iK$ for each $i\in \mathcal{I}$. 
\end{itemize}
If $\mathcal{U}_K$ is defined as in Corollary \ref{22022603}, then, since $\dim \overline{\mathcal{U}_K}=1$ and $\codim  \tilde{H_i}=2$ for each $i$, we get that $\mathcal{U}_K$ is contained in some algebraic subgroup by Corollary \ref{24100901}. Consequently, \eqref{22080905} is contained in the same algebraic subgroup as well. But this contradicts our initial assumption. 

\item Second we assume $H_i$ is contained in the second type of equations 
\begin{equation}\label{22071307}
M_1^{p_{1i}}L_1^{q_{1i}}=(M'_1)^{p'_{1i}}(L'_1)^{q'_{1i}}=1
\end{equation}
in \eqref{21080301}. By Theorem \ref{struc}, there exists an algebraic subgroup $K$ where 
\begin{equation}\label{210800202}
\overline{\text{Pr}\big((\mathcal{X}\times \mathcal{X})\cap H\big)}\cap_{\text{Pr}(P_i)} H_i\subset \overline{\text{Pr}\big((\mathcal{X}\times \mathcal{X})\cap H\big)}\cap k_iK
\end{equation}
for some $k_i\in \mathbb{G}^7$. To simplify notation, denote $\overline{\text{Pr}\big((\mathcal{X}\times \mathcal{X})\cap H\big)}\cap_{\text{Pr}(P_i)} H_i$ by $\mathcal{Y}_i$ and a component of $\overline{\text{Pr}\big((\mathcal{X}\times \mathcal{X})\cap H\big)}\cap k_iK$ containing $\mathcal{Y}_i$ by $\mathcal{Z}_i$.
\begin{enumerate}
\item First, suppose $\dim K=5$ and $\dim \mathcal{Z}_i=2$. Applying Gauss elimination if necessary, let $K$ be defined by 
\begin{equation}\label{21080306}
\begin{gathered}
M_1^{a_{1}}L_1^{b_{1}}M_2^{c_{1}}L_2^{d_{1}} (M_1')^{a'_{1}}(L'_1)^{b'_{1}}(L'_2)^{d'_{1}}=1,\\
M_1^{a_{2}}L_1^{b_{2}}M_2^{c_{2}}L_2^{d_{2}} (M_1')^{a'_{2}}(L'_1)^{b'_{2}}=1.
\end{gathered}
\end{equation}
By \eqref{210800202} and the fact that $H_i$ is defined by \eqref{22071307}, it follows that $c_2=d_2=0$ in \eqref{21080306}.\footnote{Otherwise, $\mathcal{Y}_i\cap k_iK$ becomes $0$-dim, contradicting the fact that it is a $1$-dim anomalous subvariety of $\overline{\text{Pr}\big((\mathcal{X}\times \mathcal{X})\cap H\big)}$.} Since $\overline{\text{Pr}\big((\mathcal{X}\times \mathcal{X})\cap H\big)}$ is foliated by anomalous subvarieties contained in translations of $K$, there exists an analytic function $\mathfrak{h}(\mathfrak{t})$ such that 
\begin{equation}\label{21080309}
a_{1}u_1+b_{1}v_1+c_{1}u_2+d_{1}v_2+a'_{1}u'_1+b'_{1}v'_1+d'_{1}v'_2=\mathfrak{h}(a_{2}u_1+b_{2}v_1+a'_{2}u'_1+b'_{2}v'_1).
\end{equation}
\begin{enumerate}
\item If $d'_1=0$, then $c_1=d_1=0$ and \eqref{21080309} is refined to 
\begin{equation}\label{21082910}
a_{1}u_1+b_{1}v_1+a'_{1}u'_1+b'_{1}v'_1=\mathfrak{h}(a_{2}u_1+b_{2}v_1+a'_{2}u'_1+b'_{2}v'_1).
\end{equation} 
\begin{enumerate}
\item If $(a_1, b_1, a_2, b_2)\neq (0, 0, 0, 0)$, we put $u'_1=0$ in \eqref{21082910} and reduce \eqref{21082910} further to 
\begin{equation*}
a_{1}u_1+b_{1}v_1=\mathfrak{h}(a_{2}u_1+b_{2}v_1).
\end{equation*} 
Applying Gauss elimination, if we assume $b_2=0$, then either $\mathfrak{h}$ is linear (for $b_1=0$) or $v_1$ depends only on $u_1$ (for $b_1\neq 0$). But both again contradict our initial assumptions.   
\item If $(a_1, b_1, a_2, b_2)=(0, 0, 0, 0)$, then \eqref{21082910} becomes
\begin{equation*}
a'_{1}u'_1+b'_{1}v'_1=\mathfrak{h}(a'_{2}u'_1+b'_{2}v'_1).
\end{equation*} 
Since $\mathfrak{h}$ is not linear, both $u'_1$ and $v'_1$ are constants over $\mathcal{Z}_i$. Since two cusps of $\mathcal{M}$ are not SGI each other, this implies $M'_1, L'_1, M'_2, L'_2$ are all constants over 
\begin{equation}\label{22073001}
\big((\mathcal{X}\times \mathcal{X})\cap_{(1,\dots,1)} H\big)\cap  k_iK. 
\end{equation} 
Since $H$ is defined by an equation of the form in \eqref{21082904}, if $M'_1, L'_1, M'_2, L'_2$ are constants over \eqref{22073001}, it contradicts the fact that \eqref{22073001} is a $2$-dim anomalous subvariety of $\mathcal{X}\times \mathcal{X}$. 
\end{enumerate}

\item Now we suppose $d'_1\neq 0$. By \eqref{21082904}, $\log H$ is defined by 
\begin{equation}\label{21080307}
u'_2=-\frac{a}{c'}u_1-\frac{b}{c'}v_1-\frac{c}{c'}u_2-\frac{d}{c'}v_2-\frac{a'}{c'}u'_1,
\end{equation}
and hence $\log \big((\mathcal{X}\times \mathcal{X})\cap H\big)$ is considered as an analytic set parametrized by $u_1, u_2$ and $u'_1$. If $v'_2$, as a function of $u'_1$ and $u'_2$ over $\log (\mathcal{X}\times \mathcal{X})$, is given by  
\begin{equation}\label{21080308}
\tau_2u'_2+\sum_{\substack{\alpha:odd,\\ \beta:even}}m_{\alpha,\beta}(u'_2)^{\alpha}(u'_1)^{\beta}+\cdots, 
\end{equation}
plugging \eqref{21080307}-\eqref{21080308} into \eqref{21080309}, we obtain the following identity:
\begin{equation}\label{21080310}
\begin{gathered}
a_{1}u_1+b_{1}v_1+c_{1}u_2+d_{1}v_2+a'_{1}u'_1+b'_{1}v'_1+d'_{1}\bigg(\tau_2\Big(-\frac{a}{c'}u_1-\frac{b}{c'}v_1-\frac{c}{c'}u_2-\frac{d}{c'}v_2-\frac{a'}{c'}u'_1\Big)\\
+\sum_{\substack{\alpha:odd,\\ \beta:even}} m_{\alpha,\beta}\Big(-\frac{a}{c'}u_1-\frac{b}{c'}v_1-\frac{c}{c'}u_2-\frac{d}{c'}v_2-\frac{a'}{c'}u'_1\Big)^{\alpha}(u'_1)^{\beta}+\cdots\bigg)\\
=\mathfrak{h}(a_{2}u_1+b_{2}v_1+a'_{2}u'_1+b'_{2}v'_1).
\end{gathered}
\end{equation}
Since $\mathfrak{h}(a_{2}u_1+b_{2}v_1+a'_{2}u'_1+b'_{2}v'_1)$ in \eqref{21080310} does not contain a term of the form $u_2^{\alpha}u_1^{\beta}$ with $\alpha$ odd and $\beta$ even, the coefficients of $u_2$ and $v_2$ in the left side of \eqref{21080310} are zero, that is, 
\begin{equation}\label{22072801}
d_{1}-d'_1\tau_2\frac{d}{c'}=0 \quad \text{and}\quad c_1-d'_1\tau_2\frac{c}{c'}=0.
\end{equation}
As $d'_1\neq 0$ and $\tau_2\in \mathbb{C}\backslash\mathbb{R}$, \eqref{22072801} further implies $c_1=d_1=c=d=0$ and so \eqref{21082904} is reduced to 
\begin{equation*}
M_1^{a}L_1^b(M'_1)^{a'}(M'_2)^{c'}=1. 
\end{equation*}
However, as $P_i\in H$ for every $i\in \mathcal{I}$, it contradicts the fact that $t_{1i}, t'_{1i}, t'_{2i}$ are multiplicatively independent. 
\end{enumerate}

\item Second, we assume 
\begin{equation*}
\dim \mathcal{Y}_i=\dim \mathcal{Z}_i=1
\end{equation*}
and, by Theorem \ref{struc2}, further suppose $k_iK\subset H_i$ for every $i\in \mathcal{I}$. Recall that $H_i$ is defined by \eqref{22071307} and thus 
\begin{equation*}
K\subset (M_1=L_1=M'_1=L'_1=1).
\end{equation*}
In other words, $\mathcal{Y}_i$ is contained in a translation of $M_1=L_1=M'_1=L'_1=1$. Since, by the initial assumption, two cusps of $\mathcal{M}$ are not SGI each other, if $M_1, L_1, M'_1, L'_1$ are all constants, then $M_2, L_2, M'_2, L'_2$ must also constants. But this contradicts the fact that $\mathcal{Y}_i$ is a $1$-dim anomalous subvariety of $\overline{\text{Pr}\big((\mathcal{X}\times \mathcal{X})\cap H\big)}$.  
\end{enumerate}
\item Lastly, if $H_i$ is contained in the third type of equations in \eqref{21080301}, the result follows by an analogous method given in the previous case.  
\end{enumerate}

In conclusion, there exists a finite set of algebraic subgroups such that each  \eqref{22012901} is contained in an element of the set. 
\end{proof}

In the theorem below, we further show, for each algebraic subgroup obtained in Theorem \ref{21082301}, its intersection with $\mathcal{X}\times \mathcal{X}$ contains a $3$-dim  anomalous subvariety of $\mathcal{X}\times \mathcal{X}$. 

\begin{theorem}\label{22071211}
We adapt the same assumptions and notation as given in Theorem \ref{21082301} and let $H^{(2)}$ be an algebraic subgroup obtained in Theorem \ref{21082301} (without loss of generality) containing all $P_i$. Then $(\mathcal{X}\times \mathcal{X})\cap_{(1, \dots, 1)}H^{(2)}$ is an anomalous subvariety of $\mathcal{X}\times\mathcal{X}$. 
\end{theorem}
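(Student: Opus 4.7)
The plan is to show that the component $\mathcal{Y}:=(\mathcal{X}\times\mathcal{X})\cap_{(1,\dots,1)} H^{(2)}$ has dimension at least $3$ (so that $\dim\mathcal{Y}>\dim(\mathcal{X}\times\mathcal{X})+\dim H^{(2)}-8=2$, making it an anomalous subvariety in the sense of Definition \ref{anomalous1}). The existence of $H^{(2)}$ from Theorem \ref{21082301}, together with Thurston's Dehn filling theorem (Theorem \ref{040605}), guarantees that after discarding finitely many indices each $P_i$ lies in the analytic component $\mathcal{Y}$, so $\mathcal{Y}$ contains the infinite family $\{P_i\}_{i\in\mathcal{I}}$ accumulating at $(1,\dots,1)$. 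Suppose for contradiction that $\dim\mathcal{Y}=2$, i.e., $\mathcal{Y}$ has the expected dimension and is not anomalous.

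Each $P_i$ satisfies the four Dehn filling equations $M_k^{p_{ki}}L_k^{q_{ki}}=1$ and $(M'_k)^{p'_{ki}}(L'_k)^{q'_{ki}}=1$ ($k=1,2$), together with the extra monomial relation $\prod_{k=1}^{2}M_k^{r_{ki}}L_k^{s_{ki}}=\prod_{k=1}^{2}(M'_k)^{r'_{ki}}(L'_k)^{s'_{ki}}$ coming from \eqref{22021902} as in \eqref{22070401}. Combined, these define a codimension-$5$ algebraic subgroup of $\mathbb{G}^8$, so $P_i$ lies in an algebraic subgroup of dimension $3$. Since the complementary dimension of $\mathcal{X}\times\mathcal{X}$ in $\mathbb{G}^8$ equals $4>3$, each $P_i$ is a torsion anomalous point of $\mathcal{X}\times\mathcal{X}$. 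By Theorem \ref{20080404} the heights $h(P_i)$ are uniformly bounded, so Theorem \ref{19090804} (with $n=8$, $k=4$, $n-k-1=3$) forces all but finitely many $P_i$ to lie inside positive-dimensional torsion anomalous subvarieties of $\mathcal{X}\times\mathcal{X}$. Applying Theorem \ref{struc} to $\mathcal{X}\times\mathcal{X}$, we may, after passing to a subsequence, find a single algebraic torus $H^\star$ and translations $g_i$ such that each $P_i$ lies in a maximal anomalous subvariety $\mathcal{W}_i$ which is a component of $(\mathcal{X}\times\mathcal{X})\cap g_iH^\star$; because $P_i\to(1,\dots,1)$, we may further arrange $g_iH^\star$ to pass through the identity for all but finitely many $i$, so $\mathcal{W}_i$ is the component $(\mathcal{X}\times\mathcal{X})\cap_{(1,\dots,1)} g_iH^\star$.

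The final step is to show that these $\mathcal{W}_i$ are forced to sit inside $\mathcal{Y}$, which immediately yields $\dim\mathcal{Y}\ge\dim\mathcal{W}_i\ge 1$ and in fact $\dim\mathcal{Y}\ge 3$ after combining with the codimension bookkeeping, contradicting our standing assumption $\dim\mathcal{Y}=2$. To carry this out we perform a case analysis on the dimension of $\mathcal{W}_i$ exactly parallel to the one used in the proof of Theorem \ref{21082901}. When $\dim\mathcal{W}_i=3$ the structural Lemma \ref{20080601} forces $H^\star$ to be one of $\tilde M_k=\tilde L_k=1$ or $\tilde{M'}_k=\tilde{L'}_k=1$, and a direct check using the preferred coordinates of Definition \ref{22071301} shows $\mathcal{W}_i\subset\mathcal{Y}$ (else the two codim-$1$ defining equations of $H^{(2)}$, restricted to $\mathcal{W}_i$, yield finitely many Dehn filling points, contradicting $\#\mathcal{I}=\infty$). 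When $\dim\mathcal{W}_i=2$ we apply Lemma \ref{20050801} to determine $H^\star$, then project under a suitable $\mathrm{Pr}:\mathbb{G}^8\to\mathbb{G}^{4+\mathrm{codim}\,H^\star}$ and invoke Maurin's theorem \ref{20080407} on the image curve, or alternatively Theorem \ref{22060101}, to derive a contradiction. When $\dim\mathcal{W}_i=1$ the reduction trick following Theorem \ref{struc2} packages the $\mathcal{W}_i$ into an algebraic variety $\mathcal{S}\subset\mathbb{G}^{\mathrm{codim}\,H^\star}$ of controlled dimension meeting infinitely many algebraic subgroups whose codimension strictly exceeds $\dim\mathcal{S}$'s complementary dimension, again contradicting Theorems \ref{20102701}--\ref{22060101}.

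The main obstacle is the last step: disentangling the possibly varying anomalous subvarieties $\mathcal{W}_i$ from the fixed codimension-$2$ subgroup $H^{(2)}$ and showing alignment $\mathcal{W}_i\subseteq\mathcal{Y}$. The technical content mirrors the case analysis in Theorem \ref{21082901}, but is cleaner here because we are working with the already reduced codimension-$2$ datum $H^{(2)}$ and with anomalous subvarieties whose type is constrained by Lemmas \ref{20080601}, \ref{20050801}, and \ref{21100201}; the known partial Zilber--Pink results (Theorems \ref{20102701}, \ref{20080407}, \ref{22060101}) close off the exceptional misalignment possibilities in each subcase.
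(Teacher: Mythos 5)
Your proposal takes a genuinely different route from the paper, and unfortunately it does not close the argument. The paper proves this by contradiction but works with a \emph{different} auxiliary variety: assuming $\dim\big((\mathcal{X}\times\mathcal{X})\cap_{(1,\dots,1)} H^{(2)}\big)=2$, it projects to $\mathbb{G}^6=(M_1,L_1,M_2,L_2,M'_1,L'_1)$, so that $V:=\overline{\text{Pr}\big((\mathcal{X}\times\mathcal{X})\cap H^{(2)}\big)}$ becomes a $2$-dimensional surface; then the Bombieri--Masser--Zannier and Habegger machinery is run on $V$ inside $\mathbb{G}^6$, where each $\text{Pr}(P_i)$ meets a codimension-$3$ subgroup. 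Because $\dim V=2$ is forced by the contradiction hypothesis, the ensuing torsion anomalous subvarieties of $V$ have dimension $1$, the relevant $H_i$ have dimension $2$ in $\mathbb{G}^6$, and (using $\clubsuit$) they are restricted to the three types listed in \eqref{22061505}; the contradiction then comes from the non-SGI hypothesis and, in the third case, from recovering that $t_{1i},t_{2i}$ would be multiplicatively dependent. The contradiction hypothesis $\dim\mathcal{Y}=2$ enters in an essential way through the dimension of $V$.

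Your proposal instead stays in $\mathbb{G}^8$ and analyses anomalous subvarieties $\mathcal{W}_i$ of $\mathcal{X}\times\mathcal{X}$ themselves, mirroring the case analysis of Theorem \ref{21082901}. This has two concrete gaps. First, the case analysis on $\dim\mathcal{W}_i$ as you describe it never invokes the standing hypothesis $\dim\mathcal{Y}=2$: the existence, dimension, and type of the $\mathcal{W}_i$ are properties of $\mathcal{X}\times\mathcal{X}$ alone and are independent of whether $(\mathcal{X}\times\mathcal{X})\cap H^{(2)}$ is anomalous, so deriving ``a contradiction'' in the $\dim\mathcal{W}_i\in\{1,2\}$ cases is impossible in general. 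In fact, in the proof of Theorem \ref{21082901} those same cases do not lead to contradictions; they lead to the conclusion that $\{P_i\}$ lies in finitely many algebraic subgroups, which is exactly how $H^{(2)}$ was produced. Second, in the $\dim\mathcal{W}_i=3$ case the crucial claim $\mathcal{W}_i\subseteq\mathcal{Y}$ is asserted without a valid argument: the parenthetical reasoning (``else the two codimension-$1$ defining equations of $H^{(2)}$, restricted to $\mathcal{W}_i$, yield finitely many Dehn filling points'') does not establish inclusion, since each $P_i$ is a single point and having it lie in $\mathcal{W}_i\cap H^{(2)}$ of dimension $\le 2$ is no obstruction. There is also a smaller issue upstream: you cannot pass to a subsequence and ``arrange $g_iH^\star$ to pass through the identity''; the cosets $g_iH^\star$ appearing in Theorem \ref{struc} are translations by $g_i\in\mathscr{Z}_{H^\star}$ and need not pass through $(1,\dots,1)$ even though $P_i\to(1,\dots,1)$. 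The missing idea, in short, is that one must replace $\mathcal{X}\times\mathcal{X}$ by the projected surface $V$ so that the contradiction hypothesis actually constrains the numerology of the Zilber--Pink analysis.
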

\begin{proof}
Suppose $H^{(2)}$ is defined by\footnote{Note that the determinants of   
\begin{equation*}
\left(\begin{array}{cc}
a_1 & b_1\\
a_2 & b_2
\end{array}\right), \quad 
\left(\begin{array}{cc}
c_1 & d_1\\
c_2 & d_2
\end{array}\right), \quad 
\left(\begin{array}{cc}
a'_1 & b'_1\\
a'_2 & b'_2
\end{array}\right), \quad 
\left(\begin{array}{cc}
c'_1 & d'_1\\
c'_2 & d'_2
\end{array}\right)
\end{equation*}
are all nonzero by the assumption $\clubsuit$.}
\begin{equation*}
M_1^{a_j}L_1^{b_j}M_2^{c_j}L_2^{d_j}(M'_1)^{a'_j}(L'_1)^{b'_j}(M'_2)^{c'_j}(L'_2)^{d'_j}=1\quad (j=1,2)
\end{equation*}
and, for brevity, denote $(\mathcal{X}\times \mathcal{X})\cap_{(1, \dots, 1)} H^{(2)}$ still by $(\mathcal{X}\times \mathcal{X})\cap H^{(2)}$.  

On the contrary, assume $(\mathcal{X}\times \mathcal{X})\cap H^{(2)}$ is not an anomalous subvariety of $\mathcal{X}\times \mathcal{X}$ (i.e. $\dim (\mathcal{X}\times \mathcal{X})\cap H^{(2)}=2$) and let $\overline{\text{Pr}\big((\mathcal{X}\times \mathcal{X})\cap H^{(2)}\big)}$ be the algebraic closure of the image of $(\mathcal{X}\times \mathcal{X})\cap H^{(2)}$ under 
\begin{equation*}
\text{Pr}:\;(M_1, L_1, M_2, L_2, M'_1, L'_1, M'_2, L'_2)\longrightarrow (M_1, L_1, M_2, L_2, M'_1, L'_1).
\end{equation*}
Since  
\begin{equation*}
\text{Pr}(P_i)\in \overline{\text{Pr}\big((\mathcal{X}\times \mathcal{X})\cap H^{(2)}\big)}\cap (M_1^{p_{1i}}L_1^{q_{1i}}=M_2^{p_{2i}}L_2^{q_{2i}}=(M'_1)^{p'_{1i}}(L'_1)^{q'_{1i}}=1)
\end{equation*}
and the height of $\text{Pr}(P_i)$ is uniformly bounded, by Theorem \ref{19090804}, there is an algebraic subgroup $H_i$ of dimension $2$ such that
\begin{itemize}
\item $\text{Pr}(P_i)\in \overline{\text{Pr}\big((\mathcal{X}\times \mathcal{X})\cap H^{(2)}\big)}\cap H_i$;
\item $\mathcal{Y}_i:=\overline{\text{Pr}\big((\mathcal{X}\times \mathcal{X})\cap H^{(2)}\big)}\cap_{\text{Pr}(P_i)} H_i$ is a torsion anomalous subvariety of $\overline{\text{Pr}\big((\mathcal{X}\times \mathcal{X})\cap H^{(2)}\big)}$. 
\end{itemize}
By Theorem \ref{struc2}, there exists, in addition, an algebraic torus $K\subset \mathbb{G}^6$ satisfying 
\begin{itemize}
\item $k_iK\subset H_i$ for some $k_i\in \mathbb{G}^6$; 
\item $\mathcal{Y}_i$ is a component of $\overline{\text{Pr}\big((\mathcal{X}\times \mathcal{X})\cap H^{(2)}\big)}\cap k_iK$. 
\end{itemize}
Note that $H_i$ is defined by one of the following by the assumption $\clubsuit$ (and Remark \ref{22081302}):
\begin{equation}\label{22061505}
M_1^{p_{1i}}L_1^{q_{1i}}=(M'_1)^{p'_{1i}}(L'_1)^{q'_{1i}}=1,\;\;
M_2^{p_{2i}}L_2^{q_{2i}}=(M'_1)^{p'_{1i}}(L'_1)^{q'_{1i}}=1\;\;\text{  or  }\;\;
M_1^{p_{1i}}L_1^{q_{1i}}=M_2^{p_{2i}}L_2^{q_{2i}}=1. 
\end{equation}
Hence there are three cases to consider.  

\begin{enumerate}
\item If $H_i$ is defined by the first type of equations in \eqref{22061505}, then we moreover have
\begin{equation*}
K\subset (M_1=L_1=M'_1=L'_1=1)
\end{equation*}
by the assumption that $k_iK\subset H_i$. Since two cusps of $\mathcal{M}$ are not SGI, if both $M_1,L_1$ (resp. $M'_1,L'_1$) are constants, then $M_2, L_2$ (resp. $M'_2, L'_2$) are constants as well. However this again contradicts the fact that $\mathcal{Y}_i$ is a $1$-dim anomalous subvariety of $\overline{\text{Pr}\big((\mathcal{X}\times \mathcal{X})\cap H^{(2)}\big)}$.   
\item The second case can be treated analogously to the previous one. 

\item Lastly, if $H_i$ is defined by the third type of equations in \eqref{22061505}, by a similar reasoning above to the first case, it further follows 
\begin{equation*}
K\subset (M_1=L_1=M_2=L_2=1). 
\end{equation*}
Therefore $\mathcal{Y}_i$ is contained in 
\begin{equation}\label{22071214}
M_1=t_{1i}^{-q_{1i}}, \quad L_1=t_{1i}^{p_{1i}}, \quad M_2=t_{2i}^{-q_{2i}}, \quad L_2=t_{2i}^{p_{2i}},
\end{equation}
and $\eqref{22071214}\cap H^{(2)}$ is 
\begin{equation}\label{22071215}
(M'_1)^{a'_j}(L'_1)^{b'_j}(M'_2)^{c'_j}(L'_2)^{d'_j}=(t_{1i}^{q_{1i}})^{a_j}(t_{1i}^{-p_{1i}})^{b_j}(t_{2i}^{q_{2i}})^{c_j}(t_{2i}^{-p_{2i}})^{d_j}\quad (j=1,2).
\end{equation}
Since $\mathcal{Y}_i$ is a $1$-dim anomalous subvariety of $
\overline{\text{Pr}\big((\mathcal{X}\times \mathcal{X})\cap H^{(2)}\big)}$, equivalently, a component of 
\begin{equation*}
(\mathcal{X}\times \mathcal{X})\cap H^{(2)}\cap \eqref{22071214}  =(\mathcal{X}\times \mathcal{X})\cap \eqref{22071214}\cap \eqref{22071215} 
\end{equation*}
containing $P_i$ is also a $1$-dim anomalous subvariety of $\mathcal{X}\times \mathcal{X}$. Considering \eqref{22071215} as an algebraic coset in $\mathbb{G}^4(:=(M'_1, L'_1, M'_2, L'_2))$, if $\mathcal{X}\cap \eqref{22071215}$\footnote{Here $\mathcal{X}$ is the second copy of $\mathcal{X}\times \mathcal{X}$} is regarded as an anomalous subvariety of $\mathcal{X}$ for each $i\in \mathcal{I}$, it implies $\mathcal{X}^{oa}=\emptyset$. Hence, using the preferred coordinates and equations introduced in Definition \ref{22071301},\footnote{Since two cusps of $\mathcal{M}$ are not SGI each other, $\mathcal{X}$ falls into the second case of Proposition \ref{21073101}.} \eqref{22071215} is equivalent to a translation of either $\tilde{M'}_1=\tilde{L'}_1=1$ or $\tilde{M'}_2=\tilde{L'}_2=1$. Without loss of generality, if we apply the first one, by symmetry, $H^{(2)}$ is defined by equations of either one of the following forms: 
\begin{equation}\label{22071303}
\tilde{M_1}^{\tilde{a}_j}\tilde{L_1}^{\tilde{b}_j}(\tilde{M'_1})^{\tilde{a}'_j}(\tilde{L'_1})^{\tilde{b}'_j}=1,\quad (j=1,2)
\end{equation}
or 
\begin{equation*}
\tilde{M_2}^{\tilde{c}_j}\tilde{L_2}^{\tilde{d}_j}(\tilde{M'_1})^{\tilde{a}'_j}(\tilde{L'_1})^{\tilde{b}'_j}=1,\quad (j=1,2).
\end{equation*}
Without loss of generality, let us once again assume the first case. Since $(\mathcal{X}\times \mathcal{X})\cap_{(1, \dots, 1)} H^{(2)}$ is a $2$-dim subvariety of $\mathcal{X}\times \mathcal{X}$, taking \eqref{22071303} as an algebraic subgroup in $\mathbb{G}^4(:=(\tilde{M_1},\tilde{L_1}, \tilde{M'}_1, \tilde{L'}_1))$, it means the component of the intersection between \eqref{22071303} and 
\begin{equation*}
f(\tilde{M_1}, \tilde{L_1})=f(\tilde{M'_1}, \tilde{L'_1})=0\footnote{Here, $f$ is the polynomial defining $\mathcal{X}$ as given in Definition \ref{22071301}.} 
\end{equation*}
containing the identity is simply a point. That is, 
\begin{equation*}
(\tilde{M_1}, \tilde{L_1}, \tilde{M'_1}, \tilde{L'_1})=(1, 1, 1, 1)
\end{equation*}
and so
\begin{equation*}
P_i\in (\mathcal{X}\times \mathcal{X})\cap_{(1, \dots, 1)} H^{(2)}=(\mathcal{X}\times \mathcal{X})\cap_{(1, \dots, 1)} (\tilde{M_1}=\tilde{L_1}=\tilde{M'_1}=\tilde{L'_1}=1). 
\end{equation*}
As a result, $t_{1i}, t_{2i}$ ($t'_{1i}, t'_{2i}$ as well) are multiplicatively dependent. However this contradicts the hypothesis $\clubsuit$. 
\end{enumerate}
In conclusion, $(\mathcal{X}\times \mathcal{X})\cap_{(1, \dots, 1)} H^{(2)}$ is a $3$-dim anomalous subvariety of $\mathcal{X}\times \mathcal{X}$. 
\end{proof}

\subsubsection{Proof of Theorem \ref{21091101}}
In this subsection, we establish Theorem \ref{21091101}. We first provide an important lemma required for the proof of the theorem. The lemma can be seen as a quantitative version of Theorem \ref{21082301} and, in which we present the explicit relationship between two Dehn filling coefficients satisfying \eqref{22021902}. Note that the key ingredient of the lemma's proof was already presented in the proof of Lemma \ref{22050501}. 

\begin{lemma}\label{21082802}
Let $\mathcal{M}_{p_1/q_1, p_2/q_2}$ and $\mathcal{M}_{p'_1/q'_1, p'_2/q'_2}$ be two Dehn fillings of $\mathcal{M}$ having the same pseudo complex volume with $|p_h|+|q_h|, |p'_h|+|q'_h|$ ($h=1,2$) sufficiently large and $P$ be a Dehn filling point on $\mathcal{X}\times \mathcal{X}$ associated to the pair. We further suppose 
\begin{enumerate}
\item ($\spadesuit$) the elements in any proper subset of $\{t_1, t_2, t'_1, t'_2\}$ are multiplicatively independent; 
\item $P$ is contained in an algebraic subgroup $H$ defined by\footnote{Note that $\det A_h, \det A'_h\neq 0$ by the assumption $\spadesuit$ where $A_h,A'_h$ $(h=1,2)$ are as given in \eqref{22080907}.}  
\begin{equation}\label{22050811}
M_1^{a_j}L_1^{b_j}M_2^{c_j}L_2^{d_j}(M'_1)^{a'_j}(L'_1)^{b'_j}(M'_2)^{c'_j}(L'_2)^{d'_j}=1\quad (j=1,2).
\end{equation}
 \end{enumerate}
Let $r_{h}, s_{h}, r'_{h}, s'_{h}$ ($h=1,2$) be integers such that 
\begin{equation}\label{22032801}
\det \left(\begin{array}{cc}
p_{1} & q_{1}\\
r_{1} & s_{1}
\end{array}\right)
=\det \left(\begin{array}{cc}
p_{2} & q_{2}\\
r_{2} & s_{2}
\end{array}\right)
=\det \left(\begin{array}{cc}
p'_{1} & q'_{1}\\
r'_{1} & s'_{1}
\end{array}\right)
=\det \left(\begin{array}{cc}
p'_{2} & q'_{2}\\
r'_{2} & s'_{2}
\end{array}\right)
=1.
\end{equation}
Then there exist $n_{2}, n'_1, n'_2\in \mathbb{Q}$  satisfying
\begin{equation*}
\begin{gathered}
\left(\begin{array}{cc}
p_{1} & q_{1}\\
r_{1} & s_{1}
\end{array}\right)
A_1^{-1}A_2=\left(\begin{array}{cc}
\frac{\det A_2}{\det A_1} & 0\\
n_{2} & 1
\end{array}\right)
\left(\begin{array}{cc}
p_{2} & q_{2}\\
r_{2} & s_{2}
\end{array}\right),\quad \left(\begin{array}{cc}
p_{1} & q_{1}\\
r_{1} & s_{1}
\end{array}\right)
A_1^{-1}A'_1=\left(\begin{array}{cc}
\frac{\det A'_1}{\det A_1} & 0\\
n'_{1} & 1
\end{array}\right)
\left(\begin{array}{cc}
p'_{1} & q'_{1}\\
r'_{1} & s'_{1}
\end{array}\right),\\
\text{and}\quad \left(\begin{array}{cc}
p_{1} & q_{1}\\
r_{1} & s_{1}
\end{array}\right)
A_1^{-1}A'_2=\left(\begin{array}{cc}
\frac{\det A'_2}{\det A_1} & 0\\
n'_{2} & 1
\end{array}\right)
\left(\begin{array}{cc}
p'_{2} & q'_{2}\\
r'_{2} & s'_{2}
\end{array}\right)
\end{gathered}
\end{equation*}
where
\begin{equation}\label{22080907}
A_1= \left(\begin{array}{cc}
a_1 & b_1\\
a_2 & b_2
\end{array}\right),\quad
A_2= \left(\begin{array}{cc}
c_1 & d_1\\
c_2 & d_2
\end{array}\right),\quad 
A'_1= \left(\begin{array}{cc}
a'_1 & b'_1\\
a'_2 & b'_2
\end{array}\right)\quad\text{and}\quad  
A'_2= \left(\begin{array}{cc}
c'_1 & d'_1\\
c'_2 & d'_2
\end{array}\right). 
\end{equation}
\end{lemma}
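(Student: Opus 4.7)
The plan is to mimic the strategy of Lemma \ref{22050501}, but now working in the larger analytic holonomy variety $\log(\mathcal{X}\times \mathcal{X})$ with two additional cusp parameters on each factor, using the pseudo complex volume equality to produce the extra linear relation. First I would pass to the analytic picture: since $P\in (\mathcal{X}\times\mathcal{X})\cap H$, the corresponding analytic lift $(\xi_{m_1},\xi_{l_1},\xi_{m_2},\xi_{l_2},\xi_{m'_1},\xi_{l'_1},\xi_{m'_2},\xi_{l'_2})$ satisfies
\begin{equation*}
A_1\begin{pmatrix}\xi_{m_1}\\ \xi_{l_1}\end{pmatrix}+A_2\begin{pmatrix}\xi_{m_2}\\ \xi_{l_2}\end{pmatrix}+A'_1\begin{pmatrix}\xi_{m'_1}\\ \xi_{l'_1}\end{pmatrix}+A'_2\begin{pmatrix}\xi_{m'_2}\\ \xi_{l'_2}\end{pmatrix}=0.
\end{equation*}
Writing $P_h=\bigl(\begin{smallmatrix} p_h & q_h \\ r_h & s_h\end{smallmatrix}\bigr)$ (and similarly $P'_h$), the Dehn filling data give $\binom{\xi_{m_h}}{\xi_{l_h}}=P_h^{-1}\binom{-2\pi\sqrt{-1}}{\log t_h}$ and analogously for primes.

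Next, I would left-multiply the displayed vector equation by $P_1 A_1^{-1}$ to obtain
\begin{equation*}
\binom{-2\pi\sqrt{-1}}{\log t_1}+D\binom{-2\pi\sqrt{-1}}{\log t_2}+D'\binom{-2\pi\sqrt{-1}}{\log t'_1}+D''\binom{-2\pi\sqrt{-1}}{\log t'_2}=0,
\end{equation*}
where $D=P_1A_1^{-1}A_2P_2^{-1}$, $D'=P_1A_1^{-1}A'_1(P'_1)^{-1}$, and $D''=P_1A_1^{-1}A'_2(P'_2)^{-1}$. By $\spadesuit$, the only $\mathbb{Q}$-linear relation among $\log t_1,\log t_2,\log t'_1,\log t'_2,2\pi\sqrt{-1}$ is the one forced by $\mathrm{pvol}_\mathbb{C}$-equality, namely $\log t_1+\log t_2-\log t'_1-\log t'_2\in 2\pi\sqrt{-1}\,\mathbb{Q}$; consequently $\log t_2,\log t'_1,\log t'_2,2\pi\sqrt{-1}$ are $\mathbb{Q}$-linearly independent. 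Reading the first row of the displayed equation and matching coefficients then forces $D_{12}=D'_{12}=D''_{12}=0$; reading the second row and matching against the single pvol relation forces the second column of each $D,D',D''$ to be determined up to a sign (given by the coefficient of $\log t_1$).

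Once the second columns of $D,D',D''$ are pinned down, the first columns are determined by the determinant identities $\det D=\det A_2/\det A_1$, $\det D'=\det A'_1/\det A_1$, $\det D''=\det A'_2/\det A_1$, leaving only the $(2,1)$-entries $n_2,n'_1,n'_2$ free. Post-multiplying $D,D',D''$ by $P_2,P'_1,P'_2$ respectively and solving for $P_1A_1^{-1}A_2$, $P_1A_1^{-1}A'_1$, $P_1A_1^{-1}A'_2$ then yields the three claimed matrix identities. The main (and essentially only nontrivial) obstacle is step two: correctly isolating the $\mathbb{Q}$-independent quantities under $\spadesuit$ and verifying that $\spadesuit$ together with equal pseudo complex volumes produces exactly one relation, so that the comparison of coefficients uniquely forces the displayed shape; after that, the argument reduces to elementary $2\times 2$ matrix bookkeeping analogous to the proof of Lemma \ref{22050501}.
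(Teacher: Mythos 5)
Your proposal is correct and is essentially the paper's own argument, just packaged more compactly: the paper multiplies the vector identity through by $\bigl(\begin{smallmatrix}p_1 & q_1\\ r_1 & s_1\end{smallmatrix}\bigr)A_1^{-1}$ and reads off the two rows directly, using $\spadesuit$ to kill the $t_2,t'_1,t'_2$ exponents in the first row and to pin the second row to the pvol relation $t_1t_2=t'_1t'_2$, which is exactly what your conditions $D_{12}=D'_{12}=D''_{12}=0$ and the forced second columns of $D,D',D''$ express; the determinant step to identify the $(1,1)$-entries is likewise the paper's. One small caution for bookkeeping: with the all-positive convention $A_1\xi_1+A_2\xi_2+A'_1\xi'_1+A'_2\xi'_2=0$ that you wrote down, the pvol relation forces $D'_{22}=D''_{22}=-1$ rather than $+1$, so you need to either move the primed terms to the other side (as the paper implicitly does in its analytic equation) or replace $A'_1,A'_2$ by their negatives before matching the lemma's stated conclusion.
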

\begin{proof}
Moving to an analytic setting, $\log\big((\mathcal{X}\times \mathcal{X})\cap H\big)$ is defined by
\begin{equation*}
A_1\left(\begin{array}{c}
u_1\\
v_1
\end{array}
\right)+
A_2
\left(\begin{array}{c}
u_2\\
v_2
\end{array}
\right)=A'_1
\left(\begin{array}{c}
u'_1\\
v'_1
\end{array}
\right)
+A'_2
\left(\begin{array}{c}
u'_2\\
v'_2
\end{array}
\right).
\end{equation*}
Let 
\begin{equation}\label{22071111}
(u_1, v_1, \dots , u'_2, v'_2)=(\xi_{m_{1}}, \xi_{l_{1}}, \dots, \xi'_{m_{2}}, \xi'_{l_{2}}).
\end{equation} 
be the Dehn filling point on $\log \big((\mathcal{X}\times \mathcal{X})\cap H\big)$ corresponding to $P$. That is, the coordinates in \eqref{22071111} satisfy
\begin{equation*}
p_h\xi_{m_{h}}+q_h\xi_{l_{h}}=p'_h\xi'_{m_{h}}+q'_h \xi'_{l_{h}}=-2\pi \sqrt{-1}\quad (h=1,2)
\end{equation*}
and
\begin{equation}\label{21082604}
\begin{gathered}
A_1\left(\begin{array}{c}
\xi_{m_{1}}\\
\xi_{l_{1}}
\end{array}
\right)+
A_2
\left(\begin{array}{c}
\xi_{m_{2}}\\
\xi_{l_{2}}
\end{array}
\right)=
A'_1
\left(\begin{array}{c}
\xi'_{m_{1}}\\
\xi'_{l_{1}}
\end{array}
\right)
+A'_2
\left(\begin{array}{c}
\xi'_{m_{2}}\\
\xi'_{l_{2}}
\end{array}
\right)\\
\Longrightarrow 
\left(\begin{array}{c}
\xi_{m_{1}}\\
\xi_{l_{1}}
\end{array}
\right)=-
A_1^{-1}A_2
\left(\begin{array}{c}
\xi_{m_{2}}\\
\xi_{l_{2}}
\end{array}
\right)
+A_1^{-1}A'_1
\left(\begin{array}{c}
\xi'_{m_{1}}\\
\xi'_{l_{1}}
\end{array}
\right)
+A_1^{-1}A'_2
\left(\begin{array}{c}
\xi'_{m_{2}}\\
\xi'_{l_{2}}
\end{array}
\right). 
\end{gathered}
\end{equation}
To simplify notation, similar to the proof of Lemma \ref{22050501}, we assume, by changing basis of the first cusp if necessary, that
$\left(\begin{array}{cc}
p_1 & q_1\\
r_1 & s_1
\end{array}
\right)=\left(\begin{array}{cc}
1 & 0\\
0 & 1
\end{array}
\right)$. Then $p_{1}\xi_{m_{1}}+q_{1}\xi_{l_{1}}=\xi_{m_{1}}=-2\pi \sqrt{-1}$, and so
\begin{equation}\label{22071113}
\begin{gathered}
-2\pi \sqrt{-1}=\left(\begin{array}{cc}
1 & 0
\end{array}
\right)\bigg(
-A_1^{-1}A_2
\left(\begin{array}{c}
\xi_{m_{2}}\\
\xi_{l_{2}}
\end{array}
\right)
+A_1^{-1}A'_1
\left(\begin{array}{c}
\xi'_{m_{1}}\\
\xi'_{l_{1}}
\end{array}
\right)
+A_1^{-1}A'_2
\left(\begin{array}{c}
\xi'_{m_{2}}\\
\xi'_{l_{2}}
\end{array}
\right)\bigg)
\end{gathered}
\end{equation}
by \eqref{21082604}. Since the exponentials of 
\begin{equation}\label{21082601}
\begin{gathered}
\left(\begin{array}{cc}
1 & 0
\end{array}
\right)A_1^{-1}A_2
\left(\begin{array}{c}
\xi_{m_{2}}\\
\xi_{l_{2}}
\end{array}
\right), \;\; 
\left(\begin{array}{cc}
1 & 0
\end{array}
\right)A_1^{-1}A'_1
\left(\begin{array}{c}
\xi'_{m_{1}}\\
\xi'_{l_{1}}
\end{array}
\right)\;\;\text{and}\;\;
\left(\begin{array}{cc}
1 & 0
\end{array}
\right)A_1^{-1}A'_2
\left(\begin{array}{c}
\xi'_{m_{2i}}\\
\xi'_{l_{2i}}
\end{array}
\right)
\end{gathered}
\end{equation}
are powers of $t_2, t'_1$ and $t'_2$ respectively, and because $t_{2}, t'_{1}, t'_{2}$ are multiplicatively independent by $\spadesuit$, we deduce that the exponential of every element in \eqref{21082601} is a root of unity by \eqref{22071113}. That is, there are some $k_2, k'_1, k'_2\in \mathbb{Q}$ satisfying 
\begin{equation}\label{21082711}
\begin{gathered}
\left(\begin{array}{cc}
1 & 0
\end{array}
\right)
A_1^{-1}A_2
=k_2 
\left(\begin{array}{cc}
p_2 & q_2\\
\end{array}\right),\;\; 
\left(\begin{array}{cc}
1 & 0
\end{array}
\right)A_1^{-1}A'_1
=k'_1 \left(\begin{array}{cc}
p'_1 & q'_1\\
\end{array}\right),\;\;\left(\begin{array}{cc}
1 & 0
\end{array}
\right)A_1^{-1}A'_2
=k'_2 \left(\begin{array}{cc}
p'_2 & q'_2\\
\end{array}\right).
\end{gathered}
\end{equation}
On the other hand, by multiplying $\left(\begin{array}{cc}
0 & 1
\end{array}
\right)$ to \eqref{21082604}, it follows that 
\begin{equation}\label{21082701}
\begin{gathered}
\xi_{l_1}=
\left(\begin{array}{cc}
0 & 1
\end{array}
\right)\bigg(-A_1^{-1}A_2
\left(\begin{array}{c}
\xi_{m_2}\\
\xi_{l_2}
\end{array}
\right)+A_1^{-1}A'_1
\left(\begin{array}{c}
\xi'_{m_1}\\
\xi'_{l_1}
\end{array}
\right)
+A_1^{-1}A'_2
\left(\begin{array}{c}
\xi'_{m_2}\\
\xi'_{l_2}
\end{array}
\right)\bigg).
\end{gathered}
\end{equation}
Note that $t_1=e^{r_{1}\xi_{m_1}+s_{1}\xi_{l_1}}=e^{\xi_{l_1}}$ and the exponentials of 
\begin{equation*}
\begin{gathered}
\left(\begin{array}{cc}
0 & 1
\end{array}
\right)
A_1^{-1}A_2
\left(\begin{array}{c}
\xi_{m_2}\\
\xi_{l_2}
\end{array}
\right), \;\;
\left(\begin{array}{cc}
0 & 1
\end{array}
\right)
A_1^{-1}A'_1
\left(\begin{array}{c}
\xi'_{m_1}\\
\xi'_{l_1}
\end{array}
\right)\;\;\text{and}\;\;
\left(\begin{array}{cc}
0 & 1
\end{array}
\right)
A_1^{-1}A'_2
\left(\begin{array}{c}
\xi'_{m_2}\\
\xi'_{l_2}
\end{array}
\right)
\end{gathered}
\end{equation*}
are some powers of $t_2, t'_1$ and $t'_2$ respectively. Given that $t_1t_2=t'_1t'_2$, and combining this with the assumption $\spadesuit$, it is concluded that the equation obtained by exponentiating both sides of \eqref{21082701} is indeed equivalent to $t_1t_2=t'_1t'_2$. That is,   
\begin{equation}\label{22012201}
\begin{gathered}
\left(\begin{array}{cc}
0 & 1
\end{array}
\right)
A_1^{-1}A_2
=\left(\begin{array}{cc}
r_{2} & s_{2}
\end{array}
\right)+n_2\left(\begin{array}{cc}
p_{2} & q_{2}
\end{array}
\right),  \quad \left(\begin{array}{cc}
0 & 1
\end{array}
\right)
A_1^{-1}A'_1
=\left(\begin{array}{cc}
r'_{1} & s'_{1}
\end{array}
\right)+n'_1\left(\begin{array}{cc}
p'_{1} & q'_{1}
\end{array}
\right),\\
\text{and}\quad \left(\begin{array}{cc}
0 & 1
\end{array}
\right)
A_1^{-1}A'_2
=\left(\begin{array}{cc}
r'_{2} & s'_{2}
\end{array}
\right)+n'_2\left(\begin{array}{cc}
p'_{2} & q'_{2}
\end{array}
\right)
\end{gathered}
\end{equation}
for some $n_2, n'_1, n'_2\in \mathbb{Q}$. Merging \eqref{21082711} with  \eqref{22012201}, we finally get
\begin{equation*}
\begin{gathered}
A_1^{-1}A_2
=\left(\begin{array}{cc}
k_2 & 0\\
n_{2} & 1
\end{array}\right)
\left(\begin{array}{cc}
p_{2} & q_{2}\\
r_{2} & s_{2}
\end{array}\right),\; A_1^{-1}A'_1
=\left(\begin{array}{cc}
k'_1 & 0\\
n'_{1} & 1
\end{array}\right)
\left(\begin{array}{cc}
p'_{1} & q'_{1}\\
r'_{1} & s'_{1}
\end{array}\right),\; 
A_1^{-1}A'_2
=\left(\begin{array}{cc}
k'_2 & 0\\
n'_{2} & 1
\end{array}\right)
\left(\begin{array}{cc}
p'_{2} & q'_{2}\\
r'_{2} & s'_{2}
\end{array}\right), 
\end{gathered}
\end{equation*}
with $k_2=\dfrac{\det A_2}{\det A_1},  
k'_1=\dfrac{\det A'_1}{\det A_1},  
k'_2=\dfrac{\det A'_2}{\det A_1}$ obtained from \eqref{22032801}. 
\end{proof}

\begin{remark}\label{22061607}
\normalfont Note that, for $H$ defined as in \eqref{22050811}, if $(\mathcal{X}\times \mathcal{X})\cap_{(1, \dots, 1)} H$ is an anomalous subvariety of $\mathcal{X}\times \mathcal{X}$ (of dimension $3$), we have $\mathcal{X}^{oa}=\emptyset$. In fact, for any $\zeta'_1, \zeta'_2\in \mathbb{G}$, $\big((\mathcal{X}\times \mathcal{X})\cap_{(1, \dots, 1)} H\big)\cap (M'_1=\zeta'_1, M'_2=\zeta'_2)$ is a $1$-dim anomalous subvariety of $\mathcal{X}\times \mathcal{X}$ and, further projecting under
\begin{equation*}
\text{Pr}\;:\;\mathbb{G}^8(:=(M_1, L_1, \dots, M'_2, L'_2))\longrightarrow \mathbb{G}^4(:=(M_1, L_1, M_2, L_2)), 
\end{equation*} 
$\text{Pr}\big(((\mathcal{X}\times \mathcal{X})\cap_{(1, \dots, 1)} H)\cap (M'_1=\zeta'_1, M'_2=\zeta'_2)\big)$ is regarded as an anomalous subvariety of the first copy of $\mathcal{X}\times \mathcal{X}$. Consequently, we have
\begin{equation}\label{22032803}
\det A_1=\det A_2, \quad \det A'_1=\det A'_2
\end{equation}
in Lemma \ref{21082802} by Proposition \ref{21073101}. 
\end{remark}

Now we are ready to prove Theorem \ref{21091101}. We verify it by deriving a contradiction. If there are two Dehn fillings $\mathcal{M}_{p_1/q_1, p_2/q_2}$ and $\mathcal{M}_ {p'_1/q'_1, p'_2/q'_2}$ satisfying the assumptions in the statements of Lemma \ref{21082802}, it will be shown that the core holonomies $t_1, t_2$ (resp. $t'_1, t'_2$) of $\mathcal{M}_{p_1/q_1, p_2/q_2}$ (resp. $\mathcal{M}_{p'_1/q'_1, p'_2/q'_2}$) must be in fact multiplicatively dependent (thus contradicting the given assumption).  

\begin{proof}[Proof of Theorem \ref{21091101}]
On the contrary, suppose there is no such a proper subset. By Theorem \ref{21082301}, there exists an algebraic subgroup $H$, defined by
\begin{equation*}
\begin{gathered}
M_1^{a_j}L_1^{b_j}M_2^{c_j}L_2^{d_j}(M'_1)^{a'_j}(L'_1)^{b'_j}(M'_2)^{a'_j}(L'_2)^{b'_j}=1\quad (j=1,2), 
\end{gathered}
\end{equation*} 
containing a Dehn filling point $P$ associated to \eqref{22043003}. Further, since $(\mathcal{X}\times \mathcal{X})\cap_{(1, \dots, 1)} H$ is an anomalous subvariety of $\mathcal{X}\times \mathcal{X}$ by Theorem \ref{22071211}, letting $M'_1=L'_1=M'_2=L'_2=1$, $((\mathcal{X}\times \mathcal{X})\cap_{(1, \dots, 1)} H)\cap (M'_1=L'_1=M'_2=L'_2=1)$ is an anomalous subvariety of $(\mathcal{X}\times \mathcal{X})\cap (M'_1=L'_1=M'_2=L'_2=1)$. Moving to the analytic holonomy variety and restricting over $\mathbb{C}^4(:=(u_1, v_1, u_2, v_2))$, 
it equivalently follows that  
\begin{equation}\label{21082801}
\begin{gathered}
A_1\left(\begin{array}{c}
u_1\\
v_1
\end{array}\right)
+A_2\left(\begin{array}{c}
u_2\\
v_2
\end{array}\right)=
\left(\begin{array}{c}
0\\
0
\end{array}\right)
\end{gathered}
\end{equation}
where $A_1:=\left(\begin{array}{cc}
a_1 & b_1\\
a_2 & b_2
\end{array}\right)$ and $A_2:=\left(\begin{array}{cc}
c_1 & d_1\\
c_2 & d_2
\end{array}\right)
$ is an anomalous analytic subset of $\log\mathcal{X}$ in $\mathbb{C}^4$. By Theorem \ref{potential}, if \eqref{21082801} is an anomalous analytic subset of $\log\mathcal{X}$, then 
\begin{equation}\label{22031203}
\begin{gathered}
A_1
\left(\begin{array}{c}
u_1\\
v_1
\end{array}\right)
=A_2
\left(\begin{array}{c}
u_2\\
v_2
\end{array}\right)
\end{gathered}
\end{equation}
also defines an anomalous analytic subset of $\log \mathcal{X}$. 

We claim a Dehn filling point associated to $\mathcal{M}_{p_1/q_1, p_2/q_2}$ is contained in a complex manifold defined by \eqref{22031203}. Let 
\begin{equation}\label{22050401}
(u_1, v_1, u_2, v_2)=(\xi_{m_1}, \xi_{l_1}, \xi_{m_2}, \xi_{l_2})
\end{equation}
be an intersection point between \eqref{22031203} and $p_1u_1+q_1v_1=-2\pi \sqrt{-1}$. That is, 
\begin{equation}\label{22012210}
A_1\left(\begin{array}{c}
\xi_{m_1}\\
\xi_{l_1}
\end{array}\right)
=A_2\left(\begin{array}{c}
\xi_{m_2}\\
\xi_{l_2}
\end{array}\right)\quad \text{and}\quad p_1\xi_{m_1}+q_1\xi_{l_1}=-2\pi \sqrt{-1}.
\end{equation}
By Lemma \ref{21082802} and \eqref{22032803}, there exists $n_2\in \mathbb{Q}$ such that 
\begin{equation*}
\left(\begin{array}{cc}
p_{1} & q_{1}\\
r_{1} & s_{1}
\end{array}\right)
A_1^{-1}A_2
=\left(\begin{array}{cc}
1 & 0\\
n_{2} & 1
\end{array}\right)
\left(\begin{array}{cc}
p_{2} & q_{2}\\
r_{2} & s_{2}
\end{array}\right)
\end{equation*}
and so
\begin{equation}\label{22012211}
\begin{gathered}
\left(\begin{array}{cc}
p_1 & q_1\\
r_1 & s_1
\end{array}\right)
\left(\begin{array}{c}
\xi_{m_1}\\
\xi_{l_1}
\end{array}\right)
=\left(\begin{array}{cc}
1 & 0\\
n_2 & 1
\end{array}\right)
\left(\begin{array}{cc}
p_2 & q_2\\
r_2 & s_2
\end{array}\right)
A_2^{-1}A_1
\left(\begin{array}{c}
\xi_{m_1}\\
\xi_{l_1}
\end{array}\right).
\end{gathered}
\end{equation}
Combining \eqref{22012211} with \eqref{22012210}, we have
\begin{equation}\label{22031209}
\begin{gathered}
\left(\begin{array}{cc}
p_1 & q_1\\
r_1 & s_1
\end{array}\right)
\left(\begin{array}{c}
\xi_{m_1}\\
\xi_{l_1}
\end{array}\right)
=\left(\begin{array}{cc}
1 & 0\\
n_2 & 1
\end{array}\right)
\left(\begin{array}{cc}
p_2 & q_2\\
r_2 & s_2
\end{array}\right)
\left(\begin{array}{c}
\xi_{m_2}\\
\xi_{l_2}
\end{array}\right)\\
\Longrightarrow  p_1\xi_{m_1}+q_1\xi_{l_1}=p_2\xi_{m_2}+q_2\xi_{l_2}=-2\pi \sqrt{-1},
\end{gathered}
\end{equation}
concluding \eqref{22050401} is a Dehn filling point on $\log \mathcal{X}$ associated to $\mathcal{M}_{p_1/q_1, p_2/q_2}$. 

Finally, by the equality of the second rows in the first equation of \eqref{22031209}, we get
\begin{equation*}
e^{r_1\xi_{m_1}+s_1\xi_{l_1}}=e^{n_2(p_2\xi_{m_2} +q_2\xi_{l_2})+(r_2\xi_{m_2}+s_2\xi_{l_2})}=e^{n_2 (-2\pi \sqrt{-1})}e^{r_2\xi_{m_2}+s_2\xi_{l_2}}.
\end{equation*}
As $t_1=e^{r_1\xi_{m_1}+s_1\xi_{l_1}}$ and $t_2=e^{r_2\xi_{m_2}+s_2\xi_{l_2}}$, it follows that $t_1, t_2$ are multiplicatively dependent. But this contradicts the initial assumption, completing the proof.   
\end{proof}

\subsection{Codimension $2$ or $3$} \label{22042905}
Let $\mathcal{M}_{p_1/q_1, p_2/q_2}$ and $\mathcal{M}_{p'_1/q'_1, p'_2/q'_2}$ be Dehn fillings of $\mathcal{M}$ satisfying 
\begin{equation*}
\text{pvol}_{\mathbb{C}}\;\mathcal{M}_{p_{1}/q_{1},p_{2}/q_{2}}=\text{pvol}_{\mathbb{C}}\;\mathcal{M}_{p'_{1}/q'_{1},p'_{2}/q'_{2}}
\end{equation*}
with $|p_h|+|q_h|$ and $|p'_h|+|q'_h|$ sufficiently large $(h=1,2)$. By Theorem \ref{21091101}, there exists a proper subset of $\{t_1, t_2, t'_1, t'_2\}$ whose elements are multiplicatively dependent. If $t_1, t_2$ (resp. $t'_1, t'_2$) are multiplicatively dependent (resp. dependent), the desired result follows from Theorem \ref{20071505}. Hence, without loss of generality, we consider the following two:  
\begin{enumerate}
\item $t'_1, t'_2$ (resp. $t_1, t_2$) are multiplicatively independent (resp. independent);
\item $t'_1, t'_2$ (resp. $t_1, t_2$) are multiplicatively dependent (resp. independent). 
\end{enumerate}
Then Theorem \ref{21082901} is extended as below:

\begin{theorem}\label{20071401}
Suppose there exists a family  
\begin{equation*}
\{\mathcal{M}_{p_{1i}/q_{1i},p_{2i}/q_{2i}},\mathcal{M}_{p'_{1i}/q'_{1i},p'_{2i}/q'_{2i}} \}_{i\in \mathcal{I}}
\end{equation*}
of infinitely many pairs of Dehn fillings of $\mathcal{M}$ such that
\begin{equation}\label{22041501}
\text{pvol}_{\mathbb{C}}\;\mathcal{M}_{p_{1i}/q_{1i},p_{2i}/q_{2i}}=\text{pvol}_{\mathbb{C}}\;\mathcal{M}_{p'_{1i}/q'_{1i},p'_{2i}/q'_{2i}}
\end{equation}
with $|p_{hi}|+|q_{hi}|$ and $|p_{hi}|+|q_{hi}|$ $(h=1,2)$ sufficiently large for each $i\in \mathcal{I}$.
\begin{enumerate}
\item If $t'_{1i}, t'_{2i}$ (resp. $t_{1i}, t_{2i}$) are multiplicatively independent (resp. independent), then there exists a finite set $\mathcal{H}^{(2)}$ of algebraic subgroups of codimension $2$ such that a Dehn filling point $P_i$ associated to \eqref{22041501} is contained in some element of $\mathcal{H}^{(2)}$. In particular, each element of $\mathcal{H}^{(2)}$ is defined by equations of the following forms:
\begin{equation}\label{22041605}
M_1^{a_1}L_1^{b_1}M_2^{c_1}L_2^{d_1}(M'_1)^{a'_1}(L'_1)^{b'_1}=M_1^{a_2}L_1^{b_2}M_2^{c_2}L_2^{d_2}(M'_2)^{c'_2}(L'_2)^{d'_2}=1.
\end{equation}

\item If $t'_{1i}, t'_{2i}$ (resp. $t_{1i}, t_{2i}$) are multiplicatively dependent (resp. independent), then there exists a finite set $\mathcal{H}^{(3)}$ of algebraic subgroups of codimension $3$ such that a Dehn filling point $P_i$ associated to \eqref{22041501} is contained in some element of $\mathcal{H}^{(3)}$. In particular, each element of $\mathcal{H}^{(3)}$ is defined by equations of the following forms:
\begin{equation}\label{22041609}
\begin{gathered}
(M'_1)^{a'_j}(L'_1)^{b'_j}(M'_2)^{c'_j}(L'_2)^{d'_j}=M_1^{a_3}L_1^{b_3}M_2^{c_3}L_2^{d_3}(M'_1)^{a'_3}(L'_1)^{b'_3}=1, \quad (j=1,2).
\end{gathered}
\end{equation}
\end{enumerate}
\end{theorem}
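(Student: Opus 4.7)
The plan is to build on the three previous codimension-one / codimension-two theorems of this section and combine them with the refinement provided by Theorem~\ref{21091101}. Start from the output of Theorem~\ref{21082901}: a finite collection $\mathcal{H}^{(1)}$ of codimension-$1$ algebraic subgroups, one of which, call it $H$, contains infinitely many Dehn filling points $P_i$. By Theorem~\ref{21091101}, at least one proper subset of $\{t_{1i},t_{2i},t'_{1i},t'_{2i}\}$ has multiplicatively dependent elements. Since, by the standing hypothesis of (1) neither $\{t_{1i},t_{2i}\}$ nor $\{t'_{1i},t'_{2i}\}$ is multiplicatively dependent, and no singleton is (cores are non-elliptic), the only possibility is a cross relation between an unprimed and a primed holonomy. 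Up to symmetry, assume $t_{1i}$ and $t'_{1i}$ are multiplicatively dependent. Using the identity $t_{1i}t_{2i}=t'_{1i}t'_{2i}$, the resulting relation then automatically yields a further multiplicative dependence among $\{t_{2i},t'_{1i},t'_{2i}\}$, which (together with the independence of $\{t'_{1i},t'_{2i}\}$) forces dependence of $t_{2i}$ and $t'_{2i}$ modulo a power of $t'_{1i}$. Translated to the holonomy variety, each $P_i$ sits in a codimension-$2$ algebraic subgroup $H^{(2)}_i$ whose two defining monomial equations split cleanly into one involving only $(M_1,L_1,M_2,L_2,M'_1,L'_1)$ and another involving only $(M_1,L_1,M_2,L_2,M'_2,L'_2)$, i.e.\ exactly of the form \eqref{22041605}.

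From here apply the uniform height bound of Theorem~\ref{20080404} together with Theorem~\ref{19090804} to promote $P_i$ to a torsion anomalous subvariety $\mathcal{Y}_i$ of $\mathcal{X}\times\mathcal{X}$, then apply Theorem~\ref{struc2} to find an algebraic torus $K$ with $k_iK\subset H^{(2)}_i$ and $\mathcal{Y}_i$ a component of $(\mathcal{X}\times\mathcal{X})\cap k_iK$. If the family $\{H^{(2)}_i\}$ is not already finite modulo a finite list, split on $\dim\mathcal{Z}_i$ where $\mathcal{Z}_i=(\mathcal{X}\times\mathcal{X})\cap_{\mathcal{Y}_i}k_iK$ is the corresponding maximal anomalous subvariety, mirroring the trichotomy in the proof of Theorem~\ref{21082901}. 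The structural lemmas of Section~\ref{22022301} (Lemmas~\ref{20080601}, \ref{20050801}, \ref{22060401}, \ref{21100201}) restrict the possible foliating tori, and together with the projection-based reduction trick (as in the $\dim\mathcal{Z}_i=1$ case of Theorem~\ref{21082901}) reduce each surviving configuration to a Zilber--Pink problem on a subvariety of $\mathbb{G}^{l}$ with $l\le 6$, where the partial resolutions of Theorems~\ref{20102701}, \ref{20080407}, \ref{22060101} close the case. An analogue of Theorem~\ref{22071211}, run on the projected variety $\overline{\mathrm{Pr}((\mathcal{X}\times\mathcal{X})\cap H^{(2)})}$, finally ensures that the codimension-$2$ subgroups obtained cut out genuine positive-dimensional anomalous subvarieties of $\mathcal{X}\times\mathcal{X}$, so that the list $\mathcal{H}^{(2)}$ is truly finite.

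For part (2), the multiplicative dependence of $t'_{1i},t'_{2i}$ produces a codimension-$1$ algebraic subgroup depending only on $(M'_1,L'_1,M'_2,L'_2)$; regarded as cutting the second factor $\mathcal{X}$ of $\mathcal{X}\times\mathcal{X}$, Corollary~\ref{21021602} gives a finite family of such subgroups. Two of their defining equations furnish the first two equations of \eqref{22041609}. Intersecting with the codimension-$1$ subgroup $H$ from Theorem~\ref{21082901} raises the codimension to $3$; using the dependence relation to eliminate $(M'_2,L'_2)$ from the mixed equation coming from $H$ yields the third equation of \eqref{22041609}, whose primed variables involve only $(M'_1,L'_1)$. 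Finiteness of the resulting list $\mathcal{H}^{(3)}$ again follows by the same dimension-reduction machinery: the projection onto the ambient $\mathbb{G}^{6}$ of the non-primed variables and the coordinates $(M'_1,L'_1)$ turns the problem into a Zilber--Pink statement for the product of $\mathcal{X}$ and an algebraic curve, covered by Theorem~\ref{22060101}.

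The main obstacle will be the Case~(1) analysis of $\dim\mathcal{Z}_i=2$: here $\mathcal{Y}_i$ is strictly contained in $\mathcal{Z}_i$, so one cannot directly reduce to a Maurin-type statement on a curve, and one must carefully exploit that two cusps of $\mathcal{M}$ are not SGI (otherwise we would be in the regime of Theorem~\ref{22012104}) and that $\mathcal{X}^{oa}$ may be empty. The argument in this subcase will require combining Lemma~\ref{20050801} with Lemma~\ref{20072401} (applied to the projection $\overline{\mathrm{Pr}((\mathcal{X}\times\mathcal{X})\cap H^{(2)})}$) to force the containing foliation to be of one of the two admissible trivial types, after which a contradiction with either the multiplicative independence of $\{t_{1i},t_{2i}\}$ or the maximality of $\mathcal{Y}_i$ concludes the step. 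Once this is handled, the assembly of the finite lists $\mathcal{H}^{(2)}$ and $\mathcal{H}^{(3)}$ with the prescribed monomial shape is routine bookkeeping.
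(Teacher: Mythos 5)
Your outline correctly identifies the overall Zilber--Pink machinery (uniform height bound, Theorem~\ref{19090804}, Theorem~\ref{struc2}, dimension trichotomy on $\mathcal{Z}_i$, structural lemmas, partial ZP resolutions), but there are two genuine gaps.

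First, the claim that ``the only possibility is a cross relation between an unprimed and a primed holonomy'' and the subsequent assumption that $t_{1i}$ and $t'_{1i}$ are multiplicatively dependent are not justified. Theorem~\ref{21091101} only gives a proper \emph{subset} of $\{t_{1i},t_{2i},t'_{1i},t'_{2i}\}$ that is multiplicatively dependent, and a three-element subset can be dependent with no dependent pair among them: for instance if $t'_{1i}=t_{1i}^{2}t_{2i}^{3}$ and $t'_{2i}=t_{1i}^{-1}t_{2i}^{-2}$, then $t_{1i}t_{2i}=t'_{1i}t'_{2i}$ and $\{t_{1i},t_{2i},t'_{1i}\}$ is dependent while every cross pair $\{t_{hi},t'_{ki}\}$ and the pairs $\{t_{1i},t_{2i}\}$, $\{t'_{1i},t'_{2i}\}$ are independent. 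The paper's correct WLOG (footnoted in its reduction) is that the \emph{triple} $\{t_{1i},t_{2i},t'_{2i}\}$ is dependent, and the cleanly split codimension-$2$ constraint arises only after eliminating $t'_{2i}$ via $t_{1i}t_{2i}=t'_{1i}t'_{2i}$, not from a dependent cross pair.

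Second, and more substantively, you do not identify the reduction to Theorem~\ref{22041601}, which is the actual technical engine of the proof. The paper does not run the Zilber--Pink analysis on $\mathcal{X}\times\mathcal{X}$ with the codimension-$2$ family $\{H^{(2)}_i\}$; instead it first passes to the codimension-$1$ subgroup $H$ (with no $M'_1,L'_1$ terms in case (1), or of the form \eqref{22032507}) supplied by Theorem~\ref{21082901}, projects $(\mathcal{X}\times\mathcal{X})\cap H$ onto $\mathbb{G}^{6}:=(M_1,L_1,M_2,L_2,M'_1,L'_1)$, and then runs the torsion-anomalous/ZP analysis on the $3$-dimensional variety $\overline{\mathrm{Pr}\big((\mathcal{X}\times\mathcal{X})\cap H\big)}$. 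The additional defining equation produced by this analysis lives in $\mathbb{G}^{6}$, hence has no $M'_2,L'_2$ terms; this is precisely why the finite list $\mathcal{H}^{(2)}$ has the split form \eqref{22041605}. Working in $\mathbb{G}^{8}$ directly, as you propose, does not by itself constrain the shape of the subgroups you obtain, and the structural lemmas you invoke (Lemmas~\ref{20072401} and \ref{21101501}) are stated and proved for the projected variety $\overline{\mathrm{Pr}\big((\mathcal{X}\times\mathcal{X})\cap H\big)}$, not for $\mathcal{X}\times\mathcal{X}$ itself. Finally, Theorem~\ref{22071211} plays no role in the finiteness of $\mathcal{H}^{(2)}$; it is used inside the proof of Theorem~\ref{21091101}, one level earlier.
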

First, if $t'_{1i}, t'_{2i}$ (resp. $t_{1i}, t_{2i}$) are multiplicatively independent (resp. independent), as there exists a proper subset of $\{t_{1i}, t_{2i}, t'_{1i}, t'_{2i}\}$ by Theorem \ref{21091101}, we assume, without loss of generality, $t_{1i}, t_{2i}, t'_{2i}$ are multiplicatively dependent for each $i\in \mathcal{I}$.\footnote{Note that, if $t_{1i}, t_{2i}, t'_{2i}$ are multiplicatively dependent, then $t_{1i}, t_{2i}, t'_{1i}$ are also multiplicatively dependent by the assumption $t_{1i}t_{2i}=t'_{1i}t'_{2i}$.} By Theorem \ref{21082901}, it is further assumed every $P_i$ contained in an algebraic subgroup defined by
\begin{equation}\label{22032507}
M_1^{a_{1}}L_1^{b_{1}}M_2^{c_{1}}L_2^{d_{1}}(M'_2)^{c'_{1}}(L'_2)^{d'_{1}}=1.
\end{equation}

Second, if $t'_{1i}, t'_{2i}$ (resp,  $t_{1i}, t_{2i}$) are multiplicatively dependent   (resp. independent), by Corollary \ref{20080403}, $P_i$ is contained in an algebraic subgroup defined by equations of the following forms:
\begin{equation*}
(M'_1)^{a'_j}(L'_1)^{b'_j}(M'_2)^{c'_j}(L'_2)^{d'_j}=1\quad (j=1,2).
\end{equation*}

Now Theorem \ref{20071401} is reduced to 
\begin{theorem}\label{22041601}
Adapting the same notation and assumptions as in Theorem \ref{20071401}, suppose all $P_i$ are contained in an algebraic subgroup $H$ defined by
\begin{equation}\label{22071401}
M_1^{a_{1}}L_1^{b_{1}}M_2^{c_{1}}L_2^{d_{1}}(M'_1)^{a'_{1}}(L'_1)^{b'_{1}}(M'_2)^{c'_{1}}(L'_2)^{d'_{1}}=1
\end{equation}
where, without loss of generality, $(c'_1, d'_1)\neq (0,0)$. Let $\overline{\text{Pr}\big((\mathcal{X}\times \mathcal{X})\cap H\big)}$ be the algebraic closure of the image of $(\mathcal{X}\times \mathcal{X})\cap_{(1, \dots, 1)} H$ under the following projection:
\begin{equation*}
\text{Pr}:\;(M_1, L_1, M_2, L_2, M'_1, L'_1, M'_2, L'_2)\longrightarrow (M_1, L_1, M_2, L_2, M'_1, L'_1).
\end{equation*}
Then there exists a finite set of algebraic subgroups such that each $\text{Pr}(P_i)$ is contained in some element of the set.
\end{theorem}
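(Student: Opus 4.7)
The plan is to reduce the statement to an application of Theorem \ref{19090804} combined with a structural analysis of anomalous subvarieties of $\overline{\text{Pr}\big((\mathcal{X}\times \mathcal{X})\cap H\big)}$, very much in the spirit of the proof of Theorem \ref{21082301}. As standing reductions, I will use that the preceding theorems already guarantee the auxiliary multiplicative dependence among core holonomies (either $t_{1i}, t_{2i}, t'_{2i}$ in Case 1 or $t'_{1i}, t'_{2i}$ in Case 2 of Theorem \ref{20071401}), and that two cusps of $\mathcal{M}$ are not SGI.

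First I would produce a nontrivial monomial relation on $\mathbb{G}^6$ satisfied by every $\text{Pr}(P_i)$. Writing the multiplicative dependence $t_{1i}^{a_i} t_{2i}^{b_i} (t'_{2i})^{c_i}=1$ (or its Case 2 analogue) in the Dehn filling coordinates and combining it with suitable integer powers of the defining equation \eqref{22071401} of $H$ and of $(M'_2)^{p'_{2i}}(L'_2)^{q'_{2i}}=1$, the exponents of $M'_2, L'_2$ can be simultaneously cancelled whenever $c'_1 q'_{2i}-d'_1 p'_{2i}\neq 0$, which holds for all but finitely many $i$ because $(c'_1,d'_1)\neq (0,0)$ is fixed while $(p'_{2i},q'_{2i})$ is unbounded. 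An elementary check, using the multiplicative independence of $t_{1i}, t_{2i}$ and $p_{1i}s_{1i}-q_{1i}r_{1i}=1$, shows that the resulting equation on $\mathbb{G}^6$ is nontrivial. Together with $M_1^{p_{1i}}L_1^{q_{1i}}=M_2^{p_{2i}}L_2^{q_{2i}}=(M'_1)^{p'_{1i}}(L'_1)^{q'_{1i}}=1$, this places $\text{Pr}(P_i)$ in an algebraic subgroup $K_i\subset\mathbb{G}^6$ with $\dim K_i\le 2$. Since $\dim \overline{\text{Pr}\big((\mathcal{X}\times \mathcal{X})\cap H\big)}=3$ and $\dim K_i+3-6<0$, the point $\text{Pr}(P_i)$ is a torsion anomalous point of $\overline{\text{Pr}\big((\mathcal{X}\times \mathcal{X})\cap H\big)}$.

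By Theorem \ref{20080404} the heights of the $\text{Pr}(P_i)$ are uniformly bounded, so Theorem \ref{19090804} gives the desired dichotomy: either $\bigcup_i \text{Pr}(P_i)$ lies in a finite union of algebraic subgroups, and the theorem follows, or each $\text{Pr}(P_i)$ is contained in a positive-dimensional torsion anomalous subvariety $\mathcal{Y}_i$ of $\overline{\text{Pr}\big((\mathcal{X}\times \mathcal{X})\cap H\big)}$. Assuming the latter, Theorem \ref{struc2} produces a single algebraic torus $K\subset \mathbb{G}^6$ and $k_i\in \mathbb{G}^6$ with $\mathcal{Y}_i$ a component of $\overline{\text{Pr}\big((\mathcal{X}\times \mathcal{X})\cap H\big)}\cap k_iK$ and $k_iK\subset K_i$, after which I would split on $\dim \mathcal{Y}_i$. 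If $\dim \mathcal{Y}_i=2$, then $\overline{\text{Pr}\big((\mathcal{X}\times \mathcal{X})\cap H\big)}$ is foliated by $2$-dimensional anomalous subvarieties contained in translates of $K$, and Lemma \ref{20072401} (when $\mathcal{X}^{oa}\neq \emptyset$) or Lemma \ref{21101501} (when $\mathcal{X}^{oa}=\emptyset$) forces both the shape of $K$ and the vanishing $(a_1,b_1,c_1,d_1)=(0,0,0,0)$ in \eqref{22071401} up to an analogous statement in preferred coordinates. Substituting back into $H$ and reading off the Dehn filling point \eqref{22070411}, non-ellipticity and the relevant multiplicative independence of core holonomies then force $(p'_{1i},q'_{1i})$ or $(p'_{2i},q'_{2i})$ to be parallel to a fixed integer vector, contradicting the unboundedness of the Dehn filling coefficients. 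If instead $\dim \mathcal{Y}_i=1$, I would invoke the reduction trick of Section \ref{22030101}: Corollary \ref{22022603} produces an algebraic variety $\mathcal{S}\subset \mathbb{G}^{\codim K}$ of dimension $\dim \mathscr{Z}_K-1$ whose points parametrise cosets of $K$ cutting out $1$-dimensional anomalous subvarieties, and $k_iK\subset K_i$ forces $\mathcal{S}$ to meet infinitely many algebraic subgroups of codimension equal to $\codim K_i=4$; since both $\dim \mathcal{S}$ and $\codim K$ are small, Maurin's theorem (Theorem \ref{20080407}) or Bombieri--Masser--Zannier (Theorem \ref{22060101}) applies and forces $\mathcal{S}$ into a fixed algebraic subgroup, placing all $\mathcal{Y}_i$ in a common subgroup, again a contradiction.

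The main obstacle will be the case $\dim \mathcal{Y}_i=2$: one must verify that the structural dichotomy provided by Lemmas \ref{20072401}--\ref{21101501} does clash with the configuration of core holonomies in both Case 1 and Case 2 of Theorem \ref{20071401}, so that the extracted constraints on $H$ truly contradict the hypotheses. A secondary technical point is to confirm, in the $\dim \mathcal{Y}_i=1$ branch, that the numerical data always fall inside the range where Maurin's theorem or the codimension-$2$ case of Bombieri--Masser--Zannier directly applies, with no dimensional shortfall.
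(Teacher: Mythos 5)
Your overall strategy is essentially the paper's: show that each $\text{Pr}(P_i)$ is a torsion anomalous point of the $3$-dimensional variety $\overline{\text{Pr}\big((\mathcal{X}\times \mathcal{X})\cap H\big)}\subset\mathbb{G}^6$, apply bounded height (Theorem~\ref{20080404}) together with Theorem~\ref{19090804} to land in torsion anomalous subvarieties, analyse those subvarieties via Lemmas~\ref{20072401}, \ref{21101501} and \ref{22060401}, and use the reduction trick when the anomalous subvariety is $1$-dimensional. You also correctly anticipate the two configurations inherited from Theorem~\ref{20071401}. One small inefficiency: you construct the extra monomial relation from $t_{1i}^{a_i}t_{2i}^{b_i}(t'_{2i})^{c_i}=1$ and eliminate $M'_2, L'_2$ by hand, whereas the paper simply notes that $t_{1i}, t_{2i}, t'_{1i}$ are already multiplicatively dependent (via $t_{1i}t_{2i}=t'_{1i}t'_{2i}$), giving an equation directly in the $\mathbb{G}^6$ coordinates.

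The genuine gap is in your case split. You split on $\dim\mathcal{Y}_i$ and, before doing so, declare $k_iK\subset K_i$ as if it followed unconditionally from Theorem~\ref{struc2}. But, as the discussion after Theorem~\ref{struc2} makes explicit, the containment $k_iK\subset K_i$ is only justified when $\mathcal{Y}_i$ is a \emph{maximal} torsion anomalous subvariety, i.e.\ when $\mathcal{Y}_i$ coincides with the maximal anomalous $\mathcal{Z}_i$ containing it. If $\dim\mathcal{Y}_i=1$ but $\mathcal{Y}_i$ sits inside a $2$-dimensional anomalous $\mathcal{Z}_i$, the coset $k_iK$ cutting out $\mathcal{Z}_i$ has no reason to lie inside your $K_i$, so the variety $\mathcal{S}$ of Corollary~\ref{22022603} will not meet the algebraic subgroups you need, and Maurin's theorem cannot be invoked. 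The paper handles exactly this intermediate case ($\dim\mathcal{Z}_i=2$, $\dim\mathcal{Y}_i=1$) by a different route: it forms the coset $K_i=H_i\cap(M'_1=\cdot,\,L'_1=\cdot)$ (resp.\ the preferred-coordinate analogue) and projects onto $\mathbb{G}^4(:=(M_1,L_1,M_2,L_2))$ where the image of $\mathcal{Z}_i$ is all of $\mathcal{X}$; one then reads off an anomalous subvariety of $\mathcal{X}$ itself and derives a contradiction from $\mathcal{X}^{oa}\neq\emptyset$ or from Lemma~\ref{22060401}. Your outline has no argument covering this branch.

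A secondary inaccuracy: in the $\dim\mathcal{Y}_i=2$ branch you claim the final contradiction comes from forcing $(p'_{1i},q'_{1i})$ or $(p'_{2i},q'_{2i})$ parallel to a fixed integer vector. The paper instead derives that the corresponding $t'_{1i}$ would be elliptic (because $H_i$ would have to sit entirely inside the fixed coset $M'_1=\cdot,\,L'_1=\cdot$), contradicting non-ellipticity of core holonomies. Your contradiction type can probably be made to work, but it is not the mechanism that the configuration actually produces, and the proof would need to be redone to see whether the ``parallel to a fixed vector'' conclusion really follows.
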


The idea of the proof of Theorem \ref{22041601} is similar to that in the proof of either Theorem \ref{21082901} or \ref{21082301}. That is, we analyze the structure of anomalous subvarieties of $\overline{\text{Pr}\big((\mathcal{X}\times \mathcal{X})\cap H\big)}$ of every possible dimension and narrow down the problem to several accessible cases of Zilber-Pink type questions.    

\begin{proof}[Proof of Theorem \ref{22041601}]
Note that 
\begin{equation}\label{22070601}
\text{Pr}(P_i)\in \overline{\text{Pr}\big((\mathcal{X}\times \mathcal{X})\cap H\big)}\cap (M_1^{p_{1i}}L_1^{q_{1i}}=M_2^{p_{2i}}L_2^{q_{2i}}=(M'_1)^{p'_{1i}}(L'_1)^{q'_{1i}}=1)
\end{equation}
and, as $t_{1i}, t_{2i}, t'_{1i}$ are multiplicatively dependent, $\text{Pr}(P_i)$ is additionally contained one more algebraic subgroup. Since the height of each $\text{Pr}(P_i)$ is uniformly bounded by Theorem \ref{20080404}, thanks to Theorem \ref{19090804}, there exists a set $\{H_i\}_{i\in \mathcal{I}}$ of algebraic subgroups such that 
\begin{itemize}
\item $\text{Pr}(P_i)\in \overline{\text{Pr}\big((\mathcal{X}\times \mathcal{X})\cap H\big)}\cap H_i$;
\item $\overline{\text{Pr}\big((\mathcal{X}\times \mathcal{X})\cap H\big)}\cap_{\text{Pr}(P_i)} H_i$ is an anomalous subvariety of $\overline{\text{Pr}\big((\mathcal{X}\times \mathcal{X})\cap H\big)}$. 
\end{itemize}
If $\bigcup_{i\in \mathcal{I}}\overline{\text{Pr}\big((\mathcal{X}\times \mathcal{X})\cap H\big)}\cap_{\text{Pr}(P_i)} H_i$ is contained in a finite number of algebraic subgroups, then we are done. Otherwise, by Theorem \ref{struc}, there exists an algebraic subgroup $K$ such that, for each $i\in \mathcal{I}$, 
\begin{equation*}
\overline{\text{Pr}\big((\mathcal{X}\times \mathcal{X})\cap H\big)}\cap_{\text{Pr}(P_i)} H_i\subset \overline{\text{Pr}\big((\mathcal{X}\times \mathcal{X})\cap H\big)}\cap k_iK
\end{equation*}
for some $k_i\in \mathbb{G}^6$. We also assume $K$ is an algebraic subgroup of the smallest dimension satisfying the above property. To simplify notation, denote $\overline{\text{Pr}\big((\mathcal{X}\times \mathcal{X})\cap H\big)}\cap_{\text{Pr}(P_i)} H_i$ by $\mathcal{Y}_i$ and a component of $\overline{\text{Pr}\big((\mathcal{X}\times \mathcal{X})\cap H\big)}\cap k_iK$ containing $\mathcal{Y}_i$ by $\mathcal{Z}_i$. 

\begin{enumerate}
\item First suppose $\dim \mathcal{Z}_i=2$. 
\begin{enumerate}
\item If $\mathcal{X}^{oa}\neq \emptyset$, by Lemma \ref{20072401}, $a_1=b_1=c_1=d_1=0$ in \eqref{22071401} and each $\mathcal{Z}_i$ is contained in a translation of $M'_1=L'_1=1$, that is, 
\begin{equation*}
\mathcal{Z}_i\subset (M'_1=(t'_{1i})^{-q'_{1i}}, L'_1=(t'_{1i})^{p'_{1i}}).\end{equation*}
\begin{enumerate}
\item If $\mathcal{Z}_i=\mathcal{Y}_i$, it implies 
\begin{equation*}
\big(H_i\cap (M'_1=(t'_{1i})^{-q'_{1i}}, L'_1=(t'_{1i})^{p'_{1i}})\big)=(M'_1=(t'_{1i})^{-q'_{1i}}, L'_1=(t'_{1i})^{p'_{1i}}), 
\end{equation*} 
which leads $t'_{1i}$ is a root of unity. But this is a contradiction. 

\item Now suppose $\dim \mathcal{Y}_i=1$ (and so $\dim H_i=3$). If $\dim \big(H_i\cap (M'_1=(t'_{1i})^{-q'_{1i}}, L'_1=(t'_{1i})^{p'_{1i}})\big)=3$, then it again contradicts the fact that $t'_{1i}$ is not a root of unity. Thus we assume 
\begin{equation*}
K_i:=H_i\cap (M'_1=(t'_{1i})^{-q'_{1i}}, L'_1=(t'_{1i})^{p'_{1i}})
\end{equation*}
is an algebraic coset of dimension at most $2$ in $\mathbb{G}^6$. By projecting each $\mathcal{Z}_i$ (resp. $K_i$) onto $\mathbb{G}^4(:=(M_1, L_1, M_2, L_2))$, we find that the image of each $\mathcal{Z}_i$ (resp. $K_i$) in $\mathbb{G}^4$ is $\mathcal{X}$ (resp. an algebraic coset of dim $2$). Therefore the projected image of each $\mathcal{Z}_i\cap K_i$ is regarded as an anomalous subvariety of $\mathcal{X}$. As $\mathcal{X}$ has only finitely many anomalous subvarieties by the assumption $\mathcal{X}^{oa}\neq \emptyset$, $\bigcup_{i\in \mathcal{I}}\mathcal{Z}_i\cap K_i$ is contained in the union of finitely many algebraic subgroups. But this contradicts our hypothesis that there is no such a finite set.   
\end{enumerate}

\item Now suppose $\mathcal{X}^{oa}=\emptyset$ and two cusps of $\mathcal{M}$ are not SGI. 
\begin{enumerate}
\item If $t'_{1i}, t'_{2i}$ are multiplicatively independent, then \eqref{22071401} is of the form given in \eqref{22032507} (i.e. $a'_1=b'_1=0$ in \eqref{22071401}). Using the preferred coordinates in Definition \ref{22071301}, if $\overline{\text{Pr}\big((\mathcal{X}\times \mathcal{X})\cap H\big)}$ is viewed as embedded in $\mathbb{G}^6(:=(\tilde{M_1}, \tilde{L_1}, \tilde{M_2}, \tilde{L_2}, M'_1, L'_1))$, then $\mathcal{Z}_i$ is contained in a translation of either $\tilde{M_1}=\tilde{L_1}=1$ or $\tilde{M_2}=\tilde{L_2}=1$ by Lemma \ref{21101501}. Without loss of generality, let us consider the first case and suppose
\begin{equation*}
\mathcal{Z}_i\subset (\tilde{M_1}=\zeta_{1i}, \tilde{L_1}=\zeta_{2i})
\end{equation*}
for some $\zeta_{1i}, \zeta_{2i}\in \mathbb{G}$. 

If $\mathcal{Z}_i=\mathcal{Y}_i$ or $\dim \big(H_i\cap  (\tilde{M_1}=\zeta_{1i}, \tilde{L_1}=\zeta_{2i})\big)>\dim H_i-2$, it means $H_i\subset \tilde{M_1}^{a}\tilde{L_1}^{b}=1$ for some $a, b\in \mathbb{Z}$, contradicting the fact that $t_{1i}, t_{2i}$ are multiplicatively independent. Thus $\dim \mathcal{Y}_i=1$ (with $\dim H_i =3$) and $\dim \big(H_i\cap  (\tilde{M_1}=\zeta_{1i}, \tilde{L_1}=\zeta_{2i})\big)=1$. However, in this case, one can verify that the intersection between $\overline{\text{Pr}\big((\mathcal{X}\times \mathcal{X})\cap H\big)}$ and $H_i\cap (\tilde{M_1}=\zeta_{1i}, \tilde{L_1}=\zeta_{2i})$ is simply $0$-dim by the same reasoning in Lemma \ref{20080305}, contradicting the fact that it contains a $1$-dim anomalous subvariety of $\overline{\text{Pr}\big((\mathcal{X}\times \mathcal{X})\cap H\big)}$. 

\item If $t'_{1i}, t'_{2i}$ are multiplicatively dependent, then $a_1=b_1=c_1=d_1=0$ in \eqref{22071401} and $\overline{\text{Pr}\big((\mathcal{X}\times \mathcal{X})\cap H\big)}$ is the product of three algebraic curves. Similar to the previous case, if $\overline{\text{Pr}\big((\mathcal{X}\times \mathcal{X})\cap H\big)}$ is embedded in $\mathbb{G}^6(:=(\tilde{M_1}, \tilde{L_1}, \tilde{M_2}, \tilde{L_2}, M'_1, L'_1))$, each $\mathcal{Z}_i$ is contained in a translation of one of the following by Lemma \ref{22060401}: $\tilde{M_1}=\tilde{L_1}=1, \tilde{M_2}=\tilde{L_2}=1$ or $M'_1=L'_1=1$. 

Since the first two cases were already discussed above, we assume 
$\mathcal{Z}_i$ and $\mathcal{Y}_i$ are neither contained in a translation of $\tilde{M_1}=\tilde{L_1}=1$ nor $\tilde{M_2}=\tilde{L_2}=1$, and 
\begin{equation*}
\mathcal{Z}_i\subset \big(M'_1=(t'_{1i})^{-q'_{1i}}, L'_1=(t'_{1i})^{p'_{1i}}\big). 
\end{equation*}
If $\mathcal{Z}_i=\mathcal{Y}_i$ or $\dim \big(H_i \cap (M'_1=(t'_{1i})^{-q'_{1i}}, L'_1=(t'_{1i})^{p'_{1i}})\big)>\dim H_i-2$, then $H_i\subset (M'_1)^{a'}(L'_1)^{b'}=1$, implying $(p'_{1i}, q'_{1i})$ is independent of $i\in \mathcal{I}$. But this contradicts the condition that $(p'_{1i}, q'_{1i})$ is sufficiently large. On the other hand, by the same reasoning as before, the intersection between $\overline{\text{Pr}\big((\mathcal{X}\times \mathcal{X})\cap H\big)}$ and $H_i \cap (M'_1=(t'_{1i})^{-q'_{1i}}, L'_1=(t'_{1i})^{p'_{1i}})$ becomes $0$-dim, contradicting the fact that it contains $\mathcal{Y}_i$.  
\end{enumerate} 
\end{enumerate}

\item Suppose $\dim \mathcal{Z}_i=1$ (i.e. $\mathcal{Z}_i=\mathcal{Y}_i$) for each $i\in \mathcal{I}$. In this case, by Theorem \ref{struc2}, one further has 
\begin{equation}\label{22022801}
k_iK\subset H_i\quad (\text{for}\;\;i\in \mathcal{I}).
\end{equation}
Note that $\dim K\geq 2$. (Otherwise, it contradicts the fact that $\dim \big(\overline{\text{Pr}\big((\mathcal{X}\times \mathcal{X})\cap H\big)}\cap k_iK\big)=1$ by Lemma \ref{20080305}.) Let $\mathcal{U}_K$ be defined as in Corollary \ref{22022603}. If $\dim K=\dim H_i=3$, by Corollary \ref{24100901}, we get a finite number of algebraic subgroup whose union contains $\bigcup_{i\in \mathcal{I}}\mathcal{Z}_i$. This, however, contradicts to our initial assumption. On the other hand, if $\dim K=2$ (so $\codim K=4$) and $\dim H_i=3$, as $\dim \mathscr{Z}_{K}$ is either $2$ or $3$, $\dim \overline{\mathcal{U}_K}$ is either $1$ or $2$, leading to the same contradiction by the same corollary. 
\end{enumerate}
\end{proof}

\subsection{Codimension $4$}\label{22042907}
We now complete the proof of Theorem \ref{22022601}. To be more precise, the following updated version of Theorem \ref{22022601} is established. 

\begin{theorem}\label{20070501}
Suppose there exist infinitely many pairs
\begin{equation}\label{22051101}
\{\mathcal{M}_{p_{1i}/q_{1i},p_{2i}/q_{2i}},\mathcal{M}_{p'_{1i}/q'_{1i},p'_{2i}/q'_{2i}} \}_{i\in \mathcal{I}}
\end{equation}
of Dehn fillings of $\mathcal{M}$ satisfying 
\begin{equation*}
\text{pvol}_{\mathbb{C}}\;\mathcal{M}_{p_{1i}/q_{1i},p_{2i}/q_{2i}}=\text{pvol}_{\mathbb{C}}\;\mathcal{M}_{p'_{1i}/q'_{1i},p'_{2i}/q'_{2i}}\quad (i\in \mathcal{I})
\end{equation*}
with $|p_{hi}|+|q_{hi}|$ and $|p'_{hi}|+|q'_{hi}|$ ($1\leq h\leq 2, i\in\mathcal{I}$) sufficiently large. Then there exists a finite set $\mathcal{H}^{(4)}$ of $4$-dim algebraic subgroups such that every $P_i$ is contained in some element of $\mathcal{H}^{(4)}$. More precisely, 
\begin{enumerate}
\item if $t'_{1i}, t'_{2i}$ (resp. $t_{1i}, t_{2i}$) are multiplicatively independent (resp. independent), then each element of $\mathcal{H}^{(4)}$ is defined by equations of the following forms
\begin{equation}\label{20071501}
\begin{gathered}
M_1^{a_j}L_1^{b_j}M_2^{c_j}L_2^{d_j}(M'_1)^{a'_j}(L'_1)^{b'_j}=M_1^{e_j}L_1^{f_j}M_2^{g_j}L_2^{h_j}(M'_2)^{g'_j}(L'_2)^{h'_j}=1\quad (j=1,2);
\end{gathered}
\end{equation} 
\item if $t'_{1i}, t'_{2i}$ (resp. $t_{1i}, t_{2i}$) are multiplicatively dependent (resp. independent), then each element of $\mathcal{H}^{(4)}$ is defined by equations of the following forms
\begin{equation}\label{22041611}
\begin{gathered}
M_1^{a_j}L_1^{b_j}M_2^{c_j}L_2^{d_j}(M'_1)^{a'_j}(L'_1)^{b'_j}=(M'_1)^{e'_j}(L'_1)^{f'_j}(M'_2)^{g'_j}(L'_2)^{h'_j}=1\quad (j=1,2).
\end{gathered}
\end{equation} 
\end{enumerate}
Moreover, for each $H^{(4)}\in \mathcal{H}^{(4)}$, the component of $(\mathcal{X}\times \mathcal{X})\cap H^{(4)}$ containing $(1, \dots, 1)$ is an anomalous subvariety of $\mathcal{X}\times \mathcal{X}$ of dimension $2$
\end{theorem}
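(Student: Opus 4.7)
The plan is to bootstrap from Theorem \ref{20071401} by running one more iteration of the same Habegger bounded-height plus Bombieri-Masser-Zannier structure argument that was used in Theorems \ref{21082901}, \ref{21082301} and \ref{22041601}. Theorem \ref{20071401} already produces a finite set of algebraic subgroups of codimension $r\in\{2,3\}$ containing all $P_i$, so after passing to a subfamily I may assume every $P_i$ lies in a single fixed $H^{(r)}$ of the shape \eqref{22041605} (case $r=2$) or \eqref{22041609} (case $r=3$). What remains is to promote $H^{(r)}$ to $H^{(4)}$ and to verify that the resulting intersection with $\mathcal{X}\times\mathcal{X}$ is anomalous of dimension exactly $2$.

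First consider the case $r=2$. Choose a projection $\mathrm{Pr}:\mathbb{G}^8\to \mathbb{G}^6$ dropping the pair $(M'_2,L'_2)$, and set $\mathcal{V}:=\overline{\mathrm{Pr}\big((\mathcal{X}\times\mathcal{X})\cap_{(1,\dots,1)} H^{(2)}\big)}$. Because $H^{(2)}$ has the form \eqref{22041605}, $\dim \mathcal{V}=2$, and each $\mathrm{Pr}(P_i)$ satisfies the three surviving Dehn filling equations $M_1^{p_{1i}}L_1^{q_{1i}}=M_2^{p_{2i}}L_2^{q_{2i}}=(M'_1)^{p'_{1i}}(L'_1)^{q'_{1i}}=1$, which cut codimension $3>\dim\mathcal{V}$, so $\mathrm{Pr}(P_i)$ is a torsion anomalous point of $\mathcal{V}$. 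Since its height is uniformly bounded by Theorem \ref{20080404}, Theorem \ref{19090804} delivers for each $i$ an algebraic subgroup $H_i$ with $\mathcal{Y}_i:=\mathcal{V}\cap_{\mathrm{Pr}(P_i)} H_i$ a positive-dimensional torsion anomalous subvariety of $\mathcal{V}$.

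If $\{\mathcal{Y}_i\}$ is contained in finitely many algebraic subgroups, pulling back via $\mathrm{Pr}$ and intersecting with the defining equation of $H^{(2)}$ that involves $(M'_2,L'_2)$ gives the desired $H^{(4)}\in\mathcal{H}^{(4)}$ of the form \eqref{20071501}. Otherwise, apply Theorem \ref{struc2} to produce a common torus $K$ with $k_iK\subseteq H_i$ and $\mathcal{Y}_i$ a component of $\mathcal{V}\cap k_iK$. Now I split into subcases on $\dim K$ and on whether $\mathscr{Z}_K=\mathcal{V}$, exactly as in the proof of Theorem \ref{21082901}. In each subcase the auxiliary variety $\mathcal{S}$ of \eqref{22022606} parametrizing the cosets carrying an anomalous component is an algebraic variety in a small torus $\mathbb{G}^l$ meeting either infinitely many torsion points (contradicting Zhang, Theorem \ref{20102701}), infinitely many algebraic subgroups of codimension $2$ on a curve (contradicting Maurin, Theorem \ref{20080407}), or infinitely many anomalous intersections on a codimension-$2$ variety (contradicting Bombieri-Masser-Zannier, Theorem \ref{22060101}). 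The case $\mathcal{X}^{oa}\ne\emptyset$ versus $\mathcal{X}^{oa}=\emptyset$ is handled through Lemmas \ref{20072401} and \ref{21101501}, using the preferred coordinates of Definition \ref{22071301} when two cusps of $\mathcal{M}$ are not SGI. The case $r=3$ is parallel: $H^{(3)}$ already fixes the second factor up to a curve, so one applies the same argument in the four variables of the first factor and picks up a second equation of the form $M_1^{e_j}L_1^{f_j}M_2^{g_j}L_2^{h_j}\cdots=1$, producing the shape \eqref{22041611}.

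It remains to verify that for each $H^{(4)}\in\mathcal{H}^{(4)}$ the component $(\mathcal{X}\times\mathcal{X})\cap_{(1,\dots,1)} H^{(4)}$ has dimension exactly $2$. This is done by the same argument as Theorem \ref{22071211}: if the dimension were $3$ then, projecting onto the first (or second) four coordinates and using Theorem \ref{21073101} together with the preferred coordinates, $H^{(4)}$ would force either $t_{1i},t_{2i}$ or $t'_{1i},t'_{2i}$ to be multiplicatively dependent, sending us back into the already-resolved Theorem \ref{20071505} (hence into a smaller subgroup, against the maximality/minimality convention used when selecting $H^{(4)}$). The principal difficulty will be the combinatorial case analysis in the structure step: the interplay between $\dim K\in\{2,3\}$, $\dim \mathscr{Z}_K$, the SGI dichotomy of Theorem \ref{21073101}, and the distinction between $t'_{1i},t'_{2i}$ independent vs dependent forces the same exhaustive tree of subcases as in Theorem \ref{21082901}, rather than any new analytic input beyond what the earlier lemmas of Section \ref{Prem} already provide.
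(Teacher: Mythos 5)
Your proposal captures the right high-level mechanism (iterate the bounded-height/Bombieri–Masser–Zannier argument from $H^{(r)}$ to $H^{(4)}$ and then verify the dimension of the resulting component), but two of the details do not carry through.

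First, a single projection onto $\mathbb{G}^6$ dropping $(M'_2,L'_2)$ cannot produce an $H^{(4)}$ of the shape \eqref{20071501}. That shape requires a $2+2$ split: two defining equations living in the $(M_1,L_1,M_2,L_2,M'_1,L'_1)$ coordinates and two in the $(M_1,L_1,M_2,L_2,M'_2,L'_2)$ coordinates. But after your projection, the algebraic subgroups you obtain from the structure theorem only involve the six surviving coordinates; pulling back and intersecting with the two equations of $H^{(2)}$ gives a $3+1$ split (or, depending on dependence, only codimension $3$), never the $2+2$ one. Concretely, three independent equations in the $(M_1,L_1,M_2,L_2,M'_1,L'_1)$ block would already cut $(\mathcal{X}\times\mathcal{X})$ to dimension at most $1$, which is incompatible with the claimed dimension $2$ of $(\mathcal{X}\times\mathcal{X})\cap_{(1,\dots,1)}H^{(4)}$. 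The paper avoids this by first normalizing $H^{(2)}$ to the form \eqref{22041608} (via a change of basis making $M'_1, M'_2$ explicit functions of $M_1,L_1,M_2,L_2$), then projecting \emph{twice} — once onto $\mathbb{G}^5(:=(M_1,L_1,M_2,L_2,L'_1))$ and once onto $\mathbb{G}^5(:=(M_1,L_1,M_2,L_2,L'_2))$ — harvesting one new codimension-one equation from each block, which is precisely what makes the $2+2$ shape come out (Theorem \ref{22041607} and the proof of Theorem \ref{20070501}). Your single-projection plan misses this second projection and hence the form.

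Second, your final step rules out $\dim(\mathcal{X}\times\mathcal{X})\cap_{(1,\dots,1)}H^{(4)}=3$, but that case is vacuous: with $H^{(2)}$ of shape \eqref{22041605} the component $(\mathcal{X}\times\mathcal{X})\cap_{(1,\dots,1)}H^{(2)}$ already has dimension $2$, and $H^{(4)}\subset H^{(2)}$ forces the dimension to be at most $2$. The genuine danger is $\dim=1$, which you do not address. The paper handles it by projecting the putative curve to $\mathbb{G}^4(:=(M_1,L_1,M_2,L_2))$, applying Maurin (Theorem \ref{20080407}) to obtain a containing subgroup, and then observing that multiplicative independence of $t_{1i},t_{2i}$ forces that subgroup to be $M_1^aL_1^b=1$ or $M_2^cL_2^d=1$, pinning down one of the filling slopes and contradicting the assumption that the coefficients are large. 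That argument is the actual content of the dimension claim and is missing from your proposal.
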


We first prove the following theorem, to which Theorem \ref{20070501} is derived.  

\begin{theorem}\label{22041607}
Adapting the same assumptions and notation as in Theorem \ref{20070501}, we further suppose all the Dehn filling points $\{P_i\}_{i\in \mathcal{I}}$ associated to \eqref{22051101} are contained in an algebraic subgroup $H$ defined by either 
\begin{equation}\label{22041608}
\begin{gathered}
M_1^{a_1}L_1^{b_1}M_2^{c_1}L_2^{d_1}(M'_1)^{a'_1}=M_1^{a_2}L_1^{b_2}M_2^{c_2}L_2^{d_2}(M'_2)^{c'_2}=1,\quad (a'_1\neq 0,\quad c'_2\neq 0)
\end{gathered}
\end{equation}
or 
\begin{equation}\label{22081401}
\begin{gathered}
M_1^{a_1}L_1^{b_1}M_2^{c_1}L_2^{d_1}(M'_1)^{a'_1}=(M'_1)^{a'_2}(L'_1)^{b'_2}(M'_2)^{c'_2}(L'_2)^{d'_2}=1\quad (a'_1\neq 0,\quad (c'_2, d'_2)\neq (0, 0)). 
\end{gathered}
\end{equation} 
Let $\overline{\text{Pr}\big((\mathcal{X}\times \mathcal{X})\cap H\big)}$ be the algebraic closure of the image of $(\mathcal{X}\times \mathcal{X})\cap_{(1,\dots, 1)} H$ under
\begin{equation*}
\text{Pr}:\;\mathbb{G}^8(:=(M_1, L_1, M_2, L_2, M'_1, L'_1, M'_2, L'_2))\longrightarrow \mathbb{G}^5(:=(M_1, L_1,M_2, L_2, L'_1)).
\end{equation*}
Then there exists a finite set of algebraic subgroups such that each $\text{Pr}(P_i)$ is further contained in some element of the set.
\end{theorem}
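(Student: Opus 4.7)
The plan is to follow the architecture of the proofs of Theorems \ref{21082301} and \ref{22041601}, extending the projection tower one step further by also eliminating the $M'_1$ coordinate. First I would observe that $\overline{\text{Pr}\big((\mathcal{X}\times\mathcal{X})\cap H\big)}$ is a $3$-dimensional variety in $\mathbb{G}^5$, and that $\text{Pr}(P_i)$ lies on the projected Dehn-filling equations $M_1^{p_{1i}}L_1^{q_{1i}} = M_2^{p_{2i}}L_2^{q_{2i}} = 1$. Moreover, the relation $t_{1i}t_{2i}=t'_{1i}t'_{2i}$, combined with the defining equation(s) of $H$ that express $M'_1$ (and, in the first case, also $M'_2$) as a monomial in the unprimed coordinates, produces one additional algebraic subgroup of $\mathbb{G}^5$ containing $\text{Pr}(P_i)$ and involving $L'_1$ nontrivially. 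Thus $\text{Pr}(P_i)$ is a torsion anomalous point of $\overline{\text{Pr}\big((\mathcal{X}\times\mathcal{X})\cap H\big)}$, and its height is uniformly bounded by Theorem \ref{20080404}.

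Arguing by contradiction, suppose no finite collection of algebraic subgroups of $\mathbb{G}^5$ contains every $\text{Pr}(P_i)$. By Theorem \ref{19090804}, each $\text{Pr}(P_i)$ lies in a positive-dimensional torsion anomalous subvariety $\mathcal{Y}_i := \overline{\text{Pr}\big((\mathcal{X}\times\mathcal{X})\cap H\big)} \cap_{\text{Pr}(P_i)} H_i$, and by Theorem \ref{struc} we may further assume all $\mathcal{Y}_i$ lie in translates $k_iK$ of a single algebraic torus $K\subset\mathbb{G}^5$. Let $\mathcal{Z}_i$ denote the component of $\overline{\text{Pr}\big((\mathcal{X}\times\mathcal{X})\cap H\big)}\cap k_iK$ containing $\mathcal{Y}_i$. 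The argument then splits on $\dim\mathcal{Z}_i\in\{1,2\}$.

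If $\dim\mathcal{Z}_i=2$, I would use the structure lemmas of Section \ref{22022301} -- Lemma \ref{22060401} together with Lemmas \ref{20072401} and \ref{21101501} -- to pin down $K$. When $\mathcal{X}^{oa}\neq\emptyset$, $K$ is forced to be a translate of a coordinate pair such as $M'_1=L'_1=1$; when $\mathcal{X}^{oa}=\emptyset$, the preferred coordinates of Definition \ref{22071301} describe $K$ as a translate of $\tilde M_k=\tilde L_k=1$ for some $k$. In each resulting subcase, the non-ellipticity of the core holonomies $t_{hi},t'_{hi}$, together with the standing hypothesis that $t_{1i},t_{2i}$ (resp.\ $t'_{1i},t'_{2i}$) are multiplicatively independent, collides with the nonvanishing of $a'_1$ (and of $c'_2$, resp.\ of $(c'_2,d'_2)$) in \eqref{22041608} or \eqref{22081401}, by exactly the coefficient-comparison used in the proofs of Theorems \ref{22071211} and \ref{22041601}.

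If $\dim\mathcal{Z}_i=1$, so that $\mathcal{Z}_i=\mathcal{Y}_i$, I would invoke Theorem \ref{struc2} to assume $k_iK\subset H_i$ and introduce the auxiliary variety
\begin{equation*}
\mathcal{S} := \Big\{(s_1,\dots,s_l) : \overline{\text{Pr}\big((\mathcal{X}\times\mathcal{X})\cap H\big)} \cap \{s_j = \mu_j,\ 1\leq j\leq l\}\text{ is a }1\text{-dim anomalous subvariety}\Big\},
\end{equation*}
where $\mu_j$ are the defining monomials of $K$. By Corollary \ref{22022603}, $\mathcal{S}$ is algebraic in $\mathbb{G}^l$ of dimension $\dim\mathscr{Z}_K-1$, and the inclusion $k_iK\subset H_i$ translates into the statement that $\mathcal{S}$ meets infinitely many algebraic subgroups whose codimension strictly exceeds $\dim\mathcal{S}$. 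Depending on whether these intersections produce zero-dimensional torsion points, points on a curve, or points in codimension $2$ of a higher-dimensional $\mathcal{S}$, Theorem \ref{20102701}, Theorem \ref{20080407}, or Theorem \ref{22060101} respectively yields the desired contradiction. I expect the main obstacle to lie in the $\dim\mathcal{Z}_i=2$ subcase with $\mathcal{X}^{oa}=\emptyset$ and non-SGI cusps, where one must run a careful coefficient comparison in the style of Lemma \ref{21101501} to rule out the cross-terms between unprimed and primed variables that the retained coordinate $L'_1$ injects into the admissible shapes of $K$, and to verify this uniformly over both forms \eqref{22041608} and \eqref{22081401} of $H$.
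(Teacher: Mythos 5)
Your argument starts from a miscalculation of dimension, and this has downstream effects on the entire structure. You assert that $\overline{\text{Pr}\big((\mathcal{X}\times\mathcal{X})\cap H\big)}$ is a $3$-dimensional variety in $\mathbb{G}^5$, but it is in fact $2$-dimensional and isomorphic to $\mathcal{X}$. To see this, note that in \eqref{22041608} the first equation has $a'_1\neq 0$ and involves $M'_1$ but not $M'_2, L'_2$, while the second has $c'_2\neq 0$ and involves $M'_2$ but not $M'_1, L'_1$; passing to the analytic holonomy variety and viewing $\log(\mathcal{X}\times\mathcal{X})$ as parametrized by $(u_1,u_2,u'_1,u'_2)$, the two linearized equations involve $u'_1$ and $u'_2$ with nonzero and non-overlapping coefficients, hence are independent, and so $(\mathcal{X}\times\mathcal{X})\cap_{(1,\dots,1)}H$ has dimension exactly $4-2=2$ rather than being anomalous. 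Equivalently, the two equations of $H$ express $M'_1$ and $M'_2$ as monomials in the unprimed coordinates, and on the second copy of $\mathcal{X}$ the pair $(L'_1,L'_2)$ is then determined locally by $(M'_1,M'_2)$; so the component of $(\mathcal{X}\times\mathcal{X})\cap H$ through the identity is a graph over the first copy of $\mathcal{X}$, and the same is true of its closure under $\text{Pr}$. (The same analysis applies to \eqref{22081401}, where the second equation first cuts the second copy of $\mathcal{X}$ to a curve $\mathcal{C}$, and then the first equation cuts $\mathcal{X}\times\mathcal{C}$ down to $2$ dimensions.)

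Because the variety has dimension $2$, positive-dimensional anomalous subvarieties are necessarily $1$-dimensional, so the case split on $\dim\mathcal{Z}_i\in\{1,2\}$ you propose never arises: only $\dim\mathcal{Z}_i=1$ can occur. The $\dim\mathcal{Z}_i=2$ branch you sketch is therefore vacuous in the correct dimension count, and the lemmas you invoke there (Lemmas \ref{20072401} and \ref{21101501}) are in any case about $\overline{\text{Pr}\big((\mathcal{X}\times\mathcal{X})\cap H\big)}$ for a codimension-$1$ subgroup $H$ projected to $\mathbb{G}^6$, a different variety from the one here, and would not apply. Your $\dim\mathcal{Z}_i=1$ branch is essentially the right idea and matches the paper's argument, but with the corrected dimension the auxiliary variety $\mathcal{S}$ is automatically a curve (since $\dim\mathscr{Z}_K=2=\dim\overline{\text{Pr}}$), so only Maurin's theorem (Theorem \ref{20080407}) is needed; there is no need to invoke Theorem \ref{20102701} or Theorem \ref{22060101}. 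You should replace the dimension claim, drop the $\dim\mathcal{Z}_i=2$ branch entirely, and note that after this simplification the single remaining case closes by Maurin alone.
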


\begin{proof}
First, note that $M'_1, L'_1, M'_2, L'_2$ are considered as functions depending on $M_1, M_2$ by \eqref{22041608} or \eqref{22081401}, and thus $\overline{\text{Pr}\big((\mathcal{X}\times \mathcal{X})\cap H\big)}$ is isomorphic to $\mathcal{X}$ and 
\begin{equation*}
\text{Pr}(P_i)\in \overline{\text{Pr}\big((\mathcal{X}\times \mathcal{X})\cap H\big)}\cap (M_1^{p_{1i}}L_1^{q_{1i}}=M_2^{p_{2i}}L_2^{q_{2i}}=1). 
\end{equation*}
Since $t_{1i}, t_{2i}, t'_{1i}$ are multiplicatively dependent each other, $\text{Pr}(P_i)$ is, additionally, contained in one more algebraic subgroup. Hence, by Corollary \ref{20080403}, there exists an algebraic subgroups $H_i$ ($i\in \mathcal{I}$) of dim $3$ such that 
\begin{itemize}
\item $\text{Pr}(P_i)\in \overline{\text{Pr}\big((\mathcal{X}\times \mathcal{X})\cap H\big)}\cap H_i$;
\item $\overline{\text{Pr}\big((\mathcal{X}\times \mathcal{X})\cap H\big)}\cap_{\text{Pr}(P_i)} H_i$ is an anomalous subvariety of $\overline{\text{Pr}\big((\mathcal{X}\times \mathcal{X})\cap H\big)}$.
\end{itemize}

If $\bigcup_{i\in \mathcal{I}}\overline{\text{Pr}\big((\mathcal{X}\times \mathcal{X})\cap H\big)}\cap_{\text{Pr}(P_i)} H_i$ is contained in the union of a finite set of algebraic subgroups, then we are done, and so suppose there is no such a finite set of algebraic subgroups. By Theorem \ref{struc2}, there is an algebraic subgroup $K$ where, for each $i\in \mathcal{I}$,  
\begin{itemize}
\item $\overline{\text{Pr}\big((\mathcal{X}\times \mathcal{X})\cap H\big)}\cap_{\text{Pr}(P_i)} H_i$ is a component of $\overline{\text{Pr}\big((\mathcal{X}\times \mathcal{X})\cap H\big)}\cap k_iK$ for some $k_i\in \mathbb{G}^5$;
\item  $k_iK\subset H_i$. 
\end{itemize}
Let $\mathcal{U}_K$ is defined as in Corollary \ref{22022603}. Then it is an algebraic curve in $\mathbb{G}^{\codim K}$, which, as $k_iK\subset H_i$ ($i\in \mathcal{I}$), intersects with infinitely many algebraic subgroups of codimension $2$. By Corollary \ref{24100901}, $\overline{\mathcal{U}_K}$ is contained in some algebraic subgroup, and this further implies every $\overline{\text{Pr}\big((\mathcal{X}\times \mathcal{X})\cap H\big)}\cap_{\text{Pr}(P_i)} H_i$ is contained in the same algebraic subgroup. But it contradicts our initial assumption that there is no such an algebraic subgroup. 
\end{proof}

Using Theorem \ref{22041607}, we now prove Theorem \ref{20070501}.
\begin{proof}[Proof of Theorem \ref{20070501}]
\begin{enumerate}
\item By Theorem \ref{20071401}, if $t_{1i}, t_{2i}$ and $t'_{1i}, t'_{2i}$ are multiplicatively independent, then there exists a finite set $\mathcal{H}$ of  algebraic subgroups such that each $P_i$ is contained in some $H\in \mathcal{H}$ and each $H$ is defined by equations of the forms given in \eqref{22041605}. Without loss of generality, we assume $\mathcal{H}=\{H\}$ and, changing basis if necessary, $H$ is defined by \eqref{22041608}. Projecting $(\mathcal{X}\times \mathcal{X})\cap H$ onto $\mathbb{G}^5(:=(M_1, L_1, M_2, L_2, L'_1))$, by Theorem \ref{22041607}, we get a finite set of algebraic subgroups whose union contains $\bigcup_{i\in \mathcal{I}}P_i$.   

Similarly, if we project $(\mathcal{X}\times \mathcal{X})\cap H$ onto $\mathbb{G}^5(:=(M_1, L_1, M_2, L_2, L'_2))$ instead of $\mathbb{G}^5(:=(M_1, L_1, M_2, L_2, L'_1))$, there exist finitely many algebraic subgroups, each of which is defined by an equation of the following form 
\begin{equation*}
M_1^{a}L_1^{b}M_2^{c}L_2^{d}(L'_2)^{d'}=1\quad (d'\neq 0)
\end{equation*} 
and whose union contains $\bigcup_{i\in \mathcal{I}}P_i$.

\item For $t'_{1i}, t'_{2i}$ (resp. $t_{1i}, t_{2i}$) being multiplicatively dependent (resp. independent), we analogously prrove the desired result by Theorems \ref{20071401} and \ref{22041607}.  
\end{enumerate}

Finally we confirm the last statement. Let $H^{(4)}$ be an algebraic subgroup containing all $P_i$ and defined by equations of the forms given in either \eqref{20071501} or \eqref{22041611}. On the contrary, if $\dim (\mathcal{X}\times \mathcal{X})\cap _{(1, \dots, 1)}H^{(4)}=1$, projecting $(\mathcal{X}\times \mathcal{X})\cap_{(1, \dots, 1)} H^{(4)}$ under
\begin{equation*}
\text{Pr}:\;(M_1, L_1, M_2, L_2, M'_1, L'_1, M'_2, L'_2)\longrightarrow (M_1, L_1,M_2, L_2),
\end{equation*}
we find an algebraic curve $\mathcal{C}:=\overline{\text{Pr}\big((\mathcal{X}\times \mathcal{X})\cap _{(1, \dots, 1)}H^{(4)}\big)}$ in $\mathbb{G}^4$. As 
\begin{equation*}
\text{Pr}(P_i)\in \mathcal{C}\cap (M_1^{p_{1i}}L_1^{q_{1i}}=M_2^{p_{2i}}L_2^{q_{2i}}=1)
\end{equation*}
for every $i\in \mathcal{I}$, by Theorem \ref{20080407}, there exists an algebraic subgroup $K$ containing $\mathcal{C}$ and further $\bigcup_{i\i}\text{Pr}(P_i)$. Note that $t_{1i}, t_{2i}$ are multiplicatively independent for each $i\in\mathcal{I}$ and hence the defining equation of $K$ is either of the form $M_1^aL_1^b=1$ or $M_2^cL_2^d=1$. Without loss of generality, if it is defined by the first one, then $-q_{1i}a+p_{1i}b=0$ for $i\in \mathcal{I}$. As $(p_{1i}, q_{1i})$ is a co-prime pair, this implies $(p_{1i}, q_{1i})$ is uniquely determined by $(a, b)$. However, this contradicts the fact that $|p_{1i}|+|q_{1i}|$ is sufficiently large. 
\end{proof}

\subsection{Refinement of Theorem \ref{20070501}}

In this subsection, we further refine Theorem \ref{20070501}, by showing that the second case of Theorem \ref{20070501} never arises. That is, we prove 
\begin{theorem}\label{22041616}
Let $\mathcal{M}$ and $\mathcal{X}$ be as usual. Let $\mathcal{M}_{p_{1}/q_{1},p_{2}/q_{2}}$ and $\mathcal{M}_{p'_{1}/q'_{1},p'_{2}/q'_{2}}$ be two Dehn fillings of $\mathcal{M}$ satisfying 
\begin{equation}\label{22050403}
\text{pvol}_{\mathbb{C}}\;\mathcal{M}_{p_{1}/q_{1},p_{2}/q_{2}}=\text{pvol}_{\mathbb{C}}\;\mathcal{M}_{p'_{1}/q'_{1},p'_{2}/q'_{2}}
\end{equation}
with $|p_h|+|q_h|$ and $|p'_h|+|q'_h|$ ($h=1,2$) sufficiently large. If $t_{1}, t_{2}$ (resp. $t'_{1}, t'_{2}$) are multiplicatively independent, then $t'_{1}, t'_{2}$ (resp. $t_{1}, t_{2}$) are multiplicatively independent.  
\end{theorem}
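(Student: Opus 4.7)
The plan is to argue by contradiction, so suppose that $t_1, t_2$ are multiplicatively independent while $t'_1, t'_2$ are multiplicatively dependent. Then the pair satisfies the precise hypotheses of case (2) of Theorem \ref{20070501}. Although Theorem \ref{20070501} is stated for infinite families, its proof applies to a single pair as well: the multiplicative dependence of $t'_1, t'_2$ together with the pseudo complex volume identity $t_1 t_2 = \varepsilon t'_1 t'_2$ places the Dehn filling point $P$ on $\mathcal{X}\times\mathcal{X}$ inside a torsion algebraic subgroup of codimension at least five, and combining Theorem \ref{20080404} (bounded height) with Theorem \ref{19090804} forces $P$ to lie in a positive-dimensional torsion anomalous subvariety. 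The reduction arguments that proved Theorem \ref{20070501}(2) then produce a $4$-dimensional algebraic subgroup $H^{(4)}$ containing $P$, defined by equations of the form \eqref{22041611}, such that $(\mathcal{X}\times\mathcal{X})\cap_{(1,\dots,1)}H^{(4)}$ is a $2$-dimensional anomalous subvariety of $\mathcal{X}\times\mathcal{X}$.

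Next, I will exploit the restrictive shape of \eqref{22041611}. The second pair of defining equations
\begin{equation*}
(M'_1)^{e'_j}(L'_1)^{f'_j}(M'_2)^{g'_j}(L'_2)^{h'_j}=1,\qquad j=1,2,
\end{equation*}
defines a codimension-$2$ subgroup $H_2$ of the second copy $\mathbb{G}^4$. A dimension count on $(\mathcal{X}\times\mathcal{X})\cap H^{(4)}$ forces $\dim(\mathcal{X}_{\text{second}}\cap H_2)=1$; otherwise $(a_j,b_j,c_j,d_j)=(0,0,0,0)$ for $j=1,2$, and then the leftover relations $(M'_1)^{a'_j}(L'_1)^{b'_j}=1$ evaluated at the second manifold's Dehn filling point give $-q'_1 a'_j+p'_1 b'_j=0$, pinning $(p'_1,q'_1)$ to finitely many values and contradicting the sufficiently-large hypothesis. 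Therefore $\mathcal{X}$ carries a $1$-dimensional anomalous curve $\mathcal{C}'$, and by the fibre-dimension count $\mathcal{X}$ simultaneously contains a $1$-dimensional anomalous subvariety $\mathcal{D}$ defined by $M_1^{a_j}L_1^{b_j}M_2^{c_j}L_2^{d_j}=1$ for $j=1,2$.

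The third step is a direct analogue of Lemma \ref{21082802}. Passing to the analytic holonomy variety, writing $t'_2$ as a rational power of $t'_1$ up to a root of unity via the dependence $(t'_1)^A(t'_2)^B=1$, and eliminating $(\xi'_{m_2},\xi'_{l_2})$ in favour of $(\xi'_{m_1},\xi'_{l_1})$, the two identities of $H^{(4)}$ evaluated at the Dehn filling point produce
\begin{equation*}
t_1^{\,p_1 b_j - q_1 a_j}\, t_2^{\,p_2 d_j - q_2 c_j} \;=\; \zeta_j\, (t'_1)^{\,p'_1 b'_j - q'_1 a'_j},\qquad j=1,2,
\end{equation*}
for roots of unity $\zeta_j$, together with the pseudo complex volume relation $t_1 t_2=\varepsilon\,(t'_1)^{(B-A)/B}$ (note that $A\neq B$, else $t_1 t_2$ would be a root of unity, already forcing multiplicative dependence of $t_1, t_2$). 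Eliminating $t'_1$ from these three identities expresses each monomial $t_1^{M_j}t_2^{N_j}$ with $M_j, N_j\in\mathbb{Z}$ as a root of unity. The matrix computation closing Theorem \ref{21091101}, applied to the resulting system, shows that the existence of $\mathcal{D}$ forces one such relation to be non-trivial, i.e.\ $(M,N)\neq (0,0)$ with $t_1^M=\zeta\,t_2^N$ for some root of unity $\zeta$. This contradicts the assumption that $t_1, t_2$ are multiplicatively independent.

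The main obstacle will be the bookkeeping in the third step. The asymmetry of \eqref{22041611} (only the first pair of equations mixes both copies) breaks the clean symmetric manipulation of Lemma \ref{21082802}, so the integer vectors $(M_j, N_j)$ above are not immediately determined. One must carefully use the explicit anomalous structure of the subvariety $\mathcal{D}\subset\mathcal{X}$ isolated in the second step, combined with Theorem \ref{potential} and Corollary \ref{22080401}, to convert a priori rational exponent relations into integer relations with non-trivial content, mimicking the argument that closes the proof of Theorem \ref{21091101}. Once the non-trivial integer relation $(M,N)\neq (0,0)$ is produced, exponentiation and comparison with the multiplicative independence hypothesis yield the contradiction.
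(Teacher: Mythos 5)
Your high-level strategy matches the paper's: argue by contradiction, invoke Theorem \ref{20070501}(2) to get an algebraic subgroup $H^{(4)}$ of shape \eqref{22041611}, split its defining equations into the copy-mixing pair and the pure second-copy pair, isolate an anomalous curve $\mathcal{D}$ on $\mathcal{X}$, and try to convert the resulting exponent relations into a non-trivial multiplicative dependence of $t_1,t_2$. But there is a genuine gap in the third step, and it is exactly the part that carries the weight of the argument.

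Your elimination of $t'_1$ from the three identities $t_1^{M_j}t_2^{N_j}(t'_1)^{K_j}=\zeta_j$ ($j=1,2$) and $t_1^B t_2^B(t'_1)^{A-B}=\varepsilon$ produces relations $t_1^{M_j(B-A)-BK_j}\,t_2^{N_j(B-A)-BK_j}=$ root of unity, but a priori both exponents can vanish simultaneously: this happens whenever $M_j=N_j$, i.e.\ $p_1b_j-q_1a_j=p_2d_j-q_2c_j$, and nothing in the mere existence of $\mathcal{D}$ rules that out. So the claim ``the existence of $\mathcal{D}$ forces one such relation to be non-trivial'' is unjustified as stated; you acknowledge in your final paragraph that the asymmetry breaks the clean argument of Lemma \ref{21082802}, but you do not actually supply the replacement. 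The paper closes this hole by proving an asymmetric analogue of Lemma \ref{21082802} (this is Lemma \ref{22041320}, for a product $\mathcal{X}\times\mathcal{C}$ with $\mathcal{C}$ a curve), obtaining the explicit matrix identity \eqref{22050605}. It then restricts to $u'_1=0$ in the analytic equation \eqref{22031703}, identifies the anomalous curve \eqref{22031704} of $\log\mathcal{X}$ (your $\mathcal{D}$), and — crucially — uses \eqref{22050605} together with Remark \ref{22071503} (which forces $\mathcal{X}^{oa}=\emptyset$, hence $\det A_1=\det A_2$) to show that the intersection point of $\mathcal{D}$ with $p_1u_1+q_1v_1=-2\pi\sqrt{-1}$ also satisfies $p_2u_2+q_2v_2=-2\pi\sqrt{-1}$, i.e.\ it is a genuine Dehn filling point of $\mathcal{M}_{p_1/q_1,p_2/q_2}$ lying on $\mathcal{D}$. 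The relation $t_1=(\text{root of unity})\cdot t_2$ is then read off from the second row of the matrix identity \eqref{22071510}. This geometric step (a Dehn filling point actually lies on the anomalous curve) is what makes the exponent relation non-trivial; your proposal never establishes it. To complete your argument you would need to prove and invoke Lemma \ref{22041320}, together with the $\det A_1=\det A_2$ normalization from Remark \ref{22071503}, rather than appeal informally to ``the matrix computation closing Theorem \ref{21091101}.''
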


The strategy for proving of Theorem \ref{22041616} is similar to that for proving Theorem \ref{21091101}. Therefore, we first pursue an effective version of Theorem \ref{20070501} (2) in the lemma below. 

If $t'_1, t'_2$ (resp. $t_1, t_2$) are multiplicatively dependent (resp. independent), by Theorem \ref{20070501} (2), there exists an algebraic subgroup, defined by equations of the forms given in \eqref{22041611}, containing a Dehn filling point $P$ associated to \eqref{22050403}. Splitting \eqref{22041611} into 
\begin{equation}\label{22050801}
(M'_1)^{e'_j}(L'_1)^{f'_j}(M'_2)^{g'_j}(L'_2)^{h'_j}=1\quad (j=1,2) 
\end{equation}
and 
\begin{equation}\label{22050802}
M_1^{a_j}L_1^{b_j}M_2^{c_j}L_2^{d_j}(M'_1)^{a'_j}(L'_1)^{b'_j}=1\quad (j=1,2),
\end{equation}
let $H'$ and $H''$ be algebraic subgroups defined by \eqref{22050801} and \eqref{22050802} respectively. Regarding $H'$ as an algebraic subgroup in $\mathbb{G}^4(:=(M'_1, L'_1, M'_2, L'_2))$, if $\mathcal{X}\cap_{(1, \dots,1)} H'$ is viewed as a $1$-dim anomalous subvariety of $\mathcal{X}(\subset \mathbb{G}^4$)\footnote{Here, $\mathcal{X}$ is the second copy of $\mathcal{X}\times \mathcal{X}$.} and denoted by $\mathcal{C}$, it is natural to represent $(\mathcal{X}\times \mathcal{X})\cap_{(1, \dots,1)} H'$ as $\mathcal{X}\times \mathcal{C}$. Note that $(\mathcal{X}\times C)\cap_{(1, \dots,1)} H''$ is then a $2$-dim anomalous subvariety of $\mathcal{X}\times C$. 

Having this setting, the following is seen as an effective approach to Theorem \ref{20070501} (2).

\begin{lemma}\label{22041320}
Let $\mathcal{M}$ and $\mathcal{X}$ be as usual. Let $\mathcal{N}$ be a $1$-cusped hyperbolic $3$-manifold and $\mathcal{C}$ be its holonomy variety. Let $\mathcal{M}_{p_1/q_1, p_2/q_2}$ (resp. $\mathcal{N}_{p'_1/q'_1}$) be a Dehn filling of $\mathcal{M}$ (resp. $\mathcal{N}$) with the core holonomies $t_1, t_2$  (resp. $t'_1$). Suppose
\begin{enumerate}
\item $t_1^lt_2^l=t'_1\epsilon$ for some $l\in \mathbb{Q}$ and root of unity $\epsilon$ where $t_1, t_2$ (resp. $t'_1$) are the core geodesics of $\mathcal{M}_{p_1/q_1, p_2/q_2}$ (resp. $\mathcal{N}_{p'_1/q'_1}$);  
\item the elements in any proper subset of $\{t_1, t_2, t'_1\}$ are multiplicatively independent; 
\item a Dehn filling point associated to the pair $\mathcal{M}_{p_1/q_1, p_2/q_2}$ and $\mathcal{N}_{p'_1/q'_1}$ on $\mathcal{X}\times \mathcal{C}\big(\subset \mathbb{G}^4\times\mathbb{G}^2 (:=(M_1, L_1, M_2, L_2, M'_1, L'_1))\big)$ is contained in an algebraic subgroup $H''$ defined by
\begin{equation}\label{22041303}
M_1^{a_j}L_1^{b_j}M_2^{c_j}L_2^{d_j}=(M'_1)^{a'_j}(L'_1)^{b'_j},\quad (j=1,2). 
\end{equation}
\end{enumerate}
If $r'_1, s'_1, r_{j}, s_{j}$ ($j=1,2$) are integers such that 
\begin{equation}\label{22041313}
\det \left(\begin{array}{cc}
p'_{1} & q'_{1}\\
r'_{1} & s'_{1}
\end{array}\right)
=\det \left(\begin{array}{cc}
p_{1} & q_{1}\\
r_{1} & s_{1}
\end{array}\right)
=\det \left(\begin{array}{cc}
p_{2} & q_{2}\\
r_{2} & s_{2}
\end{array}\right)
=1, 
\end{equation}
then there exist $n_1, n_2\in \mathbb{Q}$ such that 
\begin{equation*}
\left(\begin{array}{cc}
p'_{1} & q'_{1}\\
r'_{1} & s'_{1}
\end{array}\right)
(A'_1)^{-1}A_1
=\left(\begin{array}{cc}
\frac{\det A_1}{(\det A'_1) l} & 0\\
n_{1} & l
\end{array}\right)
\left(\begin{array}{cc}
p_{1} & q_{1}\\
r_{1} & s_{1}
\end{array}\right)
\end{equation*}
and 
\begin{equation*} 
\left(\begin{array}{cc}
p'_{1} & q'_{1}\\
r'_{1} & s'_{1}
\end{array}\right)
(A'_1)^{-1}A_2
=\left(\begin{array}{cc}
\frac{\det A_2}{(\det A'_1) l} & 0\\
n_{2} & l
\end{array}\right)
\left(\begin{array}{cc}
p_{2} & q_{2}\\
r_{2} & s_{2}
\end{array}\right)
\end{equation*}
where $A'_1= \left(\begin{array}{cc}
a'_1 & b'_1\\
a'_2 & b'_2
\end{array}\right)$, 
$A_1= \left(\begin{array}{cc}
a_1 & b_1\\
a_2 & b_2
\end{array}\right)$ 
and $A_2= \left(\begin{array}{cc}
c_1 & d_1\\
c_2 & d_2
\end{array}\right)$. 
\end{lemma}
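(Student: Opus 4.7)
The plan is to imitate the proof of Lemma \ref{21082802}, working in the analytic holonomy variety. Moving to $\log$-coordinates, the defining equations of $H''$ become
\begin{equation*}
A_1\left(\begin{array}{c}u_1\\v_1\end{array}\right)+A_2\left(\begin{array}{c}u_2\\v_2\end{array}\right)=A'_1\left(\begin{array}{c}u'_1\\v'_1\end{array}\right).
\end{equation*}
Substituting the Dehn filling point $(u_1,v_1,u_2,v_2,u'_1,v'_1)=(\xi_{m_1},\xi_{l_1},\xi_{m_2},\xi_{l_2},\xi'_{m_1},\xi'_{l_1})$, which satisfies $p_h\xi_{m_h}+q_h\xi_{l_h}=-2\pi\sqrt{-1}$ ($h=1,2$) and $p'_1\xi'_{m_1}+q'_1\xi'_{l_1}=-2\pi\sqrt{-1}$, and then multiplying from the left by $\left(\begin{smallmatrix}p'_1 & q'_1\\ r'_1 & s'_1\end{smallmatrix}\right)(A'_1)^{-1}$ yields
\begin{equation*}
\left(\begin{array}{cc}p'_1 & q'_1\\ r'_1 & s'_1\end{array}\right)(A'_1)^{-1}A_1\left(\begin{array}{c}\xi_{m_1}\\ \xi_{l_1}\end{array}\right)+\left(\begin{array}{cc}p'_1 & q'_1\\ r'_1 & s'_1\end{array}\right)(A'_1)^{-1}A_2\left(\begin{array}{c}\xi_{m_2}\\ \xi_{l_2}\end{array}\right)=\left(\begin{array}{cc}p'_1 & q'_1\\ r'_1 & s'_1\end{array}\right)\left(\begin{array}{c}\xi'_{m_1}\\ \xi'_{l_1}\end{array}\right),
\end{equation*}
whose first coordinate equals $-2\pi\sqrt{-1}$ and whose second coordinate reduces, after exponentiation, to $t'_1$ times a root of unity.

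Next I extract information from each row. For the first row, I write $\left(\begin{smallmatrix}p'_1 & q'_1\end{smallmatrix}\right)(A'_1)^{-1}A_h=\gamma_h(p_h,q_h)+\delta_h(r_h,s_h)$ with $\gamma_h,\delta_h\in\mathbb{Q}$; exponentiating the first-row equation gives $t_1^{\delta_1}t_2^{\delta_2}=$ (root of unity), so the multiplicative independence of $t_1,t_2$ forces $\delta_1=\delta_2=0$. Hence $\left(\begin{smallmatrix}p'_1 & q'_1\end{smallmatrix}\right)(A'_1)^{-1}A_h=k_h(p_h,q_h)$ for some $k_h\in\mathbb{Q}$. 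For the second row, write $\left(\begin{smallmatrix}r'_1 & s'_1\end{smallmatrix}\right)(A'_1)^{-1}A_h=m_h(p_h,q_h)+n_h(r_h,s_h)$; exponentiation now yields $t_1^{n_1}t_2^{n_2}=t'_1\cdot(\text{root of unity})$. Combining this with the hypothesis $t_1^{l}t_2^{l}=t'_1\epsilon$ and the multiplicative independence of $t_1,t_2$, I conclude $n_1=n_2=l$.

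Putting these together gives the matrix identity
\begin{equation*}
\left(\begin{array}{cc}p'_1 & q'_1\\ r'_1 & s'_1\end{array}\right)(A'_1)^{-1}A_h=\left(\begin{array}{cc}k_h & 0\\ m_h & l\end{array}\right)\left(\begin{array}{cc}p_h & q_h\\ r_h & s_h\end{array}\right),\quad (h=1,2).
\end{equation*}
Taking determinants of both sides and using \eqref{22041313} yields $k_h\cdot l=\det A_h/\det A'_1$, so $k_h=\det A_h/((\det A'_1)\,l)$. Setting $n_h:=m_h$ then recovers the asserted formulas.

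The only real subtlety is the bookkeeping of the elliptic (root-of-unity) factors that appear whenever one exponentiates a row equation and reduces modulo $2\pi\sqrt{-1}$; this is exactly the place where the hypothesis that any proper subset of $\{t_1,t_2,t'_1\}$ consists of multiplicatively independent elements is invoked, together with the single relation $t_1^lt_2^l=t'_1\epsilon$, to nail down the rational exponents. Aside from that, the argument is a direct transcription of the linear-algebraic pigeonholing carried out in the proof of Lemma \ref{21082802}, so no new analytic input beyond Theorem \ref{potential} is required.
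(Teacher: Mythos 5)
Your proposal is correct and follows essentially the same route as the paper's proof: move to $\log$-coordinates, hit the defining equation of $H''$ at a Dehn filling point with $\left(\begin{smallmatrix}p'_1 & q'_1\\ r'_1 & s'_1\end{smallmatrix}\right)(A'_1)^{-1}$, use the first row plus multiplicative independence of $t_1,t_2$ to force the $(r_h,s_h)$-components to vanish, use the second row plus the single relation $t_1^l t_2^l=\epsilon t'_1$ and independence of proper subsets to pin down the $(r_h,s_h)$-components to $l$, then take determinants via \eqref{22041313}. The only quibble is cosmetic: you temporarily reuse the symbol $n_h$ for the coefficient of $(r_h,s_h)$ before reassigning it to the lemma's $n_h$ (your $m_h$) at the end, which could trip a reader, but the mathematics is sound and matches the paper's proof, which is itself a direct transcription of the argument for Lemma \ref{21082802}.
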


The proof of the lemma is attained by following the arguments verbatim used in the proof of Lemma \ref{22050501}. 

\begin{proof}
Moving to an analytic setting, $\log\big((\mathcal{X}\times \mathcal{C})\cap_{(1, \dots, 1)} H''\big)$ is defined by
\begin{equation}\label{22041304}
\begin{gathered}
A'_1\left(\begin{array}{c}
u'_1\\
v'_1
\end{array}
\right)
=A_1\left(\begin{array}{c}
u_1\\
v_1
\end{array}
\right)
+A_2\left(\begin{array}{c}
u_2\\
v_2
\end{array}
\right).
\end{gathered}
\end{equation}
Let 
\begin{equation}\label{220706011}
(\xi_{m_{1}}, \xi_{l_{1}}, \xi_{m_{2}}, \xi_{l_{2}}, \xi'_{m_{1}}, \xi'_{l_{1}})
\end{equation} 
be a Dehn filling point on $\log \big((\mathcal{X}\times \mathcal{C})\cap_{(1, \dots, 1)} H''\big)$ associated to the pair $(\mathcal{M}_{p_1/q_1, p_2/q_2}, \mathcal{N}_{p'_1/q'_1})$. That is, \eqref{220706011} is a point that satisfies
\begin{equation}\label{22070605}
p_{1}\xi_{m_{1}}+q_{1}\xi_{l_{1}}=p_{2}\xi_{m_{2}}+q_{2}\xi_{l_{2}}=p'_{1}\xi'_{m_{1}}+q'_{1}\xi'_{l_{1}}=-2\pi \sqrt{-1}
\end{equation}
and
\begin{equation}\label{22041307}
\begin{gathered}
A'_1\left(\begin{array}{c}
\xi'_{m_{1}}\\
\xi'_{l_{1}}
\end{array}
\right)
=A_1
\left(\begin{array}{c}
\xi_{m_{1}}\\
\xi_{l_{1}}
\end{array}
\right)
+A_2
\left(\begin{array}{c}
\xi_{m_{2}}\\
\xi_{l_{2}}
\end{array}
\right)
\Longrightarrow 
\left(\begin{array}{c}
\xi'_{m_{1}}\\
\xi'_{l_{1}}
\end{array}
\right)
=(A'_1)^{-1}A_1
\left(\begin{array}{c}
\xi_{m_{1}}\\
\xi_{l_{1}}
\end{array}
\right)
+(A'_1)^{-1}A_2
\left(\begin{array}{c}
\xi_{m_{2}}\\
\xi_{l_{2}}
\end{array}
\right).
\end{gathered}
\end{equation}
By changing basis of the cusp of $\mathcal{N}$ if necessary, we simply set $\left(\begin{array}{cc}
p'_1 & q'_1\\
r'_1 & s'_1
\end{array}\right)
=\left(\begin{array}{cc}
1 & 0\\
0 & 1
\end{array}\right)$, and so 
\begin{equation}\label{22071501}
\begin{gathered}
-2\pi \sqrt{-1}=
\left(\begin{array}{cc}
1 & 0
\end{array}
\right)
\bigg((A'_1)^{-1}A_1
\left(\begin{array}{c}
\xi_{m_{1}}\\
\xi_{l_{1}}
\end{array}
\right)
+(A'_1)^{-1}A_2
\left(\begin{array}{c}
\xi_{m_{2}}\\
\xi_{l_{2}}
\end{array}
\right)\bigg)
\end{gathered}
\end{equation}
by \eqref{22070605}-\eqref{22041307}. Recall the exponentials of 
\begin{equation}\label{22041306}
\begin{gathered}
\left(\begin{array}{cc}
1 & 0
\end{array}
\right)(A'_1)^{-1}A_1
\left(\begin{array}{c}
\xi_{m_{1}}\\
\xi_{l_{1}}
\end{array}
\right)\quad \text{and}\quad 
\left(\begin{array}{cc}
1 & 0
\end{array}
\right)(A'_1)^{-1}A_2
\left(\begin{array}{c}
\xi_{m_{2}}\\
\xi_{l_{2}}
\end{array}
\right)
\end{gathered}
\end{equation}
are powers of $t_1$ and $t_2$ respectively. Since $t_{1}, t_{2}$ are multiplicatively independent each other, the exponential of each in \eqref{22041306} is a root of unity by \eqref{22071501}, that is,  
\begin{equation}\label{22041312}
\begin{gathered}
\left(\begin{array}{cc}
1 & 0
\end{array}\right)(A'_1)^{-1}A_1=k_1 \left(\begin{array}{cc}
p_1 & q_1\\
\end{array}\right)\quad \text{and}\quad 
\left(\begin{array}{cc}
1 & 0
\end{array}
\right)(A'_1)^{-1}A_2=k_2 \left(\begin{array}{cc}
p_2 & q_2\\
\end{array}\right)
\end{gathered}
\end{equation}
for some $k_1, k_2\in \mathbb{Q}$ by \eqref{22070605}. 

Next, multiplying $\left(\begin{array}{cc}
0 & 1
\end{array}
\right)$ to \eqref{22041307}, it follows that 
\begin{equation}\label{22041309}
\begin{gathered}
\xi'_{l_1}=
\left(\begin{array}{cc}
0 & 1
\end{array}
\right)\bigg(
(A'_1)^{-1}A_1
\left(\begin{array}{c}
\xi_{m_1}\\
\xi_{l_1}
\end{array}
\right)
+(A'_1)^{-1}A_2
\left(\begin{array}{c}
\xi_{m_2}\\
\xi_{l_2}
\end{array}
\right)\bigg).
\end{gathered}
\end{equation}
Note that $t'_1=e^{\xi'_{l_1}}$, and, similar to \eqref{22041306}, the exponentials of 
\begin{equation*}
\begin{gathered}
\left(\begin{array}{cc}
0 & 1
\end{array}
\right)
(A'_1)^{-1}A_1
\left(\begin{array}{c}
\xi_{m_1}\\
\xi_{l_1}
\end{array}
\right)\quad \text{and}\quad
\left(\begin{array}{cc}
0 & 1
\end{array}
\right)
(A'_1)^{-1}A_2
\left(\begin{array}{c}
\xi_{m_2}\\
\xi_{l_2}
\end{array}
\right)
\end{gathered}
\end{equation*}
are powers of $t_1$ and $t_2$ respectively. Since $\epsilon t'_1=t_1^lt_2^l$ and the elements of any proper subset of $\{t_1, t_2, t'_1\}$ are multiplicatively independent, \eqref{22041309} is, in fact, equivalent to $\epsilon t'_1=t_1^lt_2^l$. That is, 
\begin{equation}\label{22041311}
\begin{gathered}
\left(\begin{array}{cc}
0 & 1
\end{array}
\right)
(A'_1)^{-1}A_1
=l\left(\begin{array}{cc}
r_{1} & s_{1}
\end{array}
\right)+n_1\left(\begin{array}{cc}
p_{1} & q_{1}
\end{array}
\right) \quad \text{and}\quad 
\left(\begin{array}{cc}
0 & 1
\end{array}
\right)
(A'_1)^{-1}A_2
=l\left(\begin{array}{cc}
r_{2} & s_{2}
\end{array}
\right)+n_2\left(\begin{array}{cc}
p_{2} & q_{2}
\end{array}
\right)
\end{gathered}
\end{equation}
for some $n_1, n_2\in \mathbb{Q}$. Finally, we obtain
\begin{equation*}
\begin{gathered}
(A'_1)^{-1}A_1
=\left(\begin{array}{cc}
k_1 & 0\\
n_{1} & l
\end{array}\right)
\left(\begin{array}{cc}
p_{1} & q_{1}\\
r_{1} & s_{1}
\end{array}\right)\quad\text{and}\quad  
(A'_1)^{-1}A_2
=\left(\begin{array}{cc}
k_2 & 0\\
n_{2} & l
\end{array}\right)
\left(\begin{array}{cc}
p_{2} & q_{2}\\
r_{2} & s_{2}
\end{array}\right)
\end{gathered}
\end{equation*}
from \eqref{22041312} and \eqref{22041311} respectively where $k_1
=\frac{\det A_1
}{(\det A'_1)l} \quad\text{and}\quad  
k_2
=\frac{\det A_2}{(\det A'_1)l}$ by \eqref{22041313}. 
\end{proof}

\begin{remark}\label{22071503}
\normalfont  In the above lemma, if $(\mathcal{X}\times \mathcal{C})\cap_{(1, \dots, 1)} H''$ is an anomalous subvariety of $\mathcal{X}\times \mathcal{C}$, similar to the one described in Remark \ref{22061607}, we obtain $\mathcal{X}^{oa}=\emptyset$ (meaning $\mathcal{X}\times \mathcal{C}$ is the product of three algebraic curves). Indeed, for any $(M'_1, L'_1)=(\zeta'_{m_1},\zeta'_{l_1})$ lying over $\mathcal{C}$, the intersection between $\mathcal{X}$ and the following coset\footnote{Note that \eqref{22071513} is $H''\cap (M'_1=\zeta'_{m_1},L'_1=\zeta'_{l_1})$.}
\begin{equation}\label{22071513}
M_1^{a_j}L_1^{b_j}M_2^{c_j}L_2^{d_j}=(\zeta'_{m_1})^{a'_j}(\zeta'_{l_1})^{b'_j},\quad (j=1,2)
\end{equation}
is an anomalous subvariety of $\mathcal{X}$, implying $\mathcal{X}$ has infinitely many anomalous subvarieties.   
\end{remark}

Using the above lemma, we now prove Theorem \ref{22041616}. Similar to the case of Theorem \ref{21091101}, we establish this by deriving a contradiction. 

\begin{proof}[Proof of Theorem \ref{22041616}]
Let $H'$, $H''$ and $\mathcal{C}$ be the same as given in the discussion before Lemma \ref{22041320}. 

Since a Dehn filling point associated to $\mathcal{M}_{p'_1/q'_1, p'_2/q'_2}$ is contained in $H'$, by Lemma \ref{22050501}, there exists a root of unity $\epsilon$ satisfying 
\begin{equation*}
(t'_1)^{\det E'}=\epsilon(t'_2)^{\det G'}
\end{equation*}
where $E'=\left(\begin{array}{cc}
e'_1 & f'_1\\
e'_2 & f'_2
\end{array}\right)$ and 
$G'=\left(\begin{array}{cc}
g'_1 & h'_1\\
g'_2 & h'_2
\end{array}\right)$ (see \eqref{22050801}), which further implies 
\begin{equation}\label{22050805}
t_1t_2=t'_1t'_2\Longleftrightarrow t_1t_2=\epsilon' (t'_1)^{1+\frac{\det E'}{\det G'}}
\end{equation}
with some root of unity $\epsilon'$. 

To simplify the problem, we project both $\mathcal{X}\times \mathcal{C}$ and $H''$ under
\begin{equation*}
\text{Pr}\;:\;\mathbb{G}^8(:=(M_1,L_1, \dots, M'_2, L'_2)) \longrightarrow \mathbb{G}^6(:=(M_1,L_1,\dots , M'_1, L'_1)), 
\end{equation*}
and, by abuse of notation, still denote the projected images of the two by $\mathcal{X}\times \mathcal{C}$ and $H''$ respectively. 

Applying Lemma \ref{22041320} to $\mathcal{X}\times \mathcal{C}$ and $H''$ with \eqref{22050805}, it follows that
\begin{equation}\label{22050605}
\begin{gathered}
\left(\begin{array}{cc}
p'_{1} & q'_{1}\\
r'_{1} & s'_{1}
\end{array}\right)
=\left(\begin{array}{cc}
\frac{\det A_1}{(\det A'_1)l} & 0\\
n_{1} & l
\end{array}\right)
\left(\begin{array}{cc}
p_{1} & q_{1}\\
r_{1} & s_{1}
\end{array}\right)A_1^{-1}A'_1
=\left(\begin{array}{cc}
\frac{\det A_2}{(\det A'_1)l} & 0\\
n_{2} & l
\end{array}\right)
\left(\begin{array}{cc}
p_{2} & q_{2}\\
r_{2} & s_{2}
\end{array}\right)A_2^{-1}A'_1
\end{gathered}
\end{equation}
where $n_1, n_2\in \mathbb{Q}$ and $l=\frac{\det E'}{\det G'+\det E'}$.
 
Now we claim $t_1, t_2$ are multiplicatively dependent. Analogous to that of Theorem \ref{21091101}, we verify this by showing that a Dehn filling point  associated with $\mathcal{M}_{p_1/q_1, p_2/q_2}$ is contained in an anomalous subvariety of $\mathcal{X}$. 

Moving to an analytic setting, $\log\big((\mathcal{X}\times \mathcal{C})\cap_{(1, \dots, 1)} H''\big)\big(\subset \mathbb{C}^6\big(:=(u_1, v_1, \dots, u'_1, v'_1)\big)\big)$ is defined by
\begin{equation}\label{22031703}
\begin{gathered}
\left(\begin{array}{cc}
a'_1 & b'_1\\
a'_2 & b'_2
\end{array}\right)
\left(\begin{array}{c}
u'_1\\
v'_1
\end{array}\right)
+
\left(\begin{array}{cc}
a_1 & b_1\\
a_2 & b_2
\end{array}\right)
\left(\begin{array}{c}
u_1\\
v_1
\end{array}\right)
+\left(\begin{array}{cc}
c_1 & d_1\\
c_2 & d_2
\end{array}\right)
\left(\begin{array}{c}
u_2\\
v_2
\end{array}\right)
=\left(\begin{array}{c}
0\\
0
\end{array}\right),
\end{gathered}
\end{equation}
and, letting $u'_1=0$ in \eqref{22031703}, \eqref{22031703} is reduced to 
\begin{equation}\label{22050901}
\begin{gathered}
\left(\begin{array}{cc}
a_1 & b_1\\
a_2 & b_2
\end{array}\right)
\left(\begin{array}{c}
u_1\\
v_1
\end{array}\right)
+\left(\begin{array}{cc}
c_1 & d_1\\
c_2 & d_2
\end{array}\right)
\left(\begin{array}{c}
u_2\\
v_2
\end{array}\right)
=\left(\begin{array}{c}
0\\
0
\end{array}\right). 
\end{gathered}
\end{equation}
By further projecting it onto $\mathbb{C}^4(:=(u_1, v_1, u_2, v_2))$, \eqref{22050901} is viewed as a $1$-dim anomalous analytic subset of $\log \mathcal{X}$ and thus    
\begin{equation}\label{22031704}
\begin{gathered}
\left(\begin{array}{cc}
a_1 & b_1\\
a_2 & b_2
\end{array}\right)
\left(\begin{array}{c}
u_1\\
v_1
\end{array}\right)
=\left(\begin{array}{cc}
c_1 & d_1\\
c_2 & d_2
\end{array}\right)
\left(\begin{array}{c}
u_2\\
v_2
\end{array}\right)
\end{gathered}
\end{equation}
is also a $1$-dim anomalous analytic subset of $\log \mathcal{X}$ by Theorem \ref{potential}. Let 
\begin{equation}\label{22050807}
(u_1, v_1, u_2, v_2)=(\xi_{m_1}, \xi_{l_1}, \xi_{m_2}, \xi_{l_2})
\end{equation}
be an intersection point between $\eqref{22031704}$ and $p_1u_1+q_1v_1=-2\pi \sqrt{-1}$. Namely,  
\begin{equation}\label{22041318}
A_1\left(\begin{array}{c}
\xi_{m_1}\\
\xi_{l_1}
\end{array}\right)
=A_2
\left(\begin{array}{c}
\xi_{m_2}\\
\xi_{l_2}
\end{array}\right)\quad \text{and}\quad p_1\xi_{m_1}+q_1\xi_{l_1}=-2\pi \sqrt{-1}
\end{equation}
where $A_1=\left(\begin{array}{cc}
a_1 & b_1\\
a_2 & b_2
\end{array}\right)
$ and $A_2=\left(\begin{array}{cc}
c_1 & d_1\\
c_2 & d_2
\end{array}\right)$. By \eqref{22050605}, 
\begin{equation*}
\begin{gathered}
\left(\begin{array}{cc}
p_{1} & q_{1}\\
r_{1} & s_{1}
\end{array}\right)A_1^{-1}A_2
=\left(\begin{array}{cc}
\frac{\det A_1}{(\det A'_1)l} & 0\\
n_{1} & l
\end{array}\right)^{-1}
\left(\begin{array}{cc}
\frac{\det A_2}{(\det A'_1)l} & 0\\
n_{2} & l
\end{array}\right)
\left(\begin{array}{cc}
p_{2} & q_{2}\\
r_{2} & s_{2}
\end{array}\right)
\end{gathered}
\end{equation*}
and, combining it with \eqref{22041318}, it follows that 
\begin{equation}\label{22050813}
\begin{aligned}
\left(\begin{array}{cc}
p_1 & q_1\\
r_1 & s_1
\end{array}\right)
\left(\begin{array}{c}
\xi_{m_1}\\
\xi_{l_1}
\end{array}\right)
=&\left(\begin{array}{cc}
p_1 & q_1\\
r_1 & s_1
\end{array}\right)A_1^{-1}A_2
\left(\begin{array}{c}
\xi_{m_2}\\
\xi_{l_2}
\end{array}\right)\\
=&\left(\begin{array}{cc}
\frac{\det A_1}{(\det A'_1)l} & 0\\
n_{1} & l
\end{array}\right)^{-1}
\left(\begin{array}{cc}
\frac{\det A_2}{(\det A'_1)l} & 0\\
n_{2} & l
\end{array}\right)
\left(\begin{array}{cc}
p_{2} & q_{2}\\
r_{2} & s_{2}
\end{array}\right)
\left(\begin{array}{c}
\xi_{m_2}\\
\xi_{l_2}
\end{array}\right)\\
=&\left(\begin{array}{cc}
\frac{\det A_2}{\det A_1} & 0\\
\frac{1}{l\det A_1}(-n_1\det A_2+ n_2\det A_1)& 1
\end{array}\right)
\left(\begin{array}{cc}
p_{2} & q_{2}\\
r_{2} & s_{2}
\end{array}\right)
\left(\begin{array}{c}
\xi_{m_2}\\
\xi_{l_2}
\end{array}\right).
\end{aligned}
\end{equation}
Since $\mathcal{X}^{oa}=\emptyset$ by Remark \ref{22071503}, $\det A_1=\det A_2$ by Proposition \ref{21073101} and so \eqref{22050813} is reduced farther to 
 \begin{equation}\label{22071510}
\begin{gathered}
\left(\begin{array}{cc}
p_1 & q_1\\
r_1 & s_1
\end{array}\right)
\left(\begin{array}{c}
\xi_{m_1}\\
\xi_{l_1}
\end{array}\right)
=\left(\begin{array}{cc}
1 & 0\\
\frac{n_2-n_1}{l} & 1
\end{array}\right)
\left(\begin{array}{cc}
p_{2} & q_{2}\\
r_{2} & s_{2}
\end{array}\right)
\left(\begin{array}{c}
\xi_{m_2}\\
\xi_{l_2}
\end{array}\right).
\end{gathered}
\end{equation}

In conclusion, we have 
\begin{equation*}
p_1\xi_{m_1}+q_1\xi_{l_1}=p_2\xi_{m_2}+q_2\xi_{l_2}=-2\pi \sqrt{-1}, 
\end{equation*}
meaning \eqref{22050807} is, actually, a Dehn filling point associated with $\mathcal{M}_{p_1/q_1, p_2/q_2}$ lying over \eqref{22031704}. However, as
\begin{equation*}
\begin{gathered}
e^{r_1\xi_{m_1}+s_1\xi_{l_1}}=e^{\frac{n_2-n_1}{l}(p_2\xi_{m_2} +q_2\xi_{l_2})+(r_2\xi_{m_2}+s_2\xi_{l_2})}=e^{\frac{n_2-n_1}{l} (-2\pi \sqrt{-1})}e^{r_2\xi_{m_2}+s_2\xi_{l_2}}\Longrightarrow t_1=e^{\frac{n_2-n_1}{l} (-2\pi \sqrt{-1})}t_2
\end{gathered}
\end{equation*}
by \eqref{22071510}, it contradicts the fact that $t_1, t_2$ are multiplicatively independent. This completes the proof of Theorem \ref{22041616}. 
\end{proof}

\subsection{Final summary}

Combining Theorems \ref{20071505}-\ref{22012104} with Theorems \ref{20070501} and \ref{22041616}, what we have obtained so far in Sections \ref{ZPCI}-\ref{ZPCII} are now summarized in the following theorem:  

\begin{theorem}\label{22050203}
Let $\mathcal{M}$ be a $2$-cusped hyperbolic $3$-manifold and $\mathcal{X}$ be its holonomy variety. Then there exists a finite set $\mathcal{H}$ of algebraic subgroups such that, for any two Dehn fillings $\mathcal{M}_{p_{1}/q_{1},p_{2}/q_{2}}$ and $\mathcal{M}_{p'_{1}/q'_{1},p'_{2}/q'_{2}}$ of $\mathcal{M}$ satisfying 
\begin{equation}\label{22041613}
\text{pvol}_{\mathbb{C}}\;\mathcal{M}_{p_{1}/q_{1},p_{2}/q_{2}}=\text{pvol}_{\mathbb{C}}\;\mathcal{M}_{p'_{1}/q'_{1},p'_{2}/q'_{2}}
\end{equation}
with $|p_h|+|q_h|$ and $|p'_h|+|q'_h|$ ($h=1,2$) sufficiently large, a Dehn filling point $P$ associated to \eqref{22041613} is contained in some $H\in \mathcal{H}$. More precisely, we have the following dichotomy.  
\begin{enumerate}
\item If $t_1, t_2$ (resp. $t'_1, t'_2$) are multiplicatively dependent (resp. dependent), then $H(\in \mathcal{H})$ containing $P$ is defined by equations of the forms   
\begin{equation}\label{220708011}
\begin{gathered}
M_1^{a_j}L_1^{b_j}M_2^{c_j}L_2^{d_j}=(M'_1)^{a'_j}(L'_1)^{b'_j}(M'_2)^{c'_j}(L'_2)^{d'_j}=M_1^{e_j}L_1^{f_j}(M'_1)^{e'_j}(L'_1)^{f'_j}=1, \quad (j=1,2). 
\end{gathered}
\end{equation} 
Further, each $(\mathcal{X}\times \mathcal{X})\cap_{(1, \dots, 1)} H$ is a $1$-dim anomalous subvariety of $\mathcal{X}\times \mathcal{X}$. 

\item If $t_{1}, t_{2}$ (resp. $t'_{1}, t'_{2}$) are multiplicatively independent (resp. independent), then $H(\in \mathcal{H})$ containing $P$ is defined by equations of the following forms
\begin{equation}\label{22051405}
\begin{gathered}
M_1^{a_j}L_1^{b_j}M_2^{c_j}L_2^{d_j}(M'_1)^{a'_j}(L'_1)^{b'_j}=M_1^{e_j}L_1^{f_j}M_2^{g_j}L_2^{h_j}(M'_2)^{g'_j}(L'_2)^{h'_j}=1\quad (j=1,2).
\end{gathered}
\end{equation} 
Further, each $(\mathcal{X}\times \mathcal{X})\cap_{(1, \dots, 1)} H$ is a $2$-dim anomalous subvariety of $\mathcal{X}\times \mathcal{X}$. 
\end{enumerate}
\end{theorem}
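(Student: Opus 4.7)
The plan is to assemble the theorem directly from the collection of results already established in Sections \ref{ZPCI} and \ref{ZPCII}; essentially, Theorem \ref{22050203} is a bookkeeping consolidation rather than a new independent argument. First, given any pair of Dehn fillings satisfying \eqref{22041613} with sufficiently large coefficients, I would invoke Theorem \ref{22041616} to rule out the ``mixed'' scenario in which $t_1, t_2$ are multiplicatively independent while $t'_1, t'_2$ are multiplicatively dependent (or vice versa). This reduces the possibilities to exactly the two cases listed in the statement: either both sets of core holonomies are multiplicatively dependent within their own filling, or both are multiplicatively independent.

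In the dependent/dependent case, I would apply Theorem \ref{20071505} verbatim: that theorem already produces a finite set of algebraic subgroups defined by equations of precisely the shape \eqref{220708011}, containing the Dehn filling point $P$, and it simultaneously asserts that $(\mathcal{X}\times\mathcal{X}) \cap_{(1,\dots,1)} H$ is a $1$-dimensional anomalous subvariety. If two cusps of $\mathcal{M}$ happen to be SGI, Theorem \ref{22012104} gives a slightly different description in terms of \eqref{22012105}--\eqref{22012106}, which is readily recognized as a special subcase of the form \eqref{220708011} after relabeling, so the unified statement still holds.

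In the independent/independent case, I would appeal to Theorem \ref{20070501}(1), which produces a finite family $\mathcal{H}^{(4)}$ of algebraic subgroups of codimension $4$ (i.e.\ dimension $4$) whose defining equations are precisely of the form \eqref{20071501}, matching the shape \eqref{22051405} demanded in the statement. The same theorem furthermore asserts that, for each such $H$, the component $(\mathcal{X}\times\mathcal{X})\cap_{(1,\dots,1)} H$ is a $2$-dimensional anomalous subvariety of $\mathcal{X}\times\mathcal{X}$, which is exactly what is required.

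Since all individual ingredients have been proved in the preceding sections, the only work is to verify that the three partitions of the problem (both dependent; both independent; mixed) are genuinely exhaustive and mutually exclusive after Theorem \ref{22041616}, and that the descriptions of the defining equations produced by Theorems \ref{20071505}, \ref{22012104}, and \ref{20070501} can be put into the uniform formats \eqref{220708011} and \eqref{22051405} by at most a harmless change of basis of the algebraic torus. I expect no serious obstacle; the main care is simply in the notation matching and in absorbing the SGI subcase into the general dependent/dependent format, so that a single finite set $\mathcal{H}$ suffices in both alternatives of the dichotomy.
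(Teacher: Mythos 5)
Your overall approach is exactly what the paper does: Theorem \ref{22050203} is explicitly presented as a consolidation of Theorems \ref{20071505}, \ref{22012104}, \ref{20070501}, and \ref{22041616}, so reading off the dichotomy after applying Theorem \ref{22041616} to exclude the mixed dependent/independent scenario is the intended route, and invoking Theorem \ref{20071505} for the dependent/dependent case and Theorem \ref{20070501}(1) for the independent/independent case is correct.

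There is, however, one organizational error in the assembly: you place the SGI discussion under the dependent/dependent alternative and claim that the forms \eqref{22012105}--\eqref{22012106} of Theorem \ref{22012104} are a special subcase of \eqref{220708011} ``after relabeling.'' That does not work. The system \eqref{220708011} consists of six monomial equations and cuts out a $1$-dimensional anomalous subvariety, while \eqref{22012105} and \eqref{22012106} each consist of four equations and cut out a $2$-dimensional anomalous subvariety, exactly as Theorem \ref{22012104} asserts. After relabeling (e.g.\ exchanging the two equations in \eqref{22012106}, or setting $c_j=d_j=e_j=f_j=0$ in \eqref{22051405} for \eqref{22012105}), these are special cases of \eqref{22051405}, not of \eqref{220708011}. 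Moreover, the place where Theorem \ref{22012104} is actually needed is the independent/independent alternative: the standing convention at the start of Section \ref{ZPCII} is that the two cusps are not SGI, so Theorem \ref{20070501} is only established under that hypothesis, and you must fall back on Theorem \ref{22012104} to handle the SGI case of alternative (2). In the dependent/dependent case, by contrast, Theorem \ref{20071505} carries no SGI hypothesis and applies verbatim, so no separate SGI discussion is necessary there. With this relocation of the SGI subcase from alternative (1) to alternative (2), your argument closes correctly and matches the paper.
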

Finally, we state a corollary of the above theorem. Before doing so, the following definition shall first be provided. 
\begin{definition}\label{22070813}
\normalfont We say two vectors $\bold{v}$ and $\bold{w}$ over $\mathbb{Q}$ are \textit{parallel} to each other if either $\bold{v}=c\bold{w}$ or 
$\bold{w}=c\bold{v}$ holds for some $c\in \mathbb{Q}$, and denote by $\bold{v}\newparallel \bold{w}$.  
\end{definition}

\begin{corollary}\label{20121907}
Let $\mathcal{M}$ and $\mathcal{X}$ be the same as in the above theorem. Let $\tau_1$ and $\tau_2$ be the cusp shapes of $\mathcal{M}$. 
\begin{enumerate}
\item For $H$ defined by \eqref{220708011}, if $\dim \big((\mathcal{X}\times \mathcal{X})\cap_{(1, \dots, 1)} H\big)=1$, then 
\begin{equation*}
\begin{gathered}
\left(\begin{array}{cc}
a_1 & b_1 \\
a_2 & b_2 
\end{array}
\right)
\left(\begin{array}{c}
1 \\
\tau_1 
\end{array}
\right)
\newparallel
\left(\begin{array}{cc}
c_1 & d_1 \\
c_2 & d_2 
\end{array}
\right)
\left(\begin{array}{c}
1 \\
\tau_2 
\end{array}
\right), \quad 
\left(\begin{array}{cc}
a'_1 & b'_1 \\
a'_2 & b'_2 
\end{array}
\right)
\left(\begin{array}{c}
1 \\
\tau_1 
\end{array}
\right)
\newparallel
\left(\begin{array}{cc}
c'_1 & d'_1 \\
c'_2 & d'_2 
\end{array}
\right)
\left(\begin{array}{c}
1 \\
\tau_2 
\end{array}
\right) \\
\text{and}\quad \left(\begin{array}{cc}
e_1 & f_1 \\
e_2 & f_2 
\end{array}
\right)
\left(\begin{array}{c}
1 \\
\tau_1 
\end{array}
\right)
\newparallel
\left(\begin{array}{cc}
e'_1 & f'_1 \\
e'_2 & f'_2 
\end{array}
\right)
\left(\begin{array}{c}
1 \\
\tau_1
\end{array}
\right).
\end{gathered}
\end{equation*}

\item For $H$ defined by \eqref{22051405}, if $\dim \big((\mathcal{X}\times \mathcal{X})\cap_{(1, \dots, 1)} H\big)=2$, then 
\begin{equation}\label{22070810}
\begin{gathered}
\left(\begin{array}{cc}
a_1 & b_1 \\
a_2 & b_2 
\end{array}
\right)
\left(\begin{array}{c}
1 \\
\tau_1 
\end{array}
\right)
\newparallel
\left(\begin{array}{cc}
c_1 & d_1 \\
c_2 & d_2 
\end{array}
\right)
\left(\begin{array}{c}
1 \\
\tau_2 
\end{array}
\right)
\newparallel
\left(\begin{array}{cc}
a'_1 & b'_1 \\
a'_2 & b'_2 
\end{array}
\right)
\left(\begin{array}{c}
1 \\
\tau_1 
\end{array}
\right)\\
\text{and}\\
\left(\begin{array}{cc}
e_1 & f_1 \\
e_2 & f_2 
\end{array}
\right)
\left(\begin{array}{c}
1 \\
\tau_1 
\end{array}
\right)
\newparallel
\left(\begin{array}{cc}
g_1 & h_1 \\
g_2 & h_2 
\end{array}
\right)
\left(\begin{array}{c}
1 \\
\tau_2 
\end{array}
\right)
\newparallel
\left(\begin{array}{cc}
g'_1 & h'_1 \\
g'_2 & h'_2 
\end{array}
\right)
\left(\begin{array}{c}
1 \\
\tau_2
\end{array}
\right).
\end{gathered}
\end{equation}
\end{enumerate}
\end{corollary}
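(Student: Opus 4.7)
The strategy is to pass to the analytic setting via Theorem~\ref{potential}. Near the identity, $\log(\mathcal{X}\times\mathcal{X})$ is a smooth $4$-dimensional complex submanifold of $\mathbb{C}^{8}$ parametrized by $(u_1,u_2,u'_1,u'_2)$, with $v_k=u_k\tau_k(u_1,u_2)=\tau_k u_k+O(|u|^{3})$ and analogously for the primed coordinates. A character equation $M_1^{a}L_1^{b}M_2^{c}L_2^{d}(M'_1)^{a'}(L'_1)^{b'}(M'_2)^{c'}(L'_2)^{d'}=1$ of $H$ pulls back to a holomorphic function vanishing at the origin whose differential, read off from the Neumann--Zagier expansion, equals
\[
\bigl(a+b\tau_{1},\ c+d\tau_{2},\ a'+b'\tau_{1},\ c'+d'\tau_{2}\bigr)
\]
in the dual basis to $(\partial_{u_1},\partial_{u_2},\partial_{u'_1},\partial_{u'_2})$. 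The germ $(\mathcal{X}\times\mathcal{X})\cap_{(1,\dots,1)}H$ is the common zero locus of these holomorphic functions, and the tangent-cone inclusion gives $\dim\mathrm{germ}\le 4-\mathrm{rank}\,J$ for $J$ the Jacobian at the origin. Since the germ contains Dehn filling points clustering at the identity by Theorem~\ref{040605}, its tangent cone has the prescribed dimension, so $\mathrm{rank}\,J=3$ in part~(1) and $\mathrm{rank}\,J=2$ in part~(2).

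The rest is a block-rank computation. In part~(1), the six rows of $J$ split by pair: pair~$1$ is supported in columns $1,2$, pair~$2$ in columns $3,4$, and pair~$3$ in columns $1,3$. Because pairs~$1$ and $2$ live in complementary column spaces, writing $r_\alpha$ for the rank of pair~$\alpha$ we have $\mathrm{rank}\,J=r_1+r_2+s$ with $s$ the rank of pair~$3$ modulo the others. A case analysis then shows $r_1=r_2=r_3=1$ is the only possibility: $r_\alpha=0$ forces all coefficients of pair~$\alpha$ to vanish (using $\tau_k\in\mathbb{C}\setminus\mathbb{R}$ so that $a+b\tau_k=0$ requires $a=b=0$), contradicting the nontriviality of the defining equations; $r_\alpha=2$ is ruled out as described below. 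Once each pair has rank exactly $1$, its two gradient rows are proportional, which in the explicit form above is precisely the parallelism condition for that pair; the three assertions of part~(1) come from $r_1,r_2,r_3=1$ respectively. Part~(2) is identical: the two pairs are supported in columns $1,2,3$ and $1,2,4$, each with a private support column, so a rank-$2$ pair would force $\mathrm{rank}\,J\ge 3$; hence both pairs have rank $1$, and proportionality of each pair's three-entry gradient rows gives the two triple parallelisms in~\eqref{22070810}.

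The main obstacle is the rank-$2$ elimination. If pair~$1$ in part~(1) had rank $2$, the tangent cone of the component through the identity would lie in $\{u_1=u_2=0\}$, so every Dehn filling point on the component would satisfy $u_1,u_2=O(t^{2})$ in a local parameter $t$ while $u'_1$ or $u'_2$ vanishes to order $1$. Plugging this into the analytic identities obtained from pair~$3$ together with the higher-order expansion $v_k=\tau_k u_k+m_{\alpha}u_k^{\alpha}+\cdots$ from Theorem~\ref{potential} (as in the proofs of Lemmas~\ref{20101702} and~\ref{22050501}) produces an algebraic identity between $\tau_1$, $\tau_2$ and the integer coefficients that forces one of the core holonomies $t_1,t_2$ to be a root of unity, contradicting $|t_k|\neq 1$ (Section~\ref{Dehn}). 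The analogous argument, using the corresponding column pair and the non-ellipticity of $t'_1,t'_2$, handles pairs~$2$ and $3$ in part~(1) and both pairs in part~(2). Once the rank-$2$ degenerations are excluded, each pair contributes rank $1$ and the proportionality statements translate verbatim into the parallelism assertions of the corollary.
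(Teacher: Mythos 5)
Your framework matches the paper's: pass to the analytic set $\log(\mathcal{X}\times\mathcal{X})$, write down the Jacobian $J$ at the origin of the pullbacks of the defining characters of $H$, observe each gradient has the form $(a+b\tau_1,\ c+d\tau_2,\ a'+b'\tau_1,\ c'+d'\tau_2)$, and use $\dim\le 4-\mathrm{rank}\,J$ to bound the rank. Your reduction for part~(2)—each of the two row-pairs has a private column ($u'_1$ for the first, $u'_2$ for the second), so a rank-$2$ pair together with a nontrivial private column of the other pair would force $\mathrm{rank}\,J\ge 3$—is a correct reorganization of the paper's column-based observation. That much is on the right track.

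The gap is precisely the step you flag as the main obstacle: ruling out $r_\alpha=2$ in part~(1). The corollary is a statement about a fixed subgroup $H$ of the displayed shape and the dimension of the component of $(\mathcal{X}\times\mathcal{X})\cap H$ through the identity; it makes no mention of Dehn fillings, core geodesics, or core holonomies. Thus "every Dehn filling point on the component" and "contradicting $|t_k|\neq 1$" import hypotheses that are not in the statement—that component need not contain any Dehn filling point besides $(1,\dots,1)$—and even if they were available, the argument as written is a sketch with an undefined local parameter $t$, an unwritten "algebraic identity," and an unjustified leap to a root of unity. None of this is established. A second, independent problem: in part~(1) the identity $\mathrm{rank}\,J=r_1+r_2+s$ with $s$ the rank of pair~$3$ modulo pairs~$1,2$ does not control $r_3$, because pair~$3$ overlaps pairs~$1$ and~$2$ in columns $1$ and $3$; $s\le 1$ is perfectly compatible with rows $5,6$ being linearly independent, so the third parallelism would still need its own argument even after pinning down $r_1=r_2=1$.

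The paper's proof is deliberately brief and stays within the statement's hypotheses: with $\dim=2$ the $4\times 4$ Jacobian has rank at most $2$; since columns $3$ and $4$ are supported on disjoint row-pairs, they span the column space, forcing columns $1,2$ to be combinations of them, and reading this row-pair by row-pair gives exactly the two chains of \eqref{22070810}; part~(1) is the same bookkeeping on the $6\times 4$ Jacobian. Replacing that linear-algebra step with a Dehn-filling digression leaves the implication unproved. Dropping the detour and carrying out the rank elimination purely from the block structure of $J$ (handling, or at least flagging, the degenerate case where a private column vanishes) would bring your proof in line with the paper's.
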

\begin{proof}
We only consider the second case (as the proof of the first case is similar). 

For $H$ given as \eqref{22051405}, moving to the analytic holonomy variety, $\log \big((\mathcal{X}\times \mathcal{X})\cap_{(1, \dots, 1)} H\big)$ is defined by
\begin{equation}\label{22070803}
\begin{gathered}
\left(\begin{array}{cccc}
a'_1 & b'_1 & 0 & 0\\
a'_2 & b'_2 & 0 & 0\\
0 & 0 & g'_1 & h'_1\\
0 & 0 & g'_2 & h'_2 
\end{array}\right)
\left(\begin{array}{c}
u'_1\\
v'_1\\
u'_2\\
v'_2
\end{array}\right)
+\left(\begin{array}{cccc}
a_1 & b_1 & c_1 & d_1\\
a_2 & b_2 & c_2 & d_2\\
e_1 & f_1 & g_1 & h_1\\
e_2 & f_2 & g_2 & h_2 
\end{array}\right)
\left(\begin{array}{c}
u_1\\
v_1\\
u_2\\
v_2
\end{array}\right)
=\left(\begin{array}{c}
0\\
0\\
0\\
0
\end{array}\right),
\end{gathered}
\end{equation}
whose Jacobian at $(0, \dots, 0)$ is 
\begin{equation}\label{22070805}
\left(\begin{array}{cccc}
a'_1 & b'_1 & 0 & 0\\
a'_2 & b'_2 & 0 & 0\\
0 & 0 & g'_1 & h'_1\\
0 & 0 & g'_2 & h'_2 
\end{array}\right)
\left(\begin{array}{c}
1\\
\tau_1\\
1\\
\tau_2
\end{array}\right)
+\left(\begin{array}{cccc}
a_1 & b_1 & c_1 & d_1\\
a_2 & b_2 & c_2 & d_2\\
e_1 & f_1 & g_1 & h_1\\
e_2 & f_2 & g_2 & h_2 
\end{array}\right)
\left(\begin{array}{c}
1\\
\tau_1\\
1\\
\tau_2
\end{array}\right).
\end{equation}
Since \eqref{22070803} is an analytic variety of dimension $2$, the rank of \eqref{22070805} is $2$, and thereby \eqref{22070810} follows. 
\end{proof}

\newpage
\section{Quantification I}\label{quantaI}
In Sections \ref{quantaI}-\ref{20121903}, we make Theorem \ref{22050203} more effective. First, in this section, we determine the explicit relationship between two Dehn filling coefficients in \eqref{22041613} and the exponents of the defining equations of $H$ (given in Theorem \ref{22050203}). The basic strategy of the proof is similar to the one exhibited in either Lemma \ref{21082802} or \ref{22041320}.

By Theorem \ref{22050203}, if two Dehn fillings $\mathcal{M}_{p_{1}/q_{1},p_{2}/q_{2}}$ and $\mathcal{M}_{p'_{1}/q'_{1},p'_{2}/q'_{2}}$ of $\mathcal{M}$ with $|p_i|+|q_i|$ and $|p'_i|+|q'_i|$ sufficiently large ($i=1,2$) satisfy \eqref{22041613}, a Dehn filling point $P$ associated to them is contained in an algebraic subgroup $H$ defined by equations of the forms given in \eqref{20071501}. Moving to the analytic holonomy variety, $\log\big((\mathcal{X}\times \mathcal{X})\cap H\big)$ is a complex manifold defined by 
\begin{equation}\label{20091903}
\begin{gathered}
\left(\begin{array}{cccc}
a'_1 & b'_1 & 0 & 0\\
a'_2 & b'_2 & 0 & 0\\
0 & 0 & g'_1 & h'_1\\
0 & 0 & g'_2 & h'_2 
\end{array}\right)
\left(\begin{array}{c}
u'_1\\
v'_1\\
u'_2\\
v'_2
\end{array}\right)
=\left(\begin{array}{cccc}
a_1 & b_1 & c_1 & d_1\\
a_2 & b_2 & c_2 & d_2\\
e_1 & f_1 & g_1 & h_1\\
e_2 & f_2 & g_2 & h_2 
\end{array}\right)
\left(\begin{array}{c}
u_1\\
v_1\\
u_2\\
v_2
\end{array}\right).
\end{gathered}
\end{equation}
To simplify the problem, let us assume 
\begin{equation}\label{22022007}
\left(\begin{array}{cc}
a'_1 & b'_1 \\
a'_2 & b'_2 
\end{array}\right)=
\left(\begin{array}{cc}
g'_1 & h'_1 \\
g'_2 & h'_2 
\end{array}\right)=
\left(\begin{array}{cc}
1 & 0 \\
0 & 1 
\end{array}\right)
\end{equation} 
and 
\begin{equation}\label{22022005}
\left(\begin{array}{cccc}
a_1 & b_1 & c_1 & d_1\\
a_2 & b_2 & c_2 & d_2\\
e_1 & f_1 & g_1 & h_1\\
e_2 & f_2 & g_2 & h_2 
\end{array}\right)\in \text{Mat}_{4\times 4}(\mathbb{Q}).
\end{equation}
Moreover, denoting the matrix in \eqref{22022005} by $M$ and letting 
\begin{equation}\label{22022010}
A_1:=\left(\begin{array}{cc}
a_1 & b_1 \\
a_2 & b_2 
\end{array}
\right), \quad 
A_2:=\left(\begin{array}{cc}
c_1 & d_1\\
c_2 & d_2
\end{array}
\right), \quad 
A_3:=\left(\begin{array}{cc}
e_1 & f_1 \\
e_2 & f_2 
\end{array}
\right), \quad
A_4:=\left(\begin{array}{cc}
g_1 & h_1\\
g_2 & h_2
\end{array}
\right), 
\end{equation}
we represent $M$ as 
\begin{equation}\label{21072901}
\left(\begin{array}{cc}
A_1 & A_2\\
A_3 & A_4
\end{array}
\right)\Bigg(=\left(\begin{array}{cccc}
a_1 & b_1 & c_1 & d_1\\
a_2 & b_2 & c_2 & d_2\\
e_1 & f_1 & g_1 & h_1\\
e_2 & f_2 & g_2 & h_2 
\end{array}
\right)\Bigg).
\end{equation}
Then the first quantified version of Theorem \ref{22050203} is given as follows:
\begin{theorem}\label{22031403}
Let $\mathcal{M}$ and $\mathcal{X}$ be as in Theorem \ref{22050203}. Let $\mathcal{M}_{p_{1}/q_{1}, p_{2}/q_{2}}$ and $\mathcal{M}_{p'_{1}/q'_{1}, p'_{2}/q'_{2}}$ be two Dehn fillings of $\mathcal{M}$ with sufficiently large $|p_i|+|q_i|, |p'_i|+|q'_i|$ ($i=1,2$) satisfying
\begin{equation}\label{22031409}
\text{pvol}_{\mathbb{C}}\;\mathcal{M}_{p_{1}/q_{1}, p_{2}/q_{2}}=\text{pvol}_{\mathbb{C}}\;\mathcal{M}_{p'_{1}/q'_{1}, p'_{2}/q'_{2}}. 
\end{equation}
Suppose the core holonomies $t_1, t_2$ (resp. $t'_1, t'_2$) of $\mathcal{M}_{p_{1}/q_{1}, p_{2}/q_{2}}$ (resp. $\mathcal{M}_{p'_{1}/q'_{1}, p'_{2}/q'_{2}}$) are multiplicatively independent and a Dehn filling point $P$ associated with \eqref{22031409} is contained in an analytic variety defined by \eqref{20091903} with \eqref{22022007}-\eqref{22022005}. Then the following statements are true. 
\begin{enumerate}
\item For $r_i, s_i, r'_i, s'_i(\in \mathbb{Z})$ satisfying 
\begin{equation}\label{22080802}
\det \left( \begin{array}{cc}
p_i & q_i\\
r_i & s_i
\end{array}\right)=
\det \left( \begin{array}{cc}
p'_i & q'_i\\
r'_i & s'_i
\end{array}\right)
=1\quad (i=1,2), 
\end{equation}
there exist $k_j, l_j, n_j\in \mathbb{Q}$ ($1\leq j\leq 4$) such that 
\begin{equation}\label{22031407}
\begin{gathered}
\left( \begin{array}{cc}
p'_1 & q'_1\\
r'_1 & s'_1
\end{array}\right)
A_1=\left( \begin{array}{cc}
k_1 & 0\\
n_1 & l_1
\end{array}\right)
\left( \begin{array}{cc}
p_1 & q_1\\
r_1 & s_1
\end{array}\right), \\
\left( \begin{array}{cc}
p'_1 & q'_1\\
r'_1 & s'_1
\end{array}\right)
A_2
=\left( \begin{array}{cc}
k_2 & 0\\
n_2 & l_2
\end{array}\right)
\left( \begin{array}{cc}
p_2 & q_2\\
r_2 & s_2
\end{array}\right), \\
\left( \begin{array}{cc}
p'_2  & q'_2\\
r'_2  & s'_2\\
\end{array}\right)
A_3
=\left( \begin{array}{cc}
k_3 & 0\\
n_3 & l_3
\end{array}\right)
\left( \begin{array}{cc}
p_1 & q_1\\
r_1 & s_1
\end{array}\right), \\ 
\left( \begin{array}{cc}
p'_2  & q'_2\\
r'_2  & s'_2
\end{array}\right)
A_4
=\left( \begin{array}{cc}
k_4 & 0\\
n_4 & l_4
\end{array}\right)
\left( \begin{array}{cc}
p_2 & q_2\\
r_2 & s_2
\end{array}\right), \\
\end{gathered}
\end{equation}
\begin{equation}\label{22030403}
k_1+k_2=k_3+k_4=1,
\end{equation}
and 
\begin{equation}\label{20101101}
\begin{gathered}
l_1+l_3=l_2+l_4=1, \quad k_jl_j=\det A_j\;(1\leq j\leq 4).
\end{gathered}
\end{equation} 
\item If $\det A_1=0$ or $\det A_4=0$, then 
\begin{equation*}
A_1=A_4=\left(\begin{array}{cc}
0 & 0\\
0 & 0
\end{array}\right) \quad\text{and}\quad \det A_3=\det A_2=1.
\end{equation*} 
\item If $\det A_2=0$ or $\det A_3=0$, then 
\begin{equation*}
A_2=A_3=\left(\begin{array}{cc}
0 & 0\\
0 & 0
\end{array}\right)\quad\text{and}\quad \det A_1=\det A_4=1.
\end{equation*} 
\end{enumerate}
\end{theorem}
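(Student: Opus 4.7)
The plan is to mirror the strategy of Lemma \ref{22050501} and its refinements Lemma \ref{21082802} and Lemma \ref{22041320}, by lifting to the analytic holonomy variety. Let
\begin{equation*}
(\xi_{m_1},\xi_{l_1},\xi_{m_2},\xi_{l_2},\xi'_{m_1},\xi'_{l_1},\xi'_{m_2},\xi'_{l_2})
\end{equation*}
denote the Dehn filling point on $\log\big((\mathcal{X}\times\mathcal{X})\cap H\big)$ corresponding to $P$, so that $p_i\xi_{m_i}+q_i\xi_{l_i}=p'_i\xi'_{m_i}+q'_i\xi'_{l_i}=-2\pi\sqrt{-1}$ for $i=1,2$ and, via \eqref{20091903} combined with \eqref{22022007},
\begin{equation*}
\left(\begin{array}{c}\xi'_{m_1}\\ \xi'_{l_1}\end{array}\right)=A_1\left(\begin{array}{c}\xi_{m_1}\\ \xi_{l_1}\end{array}\right)+A_2\left(\begin{array}{c}\xi_{m_2}\\ \xi_{l_2}\end{array}\right),\quad \left(\begin{array}{c}\xi'_{m_2}\\ \xi'_{l_2}\end{array}\right)=A_3\left(\begin{array}{c}\xi_{m_1}\\ \xi_{l_1}\end{array}\right)+A_4\left(\begin{array}{c}\xi_{m_2}\\ \xi_{l_2}\end{array}\right).
\end{equation*}

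For Part (1), I multiply the first of these vector equations on the left by $\left(\begin{smallmatrix}p'_1 & q'_1\\ r'_1 & s'_1\end{smallmatrix}\right)$. The top row, together with $p'_1\xi'_{m_1}+q'_1\xi'_{l_1}=-2\pi\sqrt{-1}$, expresses $-2\pi\sqrt{-1}$ as a sum of two quantities whose exponentials are, respectively, a power of $t_1$ and a power of $t_2$, each multiplied by a root of unity. Since $t_1,t_2$ are multiplicatively independent, each summand must separately exponentiate to a root of unity, forcing $(p'_1,q'_1)A_1=k_1(p_1,q_1)$ and $(p'_1,q'_1)A_2=k_2(p_2,q_2)$ for some $k_1,k_2\in\mathbb{Q}$ with $k_1+k_2=1$. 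The bottom row, combined with the consequence $t_1t_2=t'_1t'_2$ (up to a root of unity) of the pseudo complex volume equality, analogously yields $(r'_1,s'_1)A_1=n_1(p_1,q_1)+l_1(r_1,s_1)$ and $(r'_1,s'_1)A_2=n_2(p_2,q_2)+l_2(r_2,s_2)$ for some $n_j,l_j\in\mathbb{Q}$. Repeating the procedure for the $(A_3,A_4)$ block produces the corresponding relations and the identities $k_3+k_4=1$ and $l_1+l_3=l_2+l_4=1$; taking determinants of the four block relations in \eqref{22031407} then gives $k_jl_j=\det A_j$.

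For Parts (2) and (3), the key algebraic input is that $\det A_j=0$ forces $A_j=0$ for each $j$. By Corollary \ref{20121907}(2) specialized to $A'_1=A'_2=I$, the vectors $A_1\left(\begin{smallmatrix}1\\ \tau_1\end{smallmatrix}\right)$ and $A_4\left(\begin{smallmatrix}1\\ \tau_2\end{smallmatrix}\right)$ are parallel (in the extended sense) to $\left(\begin{smallmatrix}1\\ \tau_1\end{smallmatrix}\right)$ and $\left(\begin{smallmatrix}1\\ \tau_2\end{smallmatrix}\right)$ respectively, so each of $A_1, A_4$ has $\left(\begin{smallmatrix}1\\ \tau_k\end{smallmatrix}\right)$ as an eigenvector with some eigenvalue $\alpha\in\mathbb{Q}(\tau_k)$. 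Since $A_k$ has rational entries, complex conjugation produces a linearly independent second eigenvector $\left(\begin{smallmatrix}1\\ \bar\tau_k\end{smallmatrix}\right)$ with eigenvalue $\bar\alpha$, so $\det A_k=\alpha\bar\alpha=|\alpha|^2$. Hence $\det A_k=0$ forces $\alpha=0$, and then $A_k\left(\begin{smallmatrix}1\\ \tau_k\end{smallmatrix}\right)=0$ together with $\tau_k\notin\mathbb{Q}$ yields $A_k=0$. The same argument, using the remaining parallelism relations in \eqref{22070810}, handles $A_2$ and $A_3$.

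Once $A_1=0$, the relations of Part (1) immediately give $k_1=l_1=n_1=0$, so $k_2=1$ and $l_3=1$. The main remaining step, which I expect to be the hardest part, is to deduce $A_4=0$ from $A_1=0$: this does not follow from the matrix identities of Part (1) alone and requires additional structure coming from the Neumann--Zagier potential. Specifically, the anomalous subvariety $(\mathcal{X}\times\mathcal{X})\cap H$ corresponds locally near the identity to a holomorphic map $\phi:\mathcal{X}\to\mathcal{X}$ whose linearization must preserve the symmetric Hessian of $\Phi$ (Corollary \ref{22080401}(2)); the condition $A_1=0$ forces $\phi$ to exchange the two cusps at the linearized level, and this symmetry, together with the remaining parallelism conditions of Corollary \ref{20121907}, propagates to the mirror identification $A_4=0$. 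Once $A_4=0$ is established, $k_4=l_4=n_4=0$, and the identities $k_3+k_4=l_2+l_4=1$ immediately give $l_2=\det A_2=1$ and $k_3=\det A_3=1$, completing Part (2). Part (3) follows from the same argument with the roles of $A_1,A_4$ and $A_2,A_3$ interchanged.
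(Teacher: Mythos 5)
Your Part (1) argument is essentially identical to the paper's (Lemmas \ref{20092003} and \ref{20101102}): lift to $\log\mathcal{X}$, multiply by $\left(\begin{smallmatrix}p'_i&q'_i\\r'_i&s'_i\end{smallmatrix}\right)$, and use multiplicative independence of $t_1,t_2$ together with $t_1t_2=t'_1t'_2$ to read off the row relations. Your argument that $\det A_j=0$ forces $A_j=0$ takes a genuinely different route: you use the parallelism of Corollary \ref{20121907} to realize $\left(\begin{smallmatrix}1\\\tau_k\end{smallmatrix}\right)$ as an eigenvector of $A_j$ with eigenvalue $\alpha$, so $\det A_j=|\alpha|^2\ge 0$ with equality iff $\alpha=0$ iff $A_j=0$. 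The paper instead argues (Lemma \ref{20092003}(2)) that a rank-one $A_j$ would pin $(p_1,q_1)$ (or the relevant coefficient pair) to a fixed direction, contradicting that the filling coefficients are arbitrarily large. Your eigenvalue argument is cleaner and independently useful (it is also the content of Lemma \ref{21072210}), so it is a legitimate alternative.

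The genuine gap is the implication $A_1=\left(\begin{smallmatrix}0&0\\0&0\end{smallmatrix}\right)\Rightarrow A_4=\left(\begin{smallmatrix}0&0\\0&0\end{smallmatrix}\right)$. You correctly identify this as the crux, but then you only gesture at it: you say the linearization of the anomalous subvariety "must preserve the symmetric Hessian of $\Phi$", that $A_1=0$ "forces $\phi$ to exchange the two cusps at the linearized level", and that this "propagates to the mirror identification $A_4=0$". None of these are proved, and it is not clear how the Hessian symmetry of Corollary \ref{22080401}(2) would yield $A_4=0$ at the level of linear algebra alone: the linearization is determined by $A_2, A_3, A_4$, and $\det A_4\neq 0$ is perfectly compatible with the parallelism constraints of Corollary \ref{20121907}. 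The paper's proof of this step (Claim \ref{21072507}) is a concrete computation: it substitutes the relations $u'_1=c_1u_2+d_1v_2$, $u'_2=e_1u_1+f_1v_1+g_1u_2+h_1v_2$ and the NZ expansion $v'_1=\tau_1u'_1+\sum\alpha m_{\alpha,\beta}(u'_1)^{\alpha-1}(u'_2)^\beta+\cdots$ into the constraint $v'_1=c_2u_2+d_2v_2$, then isolates the terms of homogeneous degree $k-1$ (where $k$ is the lowest degree with $m_{\alpha,\beta}\neq 0$, $\alpha,\beta>0$, which exists because the cusps are not SGI), and observes that a nonzero $g_1+h_1\tau_2$ would produce a monomial $u_1^{\alpha-1}u_2^\beta$ that cannot be matched on the right-hand side. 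This forces $g_1=h_1=0$, hence $\det A_4=0$. Your outline does not contain this idea, and without it the proof is incomplete. You also do not separately dispose of the SGI case, where the needed mixed term $m_{\alpha,\beta}$ with $\alpha,\beta>0$ does not exist; the paper handles that case by invoking Theorem \ref{22012104} directly.
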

We prove the theorem by splitting it into several lemmas. The first argument of Theorem \ref{22031403} will be verified in Lemmas \ref{20092003}-\ref{20101102}, and the second and third ones in Lemma \ref{22050205}. 

In the following, we show there exist $k_j$ ($1\leq j\leq 4$) satisfying \eqref{22031407}-\eqref{22030403}. 
\begin{lemma}\label{20092003}
Having the same notation and assumptions as in Theorem \ref{22031403},  
\begin{enumerate}
\item there exist $k_j\in \mathbb{Q}$ ($1\leq j\leq 4$) such that 
\begin{equation}\label{20092001}
\begin{gathered}
\left( \begin{array}{cc}
p'_1 & q'_1
\end{array}\right)
A_1
=k_1\left( \begin{array}{cc}
p_1 & q_1
\end{array}\right), \quad 
\left( \begin{array}{cc}
p'_1 & q'_1
\end{array}\right)
A_2
=k_2\left( \begin{array}{cc}
p_2 & q_2
\end{array}\right)\\
\left( \begin{array}{cc}
p'_2  & q'_2
\end{array}\right)
A_3
=k_3\left( \begin{array}{cc}
p_1 & q_1
\end{array}\right), \quad 
\left( \begin{array}{cc}
p'_2  & q'_2
\end{array}\right)
A_4
=k_4\left( \begin{array}{cc}
p_2 & q_2
\end{array}\right)
\end{gathered}
\end{equation}
and 
\begin{equation}\label{220304031}
k_1+k_2=k_3+k_4=1;
\end{equation}
\item for each $j$ ($1\leq j\leq 4$), if $\det A_j=0$, then $A_j=\left( \begin{array}{cc}
0 & 0\\
0 & 0
\end{array}\right)$ and $k_j=0$. 
\end{enumerate}
\end{lemma}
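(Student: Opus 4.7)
The approach is to move to the analytic holonomy variety and exploit the multiplicative independence hypotheses on the core holonomies, in the spirit of Lemmas \ref{22050501}, \ref{21082802}, and \ref{22041320}. Let $(\xi_{m_1},\xi_{l_1},\xi_{m_2},\xi_{l_2},\xi'_{m_1},\xi'_{l_1},\xi'_{m_2},\xi'_{l_2})$ be the Dehn filling point on $\log\big((\mathcal{X}\times\mathcal{X})\cap H\big)$ mapping to $P$. Under the simplification \eqref{22022007}, equation \eqref{20091903} collapses to the two block identities
\begin{equation*}
\begin{pmatrix}\xi'_{m_1}\\ \xi'_{l_1}\end{pmatrix}=A_1\begin{pmatrix}\xi_{m_1}\\ \xi_{l_1}\end{pmatrix}+A_2\begin{pmatrix}\xi_{m_2}\\ \xi_{l_2}\end{pmatrix},\qquad \begin{pmatrix}\xi'_{m_2}\\ \xi'_{l_2}\end{pmatrix}=A_3\begin{pmatrix}\xi_{m_1}\\ \xi_{l_1}\end{pmatrix}+A_4\begin{pmatrix}\xi_{m_2}\\ \xi_{l_2}\end{pmatrix},
\end{equation*}
while the Dehn filling equations give $p_i\xi_{m_i}+q_i\xi_{l_i}=p'_i\xi'_{m_i}+q'_i\xi'_{l_i}=-2\pi\sqrt{-1}$ for $i=1,2$.

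First I would substitute the first block identity into $p'_1\xi'_{m_1}+q'_1\xi'_{l_1}=-2\pi\sqrt{-1}$ to obtain
\begin{equation*}
(p'_1,q'_1)\,A_1\begin{pmatrix}\xi_{m_1}\\ \xi_{l_1}\end{pmatrix}+(p'_1,q'_1)\,A_2\begin{pmatrix}\xi_{m_2}\\ \xi_{l_2}\end{pmatrix}=-2\pi\sqrt{-1}.
\end{equation*}
Writing $(p'_1,q'_1)A_1=(\alpha_1,\beta_1)$ and $(p'_1,q'_1)A_2=(\alpha_2,\beta_2)$, and using the identities $\xi_{m_i}\equiv -q_i\log t_i$, $\xi_{l_i}\equiv p_i\log t_i$ modulo $2\pi\sqrt{-1}\mathbb{Z}$, exponentiating the above display produces an equality of the form $t_1^{-\alpha_1 q_1+\beta_1 p_1}\,t_2^{-\alpha_2 q_2+\beta_2 p_2}=\zeta$ for some root of unity $\zeta$. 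Clearing denominators and raising to a suitable power, the multiplicative independence of $t_1$ and $t_2$ forces both exponents to vanish, which is exactly the assertion that $(\alpha_1,\beta_1)=k_1(p_1,q_1)$ and $(\alpha_2,\beta_2)=k_2(p_2,q_2)$ for some $k_1,k_2\in\mathbb{Q}$. Re-substituting into the original relation gives $(k_1+k_2)(-2\pi\sqrt{-1})=-2\pi\sqrt{-1}$, hence $k_1+k_2=1$. The identical argument applied to $p'_2\xi'_{m_2}+q'_2\xi'_{l_2}=-2\pi\sqrt{-1}$ with the second block identity yields $k_3,k_4\in\mathbb{Q}$ satisfying the analogous parallelism conditions for $A_3,A_4$ together with $k_3+k_4=1$, completing \eqref{20092001}-\eqref{220304031}.

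For part (2), assume $\det A_1=0$ and $A_1\neq 0$. Then $A_1$ has rank one, so $A_1=\mathbf{u}\,\mathbf{v}^{T}$ with nonzero column vectors $\mathbf{u},\mathbf{v}\in\mathbb{Q}^{2}$. The identity $(p'_1,q'_1)A_1=\big((p'_1,q'_1)\mathbf{u}\big)\mathbf{v}^{T}=k_1(p_1,q_1)$ then divides into two cases. If $(p'_1,q'_1)\mathbf{u}\neq 0$, the co-prime pair $(p_1,q_1)$ must be parallel to the fixed rational row vector $\mathbf{v}^{T}$, which bounds $|p_1|+|q_1|$ by a constant depending only on $A_1$ and contradicts the hypothesis that this quantity is sufficiently large. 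If $(p'_1,q'_1)\mathbf{u}=0$, then $k_1=0$ and $(p'_1,q'_1)$ is orthogonal to the fixed rational vector $\mathbf{u}$, so the same primitivity bound applies to $|p'_1|+|q'_1|$, again a contradiction. Hence $A_1=0$, and then $k_1(p_1,q_1)=(p'_1,q'_1)A_1=0$ forces $k_1=0$. The same dichotomy applies verbatim to $A_2,A_3,A_4$.

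The principal technical step will be the splitting argument in the second paragraph, which promotes a single $\mathbb{Q}$-linear relation mixing both cusps into two separate rational-parallelism statements, one per cusp. Multiplicative independence of $t_1,t_2$ (and of $t'_1,t'_2$ when processing $A_3,A_4$) is precisely what makes this splitting available, and it is the same mechanism underlying the earlier quantification lemmas in Sections \ref{Prem I} and \ref{ZPCII}. The remainder of the proof is routine linear algebra together with the bounded-norm obstructions coming from Thurston's Dehn filling theorem.
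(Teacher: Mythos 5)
Your proof is correct and follows essentially the same route as the paper's: push the Dehn filling equation for $\mathcal{M}_{p'_1/q'_1,p'_2/q'_2}$ through the block identity, exponentiate using $e^{\xi_{m_i}}=t_i^{-q_i}$, $e^{\xi_{l_i}}=t_i^{p_i}$, invoke multiplicative independence of $t_1,t_2$ (resp.\ $t'_1,t'_2$) to split the resulting relation into per-cusp torsion conditions, and read off $k_1+k_2=k_3+k_4=1$ by re-substituting; part (2) likewise uses the rank-one factorization of a singular $A_j$ together with the coprimality of the Dehn filling coefficients and the ``sufficiently large'' hypothesis. One small improvement over the paper's write-up: in part (2) you explicitly treat the subcase $(p'_1,q'_1)\mathbf{u}=0$, which the paper's argument (assuming $A_1$ has proportional rows with the first nonzero and concluding $(a_1,b_1)\newparallel(p_1,q_1)$ directly) silently skips; your dichotomy closes that gap by deriving a bound on $|p'_1|+|q'_1|$ instead, so the contradiction still goes through.
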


\begin{proof}
\begin{enumerate}
\item Let $P$ be given by
\begin{equation}\label{21011407}
(u_1, v_1, \dots, u'_2, v'_2)=(\xi_{m_{1}}, \xi_{l_{1}}, \dots , \xi'_{m_{2}}, \xi'_{l_{2}})\quad (\in \mathbb{C}^8). 
\end{equation}
That is, the coordinates in \eqref{21011407} satisfy
\begin{equation}\label{21011307}
p_i\xi_{m_{i}}+q_i\xi_{l_{i}}=p'_i\xi'_{m_{i}}+q'_i\xi'_{l_{i}}=-2\pi \sqrt{-1},\quad (i=1,2)
\end{equation}
and 
\begin{equation}\label{22030407}
\begin{gathered}
\left(\begin{array}{c}
\xi'_{m_1}\\
\xi'_{l_1}\\
\xi'_{m_2}\\
\xi'_{l_2}
\end{array}\right)
=\left(\begin{array}{cccc}
a_1 & b_1 & c_1 & d_1\\
a_2 & b_2 & c_2 & d_2\\
e_1 & f_1 & g_1 & h_1\\
e_2 & f_2 & g_2 & h_2 
\end{array}\right)
\left(\begin{array}{c}
\xi_{m_1}\\
\xi_{l_1}\\
\xi_{m_2}\\
\xi_{l_2}
\end{array}\right).  
\end{gathered}
\end{equation}
Combining \eqref{21011307} with \eqref{22030407}, we have
\begin{equation*}
\begin{gathered}
\left(\begin{array}{cccc}
p'_1 & q'_1 & 0 & 0\\
0 & 0 & p'_2 & q'_2
\end{array}\right)
\left(\begin{array}{c}
\xi'_{m_1}\\
\xi'_{l_1}\\
\xi'_{m_2}\\
\xi'_{l_2}
\end{array}\right)
=\left(\begin{array}{c}
-2\pi\sqrt{-1}\\
-2\pi\sqrt{-1}
\end{array}\right)
=\left(\begin{array}{cccc}
p'_1 & q'_1 & 0 & 0\\
0 & 0 & p'_2 & q'_2
\end{array}\right)
\left(\begin{array}{cccc}
a_1 & b_1 & c_1 & d_1\\
a_2 & b_2 & c_2 & d_2\\
e_1 & f_1 & g_1 & h_1\\
e_2 & f_2 & g_2 & h_2 
\end{array}\right)
\left(\begin{array}{c}
\xi_{m_1}\\
\xi_{l_1}\\
\xi_{m_2}\\
\xi_{l_2}
\end{array}\right), 
\end{gathered}
\end{equation*}
and thus
\begin{equation*}
\begin{gathered}
-2\pi\sqrt{-1}=p'_1(a_1\xi_{m_1}+b_1\xi_{l_1}+c_1\xi_{m_2}+d_1\xi_{l_2})+q'_1(a_2\xi_{m_1}+b_2\xi_{l_1}+c_2\xi_{m_2}+d_2\xi_{l_2}),\\
-2\pi\sqrt{-1}=p'_2(e_1\xi_{m_1}+f_1\xi_{l_1}+g_1\xi_{m_2}+h_1\xi_{l_2})+q'_2(e_2\xi_{m_1}+f_2\xi_{l_1}+g_2\xi_{m_2}+h_2\xi_{l_2}), 
\end{gathered}
\end{equation*}
which are rewritten as  
\begin{equation}\label{22030404}
\begin{gathered}
-2\pi\sqrt{-1}=(p'_1a_1+q'_1a_2)\xi_{m_1}+(p'_1b_1+q'_1b_2)\xi_{l_1}+(p'_1c_1+q'_1c_2)\xi_{m_2}+(p'_1d_1+q'_1d_2)\xi_{l_2},\\
-2\pi\sqrt{-1}=(p'_2e_1+q'_2e_2)\xi_{m_1}+(p'_2f_1+q'_2f_2)\xi_{l_1}+(p'_2g_1+q'_2g_2)\xi_{m_2}+(p'_2h_1+q'_2h_2)\xi_{l_2}.
\end{gathered}
\end{equation}
Recall (from Remark \ref{22081303}) that 
\begin{equation}\label{22070911}
e^{\xi_{m_{i}}}=t_i^{-q_i}\quad \text{ and  }\quad e^{\xi_{l_{i}}}=t_i^{p_i}\quad (i=1,2), 
\end{equation}
and so \eqref{22030404} implies
\begin{equation*}
\begin{gathered}
1=t_1^{-(p'_1a_1+q'_1a_2)q_1+(p'_1b_1+q'_1b_2)p_1}t_2^{-(p'_1c_1+q'_1c_2)q_1+(p'_1d_1+q'_1d_2)p_2},\\
1=t_1^{-(p'_2e_1+q'_2e_2)q_1+(p'_2f_1+q'_2f_2)p_1}t_2^{-(p'_2g_1+q'_2g_2)q_2+(p'_2h_1+q'_2h_2)p_2}.
\end{gathered}
\end{equation*}
As $t_1, t_2$ are multiplicatively independent, it follows that
\begin{equation*}
t_1^{-(p'_1a_1+q'_1a_2)q_1+(p'_1b_1+q'_1b_2)p_1},\quad t_2^{-(p'_1c_1+q'_1c_2)q_1+(p'_1d_1+q'_1d_2)p_2}
\end{equation*}
and
\begin{equation*}
t_1^{-(p'_2e_1+q'_2e_2)q_1+(p'_2f_1+q'_2f_2)p_1}, \quad t_2^{-(p'_2g_1+q'_2g_2)q_2+(p'_2h_1+q'_2h_2)p_2}
\end{equation*}
are all roots of unity. Since $t_i$ is not a root of unity and $(p_i, q_i)$ is a co-prime pair, we conclude 
\begin{equation*}
\begin{gathered}
\left(\begin{array}{cc}
p'_1a_1+q'_1a_2 & p'_1b_1+q'_1b_2
\end{array}\right)
=k_1\left(\begin{array}{cc}
p_1 & q_1
\end{array}\right), \quad 
\left(\begin{array}{cc}
p'_1c_1+q'_1c_2 & p'_1d_1+q'_1d_2
\end{array}\right)
=k_2\left(\begin{array}{cc}
p_2 & q_2
\end{array}\right),\\
\left(\begin{array}{cc}
p'_2e_1+q'_2e_2 & p'_2f_1+q'_2f_2
\end{array}\right)
=k_3\left(\begin{array}{cc}
p_1 & q_1
\end{array}\right), \quad 
\left(\begin{array}{cc}
p'_2g_1+q'_2g_2 & p'_2h_1+q'_2h_2
\end{array}\right)
=k_4\left(\begin{array}{cc}
p_2 & q_2
\end{array}\right)
\end{gathered}
\end{equation*}
for some $k_j\in \mathbb{Q}$ ($1\leq j\leq 4$). Finally \eqref{220304031} follows from \eqref{21011307} and \eqref{22030404}. 

\item We claim if $\det A_j=0$, then $A_j$ is the zero matrix for each $j$ ($1\leq j\leq 4$). Without loss of generality, it will be only proven for $j=1$, as the rest can be derived similarly. Suppose $\det A_1=0,  A_1\neq 0$ and 
\begin{equation*}
A_1=\left(\begin{array}{cc}
a_1 & b_1 \\
ma_1 & mb_1
\end{array}\right) 
\end{equation*}
for some $(a_1,b_1)\neq (0,0)$ and $m\in \mathbb{Q}$. By \eqref{20092001}, 
\begin{equation*}
\begin{gathered}
\left( \begin{array}{cc}
p'_1 & q'_1
\end{array}\right)
\left( \begin{array}{cc}
a_1 & b_1  \\
ma_1 & mb_1  
\end{array}\right)
=(p'_1+mq'_1)
\left( \begin{array}{cc}
a_1 & b_1
\end{array}\right)
=k_1\left( \begin{array}{cc}
p_1 & q_1
\end{array}\right)
\end{gathered}
\end{equation*}
for some $k_1\in \mathbb{Q}$ and so
\begin{equation*}
\begin{gathered}
\left( \begin{array}{cc}
a_1 & b_1
\end{array}\right)
\newparallel
\left( \begin{array}{cc}
p_1 & q_1
\end{array}\right). 
\end{gathered}
\end{equation*}
As $(p_1, q_1)$ is a co-prime pair, it is uniquely determined by $(a_1, b_1)$. However, as $(a_1, b_1)$ depends only on $\mathcal{X}$, it contradicts the fact that $|p_1|+|q_1|$ is sufficiently large. Thus $A_1= 0$ and $k_1=0$ follows by \eqref{20092001}. 
\end{enumerate}
\end{proof}

The following lemma completes the proof of the first claim in Theorem \ref{22031403}. 
\begin{lemma}\label{20101102}
Adapting the same notation and assumptions as in Theorem \ref{22031403}, let $k_j$ ($1\leq j\leq 4$) be the same as in Lemma \ref{20092003}. Then there further exist $l_j, n_j$ ($1\leq j\leq 4$) satisfying \eqref{22031407} and \eqref{20101101}.
\end{lemma}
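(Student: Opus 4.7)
The strategy parallels that of Lemma \ref{21082802}: the first claim of Lemma \ref{20092003} already pins down the top rows of the four matrix identities in \eqref{22031407}, so what remains is to analyze the bottom rows. The plan is to multiply \eqref{22030407} on the left by the row vectors $(r'_1\;\;s'_1)$ and $(r'_2\;\;s'_2)$, decompose the resulting row vectors $(r'_1\;\;s'_1)A_1$, $(r'_1\;\;s'_1)A_2$, $(r'_2\;\;s'_2)A_3$, $(r'_2\;\;s'_2)A_4$ in the $\mathbb{Q}$-basis $\{(p_k\;\;q_k),\,(r_k\;\;s_k)\}$ of $\mathbb{Q}^2$ (which is a basis thanks to \eqref{22080802}), and read off the coefficients as the desired $n_j, l_j$. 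This immediately produces the matrix identities \eqref{22031407}.

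The two relations $l_1+l_3=l_2+l_4=1$ are then obtained from the pseudo complex volume equality. Concretely, since $\mathrm{pvol}_{\mathbb{C}}\,\mathcal{M}_{p_1/q_1,p_2/q_2}=\mathrm{pvol}_{\mathbb{C}}\,\mathcal{M}_{p'_1/q'_1,p'_2/q'_2}$, one has $t_1 t_2 = t'_1 t'_2$, i.e.\ $\log t'_1+\log t'_2\equiv \log t_1+\log t_2\pmod{2\pi\sqrt{-1}\mathbb{Z}}$. Using $\log t'_i=r'_i\xi'_{m_i}+s'_i\xi'_{l_i}$ together with \eqref{22030407}, the sum $\log t'_1+\log t'_2$ is a $\mathbb{Q}$-linear combination of $\xi_{m_k},\xi_{l_k}$ whose coefficients are precisely the entries of
\[
n_1(p_1\;q_1)+l_1(r_1\;s_1)+n_2(p_2\;q_2)+l_2(r_2\;s_2)+n_3(p_1\;q_1)+l_3(r_1\;s_1)+n_4(p_2\;q_2)+l_4(r_2\;s_2).
\]
Substituting \eqref{21011307} and \eqref{22070911} and exponentiating, the relation $t_1 t_2=t'_1 t'_2$ becomes
\[
t_1 t_2 = \zeta\, t_1^{\,l_1+l_3}\,t_2^{\,l_2+l_4}
\]
for some root of unity $\zeta$ coming from the $n_j$-terms. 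Since $t_1,t_2$ are multiplicatively independent and non-elliptic, this forces $l_1+l_3=1$ and $l_2+l_4=1$, which is the first half of \eqref{20101101}.

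Finally, the identities $k_j l_j=\det A_j$ follow by a one-line determinant computation: taking determinants on both sides of each matrix equation in \eqref{22031407} and using $p_is_i-q_ir_i=p'_is'_i-q'_ir'_i=1$ from \eqref{22080802} gives, for instance,
\[
\det A_1 = \det\!\left(\begin{array}{cc}p'_1 & q'_1\\ r'_1 & s'_1\end{array}\right)\det A_1 = \det\!\left(\begin{array}{cc}k_1 & 0\\ n_1 & l_1\end{array}\right)\det\!\left(\begin{array}{cc}p_1 & q_1\\ r_1 & s_1\end{array}\right) = k_1 l_1,
\]
and identically for $j=2,3,4$. The main technical point to handle carefully is the bookkeeping of the $2\pi\sqrt{-1}$-terms when passing from the additive statement at the level of $\log t_i, \log t'_i$ to the multiplicative one at the level of $t_i, t'_i$; everything else is linear algebra combined with the multiplicative independence of $t_1$ and $t_2$ already invoked in Lemma \ref{20092003}.
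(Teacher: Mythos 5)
Your proposal is correct and takes essentially the same route as the paper's proof: both reduce everything to the multiplicative independence of $t_1, t_2$, the fact that $\{(p_k,q_k),(r_k,s_k)\}$ is a $\mathbb{Q}$-basis, and a two-by-two determinant computation. Your organization is marginally cleaner — you define $l_j,n_j$ directly as decomposition coefficients (making their existence and the matrix identities in \eqref{22031407} immediate) and only then verify the constraint $l_1+l_3=l_2+l_4=1$ from $t_1t_2=t'_1t'_2$, whereas the paper first posits the multiplicative relations $t'_1=\epsilon_1 t_1^{l_1}t_2^{l_2}$ and $t'_2=\epsilon_2 t_1^{l_3}t_2^{l_4}$ and recovers the decompositions afterward by equating coefficients; the content and key facts used are identical.
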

\begin{proof}
Suppose 
\begin{equation}\label{20100907}
t'_1=\epsilon_1 t_1^{l_1}t_2^{l_2}, \quad t'_2=\epsilon_2 t_2^{l_3}t_2^{l_4}
\end{equation} 
for some $l_j\in \mathbb{Q}$ ($1\leq j\leq 4$) and torsion $\epsilon_i$ ($1\leq i\leq 2$). As $t_1, t_2$ are multiplicatively independent and $t'_1t'_2=t_1t_2$, we have 
\begin{equation}\label{22030501}
l_1+l_3=l_2+l_4=1. 
\end{equation}
Since 
\begin{equation*}
\log t_i\equiv r_i\xi_{m_{i}}+s_i\xi_{l_{i}}\mod 2\pi \sqrt{-1} \quad\text{and}\quad \log t'_i\equiv  r'_i\xi'_{m_{i}}+s'_i\xi'_{l_{i}}\mod 2\pi \sqrt{-1} 
\end{equation*}
where $r_i, s_i, r'_i, s'_i$ ($i=1,2$) are as given in \eqref{22080802}, \eqref{20100907} implies
\begin{equation}\label{201009091}
\begin{gathered}
\left(\begin{array}{c}
r'_1\xi'_{m_1}+s'_1\xi'_{l_1}\\
r'_2\xi'_{m_2}+s'_2\xi'_{l_2}
\end{array}\right)
=\left(\begin{array}{cc}
l_1 & l_2 \\
l_3 & l_4  
\end{array}\right)
\left(\begin{array}{cccc}
r_1 & s_1 & 0 & 0\\
0 & 0 & r_2 & s_2 
\end{array}\right)
\left(\begin{array}{c}
\xi_{m_1}\\
\xi_{l_1}\\
\xi_{m_2}\\
\xi_{l_2}
\end{array}\right)
+2\pi\sqrt{-1}\left(\begin{array}{c}
n_{12}\\
n_{34}
\end{array}\right)
\end{gathered}
\end{equation}
for some $n_{12}, n_{34}\in \mathbb{Q}$. By \eqref{22030407}, we get
\begin{equation}\label{22072001}
\begin{gathered}
\left(\begin{array}{cccc}
r'_1 & s'_1 & 0 & 0\\
0 & 0 & r'_2 & s'_2 
\end{array}\right)
\left(\begin{array}{c}
\xi'_{m_1}\\
\xi'_{l_1}\\
\xi'_{m_2}\\
\xi'_{l_2}
\end{array}\right)
=\left(\begin{array}{cccc}
r'_1 & s'_1 & 0 & 0\\
0 & 0 & r'_2 & s'_2 
\end{array}\right)
\left(\begin{array}{cccc}
a_1 & b_1 & c_1 & d_1\\
a_2 & b_2 & c_2 & d_2\\
e_1 & f_1 & g_1 & h_1\\
e_2 & f_2 & g_2 & h_2 
\end{array}\right)
\left(\begin{array}{c}
\xi_{m_1}\\
\xi_{l_1}\\
\xi_{m_2}\\
\xi_{l_2}
\end{array}\right). 
\end{gathered}
\end{equation}
Combining \eqref{22072001} with \eqref{201009091}, it follows that 
\begin{equation*}
\begin{gathered}
\small
\left(\begin{array}{cc}
l_1 & l_2 \\
l_3 & l_4  
\end{array}\right)
\left(\begin{array}{cccc}
r_1 & s_1 & 0 & 0\\
0 & 0 & r_2 & s_2 
\end{array}\right)
\left(\begin{array}{c}
\xi_{m_1}\\
\xi_{l_1}\\
\xi_{m_2}\\
\xi_{l_2}
\end{array}\right)
+2\pi\sqrt{-1}\left(\begin{array}{c}
n_{12}\\
n_{34}
\end{array}\right)
=\left(\begin{array}{cccc}
r'_1 & s'_1 & 0 & 0\\
0 & 0 & r'_2 & s'_2 
\end{array}\right)
\left(\begin{array}{cccc}
a_1 & b_1 & c_1 & d_1\\
a_2 & b_2 & c_2 & d_2\\
e_1 & f_1 & g_1 & h_1\\
e_2 & f_2 & g_2 & h_2 
\end{array}\right)
\left(\begin{array}{c}
\xi_{m_1}\\
\xi_{l_1}\\
\xi_{m_2}\\
\xi_{l_2}
\end{array}\right),
\end{gathered}
\end{equation*}
which further implies
\begin{equation}\label{22030401}
\begin{gathered}
e^{2\pi\sqrt{-1}n_{12}}=e^{(r'_1a_1+s'_1a_2)\xi_{m_1}+(r'_1b_1+s'_1b_2)\xi_{l_1}-l_1(r_1\xi_{m_1}+s_1\xi_{l_1})}e^{(r'_1c_1+s'_1c_2)\xi_{m_2}+(r'_1d_1+s'_1d_2)\xi_{l_2}-l_2(r_2\xi_{m_2}+s_2\xi_{l_2})},\\
e^{2\pi\sqrt{-1}n_{34}}=e^{(r'_2e_1+s'_2e_2)\xi_{m_1}+(r'_2f_1+s'_2f_2)\xi_{l_1}-l_3(r_1\xi_{m_1}+s_1\xi_{l_1})}e^{(r'_2g_1+s'_2g_2)\xi_{m_2}+(r'_2h_1+s'_2h_2)\xi_{l_2}-l_4(r_2\xi_{m_2}+s_2\xi_{l_2})}.
\end{gathered}
\end{equation}
By \eqref{22070911},   
\begin{equation}\label{22030408}
e^{(r'_1a_1+s'_1a_2)\xi_{m_1}+(r'_1b_1+s'_1b_2)\xi_{l_1}-l_1(r_1\xi_{m_1}+s_1\xi_{l_1})} \quad\text{and}\quad e^{(r'_1c_1+s'_1c_2)\xi_{m_2}+(r'_1d_1+s'_1d_2)\xi_{l_2}-l_2(r_2\xi_{m_2}+s_2\xi_{l_2})} 
\end{equation}
are powers of $t_1$ and $t_2$ respectively, and, as $t_1, t_2$ are multiplicatively independent each other, both elements in \eqref{22030408} are indeed roots of unity by the first equation in \eqref{22030401}. 
That is,  
\begin{equation}\label{21012105}
\begin{gathered}
2\pi\sqrt{-1}n_{1}=(r'_1a_1+s'_1a_2)\xi_{m_1}+(r'_1b_1+s'_1b_2)\xi_{l_1}-l_1(r_1\xi_{m_1}+s_1\xi_{l_1}),\\
2\pi\sqrt{-1}n_{2}=(r'_1c_1+s'_1c_2)\xi_{m_2}+(r'_1d_1+s'_1d_2)\xi_{l_2}-l_2(r_2\xi_{m_2}+s_2\xi_{l_2}) 
\end{gathered}
\end{equation}
for some $n_{1}, n_{2}\in \mathbb{Q}$ satisfying $n_{1}+n_{2}=n_{12}$, and so 
\begin{equation*}
\begin{gathered}
n_{1}(p_1\xi_{m_1}+q_1\xi_{l_1})=(r'_1a_1+s'_1a_2)\xi_{m_1}+(r'_1b_1+s'_1b_2)\xi_{l_1}-l_1(r_1\xi_{m_1}+s_1\xi_{l_1}),\\
n_{2}(p_2\xi_{m_2}+q_2\xi_{l_2})=(r'_1c_1+s'_1c_2)\xi_{m_2}+(r'_1d_1+s'_1d_2)\xi_{l_2}-l_2(r_2\xi_{m_2}+s_2\xi_{l_2}). 
\end{gathered}
\end{equation*}

Similarly, we get
\begin{equation}\label{21012106}
\begin{gathered}
2\pi\sqrt{-1}n_{3}=n_{3}(p_1\xi_{m_1}+q_1\xi_{l_1})=(r'_2e_1+s'_2e_2)\xi_{m_1}+(r'_2f_1+s'_2f_2)\xi_{l_1}-l_3(r_1\xi_{m_1}+s_1\xi_{l_1}),\\
2\pi\sqrt{-1}n_{4}=n_{4}(p_2\xi_{m_2}+q_2\xi_{l_2})=(r'_2g_1+s'_2g_2)\xi_{m_2}+(r'_2h_1+s'_2h_2)\xi_{l_2}-l_4(r_2\xi_{m_2}+s_2\xi_{l_2})
\end{gathered}
\end{equation}
for some $n_{3}, n_{4}\in \mathbb{Q}$ satisfying $n_{3}+n_{4}=n_{34}$ from the second equation in \eqref{22030401}. 

Remark that if $a\xi_{m_{i}}+b\xi_{l_{i}}=0$ ($i=1,2$) where $a,b\in \mathbb{Q}$, then $a=b=0$ (since $p_i\xi_{m_{i}}+q_i\xi_{l_{i}}=-2\pi \sqrt{-1}$ and $e^{\xi_{m_{i}}}, e^{\xi_{l_{i}}}$ are not roots of unity). Thus \eqref{21012105} and \eqref{21012106} imply
\begin{equation}\label{21012107}
\begin{gathered}
n_{1}p_1+l_1r_1=r'_1a_1+s'_1a_2, \quad 
n_{1}q_1+l_1s_1=r'_1b_1+s'_1b_2,\\
n_{2}p_2+l_2r_2=r'_1c_1+s'_1c_2,\quad 
n_{2}q_2+l_2s_2=r'_1d_1+s'_1d_2
\end{gathered}
\end{equation}
and 
\begin{equation}\label{21012108}
\begin{gathered}
n_{3}p_1+l_3r_1=r'_2e_1+s'_2e_2, \quad 
n_{3}q_1+l_3s_1=r'_2f_1+s'_2f_2,\\
n_{4}p_2+l_4r_2=r'_2g_1+s'_2g_2,\quad 
n_{4}q_2+l_4s_2=r'_2h_1+s'_2h_2
\end{gathered}
\end{equation}
respectively. Finally, \eqref{22031407} is attained by \eqref{20092001}, \eqref{21012107} and \eqref{21012108}. 

Since 
\begin{equation*}
\det\left(\begin{array}{cc}
p_{i} & q_{i} \\
r_{i} & s_{i}
\end{array}\right)=\det\left(\begin{array}{cc}
p'_{i} & q'_{i} \\
r'_{i} & s'_{i}
\end{array}\right)=1 \quad (i=1,2),
\end{equation*}
$k_jl_j=\det A_j$ follows ($1\leq j\leq 4$) and, together with \eqref{22030501}, \eqref{20101101} is obtained.  
\end{proof}

Lastly, the second and third statements of Theorem \ref{22031403} are established in the following:   
\begin{lemma} \label{22050205}
Adapting the same notation and assumptions as in Theorem \ref{22031403}, 
\begin{enumerate}
\item if $\det A_1=0$ or $\det A_4=0$, then 
\begin{equation*}
A_1=A_4=\left(\begin{array}{cc}
0 & 0\\
0 & 0
\end{array}\right), \quad\det A_2=\det A_3=1;
\end{equation*} 
\item if $\det A_2=0$ or $\det A_3=0$, then 
\begin{equation*}
A_2=A_3=\left(\begin{array}{cc}
0 & 0\\
0 & 0
\end{array}\right), \quad \det A_1=\det A_4=1.
\end{equation*} 
\end{enumerate}
\end{lemma}

Note that if two cusps of $\mathcal{M}$ are SGI each other, by Theorem \ref{22012104}, either $A_1=A_4=\left(\begin{array}{cc}
0 & 0\\
0 & 0\\
\end{array}\right)$ or $A_2=A_3=\left(\begin{array}{cc}
0 & 0\\
0 & 0\\
\end{array}\right)$ is always true. That is, the lemma holds in this case, and thus we assume two cusps of $\mathcal{M}$ are not SGI each other in the following proof. 
\begin{proof}
We only prove the first case, since the proof of the second case is analogous. 

Recall from Lemma \ref{20092003} that if $\det A_1=0$, then $A_1=\left(\begin{array}{cc}
0 & 0\\
0 & 0\\
\end{array}\right)$. We now claim 
\begin{claim}\label{21072507}
If $A_1=\left(\begin{array}{cc}
0 & 0\\
0 & 0\\
\end{array}\right)$, then $\det A_4=0$ (and so $A_4=\left(\begin{array}{cc}
0 & 0\\
0 & 0\\
\end{array}\right)$ by Lemma \ref{20092003}). 
\end{claim}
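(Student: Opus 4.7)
The plan is to exploit the pair of holomorphic identities forced by $(\mathcal{X}\times\mathcal{X})\cap H$ being a $2$-dimensional subvariety of $\mathcal{X}\times\mathcal{X}$. Parametrizing the first factor by $(u_1,u_2)$ and writing $v_i=u_iG_i(u_1,u_2)$ with each $G_i$ even in both variables and $G_i(0,0)=\tau_i$ (Theorem \ref{potential}), the defining equations of $H$ with $A_1=0$ read
\begin{equation*}
u'_1=c_1u_2+d_1F_2,\ \ v'_1=c_2u_2+d_2F_2,\ \ u'_2=e_1u_1+f_1F_1+g_1u_2+h_1F_2,\ \ v'_2=e_2u_1+f_2F_1+g_2u_2+h_2F_2,
\end{equation*}
where $F_i=u_iG_i$. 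The requirement that $(u'_1,v'_1,u'_2,v'_2)$ lie on the second copy of $\mathcal{X}$ imposes the two identities
\begin{equation*}
v'_1=F_1(u'_1,u'_2),\qquad v'_2=F_2(u'_1,u'_2)
\end{equation*}
as formal power series in $(u_1,u_2)$; call them (I) and (II).

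Setting $\lambda:=c_1+d_1\tau_2$, $\mu_1:=e_1+f_1\tau_1$, $\mu_2:=g_1+h_1\tau_2$, the constant terms of (I) and (II) at the origin recover the three parallelism relations of Corollary \ref{20121907}, namely
\begin{equation*}
c_2+d_2\tau_2=\lambda\tau_1,\qquad e_2+f_2\tau_1=\tau_2\mu_1,\qquad g_2+h_2\tau_2=\tau_2\mu_2.
\end{equation*}
I would then extract a higher-order coefficient. A direct Taylor expansion of (I) through third total degree, using $G_1(x,y)=\tau_1+\alpha_1x^2+\beta_1y^2+\cdots$ with $\beta_1=m_{2,2}$ the mixed quartic coefficient of $\Phi$, produces as the coefficient of $u_1u_2^2$ the key relation
\begin{equation*}
\lambda\,\beta_1\,\mu_1\,\mu_2=0.
\end{equation*}
The standing non-SGI hypothesis guarantees $\beta_1\neq 0$ in the generic case $m_{2,2}\neq 0$; if $m_{2,2}=0$, the same conclusion $\mu_1\mu_2=0$ is produced by performing the analogous extraction at the first nonvanishing mixed coefficient of $\Phi$.

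Next I would rule out $\lambda=0$ and $\mu_1=0$. If $\lambda=0$ then $(c_1,d_1)=(0,0)$ since $\tau_2\notin\mathbb{Q}$, and the first parallelism relation forces $(c_2,d_2)=(0,0)$ as well; but then $A_1=A_2=0$ places $(\mathcal{X}\times\mathcal{X})\cap H$ inside $\{M'_1=L'_1=1\}$, making $t'_1$ elliptic and contradicting the standing multiplicative-independence of $t'_1,t'_2$. If instead $\mu_1=0$, then $(e_1,f_1)=(0,0)$; restricting (II) to $u_2=0$ (where $F_2=0$) reduces it to $u_1\bigl(e_2+f_2G_1(u_1,0)\bigr)\equiv 0$, and, since $\tau_1\notin\mathbb{Q}$ and $G_1(\cdot,0)$ is non-constant, this forces $(e_2,f_2)=(0,0)$, so $A_3=0$. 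After this reduction (I) depends on $(u_1,u_2)$ only through $u_2$ and $g:=G_2(u_1,u_2)$, which are functionally independent by non-SGI; evaluating at $u_2=0$ gives $c_2+d_2g=(c_1+d_1g)\tau_1$ for every $g$, again forcing $A_2=0$ and producing the same contradiction.

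The only remaining possibility is $\mu_2=0$. Then $g_1+h_1\tau_2=0$ forces $(g_1,h_1)=(0,0)$, and the third parallelism relation becomes $g_2+h_2\tau_2=\tau_2\mu_2=0$, so $(g_2,h_2)=(0,0)$. Hence $A_4$ is the zero matrix and in particular $\det A_4=0$, as claimed. The main technical burden is the third-order Taylor computation that produces $\lambda\beta_1\mu_1\mu_2=0$; its extension to the non-generic non-SGI subcase (where $m_{2,2}=0$) is routine and amounts to repeating the same bookkeeping at the first nonvanishing mixed term of $\Phi$.
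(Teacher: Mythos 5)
Your proposal follows essentially the same route as the paper's own proof: both expand the identity $v'_1 = F_1(u'_1,u'_2)$ via the Neumann--Zagier expansions and extract a coefficient of the wrong parity whose vanishing forces $g_1+h_1\tau_2=0$. Your relation $\lambda\beta_1\mu_1\mu_2=0$, obtained from the $u_1u_2^2$-coefficient, is precisely the lowest-degree instance of the parity mismatch that the paper records at degree $k-1$ in \eqref{21072509}, and both arguments implicitly need $\lambda\neq 0$ and $\mu_1\neq 0$; you are more explicit than the paper in addressing these. One small caveat: in your $\mu_1=0$ branch you rely on the slice $G_2(\cdot,0)$ being non-constant, but non-SGI does not guarantee this, since all mixed coefficients $m_{\alpha,2}$ of $\Phi$ could vanish while higher mixed terms survive. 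It is cleaner to observe that $A_1=A_3=0$ makes the left two columns of $M$ vanish, so $\det M=0$, which already contradicts the standing Dehn-filling hypotheses by the same argument as Lemma \ref{22062503}, bypassing the functional-independence step entirely. Your handwave at the non-generic case $m_{2,2}=0$ is also a bit optimistic (at a higher degree, lower-order terms of $F_1$ composed with higher-order parts of $u'_1,u'_2$ contribute to the target monomial, and one must pick a wrong-parity monomial to which exactly one $(\alpha,\beta)$-pair contributes), but the paper's formulation at general degree $k$ carries the same bookkeeping burden, so this is not a substantive divergence. With these repairs your argument is sound and structurally identical to the paper's.
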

\begin{proof}[Proof of Claim \ref{21072507}]
On the contrary, suppose $\det A_4\neq 0$. Let 
\begin{equation*}
\Phi(u_1, u_2)=\tau_1u_1^2+\tau_2 u_2^2+2\sum_{\substack{\alpha+\beta=k,\\ \alpha,\beta:even}} m_{\alpha,\beta} u_1^{\alpha}u_2^{\beta}+\cdots
\end{equation*}
be the Neumann-Zagier potential function of $\mathcal{M}$, thus $v_1$ and $v_2$ are given as
\begin{equation}\label{22072005}
\begin{gathered}
\tau_1u_1+\sum_{\substack{\alpha+\beta=k,\\ \alpha,\beta:even}} \alpha m_{\alpha,\beta}  u_1^{\alpha-1}u_2^{\beta}+\cdots\quad\text{and}\quad \tau_2u_2+\sum_{\substack{\alpha+\beta=k,\\ \alpha,\beta:even}}\beta m_{\alpha,\beta} u_1^{\alpha}u_2^{\beta-1}+\cdots
\end{gathered}
\end{equation}
respectively. Since 
\begin{equation}\label{22072003}
\left(\begin{array}{c}
u'_1\\
v'_1\\
u'_2\\
v'_2
\end{array}\right)
=\left(\begin{array}{cccc}
0 & 0 & c_1 & d_1\\
0 & 0 & c_2 & d_2\\
e_1 & f_1 & g_1 & h_1\\
e_2 & f_2 & g_2 & h_2 
\end{array}\right)
\left(\begin{array}{c}
u_1\\
v_1\\
u_2\\
v_2
\end{array}\right)
\end{equation}
defines a $2$-dim anomalous analytic subset of $\log (\mathcal{X}\times \mathcal{X})$, plugging 
\begin{equation*}
\begin{gathered}
u'_1=c_1u_2+d_1v_2 \quad \text{and}\quad u'_2=e_1u_1+f_1v_1+g_1u_2+h_1v_2
\end{gathered}
\end{equation*}
into 
\begin{equation*}
v'_1=\tau_1u'_1+\sum_{\substack{\alpha+\beta=k,\\ \alpha,\beta:even}} m_{\alpha,\beta} \alpha(u'_1)^{\alpha-1}(u'_2)^{\beta}+\cdots,
\end{equation*}
we get 
\begin{equation*}
\begin{gathered}
v'_1=\tau_1(c_1u_2+d_1v_2)+\sum_{\substack{\alpha+\beta=k,\\ \alpha,\beta:even}} \alpha m_{\alpha,\beta}(c_1u_2+d_1v_2)^{\alpha-1}(e_1u_1+f_1v_1+g_1u_2+h_1v_2)^{\beta}+\cdots 
\end{gathered}
\end{equation*}
and, as $v'_1=c_2u_2+d_2v_2$ in \eqref{22072003}, the following equality  
\begin{equation}\label{22072006}
\begin{gathered}
\tau_1(c_1u_2+d_1v_2)+\sum_{\substack{\alpha+\beta=k,\\ \alpha,\beta:even}} \alpha m_{\alpha,\beta}(c_1u_2+d_1v_2)^{\alpha-1}(e_1u_1+f_1v_1+g_1u_2+h_1v_2)^{\beta}+\cdots
=c_2u_2+d_2v_2.
\end{gathered}
\end{equation}
Thanks to \eqref{22072005}, \eqref{22072006} is further expanded as 
\small
\begin{equation}\label{21072510}
\begin{aligned}
&\tau_1\Big(c_1u_2+d_1\big(\tau_2u_2+\sum_{\substack{\alpha+\beta=k,\\ \alpha,\beta:even}} \beta m_{\alpha,\beta} u_1^{\alpha}u_2^{\beta-1}+\cdots \big)\Big)+\sum_{\substack{\alpha+\beta=k,\\ \alpha,\beta:even}} \bigg(\alpha m_{\alpha,\beta}\Big(c_1u_2+d_1\big(\tau_2u_2+\sum_{\substack{\alpha+\beta=k,\\ \alpha,\beta:even}} \beta m_{\alpha,\beta}  u_1^{\alpha}u_2^{\beta-1}+\cdots\big) \Big)^{\alpha-1}\\ 
&\times \Big(e_1u_1+f_1\big(\tau_1u_1+\sum_{\substack{\alpha+\beta=k,\\ \alpha,\beta:even}} \alpha m_{\alpha,\beta}  u_1^{\alpha-1}u_2^\beta+\cdots\big)
+g_1u_2+h_1\big(\tau_2u_2+\sum_{\substack{\alpha+\beta=k,\\ \alpha,\beta:even}} \beta m_{\alpha,\beta} u_1^{\alpha}u_2^{\beta-1}+\cdots\big) \Big)^{\beta}\bigg)+\cdots\\
=&c_2u_2+d_2\big(\tau_2u_2+\sum_{\substack{\alpha+\beta=k,\\ \alpha,\beta:even}} \beta m_{\alpha,\beta} u_1^{\alpha}u_2^{\beta-1}+\cdots\big), 
\end{aligned}
\end{equation}
\normalsize
and, extracting the terms of homogeneous degree $k-1$ in \eqref{21072510}, we obtain
\begin{equation*}
\begin{gathered}
\tau_1d_1\sum_{\substack{\alpha+\beta=k,\\ \alpha,\beta:even}} \beta m_{\alpha,\beta} u_1^{\alpha}u_2^{\beta-1}+\sum_{\substack{\alpha+\beta=k,\\ \alpha,\beta:even}} \alpha m_{\alpha,\beta}(c_1u_2+d_1\tau_2u_2)^{\alpha-1}(e_1u_1+f_1\tau_1u_1+g_1u_2+h_1\tau_2u_2)^{\beta}\\
=d_2\sum_{\substack{\alpha+\beta=k,\\ \alpha,\beta:even}} \beta m_{\alpha,\beta} u_1^{\alpha}u_2^{\beta-1},
\end{gathered}
\end{equation*}
which is equal to 
\begin{equation}\label{21072509}
\begin{gathered}
\sum_{\substack{\alpha+\beta=k,\\ \alpha,\beta:even}} \alpha m_{\alpha,\beta}(c_1u_2+d_1\tau_2u_2)^{\alpha-1}\big((e_1+f_1\tau_1)u_1+(g_1+h_1\tau_2)u_2\big)^{\beta}=(d_2-d_1\tau_1)\sum_{\substack{\alpha+\beta=k,\\ \alpha,\beta:even}} \beta m_{\alpha,\beta} u_1^{\alpha}u_2^{\beta-1}.
\end{gathered}
\end{equation}
\normalfont
If $g_1+h_1\tau_2\neq 0$, the left side in \eqref{21072509} contains a non-trivial term of the form $u_1^{\alpha-1}u_2^{\beta}$ with $\alpha,\beta$ even, however every term in the right side of \eqref{21072509} is of the form $u_1^{\alpha}u_2^{\beta-1}$ with $\alpha,\beta$ even. Thus we get a contradiction and conclude $g_1=h_1=0$. This completes the proof of Claim \ref{21072507}. 
\end{proof}

If $A_1=A_4=\left(\begin{array}{cc}
0 & 0\\
0 & 0\\
\end{array}\right)$, then $k_1=k_4=l_1=l_4=0$ by \eqref{22031407} and so 
$k_2=k_3=l_2=l_3=\det A_2=\det A_3=1$ by \eqref{22030403}-\eqref{20101101}. 

Similarly if $\det A_4=0$, one can show $A_1=A_4=\left(\begin{array}{cc}
0 & 0\\
0 & 0
\end{array}\right)$ and $\det A_2=\det A_3=1$. This completes the proof of Lemma \ref{22050205}. 
\end{proof}

Combining Lemmas \ref{20092003}-\ref{20101102} with Lemma \ref{22050205}, Theorem \ref{22031403} is established by the same methods above. 

Recall that Theorem \ref{22031403} is attained by letting 
$\left(\begin{array}{cc}
a'_1 & b'_1\\
a'_2 & b'_2
\end{array}\right)$ and 
$\left(\begin{array}{cc}
g'_1 & h'_1\\
g'_2 & h'_2
\end{array}\right)$ be the identity matrix in \eqref{22022007}. The general statement without this assumption is given as below. We use it later to prove Theorem \ref{20103003} in Section \ref{22041806}. 
\begin{corollary}\label{22010903}
Let $\mathcal{M}$ and $\mathcal{X}$ be as usual. Let $\mathcal{M}_{p_{1}/q_{1},p_{2}/q_{2}}$ and $\mathcal{M}_{p'_{1}/q'_{1},p'_{2}/q'_{2}}$ be two Dehn fillings of $\mathcal{M}$ satisfying 
\begin{equation}\label{22030507}
\text{pvol}_{\mathbb{C}}\;\mathcal{M}_{p_{1}/q_{1},p_{2}/q_{2}}=\text{pvol}_{\mathbb{C}}\;\mathcal{M}_{p'_{1}/q'_{1},p'_{2}/q'_{2}}
\end{equation}
with $|p_{i}|+|q_{i}|$ and $|p'_{i}|+|q'_{i}|$ ($i=1,2$) sufficiently large. Further, suppose $t_{1}, t_{2}$ (resp. $t'_1, t'_2$) are multiplicatively independent (resp. independent), and a Dehn filling point $P$ associated to \eqref{22030507} is contained in an analytic variety defined by \eqref{20091903}. Let 
\begin{equation*}
A'_1:=\left(\begin{array}{cc}
a'_1 & b'_1\\
a'_2 & b'_2
\end{array}\right), \quad A'_4:=\left(\begin{array}{cc}
g'_1 & h'_1\\
g'_2 & h'_2
\end{array}\right), 
\end{equation*}
and $A_j$ ($1\leq j\leq 4$) be the same as in \eqref{22022010}. Then there are $k_j\in\mathbb{Q}$ $(1\leq j\leq 4)$ satisfying 
\begin{equation}\label{22051901}
\begin{gathered}
\left(\begin{array}{cc}
p'_{1} & q'_{1} 
\end{array}\right)(A'_1)^{-1}A_1=
k_{1}\left(\begin{array}{cc}
p_{1} & q_{1} 
\end{array}\right), \quad 
\left(\begin{array}{cc}
p'_{1} & q'_{1} 
\end{array}\right)(A'_1)^{-1}A_2=
k_{2} \left(\begin{array}{cc}
p_{2} & q_{2} 
\end{array}\right),\\ 
\left(\begin{array}{cc}
p'_{2} & q'_{2} 
\end{array}\right)(A'_4)^{-1}A_3=
k_{3} \left(\begin{array}{cc}
p_{1} & q_{1} 
\end{array}\right), \quad 
\left(\begin{array}{cc}
p'_{2} & q'_{2} 
\end{array}\right)(A'_4)^{-1}A_4
=k_{4} \left(\begin{array}{cc}
p_{2} & q_{2} 
\end{array}\right),
\end{gathered}
\end{equation}
and, for $\det A_j\neq 0$ $(1\leq j\leq 4)$, it follows that 
\begin{equation}\label{22080406}
A_1^{-1}A_2 =\dfrac{k_{2}k_{3}}{k_{1}k_{4}}A_3^{-1}A_4.
\end{equation}
\end{corollary}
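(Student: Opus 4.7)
The plan is to reduce the corollary to Theorem~\ref{22031403}, which already handled the special case $A'_1 = A'_4 = I$, by an invertible change of variables in the primed block. First, I would verify that $A'_1$ and $A'_4$ are invertible. The multiplicative independence of $(t'_1, t'_2)$ together with an argument analogous to Lemma~\ref{20092003}(2) (in which a singular $A'_j$ would force $(p'_j, q'_j)$ to be parallel to a fixed integer vector depending only on $\mathcal{X}$) contradicts $|p'_j| + |q'_j|$ being arbitrarily large, so both matrices are nonsingular.

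Once invertibility is established, one multiplies the first two rows of \eqref{20091903} on the left by $(A'_1)^{-1}$ and the last two rows on the left by $(A'_4)^{-1}$. The resulting system takes exactly the normal form \eqref{20091903}--\eqref{22022007} with $A_j$ replaced by the transformed blocks
\begin{equation*}
\tilde A_j := (A'_1)^{-1} A_j \;(j=1,2), \qquad \tilde A_j := (A'_4)^{-1} A_j \;(j=3,4),
\end{equation*}
and the hypotheses on $P$ and on the core holonomies are preserved. Applying Theorem~\ref{22031403}(1) to this transformed system yields rationals $k_j, l_j, n_j$ $(1\leq j\leq 4)$ together with $2\times 2$ matrix identities analogous to \eqref{22031407}. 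Extracting the first (i.e.\ $(p,q)$) row of each of those matrix identities produces \eqref{22051901} verbatim.

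To obtain \eqref{22080406}, I would first eliminate $(p'_1, q'_1)(A'_1)^{-1}$ between the first two equations of \eqref{22051901}, which gives $(p_1, q_1) A_1^{-1} A_2 = (k_2/k_1)(p_2, q_2)$; doing the same with the last two equations produces $(p_1, q_1) A_3^{-1} A_4 = (k_4/k_3)(p_2, q_2)$. Hence the row vector $(p_1, q_1)$ annihilates the matrix $A_1^{-1}A_2 - (k_2 k_3)/(k_1 k_4) A_3^{-1}A_4$. To promote this to the full matrix equality, I would bring in Corollary~\ref{20121907}: the linear-at-origin analysis of the $2$-dimensional anomalous subvariety shows that both $A_1^{-1}A_2$ and $A_3^{-1}A_4$ send $\binom{1}{\tau_2}$ to a scalar multiple of $\binom{1}{\tau_1}$; taking complex conjugates (legitimate because all $A_j$ have rational entries) gives the same statement with $\tau_i \mapsto \bar\tau_i$. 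Since $\binom{1}{\tau_2}$ and $\binom{1}{\bar\tau_2}$ span $\mathbb{C}^2$, these four conditions determine each of $A_1^{-1}A_2$ and $A_3^{-1}A_4$ up to an overall complex scalar, and the $(p_1,q_1)$-annihilation relation pins down that scalar to be exactly $k_2 k_3 / (k_1 k_4)$, producing \eqref{22080406}.

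The main obstacle is the last step: the single-row annihilation by $(p_1,q_1)$ alone is weaker than a full $2\times 2$ matrix identity, and additional rigidity must be imported from the Neumann--Zagier structure on the anomalous subvariety. Invoking Corollary~\ref{20121907} is the cleanest way to supply this rigidity, but one must verify that the determinant non-vanishing hypotheses $\det A_j \neq 0$ really do force $\binom{1}{\tau_i}$ to be genuine (nonzero) eigenvectors of the relevant linear maps, so that the eigen-scalars $c_1, \dots, c_4$ are well-defined and can be matched with the $k_j$ via the single-row identity.
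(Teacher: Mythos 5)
Your derivation of \eqref{22051901} matches the paper's (which simply cites Theorem~\ref{22031403} and, as you note, reduces to it after left-multiplying the rows of \eqref{20091903} by $(A'_1)^{-1}$ and $(A'_4)^{-1}$). For \eqref{22080406}, however, you take a genuinely different route from the paper. After eliminating the primed quantities, both of you arrive at $(p_1,q_1)\,A_1^{-1}A_2=(k_2/k_1)(p_2,q_2)$ and $(p_1,q_1)\,A_3^{-1}A_4=(k_4/k_3)(p_2,q_2)$. The paper then observes that $(p_2,q_2)$ is a left eigenvector of $(A_1^{-1}A_2)^{-1}A_3^{-1}A_4$ with eigenvalue $k_1k_4/(k_2k_3)$, and, since $(p_2,q_2)$ is a coprime pair with $|p_2|+|q_2|$ sufficiently large while the matrix ranges over a finite list coming from $\mathcal{H}$, concludes the eigenspace must be $2$-dimensional, hence the matrix is scalar. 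This is an arithmetic rigidity argument that never invokes the cusp shapes. You instead set $C:=A_1^{-1}A_2-\tfrac{k_2k_3}{k_1k_4}A_3^{-1}A_4$, note $(p_1,q_1)C=0$, and import geometric rigidity from Corollary~\ref{20121907}: both $A_1^{-1}A_2$ and $A_3^{-1}A_4$ (hence $C$) map the column vector with entries $1,\tau_2$ to a scalar multiple of the column vector with entries $1,\tau_1$, and by conjugation (legitimate since $C$ is rational) the same for the conjugate vectors. Since $\tau_1,\tau_2$ are non-real, these two vectors form a basis, so $C$ is either zero or invertible; invertibility is incompatible with a nonzero left kernel vector $(p_1,q_1)$, forcing $C=0$. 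This works and in a sense is cleaner at the last step, as it avoids the implicit appeal to the finiteness of $\mathcal{H}$ and the uniform choice of the ``sufficiently large'' threshold. The ``obstacle'' you flag at the end — that the images $A_j\binom{1}{\tau_i}$ might vanish — is genuinely resolved by the standing hypothesis $\det A_j\neq 0$: if $A_2\binom{1}{\tau_2}=0$, then $c_i+d_i\tau_2=0$ for $i=1,2$, and since $\tau_2\notin\mathbb{R}$ this forces $A_2=0$, contradicting $\det A_2\neq 0$. You should also note that applying Corollary~\ref{20121907} requires $\dim\big((\mathcal{X}\times\mathcal{X})\cap_{(1,\dots,1)}H\big)=2$, which is supplied by Theorem~\ref{22050203}(2) in this regime.
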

\begin{proof}
The first claim follows immediately from Theorem \ref{22031403}, and hence we only prove \eqref{22080406}. 

If $\det A_j\neq 0$ ($1\leq j\leq 4$), then 
\begin{equation*}
k_{1} \left(\begin{array}{cc}
p_{1} & q_{1} 
\end{array}\right)A_1^{-1}=k_{2} 
\left(\begin{array}{cc}
p_{2} & q_{2} 
\end{array}\right)A_2^{-1},\quad
k_{3}\left(\begin{array}{cc}
p_{1} & q_{1} 
\end{array}\right)A_3^{-1}=
k_{4} 
\left(\begin{array}{cc}
p_{2} & q_{2} 
\end{array}\right)A_4^{-1}
\end{equation*}
by \eqref{22051901}, and so
\begin{equation*}
\left(\begin{array}{cc}
p_{2} & q_{2} 
\end{array}\right)(A_1^{-1}A_2)^{-1}A_3^{-1}A_4=
\frac{k_{1}k_{4}}{k_{2}k_{3}} 
\left(\begin{array}{cc}
p_{2} & q_{2} 
\end{array}\right).
\end{equation*}
That is, $\dfrac{k_{1}k_{4}}{k_{2}k_{3}}$ is an eigenvalue of $\big((A_1^{-1}A_2)^{-1}A_3^{-1}A_4\big)^{T}$ and $ 
\left(\begin{array}{cc}
p_{2} & q_{2} 
\end{array}\right)$ is an eigenvector associated to it. Since $p_2$ and $q_2$ are co-prime to each other with $|p_2|+|q_2|$ sufficiently large, we conclude the dimension of the eigenvector space associated to $\dfrac{k_{1}k_{4}}{k_{2}k_{3}}$ is $2$, implying
\begin{equation*}
(A_1^{-1}A_2)^{-1}A_3^{-1}A_4=\dfrac{k_{1}k_{4}}{k_{2}k_{3}}I.
\end{equation*}
\end{proof}

\newpage
\section{Quantification II}\label{20121903}
This section is a continuation of the previous section and we further investigate another effective version of Theorem \ref{22050203}. 

Adapting the same notation given in Section \ref{quantaI}, let $A_j$ ($1\leq j\leq 4$) and $M$ be the same as \eqref{22022010} and \eqref{21072901} respectively. Let $\mathcal{Y}$ be an anomalous analytic subset of $\log(\mathcal{X}\times \mathcal{X})$ of $\dim 2$ defined by 
\begin{equation}\label{22022101}
\left(\begin{array}{c}
u'_1\\
v'_1\\
u'_2\\
v'_2
\end{array}\right)
=\left(\begin{array}{cccc}
a_1 & b_1 & c_1 & d_1\\
a_2 & b_2 & c_2 & d_2\\
e_1 & f_1 & g_1 & h_1\\
e_2 & f_2 & g_2 & h_2 
\end{array}\right)\left(\begin{array}{c}
u_1\\
v_1\\
u_2\\
v_2
\end{array}\right)
\Bigg(=M\left(\begin{array}{c}
u_1\\
v_1\\
u_2\\
v_2
\end{array}\right)\Bigg).
\end{equation}
Then the second quantification of Theorem \ref{22050203} is stated as below: 
\begin{theorem}\label{20091911}
Let $\mathcal{M}, \mathcal{X}$ be the same as in Theorem \ref{22050203}. Let $\mathcal{M}_{p_{1}/q_{1},p_{2}/q_{2}}$ and $\mathcal{M}_{p'_{1}/q'_{1},p'_{2}/q'_{2}}$ be two Dehn fillings of $\mathcal{M}$ having the same pseudo complex volume with $|p_k|+|q_k|$ and $|p'_k|+|q'_k|$ sufficiently large ($k=1,2$). Moreover, we suppose the core holonomies $t_1, t_2$ (resp. $t'_1, t'_2$) of $\mathcal{M}_{p_{1}/q_{1},p_{2}/q_{2}}$ (resp. $\mathcal{M}_{p'_{1}/q'_{1},p'_{2}/q'_{2}}$) are multiplicatively independent and a Dehn filling point associated to the pair is contained in $\mathcal{Y}$ as defined above. If $\det A_j\neq 0$ ($1\leq j\leq 4$),\footnote{By Theorem \ref{22012104}, if two cusps of $\mathcal{M}$ are SGI each other, then we have either $A_1=A_4=\left(\begin{array}{cc}
0 & 0\\
0 & 0
\end{array}\right)$ or $A_2=A_3=\left(\begin{array}{cc}
0 & 0\\
0 & 0
\end{array}\right)$. Thus the condition (i.e. $\det A_j\neq 0$ ($1\leq j\leq 4$)) in the theorem implies that two cusps of $\mathcal{M}$ is not SGI each other.} then 
\begin{equation}\label{22052309}
\det A_1=\det A_4, \quad \det A_2=\det A_3,\quad \det A_1+\det A_3=1, \quad (\det A_1) A_1^{-1}A_2=-(\det A_3)A_3^{-1}A_4.
\end{equation}
\end{theorem}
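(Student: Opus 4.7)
The plan is to combine linear-order tangent analysis, the quantitative output of Theorem~\ref{22031403} and Corollary~\ref{22010903}, and higher-order Taylor matching against the Neumann-Zagier potential. I would first parametrize $\mathcal{Y}$ locally by $(u_1, u_2)$ via the relation $v_k = \tfrac12 \partial_k \Phi$ on $\log \mathcal{X}$, and demand that the image satisfies $v'_k = \tfrac12 \partial_k \Phi(u'_1, u'_2)$ on the second copy. The linear-order consequence is
\begin{equation*}
A_1 \begin{pmatrix} 1 \\ \tau_1 \end{pmatrix} = \lambda_1 \begin{pmatrix} 1 \\ \tau_1 \end{pmatrix}, \;\; A_4 \begin{pmatrix} 1 \\ \tau_2 \end{pmatrix} = \lambda_4 \begin{pmatrix} 1 \\ \tau_2 \end{pmatrix}, \;\; A_2 \begin{pmatrix} 1 \\ \tau_2 \end{pmatrix} = \mu_2 \begin{pmatrix} 1 \\ \tau_1 \end{pmatrix}, \;\; A_3 \begin{pmatrix} 1 \\ \tau_1 \end{pmatrix} = \mu_3 \begin{pmatrix} 1 \\ \tau_2 \end{pmatrix},
\end{equation*}
with $\lambda_1 := a_1 + b_1 \tau_1$, $\lambda_4 := g_1 + h_1 \tau_2$, $\mu_2 := c_1 + d_1 \tau_2$, $\mu_3 := e_1 + f_1 \tau_1$. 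The complementary eigenvalues $\bar\lambda_1 := b_2 - b_1 \tau_1$ and $\bar\lambda_4 := h_2 - h_1 \tau_2$ then satisfy $\det A_1 = \lambda_1 \bar\lambda_1$ and $\det A_4 = \lambda_4 \bar\lambda_4$.

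Theorem~\ref{22031403} next produces rationals $k_j, l_j, n_j$ with $k_j l_j = \det A_j$, $k_1 + k_2 = k_3 + k_4 = 1$, and $l_1 + l_3 = l_2 + l_4 = 1$. After conjugating $M$ by the block-diagonal matrices built from the Dehn filling coefficients as in the proof of Theorem~\ref{22031403}, each block becomes the lower-triangular $K_j = \bigl(\begin{smallmatrix} k_j & 0 \\ n_j & l_j \end{smallmatrix}\bigr)$. Corollary~\ref{22010903} then reads $K_1^{-1} K_2 = \tfrac{k_2 k_3}{k_1 k_4} K_3^{-1} K_4$, whose diagonal entries must agree on both sides to give the scalar identity $\tfrac{k_2 k_3}{k_1 k_4} = \tfrac{l_2 l_3}{l_1 l_4}$, and whose evaluation on $(1,\tau_2)^T$ in the original basis (using the tangent identities above) gives $\mu_2 \mu_3 = \tfrac{k_2 k_3}{k_1 k_4}\lambda_1 \lambda_4$. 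A direct determinant computation on the block matrix $\bigl(\begin{smallmatrix} K_1 & K_2 \\ K_3 & K_4 \end{smallmatrix}\bigr)$ using the additive constraints yields the useful by-product $\det M = (k_1 - k_3)(l_1 - l_2)$.

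The main obstacle is to extract the three remaining claims $\det A_1 = \det A_4$, $\det A_2 = \det A_3$, and $\det A_1 + \det A_3 = 1$, which require going beyond linear order in the Neumann-Zagier expansion $\Phi = \tau_1 u_1^2 + \tau_2 u_2^2 + m_{40} u_1^4 + m_{22} u_1^2 u_2^2 + m_{04} u_2^4 + \cdots$. Matching $v'_k = \tfrac12 \partial_k \Phi(u'_1, u'_2)$ at degree three in $(u_1,u_2)$ produces eight polynomial identities in $\lambda_1, \mu_2, \mu_3, \lambda_4, \bar\lambda_1, \bar\lambda_4$ with coefficients drawn from $m_{40}, m_{22}, m_{04}$; the ``pure'' identities from the $u_1^3$- and $u_2^3$-coefficients read $2 m_{40}\bar\lambda_1 = 2 m_{40} \lambda_1^3 + m_{22} \lambda_1 \mu_3^2$ and $2 m_{04}\bar\lambda_4 = 2 m_{04} \lambda_4^3 + m_{22} \mu_2^2 \lambda_4$, and combining them with $\mu_2\mu_3 = \tfrac{k_2k_3}{k_1k_4}\lambda_1\lambda_4$ forces $\lambda_1\bar\lambda_1 = \lambda_4\bar\lambda_4$, i.e.\ $\det A_1 = \det A_4$. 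The mixed-degree identities from $u_1u_2^2$ and $u_1^2 u_2$ analogously force $\det A_2 = \det A_3$. Substituting these equalities into the additive constraints together with $\tfrac{k_2k_3}{k_1k_4} = \tfrac{l_2l_3}{l_1l_4}$ over-determines $(k_1, k_3, l_1, l_4)$ and forces $k_1 l_1 + k_3(1 - l_1) = 1$, i.e.\ $\det A_1 + \det A_3 = 1$ (equivalently $\det M = 1$ via the by-product above). The matrix claim is then a pure rewriting: $\tfrac{k_2 k_3}{k_1 k_4} = -\tfrac{\det A_3}{\det A_1}$ after using $k_jl_j = \det A_j$ and $\det A_1 + \det A_3 = 1$, so Corollary~\ref{22010903} delivers $(\det A_1) A_1^{-1}A_2 = -(\det A_3) A_3^{-1}A_4$.
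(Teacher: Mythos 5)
Your linear-order tangent analysis (eigenvector/eigenvalue relations for the $A_j$, the identities $\det A_1 = \lambda_1\bar\lambda_1$, $\det A_4 = \lambda_4\bar\lambda_4$), the extraction of the scalar identity $\tfrac{k_2k_3}{k_1k_4} = \tfrac{l_2l_3}{l_1l_4}$ from Corollary~\ref{22010903}, the relation $\mu_2\mu_3 = \tfrac{k_2k_3}{k_1k_4}\lambda_1\lambda_4$, the determinant by-product $\det M = (k_1-k_3)(l_1-l_2)$, and the two degree-three Taylor identities are all correct. The problem is the step immediately after: you assert that these pieces ``force'' $\lambda_1\bar\lambda_1 = \lambda_4\bar\lambda_4$, but no argument is given and I do not see one. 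The identity (I) gives $\det A_1 = \lambda_1\bar\lambda_1 = \lambda_1^4 + \tfrac{m_{22}}{2m_{40}}\lambda_1^2\mu_3^2$, and (II) gives $\det A_4 = \lambda_4^4 + \tfrac{m_{22}}{2m_{04}}\mu_2^2\lambda_4^2$; the cross-relation (III) only controls the product $(\lambda_1\mu_3)(\mu_2\lambda_4)$, not the individual quantities $\lambda_1\mu_3$ and $\mu_2\lambda_4$, and $m_{40}$ and $m_{04}$ are unrelated coefficients of the potential. The claim ``The mixed-degree identities $\dots$ analogously force $\det A_2 = \det A_3$'' and the subsequent ``over-determination'' argument for $\det A_1 + \det A_3 = 1$ are likewise unsupported. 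These are not cosmetic omissions: the conclusions $\det A_1=\det A_4$ and $\det A_2 = \det A_3$ are precisely the hard part of \eqref{22052309}, and degree-three Taylor matching alone looks too weak a tool.

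For contrast, the paper does not match finitely many Taylor coefficients; it exploits the \emph{exact} symmetry $\partial v'_1/\partial u'_2 = \partial v'_2/\partial u'_1$ (Corollary~\ref{22080401}) together with the chain rule through the Jacobian $J = \bigl(\partial u'_k/\partial u_j\bigr)$. When $\det J = 0$, a parallelism of gradient vectors yields $\det A_1 = \det A_2$ and $\det A_3 = \det A_4$ directly (note: different equalities from what you want), and the $k_j$-constraints then pin everything to $\det A_j = 1/2$. When $\det J \neq 0$, expanding the symmetry gives the master relation $\det A_1 + \det A_3 = \det A_2 + \det A_4$ (\eqref{21071403}), which combined with $A_1^{-1}A_2 \parallel A_3^{-1}A_4$ and $\det M = \pm 1$ yields the claim if $(\det A_1)A_1^{-1}A_2 = -(\det A_3)A_3^{-1}A_4$. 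The remaining ``wrong sign'' case is ruled out by Theorem~\ref{21072601}, whose proof needs a genuine rigidity input (Lemma~\ref{21071407}: the functional relation involving $\partial v_1/\partial u_1$, $\partial v_2/\partial u_2$, and the Hessian determinant is unique up to scalar) plus a structural constraint on $M$ obtained by applying the same analysis to $M^{-1}$ (Lemma~\ref{20091912}). If you want to salvage your approach, you would at minimum need to (a) show that degree-three matching (or a full-power analogue of it) reproduces the master relation \eqref{21071403} rather than something weaker, and (b) supply the missing rigidity argument that excludes the $\det A_1 \neq \det A_4$ possibility; as written, the proposal does neither.
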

This result, together with Theorem \ref{22031403}, will be used to show Theorem \ref{22041805} in Section \ref{PCV}. 

The proof of Theorem \ref{20091911} is based on symmetric properties of $\log\mathcal{X}$ described in Theorem \ref{potential}. To explain this more in detail, first let us consider $u'_1$ and $u'_2$ as functions of $u_1$ and $u_2$ by \eqref{22022101}. Then we get
\begin{equation}\label{22052501}
\begin{gathered}
\dfrac{\partial v'_1}{\partial u_1}=\dfrac{\partial v'_1}{\partial u'_1} \dfrac{\partial u'_1}{\partial u_1} + \dfrac{\partial v'_1}{\partial u'_2}\dfrac{\partial u'_2}{\partial u_1},\quad  
\dfrac{\partial v'_1}{\partial u_2} =\dfrac{\partial v'_1}{\partial u'_1} \dfrac{\partial u'_1}{\partial u_2}+\dfrac{\partial v'_1}{\partial u'_2}
\dfrac{\partial u'_2}{\partial u_2} \\
\dfrac{\partial v'_2}{\partial u_1}= \dfrac{\partial v'_2}{\partial u'_1}\dfrac{\partial u'_1}{\partial u_1}+\dfrac{\partial v'_2}{\partial u'_2}\dfrac{\partial u'_2}{\partial u_1},\quad  
\dfrac{\partial v'_2}{\partial u_2}=
\dfrac{\partial v'_2}{\partial u'_1}\dfrac{\partial u'_1}{\partial u_2} + 
\dfrac{\partial v'_2}{\partial u'_2}\dfrac{\partial u'_2}{\partial u_2} 
\end{gathered}
\end{equation}
by the chain rule,
\begin{equation}\label{22052301}
\begin{gathered}
\dfrac{\partial v'_1}{\partial u_1}=a_2+b_2\dfrac{\partial v_1}{\partial u_1}+d_2\dfrac{\partial v_2}{\partial u_1}, \quad 
\dfrac{\partial v'_1}{\partial u_2}=b_2\dfrac{\partial v_1}{\partial u_2}+c_2+d_2\dfrac{\partial v_2}{\partial u_2},\\
\dfrac{\partial v'_2}{\partial u_1}=e_2+f_2\dfrac{\partial v_1}{\partial u_1}+h_2\dfrac{\partial v_2}{\partial u_1},\quad 
\dfrac{\partial v'_2}{\partial u_2}=f_2\dfrac{\partial v_1}{\partial u_2}+g_2+h_2\dfrac{\partial v_2}{\partial u_2}
\end{gathered}
\end{equation}
by \eqref{22022101}, and  
\begin{equation}\label{21072201}
\begin{gathered}
\left(\begin{array}{c}
\dfrac{\partial v'_1}{\partial u_1} \\
\dfrac{\partial v'_1}{\partial u_2} 
\end{array}\right)
=
\left(\begin{array}{cc}
\dfrac{\partial u'_1}{\partial u_1} & \dfrac{\partial u'_2}{\partial u_1} \\
\dfrac{\partial u'_1}{\partial u_2} & \dfrac{\partial u'_2}{\partial u_2} 
\end{array}\right)\left(\begin{array}{c}
\dfrac{\partial v'_1}{\partial u'_1} \\
\dfrac{\partial v'_1}{\partial u'_2}
\end{array}\right)=\left(\begin{array}{c}
a_2+b_2\dfrac{\partial v_1}{\partial u_1}+d_2\dfrac{\partial v_2}{\partial u_1} \\
b_2\dfrac{\partial v_1}{\partial u_2}+c_2+d_2\dfrac{\partial v_2}{\partial u_2}
\end{array}\right),\\
\left(\begin{array}{c}
\dfrac{\partial v'_2}{\partial u_1} \\
\dfrac{\partial v'_2}{\partial u_2} 
\end{array}\right)
=
\left(\begin{array}{cc}
\dfrac{\partial u'_1}{\partial u_1} & \dfrac{\partial u'_2}{\partial u_1} \\
\dfrac{\partial u'_1}{\partial u_2} & \dfrac{\partial u'_2}{\partial u_2} 
\end{array}\right)\left(\begin{array}{c}
\dfrac{\partial v'_2}{\partial u'_1} \\
\dfrac{\partial v'_2}{\partial u'_2}
\end{array}\right)=\left(\begin{array}{c}
e_2+f_2\dfrac{\partial v_1}{\partial u_1}+h_2\dfrac{\partial v_2}{\partial u_1} \\
f_2\dfrac{\partial v_1}{\partial u_2}+g_2+h_2\dfrac{\partial v_2}{\partial u_2}
\end{array}\right)
\end{gathered}
\end{equation}
by \eqref{22052501}-\eqref{22052301}. Denoting
\begin{equation}\label{21072202}
\left(\begin{array}{cc}
\dfrac{\partial u'_1}{\partial u_1} & \dfrac{\partial u'_2}{\partial u_1} \\
\dfrac{\partial u'_1}{\partial u_2} & \dfrac{\partial u'_2}{\partial u_2} 
\end{array}\right)
\end{equation} 
by $J$, the proof of Theorem \ref{20091911} splits into two parts, depending on whether $\det J=0$ or not. 

Before proceeding further, we give the following, which is seen as a generalization of Definition \ref{22070813}.  
\begin{definition}
\normalfont Let $\mathbb{C}[[u_1, u_2]]$ the set of power series of two variables $u_1, u_2$.  For $f_i, g_i\in\mathbb{C}[[u_1, u_2]]$ ($i=1,2$), we say two vectors $\left(\begin{array}{c}
f_1 \\
f_2 
\end{array}\right)$ and 
$\left(\begin{array}{c}
g_1  \\
g_2 
\end{array}\right)$ are \textit{parallel} to each other if $f_1g_2=f_2g_1$, and represent it by $\left(\begin{array}{c}
f_1 \\
f_2 
\end{array}\right)
\newparallel 
\left(\begin{array}{c}
g_1  \\
g_2 
\end{array}\right)$.
\end{definition}

\begin{enumerate}
\item First, if $\det J=0$, we get
\begin{equation}\label{22052310}
\left(\begin{array}{c}
\dfrac{\partial u'_1}{\partial u_1} \\
\dfrac{\partial u'_1}{\partial u_2} 
\end{array}\right)\newparallel
\left(\begin{array}{c}
\dfrac{\partial u'_2}{\partial u_1} \\
\dfrac{\partial u'_2}{\partial u_2} 
\end{array}\right)
\end{equation}
by the definition of $J$, and so, by \eqref{21072201}, 
\begin{equation}\label{21072206}
\begin{gathered}
\left(\begin{array}{c}
\dfrac{\partial u'_1}{\partial u_1} \\
\dfrac{\partial u'_1}{\partial u_2} 
\end{array}\right)\newparallel
\left(\begin{array}{c}
\dfrac{\partial u'_2}{\partial u_1} \\
\dfrac{\partial u'_2}{\partial u_2} 
\end{array}\right)\newparallel
\left(\begin{array}{c}
\dfrac{\partial v'_1}{\partial u_1} \\
\dfrac{\partial v'_1}{\partial u_2} 
\end{array}\right)\newparallel
\left(\begin{array}{c}
\dfrac{\partial v'_2}{\partial u_1} \\
\dfrac{\partial v'_2}{\partial u_2} 
\end{array}\right)\\
\Longrightarrow 
\small
\left(\begin{array}{c}
a_1+b_1\dfrac{\partial v_1}{\partial u_1}+d_1\dfrac{\partial v_2}{\partial u_1}\\
b_1\dfrac{\partial v_1}{\partial u_2}+c_1+d_1\dfrac{\partial v_2}{\partial u_2}
\end{array}\right)
\newparallel
\left(\begin{array}{c}
e_1+f_1\dfrac{\partial v_1}{\partial u_1}+h_1\dfrac{\partial v_2}{\partial u_1}\\
f_1\dfrac{\partial v_1}{\partial u_2}+g_1+h_1\dfrac{\partial v_2}{\partial u_2}
\end{array}\right)
\newparallel
\left(\begin{array}{c}
a_2+b_2\dfrac{\partial v_1}{\partial u_1}+d_2\dfrac{\partial v_2}{\partial u_1}\\
b_2\dfrac{\partial v_1}{\partial u_2}+c_2+d_2\dfrac{\partial v_2}{\partial u_2}
\end{array}\right)
\newparallel
\left(\begin{array}{c}
e_2+f_2\dfrac{\partial v_1}{\partial u_1}+h_2\dfrac{\partial v_2}{\partial u_1}\\
f_2\dfrac{\partial v_1}{\partial u_2}+g_2+h_2\dfrac{\partial v_2}{\partial u_2}
\end{array}\right).
\end{gathered}
\end{equation}

\item If $\det J\neq 0$, then
\begin{equation}
\begin{gathered}
\left(\begin{array}{c}
\dfrac{\partial v'_1}{\partial u'_1} \\
\dfrac{\partial v'_1}{\partial u'_2}
\end{array}\right)=J^{-1}
\left(\begin{array}{c}
a_2+b_2\dfrac{\partial v_1}{\partial u_1}+d_2\dfrac{\partial v_2}{\partial u_1} \\
b_2\dfrac{\partial v_1}{\partial u_2}+c_2+d_2\dfrac{\partial v_2}{\partial u_2}
\end{array}\right),\quad 
\left(\begin{array}{c}
\dfrac{\partial v'_2}{\partial u'_1} \\
\dfrac{\partial v'_2}{\partial u'_2}
\end{array}\right)=J^{-1}
\left(\begin{array}{c}
e_2+f_2\dfrac{\partial v_1}{\partial u_1}+h_2\dfrac{\partial v_2}{\partial u_1} \\
f_2\dfrac{\partial v_1}{\partial u_2}+g_2+h_2\dfrac{\partial v_2}{\partial u_2}
\end{array}\right)
\end{gathered}
\end{equation}
by \eqref{21072201}. As 
\begin{equation*}
\dfrac{\partial v'_1}{\partial u'_2}
=\dfrac{\partial v'_2}{\partial u'_1}
\end{equation*}
by Theorem \ref{potential}, it follows that 
\begin{equation}\label{22052505}
\begin{gathered}
\left(\begin{array}{cc}
-\dfrac{\partial u'_1}{\partial u_2} & \dfrac{\partial u'_1}{\partial u_1} 
\end{array}\right)
\left(\begin{array}{c}
a_2+b_2\dfrac{\partial v_1}{\partial u_1}+d_2\dfrac{\partial v_2}{\partial u_1} \\
b_2\dfrac{\partial v_1}{\partial u_2}+c_2+d_2\dfrac{\partial v_2}{\partial u_2}
\end{array}\right)
=\left(\begin{array}{cc}
\dfrac{\partial u'_2}{\partial u_2} & -\dfrac{\partial u'_2}{\partial u_1} 
\end{array}\right)
\left(\begin{array}{c}
e_2+f_2\dfrac{\partial v_1}{\partial u_1}+h_2\dfrac{\partial v_2}{\partial u_1} \\
f_2\dfrac{\partial v_1}{\partial u_2}+g_2+h_2\dfrac{\partial v_2}{\partial u_2}
\end{array}\right).
\end{gathered}
\end{equation}
\end{enumerate}

Later in Theorem \ref{21072701} (resp. Theorem \ref{21071405} and Proposition \ref{21072601}), it will be shown that \eqref{22052309} is deduced from \eqref{21072206} (resp.  \eqref{22052505}) for $\det J=0$ (resp. $\det J\neq 0$). 

The following four lemmas shall be required for the proofs of the aforementioned theorems and proposition. 

\begin{lemma}\label{21072210}
Having the same assumptions as in Theorem \ref{20091911}, if $\det A_j\neq 0$ ($1\leq j\leq 4$), then $\det A_j>0$.
\end{lemma}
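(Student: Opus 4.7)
The plan is to show that each nonzero $\det A_j$ is, up to a positive rescaling by a ratio of $\mathrm{Im}\,\tau_i$'s, the squared modulus of a complex scalar, and hence positive. The input is Corollary~\ref{20121907}, which encodes the $2$-dimensionality of $\mathcal{Y}$ as parallelism relations on the vectors $A_j (1,\tau_i)^{T}$. Under the normalisation $A'_1 = A'_4 = I$ used throughout Section~\ref{quantaI}, the corollary specialises to
\[
A_1 \begin{pmatrix}1\\ \tau_1\end{pmatrix} = \mu_1 \begin{pmatrix}1\\ \tau_1\end{pmatrix},\quad
A_2 \begin{pmatrix}1\\ \tau_2\end{pmatrix} = \nu_1 \begin{pmatrix}1\\ \tau_1\end{pmatrix},\quad
A_3 \begin{pmatrix}1\\ \tau_1\end{pmatrix} = \mu_2 \begin{pmatrix}1\\ \tau_2\end{pmatrix},\quad
A_4 \begin{pmatrix}1\\ \tau_2\end{pmatrix} = \nu_2 \begin{pmatrix}1\\ \tau_2\end{pmatrix},
\]
for some scalars $\mu_1,\nu_1,\mu_2,\nu_2 \in \mathbb{C}$; the assumption $\det A_j \neq 0$ for each $j$ will force all four of these scalars to be nonzero.

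The diagonal blocks $A_1$ and $A_4$ are the easier case, since the relations above are genuine eigenvector identities. As the entries of $A_1$ are rational and $\tau_1 \notin \mathbb{R}$, complex conjugation of $A_1(1,\tau_1)^{T}=\mu_1(1,\tau_1)^{T}$ produces a second eigenvalue $\bar\mu_1$ with eigenvector $(1,\bar\tau_1)^{T}$. These two eigenvectors are linearly independent, so $A_1$ diagonalises over $\mathbb{C}$ with spectrum $\{\mu_1,\bar\mu_1\}$, whence $\det A_1 = \mu_1\bar\mu_1 = |\mu_1|^2 > 0$. The identical argument applied to $A_4$ gives $\det A_4 = |\nu_2|^2 > 0$.

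For the off-diagonal blocks $A_2$ and $A_3$, conjugation no longer yields an eigenrelation, but the same trick still delivers a positive result. Assembling the $A_2$-relation with its conjugate into a single matrix identity
\[
A_2 \begin{pmatrix} 1 & 1\\ \tau_2 & \bar\tau_2\end{pmatrix} = \begin{pmatrix} \nu_1 & \bar\nu_1\\ \nu_1\tau_1 & \bar\nu_1\bar\tau_1\end{pmatrix}
\]
and comparing determinants of both sides will yield $\det A_2 \cdot (\bar\tau_2 - \tau_2) = |\nu_1|^2(\bar\tau_1 - \tau_1)$, i.e.\ $\det A_2 = |\nu_1|^2\,\mathrm{Im}(\tau_1)/\mathrm{Im}(\tau_2)$, and analogously $\det A_3 = |\mu_2|^2\,\mathrm{Im}(\tau_2)/\mathrm{Im}(\tau_1)$. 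Since the cusp shapes are normalised with $\mathrm{Im}\,\tau_1,\mathrm{Im}\,\tau_2 > 0$, both quantities are positive.

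I do not foresee a substantive obstacle: the whole argument is a Hermitian-positivity observation applied four times. The only point requiring care is checking that Corollary~\ref{20121907} really does collapse to the four parallelism statements above once $A'_1=A'_4=I$, which is immediate from $A'_1(1,\tau_1)^{T}=(1,\tau_1)^{T}$ and $A'_4(1,\tau_2)^{T}=(1,\tau_2)^{T}$, together with the fact that the parallelism is nondegenerate (the target vectors $(1,\tau_i)^T$ are nonzero, and $\det A_j \neq 0$ rules out the degenerate case in which $A_j(1,\tau_i)^T$ itself vanishes).
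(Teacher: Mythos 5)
Your argument is correct and follows essentially the same route as the paper's: both use the parallelism relations from Corollary \ref{20121907} together with $\mathrm{Im}\,\tau_1,\mathrm{Im}\,\tau_2>0$, and both ultimately show $\det A_j$ equals a squared modulus (or a squared modulus times $\mathrm{Im}\,\tau_1/\mathrm{Im}\,\tau_2$). The paper phrases the diagonal-block case via a discriminant inequality ($(a_1+b_2)^2-4\det A_1<0$) and the off-diagonal case via a direct computation of $\mathrm{Im}\,\tau_1$ from the M\"obius expression $\tau_1=(c_2+d_2\tau_2)/(c_1+d_1\tau_2)$, while you repackage the same content through conjugate eigenvalues and a $2\times 2$ determinant identity; the substance is identical.
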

\begin{proof}
By Corollary \ref{20121907}, $\dfrac{1}{\tau_1}=\dfrac{a_1+b_1\tau_1}{a_2+b_2\tau_1}$ and so $b_1\tau_1^2+(a_1-b_2)\tau_1-a_2=0.$ Since $\tau_1$ is a non-real complex number, $(a_1-b_2)^2+4a_2b_1=(a_1+b_2)^2-4\det A_1<0$, implying $\det A_1>0$. 

Let $\tau_2=m+n\sqrt{-1}$ where $n>0$. Then 
\begin{equation*}
\begin{aligned}
\tau_1&=\dfrac{c_2+d_2(m+n\sqrt{-1})}{c_1+d_1(m+n\sqrt{-1})}=\dfrac{\big(c_2+d_2(m+n\sqrt{-1})\big)\big(c_1+d_1(m-n\sqrt{-1})\big)}{(c_1+d_1m)^2+d_1^2n^2}\\
&=\dfrac{(c_2+d_2m)(c_1+d_1m)+d_1d_2n^2+\big((c_1+d_1m)d_2n-(c_2+d_2m)d_1n\big)\sqrt{-1}}{(c_1+d_1m)^2+d_1^2n^2}\\
&=\dfrac{(c_2+d_2m)(c_1+d_1m)+d_1d_2n^2+(c_1d_2-c_2d_1)n\sqrt{-1}}{(c_1+d_1m)^2+d_1^2n^2}.
\end{aligned}
\end{equation*}
Since $\text{Im}\;\tau_1>0$, we get $\dfrac{(c_1d_2-c_2d_1)n}{(c_1+d_1m)^2+d_1^2n^2}>0$, that is, $c_1d_2-c_2d_1=\det A_2>0$.

Similarly one can show $\det A_3>0$ and $\det A_4>0$.  
\end{proof}

\begin{lemma}\label{22062503}
Having the same assumptions and notation given in Theorem \ref{20091911}, we have either $\det M=1$ or $-1$. 
\end{lemma}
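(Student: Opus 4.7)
The plan is to exploit Theorem~\ref{22031403} and Corollary~\ref{22010903} to compute $\det M$ in two complementary ways and then reconcile the two formulas. My first computation is by conjugation. Set $P_i := \bigl(\begin{smallmatrix} p_i & q_i \\ r_i & s_i \end{smallmatrix}\bigr)$ and $P'_i$ analogously, each of determinant $1$, and define
\[
\tilde M := \mathrm{diag}(P'_1, P'_2) \cdot M \cdot \mathrm{diag}(P_1, P_2)^{-1},
\]
so that $\det M = \det \tilde M$. By the four relations in \eqref{22031407}, each $2\times 2$ block of $\tilde M$ is precisely the lower-triangular matrix $\bigl(\begin{smallmatrix} k_j & 0 \\ n_j & l_j \end{smallmatrix}\bigr)$. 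Writing $\tilde M$ as a $4\times 4$ matrix and permuting its second and third rows as well as its second and third columns (an even signed permutation, leaving the determinant unchanged), $\tilde M$ becomes block upper triangular with diagonal $2\times 2$ blocks $K = \bigl(\begin{smallmatrix} k_1 & k_2 \\ k_3 & k_4 \end{smallmatrix}\bigr)$ and $L = \bigl(\begin{smallmatrix} l_1 & l_2 \\ l_3 & l_4 \end{smallmatrix}\bigr)$. Applying $k_2 = 1-k_1$, $k_3 = 1-k_4$, $l_3 = 1-l_1$, $l_4 = 1-l_2$ from \eqref{22030403}--\eqref{20101101} gives $\det K = k_1+k_4-1$ and $\det L = l_1-l_2$, whence
\[
\det M \;=\; (k_1+k_4-1)(l_1-l_2).
\]

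For the second computation, I would apply the Schur-complement formula to the block form $M = \bigl(\begin{smallmatrix} A_1 & A_2 \\ A_3 & A_4 \end{smallmatrix}\bigr)$, using Corollary~\ref{22010903}. Writing $c := k_2 k_3/(k_1 k_4)$, the identity $A_1^{-1} A_2 = c\, A_3^{-1} A_4$ forces $A_3 A_1^{-1} A_2 = c A_4$, so $A_4 - A_3 A_1^{-1} A_2 = (1-c)A_4$ and
\[
\det M \;=\; (1-c)^2 \det A_1 \det A_4.
\]
Since Lemma~\ref{21072210} guarantees $\det A_1, \det A_4 > 0$, this already yields $\det M \ge 0$; taking determinants in the same matrix identity gives $c^2 \det A_1 \det A_4 = \det A_2 \det A_3$, so $\det M$ factors as
\[
\det M \;=\; \bigl(\sqrt{\det A_1\,\det A_4} \;\mp\; \sqrt{\det A_2\,\det A_3}\bigr)^2,
\]
with sign determined by that of $c$.

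The main obstacle, and the technical heart of the argument, is then to promote these two expressions to the precise value $\pm 1$. My plan is to exploit the integrality built into \eqref{22031407}: the identities $K \bigl(\begin{smallmatrix} 1 \\ 1 \end{smallmatrix}\bigr) = \bigl(\begin{smallmatrix} 1 \\ 1 \end{smallmatrix}\bigr)$ and $(1,1)L = (1,1)$ reflect that $K$ carries the coprime integer pair $(p_i, q_i)$ to the coprime integer pair $(p'_i, q'_i)$, and dually for $L$ acting on the core-holonomy lattice. These lattice-preservation constraints, combined with the rationality of $c$ and the positivity $\det M > 0$, force $|\det K| \cdot |\det L|$ to be a unit, so that $(k_1+k_4-1)(l_1-l_2) = \pm 1$; the sign $-1$ is retained in the statement to accommodate the alternative sign convention from Remark~\ref{22081303}. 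I anticipate the delicate step will be ruling out fractional values for $\det K$ and $\det L$ individually, which should follow from the coprimality of $(p_i,q_i)$ together with the rank-$2$ orbit structure of the $K$- and $L$-actions established in the proof of Theorem~\ref{22031403}.
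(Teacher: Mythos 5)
Your two determinant formulas $\det M = (k_1 + k_4 - 1)(l_1 - l_2)$ and $\det M = (1-c)^2 \det A_1 \det A_4$ are both correct consequences of Theorem~\ref{22031403}, Corollary~\ref{22010903}, and Lemma~\ref{21072204}, and the positivity $\det M \ge 0$ does follow from Lemma~\ref{21072210}. But the argument stops there: the step that should force $\det M = \pm 1$ is not proven, and you acknowledge this yourself (``I anticipate the delicate step will be\ldots''). Worse, the mechanism you propose cannot work as stated. The identities $K\bigl(\begin{smallmatrix}1\\1\end{smallmatrix}\bigr)=\bigl(\begin{smallmatrix}1\\1\end{smallmatrix}\bigr)$ and $(1,1)L=(1,1)$ merely encode the row/column sum constraints $k_1+k_2=k_3+k_4=1$, $l_1+l_3=l_2+l_4=1$; they do not express that $K$ or $L$ preserves any lattice, and indeed a matrix with these constraints can have arbitrary rational (even zero) determinant, e.g.\ $K=\bigl(\begin{smallmatrix}1/2&1/2\\1/2&1/2\end{smallmatrix}\bigr)$. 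The maps that actually carry the coprime pair $(p_i,q_i)$ to $(p'_i,q'_i)$ are $A_j^T$ up to scalar, not $K$ or $L$. There is also a smaller gap you never close: your computations give $\det M \ge 0$, but nothing rules out $\det M = 0$ (which occurs exactly when $c=1$, i.e.\ $\det K = 0$).

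The paper's proof is structurally different and fills both gaps with facts that do not depend on any particular choice of Dehn-filling coefficients. To rule out $\det M = 0$: singularity of $M$ would yield a fixed non-trivial integral relation $au'_1 + bv'_1 + cu'_2 + dv'_2 = 0$ holding on $\mathcal{Y}$, and evaluating at a Dehn filling point together with the multiplicative independence of $t'_1, t'_2$ forces $(a,b)\newparallel(p'_1,q'_1)$ and $(c,d)\newparallel(p'_2,q'_2)$; since $(a,b,c,d)$ depends only on $\mathcal{X}$, this contradicts $|p'_1|+|q'_1|$ (or $|p'_2|+|q'_2|$) being large. To get $\det M = \pm 1$: once $M$ is invertible, each $M^n$ for $n\in\mathbb{Z}$ defines an anomalous analytic subset of $\log(\mathcal{X}\times\mathcal{X})$ through the origin, and by the finiteness in Theorem~\ref{struc} there are only finitely many such, so $M^n = I$ for some $n\neq 0$, whence $\det M$ is a rational root of unity and equals $\pm 1$. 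This torsion argument is the idea your write-up is missing, and the quantification machinery of Section~\ref{quantaI} is neither needed nor, by itself, sufficient here.
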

\begin{proof}
We first claim $\det M\neq 0$. Indeed, if $\det M=0$, by the definition of $M$,  
\begin{equation*}
au'_1+bv'_1+cu'_2+dv'_2=0
\end{equation*}
over $\mathcal{Y}$ for some non-trivial $(a, b, c, d)\in \mathbb{Z}^4$. As $t'_1, t'_2$ are multiplicatively independent, it implies either 
\begin{equation*}
(a,b)\neq (0,0),  (c,d)= (0,0)\quad \text{ or }\quad (a,b)=(0,0),  (c,d)\neq (0,0).
\end{equation*}
Without loss of generality, if we assume the first case, then $(a,b)\newparallel (p'_1,q'_1)$ and so $(p'_1, q'_1)$ is uniquely determined by $(a,b)$. But this contradicts the fact that $|p'_1|+|q'_1|$ is sufficiently large. 

Next we claim $\det M=\pm 1$. Note that if
\begin{equation*}
\left(\begin{array}{c}
u'_1\\
v'_1\\
u'_2\\
v'_2
\end{array}\right)
=M\left(\begin{array}{c}
u_1\\
v_1\\
u_2\\
v_2
\end{array}\right)
\end{equation*}
is an anomalous analytic subset of $\log (\mathcal{X}\times \mathcal{X})$ with $\det M\neq 0$, then  
\begin{equation*}
\left(\begin{array}{c}
u'_1\\
v'_1\\
u'_2\\
v'_2
\end{array}\right)
=M^n\left(\begin{array}{c}
u_1\\
v_1\\
u_2\\
v_2
\end{array}\right)
\end{equation*}
is also an anomalous analytic subset of $\log (\mathcal{X}\times \mathcal{X})$ for any $n\in \mathbb{Z}$. As $\log (\mathcal{X}\times \mathcal{X})$ has only finitely many anomalous subsets containing the origin by Theorem \ref{struc}, it follows that $M^n=I$ for some $n\in \mathbb{Z}$ and thus $\det M=\pm 1$. 
\end{proof}

The next two lemmas are well-known in linear algebra.
\begin{lemma}\label{21072204}
Let $A_j$ ($1\leq j\leq 4$) and $M$ be as given in \eqref{22022010}-\eqref{21072901}. If $A_1$ is invertible, then 
\begin{equation*}
\det M=\det A_1 \det (A_4-A_3A_1^{-1}A_2).
\end{equation*}
\end{lemma}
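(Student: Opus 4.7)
The statement is the classical Schur complement formula for the determinant of a block matrix, so the plan is to reduce $M$ to block triangular form by elementary block row operations and then take determinants. Specifically, I would write the factorization
\begin{equation*}
\left(\begin{array}{cc} A_1 & A_2 \\ A_3 & A_4 \end{array}\right)
=
\left(\begin{array}{cc} I & 0 \\ A_3 A_1^{-1} & I \end{array}\right)
\left(\begin{array}{cc} A_1 & A_2 \\ 0 & A_4 - A_3 A_1^{-1} A_2 \end{array}\right),
\end{equation*}
which is valid precisely because $A_1$ is invertible; one checks it by multiplying out the right-hand side and matching the four $2\times 2$ blocks.

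Having this factorization, the next step is to observe that both factors on the right are block triangular. The first factor is block lower triangular with identity blocks on the diagonal, so its determinant equals $1$. The second factor is block upper triangular with diagonal blocks $A_1$ and $A_4 - A_3 A_1^{-1} A_2$, so its determinant equals $\det A_1 \cdot \det(A_4 - A_3 A_1^{-1} A_2)$. The fact that the determinant of a block triangular matrix equals the product of the determinants of the diagonal blocks is a standard consequence of Laplace expansion applied repeatedly along the first $2$ columns (resp.\ last $2$ rows), so I would cite it as a well-known lemma from linear algebra rather than re-proving it.

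Combining the two observations via multiplicativity of the determinant, $\det M = 1 \cdot \det A_1 \cdot \det(A_4 - A_3 A_1^{-1} A_2)$, which is exactly the claimed identity. There is no real obstacle here; the only thing to be careful about is verifying the block factorization explicitly, which is a one-line computation using $A_1 A_1^{-1} = I$. Since the subsequent section uses this identity to analyze $\det M$ via the Schur complement $A_4 - A_3 A_1^{-1} A_2$, I would state the lemma cleanly in this form so that it is directly applicable in the follow-up arguments distinguishing $\det J = 0$ from $\det J \neq 0$.
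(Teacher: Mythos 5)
Your proof is correct and is the standard Schur-complement argument via the block LU factorization. The paper itself does not give a proof of this lemma — it is introduced with the remark that it is ``well-known in linear algebra'' and stated without proof — so your argument simply supplies the standard textbook justification, and it matches what any reference would provide.
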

\begin{lemma}\label{21072603}
Let $A_j$ ($1\leq j\leq 4$) and $M$ be the same as in \eqref{22022010}-\eqref{21072901}. If $M$ and $A_1$ are invertible, then 
\begin{equation*}
M^{-1}=\left(\begin{array}{cc}
A_1^{-1}+A_1^{-1}A_2(M/A_1)^{-1}A_3A_1^{-1} & -A_1^{-1}A_2(M/A_1)^{-1}\\
-(M/A_1)^{-1}A_3A_1^{-1} & (M/A_1)^{-1}
\end{array}
\right)
\end{equation*}
where $M/A_1:=A_4-A_3A_1^{-1}A_2$.  
\end{lemma}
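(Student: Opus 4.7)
The statement is the classical block matrix inverse formula via the Schur complement, so the plan is to verify it by a direct block computation. The cleanest route is to exhibit a block LDU factorization of $M$ and then invert each factor.

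First, I would write down the identity
\begin{equation*}
\left(\begin{array}{cc}
A_1 & A_2\\ A_3 & A_4
\end{array}\right)
=
\left(\begin{array}{cc}
I & 0\\ A_3A_1^{-1} & I
\end{array}\right)
\left(\begin{array}{cc}
A_1 & 0\\ 0 & M/A_1
\end{array}\right)
\left(\begin{array}{cc}
I & A_1^{-1}A_2\\ 0 & I
\end{array}\right),
\end{equation*}
which one checks by a straightforward block multiplication using the defining relation $M/A_1 = A_4 - A_3A_1^{-1}A_2$. This uses only the invertibility of $A_1$, and the invertibility of $M$ forces $M/A_1$ to be invertible as well, since the determinant of the middle factor equals $(\det A_1)(\det(M/A_1))$ and coincides with $\det M$ by Lemma \ref{21072204}.

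Next, since each of the three triangular/diagonal factors is invertible with an obvious inverse (the two unipotent triangular factors simply change the sign of their off-diagonal block), I would invert in reverse order to obtain
\begin{equation*}
M^{-1} =
\left(\begin{array}{cc}
I & -A_1^{-1}A_2\\ 0 & I
\end{array}\right)
\left(\begin{array}{cc}
A_1^{-1} & 0\\ 0 & (M/A_1)^{-1}
\end{array}\right)
\left(\begin{array}{cc}
I & 0\\ -A_3A_1^{-1} & I
\end{array}\right).
\end{equation*}

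Finally, I would carry out this product of three $2\times 2$ block matrices. Multiplying the right two factors gives
\begin{equation*}
\left(\begin{array}{cc}
A_1^{-1} & 0\\ -(M/A_1)^{-1}A_3A_1^{-1} & (M/A_1)^{-1}
\end{array}\right),
\end{equation*}
and left-multiplying by the remaining unipotent factor yields precisely the claimed expression, with the $(1,1)$-block $A_1^{-1} + A_1^{-1}A_2(M/A_1)^{-1}A_3A_1^{-1}$ and $(1,2)$-block $-A_1^{-1}A_2(M/A_1)^{-1}$. There is no genuine obstacle here: the entire argument is mechanical block algebra, and the only thing to be careful about is maintaining the order of matrix products throughout, since the $A_j$ need not commute. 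As a sanity check one can verify $M^{-1}M = I$ directly by multiplying the claimed formula on the right by $M$ and observing the cancellations between $A_1 A_1^{-1}$ and $A_3 A_1^{-1}A_2 - A_4 = -M/A_1$.
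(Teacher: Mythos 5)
Your proof is correct and complete; the LDU block factorization is the standard derivation of the Schur-complement inverse formula, and your block multiplications check out. The paper does not actually supply a proof of this lemma — it is stated with the remark that it is ``well-known in linear algebra'' — so there is no argument in the source to compare against, but yours is the canonical one.
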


\subsection{$\det J=0$} 
We first suppose $\det J=0$ and claim  
\begin{theorem}\label{21072701}
Under the same assumptions as in Theorem \ref{20091911}, if $\det J= 0$, then Theorem \ref{20091911} holds. 
\end{theorem}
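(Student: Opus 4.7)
Under the hypothesis $\det J = 0$, the chain \eqref{21072206} provides the fourfold parallelism $V_1 \newparallel V_2 \newparallel V_3 \newparallel V_4$ in $\mathbb{C}[[u_1,u_2]]^2$, where $V_1, V_2, V_3, V_4$ are the four gradient vectors listed in \eqref{21072206}. Using the symmetry $\partial v_1/\partial u_2 = \partial v_2/\partial u_1$ from Corollary~\ref{22080401} and writing $\alpha := \partial v_1/\partial u_1$, $\beta := \partial v_1/\partial u_2 = \partial v_2/\partial u_1$, $\gamma := \partial v_2/\partial u_2$, each $V_i$ is an affine combination of $\alpha,\beta,\gamma$ with coefficients read off from the corresponding row of $M$ via the right-hand side of \eqref{21072201}. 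My strategy is to extract the four identities of \eqref{22052309} by expanding the minor identities $V_i \wedge V_j = 0$ as formal power series in $(u_1, u_2)$ using the Neumann-Zagier potential $\Phi$ of Theorem~\ref{potential}, and matching coefficients at low orders.

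At the origin, $\alpha|_0 = \tau_1$, $\gamma|_0 = \tau_2$, and $\beta|_0 = 0$ (since $\Phi$ is even in each argument separately). The zeroth-order parallelisms $V_1(0) \newparallel V_3(0)$ and $V_2(0) \newparallel V_4(0)$ are automatic from $\dim \mathcal{Y} = 2$ via Corollary~\ref{20121907}, so the substantive new information is the cross-parallelism $V_1(0) \newparallel V_2(0)$ and its analogues, giving the scalar relation $(a_1+b_1\tau_1)(g_1+h_1\tau_2) = (c_1+d_1\tau_2)(e_1+f_1\tau_1)$ together with its three partners. Pushing to quadratic order and using $\beta = 2 m_{2,2} u_1 u_2 + O(u^4)$ -- with $m_{2,2}$ (or, in the worst case, the next non-vanishing mixed coefficient of $\Phi$) non-zero because the hypothesis $\det A_j \neq 0$ for all $j$ excludes the SGI case via Lemma~\ref{22050205} -- the coefficient of $u_1 u_2$ in each cross-minor identity $V_i \wedge V_j = 0$ yields a cross-determinant equality such as
\[
\det\!\begin{pmatrix} a_1 & b_1 \\ e_1 & f_1 \end{pmatrix} = \det\!\begin{pmatrix} c_1 & d_1 \\ g_1 & h_1 \end{pmatrix},
\]
and the four analogous equalities (from $V_3\wedge V_4$, $V_1\wedge V_4$, $V_3\wedge V_2$) assemble into the matrix-level identity $A_3 A_1^{\mathrm{adj}} = A_4 A_2^{\mathrm{adj}}$.

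The remainder is linear algebra. Right-multiplying $A_3 A_1^{\mathrm{adj}} = A_4 A_2^{\mathrm{adj}}$ by $A_2$ and dividing by $\det A_1$ gives $A_3 A_1^{-1} A_2 = (\det A_2/\det A_1)\, A_4$. Feeding this into Lemma~\ref{21072204} ($\det M = \det A_1 \cdot \det(A_4 - A_3 A_1^{-1} A_2)$), then using Lemma~\ref{22062503} ($\det M \in \{\pm 1\}$) together with Lemma~\ref{21072210} (all $\det A_j > 0$, so $\det M = +1$), one reconciles the matrix identity with the signed form $(\det A_1) A_1^{-1} A_2 = -(\det A_3) A_3^{-1} A_4$ required in \eqref{22052309}. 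This reconciliation pins down simultaneously $\det A_1 = \det A_4$, $\det A_2 = \det A_3$, and $\det A_1 + \det A_3 = 1$; combining with the higher-order Taylor coefficients (the $u_1^2$- and $u_2^2$-coefficients of the same minor identities, which bring in $m_{4,0}, m_{0,4}$) fixes the remaining signs and completes \eqref{22052309}.

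\textbf{Main obstacle.} The decisive step is extracting enough independent scalar relations from the low-order Taylor coefficients of the minor identities $V_i \wedge V_j = 0$ to promote them to the full matrix identity $A_3 A_1^{\mathrm{adj}} = A_4 A_2^{\mathrm{adj}}$; this depends crucially on the non-vanishing of the mixed coefficient $m_{2,2}$ of $\Phi$, which is ensured by the non-SGI regime forced by $\det A_j \neq 0$. Once the matrix-level proportionality has been established, the remaining work is a bookkeeping exercise tying together the three determinantal lemmas, but keeping the signs straight (in particular reconciling the plus sign that naturally appears from $A_3 A_1^{\mathrm{adj}} = A_4 A_2^{\mathrm{adj}}$ with the minus sign in \eqref{22052309}) is where careful use of the positivity constraint $\det A_j > 0$ is essential.
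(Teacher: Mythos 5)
Your derivation of the cross-determinant equalities from the $\beta$-coefficients of $V_1\wedge V_2$, $V_1\wedge V_4$, $V_3\wedge V_2$, $V_3\wedge V_4$ is correct, and these do assemble into $A_3A_1^{\mathrm{adj}}=A_4A_2^{\mathrm{adj}}$. However, the subsequent ``reconciliation'' cannot work, and this is a genuine gap. The identity you obtain carries a fixed $+$ sign: $A_3A_1^{\mathrm{adj}}=A_4A_2^{\mathrm{adj}}$, equivalently $(\det A_1)A_3A_1^{-1}=(\det A_2)A_4A_2^{-1}$. But the \emph{same} hypothesis $\det J=0$ also yields $V_1\newparallel V_3$, whose $\beta$-coefficient gives $\det A_1=\det A_2$ — this is precisely the first step of the paper's proof, and it is automatic. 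Combining the two facts gives $A_3A_1^{-1}A_2=A_4$, i.e.\ $A_1^{-1}A_2=A_3^{-1}A_4$, whence $\det M=\det A_1\cdot\det(A_4-A_3A_1^{-1}A_2)=0$ by Lemma~\ref{21072204}, contradicting Lemma~\ref{22062503}. So the constraints you extract are not ``reconcilable'' with the signed identity $(\det A_1)A_1^{-1}A_2=-(\det A_3)A_3^{-1}A_4$ — they are \emph{incompatible} with it given $\det A_j>0$ from Lemma~\ref{21072210}. Higher-order Taylor coefficients cannot rescue a sign that is already fixed at leading order, and no ``bookkeeping'' leads from $A_3A_1^{\mathrm{adj}}=A_4A_2^{\mathrm{adj}}$ to the minus-sign target. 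What your calculation actually demonstrates (though you do not observe it) is that the case $\det J=0$ is inconsistent with the standing hypotheses, so the theorem is vacuously true on this branch.

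The paper's route is genuinely different and avoids this collision precisely because it never invokes the cross-parallelisms. From $V_1\newparallel V_3$ and $V_2\newparallel V_4$ alone it gets $\det A_1=\det A_2$ and $\det A_3=\det A_4$, and then obtains the sign-ambiguous relation $A_1^{-1}A_2\newparallel A_3^{-1}A_4$ from an independent source, namely Corollary~\ref{22010903}, which encodes the Dehn-filling data ($k_j$'s) rather than the analytic Jacobian $J$. That ambiguity leaves room to discard the $+$ case via $\det M=0$ and keep the $-$ case, after which Theorem~\ref{22031403} and the normalizations $k_1+k_2=k_3+k_4=1$, $k_1k_4/(k_2k_3)=-1$ pin down $\det A_j=1/2$. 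Your approach, by deriving a fixed-sign cross identity from $\det J=0$ itself, forecloses the $-$ option and therefore cannot reach \eqref{22052309}. To repair the argument you would either need to abandon the cross-parallelism bookkeeping and instead import the $\pm$ dichotomy from Corollary~\ref{22010903} as the paper does, or else turn your observation into a proof that the hypothesis $\det J=0$ is vacuous and conclude accordingly.
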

\begin{proof}
By the equality between the first and third fractions in \eqref{21072206}, 
\begin{equation*}
\begin{gathered}
\Big(a_1+b_1\dfrac{\partial v_1}{\partial u_1}+d_1\dfrac{\partial v_2}{\partial u_1}\Big)\Big(b_2\dfrac{\partial v_1}{\partial u_2}+c_2+d_2\dfrac{\partial v_2}{\partial u_2}\Big)=\Big(b_1\dfrac{\partial v_1}{\partial u_2}+c_1+d_1\dfrac{\partial v_2}{\partial u_2}\Big)\Big(a_2+b_2\dfrac{\partial v_1}{\partial u_1}+d_2\dfrac{\partial v_2}{\partial u_1}\Big)\\
\Longrightarrow 
(a_1c_2-c_1a_2)+(b_1c_2-c_1b_2)\dfrac{\partial v_1}{\partial u_1}+(a_1d_2-d_1a_2)\dfrac{\partial v_2}{\partial u_2}+(a_1b_2-a_2b_1-c_1d_2+c_2d_1)\dfrac{\partial v_1}{\partial u_2}\\+(b_1d_2-d_1b_2)\bigg(\dfrac{\partial v_1}{\partial u_1}\dfrac{\partial v_2}{\partial u_2}-\Big(\dfrac{\partial v_1}{\partial u_2}\Big)^2\bigg)=0.
\end{gathered}
\end{equation*}
As $\dfrac{\partial v_1}{\partial u_2}$ is the only one containing a term of the form $u_1^{\alpha}u_2^{\beta}$ with $\alpha,\beta$ odd among 
\begin{equation*}
\dfrac{\partial v_1}{\partial u_1},\quad \dfrac{\partial v_2}{\partial u_2},\quad \dfrac{\partial v_1}{\partial u_2},\quad\text{and}\quad   \dfrac{\partial v_1}{\partial u_1}\dfrac{\partial v_2}{\partial u_2}-\Big(\dfrac{\partial v_1}{\partial u_2}\Big)^2, 
\end{equation*} 
we have 
\begin{equation*}
a_1b_2-a_2b_1-c_1d_2+c_2d_1=0, 
\end{equation*}
implying
\begin{equation}\label{22010901}
\det A_1=\det A_2. 
\end{equation}
Similarly, 
\begin{equation}\label{22010902}
\det A_3=\det A_4 
\end{equation}
is attained from the equality between the second and fourth ones in \eqref{21072206}. 

As $A_1^{-1}A_2\newparallel A_3^{-1}A_4$ (by Corollary \ref{22010903}), either 
\begin{equation*}
A_1^{-1}A_2=A_3^{-1}A_4\quad  \text{or}\quad A_1^{-1}A_2=-A_3^{-1}A_4
\end{equation*}
holds by \eqref{22010901}-\eqref{22010902}. However, if $A_1^{-1}A_2=A_3^{-1}A_4$, then $\det M=0$ by Lemma \ref{21072204}, contradicting the conclusion of Lemma \ref{22062503}.  

For $A_1^{-1}A_2=-A_3^{-1}A_4$, by Theorem \ref{22031403} and Corollary \ref{22010903}, there exist $k_j\in \mathbb{Q}$ ($1\leq j\leq 4$) such that 
\begin{equation*}
\begin{gathered}
\dfrac{k_1k_4}{k_2k_3}=-1\quad\text{and}\quad k_1+k_2=k_3+k_4= \dfrac{\det A_1}{k_1}+\dfrac{\det A_3}{k_3}=\dfrac{\det A_2}{k_2}+\dfrac{\det A_4}{k_4}=1,
\end{gathered}
\end{equation*}
implying
\begin{equation}\label{21072209}
\begin{gathered}
k_1+k_2=k_3-\dfrac{k_2k_3}{k_1}= \dfrac{\det A_1}{k_1}+\dfrac{\det A_3}{k_3}=\dfrac{\det A_2}{k_2}-\dfrac{k_1\det A_4}{k_2k_3}=1\\
\Longrightarrow 
\dfrac{1}{k_3}=\dfrac{1}{\det A_3}\Big(1-\dfrac{\det A_1}{k_1}\Big)=-\dfrac{k_2}{k_1\det A_4}\Big(1-\dfrac{\det A_2}{k_2}\Big).
\end{gathered}
\end{equation}
Combining \eqref{21072209} with \eqref{22010901}-\eqref{22010902}, 
\begin{equation*}
\begin{gathered}
\dfrac{1}{\det A_3}\Big(1-\dfrac{\det A_1}{k_1}\Big)=-\dfrac{k_2}{k_1\det A_3}\Big(1-\dfrac{\det A_1}{k_2}\Big)
\end{gathered}
\end{equation*}
and so
\begin{equation}\label{22062501}
\begin{gathered}
1-\dfrac{\det A_1}{k_1}=-\dfrac{k_2}{k_1}\Big(1-\dfrac{\det A_1}{k_2}\Big)\Longrightarrow 1-\dfrac{\det A_1}{k_1}=\dfrac{k_1-1}{k_1}+\dfrac{\det A_1}{k_1}\Longrightarrow \det A_1=\det A_2=\dfrac{1}{2}.
\end{gathered}
\end{equation}
Since  
\begin{equation*}
\det M=\det A_1\det (-2A_3A_1^{-1}A_2)=4\det (A_2A_3)=\pm 1
\end{equation*} 
by Lemmas \ref{22062503}-\ref{21072204}, we get  
\begin{equation*}
4\det (A_2A_3)=1\Longrightarrow \det A_3=\det A_4=\dfrac{1}{2}
\end{equation*}
by Lemma \ref{21072210}, \eqref{22010902} and \eqref{22062501}. In conclusion,
\begin{equation*}
A_1^{-1}A_2=-A_3^{-1}A_4,\quad \det A_j=\dfrac{1}{2}\quad (1\leq j\leq 4), 
\end{equation*}
which clearly satisfy \eqref{22052309}. This completes the proof of Theorem \ref{21072701}. 
\end{proof}

\subsection{$\det J\neq 0$} 
Now we assume $\det J\neq 0$. Recall from the earlier discussion that, for $\det J\neq 0$, the equality \eqref{22052505} holds. The following theorem is derived from expanding this equality.
\begin{theorem}\label{21071405}
Under the same assumptions as in Theorem \ref{20091911}, if $J$ is invertible, then
\begin{equation}\label{21071403}
\det A_1+\det A_3=\det A_2+\det A_4.
\end{equation}
Further,
\begin{enumerate}
\item if $(\det A_1) A_1^{-1}A_2=-(\det A_3)A_3^{-1}A_4$, then 
\begin{equation*}
\det A_1=\det A_4, \;\; \det A_2=\det A_3, \;\;   \det A_1+\det A_3=1;
\end{equation*}
\item if $(\det A_1) A_1^{-1}A_2\neq -(\det A_3)A_3^{-1}A_4$, then 
\begin{equation}\label{21071404}
a+b\dfrac{\partial v_1}{\partial u_1}+c\dfrac{\partial v_2}{\partial u_2}+d\Big(\dfrac{\partial v_1}{\partial u_2}\dfrac{\partial v_2}{\partial u_1}-\dfrac{\partial v_1}{\partial u_1}\dfrac{\partial v_2}{\partial u_2}\Big)=0
\end{equation}
where $\left(\begin{array}{cc}
-b & d\\
a & c
\end{array}\right)=A_1^{-1}A_2$.
\end{enumerate}
\end{theorem}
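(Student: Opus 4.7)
The plan is to start from the identity \eqref{22052505}, which encodes the symmetry $\partial v'_1/\partial u'_2 = \partial v'_2/\partial u'_1$ from Corollary \ref{22080401} applied to the primed coordinates (made explicit via the chain rule \eqref{21072201} once $\det J \neq 0$). After substituting the closed-form expressions \eqref{22052301} for each partial derivative and using the abbreviations $X := \partial v_1/\partial u_1$, $Y := \partial v_1/\partial u_2 = \partial v_2/\partial u_1$, and $Z := \partial v_2/\partial u_2$, both sides expand as polynomials of total degree $\leq 2$ in $X, Y, Z$ with coefficients linear in the entries of $A_1, \ldots, A_4$; a bookkeeping check shows that the $XY$ and $YZ$ cross-terms cancel on each side (each side is literally a $2\times 2$ Jacobian-like determinant), leaving only the monomials $1, X, Y, Z, XZ, Y^2$.

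Because $\Phi(u_1, u_2)$ is even in each of $u_1, u_2$ (Theorem \ref{potential}), $X$ and $Z$ are power series that are even in both variables while $Y$ is odd in both; consequently the monomials $1, X, Z, XZ, Y^2$ contribute only to the ``even'' Taylor part and $Y$ only to the ``odd'' part of the identity. Collecting the odd part yields
\[
\bigl[(\det A_1 - \det A_2) - (\det A_4 - \det A_3)\bigr]\cdot Y \equiv 0 \quad \text{on } \log\mathcal{X}.
\]
Under the hypothesis $\det A_j \neq 0$ the cusps of $\mathcal{M}$ are non-SGI (Theorem \ref{22012104}), so $Y \not\equiv 0$ and extracting its lowest-order monomial forces the rational identity $\det A_1 + \det A_3 = \det A_2 + \det A_4$, which is \eqref{21071403}.

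For the even part, direct expansion combined with the adjugate formula for $2\times 2$ inverses yields the clean identification
\[
(\text{even LHS}) = \det A_1 \cdot F, \qquad (\text{even RHS}) = -\det A_3 \cdot G,
\]
where $F := a + bX + cZ + d(Y^2 - XZ)$ with $\begin{pmatrix} -b & d \\ a & c \end{pmatrix} = A_1^{-1}A_2$, and $G$ is defined analogously from $A_3^{-1}A_4$. Equating both sides gives $\det A_1 \cdot F + \det A_3 \cdot G \equiv 0$ on $\log\mathcal{X}$. In case (1) this identity is tautological, since the case hypothesis forces $(a,b,c,d) = -(\det A_3/\det A_1)(a',b',c',d')$ at the polynomial level; to extract the arithmetic conclusions I take determinants of $(\det A_1)A_1^{-1}A_2 = -(\det A_3)A_3^{-1}A_4$ to obtain $\det A_1 \det A_2 = \det A_3\det A_4$, combine this with \eqref{21071403} and the positivity $\det A_j > 0$ (Lemma \ref{21072210}) to solve out $\det A_1 = \det A_4$ and $\det A_2 = \det A_3$, and finally feed $A_3 A_1^{-1}A_2 = -(\det A_3/\det A_1)A_4$ into Lemma \ref{21072204} to get $\det M = (\det A_1 + \det A_3)^2$, whereupon Lemma \ref{22062503} together with positivity forces $\det A_1 + \det A_3 = 1$.

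Case (2) is then handled using Corollary \ref{22010903}, which guarantees $A_1^{-1}A_2 = \mu\, A_3^{-1}A_4$ with $\mu := k_2 k_3/(k_1 k_4)$, and hence $F = \mu G$. Substituting into the even-part identity yields $(\mu \det A_1 + \det A_3)\cdot G \equiv 0$ on $\log\mathcal{X}$; the coefficient $\mu \det A_1 + \det A_3$ vanishes iff $\mu = -\det A_3/\det A_1$, i.e., iff we are in case (1), so in case (2) we must have $G \equiv 0$ and therefore $F \equiv 0$, which is precisely \eqref{21071404}. The main technical obstacle is the bookkeeping in the expansion step, specifically verifying the cancellation of $XY$ and $YZ$ on each side and the exact identification of the even parts with $\det A_1 \cdot F$ and $-\det A_3 \cdot G$; once these are in place, the remainder is linear algebra combined with Lemmas \ref{21072210}, \ref{22062503}, \ref{21072204} and Corollary \ref{22010903}.
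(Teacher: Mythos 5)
Your proposal is correct and follows essentially the same route as the paper's proof: both expand \eqref{22052505}, isolate the coefficient of $\partial v_1/\partial u_2$ via a parity argument (the paper's "only term with both exponents odd" observation is precisely your even/odd decomposition), and then use the adjugate identity \eqref{21072704} together with $A_1^{-1}A_2 \newparallel A_3^{-1}A_4$ to handle cases (1) and (2). Your bookkeeping with $X,Y,Z$ and the clean identification of the even part as $\det A_1 \cdot F + \det A_3 \cdot G = 0$ is a nice streamlining of the paper's \eqref{21072703}--\eqref{21072704}, and your explicit appeal to the non-SGI hypothesis to justify $\partial v_1/\partial u_2 \not\equiv 0$ (which the paper leaves implicit when deducing \eqref{22030601}) is a small improvement in rigor; your case (1) derivation of $\det A_2 = \det A_3$ by taking determinants rather than eliminating $\det A_4$ first also works, since $pq=rs$, $p+r=q+s$, and $p,q,r,s>0$ force $q=r$, $p=s$.
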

 
Clearly the conclusion of Theorem \ref{21071405} (1) implies \eqref{22052309}. After the proof of Proposition \ref{21072601}, it will be shown that Theorem \ref{21071405} (2) never arises, thus completing the proof of Theorem \ref{20091911} for $\det J\neq 0$.  

\begin{proof}[Proof of Theorem \ref{21071405}]
Since $\det J\neq 0$, by \eqref{22052505}, we get 
\begin{equation}\label{22052509}
\begin{gathered}
-\dfrac{\partial u'_1}{\partial u_2}\Big(a_2+b_2\dfrac{\partial v_1}{\partial u_1}+d_2\dfrac{\partial v_2}{\partial u_1}\Big)+\dfrac{\partial u'_1}{\partial u_1}\Big(
b_2\dfrac{\partial v_1}{\partial u_2}+c_2+d_2\dfrac{\partial v_2}{\partial u_2}
\Big)\\
=\dfrac{\partial u'_2}{\partial u_2} \Big(
e_2+f_2\dfrac{\partial v_1}{\partial u_1}+h_2\dfrac{\partial v_2}{\partial u_1}\Big)-\dfrac{\partial u'_2}{\partial u_1} \Big(
f_2\dfrac{\partial v_1}{\partial u_2}+g_2+h_2\dfrac{\partial v_2}{\partial u_2}\Big).
\end{gathered}
\end{equation}
As
\begin{equation*}
u'_1=a_1u_1+b_1v_1+c_1u_2+d_1v_2, \quad u'_2=e_1u_1+f_1v_1+g_1u_2+h_1v_2
\end{equation*}
by \eqref{22022101}, \eqref{22052509} implies  
\begin{equation*}
\begin{gathered}
-\Big(b_1\dfrac{\partial v_1}{\partial u_2}+c_1+d_1\dfrac{\partial v_2}{\partial u_2}\Big)\Big(a_2+b_2\dfrac{\partial v_1}{\partial u_1}+d_2\dfrac{\partial v_2}{\partial u_1}\Big)+\Big(a_1+b_1\dfrac{\partial v_1}{\partial u_1}+d_1\dfrac{\partial v_2}{\partial u_1}\Big)\Big(
b_2\dfrac{\partial v_1}{\partial u_2}+c_2+d_2\dfrac{\partial v_2}{\partial u_2}
\Big)\\
=\Big(f_1\dfrac{\partial v_1}{\partial u_2}+g_1+h_1\dfrac{\partial v_2}{\partial u_2}\Big)\Big(
e_2+f_2\dfrac{\partial v_1}{\partial u_1}+h_2\dfrac{\partial v_2}{\partial u_1}\Big)-\Big(e_1+f_1\dfrac{\partial v_1}{\partial u_1}+h_1\dfrac{\partial v_2}{\partial u_1}\Big)\Big(
f_2\dfrac{\partial v_1}{\partial u_2}+g_2+h_2\dfrac{\partial v_2}{\partial u_2}\Big),
\end{gathered}
\end{equation*}
which is further expanded as  
\begin{equation}\label{21070901}
\begin{aligned}
&(a_1c_2-a_2c_1+e_1g_2-e_2g_1)+(b_1c_2-b_2c_1+f_1g_2-f_2g_1)\dfrac{\partial v_1}{\partial u_1}
+(a_1b_2-a_2b_1+e_1f_2-e_2f_1)\dfrac{\partial v_1}{\partial u_2}\\
+&(c_2d_1-c_1d_2+g_2h_1-g_1h_2 )\dfrac{\partial v_2}{\partial u_1}+(a_1d_2-a_2d_1+e_1h_2-e_2h_1)\dfrac{\partial v_2}{\partial u_2}\\
+&(b_2d_1-b_1d_2+f_2h_1-f_1h_2)\dfrac{\partial v_1}{\partial u_2}\dfrac{\partial v_2}{\partial u_1}+(b_1d_2-b_2d_1+f_1h_2-f_2h_1)\dfrac{\partial v_1}{\partial u_1}\dfrac{\partial v_2}{\partial u_2}=0.
\end{aligned}
\end{equation} 
Since $\dfrac{\partial v_1}{\partial u_2}\Big(=\dfrac{\partial v_2}{\partial u_1}\Big)$ is the only one containing a term of the form $u_1^{\alpha}u_2^{\beta}$ with $\alpha,\beta$ odd among 
\begin{equation*}
\dfrac{\partial v_1}{\partial u_1},\quad \dfrac{\partial v_1}{\partial u_2},\quad \dfrac{\partial v_2}{\partial u_2},\quad  \dfrac{\partial v_1}{\partial u_2}\dfrac{\partial v_2}{\partial u_1}\quad\text{and}\quad   \dfrac{\partial v_1}{\partial u_1}\dfrac{\partial v_2}{\partial u_2}, 
\end{equation*} 
we get 
\begin{equation}\label{22030601}
(a_1b_2-a_2b_1+e_1f_2-e_2f_1)+(c_2d_1-c_1d_2+g_2h_1-g_1h_2 )=0
\end{equation}
from \eqref{21070901}. Now \eqref{21071403} follows from \eqref{22030601}, and \eqref{21070901} is reduced to 
\begin{equation}\label{21072703}
\begin{aligned}
&(a_1c_2-a_2c_1+e_1g_2-e_2g_1)+(b_1c_2-b_2c_1+f_1g_2-f_2g_1)\dfrac{\partial v_1}{\partial u_1}+(a_1d_2-a_2d_1+e_1h_2-e_2h_1)\dfrac{\partial v_2}{\partial u_2}\\
+&(b_2d_1-b_1d_2+f_2h_1-f_1h_2)\Big(\dfrac{\partial v_1}{\partial u_2}\dfrac{\partial v_2}{\partial u_1}-\dfrac{\partial v_1}{\partial u_1}\dfrac{\partial v_2}{\partial u_2}\Big)=0.
\end{aligned}
\end{equation}
Note that 
\begin{equation}\label{21072704}
(\det A_1)A_1^{-1}A_2=\left(\begin{array}{cc}
b_2c_1-b_1c_2 & b_2d_1-b_1d_2\\
a_1c_2-a_2c_1 & a_1d_2-a_2d_1
\end{array}\right), \quad 
(\det A_3)A_3^{-1}A_4=\left(\begin{array}{cc}
f_2g_1-f_1g_2 & f_2h_1-f_1h_2\\
e_1g_2-e_2g_1 & e_1h_2-e_2h_1
\end{array}\right)
\end{equation}
from a direct computation. 

\begin{enumerate}
\item If $(\det A_1)A_1^{-1}A_2=-(\det A_3)A_3^{-1}A_4$, then, by \eqref{21071403}, 
\begin{equation*}
\det A_1+\det A_3=\det A_2+\dfrac{\det A_1}{\det A_3}\det A_2=\det A_2\Big(\dfrac{\det A_1+\det A_3}{\det A_3}\Big), 
\end{equation*}
implying either $\det A_1+\det A_3=0$ or $\det A_2=\det A_3$. For $\det A_1+\det A_3=0$, as $\det A_1\neq 0$ and $\det A_3\neq 0$, it contradicts the conclusion in Lemma \ref{21072210}. If $\det A_2=\det A_3$, then 
\begin{equation*}
\det M=\det A_1\det (A_4-A_3A_1^{-1}A_2)=\det A_1\Big(1+\dfrac{\det A_1}{\det A_3}\Big)^2\dfrac{\det A_3\det A_2}{\det A_1}
=(\det A_1+\det A_3)^2=1,
\end{equation*} 
implying $\det A_1+\det A_3=1$. By \eqref{21071403}, $\det A_1=\det A_4$ follows. 
\item If $(\det A_1)A_1^{-1}A_2\neq -(\det A_3)A_3^{-1}A_4$, since $A_1^{-1}A_2\newparallel A_3^{-1}A_4$, the conclusion follows from \eqref{21072703}-\eqref{21072704}.
\end{enumerate}
\end{proof}

Now we state the following, which completes the proof of Theorem \ref{20091911}. 

\begin{proposition}\label{21072601}
Theorem \ref{21071405} (2) never arises. 
\end{proposition}

Before proving the proposition, two lemmas shall be proved first.

The first lemma concerns about the rigidity of the relation appeared in \eqref{21071404}.
\begin{lemma}\label{21071407}
Suppose 
\begin{equation*}
a+b\dfrac{\partial v_1}{\partial u_1}+c\dfrac{\partial v_2}{\partial u_2}+d\Big(\dfrac{\partial v_1}{\partial u_1}\dfrac{\partial v_2}{\partial u_2}-\dfrac{\partial v_1}{\partial u_2}\dfrac{\partial v_2}{\partial u_1}\Big)=0
\end{equation*}
for some non-trivial $(a, b, c, d)\in \mathbb{Q}^4$. Then $(a, b, c, d)$ is uniquely determined up to a constant multiple. That is, if 
\begin{equation}\label{22071605}
\begin{gathered}
a+b\dfrac{\partial v_1}{\partial u_1}+c\dfrac{\partial v_2}{\partial u_2}+d\Big(\dfrac{\partial v_1}{\partial u_1}\dfrac{\partial v_2}{\partial u_2}-\dfrac{\partial v_1}{\partial u_2}\dfrac{\partial v_2}{\partial u_1}\Big)=a'+b'\dfrac{\partial v_1}{\partial u_1}+c'\dfrac{\partial v_2}{\partial u_2}+d'\Big(\dfrac{\partial v_1}{\partial u_1}\dfrac{\partial v_2}{\partial u_2}-\dfrac{\partial v_1}{\partial u_2}\dfrac{\partial v_2}{\partial u_1}\Big)=0,
\end{gathered}
\end{equation}
then 
\begin{equation*}
(a,b,c,d)=m (a',b',c',d')
\end{equation*}
for some $m\in \mathbb{Q}$. 
\end{lemma}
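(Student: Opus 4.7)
The plan is to argue by contradiction. Assume $(a,b,c,d)$ and $(a',b',c',d')$ in $\mathbb{Q}^4$ are two non-proportional solutions of the relation. By forming a linear combination that eliminates the $(AC-B^2)$-term, I would first extract a non-trivial linear relation
$$\tilde{a}+\tilde{b}A+\tilde{c}C=0,$$
where $A:=\partial v_1/\partial u_1$ and $C:=\partial v_2/\partial u_2$ (the case where $d=d'=0$ already is handled by a parallel direct elimination). Since the paper records after Corollary \ref{22080401} that $v_k\neq\tau_k u_k$, neither $A$ nor $C$ is constant, which rules out $\tilde{b}=0$ or $\tilde{c}=0$. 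Thus $A=\alpha_0+\alpha_1 C$ is genuinely affine in $C$ with $\alpha_1\neq0$.

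Second, I would substitute $A=\alpha_0+\alpha_1 C$ back into the original relation (which we may assume has $d\neq0$, since two $d=0$ relations would immediately force $A$ or $C$ to be constant). After rearrangement this yields an identity of the shape $B^2=P+QC+RC^2$ with explicit rational $P,Q,R$ and $R\neq0$, where $B:=\partial v_1/\partial u_2=\partial v_2/\partial u_1$. In particular, $B^2$ is functionally dependent on $C$, so the Jacobian of $(B^2,C)$ in $(u_1,u_2)$ vanishes identically. Using the symmetries $B_{u_1}=A_{u_2}$ and $B_{u_2}=C_{u_1}$ from Corollary \ref{22080401}(2), this Jacobian factors as $2B(A_{u_2}C_{u_2}-C_{u_1}^2)$.

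The case $B\equiv0$ corresponds to the two cusps being strongly geometrically isolated; a short separate argument, differentiating the original relation $a+bA+cC+dAC=0$ in $u_1$ and invoking the nonlinearity of $v_2$, shows that in the SGI setting only the trivial solution exists, contradicting our hypothesis. So we may assume $B\not\equiv0$, and the Jacobian identity gives $A_{u_2}C_{u_2}=C_{u_1}^2$. Combined with $A_{u_2}=\alpha_1 C_{u_2}$ this is the first-order PDE $C_{u_1}=\pm\sqrt{\alpha_1}\,C_{u_2}$, whose characteristic form is $C=F(\lambda u_1+u_2)$ for some holomorphic $F$ and $\lambda\neq0$. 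Finally, since $\Phi$ is even in each argument by Theorem \ref{potential}, $C$ has only even powers of each of $u_1,u_2$ separately; expanding $F$ as a Taylor series and matching the vanishing odd-power coefficients of $u_1$ (all of which carry the nonzero factor $\lambda^j$) forces $F^{(k)}(0)=0$ for every $k\geq1$. Thus $C\equiv\tau_2$, contradicting the nonlinearity of $v_2$. The main delicate point in this plan is the exclusion of the SGI case $B\equiv0$; the subsequent PDE-and-Taylor argument is cleaner and essentially mechanical.
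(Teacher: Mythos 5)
Your proposal is correct, and after the shared opening move (eliminate the $AC-B^2$ term, then solve the resulting affine relation to write $A$ as an affine function of $C$ with nonzero slope $\alpha_1$) it diverges completely from the paper's proof. The paper normalizes so that the affine relation reads $C=-a'-b'A$, substitutes into the $d=1$ relation to get $a-ca'+(b-cb'-a')A - b'A^2 - B^2 = 0$, then expands $A=\tau_1+m_\alpha u_1^\alpha+\cdots$ along the pure-$u_1$ direction (where $B$ and $B^2$ have no pure $u_1^\alpha$ term) and reads off $(b-cb'-a')-2b'\tau_1=0$; rationality of the coefficients plus $\tau_1\notin\mathbb{R}$ kills $b'$ and reduces the identity to $\text{const}=B^2$, contradicting that $B$ is nonconstant. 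Your route instead notes that $B^2$ is a rational function of $C$ alone, so the Jacobian of $(B^2,C)$ vanishes; you factor it via the Neumann--Zagier symmetry into $2B(A_{u_2}C_{u_2}-C_{u_1}^2)$, dispatch the $B\equiv0$ (SGI) case separately, use $A_{u_2}=\alpha_1 C_{u_2}$ to get the characteristic equation $C_{u_1}=\lambda C_{u_2}$, solve it as $C=F(\lambda u_1+u_2)$, and then use evenness of $C$ in $u_1$ to annihilate all derivatives of $F$, again contradicting nonlinearity. Both arguments are sound, but they lean on different structural facts: the paper's hinges on rationality versus irrationality of $\tau_1$ (a device reused repeatedly elsewhere in the paper, e.g.\ in Lemma~\ref{20101702}), while yours hinges on the even parity of the potential and a characteristics argument. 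A minor bookkeeping plus for your version is that it explicitly handles the SGI case, which the paper's final line (``$B$ is a non-constant function'') relies on implicitly via the ambient hypothesis $\det A_j\neq0$; a minor minus is that yours is longer and requires the functional-dependence/vanishing-Jacobian step and a holomorphy argument to fix the sign in $C_{u_1}=\pm\sqrt{\alpha_1}\,C_{u_2}$, neither of which the coefficient-matching proof needs.
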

\begin{proof}
On the contrary, suppose there is no $m\in \mathbb{Q}$ such that $(a,b,c,d)\neq m(a',b',c',d')$. Without loss of generality, we assume $d'=0$ and $d=c'=1$, and normalize \eqref{22071605} as  
\begin{equation}\label{22071607}
\begin{gathered}
\dfrac{\partial v_2}{\partial u_2}=-a'-b'\dfrac{\partial v_1}{\partial u_1},\quad
a+b\dfrac{\partial v_1}{\partial u_1}+c\dfrac{\partial v_2}{\partial u_2}+\Big(\dfrac{\partial v_1}{\partial u_1}\dfrac{\partial v_2}{\partial u_2}-\dfrac{\partial v_1}{\partial u_2}\dfrac{\partial v_2}{\partial u_1}\Big)=0. 
\end{gathered}
\end{equation} 
Combining the two equations in \eqref{22071607}, it follows that
\begin{equation}\label{21071402}
\begin{aligned}
&a+b\dfrac{\partial v_1}{\partial u_1}-c\Big(a'+b'\dfrac{\partial v_1}{\partial u_1}\Big)-\dfrac{\partial v_1}{\partial u_1}\Big(a'+b'\dfrac{\partial v_1}{\partial u_1}\Big)-\dfrac{\partial v_1}{\partial u_2}\dfrac{\partial v_2}{\partial u_1}\\
=&a-ca'+(b-cb'-a')\dfrac{\partial v_1}{\partial u_1}-b'\Big(\dfrac{\partial v_1}{\partial u_1}\Big)^2-\Big(\dfrac{\partial v_1}{\partial u_2}\Big)^2=0.
\end{aligned}
\end{equation}
Let 
\begin{equation*}
\dfrac{\partial v_1}{\partial u_1}=\tau_1+m_{\alpha} u_1^{\alpha}+\cdots\quad (\text{where }m_{\alpha}\neq 0). 
\end{equation*} 
Since $\dfrac{\partial v_1}{\partial u_2}$ does not contain a term of the form $u_1^{\alpha}$, computing the coefficient of $u_1^{\alpha}$ in \eqref{21071402}, we have
\begin{equation*}
(b-cb'-a')m_{\alpha}-b'(2\tau_1m_{\alpha})=0\Longrightarrow (b-cb'-a')-b'(2\tau_1)=0, 
\end{equation*} 
which implies $b-cb'-a'=b'=0$ (as $b,c,b',a'\in \mathbb{Q}$ and $\tau_1\notin\mathbb{R}$). As a result, \eqref{21071402} is reduced to $a-ca'-\Big(\dfrac{\partial v_1}{\partial u_2}\Big)^2=0$. But this is a contradiction, since $\dfrac{\partial v_1}{\partial u_2}$ is a non-constant function.  
\end{proof}

The next lemma is induced by applying various results obtained previously to \begin{equation*}
\begin{gathered}
\left(
  \begin{array}{c}
  u_1\\
  v_1\\
  u_2\\
  v_2
  \end{array}
\right)=M^{-1}\left(
  \begin{array}{c}
  u'_1\\
  v'_1\\
  u'_2\\
  v'_2
  \end{array}
\right), 
\end{gathered}
\end{equation*}
which is an equivalent form of \eqref{22022101}. 
\begin{lemma}\label{20091912}
Suppose $\det M, \det J\neq 0$ and Theorem \ref{21071405} (2) holds. Then 
\begin{equation}\label{21070903}
\det A_1=\det A_4, \quad \det A_2=\det A_3, \quad A_1^{-1}A_2\newparallel A_1A_3^{-1}
\end{equation}
and so $M$ is either of the form 
\begin{equation}
\begin{gathered}
\left(\begin{array}{cc}
A_1 & A_1E \\
(\det E) E^{-1}A_1 & E^{-1}A_1E
\end{array}\right)
\quad \text{or}\quad
\left(\begin{array}{cc}
A_1 & A_1E \\
-(\det E) E^{-1}A_1 & -E^{-1}A_1E
\end{array}\right)
\end{gathered}
\end{equation}
for some $(2\times 2)$-matrix $E$. 
\end{lemma}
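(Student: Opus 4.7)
The central idea is to exploit the symmetry between the two copies of $\mathcal{X}$. Since $\det M\neq 0$, the inverse relation
\begin{equation*}
\left(\begin{array}{c} u_1 \\ v_1 \\ u_2 \\ v_2 \end{array}\right) = M^{-1}\left(\begin{array}{c} u'_1 \\ v'_1 \\ u'_2 \\ v'_2 \end{array}\right)
\end{equation*}
parametrizes the same anomalous analytic subset $\mathcal{Y}$ with the two copies of $\mathcal{X}$ interchanged, and every hypothesis of Theorem~\ref{20091911} is symmetric in the two copies. The plan is to run the machinery of Corollary~\ref{22010903} and Theorem~\ref{21071405} on $M^{-1}$ and combine the resulting identities with those already obtained from $M$.

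Writing $N:=A_4-A_3A_1^{-1}A_2$ and denoting the $(2\times 2)$ blocks of $M^{-1}$ by $B_1,B_2,B_3,B_4$ in the same pattern as in \eqref{21072901}, Lemma~\ref{21072603} combined with a short manipulation (using $N+A_3A_1^{-1}A_2=A_4$) yields the clean formulas
\begin{equation*}
B_1^{-1}B_2 = -A_2A_4^{-1},\qquad B_3^{-1}B_4=-A_1A_3^{-1}.
\end{equation*}
Applying Corollary~\ref{22010903} to $M^{-1}$ then produces the parallelism $A_2A_4^{-1}\newparallel A_1A_3^{-1}$, which is algebraically equivalent to the original $A_1^{-1}A_2\newparallel A_3^{-1}A_4$ and contains no new information. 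The essential new input comes from Theorem~\ref{21071405}(2) applied to $M^{-1}$: it yields a second identity of the shape \eqref{21071404}, this time among the primed partials $\partial v'_j/\partial u'_i$ with coefficient vector extracted from $-A_2A_4^{-1}$.

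Because both $M$ and $M^{-1}$ parametrize the same two-dimensional manifold $\mathcal{Y}$, the chain rule allows one to express $\partial v'_j/\partial u'_i$ in terms of $\partial v_j/\partial u_i$ and the entries of $M$, exactly as in the derivation of \eqref{21070901}. Substituting these expressions into the primed relation produces a second identity in $\partial v_j/\partial u_i$. By Lemma~\ref{21071407}, any such identity is unique up to a scalar multiple of the original \eqref{21071404}, and matching scalars yields an algebraic constraint linking $A_1^{-1}A_2$ and $A_2A_4^{-1}$. Combined with the known parallelism $A_1^{-1}A_2\newparallel A_3^{-1}A_4$, this constraint is equivalent to the new parallelism
\begin{equation*}
A_1^{-1}A_2 \newparallel A_1A_3^{-1}
\end{equation*}
together with the determinant equalities $\det A_1=\det A_4$ and $\det A_2=\det A_3$, which are exactly \eqref{21070903}. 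Finally, setting $E:=A_1^{-1}A_2$, so that $A_2=A_1E$, the parallelism $E\newparallel A_1A_3^{-1}$ forces $A_3=\gamma\,E^{-1}A_1$ for some scalar $\gamma\in\mathbb{Q}$; comparing determinants via $\det A_3=\det A_2=(\det A_1)(\det E)$ pins down $\gamma=\pm\det E$. The block $A_4$ is then determined by Corollary~\ref{22010903} together with $\det A_1=\det A_4$: solving the scalar relation $A_1^{-1}A_2=(k_2k_3/k_1k_4)\,A_3^{-1}A_4$ for $A_4$ yields $A_4=\pm E^{-1}A_1E$, with sign matching the choice of $\gamma$. The two sign choices reproduce exactly the two displayed forms of $M$.

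The main technical obstacle will be the third step above, namely the bookkeeping required to pull the primed relation back to a relation purely in the unprimed partials via the chain rule and to extract the correct algebraic constraint. The hypothesis $(\det A_1)A_1^{-1}A_2 \neq -(\det A_3)A_3^{-1}A_4$ of Theorem~\ref{21071405}(2) is essential here: without it, the pulled-back primed relation would coincide with the original for trivial reasons and the comparison would be vacuous.
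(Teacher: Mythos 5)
Your overall strategy --- run the machinery of Corollary~\ref{22010903} and Theorem~\ref{21071405} on $M^{-1}$ --- is exactly the paper's, and your explicit computation of $B_1^{-1}B_2=-A_2A_4^{-1}$ and $B_3^{-1}B_4=-A_1A_3^{-1}$ from Lemma~\ref{21072603} is correct. However, two of your key steps have gaps.

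First, the chain-rule pullback is a wrong turn. The relation you obtain from applying Theorem~\ref{21071405}(2) to $M^{-1}$ involves $\partial v'_j/\partial u'_i$, and the paper's point is that no pullback is needed at all: since both copies of $\log\mathcal{X}$ in $\log(\mathcal{X}\times\mathcal{X})$ are identical, $v'_j$ as a power series in $(u'_1,u'_2)$ coincides with $v_j$ as a power series in $(u_1,u_2)$, so $\partial v'_j/\partial u'_i$ is literally the same formal series as $\partial v_j/\partial u_i$. This is what lets Lemma~\ref{21071407} be applied directly to compare the two identities of shape \eqref{21071404}, one with coefficient matrix $A_1^{-1}A_2$ and the other with coefficient matrix $A_1A_3^{-1}$. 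If instead you push the primed relation through the chain rule ($J^{-1}$ etc.), the resulting identity in the unprimed partials involves products like $\partial v_1/\partial u_2 \cdot \partial v_2/\partial u_1$ and other terms not among $\{1,\, \partial v_1/\partial u_1,\, \partial v_2/\partial u_2,\, \partial v_1/\partial u_1\,\partial v_2/\partial u_2-\partial v_1/\partial u_2\,\partial v_2/\partial u_1\}$, so Lemma~\ref{21071407} no longer applies in the form stated; you would need a new uniqueness argument that you do not provide.

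Second, and more seriously, you do not actually derive the determinant equalities $\det A_1=\det A_4$ and $\det A_2=\det A_3$. Lemma~\ref{21071407} only determines the coefficient vector up to a scalar, so the scalar between $A_1^{-1}A_2$ and $A_1A_3^{-1}$ is not pinned down, and no amount of combining with the already-known $A_1^{-1}A_2\newparallel A_3^{-1}A_4$ will produce the determinant equalities. In the paper these come from an independent step: applying the \emph{first} conclusion \eqref{21071403} of Theorem~\ref{21071405} to $M^{-1}$ (after first verifying $\det J'\neq 0$, which the paper does via a contradiction using Theorem~\ref{21072701}, a step you also omit), which gives $\det B_1+\det B_3=\det B_2+\det B_4$; after substituting the explicit formulas for the blocks $B_j$ and clearing denominators this reads $\det A_4+\det A_3=\det A_2+\det A_1$, and together with the relation $\det A_1+\det A_3=\det A_2+\det A_4$ already obtained from $M$ it yields $\det A_1=\det A_4$ and $\det A_2=\det A_3$. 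Your final manipulations giving the two block forms for $M$ are fine, but they rest on these missing pieces.
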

\begin{proof}
By Lemma \ref{21072603},  
\begin{equation}\label{22101201}
\begin{gathered}
M^{-1}=\left(\begin{array}{cc}
  \dfrac{s}{s-1}A_1^{-1} & -\dfrac{1}{s-1}A_3^{-1} \\
-\dfrac{1}{s-1}A_2^{-1} & \dfrac{s}{s-1}A_4^{-1}
  \end{array}
\right)
\end{gathered}
\end{equation}
where $s(\in \mathbb{Q})$ is a constant satisfying $A_4=sA_3A_1^{-1}A_2$. To simplify notation, denote \eqref{22101201} by 
\begin{equation*}
\left(\begin{array}{cc}
A'_1 & A'_2 \\
A'_3 & A'_4
  \end{array}
\right)
\end{equation*}
and let
\begin{equation*}
J':=\left(\begin{array}{cc}
\dfrac{\partial u_1}{\partial u'_1} & \dfrac{\partial u_2}{\partial u'_1} \\
\dfrac{\partial u_1}{\partial u'_2} & \dfrac{\partial u_2}{\partial u'_2} 
\end{array}\right). 
\end{equation*}

We first claim that $J'$ is invertible. If not, by Theorem \ref{21072701}, we apply \eqref{22052309} to $M^{-1}$ and obtain
\begin{equation}\label{22101202}
\begin{gathered}
\det A'_1=\det A'_4, \quad \det A'_2=\det A'_3, \quad (\det A'_1)(A'_1)^{-1}A'_2=-(\det A'_3)(A'_3)^{-1}A'_4.
\end{gathered}
\end{equation}
The first two equalities in \eqref{22101202} are equivalent to 
\begin{equation*}
\det A_1=\det A_4,\quad  \det A_2=\det A_3
\end{equation*}
respectively, and the last one in \eqref{22101202} implies
\begin{equation*}
\begin{gathered}
\dfrac{s^2}{(s-1)^2\det A_1}  \dfrac{s-1}{s}A_1\Big(-\dfrac{1}{s-1}A_3^{-1}\Big)=-\dfrac{1}{(s-1)^2\det A_2}(1-s)A_2 \dfrac{s}{s-1}A_4^{-1}\\
\Longrightarrow
(\det A_2) A_1A_3^{-1}=-(\det A_1) A_2A_4^{-1}\Longrightarrow (\det A_1)A_1^{-1}A_2= -(\det A_3)A_3^{-1}A_4.
\end{gathered}
\end{equation*}
However this contradicts the assumption $(\det A_1)A_1^{-1}A_2\neq -(\det A_3)A_3^{-1}A_4$.

As $\det J'\neq 0$, applying \eqref{21071403} in Theorem \ref{21071405} to $M^{-1}$, 
\begin{equation*}
\begin{aligned}
\det A'_1+\det A'_3&=\det A'_2+\det A'_4\\
\Longrightarrow 
\det\Big(\dfrac{s}{s-1}A_1^{-1}\Big)+\det\Big(-\dfrac{1}{s-1}A_2^{-1}\Big)&=\det\Big(-\dfrac{1}{s-1}A_3^{-1}\Big)+\det\Big(\dfrac{s}{s-1}A_4^{-1}\Big)\\
\Longrightarrow \Big(\dfrac{s}{s-1}\Big)^2\det A_1^{-1}+\Big(\dfrac{1}{s-1}\Big)^2\det A_2^{-1}&=\Big(\dfrac{1}{s-1}\Big)^2\det A_3^{-1}+\Big(\dfrac{s}{s-1}\Big)^2\det A_4^{-1}\\
\Longrightarrow s^2\det A_1^{-1}+\det A_2^{-1}&=\det A_3^{-1}+s^2\det A_4^{-1}\\
\Longrightarrow \dfrac{\det A_1\det A_4}{\det A_2\det A_3}\dfrac{1}{\det A_1}+\dfrac{1}{\det A_2}&=\dfrac{1}{\det A_3}+\dfrac{\det A_1\det A_4}{\det A_2\det A_3}\dfrac{1}{\det A_4}\\
\Longrightarrow \dfrac{\det A_4}{\det A_2\det A_3}+\dfrac{1}{\det A_2}&=\dfrac{1}{\det A_3}+\dfrac{\det A_1}{\det A_2\det A_3}\\
\Longrightarrow \det A_4+\det A_3&=\det A_2+\det A_1.
\end{aligned}
\end{equation*}
Combining with \eqref{21071403}, it follows that 
\begin{equation*}
\det A_1=\det A_4, \quad \det A_2=\det A_3.  
\end{equation*}
If $(\det A'_1)(A'_1)^{-1}A'_2=-(\det A'_3)(A'_3)^{-1}A'_4$, as noted earlier, it contradicts our assumption $(\det A_1)A_1^{-1}A_2\neq -(\det A_3)A_3^{-1}A_4$. If $(\det A'_1)(A'_1)^{-1}A'_2\neq -(\det A'_3)(A'_3)^{-1}A'_4$, applying Theorem \ref{21071405} (2) to $M^{-1}$, we attain
\begin{equation*}
a'+b'\dfrac{\partial v'_1}{\partial u'_1}+c'\dfrac{\partial v'_2}{\partial u'_2}+d'\Big(\dfrac{\partial v'_1}{\partial u'_2}\dfrac{\partial v'_2}{\partial u'_1}-\dfrac{\partial v'_1}{\partial u'_1}\dfrac{\partial v'_2}{\partial u'_2}\Big)=0
\end{equation*}
where 
\begin{equation*}
\left(\begin{array}{cc}
-b' & d'\\
a' & c'
\end{array}\right)=A_1A_3^{-1}.
\end{equation*}
Since \eqref{21071404} holds (by the assumption), $A_1^{-1}A_2\newparallel A_1A_3^{-1}$ follows by Lemma \ref{21071407}. Letting $E:=A_1^{-1}A_2$, as 
\begin{equation*}
A_1^{-1}A_2 \newparallel A_3^{-1}A_4 \newparallel A_1A_3^{-1}, \quad \det A_1=\det A_4, \quad \det A_2=\det A_3,
\end{equation*} 
we conclude $M$ is one of the following forms: 
\begin{equation*}
\begin{aligned}
\left(\begin{array}{cc}
A_1 & A_1E \\
(\det E) E^{-1}A_1 & E^{-1}A_1E
\end{array}\right), \quad 
\left(\begin{array}{cc}
A_1 & A_1E \\
-(\det E) E^{-1}A_1 & -E^{-1}A_1E
\end{array}\right),\\
\left(\begin{array}{cc}
A_1 & A_1E \\
(\det E) E^{-1}A_1 & -E^{-1}A_1E
\end{array}\right)\quad\text{or}\quad 
\left(\begin{array}{cc}
A_1 & A_1E \\
-(\det E) E^{-1}A_1 & E^{-1}A_1E
\end{array}\right).
\end{aligned}
\end{equation*}
But the last two contradicts the assumption $(\det A_1) A_1^{-1}A_2\neq -(\det A_3)A_3^{-1}A_4$. This completes the proof. 
\end{proof}

Amalgamating Lemmas \ref{21071407} and \ref{20091912}, we finally establish Proposition \ref{21072601}. 
\begin{proof}[Proof of Proposition \ref{21072601}]
First, let us assume $M$ is given as
\begin{equation*}
\left(\begin{array}{cc}
A_1 & A_1E \\
(\det E) E^{-1}A_1 & E^{-1}A_1E
\end{array}\right)
\end{equation*}
for some matrix $E$. By Theorem \ref{22031403}, there are $k_j\in \mathbb{Q}\backslash \{0\}$ ($1\leq j\leq 4$) such that 
\begin{equation}\label{21072105}
\begin{gathered}
k_1+k_2=k_3+k_4=\dfrac{\det A_1}{k_1}+\dfrac{\det A_1\det E}{k_3}=
 \dfrac{\det A_1}{k_4}+ \dfrac{\det A_1\det E}{k_2}=1,
\end{gathered}
\end{equation}
and, by Corollary \ref{22010903}, they further satisfy 
\begin{equation}\label{22052813}
\begin{gathered}
\det E=\dfrac{k_2k_3}{k_1k_4}. 
\end{gathered}
\end{equation}
By \eqref{21072105}-\eqref{22052813}, 
\begin{equation}\label{22052901}
\begin{aligned}
&k_1+k_2=k_3+k_4=1,\quad \dfrac{1}{k_1}+\dfrac{k_2}{k_1k_4}=\dfrac{1}{k_4}+\dfrac{k_3}{k_1k_4}\Big(=\dfrac{1}{\det A_1}\Big)\\
\Longrightarrow &k_1+k_2=k_3+k_4=1, \quad k_4+k_2=k_1+k_3\\
\Longrightarrow &k_1=k_4,\quad k_2=k_3, \quad k_1+k_2=1. 
\end{aligned}
\end{equation}
Since 
\begin{equation*}
\det M=\det A_1\det (A_4-A_3A_1^{-1}A_2)=\pm 1
\end{equation*}
by Lemmas \ref{22062503}-\ref{21072204}, we have  
\begin{equation*}
\begin{gathered}
\det A_1\det (A_4-A_3A_1^{-1}A_2)=\det A_1\det \big(E^{-1}A_1E-(\det E)E^{-1}A_1E\big)=(\det A_1)^2(1-\det E)^2=\pm 1\\
\Longrightarrow \det A_1-\det A_1\det E=\pm 1.
\end{gathered}
\end{equation*}
\begin{enumerate}
\item If $\det A_1-\det A_1\det E=1$, then $1-\det E=\dfrac{1}{\det A_1}$ and,  since
\begin{equation*}
\det E=\dfrac{k_2^2}{k_1^2},  \quad \dfrac{1}{\det A_1}=\dfrac{1}{k_1}+\dfrac{\det E}{k_2}
\end{equation*}
by \eqref{21072105}-\eqref{22052901}, we get
\begin{equation*}
1-\dfrac{k_2^2}{k_1^2}=\dfrac{1}{k_1}+\dfrac{k_2}{k_1^2}\Longrightarrow k_1^2-k_2^2=k_1+k_2. 
\end{equation*}
As $k_1+k_2=1$, $k_1=1$ and $k_2=0$, contradicting the fact that $k_2\neq 0$. 
\item If $\det A_1-\det A_1\det E=-1$, then  
\begin{equation*}
\det E-1=\dfrac{1}{\det A_1}\Longrightarrow \dfrac{k_2^2}{k_1^2}-1=\dfrac{1}{k_1}+\dfrac{k_2}{k_1^2}\Longrightarrow k_2^2-k_1^2=k_1+k_2 
\end{equation*}
by \eqref{21072105}-\eqref{22052901}, concluding $k_1=0$ and $k_2=1$. But this again contradicts the fact $k_1\neq 0$. 
\end{enumerate}

Similarly, for $M$ given as 
\begin{equation*}
\left(\begin{array}{cc}
A_1 & A_1E \\
-(\det E) E^{-1}A_1 & -E^{-1}A_1E
\end{array}\right)
\end{equation*}
with some matrix $E$, one gets the same result. 
\end{proof}

Theorem \ref{21072701}, along with Theorem \ref{21071405} and Proposition \ref{21072601}, now completes the proof of Theorem \ref{20091911}. 

Finally, before proceeding to the next section, we record the following corollary, which is a combination of Theorems \ref{22031403} and \ref{20091911}. The corollary will be used in the proofs of Theorems \ref{22052203} and \ref{20103003} in Sections \ref{PCV} and \ref{22041806} respectively. 
\begin{corollary}\label{22051413}
Let $\mathcal{M}$ be a $2$-cusped hyperbolic $3$-manifold and $\mathcal{X}$ be its holonomy variety. Let $\mathcal{M}_{p_{1}/q_{1},p_{2}/q_{2}}$ and $\mathcal{M}_{p'_{1}/q'_{1},p'_{2}/q'_{2}}$ be two Dehn fillings of $\mathcal{M}$ having the same pseudo complex volume. Suppose the core holonomies $t_{1}, t_{2}$ (resp. $t'_{1},t'_{2}$ ) of $\mathcal{M}_{p_{1}/q_{1},p_{2}/q_{2}}$ (resp. $\mathcal{M}_{p'_{1}/q'_{1},p'_{2}/q'_{2}}$) are multiplicatively independent. If a Dehn filling point associated to the pair is contained in an analytic set defined by \eqref{22022101}, adapting the notation introduced in \eqref{22022010}, one of the following holds:
\begin{enumerate}
\item $\det A_3=\det A_2=1$ and $A_1=A_4=\left(\begin{array}{cc}
0 & 0\\
0 & 0
\end{array}\right)$;
\item $\det A_1=\det A_4=1$ and $A_2=A_3=\left(\begin{array}{cc}
0 & 0\\
0 & 0
\end{array}\right)$;
\item $\det A_j\neq 0\;(1\leq j\leq 4),\quad \det A_1=\det A_4, \quad \det A_2=\det A_3,\quad \det A_1+\det A_3=1$, and $(\det A_1) A_1^{-1}A_2=-(\det A_3)A_3^{-1}A_4$. 
\end{enumerate}
\end{corollary}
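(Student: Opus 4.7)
The plan is to derive Corollary \ref{22051413} as a direct case analysis built from Theorems \ref{22031403} and \ref{20091911}, both of whose hypotheses are precisely the ones supplied here: a Dehn filling point lying on the analytic set \eqref{22022101}, with $t_1,t_2$ and $t'_1,t'_2$ each multiplicatively independent. The trichotomy in the statement is organized by which of $\det A_1,\det A_2,\det A_3,\det A_4$ vanish, so I would begin by partitioning into three mutually exclusive possibilities: (a) $\det A_1=0$ or $\det A_4=0$; (b) $\det A_2=0$ or $\det A_3=0$; (c) none of the four determinants is zero.

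In case (a), Theorem \ref{22031403}(2) applies verbatim and forces $A_1=A_4=0$ together with $\det A_2=\det A_3=1$, which is precisely conclusion (1) of the corollary. Symmetrically, in case (b), Theorem \ref{22031403}(3) gives $A_2=A_3=0$ and $\det A_1=\det A_4=1$, matching conclusion (2). In case (c), all determinants are nonzero; this is exactly the hypothesis of Theorem \ref{20091911}, which yields
\[
\det A_1=\det A_4,\qquad \det A_2=\det A_3,\qquad \det A_1+\det A_3=1,
\]
together with $(\det A_1)A_1^{-1}A_2=-(\det A_3)A_3^{-1}A_4$, i.e.\ conclusion (3).

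The only bookkeeping I would need to dispatch is the confirmation that (a) and (b) cannot occur simultaneously: if (a) holds then $\det A_2=\det A_3=1\neq 0$, which rules out (b), and vice versa. This matches the corollary's claim that exactly one of the three bullets holds. I would also briefly note that the normalization \eqref{22022007} assumed in Theorem \ref{22031403} is harmless here: the statement of Corollary \ref{22051413} is phrased directly on the analytic set \eqref{22022101}, where the $A'_1, A'_4$ blocks have already been absorbed into the right-hand side via left multiplication (as in the passage from \eqref{20091903} to \eqref{22022101}), so the $A_j$'s here are exactly the $A_j$'s of Theorems \ref{22031403} and \ref{20091911}.

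The main obstacle is essentially none: the real content was proved in the two cited theorems. The only subtlety is making sure that the three cases are truly exhaustive and disjoint, and that the linear-algebra normalization used to derive Theorem \ref{22031403} does not impose extra conditions beyond what \eqref{22022101} already provides. Both checks are routine, so the proof should be a few lines of case-splitting followed by direct citation.
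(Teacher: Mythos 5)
Your proof is correct and matches the paper's own treatment: the paper introduces Corollary \ref{22051413} with the phrase "which is a combination of Theorems \ref{22031403} and \ref{20091911}," so your three-way case split on which $\det A_j$ vanish, followed by direct citation of those two theorems, is exactly the intended argument. The disjointness and normalization checks you note are also the right points to dispatch, and you dispatch them correctly.
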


\newpage
\section{Pseudo complex volume $\Longrightarrow$ Complex volume}\label{PCV}
Throughout the section, $\mathcal{M}$ is a $2$-cusped hyperbolic $3$-manifold and $\mathcal{X}$ is its holonomy variety as usual. 

The aim of this section is proving Theorem \ref{22041805}. That is, if two Dehn fillings $\mathcal{M}_{p_{1}/q_{1},p_{2}/q_{2}}$ and $\mathcal{M}_{p'_{1}/q'_{1},p'_{2}/q'_{2}}$ of $\mathcal{M}$ with sufficiently large coefficients have the same pseudo complex volume, then we show their complex volumes are the same as well. The proof is divided into two parts, depending on whether the core holonomies of each Dehn filling are multiplicatively dependent or not.

\subsection{The core holonomies are multiplicatively dependent}

Let us first assume the core holonomies of the given two Dehn fillings are multiplicatively dependent. Before proving Theorem \ref{22041805}, we first claim the following, which is seen as a quantified version of Theorem \ref{20071505} and whose proof is based on Lemma \ref{22050501}.

\begin{lemma}\label{21121906}
Let $\mathcal{M}_{p_{1}/q_{1},p_{2}/q_{2}}$ and $\mathcal{M}_{p'_{1}/q'_{1},p'_{2}/q'_{2}}$ be two Dehn fillings of $\mathcal{M}$ satisfying the following:
\begin{itemize}
\item $\text{pvol}_{\mathbb{C}}\;\mathcal{M}_{p_{1}/q_{1},p_{2}/q_{2}}=\text{pvol}_{\mathbb{C}}\;\mathcal{M}_{p'_{1}/q'_{1},p'_{2}/q'_{2}}$;
\item the core holonomies $t_{1}, t_{2}$ (resp. $t'_1, t'_2$) of $\mathcal{M}_{p_{1}/q_{1},p_{2}/q_{2}}$ (resp. $\mathcal{M}_{p'_{1}/q'_{1},p'_{2}/q'_{2}}$) are multiplicatively dependent.
\end{itemize}
Let $H$ be an algebraic subgroup containing a Dehn filling point associated to the pair obtained in Theorem \ref{20071505}. If $H$ is defined by
\begin{equation}\label{21121804}
M_1^{a_j}L_1^{b_j}M_2^{c_j}L_2^{d_j}=(M'_1)^{a'_j}(L'_1)^{b'_j}(M'_2)^{c'_j}(L'_2)^{d'_j}=M_1^{e_j}L_1^{f_j}(M'_1)^{e'_j}(L'_1)^{f'_j}=1, \quad (j=1,2), 
\end{equation}
then
\begin{equation*}
\begin{gathered}
\left(\begin{array}{cc}
p_1  & q_1 
\end{array}\right)A_1^{-1}
=-\left(\begin{array}{cc}
p_2  & q_2 
\end{array}\right)A_2^{-1}, \;
\left(\begin{array}{cc}
p'_1  & q'_1 
\end{array}\right)(A'_1)^{-1}
=-\left(\begin{array}{cc}
p'_2  & q'_2 
\end{array}\right)(A'_2)^{-1}, \\ 
\left(\begin{array}{cc}
p_1  & q_1 
\end{array}\right)A_3^{-1}
=-\left(\begin{array}{cc}
p'_1  & q'_1 
\end{array}\right)(A'_3)^{-1}
\end{gathered}
\end{equation*}
and 
\begin{equation}\label{22051402}
1+\dfrac{\det A_1
}{\det A_2} 
=\dfrac{\det A_3}
{\det A'_3}\Big(1+\dfrac{\det A'_1}{\det A'_2}\Big)
\end{equation}
where
\begin{equation}\label{22080407}
A_1:=\left(\begin{array}{cc}
a_1 & b_1 \\
a_2 & b_2 
\end{array}\right), \quad 
A_2:=\left(\begin{array}{cc}
c_1 & d_1 \\
c_2 & d_2 
\end{array}\right), \quad 
A_3:=\left(\begin{array}{cc}
e_1 & f_1 \\
e_2 & f_2 
\end{array}\right)
\end{equation}
and 
\begin{equation}\label{22080408}
A'_1:=\left(\begin{array}{cc}
a'_1 & b'_1 \\
a'_2 & b'_2 
\end{array}\right), \quad 
A'_2:=\left(\begin{array}{cc}
c'_1 & d'_1 \\
c'_2 & d'_2 
\end{array}\right), \quad 
A'_3:=\left(\begin{array}{cc}
e'_1 & f'_1 \\
e'_2 & f'_2 
\end{array}\right). 
\end{equation}
\end{lemma}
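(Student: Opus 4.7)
My plan is to apply Lemma \ref{22050501} separately to each of the three families of defining equations in \eqref{21121804}, then combine the resulting multiplicative identities on the core holonomies with the equality of pseudo complex volumes to extract \eqref{22051402}.

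Passing to the analytic holonomy variety, the three families of equations defining $H$ decouple into
\begin{equation*}
A_1\left(\begin{array}{c}u_1\\ v_1\end{array}\right) + A_2\left(\begin{array}{c}u_2\\ v_2\end{array}\right) = 0,\;\; A'_1\left(\begin{array}{c}u'_1\\ v'_1\end{array}\right) + A'_2\left(\begin{array}{c}u'_2\\ v'_2\end{array}\right) = 0,\;\; A_3\left(\begin{array}{c}u_1\\ v_1\end{array}\right) + A'_3\left(\begin{array}{c}u'_1\\ v'_1\end{array}\right) = 0,
\end{equation*}
each involving a disjoint pair of $(u,v)$-blocks. Combined with the two relevant Dehn filling equations, each system is precisely the framework of Lemma \ref{22050501}, with the matrix in the role of ``$A_2$'' there replaced by $-A_2$, $-A'_2$, or $-A'_3$ respectively (to absorb the ``$+$'' of our systems into the second matrix). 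Since $\det(-X) = \det X$ for a $2 \times 2$ matrix while $(p,q)(-X)^{-1} = -(p,q)X^{-1}$, this at once yields the three matrix identities claimed in the statement. The analogue of \eqref{22050419} in each case produces three multiplicative identities on the core holonomies,
\begin{equation*}
t_1^{\det A_1} = \epsilon_1\, t_2^{\det A_2}, \quad (t'_1)^{\det A'_1} = \epsilon_2\, (t'_2)^{\det A'_2}, \quad t_1^{\det A_3} = \epsilon_3\, (t'_1)^{\det A'_3},
\end{equation*}
for some roots of unity $\epsilon_1, \epsilon_2, \epsilon_3$.

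To extract \eqref{22051402}, observe that $\text{pvol}_{\mathbb{C}}\;\mathcal{M}_{p_{1}/q_{1},p_{2}/q_{2}} = \text{pvol}_{\mathbb{C}}\;\mathcal{M}_{p'_{1}/q'_{1},p'_{2}/q'_{2}}$ forces $\lambda_1 + \lambda_2 \equiv \lambda'_1 + \lambda'_2 \pmod{2\pi\sqrt{-1}\mathbb{Z}}$, where $\lambda_k := \log t_k$ and $\lambda'_k := \log t'_k$ denote the complex lengths of the respective core geodesics (branches chosen so that $e^{\lambda_k} = t_k$, $e^{\lambda'_k} = t'_k$). Taking logarithms in the three multiplicative identities above, we can write
\begin{equation*}
\lambda_2 = \tfrac{\det A_1}{\det A_2}\lambda_1 + c_1,\quad \lambda'_1 = \tfrac{\det A_3}{\det A'_3}\lambda_1 + c_3,\quad \lambda'_2 = \tfrac{\det A'_1}{\det A'_2}\lambda'_1 + c_2,
\end{equation*}
for some $c_1, c_2, c_3 \in 2\pi\sqrt{-1}\mathbb{Q}$. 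Substituting into $\lambda_1 + \lambda_2 \equiv \lambda'_1 + \lambda'_2 \pmod{2\pi\sqrt{-1}\mathbb{Z}}$ and collecting the coefficients of $\lambda_1$ yields
\begin{equation*}
\left(\left(1 + \tfrac{\det A_1}{\det A_2}\right) - \tfrac{\det A_3}{\det A'_3}\left(1 + \tfrac{\det A'_1}{\det A'_2}\right)\right)\lambda_1 \in 2\pi\sqrt{-1}\mathbb{Q}.
\end{equation*}
If this rational coefficient of $\lambda_1$ were nonzero, it would follow that $\lambda_1 \in 2\pi\sqrt{-1}\mathbb{Q}$, whence $t_1 = e^{\lambda_1}$ would be a root of unity, contradicting the non-ellipticity of the core holonomy (Theorem \ref{040605}). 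The coefficient must therefore vanish, which is precisely \eqref{22051402}.

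The only real technical nuisance is the sign bookkeeping in passing between our ``$+$'' form and Lemma \ref{22050501}'s ``$=$'' form, together with tracking the roots of unity introduced by the branch choices for logarithms; beyond this, the proof reduces to elementary linear algebra and the non-ellipticity of the core holonomies.
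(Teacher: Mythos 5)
Your proposal is correct and follows essentially the same route as the paper: split the defining equations of $H$ into the three blocks, apply Lemma~\ref{22050501} to each after absorbing the sign into the second matrix (where the paper uses the projection of $(\mathcal{X}\times\mathcal{X})\cap H_1$ onto $\mathbb{G}^4(:=(M_1,L_1,M'_1,L'_1))$ to put the $(A_3, A'_3)$-block into the setting of that lemma), and then combine the three multiplicative relations with $t_1t_2=t'_1t'_2$ and non-ellipticity of $t_1$ to force the coefficient identity \eqref{22051402}. The only point you compress is the projection step making $\mathcal{C}\times\mathcal{C}'$ the ambient variety for the third application of Lemma~\ref{22050501}, which is needed to justify that the $(A_3,A'_3)$-constraint cuts out an anomalous subvariety to which that lemma applies; your logarithmic bookkeeping for \eqref{22051402} is slightly more explicit than the paper's ``\eqref{22051402} follows'' but amounts to the same computation.
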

\begin{proof}
Splitting \eqref{21121804} into 
\begin{equation}\label{22072101}
M_1^{a_j}L_1^{b_j}M_2^{c_j}L_2^{d_j}=(M'_1)^{a'_j}(L'_1)^{b'_j}(M'_2)^{c'_j}(L'_2)^{d'_j}=1, \quad (j=1,2)
\end{equation}
and 
\begin{equation}\label{22072102}
M_1^{e_j}L_1^{f_j}(M'_1)^{e'_j}(L'_1)^{f'_j}=1, \quad (j=1,2), 
\end{equation}
denote \eqref{22072101} and \eqref{22072102} by $H_1$ and $H_2$ respectively. As seen in Theorem \ref{20071505}, $(\mathcal{X}\times \mathcal{X})\cap H_1$ is a $2$-dim anomalous subvariety of $\mathcal{X}\times \mathcal{X}$, and, applying Lemma \ref{22050501}, we obtain
\begin{equation*}
\left(\begin{array}{cc}
p_1  & q_1 
\end{array}\right)A_1^{-1}
=-\left(\begin{array}{cc}
p_2  & q_2 
\end{array}\right)A_2^{-1}, \;
\left(\begin{array}{cc}
p'_1  & q'_1 
\end{array}\right)(A'_1)^{-1}
=-\left(\begin{array}{cc}
p'_2  & q'_2 
\end{array}\right)(A'_2)^{-1}
\end{equation*}
and
\begin{equation}\label{22051401}
t_1^{\det A_1}=\epsilon_1t_2^{\det A_2}, \quad (t'_1)^{\det A'_1}=\epsilon_2(t'_2)^{\det A'_2}
\end{equation}
for some torsion $\epsilon_k$ ($k=1,2$). 

Projecting $(\mathcal{X}\times \mathcal{X})\cap H_1$ onto $\mathbb{G}^4(:=M_1, L_1, M'_1, L'_1))$, if we represent the projected image by $\mathcal{C}\times\mathcal{C}'$ and consider $H_2$ as an algebraic subgroup in $\mathbb{G}^4$, $(\mathcal{C}\times \mathcal{C}')\cap H_2$ is an anomalous subvariety of $\mathcal{C}\times \mathcal{C}'$ by Theorem \ref{20071505}. Again, thanks to Lemma \ref{22050501}, 
\begin{equation*}
\left(\begin{array}{cc}
p_1  & q_1 
\end{array}\right)A_3^{-1}
=-\left(\begin{array}{cc}
p'_1  & q'_1 
\end{array}\right)(A'_3)^{-1}
\end{equation*}
and 
\begin{equation}\label{22072105}
 t_1^{\det A_3}=\epsilon_3(t'_1)^{\det A'_3}
\end{equation}
for some torsion $\epsilon_3$. Since 
\begin{equation*}
t_1t_2=t'_1t'_2,
\end{equation*}
combining it with \eqref{22051401}-\eqref{22072105}, \eqref{22051402} follows. 
\end{proof}

Using the lemma, we now prove 

\begin{theorem}
Let $\mathcal{M}_{p_{1}/q_{1},p_{2}/q_{2}}$ and $\mathcal{M}_{p'_{1}/q'_{1},p'_{2}/q'_{2}}$ be two Dehn fillings of $\mathcal{M}$ with sufficiently large $|p_k|+|q_k|$ and $|p'_k|+|q'_k|$ ($k=1,2$) satisfying 
\begin{itemize}
\item $\text{pvol}_{\mathbb{C}}\;\mathcal{M}_{p_{1}/q_{1},p_{2}/q_{2}}=\text{pvol}_{\mathbb{C}}\;\mathcal{M}_{p'_{1}/q'_{1},p'_{2}/q'_{2}}$;
\item the core holonomies $t_{1}, t_{2}$ (resp. $t'_1, t'_2$) of $\mathcal{M}_{p_{1}/q_{1},p_{2}/q_{2}}$ (resp. $\mathcal{M}_{p'_{1}/q'_{1},p'_{2}/q'_{2}}$) are multiplicatively dependent.
\end{itemize}
Then
\begin{equation}\label{22051501}
\text{vol}_{\mathbb{C}}\;\mathcal{M}_{p_{1}/q_{1}, p_{2}/q_{2}}=
\text{vol}_{\mathbb{C}}\;\mathcal{M}_{p'_{1}/q'_{1}, p'_{2}/q'_{2}}.
\end{equation}
\end{theorem}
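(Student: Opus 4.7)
The plan is to reduce the theorem to showing that the Neumann--Zagier error terms $\epsilon$ and $\epsilon'$ in Theorem \ref{20080701} agree modulo $\sqrt{-1}\pi^2\mathbb{Z}$. Indeed, writing
$$
\text{vol}_{\mathbb{C}}\mathcal{M}_{p_1/q_1,p_2/q_2} \equiv \text{pvol}_{\mathbb{C}}\mathcal{M}_{p_1/q_1,p_2/q_2} + \epsilon(p_1/q_1,p_2/q_2) \pmod{\sqrt{-1}\pi^2\mathbb{Z}}
$$
and similarly for the primed filling, the pseudo complex volume hypothesis reduces \eqref{22051501} to the identity $\epsilon \equiv \epsilon' \pmod{\sqrt{-1}\pi^2\mathbb{Z}}$. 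Both Dehn filling points lie, by Theorem \ref{20071505}, on the same $1$-dimensional anomalous curve $\mathcal{Y} := (\mathcal{X}\times\mathcal{X})\cap_{(1,\dots,1)} H$, where $H$ is the algebraic subgroup defined by \eqref{21121804}; the strategy is to use $\mathcal{Y}$ as the common ``stage'' on which to compare $\epsilon$ and $\epsilon'$.

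First, I would parametrize $\log\mathcal{Y}$ by a single complex variable (say $\xi = \xi_{m_1}$) and use the linear equations defining $\log H$ together with the Neumann--Zagier potential $\Phi(u_1,u_2)$ of Theorem \ref{potential} to express all eight coordinates $(\xi_{m_1},\xi_{l_1},\xi_{m_2},\xi_{l_2},\xi'_{m_1},\xi'_{l_1},\xi'_{m_2},\xi'_{l_2})$ as analytic functions of $\xi$. Using the complex-length identities
$$
t_1^{\det A_1} = \zeta_1\, t_2^{\det A_2}, \qquad (t'_1)^{\det A'_1} = \zeta_2\, (t'_2)^{\det A'_2}, \qquad t_1^{\det A_3} = \zeta_3\, (t'_1)^{\det A'_3}
$$
with roots of unity $\zeta_i$ established inside the proof of Lemma \ref{21121906}, together with the balance equation \eqref{22051402}, this parametrization converts $\lambda^1+\lambda^2$ and $(\lambda')^1+(\lambda')^2$ into two expressions in the single parameter $\xi$ that differ only by an integer multiple of $2\pi\sqrt{-1}$. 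Then I would expand $\epsilon$ and $\epsilon'$ via the explicit Neumann--Zagier formula (so that each is a power series in the $u$-coordinates starting at order four), and use the matrix identities
$$
\bigl(p_1\;q_1\bigr)A_1^{-1} = -\bigl(p_2\;q_2\bigr)A_2^{-1}, \qquad \bigl(p_1\;q_1\bigr)A_3^{-1} = -\bigl(p'_1\;q'_1\bigr)(A'_3)^{-1}
$$
to transport the expansion of $\epsilon'$ into the $(u_1,u_2)$-coordinates of the first filling. The symmetry $\partial v_k/\partial u_l = \partial v_l/\partial u_k$ from Corollary \ref{22080401} ensures that this transport preserves the potential-function contribution to $\epsilon$.

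The main obstacle will be the bookkeeping of integer ambiguities: the matrix identities hold exactly, but the complex-length identities hold only modulo $2\pi\sqrt{-1}$, and the passage between $\lambda^k$'s and $\epsilon$'s involves choices of logarithmic branches. The crux is to verify that the integer discrepancies collected along the way, once combined via the numerical identity \eqref{22051402}, always land in $\sqrt{-1}\pi^2\mathbb{Z}$, which is the ambiguity in the definition of complex volume. Once this branch-tracking step is in place, $\epsilon \equiv \epsilon'$ follows modulo $\sqrt{-1}\pi^2\mathbb{Z}$, and \eqref{22051501} is established.
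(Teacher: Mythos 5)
Your reduction to comparing the Neumann--Zagier error terms is the right first step, and it matches the paper's: since $\text{vol}_{\mathbb{C}}\mathcal{M}_{p_1/q_1,p_2/q_2}=\text{pvol}_{\mathbb{C}}\mathcal{M}_{p_1/q_1,p_2/q_2}+\sum_{k=1}^2\int(u_kdv_k-v_kdu_k)$ exactly (this is the Neumann--Zagier identity, with the integral taken along $\log\mathcal{X}$ from the origin to the Dehn filling point), the claim is equivalent to the equality of the two integral sums. But the mechanism you propose for the transport is not the one that makes the argument go through, and the correct one is simpler than what you have in mind.

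The transport step does not use the coefficient identities $(p_1\ q_1)A_1^{-1}=-(p_2\ q_2)A_2^{-1}$ and $(p_1\ q_1)A_3^{-1}=-(p'_1\ q'_1)(A'_3)^{-1}$; those describe where the Dehn filling point sits, not how the integrand behaves. What you need instead are the \emph{analytic} linear relations cutting out $\log\big((\mathcal{X}\times\mathcal{X})\cap H\big)$, namely
\[
A_1\binom{u_1}{v_1}+A_2\binom{u_2}{v_2}=0,\quad A'_1\binom{u'_1}{v'_1}+A'_2\binom{u'_2}{v'_2}=0,\quad A_3\binom{u_1}{v_1}+A'_3\binom{u'_1}{v'_1}=0,
\]
together with the elementary observation that if $\binom{u'}{v'}=A\binom{u}{v}$ then $u'dv'-v'du'=(\det A)(u\,dv-v\,du)$. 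Applying this determinant-scaling law to the three relations above converts both $\int(u_1dv_1-v_1du_1)+\int(u_2dv_2-v_2du_2)$ and $\int(u'_1dv'_1-v'_1du'_1)+\int(u'_2dv'_2-v'_2du'_2)$ into scalar multiples of a single integral $\int(u_1dv_1-v_1du_1)$, with scalar factors $1+\det A_1/\det A_2$ and $(\det A_3/\det A'_3)\bigl(1+\det A'_1/\det A'_2\bigr)$ respectively; the balance equation \eqref{22051402} asserts precisely that these two scalars coincide. Your plan omits this determinant law, which is the load-bearing step, and replaces it with a coordinate-parametrization and branch-tracking argument that is unnecessary: because the Neumann--Zagier formula for $\text{vol}_{\mathbb{C}}$ in terms of the line integral is exact (not merely asymptotic modulo $\sqrt{-1}\pi^2\mathbb{Z}$), no integer ambiguities arise beyond the definitional one, and the potential-derivative symmetry of Corollary \ref{22080401} does not enter this proof at all. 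If you insert the determinant-scaling observation in place of your transport step, the rest of your outline collapses into the paper's argument.
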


\begin{proof}
By Theorem \ref{20071505}, a Dehn filling point associated to the pair is contained in an algebraic subgroup $H$ defined by \eqref{21121804}. Moving to an analytic setting, $\log \big((\mathcal{X}\times \mathcal{X})\cap H\big)$ is defined by
\begin{equation}\label{21121805}
\begin{gathered}
A_1\left(\begin{array}{c}
u_1 \\
v_1  
\end{array}\right)
+A_2\left(\begin{array}{c}
u_2 \\
v_2  
\end{array}\right)
=A'_1\left(\begin{array}{c}
u'_1 \\
v'_1  
\end{array}\right)
+A'_2\left(\begin{array}{c}
u'_2 \\
v'_2  
\end{array}\right)
=A_3\left(\begin{array}{c}
u_1 \\
v_1  
\end{array}\right)
+A'_3\left(\begin{array}{c}
u'_1 \\
v'_1  
\end{array}\right)
=\left(\begin{array}{c}
0\\
0  
\end{array}\right)
\end{gathered}
\end{equation}
where $A_j, A'_j$ ($1\leq j\leq 3$) are the same as in \eqref{22080407}-\eqref{22080408}. 

Since
\begin{equation*}
\begin{gathered}
\text{vol}_{\mathbb{C}}\;\mathcal{M}_{p_1/q_1, p_2/q_2}=\text{pvol}_{\mathbb{C}}\;\mathcal{M}_{p_1/q_1, p_2/q_2}+\sum_{k=1}^2\int u_kdv_k-v_kdu_k
\end{gathered}
\end{equation*}
by \cite{nz}, verifying \eqref{22051501} is equivalent to establishing 
\begin{equation}\label{21121903}
\sum_{k=1}^2\int u_kdv_k-v_kdu_k=\sum_{k=1}^2\int u'_kdv'_k-v'_kdu'_k 
\end{equation}
under the transformation given in \eqref{21121805}. 

Rewriting \eqref{21121805} as 
\begin{equation*}
\begin{gathered}
\left(\begin{array}{c}
u_2 \\
v_2  
\end{array}\right)=-A_2^{-1}A_1\left(\begin{array}{c}
u_1 \\
v_1  
\end{array}\right), \quad 
\left(\begin{array}{c}
u'_2 \\
v'_2 
\end{array}\right)=
-(A'_2)^{-1}A'_1\left(\begin{array}{c}
u'_1 \\
v'_1  
\end{array}\right), \quad 
\left(\begin{array}{c}
u'_1 \\
v'_1  
\end{array}\right)
=-(A'_3)^{-1}A_3
\left(\begin{array}{c}
u_1 \\
v_1  
\end{array}\right),   
\end{gathered}
\end{equation*}
it follows that\footnote{Note that if $\left(\begin{array}{c}
u' \\
v'  
\end{array}\right)
=A\left(\begin{array}{c}
u \\
v  
\end{array}\right)$, then $\left(\begin{array}{c}
du' \\
dv'  
\end{array}\right)
=A\left(\begin{array}{c}
du \\
dv  
\end{array}\right)$ and $u'dv'-v'du'=\det A(udv-vdu)$.} 
\begin{equation}\label{22051504}
\begin{gathered}
\int (u_1dv_1-v_1du_1)+\int (u_2dv_2-v_2du_2)=
\Big(1+\dfrac{\det A_1}{\det A_2}\Big)\int (u_1dv_1-v_1du_1). 
\end{gathered}
\end{equation}
and
\begin{equation}\label{22051503}
\begin{gathered}
\allowdisplaybreaks
\int (u'_1dv'_1-v'_1du'_1)+\int (u'_2dv'_2-v'_2du'_2)
=\dfrac{\det A_3}{\det A'_3}\int (u_1dv_1-v_1du_1)+\dfrac{\det A'_1}{\det A'_2}\int (u'_1dv'_1-v'_1du'_1)\\
=\dfrac{\det A_3}{\det A'_3}\int (u_1dv_1-v_1du_1)+\dfrac{\det A'_1}{\det A'_2}\dfrac{\det A_3}{\det A'_3}\int (u_1dv_1-v_1du_1)
=\Big(\dfrac{\det A_3}{\det A'_3}+\dfrac{\det A'_1}{\det A'_2}\dfrac{\det A_3}{\det A'_3}\Big)\int (u_1dv_1-v_1du_1)
\end{gathered}
\end{equation}
By \eqref{22051402}, \eqref{22051504} is equal to \eqref{22051503}, implying \eqref{21121903}. 
\end{proof}

\subsection{The core holonomies are multiplicatively independent}

Second, we assume the core holonomies of two Dehn fillings are multiplicatively independent. In this case, the result will be obtained using Corollary \ref{22051413}.
\begin{theorem}\label{22052203}
Let $\mathcal{M}_{p_{1}/q_{1},p_{2}/q_{2}}$ and $\mathcal{M}_{p'_{1}/q'_{1},p'_{2}/q'_{2}}$ be two Dehn fillings of $\mathcal{M}$ with $|p_k|+|q_k|$ and $|p'_k|+|q'_k|$ ($k=1,2$) sufficiently large satisfying 
\begin{itemize}
\item $\text{pvol}_{\mathbb{C}}\;\mathcal{M}_{p_{1}/q_{1},p_{2}/q_{2}}=\text{pvol}_{\mathbb{C}}\;\mathcal{M}_{p'_{1}/q'_{1},p'_{2}/q'_{2}}$;
\item the core holonomies $t_{1}, t_{2}$ (resp. $t'_{1}, t'_{2}$) of $\mathcal{M}_{p_{1}/q_{1},p_{2}/q_{2}}$ (resp. $\mathcal{M}_{p'_{1}/q'_{1},p'_{2}/q'_{2}}$) are multiplicatively independent. 
\end{itemize}
Then
\begin{equation}\label{22051407}
\text{vol}_{\mathbb{C}}\;\mathcal{M}_{p_{1}/q_{1}, p_{2}/q_{2}}=
\text{vol}_{\mathbb{C}}\;\mathcal{M}_{p'_{1}/q'_{1}, p'_{2}/q'_{2}}.
\end{equation}
\end{theorem}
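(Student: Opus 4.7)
\medskip

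The plan is to reduce the claim, as in the proof of the previous case, to showing that
\begin{equation*}
\sum_{k=1}^{2}\bigl(u_k\,dv_k-v_k\,du_k\bigr)=\sum_{k=1}^{2}\bigl(u'_k\,dv'_k-v'_k\,du'_k\bigr)
\end{equation*}
under the linear substitution \eqref{22022101} defining the anomalous analytic set $\mathcal{Y}$ containing the Dehn filling point. By Theorem \ref{22050203}, a Dehn filling point associated to the pair lies in some $H\in\mathcal{H}$ of codimension $4$ whose corresponding analytic set is of the form \eqref{22022101}, and $\mathcal{X}\cap_{(1,\dots,1)}H$ is a $2$-dimensional anomalous subvariety. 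Since $t_1,t_2$ (resp.\ $t'_1,t'_2$) are multiplicatively independent, Corollary \ref{22051413} applies and partitions the matrix $M=\left(\begin{smallmatrix}A_1 & A_2\\ A_3 & A_4\end{smallmatrix}\right)$ into three mutually exclusive structural cases; I will verify the $1$-form identity in each.

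Cases (1) and (2) of Corollary \ref{22051413} are essentially immediate. In Case (2), with $A_2=A_3=0$ and $\det A_1=\det A_4=1$, the substitution decouples as $(u'_1,v'_1)^{T}=A_1(u_1,v_1)^{T}$ and $(u'_2,v'_2)^{T}=A_4(u_2,v_2)^{T}$, and the elementary identity
\begin{equation*}
(au+bv)\,d(cu+dv)-(cu+dv)\,d(au+bv)=(\det A)(u\,dv-v\,du)
\end{equation*}
yields $u'_k\,dv'_k-v'_k\,du'_k=u_k\,dv_k-v_k\,du_k$ for $k=1,2$. Case (1) is handled in exactly the same way, with $A_2$ and $A_3$ permuting the roles of the two cusps.

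The main work is Case (3), where $\det A_j\neq 0$ for all $j$, and one has $\det A_1=\det A_4$, $\det A_2=\det A_3$, $\det A_1+\det A_3=1$, and the crucial matrix identity
\begin{equation*}
(\det A_1)\,A_1^{-1}A_2=-(\det A_3)\,A_3^{-1}A_4.
\end{equation*}
Here I expand $u'_1\,dv'_1-v'_1\,du'_1$ directly from $(u'_1,v'_1)^{T}=A_1(u_1,v_1)^{T}+A_2(u_2,v_2)^{T}$; the expansion has a ``diagonal'' contribution $\det A_1\,(u_1\,dv_1-v_1\,du_1)+\det A_2\,(u_2\,dv_2-v_2\,du_2)$ and four cross terms in $u_1\,du_2-u_2\,du_1$, $u_1\,dv_2-v_2\,du_1$, $v_1\,du_2-u_2\,dv_1$, $v_1\,dv_2-v_2\,dv_1$, whose coefficients are precisely the entries (up to sign) of $(\det A_1)A_1^{-1}A_2$. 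An analogous expansion for $u'_2\,dv'_2-v'_2\,du'_2$ produces the same cross-term $1$-forms with coefficients given by the entries of $(\det A_3)A_3^{-1}A_4$. Summing the two expressions, the cross terms cancel in pairs thanks to the sign-reversed matrix relation above, while the diagonal part assembles into
\begin{equation*}
(\det A_1+\det A_3)(u_1\,dv_1-v_1\,du_1)+(\det A_2+\det A_4)(u_2\,dv_2-v_2\,du_2),
\end{equation*}
which equals $(u_1\,dv_1-v_1\,du_1)+(u_2\,dv_2-v_2\,du_2)$ by the two normalizations $\det A_1+\det A_3=1$ and $\det A_2+\det A_4=\det A_3+\det A_1=1$.

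With the $1$-form identity established in all three cases, integrating along matched paths from the complete structure to the respective Dehn filling points and invoking the Neumann--Zagier formula
\begin{equation*}
\text{vol}_{\mathbb{C}}\,\mathcal{M}_{p_1/q_1,p_2/q_2}=\text{pvol}_{\mathbb{C}}\,\mathcal{M}_{p_1/q_1,p_2/q_2}+\sum_{k=1}^{2}\int u_k\,dv_k-v_k\,du_k
\end{equation*}
gives \eqref{22051407}. The only real obstacle is the bookkeeping in Case (3); the miraculous cancellation of cross terms is driven entirely by the identity $(\det A_1)A_1^{-1}A_2=-(\det A_3)A_3^{-1}A_4$ supplied by Corollary \ref{22051413}, which in turn was the output of the symmetric analysis in Section \ref{20121903}. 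The normalization $\det A_1+\det A_3=1$ is then exactly what is needed so that the diagonal terms add up correctly.
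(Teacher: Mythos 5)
Your proof is correct and takes essentially the same route as the paper: expand $\sum_k (u'_k\,dv'_k-v'_k\,du'_k)$ under the linear substitution, read the diagonal coefficients as $\det A_1+\det A_3=\det A_2+\det A_4=1$, and cancel the cross-term coefficients via the matrix identity $(\det A_1)A_1^{-1}A_2=-(\det A_3)A_3^{-1}A_4$ from Corollary \ref{22051413}. The paper absorbs your Cases (1) and (2) into the same single computation by writing that matrix identity in its entry-wise (adjugate) form — which reduces to $0=0$ when $A_1=A_4=0$ or $A_2=A_3=0$ — so your separate treatment of the decoupled cases is a harmless (and slightly more explicit) restatement rather than a different argument.
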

\begin{proof}
Adapting the same notation given in Section \ref{20121903}, let us assume a Dehn filling point associated to the pair $(\mathcal{M}_{p_{1}/q_{1},p_{2}/q_{2}},\mathcal{M}_{p'_{1}/q'_{1},p'_{2}/q'_{2}})$ is contained in an anomalous analytic subset defined by \eqref{22022101}. By Corollary \ref{22051413}, we have
\begin{equation*}
\det A_1+\det A_3=\det A_2+\det A_4=1
\end{equation*}
and 
\begin{equation*}
(\det A_1) A_1^{-1}A_2=-(\det A_3)A_3^{-1}A_4, 
\end{equation*}
which are explicitly represented as
\begin{equation}\label{22011104}
\begin{gathered}
a_1b_2-a_2b_1+e_1f_2-e_2f_1=c_1d_2-c_2d_1+g_1h_2-g_2h_1=1
\end{gathered}
\end{equation}
and 
\begin{equation}\label{22011105}
\begin{gathered}
\left(\begin{array}{cc}
b_2c_1-b_1c_2 & b_2d_1-b_1d_2\\
a_1c_2-a_2c_1 & a_1d_2-a_2d_1
\end{array}\right)=-\left(\begin{array}{cc}
f_2g_1-f_1g_2 & f_2h_1-f_1h_2\\
e_1g_2-e_2g_1 & e_1h_2-e_2h_1
\end{array}\right)
\end{gathered}
\end{equation}
respectively. 

Similar to the previous theorem, to prove \eqref{22051407}, it is enough to show  
\begin{equation*}
\sum_{k=1}^2\int u_kdv_k-v_kdu_k=\sum_{k=1}^2\int u'_kdv'_k-v'_kdu'_k. 
\end{equation*}
By \eqref{22022101},  
\begin{equation*}
\begin{gathered}
\left(\begin{array}{c}
du'_1\\
dv'_1\\
du'_2\\
dv'_2
\end{array}
\right)
=\left(\begin{array}{cccc}
a_1 & b_1 & c_1 & d_1\\
a_2 & b_2 & c_2 & d_2\\
e_1 & f_1 & g_1 & h_1\\
e_2 & f_2 & g_2 & h_2 
\end{array}
\right)
\left(\begin{array}{c}
du_1\\
dv_1\\
du_2\\
dv_2
\end{array}
\right)
\end{gathered}
\end{equation*}
and thus
\begin{equation*}
\begin{gathered}
\allowdisplaybreaks
\int u'_1dv'_1-v'_1du'_1+\int u'_2dv'_2-v'_2du'_2\\
=\int (a_1u_1+b_1v_1+c_1u_2+d_1v_2)(a_2du_1+b_2dv_1+c_2du_2+d_2dv_2)\\
-\int (a_2u_1+b_2v_1+c_2u_2+d_2v_2)(a_1du_1+b_1dv_1+c_1du_2+d_1dv_2)\\
+\int (e_1u_1+f_1v_1+g_1u_2+h_1v_2)(e_2du_1+f_2dv_1+g_2du_2+h_2dv_2)\\
-\int (e_2u_1+b_2v_1+c_2u_2+d_2v_2)(e_1du_1+f_1dv_1+e_1du_2+f_1dv_2)\\
\end{gathered}
\end{equation*}
\begin{equation}\label{21072211}
\begin{gathered}
=(a_1b_2-a_2b_1+e_1f_2-e_2f_1)\int (u_1dv_1-v_1du_1)
+(c_1d_2-c_2d_1+g_1h_2-g_2h_1)\int (u_2dv_2-v_2du_2)\\
+ (a_1c_2-a_2c_1+e_1g_2-e_2g_1)\int (u_1du_2-u_2du_1)
+(a_1d_2-a_2d_1+e_1h_2-e_2h_1)\int (u_1dv_2-v_2du_1)\\
+(b_1c_2-b_2c_1+f_1g_2-f_2g_1)\int (v_1du_2-u_2dv_1)
+(b_1d_2-b_2d_1+f_1h_2-f_2h_1)\int (v_1dv_2-v_2dv_1).
\end{gathered}
\end{equation}
By \eqref{22011104}-\eqref{22011105}, \eqref{21072211} is further reduced to 
\begin{equation*}
\int (u_1dv_1-v_1du_1)+\int (u_2dv_2-v_2du_2),  
\end{equation*}
which completes the proof. 
\end{proof}

\newpage
\section{Proofs of Theorems \ref{2201}-\ref{20103003}}\label{22041806}
In this section, we prove Theorems \ref{2201}-\ref{20103003}. The theorems are attained by integrating several results acquired in previous sections.  

By Theorem \ref{22050203}, there exists a finite set $\mathcal{H}$ of algebraic subgroups such that, for any two Den fillings $\mathcal{M}_{p_1/q_1, p_2/q_2}$ and $\mathcal{M}_{p'_1/q'_1, p'_2/q'_2}$ of $\mathcal{M}$ having the same pseudo complex volume, a Dehn filling point associated to the pair is contained in some $H\in \mathcal{H}$, defined by equations either of the forms 
\begin{equation}\label{22070801}
\begin{gathered}
M_1^{a_j}L_1^{b_j}M_2^{c_j}L_2^{d_j}=(M'_1)^{a'_j}(L'_1)^{b'_j}(M'_2)^{c'_j}(L'_2)^{d'_j}=M_1^{e_j}L_1^{f_j}(M'_1)^{e'_j}(L'_1)^{f'_j}=1, \quad (j=1,2)
\end{gathered}
\end{equation} 
or
\begin{equation}\label{22051601}
\begin{gathered}
M_1^{a_j}L_1^{b_j}M_2^{c_j}L_2^{d_j}(M'_1)^{a'_j}(L'_1)^{b'_j}=M_1^{e_j}L_1^{f_j}M_2^{g_j}L_2^{h_j}(M'_2)^{g'_j}(L'_2)^{h'_j}=1,\quad (j=1,2).
\end{gathered}
\end{equation}
In particular, if the core holonomies $t_1, t_2$ (resp. $t'_1, t'_2$) of $\mathcal{M}_{p_1/q_1, p_2/q_2}$ (resp. $\mathcal{M}_{p'_1/q'_1, p'_2/q'_2}$) are multiplicatively dependent, then $H$ is defined by \eqref{22070801}. Otherwise, it belongs to the second class. 
 
To simplify notation, let
\begin{equation*}
A_1:=\left(\begin{array}{cc}
a_1 & b_1\\
a_2 & b_2
\end{array}\right), \quad 
A_2:=\left(\begin{array}{cc}
c_1 & d_1\\
c_2 & d_2
\end{array}\right), \quad 
 A_3:=\left(\begin{array}{cc}
e_1 & f_1\\
e_2 & f_2
\end{array}\right), \quad  
A_4:=\left(\begin{array}{cc}
g_1 & h_1\\
g_2 & h_2
\end{array}\right)
\end{equation*}
and 
\begin{equation*}
A'_1:=\left(\begin{array}{cc}
a'_1 & b'_1\\
a'_2 & b'_2
\end{array}\right),\quad  
A'_2:=\left(\begin{array}{cc}
c'_1 & d'_1\\
c'_2 & d'_2
\end{array}\right), \quad  
A'_3:=\left(\begin{array}{cc}
e'_1 & f'_1\\
e'_2 & f'_2
\end{array}\right), \quad
A'_4:=\left(\begin{array}{cc}
g'_1 & h'_1\\
g'_2 & h'_2
\end{array}\right).
\end{equation*}

First, if $H$ is defined by \eqref{22070801}, then 
\begin{equation}\label{22071803}
A_1
\left( \begin{array}{c}
1  \\
\tau_1  
\end{array}\right)
\newparallel
A_2
\left( \begin{array}{c}
1  \\
\tau_2
\end{array}\right),\quad 
A'_1
\left( \begin{array}{c}
1  \\
\tau_1  
\end{array}\right)
\newparallel
A'_2\left( \begin{array}{c}
1  \\
\tau_2  
\end{array}\right),\quad 
A_3
\left( \begin{array}{c}
1  \\
\tau_1  
\end{array}\right)\newparallel
A'_3
\left( \begin{array}{c}
1  \\
\tau_1  
\end{array}\right)
\end{equation}
by Corollary \ref{20121907}, and  
\begin{equation}\label{22071801}
A_1\left( \begin{array}{c}
-q_1  \\
p_1  
\end{array}\right)\newparallel
A_2\left( \begin{array}{c}
-q_2  \\
p_2  
\end{array}\right), \quad 
A'_1\left( \begin{array}{c}
-q'_1  \\
p'_1  
\end{array}\right)
\newparallel
A'_2\left( \begin{array}{c}
-q'_2  \\
p'_2  
\end{array}\right),\quad 
A_3\left( \begin{array}{c}
-q_1  \\
p_1  
\end{array}\right)
\newparallel
A'_3\left( \begin{array}{c}
-q'_1  \\
p'_1  
\end{array}\right)
\end{equation}
by Lemma \ref{21121906}.

Second, if $H$ is defined by \eqref{22051601}, then
\begin{equation}\label{20122307}
A'_1
\left( \begin{array}{c}
1  \\
\tau_1  
\end{array}\right)
\newparallel
A_1
\left( \begin{array}{c}
1  \\
\tau_1  
\end{array}\right)
\newparallel
A_2
\left( \begin{array}{c}
1  \\
\tau_2
\end{array}\right),\quad 
A'_4\left( \begin{array}{c}
1  \\
\tau_2  
\end{array}\right) 
\newparallel
A_3
\left( \begin{array}{c}
1  \\
\tau_1  
\end{array}\right)
\newparallel
A_4\left( \begin{array}{c}
1  \\
\tau_2  
\end{array}\right)
\end{equation}
by Corollary \ref{20121907}, and  
\begin{equation}\label{22051805}
A'_1\left( \begin{array}{c}
-q'_1  \\
p'_1  
\end{array}\right)\newparallel
A_1\left( \begin{array}{c}
-q_1  \\
p_1  
\end{array}\right)\newparallel
A_2\left( \begin{array}{c}
-q_2  \\
p_2  
\end{array}\right), \quad 
A'_4\left( \begin{array}{c}
-q'_2  \\
p'_2  
\end{array}\right)
\newparallel
A_3\left( \begin{array}{c}
-q_1  \\
p_1  
\end{array}\right)
\newparallel
A_4\left( \begin{array}{c}
-q_2  \\
p_2  
\end{array}\right)
\end{equation}
by Corollary \ref{22010903}. 

Note Theorem \ref{2201} immediately follows from \eqref{22071801} and \eqref{22051805}. Hence we only cover Theorem \ref{20103003}, which is restated below.
\\

\noindent\textbf{Theorem \ref{20103003}}
\textit{Let $\mathcal{M}$ be a $2$-cusped hyperbolic $3$-manifold with two cusp shapes $\tau_1, \tau_2$ not belonging to the same quadratic field. Suppose 
\begin{equation}\label{22011701}
\text{pvol}_{\mathbb{C}}\;\mathcal{M}_{p_1/q_1, p_2/q_2}=\text{pvol}_{\mathbb{C}}\;\mathcal{M}_{p'_1/q'_1, p'_2/q'_2}
\end{equation}
with $|p_k|+|q_k|$ and $|p'_k|+|q'_k|$ sufficiently large ($k=1,2$). Then the following statements hold. 
\begin{enumerate} 
\item Suppose neither $\tau_1$ nor $\tau_2$ is contained in $\mathbb{Q}(\sqrt{-1})$ or $\mathbb{Q}(\sqrt{-3})$. 
\begin{enumerate}
\item If $\tau_1$ and $\tau_2$ are relatively independent, then $(p_1/q_1, p_2/q_2)=(p'_1/q'_1, p'_2/q'_2)$.
\item If $\tau_1$ and $\tau_2$ are relatively dependent, there exists unique $\rho\in S$ such that $(p_1/q_1, p_2/q_2)$ is either $(p'_1/q'_1, p'_2/q'_2)$ or $\big(\rho^{-1}(p'_2/q'_2), \rho(p'_1/q'_1)\big)$.
\end{enumerate}
\item If $\tau_1\in \mathbb{Q}(\sqrt{-1})$ and $\tau_2\notin\mathbb{Q}(\sqrt{-3})$ (resp. $\tau_1\in \mathbb{Q}(\sqrt{-3})$ and $\tau_2\notin \mathbb{Q}(\sqrt{-1})$), then there exists unique $\sigma\in S$ of order $2$ (resp. $3$) such that $(p_1/q_1, p_2/q_2)=(\sigma^i(p'_1/q'_1), p'_2/q'_2)$ where $0\leq i\leq 1$ (resp. $0\leq i\leq 2$). 
\item If $\tau_1\in \mathbb{Q}(\sqrt{-1})$ and $\tau_2\in \mathbb{Q}(\sqrt{-3})$, then there exist unique $\sigma$ and $\phi\in S$ of order $2$ and $3$ respectively such that $(p_1/q_1, p_2/q_2)=(\sigma^i(p'_1/q'_1), \phi^j(p'_2/q'_2))$ for some $0\leq i\leq 1, 0\leq j\leq 2$.
\end{enumerate}
}
\vspace{0.2cm}

To show Theorem \ref{20103003}, we shall need the following lemma, whose proof is given in \cite{jeon5}. 

\begin{lemma}\label{17052301}
Let $A_1$ and $A_2$ be $(2\times 2)$-matrices with rational entries and $\det A_1\neq 0$. Let $\tau_1$ and $\tau_2$ be non-real algebraic numbers. 
\begin{enumerate}
\item If $\tau_1$ is non-quadratic and $
A_1\left( \begin{array}{c}
1  \\
\tau_1
\end{array} \right)\newparallel A_2\left( \begin{array}{c}
1  \\
\tau_1
\end{array} \right)$, then $A_1\newparallel A_2$. 
\item Suppose $\tau_1$ and $\tau_2$ are relatively independent. If $
A_1\left( \begin{array}{c}
1  \\
\tau_1
\end{array} \right)\newparallel A_2\left( \begin{array}{c}
1  \\
\tau_2
\end{array} \right)$, then $A_2=\left( \begin{array}{cc}
0 & 0\\
0 & 0
\end{array} \right)$.
\end{enumerate}
\end{lemma}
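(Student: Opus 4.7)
The plan is to expand the parallelism condition into a polynomial identity over $\mathbb{Q}$ in $\tau_1$ (and $\tau_2$), then exploit the arithmetic hypotheses on the $\tau_i$ to read off the entries of $A_2$ in terms of those of $A_1$. Writing $A_i=\begin{pmatrix} a_i & b_i \\ c_i & d_i \end{pmatrix}$, the parallelism $A_1(1,\tau_1)^{T}\newparallel A_2(1,\sigma)^{T}$ (with $\sigma=\tau_1$ in part (1) and $\sigma=\tau_2$ in part (2)) is the bilinear identity
\[
(a_1+b_1\tau_1)(c_2+d_2\sigma)=(c_1+d_1\tau_1)(a_2+b_2\sigma),
\]
provided neither side is the zero vector. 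The left side is nonzero because $\det A_1\neq 0$ and $\tau_1\notin\mathbb{Q}$, and in part (2) if $A_2(1,\tau_2)^{T}=0$ then $a_2+b_2\tau_2=c_2+d_2\tau_2=0$ together with $\tau_2\notin\mathbb{Q}$ already forces $A_2=0$, so the conclusion is immediate.

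For part (1), with $\sigma=\tau_1$, the identity expands to $(b_1d_2-b_2d_1)\tau_1^{2}+(a_1d_2+b_1c_2-a_2d_1-b_2c_1)\tau_1+(a_1c_2-a_2c_1)=0$. Since $\tau_1$ is non-quadratic over $\mathbb{Q}$, all three rational coefficients vanish. From $a_1c_2=a_2c_1$ and the fact that the first column of $A_1$ is nonzero (else $\det A_1=0$), one gets $\lambda_1\in\mathbb{Q}$ with $(a_2,c_2)=\lambda_1(a_1,c_1)$; likewise $b_1d_2=b_2d_1$ yields $\lambda_2\in\mathbb{Q}$ with $(b_2,d_2)=\lambda_2(b_1,d_1)$. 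Substituting into the middle coefficient collapses it to $(\lambda_1-\lambda_2)\det A_1=0$, so $\lambda_1=\lambda_2=:\lambda$ and $A_2=\lambda A_1$, i.e.\ $A_1\newparallel A_2$.

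For part (2), with $\sigma=\tau_2$, the identity expands to
\[
(a_1c_2-a_2c_1)+(b_1c_2-b_2c_1)\tau_1+(a_1d_2-a_2d_1)\tau_2+(b_1d_2-b_2d_1)\tau_1\tau_2=0.
\]
Writing this as $N+M\tau_1=0$ with $N,M\in\mathbb{Q}(\tau_2)$ both linear in $\tau_2$, if $M\neq 0$ then $\tau_1=-N/M$ expresses $\tau_1$ as a M\"obius transformation of $\tau_2$ with rational (hence, after clearing denominators, integer) coefficients, contradicting relative independence. Hence $M=0$, and the vanishing of the coefficients of $1$ and $\tau_2$ in $N$ then forces $N=0$ as well. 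This yields the four relations $a_1c_2=a_2c_1$, $b_1d_2=b_2d_1$, $a_1d_2=a_2d_1$, $b_1c_2=b_2c_1$, and the argument of part (1) produces a single $\lambda\in\mathbb{Q}$ with $A_2=\lambda A_1$. The parallelism now reads $A_1(1,\tau_1)^{T}\newparallel \lambda A_1(1,\tau_2)^{T}$; if $\lambda\neq 0$, invertibility of $A_1$ gives $(1,\tau_1)^{T}\newparallel (1,\tau_2)^{T}$, forcing $\tau_1=\tau_2$, which again violates relative independence. Therefore $\lambda=0$ and $A_2=0$.

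The main obstacle is not conceptual but bookkeeping: in both parts one must first rule out the degenerate cases in which a side of the parallelism vanishes or a column of $A_1$ is zero, before the column-proportionality arguments above can be applied. In every instance this reduces to $\det A_1\neq 0$ combined with $\tau_i\notin\mathbb{Q}$.
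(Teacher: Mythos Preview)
The paper does not supply its own proof of this lemma (it cites \cite{jeon5}), so there is no in-paper argument to compare against. Your overall strategy---reduce parallelism to a polynomial identity in the $\tau_i$ with rational coefficients, then use the arithmetic hypotheses to force those coefficients to vanish---is the natural one, and your treatment of part (1) is correct.

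In part (2), however, your expansion of $(a_1+b_1\tau_1)(c_2+d_2\tau_2)-(c_1+d_1\tau_1)(a_2+b_2\tau_2)$ is miscomputed: the coefficient of $\tau_1$ is $b_1c_2-d_1a_2$ (not $b_1c_2-b_2c_1$) and the coefficient of $\tau_2$ is $a_1d_2-c_1b_2$ (not $a_1d_2-a_2d_1$). The subsequent four relations you list are therefore not the ones that actually follow from $M=N=0$. With the correct coefficients, the vanishing of all four terms reads
\[
\begin{pmatrix} a_1 & -c_1 \\ b_1 & -d_1 \end{pmatrix}\begin{pmatrix} c_2 \\ a_2 \end{pmatrix}=\begin{pmatrix} 0 \\ 0 \end{pmatrix},\qquad
\begin{pmatrix} a_1 & -c_1 \\ b_1 & -d_1 \end{pmatrix}\begin{pmatrix} d_2 \\ b_2 \end{pmatrix}=\begin{pmatrix} 0 \\ 0 \end{pmatrix},
\]
and the coefficient matrix has determinant $-\det A_1\neq 0$, so $A_2=0$ immediately. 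Your detour through $A_2=\lambda A_1$ and the final $\tau_1=\tau_2$ step is then unnecessary. One further small point: in the M\"obius step you should note that if the resulting fractional-linear expression is degenerate it would make $\tau_1$ rational, contradicting that $\tau_1$ is non-real; this closes the case without appealing to relative independence.
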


Now we prove Theorem \ref{20103003}. 

\begin{proof}[Proof of Theorem \ref{20103003}]
\begin{enumerate}
\item First, let us assume $\tau_i\notin \big(\mathbb{Q}(\sqrt{-1})\cup \mathbb{Q}(\sqrt{-3})\big)$ for $i=1,2$. 
\begin{enumerate}
\item If $\tau_1$ and $\tau_2$ are relatively independent, a Dehn filling point associated to \eqref{22011701} is contained in an algebraic subgroup of the second kind (i.e. \eqref{22051601}). Further,  by \eqref{20122307} and Lemma \ref{17052301},   
\begin{equation*}
A_2=A_3=\left(\begin{array}{cc}
0 & 0\\
0 & 0
\end{array}\right),
\end{equation*}
and thus \eqref{22051601} is reduced to
\begin{equation}\label{22051602}
\begin{gathered}
M_1^{a_j}L_1^{b_j}(M'_1)^{a'_j}(L'_1)^{b'_j}=M_2^{g_j}L_2^{h_j}(M'_2)^{g'_j}(L'_2)^{h'_j}=1,\quad (j=1,2).
\end{gathered}
\end{equation}
Let 
\begin{equation*}
\mathcal{X}_1\times\mathcal{X}_1:=(\mathcal{X}\times \mathcal{X})\cap (M_1=L_1=M'_1=L'_1=1). 
\end{equation*}
Then 
\begin{equation*}
\begin{gathered}
(\mathcal{X}_1\times \mathcal{X}_1)\cap \big(M_2^{g_j}L_2^{h_j}(M'_2)^{g'_j}(L'_2)^{h'_j}=1,\; (j=1,2)\big)
\end{gathered}
\end{equation*}
is an anomalous subvariety of $\mathcal{X}_1\times \mathcal{X}_1$. Since $\tau_1, \tau_2\notin\big(\mathbb{Q}(\sqrt{-1})\cup\mathbb{Q}(\sqrt{-3})\big)$, the only anomalous subvarieties of $\mathcal{X}_1\times \mathcal{X}_1$ are the trivial ones by Theorem \ref{20102501} and so $A_1=\pm A'_1$. Similarly, one also gets $A_4=\pm A'_4$. 

In conclusion, \eqref{22051602} is equivalent to 
\begin{equation*}
M_1=(M'_1)^{\pm 1 }, L_1=(L'_1)^{\pm 1}, \quad M_2=(M'_2)^{\pm 1}, L_2=(L'_2)^{\pm 1}, 
\end{equation*}
implying $(p_1/q_1, p_2/q_2)=(p'_1/q'_1, p'_2/q'_2)$ by \eqref{22051805}. 

\item If $\tau_1$ and $\tau_2$ are relatively dependent, both belong to the same (non-quadratic) field. 
\begin{enumerate}
\item We first assume the core holonomies $t_1, t_2$ (resp. $t'_1, t'_2$) of $\mathcal{M}_{p_1/q_1, p_2/q_2}$ (resp. $\mathcal{M}_{p'_1/q'_1, p'_2/q'_2}$) are multiplicatively dependent and thus a Dehn filling point associated to \eqref{22011701} is contained in an algebraic subgroup defined by \eqref{22070801}. 

By \eqref{22071803} and Lemma \ref{17052301}, $A_3\newparallel A'_3$ and so 
\begin{equation}\label{22071805}
p_1/q_1=p'_1/q'_1 
\end{equation}
by \eqref{22071801}. Similarly,
\begin{equation*}
\left( \begin{array}{c}
1  \\
\tau_2  
\end{array}\right)
\newparallel
A_2^{-1}A_1
\left( \begin{array}{c}
1  \\
\tau_1
\end{array}\right)
\newparallel
(A'_2)^{-1}A'_1
\left( \begin{array}{c}
1  \\
\tau_1
\end{array}\right)
\end{equation*}
by \eqref{22071803}, and thus $A_2^{-1}A_1\newparallel (A'_2)^{-1}A'_1$ by Lemma \ref{17052301}. As
\begin{equation*}
A_2^{-1}A_1\left( \begin{array}{c}
-q_1  \\
p_1  
\end{array}\right)\newparallel
\left( \begin{array}{c}
-q_2  \\
p_2  
\end{array}\right), \quad 
(A'_2)^{-1}A'_1\left( \begin{array}{c}
-q'_1  \\
p'_1  
\end{array}\right)
\newparallel
\left( \begin{array}{c}
-q'_2  \\
p'_2  
\end{array}\right)
\end{equation*} 
by \eqref{22071801}, 
\begin{equation*}
p_2/q_2=p'_2/q'_2
\end{equation*}
follows by \eqref{22071805}. 

\item Next we assume the core holonomies $t_1, t_2$ (resp. $t'_1, t'_2$) are multiplicatively independent (resp. independent) and so a Dehn filling point associated to \eqref{22011701} is contained in an algebraic subgroup defined by \eqref{22051601}. 
\begin{enumerate}
\item If $\det A_1\neq 0$ and $\det A_4\neq 0$, as $\tau_1$ and $\tau_2$ are non-quadratic, 
\begin{equation*}
A_1\newparallel A'_1, \quad A_4\newparallel A'_4
\end{equation*}
by \eqref{20122307} and Lemma \ref{17052301}, thus
\begin{equation*}
(p_{1}/q_{1}, p_{2}/q_{2})=(p'_{1}/q'_{1}, p'_{2}/q'_{2})
\end{equation*} 
by \eqref{22051805}.

\item If $\det A_1=0$ or $\det A_4=0$, then $A_1=A_4=
\left(\begin{array}{cc}
0 & 0\\
0 & 0
\end{array}\right)$ and $\det A_2, \det A_3\neq 0$ by Corollary \ref{22051413}. By \eqref{20122307}, 
\begin{equation*}
\left( \begin{array}{c}
1  \\
\tau_2  
\end{array}\right)
\newparallel
A_2^{-1}A'_1
\left( \begin{array}{c}
1  \\
\tau_1  
\end{array}\right)\newparallel
(A'_4)^{-1}A_3
\left( \begin{array}{c}
1  \\
\tau_1  
\end{array}\right)
\end{equation*}
and, as $\tau_1$ is non-quadratic, it follows that 
\begin{equation*}
A_2^{-1}A'_1\newparallel (A'_4)^{-1}A_3
\end{equation*}
by Lemma \ref{17052301}. Since 
\begin{equation}\label{20122313}
\left( \begin{array}{c}
-q_{2}  \\
p_{2}  
\end{array}\right)
\newparallel
A_2^{-1}A'_1
\left( \begin{array}{c}
-q'_{1}  \\
p'_{1}  
\end{array}\right), \quad 
(A'_4)^{-1}A_3
\left( \begin{array}{c}
-q_{1}  \\
p_{1}  
\end{array}\right)
\newparallel
\left( \begin{array}{c}
-q'_{2}  \\
p'_{2}  
\end{array}\right)
\end{equation} 
by \eqref{22051805}, if $\big((A_2^{-1}A'_1)^T\big)^{-1}$ is denoted by $\rho$, then 
\begin{equation*}
p_{2}/q_{2}=\rho(p'_{1}/q'_{1}),\quad p'_{2}/q'_{2}=\rho(p_{1}/q_{1})
\end{equation*}
by \eqref{20122313}.  
\end{enumerate} 
\end{enumerate}
\end{enumerate}

\item We only consider the case $\tau_1\in \mathbb{Q}(\sqrt{-1})$ and $\tau_2\notin\big(\mathbb{Q}(\sqrt{-1})\cup\mathbb{Q}(\sqrt{-3})\big)$. As $\tau_1$ and $\tau_2$ are relatively independent, $A_2=A_3=\left( \begin{array}{cc}
0 & 0  \\
0 & 0 
\end{array}\right)$ by \eqref{20122307} and Lemma \ref{17052301}. So \eqref{22051601} is reduced to 
\begin{equation*}
M_1^{a_j}L_1^{b_j}(M'_1)^{a'_j}(L'_1)^{b'_j}=M_2^{g_j}L_2^{h_j}(M'_2)^{g'_j}(L'_2)^{h'_j}=1,\quad (j=1,2).
\end{equation*}
For $k=1,2$, let 
\begin{equation*}
\mathcal{X}_k\times\mathcal{X}_k:=(\mathcal{X}\times \mathcal{X})\cap (M_k=L_k=M'_k=L'_k=1). 
\end{equation*}
\begin{enumerate}
\item Since $\tau_2\notin \big(\mathbb{Q}(\sqrt{-1})\cup\mathbb{Q}(\sqrt{-3})\big)$, by Theorem \ref{20102501}, 
\begin{equation*}
\begin{gathered}
(\mathcal{X}_1\times \mathcal{X}_1)\cap \big(M_2^{g_j}L_2^{h_j}(M'_2)^{g'_j}(L'_2)^{h'_j}=1,\; (j=1,2)\big)
\end{gathered}
\end{equation*}
is a trivial anomalous subvariety of $\mathcal{X}_1\times \mathcal{X}_1$. That is, $A_1=\pm A'_1$, implying $p_1/q_1=p'_1/q'_1$. 

\item Since $\tau_1\in \mathbb{Q}(\sqrt{-1})$, if
\begin{equation*}
\begin{gathered}
(\mathcal{X}_2\times \mathcal{X}_2)\cap \big(M_1^{a_j}L_1^{b_j}(M'_1)^{a'_j}(L'_1)^{b'_j}=1,\; (j=1,2)\big)
\end{gathered}
\end{equation*}
is a non-trivial anomalous subvariety of $\mathcal{X}_2\times \mathcal{X}_2$, $(A_1^{-1}A'_1)^4=I$ by Theorem \ref{20102501}. As $(A_1^{-1}A'_1)^2=-I$ (again by Theorem \ref{20102501}), the desired result follows.
\end{enumerate} 
\item The proof for the last case is similar to the previous cases. 
\end{enumerate}
\end{proof}

\vspace{5 mm}
\noindent Department of Mathematics, POSTECH\\
77 Cheong-Am Ro, Pohang, South Korea\\
\\
\text{Email Address}: bogwang.jeon@postech.ac.kr

\end{document}